\newtheorem{thm}{Theorem}[section]
\newtheorem{lem}[thm]{Lemma}
\newtheorem{prop}[thm]{Proposition}
\newtheorem{cor}[thm]{Corollary}
\theoremstyle{definition}
\newtheorem{definition}[thm]{Definition}
\newtheorem*{definition-nono}{Definition}
\newtheorem{claim}[thm]{Claim}
\newtheorem{remark}[thm]{Remark}
\newtheorem*{acknowledgement}{Acknowledgements}
\newtheorem*{examples}{Examples}
\newtheorem*{conjecture-nonum}{Conjecture}
\newtheoremstyle{case}{}{}{}{}{}{:}{ }{}
\theoremstyle{case}
\newcommand{\N}{\mathbb{N}}
\newcommand{\Z}{\mathbb{Z}}
\newcommand{\R}{\mathbb{R}}
\newcommand{\C}{\mathbb{C}}
\renewcommand{\H}{\mathbb{H}}
\newcommand{\mc}{\mathcal}
\newcommand{\mf}{\mathfrak}
\newcommand{\mrm}{\mathrm}
\renewcommand{\a}{\alpha}
\renewcommand{\b}{\beta}
\newcommand{\g}{\gamma}
\newcommand{\G}{\Gamma}
\renewcommand{\d}{\delta}
\newcommand{\e}{\varepsilon}
\renewcommand{\l}{\lambda}
\renewcommand{\L}{\Lambda}
\newcommand{\w}{\omega}
\newcommand{\s}{\sigma}
\newcommand{\vp}{\varphi}
\renewcommand{\t}{\tau}
\renewcommand{\th}{\theta}
\renewcommand{\k}{\kappa}
\newcommand{\set}[1]{\left\{#1\right\}}
\renewcommand{\r}{\rightarrow}
\def\multiset#1#2{\ensuremath{\left(\kern-.3em\left(\genfrac{}{}{0pt}{}{#1}{#2}\right)\kern-.3em\right)}}
\newcommand{\norm}[1]{\left\lVert#1\right\rVert}
\newcommand{\Acal}{\mc{A}}
\newcommand{\Bcal}{\mc{B}}
\newcommand{\Ccal}{\mc{C}}
\newcommand{\Dcal}{\mc{D}}
\newcommand{\Ecal}{\mc{E}}
\newcommand{\Gcal}{\mc{G}}
\newcommand{\Kcal}{\mc{K}}
\newcommand{\Lcal}{\mc{L}}
\newcommand{\Mcal}{\mc{M}}
\newcommand{\Ncal}{\mc{N}}
\newcommand{\Ocal}{\mc{O}}
\newcommand{\Pcal}{\mc{P}}
\newcommand{\Qcal}{\mc{Q}}
\newcommand{\Rcal}{\mc{R}}
\newcommand{\Scal}{\mc{S}}
\newcommand{\Vcal}{\mc{V}}
\newcommand{\Wcal}{\mc{W}}
\newcommand{\diam}[1]{\mrm{diam}\left(#1\right)}
\newcommand{\supp}{\mrm{supp}}
\newcommand{\Lie}{\mrm{Lie}}
\numberwithin{equation}{section}
\newcommand{\SL}{\mrm{SL}_2(\R)}
\newcommand{\bms}{\mrm{m}^{\mrm{BMS}}}
\newcommand{\ps}{\mu^{\mrm{PS}}}
\newcommand{\wu}[1]{N^+_{#1}}
\newcommand{\uT}{\mrm{T}^1}
\newcommand{\Nab}{N_{\mrm{ab}}}
\newcommand{\Edisc}{\Ecal_{\mrm{dis}}}
\newcommand{\mxu}{\mu_x^u}
\newcommand{\Ad}{\mrm{Ad}}
\newcommand{\inj}{\mrm{inj}}
\newcommand{\dist}{\mrm{dist}}
\newcommand{\M}{\mathbb{M}}
\newcommand{\K}{\mathbb{K}}
\newcommand{\id}{\mrm{id}}
\newcommand{\F}{\mathscr{F}} 
\renewcommand{\P}{\mathbb{P}}
\renewcommand{\Re}{\mrm{Re}}
\renewcommand{\Im}{\mrm{Im}}
\newcommand{\Bstar}{\Bcal_\star}
\newcommand{\yrho}{y_{\rho}}
\newcommand{\murho}{\mu^u_{y_\rho}}
\newcommand{\murhoi}{\mu^u_{y^i_\rho}}
\newcommand{\yrhoi}{y_\rho^i}
\newcommand{\npls}{\mf{n}^+}
\newcommand{\nonepls}{\npls_\a}
\newcommand{\ntwopls}{\npls_{2\a}}
\newcommand{\nminus}{\mf{n}^-}
\newcommand{\noneminus}{\nminus_\a}
\newcommand{\ntwominus}{\nminus_{2\a}}
\newcommand{\muxu}{\mu_x^u}
\newcommand{\zeps}{Z^{(\epsilon)}}
 \newcommand*{\newconstant}[3][]{%
  \newglossaryentry{#2}{
    name={$#3$},text={#3},description={}}%
}
\newglossaryentry{pj}
{
  name={\ensuremath{p_j}},
  description={}
} 
\title[Exponential Mixing \& Additive Combinatorics]{Exponential Mixing Via Additive Combinatorics}
\author{Osama Khalil}
\address{Department of Mathematics, Statistics, and Computer Science, University of Illinois Chicago}
\email{okhalil@uic.edu}
\date{}
   \def\MR#1{}
\begin{document}

\begin{abstract}
    We prove that the geodesic flow on a geometrically finite locally symmetric space of negative curvature is exponentially mixing with respect to the Bowen-Margulis-Sullivan measure.
    The approach is based on constructing a suitable anisotropic Banach space on which the infinitesimal generator of the flow admits an essential spectral gap.
    A key step in the proof involves estimating certain oscillatory integrals against the Patterson-Sullivan measure.
    For this purpose, we prove a general result of independent interest asserting that the Fourier transform of measures on $\R^d$ that do not concentrate near proper affine hyperplanes enjoy polynomial decay outside of a sparse set of frequencies.
    As an intermediate step, we show that the $L^q$-dimension ($1<q\leq \infty$) of iterated self-convolutions of such measures tend towards that of the ambient space.
    Our analysis also yields that the Laplace transform of the correlation function of smooth observables extends meromorphically to the entire complex plane in the convex cocompact case and to a strip of explicit size beyond the imaginary axis in the case the manifold admits cusps.

\end{abstract}

\maketitle

\section{Introduction}

\subsection{Exponential mixing and Pollicott-Ruelle resonances}
Let $\mc{X}$ be the unit tangent bundle of a quotient of a real, complex, quaternionic, or a Cayley hyperbolic space by a discrete, geometrically finite, non-elementary group of isometries $\G$.
Denote by $g_t$ the geodesic flow on $\mc{X}$ and by $\bms$ the Bowen-Margulis-Sullivan probability measure of maximal entropy for $g_t$.
Let $\d_\G$ be the critical exponent of $\G$.
We refer the reader to Section~\ref{sec:prelims} for definitions. 
The following is the main result of this article in its simplest form.
\begin{thm}\label{thm:intro mixing}
The geodesic flow on $\mc{X}$ is exponentially mixing with respect to $\bms$.
More precisely, there exists $\s_0=\s_0(\mc{X})>0$ such that for all $f\in C_c^3(\mc{X})$, $g\in C_c^2(\mc{X})$ and $t\geq 0$,
\begin{equation*}
    \int_\mc{X} f\circ g_t \cdot g\;d\bms
    = \int_\mc{X} f\;d\bms \int_\mc{X}  g\;d\bms + \norm{f}_{C^3}  O_g\left( e^{-\s_0 t}\right).
\end{equation*}
The implicit constant depends on $g$ through its $C^2$-norm and the injectivity radius of its support.
\end{thm}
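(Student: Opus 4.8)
\emph{The plan} is to recast exponential mixing as a spectral‑gap statement for the generator of $g_t$ on an anisotropic Banach space, and to establish that gap by feeding the Fourier‑decay estimates for Patterson--Sullivan measures (announced in the abstract) into a Dolgopyat‑type cancellation argument. Concretely, write $\mc X = \G\backslash G$ with $G$ the relevant rank‑one simple Lie group, $g_t$ right translation by the $\R$‑diagonalizable one‑parameter subgroup, and $X$ the associated vector field, so that the Koopman semigroup $\mc L_t f = f\circ g_t$ has generator $X$. The correlation $t\mapsto \int_{\mc X}(f\circ g_t)\,g\, d\bms - \int f\, d\bms \int g\, d\bms$ has, for $\Re s > 0$, Laplace transform $s\mapsto \langle R(s)f,\, g\,\bms\rangle$ up to the $s=0$ term, where $R(s)=(s-X)^{-1}$. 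I would deduce the theorem from a contour‑shift (Paley--Wiener/Tauberian) argument once $R(s)$ is shown to extend meromorphically to $\{\Re s > -\s_0\}$, acting on a space $\mc B$ with continuous inclusions $C_c^3(\mc X)\hookrightarrow \mc B\hookrightarrow C_c^2(\mc X)^\ast$, with a single simple pole at $s=0$ whose residue is $f\mapsto (\int f\, d\bms)\cdot\mathbbm{1}$. The gap $\s_0$ is precisely the decay rate; the asymmetry between the $C^3$‑norm of $f$ and the $C^2$‑norm of $g$, and the dependence on the injectivity radius of $\supp g$, would come from the fact that elements of $\mc B$ are distributions that, along unstable leaves, are only as regular as the Patterson--Sullivan conditionals permit, so pairing against $g$ requires controlling $g$ on unstable plaques of size the injectivity radius.

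\emph{The space and quasi‑compactness.} I would build $\mc B$ as an anisotropic Banach space adapted to the $g_t$‑invariant splitting $T\mc X = E^u\oplus E^0\oplus E^s$: negative (dual‑Hölder or negative‑Sobolev) regularity transverse to $E^u$, positive regularity along $E^u$, mild regularity along the flow --- but with the unstable‑leaf norm defined through integration against the conditional measures $\mu^u$ of $\bms$ (the Patterson--Sullivan measures), and twisted by a weight growing toward the cusps. The first goal is a Lasota--Yorke / Doeblin--Fortet inequality for the resolvent: for $\Re s$ in compacta of $(-\s_0,\infty)$, $\|R(s)^n f\|_{\mc B}\le C\lambda^n\|f\|_{\mc B}+C_n\|f\|_{\mc B_w}$ with $\lambda<1$ and $\mc B_w$ a weaker norm with relatively compact unit ball in $\mc B$. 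By Hennion's theorem this bounds the essential spectral radius of $R(s)$, making $R(s)$ meromorphic with discrete spectrum outside that radius; ergodicity and mixing of $(g_t,\bms)$ --- already known in the geometrically finite case --- then identify the peripheral spectrum with the simple eigenvalue at $0$.

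\emph{The cancellation estimate via additive combinatorics.} The substantive point, beyond quasi‑compactness, is to force the essential spectral radius to be strictly subunital on and slightly left of the imaginary axis, i.e.\ $\s_0>0$. This reduces to a high‑frequency estimate for the transfer operator along unstable leaves: one must bound sums of oscillatory integrals $\int e^{2\pi i\,\xi\cdot\tau_n(\cdot)}\,\psi\, d\mu^u$ over the branches of the $n$‑th iterate, with $\tau_n$ the accumulated Busemann/temporal cocycle and $|\xi|$ large. In place of the classical Dolgopyat non‑concentration and (local) non‑integrability conditions, I would invoke the Fourier‑decay theorem of the paper: since $\G$ is non‑elementary (in fact Zariski‑dense), the Patterson--Sullivan measure does not concentrate near proper affine hyperplanes in the working coordinates on the unstable leaf, so --- through the intermediate fact that the $L^q$‑dimensions of its iterated self‑convolutions approach that of the ambient $\R^d$ --- $\widehat{\mu^u}$ decays polynomially off a sparse set of frequencies, and that sparse set is shown to contribute negligibly after summing over the exponentially many branches. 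This delivers the uniform contraction of the oscillatory part, hence the gap.

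\emph{The main obstacle: cusps.} I expect the crux to be the non‑compact, geometrically finite regime: the natural coding has an unbounded roof function, the Patterson--Sullivan measure has non‑constant local dimension near a rank‑$k$ cusp (governed by the exponent $2\d_\G-k$), and the hyperbolicity constants decay along cusp excursions --- all threatening both the Lasota--Yorke inequality and the oscillatory estimate. The remedy I would pursue is to bake into $\mc B$ a Lyapunov‑type weight growing toward the cusps, quantitatively matched to the recurrence statistics of $\bms$ in the thick part, so that mass escaping into a cusp is exponentially penalized and returns with definite contraction, and to redo the branch‑counting in the oscillatory bound with these weights. The resulting $\s_0$ --- and, in the convex cocompact case, the full meromorphic continuation to $\C$ obtained from the same machinery with trivial weight --- would then be controlled by the interplay of cusp ranks, $\d_\G$, and the Fourier‑decay exponent. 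With the gap established, the contour shift of the first step finishes the proof.
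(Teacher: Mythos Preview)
Your proposal is correct and follows essentially the same architecture as the paper: an anisotropic Banach space whose unstable-leaf norm is built from the Patterson--Sullivan conditionals and weighted by a Margulis function in the cusps, a Lasota--Yorke/Hennion argument for quasi-compactness of the resolvent, a Dolgopyat-type high-frequency contraction fed by the $L^2$-flattening/Fourier-decay theorem for PS measures, and a Paley--Wiener (Butterley) deduction of the exponential rate. The only refinements you have not anticipated are that the paper works directly on $G/\Gamma$ without any symbolic coding, and that in the non-real-hyperbolic case the oscillatory estimate must be carried out for the \emph{projection} of $\mu^u$ to the slow-expansion directions (the abelianization of $N^+$), which in turn forces a separate polynomial non-concentration estimate (Theorem~\ref{thm:friendly intro}) for PS measures near vertical subvarieties.
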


The results also hold for functions with unbounded support and controlled growth in the cusp; cf.~Section~\ref{sec:Butterley}.
Theorem~\ref{thm:intro mixing} follows immediately from the following more precise result showing that the correlation function admits a finite resonance expansion.

\begin{thm}\label{thm:intro strip}
There exists $\s>0$ such that the following holds.
There exist finitely many complex numbers $\l_1, \dots, \l_N$ with $-\s<\Re(\l_i)<0$, finite-rank projectors $\Pi_i$, and nilpotent matrices $\Ncal_i$ acting on the range of $\Pi_i$ for each $i$, such that
for all $f\in C_c^3(\mc{X})$ with $\int_\mc{X}f\;d\bms=0$, $g\in C_c^2(\mc{X})$ and $t\geq 0$, we have
\begin{align*}
     \int_\mc{X} f\circ g_t \cdot g\;d\bms =
     \sum_{i=1}^N e^{t\l_i} \int_{\mc{X}} g\cdot e^{t \Ncal_i} \Pi_i(f)\;d\bms
     +\norm{f}_{C^3} O_{g}\left(e^{-\s t}\right).
\end{align*}
The implicit constant depends on $g$ through its $C^2$-norm and the injectivity radius of its support.
\end{thm}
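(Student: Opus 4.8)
The plan is to realize the correlation function as a matrix coefficient of the resolvent of the generator of the geodesic flow on a well-chosen anisotropic Banach space, and then to read off the finite resonance expansion by a contour-shift argument; the bulk of the work lies in constructing the space and establishing the spectral gap, and the additive combinatorics enters there. Concretely, let $\Bcal$ be an anisotropic space of distributions on $\mathcal{X}$ --- smooth along the flow and unstable directions, of controlled negative order along the stable direction, with weights adapted to the cuspidal geometry and to the Patterson--Sullivan density --- and let $\mathcal{L}$ denote the generator of the Koopman semigroup $e^{t\mathcal{L}}\colon h\mapsto h\circ g_t$ on $\Bcal$ (which, following Butterley, one studies through the bounded resolvent $R(z)=(z-\mathcal{L})^{-1}$ for large $z$, i.e.\ through a mollified transfer operator, rather than through the unbounded $\mathcal{L}$ directly). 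The decisive property to be proven is a Lasota--Yorke / Dolgopyat-type resolvent inequality on $\Bcal$ forcing $\mathcal{L}$ to be quasi-compact: its essential spectrum is confined to $\{\Re(z)\leq-\s\}$ for some $\s>0$, so only finitely many eigenvalues $\lambda_1,\dots,\lambda_N$ --- the Pollicott--Ruelle resonances --- lie in the strip $-\s<\Re(\lambda_i)<0$, each with a finite-rank spectral projector $\Pi_i$ and nilpotent part $\Ncal_i=(\mathcal{L}-\lambda_i)\Pi_i$ on the finite-dimensional space $\Pi_i\Bcal$. Mixing of $\bms$, itself a consequence of these same estimates, ensures that the only spectrum on $\{\Re(z)=0\}$ is a simple eigenvalue at $0$ with projector $\Pi_0 h=\bms(h)\cdot 1$.

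Granting this, the rest is bookkeeping with $R(z)$. For $\Re(z)$ large, $R(z)=\int_0^\infty e^{-zt}e^{t\mathcal{L}}\,dt$ gives
\[
\widehat\rho_{f,g}(z):=\int_0^\infty e^{-zt}\Big(\int_{\mathcal{X}}f\circ g_t\cdot g\;d\bms\Big)\,dt=\langle g,\,R(z)f\rangle,
\]
where $\langle\cdot,\cdot\rangle$ is the duality between $\Bcal$ and its predual, which contains $C_c^2(\mathcal{X})$ with $\langle g,h\rangle=\int gh\,d\bms$ for continuous $h$ and norm controlled by $\norm{g}_{C^2}$ and $\inj(\supp g)$, and where $f\in C_c^3(\mathcal{X})\subset\Bcal$ with $\norm{f}_{\Bcal}\lesssim\norm{f}_{C^3}$. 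Hence $\widehat\rho_{f,g}$ continues meromorphically to $\{\Re(z)>-\s\}$ with poles only at $0$ and the $\lambda_i$, and the pole at $0$ disappears because $\bms(f)=0$. Inverting the Laplace transform, $\int_{\mathcal{X}}f\circ g_t\cdot g\,d\bms=\frac{1}{2\pi i}\int_{(c)}e^{zt}\widehat\rho_{f,g}(z)\,dz$ for $c>0$, and shifting the contour left past all the $\lambda_i$ onto a vertical line $\Re(z)=-\s'$ inside the essential gap ($\max_i|\Re(\lambda_i)|<\s'<\s$, with $-\s'$ avoiding the $\Re(\lambda_i)$), the residue theorem picks up $\sum_i\mathrm{Res}_{z=\lambda_i}\big(e^{zt}\langle g,R(z)f\rangle\big)$. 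Inserting the Laurent expansion $R(z)=\sum_{k\geq1}(z-\lambda_i)^{-k}\Ncal_i^{\,k-1}\Pi_i+(\text{holomorphic near }\lambda_i)$ and using $\mathrm{Res}_{z=\lambda_i}\big(e^{zt}(z-\lambda_i)^{-k}\big)=t^{k-1}e^{\lambda_i t}/(k-1)!$ turns each such residue into $e^{\lambda_i t}\langle g,\,e^{t\Ncal_i}\Pi_i(f)\rangle=e^{\lambda_i t}\int_{\mathcal{X}}g\cdot e^{t\Ncal_i}\Pi_i(f)\,d\bms$ --- a finite sum, $\Ncal_i$ being nilpotent, the pairing understood distributionally --- which is exactly the asserted main term.

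It then remains to bound the shifted integral $\frac{1}{2\pi i}\int_{(-\s')}e^{zt}\widehat\rho_{f,g}(z)\,dz$ by $O_g(e^{-\s' t})\norm{f}_{C^3}$, after which renaming $\s'$ as $\s$ finishes the proof. This needs polynomial-in-$|\Im z|$ control of $\norm{R(z)}_{\Bcal}$ on the line $\Re(z)=-\s'$ --- a by-product of the Lasota--Yorke estimates --- together with enough decay in $t$ and integrability in $\Im z$. Both are obtained by integrating by parts in $t$ before inverting: the identity $z\,\widehat\rho_{f,g}(z)=\int_{\mathcal{X}}f\cdot g\,d\bms+\widehat\rho_{\mathcal{L}f,\,g}(z)$ trades each power of $z$ for a flow-derivative of $f$, and $f\in C_c^3$ supplies exactly the margin needed to make the vertical integral converge absolutely with the stated bound and with $g$-dependence only through $\norm{g}_{C^2}$ and $\inj(\supp g)$.

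The real difficulty is everything upstream of this formal scheme: constructing $\Bcal$ and proving the Lasota--Yorke / Dolgopyat inequality responsible for the essential spectral gap. Since $\bms$ is a fractal measure carried by the non-wandering set rather than by Lebesgue measure, the gap cannot come from a smoothing/compactness mechanism alone; it requires genuine oscillatory cancellation in iterates of the transfer operator --- a Dolgopyat-type estimate --- and this is precisely where the paper's Fourier-decay input for the Patterson--Sullivan measure (oscillatory integrals $\int e^{i\,\xi\cdot\Phi}\,d\ps$ decaying polynomially away from a sparse frequency set, itself a consequence of the additive-combinatorial non-concentration and $L^q$-dimension results in the abstract) must be brought to bear. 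A second, independent obstacle is non-compactness: when $\mathcal{X}$ has cusps the transfer operator fails to be quasi-compact on a naive space, so one needs the weighted norms together with escape-of-mass (Dirichlet-type) estimates controlling returns near the cusps and the enlarged class of controlled-growth observables; this is also why the resolvent here extends only to a strip of finite width, whereas in the convex cocompact case the non-wandering set is compact, the argument can be bootstrapped to arbitrarily wide strips, and one obtains the meromorphic continuation to all of $\mathbb{C}$ noted in the abstract.
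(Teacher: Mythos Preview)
Your overall strategy matches the paper's: build an anisotropic Banach space adapted to the BMS measure and the cusp geometry, prove a Lasota--Yorke inequality for the resolvent giving an essential spectral bound, prove a Dolgopyat-type estimate via the Fourier decay of Patterson--Sullivan measures (which is where the additive combinatorics enters), use mixing to isolate the simple eigenvalue at $0$, and then read off the resonance expansion. You also correctly identify why cusps force weights and limit the strip width.

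The one place you diverge is the final deduction. You propose a direct Laplace-inversion/contour-shift argument, asserting that polynomial-in-$|\Im z|$ bounds on $\norm{R(z)}_\Bcal$ along the shifted line $\Re(z)=-\s'$ are ``a by-product of the Lasota--Yorke estimates.'' They are not: Lasota--Yorke gives only the essential spectral radius, which does not by itself prevent resonances from accumulating as $|\Im z|\to\infty$ nor yield resolvent growth control on a line with negative real part. The paper's Dolgopyat bound (Theorem~\ref{thm:Dolgopyat}, Corollary~\ref{cor:combine Dolgopyat bound}) is established only for a fixed $\Re(z)=a_\star>0$ and only for a specific power $R(z)^q$ with $q\asymp\log|\Im z|$, not for $R(z)$ itself on a shifted contour. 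Rather than transfer this across the strip, the paper invokes Butterley's abstract Paley--Wiener theorem (Theorem~\ref{thm:Butterley}), which is designed to take exactly these hypotheses---strong continuity, weak Lipschitz, essential spectral bound, and the $q$-th power Dolgopyat bound at positive real part---and output the operator decomposition $\Lcal_t=\Pcal_t+\sum_i e^{t\lambda_i}e^{t\Ncal_i}\Pi_i$ with $\norm{\Pcal_t f}'_1\ll e^{-\s t}\norm{\mf{X}f}^\star_1$ directly, without ever shifting a contour. Pairing against $g$ then goes through Lemma~\ref{lem:norm controls correlations}. Your scheme can be made to work, but the step you label routine is precisely the one Butterley's theorem is built to handle.
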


\begin{remark}
    The constant $\s$ in Theorem~\ref{thm:intro strip} depends only on non-concentration parameters of Patterson-Sullivan (PS) measures near proper generalized sub-spheres of the boundary at infinity; cf.~Corollary~\ref{cor:aff non-conc of projections} for details.
    In particular, Theorem~\ref{thm:intro strip} implies that
    $\s$ does not change if we replace $\G$ with a finite index subgroup.
    The interested reader is referred to~\cite{MageeNaud-IHES,MageeNaud-Alon} for recent developments on a closely related problem yielding uniform resonance-free regions for the Laplacian operator on random covers of convex cocompact hyperbolic surfaces.

\end{remark} 

The ``eigenvalues" $\l_i$ above are known as \emph{Pollicott-Ruelle resonances}.
Theorem~\ref{thm:intro mixing} follows from the above result by taking $\s_0$ to be the absolute value of the largest real part of the $\l_i$'s. 
The reader is referred to Section~\ref{sec:Butterley} for a more precise discussion of the Banach spaces on which the operators $\Pi_i$ live.

Given two bounded functions $f$ and $g$ on $\mc{X}$, the associated correlation function is defined by
\begin{align*}
    \rho_{f,g}(t) := \int_\mc{X} f\circ g_t \cdot g\;d\bms, \qquad t\in\R.
\end{align*}
Its (one-sided) Laplace transform is defined for any $z\in \C$ with positive real part $\Re(z)$ as follows:
\begin{align*}
    \hat{\rho}_{f,g}(z) := \int_0^\infty e^{-zt} \rho_{f,g}(t)\;dt.
\end{align*}
Theorem~\ref{thm:intro strip} implies that, for suitably smooth $f$ and $g$, $\hat{\rho}_{f,g}$ admits a meromorphic continuation to the half plane $\Re(z)>-\s$ with the only possible poles occurring at $\set{\l_i}$.

Our analysis also yields the following result.
Let $\d_\G$ denote the critical exponent of $\G$ and define
\begin{align}\label{eq:sigma(Gamma)}
    \s(\G) := \begin{cases} \infty, & \text{if } \G \text{ is convex cocompact}, \\
    \min\set{\d_\G, 2\d_\G-k_{\max}, k_{\min}}, & \text{otherwise},
    \end{cases}
\end{align}
where $k_{\max}$ and $k_{\min}$ denote the maximal and minimal ranks of parabolic fixed points of $\G$ respectively; cf.~Section~\ref{sec:global measure formula} for the definition of the rank of a cusp.

\begin{thm}\label{thm:intro meromorphic}
Let $r\in \N$.
For all $f,g\in C_c^{r+2}(\mc{X})$, $\hat{\rho}_{f,g}$ is analytic in the half plane $\Re(z)>0$ and admits a meromorphic continuation to the half plane:
\begin{equation*}
    \Re(z)> -\min\set{r,\s(\G)/2},
\end{equation*}
with $0$ being the only pole on the imaginary axis.
In particular, when $\G$ is convex cocompact and $f,g\in C_c^\infty(\mc{X})$, $\hat{\rho}_{f,g}$ admits a meromorphic extension to the entire complex plane.

\end{thm}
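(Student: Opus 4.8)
The plan is to realize $\hat{\rho}_{f,g}$ as a matrix coefficient of the resolvent of the infinitesimal generator $X$ of the geodesic flow acting on the anisotropic Banach spaces $\Bcal_r$ built in this paper (cf.\ Section~\ref{sec:Butterley}), and then to read off the meromorphic continuation from the essential spectral gap. Analyticity on $\set{\Re(z)>0}$ is immediate and has nothing to do with the dynamics: since $\bms$ is a probability measure and $f,g$ are bounded, $|\rho_{f,g}(t)|\le \norm{f}_\infty\norm{g}_\infty$, so the integral defining $\hat{\rho}_{f,g}(z)$ converges absolutely and locally uniformly on $\set{\Re(z)>0}$.

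The substance of the argument is the identity, valid for $\Re(z)>0$,
\[
\hat{\rho}_{f,g}(z)=\int_0^\infty e^{-zt}\langle \mc{L}_t f,\,g\rangle_{\bms}\,dt=\big\langle (z-X)^{-1}f,\,g\big\rangle_{\bms},
\]
where $\mc{L}_t f:=f\circ g_t$. To make sense of this I would first record that $(\mc{L}_t)_{t\ge 0}$ is a strongly continuous semigroup on $\Bcal_r$ with generator $X$ and with $\sup_{t\ge 0}\norm{\mc{L}_t}_{\Bcal_r}<\infty$; that $C_c^{r+2}(\mc{X})$ embeds continuously into $\Bcal_r$; and that integration against $\bms$ paired with a test function $g\in C_c^{r+2}(\mc{X})$ extends to a bounded functional on $\Bcal_r$ --- the loss of two derivatives absorbing the regularity needed to lie in the strong space together with the order of the (merely H\"older) measure $\bms$. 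Granting these, the Laplace-transform identity follows formally, and the right-hand side continues wherever $(z-X)^{-1}$ does. By the essential spectral gap established earlier in the paper, $X\colon\Bcal_r\to\Bcal_r$ has spectrum in $\set{\Re(z)\le 0}$, and in $\set{\Re(z)>-\min\set{r,\s(\G)/2}}$ this spectrum consists of finitely many eigenvalues of finite algebraic multiplicity --- the Pollicott--Ruelle resonances. Hence $(z-X)^{-1}$ extends meromorphically from $\set{\Re(z)>0}$ to that half-plane with poles exactly at the resonances, and pairing against $f$ and $g$ yields the claimed continuation of $\hat{\rho}_{f,g}$.

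It remains to treat the imaginary axis. Ergodicity of $(g_t,\bms)$ forces the resonance at $0$ to be simple, with rank-one spectral projector $\Pi_0\phi=\big(\int_{\mc{X}}\phi\,d\bms\big)\mbf{1}$; thus near $z=0$ one has $(z-X)^{-1}f=z^{-1}\big(\int_{\mc{X}}f\,d\bms\big)\mbf{1}+O(1)$, whence
\[
\hat{\rho}_{f,g}(z)=\frac{\big(\int_{\mc{X}}f\,d\bms\big)\big(\int_{\mc{X}}g\,d\bms\big)}{z}+(\text{holomorphic near }0).
\]
For $z=ib$ with $b\ne 0$, a resonant state $u\in\Bcal_r$ solving $Xu=ibu$ satisfies $\mc{L}_tu=e^{ibt}u$; invoking the same input that underlies Theorems~\ref{thm:intro mixing}--\ref{thm:intro strip}, namely the classical (non-effective) mixing of $(g_t,\bms)$, the flow is weakly mixing and admits no nonconstant measurable eigenfunction, and a standard argument identifying an imaginary-axis resonant state with such an eigenfunction forces $b=0$. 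So $0$ is the only possible pole on $i\R$. Finally, when $\G$ is convex cocompact, $\s(\G)=\infty$ and there are no cusps, so $C_c^\infty(\mc{X})\subset\Bcal_r$ for \emph{every} $r\in\N$; for $f,g\in C_c^\infty(\mc{X})$ the construction above then produces a meromorphic continuation of $\hat{\rho}_{f,g}$ to $\set{\Re(z)>-r}$ for each $r$, and since all of these restrict to $\hat{\rho}_{f,g}$ on $\set{\Re(z)>0}$, uniqueness of meromorphic continuation patches them into a single meromorphic function on $\C$.

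The genuinely delicate point --- everything else being either soft complex analysis or part of the machinery already in place --- is the resolvent identity and the space-theoretic bookkeeping behind it: one must verify that $C_c^{r+2}(\mc{X})$ really sits inside the strong anisotropic space, that pairing such functions against $\bms$ defines a bounded functional on $\Bcal_r$, and that the Laplace transform commutes with this pairing with exactly the stated two-derivative loss. Once that is settled, the theorem is a formal consequence of the essential spectral gap and of classical mixing.
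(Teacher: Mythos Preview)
Your proposal is correct and follows essentially the same route as the paper: write $\hat{\rho}_{f,g}(z)=\int R(z)(f)\,g\,d\bms$ (the paper's Lemma~\ref{lem:norm controls correlations} is exactly the statement that pairing with $g$ is a bounded functional), invoke the essential spectral gap of Theorem~\ref{thm:resolvent spectrum2} to meromorphically continue, and use mixing for the imaginary axis. Two small points: the spectrum of $X$ in the strip is a \emph{discrete} set of finite-multiplicity eigenvalues, not a finite one; and your imaginary-axis step is thinner than the paper's --- elements of $\Bcal_r$ are not functions on $\mc{X}$, so ``identifying a resonant state with a measurable eigenfunction'' needs justification. The paper's Lemma~\ref{lem:spectrum on imaginary axis} instead proves the natural map $\Bcal_1\to C_c^2(X)^\ast$ is injective and then uses mixing to show any eigenfunctional on the imaginary axis is a multiple of $\bms$.
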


Theorem~\ref{thm:intro meromorphic} is deduced from an analogous result on the meromorphic continuation of the family of resolvent operators $z\mapsto R(z)$,
\begin{align}\label{eq:resolvent intro}
    R(z):=\int_0^\infty e^{-zt} \Lcal_t \;dt:C_c(\mc{X}) \to C(\mc{X}) ,
\end{align}
defined initially for $z$ with large enough $\Re(z)$,
where $\Lcal_t$ is the transfer operator given by $f\mapsto f\circ g_t$; cf.~Theorem~\ref{thm:resolvent spectrum2} for a precise statement. 
Analogous results regarding resolvents were obtained for Anosov flows in~\cite{GiuliettiLiveraniPollicott} and Axiom A flows in~\cite{DyatlovGuillarmou-long,DyatlovGuillarmou-short} leading to a resolution of a conjecture of Smale on the meromorphic continuation of the Ruelle zeta function; cf.~\cite{Smale}.
We refer the reader to~\cite{GiuliettiLiveraniPollicott} for a discussion of the history of the latter problem.

\subsection{$L^q$-flattening of measures on $\R^d$ under convolution}

The key new ingredient in our proof of Theorem~\ref{thm:intro mixing} is the statement that the conditional measures of the BMS measure along the strong unstable foliation enjoy polynomial Fourier decay outside of a very sparse set of frequencies; cf.~Corollary~\ref{cor:flattening intro}.

The key step in the proof is an $L^q$-flattening result for convolutions of measures on $\R^d$ of independent interest. Roughly speaking, it states that the $L^q$-dimension (Def.~\ref{def:L2 dim}) of a measure $\mu$ improves under iterated self-convolutions unless $\mu$ is concentrated near proper affine hyperplanes in $\R^d$ at many scales.
The proof of this result provided in Section~\ref{sec:flattening} can be read independently of the rest of the article.

We formulate here a special case of our results under the following non-concentration condition and refer the reader to Definition~\ref{def:aff non-conc} for a much weaker condition under which these results hold.

We need some notation before stating the result. Let $\Dcal_k$ denote the dyadic partition of $\R^d$ by translates of the cube $2^{-k}[0,1)^d$ by $2^{-k}\Z^d$.
We recall the notion of $L^q$-dimension of measures.

\begin{definition}\label{def:L2 dim}
    For $q>1$, the \textit{$L^q$-dimension} of a Borel probability measure $\mu$ on $\R^d$, denoted $\dim_q \mu$, is defined to be
    \begin{align*}
        \dim_q\mu : 
        =\liminf_{k\to\infty} \frac{-\log_2 \sum_{P\in \Dcal_k} \mu(P)^q}{(q-1) k}.
    \end{align*}
    The \textit{Frostman exponent} of $\mu$, denoted $\dim_\infty \mu$, is defined to be
    \begin{align*}
    \dim_\infty \mu :=
    \liminf_{k\to \infty} \frac{\log_2 \max_{P\in\Dcal_k} \mu(P)}{-k}.
\end{align*}
\end{definition}

We say that Borel measure $\mu$ on $\R^d$ is \textit{uniformly affinely non-concentrated} if for every $\e>0$, there exists $\d(\e)>0$ so that $\d(\e)\to 0$ as $\e\to 0$ and for all $x\in \mrm{supp}(\mu)$, $0<r\leq 1$, and every affine hyperplane $W<\R^d$, we have
        \begin{align}\label{eq:uniform affine non-conc}
            \mu( W^{(\e r)} \cap B(x,r)) \leq \d(\e) \mu(B(x,r)),
        \end{align}
        where $W^{(r)}$ and $B(x,r)$ denote the $r$-neighborhood of $W$ and the $r$-ball around $x$ respectively.

The following is our main result on flattening under convolution with non-concentrated measures.

\begin{thm}\label{thm:ellq improvement}
    Let $1<q<\infty$ and $\eta>0$ be given.
    Then, there exists $\e=\e(q,\eta)>0$ such that if $\mu$ is any compactly supported Borel probability measure on $\R^d$ which is uniformly affinely non-concentrated, then
    \begin{align*}
        \dim_q (\mu\ast \nu) > \dim_q\nu +\e,
    \end{align*}
    for every compactly supported probability measure $\nu$ on $\R^d$ with $\dim_q \nu \leq d-\eta$.

   In particular, $\dim_\infty\mu^{\ast n}$ converges to $d$ at a rate depending only on the non-concentration parameters of $\mu$,
   and, hence, the same holds for $\dim_q \mu^{\ast n}$ for all $q>1$.
\end{thm}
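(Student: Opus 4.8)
The plan is to prove the $L^q$-flattening statement by an iterative scheme and then extract the consequences about $\mu^{\ast n}$. The heart of the matter is the inequality $\dim_q(\mu \ast \nu) > \dim_q \nu + \e$ whenever $\dim_q \nu \le d - \eta$; the ``in particular'' clause is then a bootstrap. Let me sketch both.

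For the main inequality, I would work at a fixed dyadic scale $2^{-k}$ and translate the statement into a statement about $\ell^q$-norms of the discretized measures, writing $\|\mu_k\|_q^q = \sum_{P \in \Dcal_k}\mu(P)^q$, so that $\dim_q \mu = \liminf_k \frac{-\log_2 \|\mu_k\|_q^q}{(q-1)k}$. The convolution structure suggests using a multiscale / tree decomposition: break the scale $[1, 2^{-k}]$ into $O(1)$-many sub-bands of comparable logarithmic length $s$, and on each band either $\nu$ ``gains dimension'' at the expected rate, or $\nu$ is concentrated near an affine hyperplane at that band of scales. If $\nu$ behaves generically on a definite proportion of bands, then one gets the improvement essentially from the trivial bound $\dim_q(\mu\ast\nu) \ge \dim_q\nu$ applied scale-by-scale together with an honest gain on the good bands. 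The key analytic input on a single good band is a statement of the form: if $\mu$ is affinely non-concentrated and $\nu$ is not too concentrated on hyperplanes at scale $s$, then $\|(\mu\ast\nu)_k\|_q$ is smaller than $\|\nu_k\|_q$ by a factor $2^{-\e s}$. This is an asymmetric Balog--Szemerédi--Gowers / Plünnecke-type phenomenon in the fractal setting: convolving with a measure that spreads in every direction (because it charges no hyperplane neighborhood) must reduce $\ell^q$-concentration. I would prove it by a dyadic pigeonholing argument reducing to near-uniform ``$\delta$-sets,'' applying an inverse theorem (either a Freiman-type structure theorem for approximate groups in $\R^d$, or more likely a direct argument using the affine non-concentration to run an entropy/submodularity inequality à la Hochman's inverse theorem for entropy of convolutions). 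The remaining ``bad'' case — $\nu$ concentrates near hyperplanes at almost every band — must be ruled out, and this is where I expect to use that $\dim_q \nu \le d - \eta$ is a hypothesis on $\nu$ only at the final scale but the concentration is forced at all scales, leading to a contradiction with, e.g., a lower bound on $\dim_q\nu$ coming from the non-concentration of $\mu$ after a single convolution, or by observing that total hyperplane-concentration at all scales would force $\dim_q\nu$ to be an integer $\le d-1$, against the flexibility we have in choosing which convolution factor plays which role.

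For the ``in particular'' statement, once the main inequality is in hand the argument is short. Fix $q \in (1,\infty)$ and set $a_n := \dim_q \mu^{\ast n}$. Since $\mu$ is uniformly affinely non-concentrated, $\dim_q \mu^{\ast 2} > \dim_q \mu + \e(q,\eta_1)$ for a suitable $\eta_1$ depending only on the non-concentration profile $\d(\cdot)$ of $\mu$; more generally, as long as $a_n \le d - \eta$, we get $a_{n+1} = \dim_q(\mu \ast \mu^{\ast n}) \ge a_n + \e(q,\eta)$ with $\e(q,\eta)$ depending only on $q$, $\eta$ and the profile of $\mu$. Hence the sequence $a_n$ cannot stay below $d - \eta$ for more than $\lceil d/\e(q,\eta)\rceil$ steps; letting $\eta \to 0$ and using that $\dim_q \mu^{\ast n} \le d$ always, we conclude $a_n \to d$, with a quantitative rate governed entirely by $\e(q,\cdot)$ and hence only by the non-concentration parameters of $\mu$. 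The Frostman-exponent ($q = \infty$) case follows because $\dim_\infty \ge \dim_q - (\text{something} \to 0)$; more precisely, for any $q < \infty$ one has $\dim_q \mu \ge \dim_\infty \mu$ (higher $q$ is more sensitive to concentration but $\dim_q$ is monotone so one must be careful with the direction), and conversely $\dim_\infty \mu^{\ast n} \ge \dim_q \mu^{\ast n} - \frac{(q-1)}{?}$ — rather than chase constants here I would instead note that $\mu^{\ast 2n} = \mu^{\ast n} \ast \mu^{\ast n}$ and apply a Young-type inequality: $\|\mu^{\ast 2n}\|_\infty \le \|\mu^{\ast n}\|_{q'}\,\|\mu^{\ast n}\|_{q}$ at each scale, which upgrades $L^q$ flattening to a Frostman bound for the doubled convolution power, and then the general $L^q$ statement for $q > 1$ follows from interpolation between the $L^\infty$ bound and the trivial $L^1$ normalization.

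The main obstacle, I expect, is the single-band gain estimate in the ``good'' case — i.e., making precise and quantitative the assertion that affine non-concentration of $\mu$ forces a definite drop in the $\ell^q$-concentration of $\mu \ast \nu$ relative to $\nu$ at an intermediate scale. This is essentially a multidimensional, fractal analogue of the Bourgain--Gamburd / Bourgain--Glibichuk $L^2$-flattening machinery, and the difficulty is that in $\R^d$ the obstruction to flattening is concentration near any proper affine subspace, not just a subgroup, so one needs an inverse theorem robust enough to detect this. I would attack it via Hochman's entropy-increment method: work with Shannon entropies of $\mu, \nu$ at scales $2^{-k}$, use the entropy-of-convolution inequalities and a multi-scale pigeonhole to locate a scale where $\nu$ is ``uniform,'' then invoke the affine non-concentration of $\mu$ to beat the trivial entropy bound there, and finally convert the entropy gain back to an $L^q$-dimension gain via the standard comparison between $L^q$-dimensions and scale-localized entropies (valid up to losses that vanish as the number of scales grows). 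Controlling the dependence of all constants only on $q$, $\eta$, and the function $\d(\cdot)$ — and not on $\mu$ itself or on $\nu$ at all beyond its $L^q$-dimension bound — is the delicate bookkeeping point that the whole theorem hinges on.
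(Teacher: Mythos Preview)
Your proposal correctly identifies the main ingredients --- the asymmetric Balog--Szemer\'edi--Gowers lemma and Hochman's inverse theorem for entropy of convolutions, combined with a multiscale argument exploiting affine non-concentration --- and this is indeed the paper's approach. (The paper carries out the argument in detail for $q=2$ via a discretized flattening proposition, Proposition~\ref{prop:discretized flattening}, and defers the general-$q$ deduction to Rossi--Shmerkin.)

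However, your dichotomy has the roles of $\mu$ and $\nu$ reversed at the crucial step, and this is not merely cosmetic. In Hochman's inverse theorem, when $H_{kr}(\mu\ast\nu) < H_{kr}(\nu) + \sigma$, the conclusion is that at most scales $i$ the component measures of $\mu$ are \emph{concentrated} near a subspace $V_i$, while the component measures of $\nu$ are \emph{saturated} along $V_i$ (i.e., already have nearly full entropy in those directions). The argument then splits on whether $V_i = \R^d$: if $V_i = \R^d$ at most scales, saturation forces $\nu'$ to have entropy (hence dimension) close to $d$, contradicting the bound on $|A|$ coming from $\dim_q\nu \le d-\eta$; therefore at a definite proportion of scales $V_i$ is proper, and at those scales the \emph{concentration of $\mu$} (more precisely, of a piece $B'$ of non-negligible $\mu$-mass) near proper affine subspaces contradicts the ANC hypothesis via an induction on scales. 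Your proposed mechanism --- that the bad case is ``$\nu$ concentrates near hyperplanes at almost every band'' and that this is ruled out because it would force integer dimension --- does not match the structure Hochman's theorem actually delivers and would not close the argument. The contradiction must be with the ANC hypothesis on $\mu$, using that $\mu$'s components are concentrated; the only role of the hypothesis on $\nu$ is to rule out full saturation ($V_i=\R^d$). You do say in your final paragraph that you would ``invoke the affine non-concentration of $\mu$'', so the instinct is there, but the logical flow needs to be reorganized around the concentrated/saturated pairing rather than a good-band/bad-band dichotomy for $\nu$.

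For the ``in particular'' clause, the paper's route is cleanest: the $L^2$ flattening $\norm{\mu_k^{\ast n}}_2^2 \ll 2^{-(d-\e)k}$ together with Young's inequality at each scale gives $\sup_{P\in\Dcal_k}\mu^{\ast 2n}(P) \ll 2^{-(d-\e)k}$, hence $\dim_\infty \mu^{\ast 2n} \ge d-\e$; the inequality $\dim_q \ge \dim_\infty$ for $q>1$ then handles all finite $q$. Your bootstrap via iterating the first assertion also works, and the upgrade from $\dim_q$ to $\dim_\infty$ does go through Young's inequality as you eventually note --- the monotonicity of $\dim_q$ in $q$ indeed runs the wrong direction for that purpose, as you suspected.
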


\begin{remark}
    We refer the reader to Section~\ref{sec:flattening} where a quantitative form of Theorem~\ref{thm:ellq improvement} is obtained under a much weaker \emph{non-uniform} non-concentration condition; cf.~Def.~\ref{def:aff non-conc}.
This quantitative form is necessary for our applications and the weaker hypothesis is essential in the presence of cusps.
   
\end{remark}

The $L^2$-dimension case of Theorem~\ref{thm:ellq improvement} has the following immediate corollary asserting that the Fourier transform of affinely non-concentrated measures enjoys polynomial decay outside of a very sparse set of frequencies. 

\begin{cor}\label{cor:flattening intro}
Let $\mu$ be as in Theorem~\ref{thm:ellq improvement} and denote by $\hat{\mu}$ its Fourier transform.
Then, for every $\e>0$, there is $\t>0$, depending only on the non-concentration parameters of $\mu$, such that for all $T\geq 1$,
\begin{align*}
    \left|\set{\norm{\xi}\leq T: |\hat{\mu}(\xi)|>T^{-\t}}\right| \leq C_{\e,\mu} T^\e,
\end{align*}
where $|\cdot|$ denotes the Lebesgue measure on $\R^d$, and $C_{\e,\mu}\geq 1$ is a constant depending on $\e$, the diameter of the support of $\mu$, and its non-concentration parameters.
\end{cor}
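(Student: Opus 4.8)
The plan is to derive the estimate from the $L^2$-dimension case ($q=2$) of Theorem~\ref{thm:ellq improvement}, which supplies a sequence $\g(n)\downarrow 0$, depending only on the non-concentration parameters of $\mu$, such that $\dim_2\mu^{\ast n}\geq d-\g(n)$ for all $n$. The bridge between the $L^2$-dimension and the Fourier transform is the elementary dictionary: for any compactly supported probability measure $\nu$ on $\R^d$, any $T\geq 1$, and $k:=\lceil\log_2 T\rceil$,
\begin{align*}
    \int_{\norm{\xi}\leq T}|\widehat{\nu}(\xi)|^2\;d\xi \;\ll_d\; T^d\sum_{P\in\Dcal_k}\nu(P)^2 .
\end{align*}
One proves this by dominating $\mathbbm{1}_{\norm{\xi}\leq T}$ by $\Psi(\xi/T)$ for a fixed Fej\'er-type kernel $\Psi\geq 0$ with $\Psi\geq 1$ on the unit ball and $\Psi=\widehat{\Phi}$ for some nonnegative $\Phi$ supported in a ball of radius $\asymp 1$ (take $\Phi=\psi\ast\psi$ with $\psi\in C_c^\infty$ nonnegative and even, and rescale), then rewriting the left side via Parseval as $\iint \Phi_T(x-y)\,d\nu(x)\,d\nu(y)$ with $\Phi_T(x)=T^d\Phi(Tx)$, and bounding $\nu\times\nu(\set{|x-y|\ll 2^{-k}})\ll_d\sum_{P\in\Dcal_k}\nu(P)^2$ by a standard covering argument. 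Crucially, the implied constant here is absolute once $d$ is fixed; it depends neither on $\nu$ nor on its support.

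Next I would fix $\e>0$ and choose $n=n(\e)$ with $\g(n)<\e/4$, so that $\dim_2\mu^{\ast n}\geq d-\e/4$; this choice depends only on $\e$ and the non-concentration parameters of $\mu$. By Definition~\ref{def:L2 dim} there is $k_0=k_0(\e,\mu)$ with $\sum_{P\in\Dcal_k}\mu^{\ast n}(P)^2\leq 2^{-k(d-\e/2)}$ for all $k\geq k_0$. Feeding this into the displayed inequality applied to $\nu=\mu^{\ast n}$ and using $\widehat{\mu^{\ast n}}=\widehat{\mu}^{\,n}$ gives, for $T\geq 2^{k_0}$,
\begin{align*}
    \int_{\norm{\xi}\leq T}|\widehat{\mu}(\xi)|^{2n}\;d\xi \;=\; \int_{\norm{\xi}\leq T}|\widehat{\mu^{\ast n}}(\xi)|^2\;d\xi \;\ll_{d,\e}\; T^d\cdot T^{-(d-\e/2)} \;=\; T^{\e/2}.
\end{align*}
Setting $\t=\t(\e):=\e/(4n)$ and applying Chebyshev's inequality (if $|\widehat{\mu}(\xi)|>T^{-\t}$ then $|\widehat{\mu}(\xi)|^{2n}>T^{-\e/2}$), we obtain for $T\geq 2^{k_0}$
\begin{align*}
    \left|\set{\norm{\xi}\leq T:\ |\widehat{\mu}(\xi)|>T^{-\t}}\right| \;\leq\; T^{\e/2}\int_{\norm{\xi}\leq T}|\widehat{\mu}(\xi)|^{2n}\;d\xi \;\ll_{d,\e}\; T^{\e};
\end{align*}
for $1\leq T<2^{k_0}$ the left side is bounded by the volume of $B(0,2^{k_0})$, which is absorbed into $C_{\e,\mu}$.

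This gives exactly the claimed estimate: $\t$ depends only on $\e$ and (through $n(\e)$ and the rate $\g$) the non-concentration parameters of $\mu$, while $C_{\e,\mu}$ picks up a dependence on $\diam{\supp\mu}$ only through the threshold $k_0$ at which the $\dim_2$-asymptotic for $\mu^{\ast n}$ becomes effective (for $2^{-k}$ larger than the diameter, $\mu^{\ast n}$ is concentrated on a single scale and the dyadic sums carry no dimensional information, so $k_0\gtrsim\log\diam{\supp\mu}$). The only genuinely substantive input is Theorem~\ref{thm:ellq improvement}; the remaining steps are soft Fourier analysis, and the sole point requiring attention is to keep the dependence of $n$ and $k_0$ transparent so that $\t$ comes out independent of the diameter of the support.
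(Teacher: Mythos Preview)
Your proof is correct and follows essentially the same route as the paper: obtain the moment bound $\int_{\norm{\xi}\leq T}|\hat\mu(\xi)|^{2n}\,d\xi\ll T^{\e/2}$ from the $L^2$-flattening of $\mu^{\ast n}$ via the standard Fourier/energy dictionary, then apply Chebyshev. The paper in fact proves a slightly stronger covering statement (the bad frequencies can be covered by $O(T^\e)$ unit balls) by combining Chebyshev with the Lipschitz property of $\hat\mu$, but for the Lebesgue-measure bound in Corollary~\ref{cor:flattening intro} your direct Chebyshev step is exactly what is needed.
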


\begin{remark}\label{rem:examples+expanded ball}
\begin{enumerate}
    \item A large class of dynamically defined measures, which includes self-conformal measures, is known to be affinely non-concentrated; cf.~\cite[Proposition 4.7 and Corollary 4.9]{RossiShmerkin} for measures on the real line and the results surveyed in~\cite[Section 1.3]{Dasetal} for measures in higher dimensions under suitable irreducibility hypotheses\footnote{The results referenced in~\cite{Dasetal} require the open set condition, while~\cite{RossiShmerkin} does not.}.
    In particular, Theorem~\ref{thm:ellq improvement} applies to these measures generalizing prior known special cases for certain self-similar measures on $\R$ by different methods; cf.~\cite{FengLau,MosqueraShmerkin}.

    \item In~\cite{BaYu}, it was observed that the proofs of Theorem~\ref{thm:ellq improvement}, and its quantitative form Theorem~\ref{thm:quant ellq improvement}, go through under the following weaker form of~\eqref{eq:uniform affine non-conc} allowing the ball on the right side to have a larger radius:
    \begin{align}\label{eq:expanded ball affine non-conc}
            \mu( W^{(\e r)} \cap B(x,r)) \leq \d(\e) \mu(B(x,cr)),
    \end{align}
    where $c\geq 1$ is a fixed constant. 
    This property holds for instance for certain self-similar measures which do not satisfy~\eqref{eq:uniform affine non-conc}, e.g.~in the absence of separation conditions.

    \item Our proof in fact shows that Theorem~\ref{thm:ellq improvement} holds for \textit{projections} of non-concentrated measures; cf.~Theorem~\ref{thm:quant ellq improvement} and Corollary~\ref{cor:flattening}.
    Beyond the intrinsic interest in the study of projections of fractal measures, this stronger form is essential in our proof of exponential mixing outside the case of real hyperbolic spaces;
    cf.~Section~\ref{sec:outline} for further discussion. 
\end{enumerate}
        
\end{remark}

Corollary~\ref{cor:flattening intro} generalizes the work of Kaufman~\cite{Kaufman} and Tsujii~\cite{Tsujii-selfsimilar} for self-similar measures on $\R$ by different methods.
Theorem~\ref{thm:ellq improvement} was obtained for measures on the real line by Rossi and Shmerkin in~\cite{RossiShmerkin} under the uniform non-concentration hypothesis above. 
Their work builds crucially on a $1$-dimensional inverse theorem due to Shmerkin in~\cite{Shmerkin-Furstenberg} which was the key ingredient in his groundbreaking solution of Furstenberg's intersection conjecture.
Proposition~\ref{prop:discretized flattening} can be regarded as a higher dimensional substitute for Shmerkin's inverse theorem.
A similar higher dimensional inverse theorem for $L^q$-dimension was announced by Shmerkin in his ICM survey~\cite[Section 3.8.3]{Shmerkin-ICM}.

In Section~\ref{sec:friendly}, we show that Corollary~\ref{cor:flattening intro} applies to PS measures when $\mc{X}$ is real hyperbolic (and to certain projections of these measures in the other cases, see discussion in Section~\ref{sec:outline} below).

For convex cocompact hyperbolic surfaces, Bourgain and Dyatlov showed that PS measures in fact have polynomially decaying Fourier transform~\cite{BourgainDyatlov}.
Their methods are different to ours and are based on Bourgain's sum-product estimates.
Their result was extended to convex cocompact Schottky real hyperbolic $3$-manifolds in~\cite{LiNaudPan} by similar methods.
These results imply Corollary~\ref{cor:flattening intro} in these special cases, however Corollary~\ref{cor:flattening intro} also applies to measures whose Fourier transform does not tend to $0$ at infinity (e.g.~the coin tossing measure on the middle $1/3$ Cantor set).
In forthcoming work, we apply our methods to generalize these results to hyperbolic manifolds of any dimension which are not necessarily of Schottky type.

\subsection{Polynomial decay near proper subvarieties}

Theorem~\ref{thm:ellq improvement} has the following important consequence regarding polynomial decay of the PS mass of neighborhoods of certain proper subvarieties of the boundary at infinity, which are saturated along the vertical foliation. 
This result is of independent interest.
Denote by $N^+$ the expanding horospherical group associated to $g_t$ for $t>0$, the orbits of which give rise to the strong unstable foliation.
    Let $N^+_r$ be the $r$-ball around identity in $N^+$ (cf.~Section~\ref{sec:Carnot} for definition of the metric on $N^+$).
    Let $\Nab^+ $ denote the abelianization $N^+/[N^+,N^+]$. 
    Finally, let $\Omega\subseteq \mc{X}$ be the non-wandering set for the geodesic flow; i.e. the closure of the set of its periodic orbits.

\begin{thm}\label{thm:friendly intro}
    Let $x\in \Omega$. Then, there exist $C,\k>0$ such that for all $\e>0$ the following holds.
    Let $\Lcal \subset N^+$ be the preimage of any proper affine subspace of the abelianization $\Nab^+$ and let $\Lcal^{(\e)}$ be its $\e$-neighborhood.
    Then, 
    \begin{align*}
        \muxu \left(\Lcal^{(\e)} \cap N_1^+ \right) \leq C\e^\k \muxu(N_1^+).
    \end{align*}
    The constants $C$ and $\k$ can be chosen to be uniform as $x$ varies in any fixed compact set.
\end{thm}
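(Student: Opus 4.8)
The plan is to transfer the statement, via the stereographic projection sending the boundary at infinity (minus the point fixed by the contracting horospherical group) to $N^+$, into a statement about the Patterson--Sullivan measure on $\R^d \cong \Nab^+$ being affinely non-concentrated, and then invoke Corollary~\ref{cor:flattening intro} (or rather its underlying non-concentration input, Corollary~\ref{cor:aff non-conc of projections}). First I would recall that the conditional measure $\muxu$ on $N^+_1$ is, up to a bounded density coming from the Radon--Nikodym cocycle (the Busemann function), the push-forward of a Patterson--Sullivan measure $\nu$ on $\partial_\infty \widetilde{\mc X}$ under the identification $N^+ \to \partial_\infty\widetilde{\mc X}\setminus\{\text{pt}\}$. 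Since $x\in\Omega$ and we work inside a fixed compact set, this density is bounded above and below by constants depending only on that compact set, so it suffices to prove the estimate for $\nu$ itself; this reduces the theorem to a purely metric non-concentration statement on $N^+$.

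The next step is the geometric heart: one must show that the preimage $\Lcal$ of a proper affine subspace of $\Nab^+$, intersected with $N^+_1$ and fattened by $\e$, is contained in a bounded union of $\e$-neighborhoods of \emph{affine hyperplanes of $N^+$} in the sense appropriate for the Carnot metric, and then that PS measures do not charge such neighborhoods more than $\e^\k$. When $N^+$ is abelian (the real hyperbolic case) this is immediate: $\Nab^+ = N^+$, $\Lcal$ is itself an affine subspace, and we are exactly in the situation of Corollary~\ref{cor:flattening intro} applied to $\muxu$, which is affinely non-concentrated by the results of Section~\ref{sec:friendly}. In the non-abelian cases (complex, quaternionic, Cayley hyperbolic), the point is that $\Lcal$ is saturated along the vertical foliation $[N^+,N^+]$, so its projection to the abelianization is the relevant object; one covers $\Lcal^{(\e)}\cap N^+_1$ by $O(1)$ pieces on each of which the Carnot ball structure is comparable to a Euclidean box, and on each piece $\Lcal^{(\e)}$ sits inside the $\e$-neighborhood of the preimage of an affine hyperplane. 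Then one applies the projection form of the flattening/non-concentration result (Theorem~\ref{thm:quant ellq improvement} and Corollary~\ref{cor:flattening}, as flagged in Remark~\ref{rem:examples+expanded ball}(3)): the push-forward of $\muxu$ under the abelianization map $N^+\to\Nab^+$ is affinely non-concentrated on $\Nab^+\cong\R^{d'}$, hence gives mass $\lesssim \e^\k$ to any $\e$-neighborhood of a proper affine subspace.

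To get the uniform non-concentration exponent $\k$ and constant $C$ one uses Corollary~\ref{cor:aff non-conc of projections}, which provides non-concentration parameters for PS measures near proper generalized sub-spheres that are uniform over $x$ in a compact set (this is precisely the content needed, and is the reason $\s$ in Theorem~\ref{thm:intro strip} is insensitive to passing to finite-index subgroups). Finally one reassembles: summing the $O(1)$-many local estimates gives $\muxu(\Lcal^{(\e)}\cap N^+_1)\leq C\e^\k \muxu(N^+_1)$, with $C,\k$ depending only on the compact set containing $x$ and on the geometry of $\widetilde{\mc X}$. I expect the main obstacle to be the non-abelian bookkeeping: verifying carefully that a vertically-saturated set over an affine subspace of the abelianization really is controlled, in the Carnot metric, by a bounded union of $\e$-neighborhoods of affine hyperplanes of $N^+$, so that the Euclidean affine non-concentration of the projected measure can be legitimately applied. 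The real hyperbolic case, by contrast, should be essentially a direct citation of Corollary~\ref{cor:flattening intro}.
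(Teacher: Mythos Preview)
There is a genuine gap. The non-concentration supplied by Corollary~\ref{cor:aff non-conc of projections} (via Theorem~\ref{thm:t(epsilon) goes to 0}) is only qualitative: in the parameters~\eqref{eq:parameters} one has $\vp(\e)=C_1 t(\e)$ with $t(\e)\to 0$, but no polynomial rate is established there. Likewise, Corollaries~\ref{cor:flattening intro} and~\ref{cor:flattening} concern Fourier decay outside a sparse set of frequencies, not the measure of hyperplane neighborhoods. So your inference ``affinely non-concentrated \dots\ hence gives mass $\lesssim \e^\k$'' is precisely the step that needs proof, and the results you cite do not supply a polynomial exponent for $\mu$ itself.

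The paper extracts the exponent via Theorem~\ref{thm:flat implies friendly}, whose mechanism you did not reproduce. From the $L^2$-flattening theorem (Theorem~\ref{thm:quant ellq improvement}) a high self-convolution $\mu^{\ast 2^n}$ enjoys a Frostman bound $\mu^{\ast 2^n}(B(y,r))\ll r^{d-1/2}$; covering a hyperplane slab by balls then gives $\mu^{\ast 2^n}(V^{(\e)})\ll \e^{1/2}$. The missing ingredient is the elementary Lemma~\ref{lem:sqrt friendly}: if $\nu^{\ast 2}((W+W)^{(\e)})\le C\e^{\a}$ then $\nu(W^{(\e/2)})\le C\e^{\a/2}$. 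Iterating this square-root trick $n$ times transfers the polynomial estimate from $\mu^{\ast 2^n}$ back to $\mu$, yielding $\k=2^{-(n+1)}$. Incidentally, your geometric worries about the non-abelian case are a red herring: the projection $\pi:N^+\to \Nab^+$ is built directly into the framework of Section~\ref{sec:projection setup} (Example~\ref{ex:examples of framework}(1)), and Theorem~\ref{thm:flat implies friendly} is already stated for the pushed-forward measure $\mu=\pi_\ast(\tilde\mu|_{B(\id,1)})$, so no ad hoc Carnot-metric covering is required.
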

We refer the reader to Theorem~\ref{thm:flat implies friendly} for a more general version of this result.
Theorem~\ref{thm:friendly intro} was obtained in~\cite[Lemma 3.8]{Dasetal} in the case of real hyperbolic spaces by completely different methods. 
It is worth noting that our proof of exponential mixing only uses Theorem~\ref{thm:friendly intro} in the case when the space is \textit{not} real hyperbolic; cf.~Remark~\ref{rem:role of friendliness} for further discussion.
\subsection{Exponential recurrence from the cusp}

    An important ingredient in our arguments is the following exponential decay result on the measure of the set of orbits with long cusp excursions, which is of independent interest.
    Let the notation be in as in Theorem~\ref{thm:friendly intro}.
  \begin{thm}\label{thm:exp recurrence intro}
  Let $\s(\G)$ be as in~\eqref{eq:sigma(Gamma)} and let $0<\b<\s(\G)/2$ be given. 
For every $\e >0$, there exists a compact set $K\subseteq \Omega$ and $T_0>0$ such that the following holds for all $T>T_0, 0<\th<1$ and $x\in  \Omega$.
Let $\chi_K$ be the indicator function of $K$.
Then,
    \begin{align*}
        \mu_x^u
        \left(n\in N_1^+: \int_0^T \chi_K(g_t nx)\;dt \leq (1- \th) T\right)
        \ll_{\b,x,\e} e^{-(\b\th -\e )T} \mu_x^u(N_1^+).
    \end{align*}
    The implicit constant is uniform as $x$ varies in any fixed compact set.
\end{thm}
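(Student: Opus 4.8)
The plan is to reduce the statement to a single \emph{cusp‑excursion estimate} for the conditional measures $\mu_z^u$ — extracted from the global measure formula of Section~\ref{sec:global measure formula} — and then to chain these estimates along the orbit by a large‑deviation (Chernoff‑type) argument whose only inputs are the \emph{uniformity} of the excursion estimate over the starting configuration $z\in\Omega$ and over the time horizon. Throughout, fix a proper, $1$‑Lipschitz cusp‑height function $\phi\colon\Omega\to[0,\infty)$ as in Section~\ref{sec:global measure formula}, so that for $R$ large $\{\phi\ge R\}$ is a disjoint union of horoball neighbourhoods of the finitely many cusps and $\phi$ measures the hyperbolic distance into them; I will take $K=K_R:=\{\phi\le R\}$ for an $R$ chosen at the end in terms of $\beta$ and $\epsilon$ only. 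Note $\int_0^T\chi_{K_R}(g_tnx)\,dt\le(1-\theta)T$ is equivalent to $\int_0^T\chi_{K_R^c}(g_tnx)\,dt\ge\theta T$; since $x$ ranges over a fixed compact set we may assume $R>\sup_x\phi(x)+1$, so the orbit starts in the thick part; we may assume $\theta\ge\epsilon/\beta$, as otherwise $e^{-(\beta\theta-\epsilon)T}\ge1$ and the bound is trivial; and if $\G$ is convex cocompact then $\sigma(\G)=\infty$, one takes $K=\Omega$, and the set in question is empty, so assume from now on that cusps are present.

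\textbf{Step 1: the excursion estimate.} The key input, which I would deduce from the global measure formula together with the $g_t$‑equivariance of the family $\{\mu_z^u\}$ (the Radon--Nikodym cocycle of $g_t$ between $\mu_z^u$ and $\mu_{g_tz}^u$ is $e^{O(t)}$, hence bounded over unit times, and the expansion of unstable leaves is absorbed by covering a large leaf‑piece by boundedly many unit pieces of comparable mass), is: for every $\epsilon>0$ there is $C_0=C_0(\epsilon)\ge1$ so that for all $z\in\Omega$, $s\ge0$, $\tau\ge0$,
\[
  \mu_z^u\Big(\big\{\,m\in N_1^+:\ \sup_{0\le t\le\tau}\phi(g_tmz)\ge s\,\big\}\Big)\ \le\ C_0\,e^{-(\sigma(\G)-\epsilon)\,(s-\phi(z))_+}\,\mu_z^u(N_1^+).
\]
The rate $\sigma(\G)-\epsilon$ is precisely what \eqref{eq:sigma(Gamma)} allows: the term $2\delta_\G-k_{\max}$ governs the Patterson--Sullivan mass of the shadow of a deep maximal‑rank horoball (the $\asymp e^{\delta_\G(\tau-s)}$ horoballs met at depth $\ge s$ each contributing a shadow of mass $\asymp e^{-\delta_\G\tau-(\delta_\G-k)s}$, whose product $\asymp e^{-(2\delta_\G-k)s}$ is independent of $\tau$), while $\delta_\G$ and $k_{\min}$ bound the competing contributions of the thick part and of minimal‑rank cusps. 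The two features that matter are that the bound holds for \emph{every} $z\in\Omega$ — for $z$ already deep in a cusp, $(s-\phi(z))_+$ records that only climbing still higher is costly — and that it is \emph{independent of $\tau$} (even the $\tau=\infty$ version is controlled); these are what make it iterable.

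\textbf{Step 2: decomposition into excursions and summation.} Fix $R_0$ large with $\{\phi\ge R_0\}$ a disjoint union of horoballs, $R>R_0$. If $n\in N_1^+$ has $\int_0^T\chi_{K_R^c}(g_tnx)\,dt\ge\theta T$, decompose the part of $\{g_tnx:t\in[0,T]\}$ in $\{\phi\ge R_0\}$ into its excursions, with maximal depths $h_1,\dots,h_J\ge R_0$ near times $a_1<\dots<a_J$; a horoball computation gives that each excursion spends time $\le2(h_j-R)_++O(\mathbf 1_{h_j>R})$ in $K_R^c$ and has length $\ge2(h_j-R_0)$, whence $\#\{j:h_j>R\}\le T/(2(R-R_0))$ and so $\sum_j(h_j-R)_+\ge(\theta-\epsilon')T/2$ with $\epsilon'=C/(2(R-R_0))$. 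Thus $\{\int_0^T\chi_{K_R^c}(g_tnx)\,dt\ge\theta T\}$ is contained in the union, over $J\ge1$, over integers $0\le a_1<\dots<a_J\le T$, and over depths $h_1,\dots,h_J>R$ (quantised in $\tfrac12\Z$) with $\sum_j(h_j-R)\ge(\theta-\epsilon')T/2$, of the events ``the orbit makes excursions of depth $\ge h_j$ near $a_j$''. Applying Step~1 successively — peeling off the first excursion, noting that afterwards the orbit returns to $\{\phi=R_0\}$ and that the conditional measures on the resulting sub‑leaves are, up to bounded Radon--Nikodym factors and after tiling by unit pieces, again of the form $\mu_{(\cdot)}^u$ — bounds the $\mu_x^u$‑measure of each such event by $C_1^{\,J}\prod_j e^{-(\sigma(\G)-\epsilon)(h_j-R_0)}\mu_x^u(N_1^+)$. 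Writing $e^{-(\sigma(\G)-\epsilon)(h_j-R_0)}=e^{-(\sigma(\G)-\epsilon)(R-R_0)}e^{-(\sigma(\G)-2\epsilon)(h_j-R)}e^{-\epsilon(h_j-R)}$, using $\sum_j(h_j-R)\ge(\theta-\epsilon')T/2$ on the middle factors, summing $\sum_{h-R\in\frac12\Z_{>0}}e^{-\epsilon(h-R)}=:c_\epsilon$ freely over the depths, and using $\sum_J\binom{T+1}{J}y^J\le e^{(T+1)y}$, one gets
\[
  \mu_x^u\Big(\big\{\,n:\ \textstyle\int_0^T\chi_{K_R^c}(g_tnx)\,dt\ge\theta T\,\big\}\Big)\ \ll_\epsilon\ e^{-(\sigma(\G)-2\epsilon)(\theta-\epsilon')T/2}\,\exp\!\big((T+1)\,C_1c_\epsilon\,e^{-(\sigma(\G)-\epsilon)(R-R_0)}\big)\,\mu_x^u(N_1^+).
\]
Choosing $\epsilon$ so small that $(\sigma(\G)-2\epsilon)/2\ge\beta$, then $R$ (hence $\epsilon'$) so large that $C_1c_\epsilon e^{-(\sigma(\G)-\epsilon)(R-R_0)}$ and $(\sigma(\G)-2\epsilon)\epsilon'/2$ are both small compared with $\epsilon$, and finally $T_0$ large, yields $\ll_{\beta,x,\epsilon}e^{-(\beta\theta-\epsilon)T}\mu_x^u(N_1^+)$, uniformly for $x$ in a fixed compact set.

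\textbf{Main obstacle.} The crux is Step~1: proving the excursion estimate with the sharp exponent $\sigma(\G)$ of \eqref{eq:sigma(Gamma)} \emph{uniformly over all configurations $z\in\Omega$} — in particular for $z$ deep inside a cusp, where one must show the $\mu_z^u$‑mass of the directions that climb a further distance $r$ is $\ll e^{-\sigma(\G)r}$ — and \emph{uniformly in the horizon $\tau$}. This uniformity, resting on the global measure formula of Section~\ref{sec:global measure formula} and on the boundedness over unit times of the Radon--Nikodym cocycle of $g_t$ on $\{\mu_z^u\}$, is exactly what licenses the chaining in Step~2; carrying out that chaining rigorously — tracking the conditional measures on sub‑leaves through a succession of excursions — is the main technical burden of the proof.
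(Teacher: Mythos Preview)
Your approach is quite different from the paper's. The paper deduces the statement from the discrete-time Theorem~\ref{thm:exp recurrence}, whose proof uses only the Margulis function $V=V_\b$ of Theorem~\ref{thm:Margulis function} and the seminorm $e_{1,0}$ of Section~\ref{sec: spectral gap}. With $\psi=1-\rho_{V_0}$ a smooth cusp cutoff, one forms $\tilde\Lcal_1 f=\Lcal_r f$ and $\tilde\Lcal_2 f=\Lcal_r(\psi f)$; the contraction inequality for $V$ forces $e_{1,0}(\tilde\Lcal_2 f)\ll e^{-\b r_0}e_{1,0}(f)$, and iterating over words $\varpi\in\{1,2\}^m$ bounds the bad set by $\sum_{\th_\varpi>\th m}C_1^m e^{-\b\th_\varpi r_0}V(x)\mu_x^u(N_1^+)$, with the $2^m$ terms absorbed by choosing $r_0$ large. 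There is no excursion decomposition and no direct appeal to the global measure formula; the constraint $\b<\s(\G)/2$ is inherited from Theorem~\ref{thm:Margulis function} (via Proposition~\ref{prop:linear expand}) rather than produced here.

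Your Step~1 is false as written. The bound cannot be uniform in $\tau$, and your ``even the $\tau=\infty$ version is controlled'' is simply wrong: by equidistribution of expanding horospherical pieces under $g_t$, for $\mu_z^u$-a.e.\ $m$ the orbit $t\mapsto g_tmz$ visits every height infinitely often, so for fixed $s$ and all large $\tau$ the set on the left has nearly full $\mu_z^u$-measure, not $\ll e^{-(\s(\G)-\e)s}$. What your Step~2 actually uses is only the \emph{single-excursion} estimate --- for $z$ on a horoball boundary $\{\phi=R_0\}$, the $\mu_z^u$-mass of directions whose current excursion reaches depth $\ge s$ is $\ll e^{-(\s(\G)-\e)(s-R_0)}$ --- and that does follow from Proposition~\ref{prop:doubling}. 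With Step~1 so reformulated your scheme is plausible, but the ``Markov'' step you gesture at (that after conditioning on the first $j-1$ deep excursions and flowing to the next horoball entry, the residual leaf-measure is a bounded combination of $\mu_{(\cdot)}^u$'s with uniformly controlled constants, even across the possibly long thick-part gaps between deep excursions) is genuine work you have not carried out. The paper's one-step operator iteration sidesteps this entirely by never conditioning.
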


The reader is referred to Theorem~\ref{thm:exp recurrence} for a stronger and more precise statement.
Theorem~\ref{thm:exp recurrence intro} implies that the Hausdorff dimension of the set of points in $N_1^+x$ whose forward orbit asymptotically spends all of its time in the cusp is at most $\s(\G)/2$.
This bound is not sharp and can likely be improved using a refinement of our methods.
We hope to return to this problem in future work.

\subsection{Prior results}

In the case $\G$ is convex cocompact, Theorem~\ref{thm:intro mixing} is a special case of the results of~\cite{Stoyanov} which extend the arguments of Dolgopyat~\cite{Dolgopyat} to Axiom A flows under certain assumptions on the regularity of the foliations and the holonomy maps.
The special case of convex cocompact hyperbolic surfaces was treated in earlier work of Naud~\cite{Naud-Cantor}. The extension to frame flows on convex cocompact manifolds was treated in~\cite{SarkarWinter,ChowSarkar}.

In the case of real hyperbolic manifolds with $\d_\G$ strictly greater than half the dimension of the boundary at infinity, Theorem~\ref{thm:intro mixing} was obtained in~\cite{EdwardsOh}, with much more precise and explicit estimates on the size of the essential spectral gap. The methods of~\cite{EdwardsOh} are unitary representation theoretic, building on the work of Lax and Phillips in~\cite{LaxPhillips}, for which the restriction on the critical exponent is necessary.
Earlier instances of the results of~\cite{EdwardsOh} under more stringent assumptions on the size of $\d_\G$ were obtained by Mohammadi and Oh in~\cite{MohammadiOh}, albeit the latter results are stronger in that they in fact hold for the frame flow rather than the geodesic flow.

The case of real hyperbolic geometrically finite manifolds with cusps and arbitrary critical exponent was only recently resolved independently in~\cite{LiPan} where a symbolic coding of the geodesic flow was constructed. This approach builds on extensions of Dolgopyat's method to suspension flows over shifts with infinitely many symbols; cf.~\cite{AraujoMelbourne,AGY}.
The extension of their result to frame flows was carried out in~\cite{LiPanSarkar}.

Finally, we refer the reader to~\cite{DyatlovGuillarmou-long} and the references therein for a discussion of the history of the microlocal approach to the problem of spectral gaps via anisotropic Sobolev spaces.

\subsection{Outline of the argument}\label{sec:outline} 

The article has several parts that can be read independently of one another. For the convenience of the reader, we give a brief outline of those parts.

The first part consists of Sections~\ref{sec:prelims}-\ref{sec:linear expand}.
After recalling some basic facts in Section~\ref{sec:prelims}, we prove a key doubling result, Proposition~\ref{prop:doubling}, in Section~\ref{sec:doubling} for the conditional measures of $\bms$ along the strong unstable foliation.

In Section~\ref{section: height function rank 1}, we construct a Margulis function which shows, roughly speaking, that generic orbits with respect to $\bms$ are biased to return to the thick part of the manifold. In Section~\ref{sec:linear expand}, we prove a statement on average expansion of vectors in linear representations which is essential for our construction of the Margulis function. The main difficulty in the latter result in comparison with the classical setting lies in controlling the \emph{shape} of sublevel sets of certain polynomials in order to estimate their measure with respect to conditional measures of $\bms$ along the unstable foliation.

The second part consists of Sections~\ref{sec: spectral gap} and~\ref{sec:ess radius}.
In Section~\ref{sec: spectral gap}, we define anisotropic Banach spaces arising as completions of spaces of smooth functions with respect to a dynamically relevant norm and study the norm of the transfer operator as well as the resolvent in their actions on these spaces in Section~\ref{sec:ess radius}.
The proof of Theorem~\ref{thm:intro meromorphic} is completed in Section~\ref{sec:ess radius}. The approach of these two sections follows closely the ideas of~\cite{GouezelLiverani,GouezelLiverani2,AvilaGouezel}, originating in~\cite{BlankKellerLiverani}.
Theorem~\ref{thm:exp recurrence intro} is deduced from this analysis in Section~\ref{sec:exp recur}.

 The third part concerns a Dolgopyat-type estimate which is a key technical estimate in the proof of Theorems~\ref{thm:intro mixing} and~\ref{thm:intro strip}. 
 Its proof occupies Section~\ref{sec:Dolgopyat} with auxiliary technical results in Sections~\ref{sec:mollifiers},~\ref{sec:temporal function}, and~\ref{sec:friendly}. 
 Readers familiar with the theory of anisotropic spaces may skip directly to Section~\ref{sec:Dolgopyat}, taking the results on recurrence from the cusps from previous sections as a black box.

The Dolgopyat-type estimate, obtained in Theorem~\ref{thm:Dolgopyat}, provides a contraction on the norm of resolvents with large imaginary parts.
Theorems~\ref{thm:intro mixing} and~\ref{thm:intro strip} are deduced from this result in Section~\ref{sec:Butterley}.
A sketch of its proof is given in Section~\ref{sec:sketch}.
The principle behind Theorem~\ref{thm:Dolgopyat}, due to Dolgopyat, is to exploit the non-joint integrability of the stable and unstable foliations via certain oscillatory integral estimates; cf.~\cite{Dolgopyat,Liverani,GiuliettiLiveraniPollicott,GiuliettiLiveraniPolicott-Erratum,BaladiDemersLiverani}.

A major difficulty in implementing this philosophy lies in
estimating these oscillatory integrals against Patterson-Sullivan measures, which are \emph{fractal} in nature in general.
In particular, we cannot argue using the standard integration by parts method in previous works on exponential mixing of SRB measures using the method of anisotropic spaces, see for instance~\cite{Liverani,GiuliettiLiveraniPollicott,GiuliettiLiveraniPolicott-Erratum,BaladiDemersLiverani}, where the unstable conditionals are of Lebesgue class.

We deal with this difficulty using Corollary~\ref{cor:flattening} by taking advantage of the fact that the estimate in question is an \emph{average} over oscillatory integrals.
This idea is among the main contributions of this article. 
We hope this method can be fruitful in establishing rates of mixing of hyperbolic flows in greater generality.

In the case of variable curvature (i.e. when $\mc{X}$ is not real hyperbolic), the action of the derivative of the geodesic flow on the strong unstable distribution is non-conformal which causes significant additional difficulties in the analysis, particularly in the presence of cusps.
We deal with this difficulty by working with the \emph{projection} of the unstable conditionals to the directions of slowest expansion and show that these projections also satisfy the conclusion of Corollary~\ref{cor:flattening}.
See Remarks~\ref{rem:role of friendliness} and~\ref{rem:projections} for further discussion.

In Section~\ref{sec:temporal function}, we obtain a linearization of the so-called temporal distance function. 
In Section~\ref{sec:friendly}, we verify the non-concentration hypotheses of Corollary~\ref{cor:flattening intro} (more precisely, we verify the weaker hypothesis of Corollary~\ref{cor:flattening}) for the projection of the unstable conditionals of $\bms$ onto the directions with weakest expansion.
This allows us to apply Corollary~\ref{cor:flattening} towards estimating the oscillatory integrals arising in Section~\ref{sec:Dolgopyat}.

Finally, Section~\ref{sec:flattening} is dedicated to the proof of Theorem~\ref{thm:ellq improvement} and Corollary~\ref{cor:flattening intro}.
Among the key ingredients in the proof are the asymmetric Balog-Szemer\'edi-Gowers Lemma due to Tao and Vu (Theorem~\ref{thm:asymmetric Balog-Szemeredi-Gowers}) as well as Hochman's inverse theorem for the entropy of convolutions (Theorem~\ref{thm:Hochman}).
This section can be read independently from the rest of the article.

\begin{acknowledgement}
 The author thanks the Hausdorff Research Institute for Mathematics at the Universit\"at Bonn for its hospitality during the trimester program ``Dynamics: Topology and Numbers'' in Spring 2020 where part of this research was conducted.
This research is supported in part by the NSF under grant number DMS-2247713. 
The author thanks Hee Oh, Peter Sarnak, and Pablo Shmerkin for helpful discussions regarding this project. 
The author also thanks F\'elix Lequen and the referees for detailed reading of the article and for numerous corrections and comments that significantly improved the exposition.
\end{acknowledgement}

\section{Preliminaries}

    \label{sec:prelims}
    
    We recall here some background and definitions on geometrically finite manifolds.
    
	\subsection{Geometrically finite manifolds}

	The standard reference for the material in this section is~\cite{Bowditch1993}.
	Suppose $G$ is the group of orientation preserving isometries of a real, complex, quaternionic or Cayley hyperbolic space, denoted $\H^d_\K$, of dimension $d\geq 2$, where $\K\in\set{\R,\C,\H,\mathbb{O}}$. In the case $\K=\mathbb{O}$, then $d=2$.
	
    Fix a basepoint $o\in \H^d_\K$. Then, $G$ acts transitively on $\H^d_\K$ and the stabilizer $K$ of $o$ is a maximal compact subgroup of $G$.
    We shall identify $\H^d_\K$ with $K\backslash G$.
    Denote by $A=\set{g_t:t\in\R}$ a $1$-parameter subgroup of $G$ inducing the geodesic flow on the unit tangent bundle of $\H^d_\K$.
    Let $M<K$ denote the centralizer of $A$ inside $K$ so that the unit tangent bundle $\uT\H^d_\K$ may be identified with ${M}\backslash{G}$.
    In Hopf coordinates, we can identify $\uT\H^d_\K$ with $\R\times (\partial\H^d_\K\times \partial\H^d_\K \setminus\Delta)$, where $\partial \H^d_\K$ denotes the boundary at infinity and $\Delta$ denotes the diagonal.
    
    Let $\G<G$ be an infinite discrete subgroup of $G$.
    The limit set of $\G$, denoted $\L_\G$, is the set of limit points of the orbit $\G\cdot o$ on $\partial \H^d_\K$.
    Note that the discreteness of $\G$ implies that all such limit points belong to the boundary.
    Moreover, this definition is independent of the choice of $o$ in view of the negative curvature of $\H^d_\K$.
    We often use $\L$ to denote $\L_\G$ when $\G$ is understood from context.
    We say $\G$ is \textit{non-elementary} if $\L_\G$ is infinite.
    
    The \textit{hull} of $\L_\G$, denoted $\mrm{Hull}(\L_\G)$, is the smallest convex subset of $\H^d_\K$ containing all the geodesics joining points in $\L_\G$.
    The convex core of the manifold $\H^d_\K/\G$ is the smallest convex subset containing the image of $\mrm{Hull}(\L_\G)$.
    We say $\H^d_\K/\G$ is \textit{geometrically finite} (resp.~\textit{convex cocompact}) if the closed $1$-neighborhood of the convex core has finite volume (resp. is compact), cf.~\cite{Bowditch1993}.
    The non-wandering set for the geodesic flow is the closure of the set of vectors in the unit tangent bundle whose orbit accumulates on itself.
    In Hopf coordinates, this set, denoted $\Omega$, coincides with the projection of $\R\times (\L_\G\times\L_\G-\Delta)$ mod $\G$.

	A useful equivalent definition of geometric finiteness is that the limit set of $\G$ consists entirely of radial and bounded parabolic limit points; cf.~\cite{Bowditch1993}.
    This characterization of geometric finiteness will be of importance to us and so we recall here the definitions of these objects.

    A point $\xi \in \L$ is said to be a \textit{radial point} if any geodesic ray terminating at $\xi$ returns infinitely often to a bounded subset of $\H^d_\K/\G$.
    The set of radial limit points is denoted by $\L_r$.
    
    Denote by $N^+$ the expanding horospherical subgroup of $G$ associated to $g_t$, $t\geq 0$.
    A point $p \in \L$ is said to be a \textit{parabolic point} if the stabilizer of $p$ in $\G$, denoted by $\G_p$, is conjugate in $G$ to an unbounded subgroup of $MN^+$.
    A parabolic limit point $p$ is said to be \textit{bounded} if $ \left(\L-\set{p}\right)/\G_p $ is compact.
    An equivalent charachterization is that $p\in \L$ is parabolic if and only if any geodesic ray terminating at $p$ eventually leaves every compact subset of $\H^d_\K/\G$.
    The set of parabolic limit points will be denoted by $\L_p$.

   Given $g\in G$, we denote by $g^+$ the coset of $P^-g$ in the quotient $P^-\backslash G$, where $P^-=N^-AM$ is the stable parabolic group associated to $\set{g_t:t\geq 0}$.
   Similarly, $g^-$ denotes the coset $P^+g$ in $P^+\backslash G$.
   Since $M$ is contained in $P^\pm$, such a definition makes sense for vectors in the unit tangent bundle $M\backslash G$.
   Geometrically, for $v\in M\backslash G$, $v^+$ (resp.~$v^-$) is the forward (resp.~backward) endpoint of the geodesic determined by $v$ on the boundary of $\H^d_\K$.
   Given $x\in G/\G$, we say $x^{\pm}$ belongs to $\L$ if the same holds for any representative of $x$ in $G$; this notion being well-defined since $\L$ is $\G$ invariant.

	\subsection*{Notation} Throughout the remainder of the article, we fix a discrete non-elementary geometrically finite group $\G$ of isometries of some (irreducible) rank one symmetric space $\H_\K^d$ and denote by $X$ the quotient $G/\G$, where $G$ is the isometry group of $\H^d_\K$.

\subsection{Standard horoballs}  \label{sec:cuspnbhd}
    Since parabolic points are fixed points of elements of $\G$, $\L$ contains only countably many such points.
    Moreover, $\G$ contains at most finitely many conjugacy classes of parabolic subgroups.
    This translates to the fact that $\L_p$ consists of finitely many $\G$ orbits.

     Let $\set{p_1, \dots, p_s}\subset \partial \H^d_\K$ be a maximal set of nonequivalent parabolic fixed points under the action of $\G$. 
     As a consequence of geometric finiteness of $\G$, one can find a finite disjoint collection of \textit{open} horoballs $H_1, \dots, H_s \subset \H^d_\K$ with the following properties (cf.~\cite{Bowditch1993}):
    
    \begin{enumerate}
    	\item $H_i$ is centered on $p_i$, for $i = 1,\dots, s$.
	    \item $\overline{H_i}\G  \cap \overline{H_j}\G  = \emptyset$ for all $i\neq j$.
        \item For all $i \in \set{1,\dots,s}$ and $\g_1,\g_2 \in \G$
        \begin{equation*}
            \overline{H_i}\g_1 \cap  \overline{H_i}\g_2 \neq \emptyset
            \Longrightarrow \overline{H_i}\g_1 =  \overline{H_i}\g_2, 
            \g_1^{-1}\g_2\in \G_{p_i}.
        \end{equation*}
        \item $\mrm{Hull}(\L_\G)\setminus (\bigcup_{i=1}^s H_i\G)$ is compact mod $\G$.
    \end{enumerate}
    
    \begin{remark}\label{rem:o outside cusp}
        We shall assume throughout the remainder of the article that our fixed basepoint $o$ lies outside these standard horoballs, i.e.
        \begin{equation*}
            o \notin \bigcup_{i=1}^s \overline{H_i} \G.
        \end{equation*}
    \end{remark}


\subsection{Conformal Densities and the BMS Measure}

    The \textit{critical exponent}, denoted $\d_\G$, is defined to be the infimum over all real number $s\geq 0$ such that the Poincar\'e series
    \begin{align}\label{eq:Poincare}
        P_\G(s,o) := \sum_{\g\in\G} e^{-s \dist(o,\g\cdot o)}
    \end{align}
    converges.
    We shall simply write $\d$ for $\d_\G$ when $\G$ is understood from context.
    The Busemann function is defined as follows: given $x,y\in \H^d_\K$ and $\xi\in \partial \H^d_\K$, let $\g:[0,\infty)\to\H^d_\K$ denote a geodesic ray terminating at $\xi$ and define
    \begin{equation*}
        \b_\xi(x,y) = \lim_{t\to\infty}
        \dist (x,\g(t)) - \dist(y,\g(t)).
    \end{equation*}
    A $\G$-invariant conformal density of dimension $s$ is a collection of Radon measures $\set{\nu_x:x\in \H^d_\K}$ on the boundary satisfying
    \begin{equation*}
        \g_\ast \nu_x = \nu_{\g x}, \qquad \text{and} \qquad
        \frac{d\nu_{y}}{d\nu_x}(\xi) = e^{s\b_{\xi}(x,y)}, \qquad
        \forall x,y\in\H^d_\K, \xi\in \partial \H^d_\K, \g\in\G.
    \end{equation*}
    
    Given a pair of conformal densities $\set{\mu_x}$ and $\set{\nu_x}$ of dimensions $s_1$ and $s_2$ respectively, we can form a $\G$ invariant measure on $\uT\H^d_\K$, denoted by $m^{\mu,\nu}$ as follows: for $x=(\xi_1,\xi_2,t)\in \uT\H^d_\K$
    \begin{equation}\label{eq:BMS}
        dm^{\mu,\nu}(\xi_1,\xi_2,t) = e^{s_1\b_{\xi_1}(o,x)+s_2\b_{\xi_2}(o,x)}\;d\mu_o(\xi_1)\;d\nu_o(\xi_2)\;dt.
    \end{equation}
    Moreover, the measure $m^{\mu,\nu}$ is invariant by the geodesic flow.
     
    When $\G$ is geometrically finite and $\K=\R$, Patterson~\cite{Patterson} and Sullivan~\cite{Sullivan} showed the existence of a unique (up to scaling) $\G$-invariant conformal density of dimension $\d_\G$, denoted $\set{\ps_x:x\in \H^d_\R}$.
    Geometric finiteness also implies that the measure $m^{\ps,\ps}$ descends to a finite measure of full support on $\Omega$ and is the unique measure of maximal entropy for the geodesic flow. This measure is called the Bowen-Margulis-Sullivan (BMS for short) measure and is denoted $\bms$.
    
    Since the fibers of the projection from $G/\G$ to $\uT\H^d_\K/\G$ are compact and parametrized by the group $M$, we can lift such a measure to $G/\G$, also denoted $\bms$, by taking locally the product with the Haar probability measure on $M$.
    Since $M$ commutes with the geodesic flow, this lift is invariant under the group $A$. 
    We refer the reader to~\cite{Roblin} and~\cite{PaulinPollicottSchapira} and references therein for details of the construction in much greater generality than that of $\H^d_\K$.

\subsection{Stable and unstable foliations and leafwise measures}

    The fibers of the projection $G\to \uT\H^d_\K$ are given by the compact group $M$, which is the centralizer of $A$ inside the maximal compact group $K$.
    In particular, we may lift $\bms$ to a measure on $G/\G$, also denoted $\bms$, and given locally by the product of $\bms$ with the Haar probability measure on $M$.
    The leafwise measures of $\bms$ on $N^+$ orbits are given as follows:
    \begin{equation}\label{eq:unstable conditionals}
        d\mu_x^u(n) = e^{\d_\G \b_{(nx)^+}(o,nx)}d\ps_o((nx)^+).
    \end{equation}
    They satisfy the following equivariance property under the geodesic flow:
    \begin{equation}\label{eq:g_t equivariance}
        \mu_{g_tx}^u = e^{\d t} \mrm{Ad}(g_t)_\ast \mu_{x}^u.
    \end{equation}
    Moreover, it follows readily from the definitions that for all $n\in N^+$,
    \begin{align}\label{eq:N equivariance}
       (n)_\ast \mu_{nx}^u =  \mu_x^u,
    \end{align}
    where $(n)_\ast \mu_{nz}^u$ is the pushforward of $\mu_{nz}^u$ under the map $u\mapsto un$ from $N^+$ to itself.
    Finally, since $M$ normalizes $N^+$ and leaves $\bms$ invariant, this implies that these conditionals are $\Ad(M)$-invariant: 
    \begin{align}\label{eq:M equivariance}
        \mu^u_{mx}  = \Ad(m)_\ast\mu_x^u, \qquad m\in M.
    \end{align}
    
\subsection{Cygan metrics}
\label{sec:Carnot}
We recall the definition of the Cygan metric on $N^+$, denoted $d_{N^+}$. These metrics are right invariant under translation by $N^+$, and satisfy the following convenient scaling property under conjugation by $g_t$. For all $r>0$, if $N_r^+$ denotes the ball of radius $r$ around identity in that metric and $t\in \R$, then
\begin{equation}\label{eq:scaling of Carnot metric}
    \mrm{Ad}(g_t)(N_r^+) = N_{e^tr}^+.
\end{equation}

To define the metric, we need some notation which we use throughout the article.
For $x\in\K$, denote by $\bar{x}$ its $\K$-conjugate and by $|x|:=\sqrt{\bar{x}x}$ its modulus.
This modulus extends to a norm on $\K^n$ by setting
\begin{align*}
    \norm{u}^2 := \sum_i |u_i|^2,
    \qquad u=(u_1,\dots,u_n)\in\K^n.
\end{align*}

We let $\mrm{Im}\K$ denote those $x\in\K$ such that $\bar{x}=-x$.
For example, $\mrm{Im}\K$ is the pure imaginary numbers and the subspace spanned by the quaternions $i,j$ and $k$ in the cases $\K=\C$ and $\K=\H$ respectively.
For $u\in\K$, we write $\mrm{Re}(u)=(u+\bar{u})/2$ and $\mrm{Im}(u)=(u-\bar{u})/2$.

The Lie algebra $\mf{n}^+$ of $N^+$ splits under $\Ad(g_t)$ into eigenspaces as $\mf{n}^+_\a\oplus \mf{n}^+_{2\a}$, where $\mf{n}^+_{2\a}=0$ if and only if $\K=\R$.
Moreover, we have the identification $\mf{n}^+_\a \cong \K^{d-1}$ and $\mf{n}^+_{2\a}\cong \mrm{Im}(\K)$ as real vector spaces; cf.~\cite[Section 19]{Mostow}.
We denote by $\norm{\cdot}'$ the following quasi-norm on $\mf{n}^+$: 
\begin{align}\label{eq:Carnot norm}
    \norm{(u,s)}' :=  \left(\norm{u}^4 + |s|^2 \right)^{1/4}, \qquad (u,s)\in \mf{n}^+_\a\oplus \mf{n}^+_{2\a} .
\end{align}
With this notation, the distance of $n:=\exp(u,s)$ to identity is given by:
\begin{equation}\label{eq:Carnot}
    d_{N^+}(n, \id) := \norm{(u,s)}'.
\end{equation}
Given $n_1,n_2\in N^+$, we set $d_{N^+}(n_1,n_2) = d_{N^+}(n_1n_2^{-1},\id )$.

\subsection{Local stable holonomy}\label{sec:holonomy}

    We recall the definition of (stable) holonomy maps. We give a simplified discussion of this topic which is sufficient in our homogeneous setting.
    Let $x=u^-y$ for some $y\in \Omega$ and $u^-\in N^-_2$.
Since the product map $N^-\times A \times M \times N^+\to G$ is a diffeomorphism near identity, we can choose the norm on the Lie algebra so that the following holds. We can find maps $p^-:N_1^+\to P^-=N^-AM$ and $u^+:N_2^+\to N^+$ so that
\begin{align}\label{eq:switching order of N- and N+}
    nu^- = p^-(n)u^+(n), \qquad \forall n\in N_2^+.
\end{align}
Then, it follows by~\eqref{eq:unstable conditionals} that for all $n\in N_2^+$, we have
\begin{align*}
    d\mu_y^u(u^+(n)) = e^{-\d \b_{(nx)^+}(u^+(n)y,nx)}d\mu_x^u(n).
\end{align*}
Moreover, by further scaling the metrics if necessary, we can ensure that these maps are diffeomorphisms onto their images.
In particular, writing $\Phi(nx)=u^+(n)y$, we obtain the following change of variables formula: for all $f\in C(N_2^+)$,
\begin{align}\label{eq:stable equivariance}
    \int f(n) \;d\mu_x^u(n) = 
    \int f((u^+)^{-1}(n)) e^{\d \b_{\Phi^{-1}(ny)^+}(ny,\Phi^{-1}(ny))}\;d\mu_y^u(n).
\end{align}

\begin{remark}\label{rem:commutation of stable and unstable}
To avoid cluttering the notation with auxiliary constants, we shall assume that the $N^-$ component of $p^-(n)$ belongs to $N_2^-$ for all $n\in N_2^+$ whenever $u^-$ belongs to $N_1^-$.
\end{remark}


\subsection{Notational convention} Throughout the article, given two quantities $A$ and $B$, we use the Vinogradov notation $A\ll B$ to mean that there exists a constant $C\geq 1$, possibly depending on $\G$ and the dimension of $G$, such that $|A|\leq C B$. In particular, this dependence on $\G$ is suppressed in all of our implicit constants, except when we wish to emphasize it.
    The dependence on $\G$ may include for instance the diameter of the complement of our choice of cusp neighborhoods inside $\Omega$ and the volume of the unit neighborhood of $\Omega$.
    We write $A\ll_{x,y} B$ to indicate that the implicit constant depends on parameters $x$ and $y$.
    We also write $A=O_x(B)$ to mean $A\ll_x B$.


\section{Doubling Properties of Leafwise Measures}
\label{sec:doubling}

The goal of this section is to prove the following useful consequence of the global measure formula on the doubling properties of the leafwise measures. The result is an immediate consequence of Sullivan's shadow lemma in the case $\G$ is convex cocompact. In particular, the content of the following result is the uniformity, even in the case $\Omega$ is not compact.
The argument is based on the topological transitivity of the geodesic flow when restricted to $\Omega$.

Define the following exponents:
\begin{align}\label{eq:Delta}
    \gls{Delta} &:= \min\set{\d, 2\d-k_{\max}, k_{\min}},
    \nonumber\\
    \gls{Delta+} &:= \max\set{\d, 2\d-k_{\min}, k_{\max}}.
\end{align}
where $k_{\max}$ and $k_{\min}$ denote the maximal and minimal ranks of parabolic fixed points of $\G$ respectively.
If $\G$ has no parabolic points, we set $k_{\max}=k_{\min} = \d$, so that $\Delta=\Delta_+=\d$.

\begin{prop}[Global Doubling and Decay]
\label{prop:doubling}

For every $0<\s\leq 5$, $x\in N_2^-\Omega$ and $0<r\leq 1$, we have
\begin{equation*}
    \mu_x^u(N_{\s r}^+)\ll
    \begin{cases}
       \s^{\Delta} \cdot \mu_x^u(N_r^+) &
       \forall 0<\s\leq 1, 0<r\leq 1, \\
    \s^{\Delta_+} \cdot \mu_x^u(N_r^+)
    & \forall \s> 1, 0<r\leq 5/\s.
    \end{cases}
\end{equation*}
\end{prop}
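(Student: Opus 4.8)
The plan is to deduce the doubling and decay estimates from Sullivan's global measure formula, which describes $\mu_x^u(N_r^+)$ in terms of how deep the geodesic $g_{-\log r}\cdot x$ has traveled into a cusp. Recall that the global measure formula (valid for $x\in N_2^-\Omega$, after possibly replacing $x$ by an $N^-$-translate as in Section~\ref{sec:holonomy}) states that
\begin{align*}
    \mu_x^u(N_r^+) \asymp r^{\d}\, e^{(\d - k(x,r))\,\rho(x,r)},
\end{align*}
where $\rho(x,r)\geq 0$ is the distance traveled into a cusp of rank $k(x,r)$ by the point $g_{-\log r}\cdot x$ (and $\rho(x,r)=0$, with the convention $k=\d$, when that point lies in the thick part). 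The key structural fact we need is that $\rho$ is $1$-Lipschitz in $\log r$: changing $r$ to $\s r$ moves the base point by time $\log\s$ along the geodesic flow, so $|\rho(x,\s r) - \rho(x,r)| \leq |\log\s|$, and moreover the rank $k(x,\cdot)$ of the relevant cusp can only change when $\rho$ passes through $0$. These two observations are what allow us to turn the pointwise formula into a ratio bound that is uniform in $x$ and $r$.

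The core of the argument is then the following case analysis for the ratio $\mu_x^u(N_{\s r}^+)/\mu_x^u(N_r^+)$, using the measure formula at both scales. First consider $0<\s\leq 1$. If both $g_{-\log r}x$ and $g_{-\log(\s r)}x$ see the same cusp of rank $k$ (or both are in the thick part), the ratio is $\asymp \s^{\d} e^{(\d-k)(\rho(x,\s r)-\rho(x,r))}$; since shrinking $r$ by the factor $\s$ moves \emph{further} into the cusp by at most $|\log \s|$ and at least... — more precisely, one splits according to whether the excursion is getting deeper or shallower and bounds $e^{(\d-k)\Delta\rho}$ by $\s^{-|\d-k|}$, giving the ratio $\ll \s^{\d-|\d-k|}$, which is $\geq \s^{\min\{\d,\,2\d-k,\,k\}}\geq \s^{\Delta}$ since $\s\leq 1$ makes larger exponents give smaller values — wait, we want an \emph{upper} bound by $\s^{\Delta}$, so we need the exponent to be $\geq \Delta$, i.e. $\d-|\d-k|\geq \Delta$, which holds because $\d-|\d-k| = \min\{k, 2\d-k\}\geq \Delta$. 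If the base point crosses from thick to thin (or between incomparable cusps) as $r$ decreases, one inserts an intermediate scale $r'$ with $\rho(x,r')=0$ and multiplies the two resulting estimates, the exponents adding correctly because $\Delta$ is defined by a $\min$. This handles the first case. For $\s>1$ and $0<r\leq 5/\s$ one runs the same analysis with the roles reversed: now $\mu_x^u(N_{\s r}^+)/\mu_x^u(N_r^+) \ll \s^{\d+|\d-k|} = \s^{\max\{k,\,2\d-k\}} \leq \s^{\Delta_+}$, again splicing through an intermediate scale if the cusp structure changes, and the restriction $\s r\leq 5$ keeps us in the range where the measure formula with uniform constants applies (matching the hypothesis $0<\s\leq 5$ on the scale at which we invoke it).

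The main obstacle I expect is \emph{uniformity}: the global measure formula has implicit constants that a priori could depend on which cusp is involved and on the point $x$, and one must check these are absorbable into a single $\ll$ depending only on $\G$ and $\dim G$. This is where topological transitivity of the geodesic flow on $\Omega$ enters — it is used (as the text flags) to control the geometry of the complement of the standard horoballs and to get a uniform comparison between $\mu_x^u$ for $x\in\Omega$ and for $x\in N_2^-\Omega$ via the stable holonomy change-of-variables \eqref{eq:stable equivariance}, whose Radon–Nikodym factor $e^{\d\b_{\cdot}(\cdot,\cdot)}$ is uniformly bounded above and below on the relevant compact pieces. A secondary technical point is the behavior at the ``seams'' where $\rho(x,r)=0$: one must ensure the measure formula constants match across the thick/thin interface, which is handled by choosing the horoball neighborhoods $H_i$ as in Section~\ref{sec:cuspnbhd} and noting $o\notin\bigcup \overline{H_i}\G$, so the base point $o$ itself records the transition cleanly. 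Once these uniformity issues are dispatched, the case analysis above is routine arithmetic with the exponents $\d$, $2\d-k_{\max}$, $2\d-k_{\min}$, $k_{\max}$, $k_{\min}$.
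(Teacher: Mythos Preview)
Your case analysis with the global measure formula and intermediate scales is essentially what the paper does \emph{locally}, near a fixed basepoint, and your exponent arithmetic ($\d-|\d-k|=\min\{k,2\d-k\}\geq\Delta$, etc.) is correct. The gap is in the uniformity step, and specifically in how topological transitivity enters.

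You assert a formula $\mu_x^u(N_r^+)\asymp r^\d e^{(\d-k)\rho(x,r)}$ with implicit constants uniform over $x\in N_2^-\Omega$. This is not what Theorem~\ref{thm:global measure formula} gives: that result controls $\ps_o$ of shadows \emph{from the fixed basepoint $o$}, and the correspondence~\eqref{eq:approximate shadow} between $(N_r^+x)^+$ and such shadows holds only for $x$ in a bounded neighborhood of $o$. For $x$ far from $o$ (e.g.\ deep in a cusp) the translation between $\mu_x^u(N_r^+)$ and a shadow measure from $o$ involves both a Busemann weight and a shadow-size distortion that you have not controlled. Your invocation of transitivity---``to control the geometry of the complement of the standard horoballs and to get a uniform comparison between $\mu_x^u$ for $x\in\Omega$ and for $x\in N_2^-\Omega$''---is not what the paper does; the thick part is handled by compactness (geometric finiteness), and the passage from $\Omega$ to $N_2^-\Omega$ is pure holonomy, with no transitivity needed.

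The paper's actual mechanism is different and is the missing idea: define $f_\s(x)=\sup_{0<r\leq 1}\mu_x^u(\psi_{\s r})/\mu_x^u(\psi_r)$ (smoothed to ensure lower semi-continuity via \cite[Lemme~1.16]{Roblin}), and observe from the equivariance~\eqref{eq:g_t equivariance} that $f_\s(g_tx)\leq f_\s(x)$ for $t\geq 0$, since the supremum at $g_tx$ is over the smaller range $0<r\leq e^{-t}$. Hence every sublevel set $\{f_\s<B\}$ is forward-invariant; by transitivity any such set with nonempty interior is dense, and lower semi-continuity then propagates the bound to all of $\Omega$. This reduces the problem to bounding $f_\s$ on a single open neighborhood $E$ of the basepoint, where your global-measure-formula argument (with the intermediate scales $\s_1,\s_2$) goes through cleanly. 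Without this monotonicity-plus-transitivity device, your direct approach does not yield uniform constants.
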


\begin{remark}
The above proposition has very different flavor when applied with $\s<1$, compared with $\s>1$.
In the former case, we obtain a global rate of decay of the measure of balls on the boundary, centered in the limit set. In the latter case, we obtain the so-called Federer property for our leafwise measures.

\end{remark}

\begin{remark}
The restriction that $r\leq 5/\s$ in the case $\s>1$ allows for a uniform implied constant. The proof shows that in fact, when $\s>1$, the statement holds for any $0<r\leq 1$, but with an implied constant depending on $\s$.
\end{remark}

\subsection{Global Measure Formula}
\label{sec:global measure formula}

Our basic tool in proving Proposition~\ref{prop:doubling} is the extension of Sullivan's shadow lemma known as the global measure formula, which we recall in this section.

Given a parabolic fixed point $p\in \L$, with stabilizer $\G_p\subset \G$, we define \textit{the rank of} $p$ to be twice the critical exponent of the Poincar\'e series $P_{\G_p}(s,o)$ associated with $\G_p$; cf.~\eqref{eq:Poincare}.

Given $\xi\in \partial\H^d_\K$, we let $[o\xi)$ denote the geodesic ray. For $t\in \R_+$, denote by $\xi(t)$ the point at distance $t$ from $o$ on $[o\xi)$.
For $x\in\H^d_\K$, define the $\Ocal(x)$ to be the \textit{shadow} of unit ball $B(x,1)$ in $\H^d_\K$ on the boundary as viewed from $o$.
More precisely,
\begin{equation*}
    \Ocal(x):= \set{\xi\in\partial \H^d_\K: [o\xi)\cap B(x,1)\neq \emptyset}.
\end{equation*}
Shadows form a convenient, dynamically defined, collection of neighborhoods of points on the boundary.

The following generalization of Sullivan's shadow lemma gives precise estimates on the measures of shadows with respect to Patterson-Sullivan measures.

    \begin{thm}[Theorem 3.2,~\cite{Schapira-QND}]
    \label{thm:global measure formula}
    
    There exists $C=C(\G,o)\geq 1$ such that for every $\xi\in \L$ and all $t>0$,
    \begin{equation*}
        C^{-1} 
        \leq \frac{\ps_o( \Ocal(\xi(t)))}{e^{-\d t } e^{d(t)(k(\xi(t))-\d)}} \leq
        C ,
    \end{equation*}
    where 
    \begin{equation*}
        d(t) = \dist(\xi(t),\G\cdot o),
    \end{equation*}
    and $k(\xi(t))$ denotes the rank of a parabolic fixed point $p$ if $\xi(t)$ is contained in a standard horoball centered at $p$ and otherwise $k(\xi(t))=\d$.
 
   \end{thm}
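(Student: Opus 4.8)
\textbf{Proof plan for Theorem~\ref{thm:global measure formula} (the global measure formula).}
Since the statement is attributed to~\cite{Schapira-QND}, I treat this as a reconstruction of how one proves it from first principles, following Sullivan's original shadow-lemma argument adapted to the cusped setting. The goal is the two-sided bound
\[
    \ps_o(\Ocal(\xi(t))) \asymp e^{-\d t}\, e^{d(t)(k(\xi(t))-\d)},
\]
uniformly over $\xi\in\L$ and $t>0$, where $d(t)=\dist(\xi(t),\G\cdot o)$ and $k(\xi(t))$ is $\d$ away from the horoballs and equals the rank of the relevant cusp inside a standard horoball. The strategy splits according to whether $\xi(t)$ lies in the thick part or deep inside a standard horoball $H_i$.

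\textbf{Step 1: the thick-part case.} When $\xi(t)$ stays within bounded distance $R$ of the orbit $\G\cdot o$, i.e. $d(t)\leq R$, the exponent $k(\xi(t))-\d$ term is harmless (bounded), so we must show $\ps_o(\Ocal(\xi(t)))\asymp e^{-\d t}$. This is the classical Sullivan shadow lemma. For the upper bound, one uses the conformal density property $\frac{d\ps_{x}}{d\ps_o}(\eta)=e^{\d\b_\eta(o,x)}$ together with the fact that for $\eta$ in the shadow $\Ocal(\xi(t))$, the Busemann cocycle $\b_\eta(o,\xi(t))$ is comparable to $-t$ up to $O(1)$; hence $\ps_o(\Ocal(\xi(t)))=\int_{\Ocal(\xi(t))} e^{-\d\b_\eta(\xi(t),o)}\,d\ps_{\xi(t)}(\eta) \ll e^{-\d t}\,\ps_{\xi(t)}(\partial\H^d_\K)$, and $\ps_{\xi(t)}(\partial\H^d_\K)$ is uniformly bounded when $\xi(t)$ is close to a point of $\G\cdot o$ by $\G$-equivariance of the density plus finiteness of $\ps_o$. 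For the lower bound one needs $\ps_{\xi(t)}$ of the shadow (as seen from near $\xi(t)$ of a bounded ball) to be bounded below; this follows because the limit set is non-elementary and (via geometric finiteness/cocompactness of the thick part) every point of $\L$ sits in some $\G$-translate of a fixed ball, so a fixed positive amount of $\ps$-mass is always visible. I would organize this using the standard ``coarse geometry of Busemann functions in negative curvature'' lemmas (comparison of $\b_\eta(o,x)$ with $\dist(o,x)$ and triangle-type inequalities).

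\textbf{Step 2: the deep-cusp case.} Suppose $\xi(t)$ lies in $H_i\g$ for some $\g\in\G$, with $p=p_i$ a parabolic fixed point of rank $k=k(\xi(t))$. Write $\xi(t)=\xi(t_0)\cdot(\text{excursion into }H_i)$ where $t_0$ is (roughly) the time the ray enters the horoball; then $d(t)\asymp$ the penetration depth, and $t-t_0\asymp d(t)$ because the geodesic goes in and (for the shadow as seen from $o$) the relevant ball is at depth $d(t)$. The key is to compute $\ps_o(\Ocal(\xi(t)))$ by decomposing along the orbit $\G_p\cdot o$: inside the horoball the Patterson--Sullivan measure restricted to the part of $\L$ ``behind'' $\xi(t)$ is controlled by the orbit counting function of $\G_p$, and by definition of the rank, $\#\{\g'\in\G_p: \dist(o,\g' o)\leq n\}$ grows like $e^{kn}$ (critical exponent $k/2$ for $P_{\G_p}$ gives orbital growth exponent $k$ in the horospherical/parabolic geometry — the factor two is exactly the normalization in the rank definition). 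Combining: the shadow at depth $d(t)$ captures a ``horospherical ball'' on $\partial\H^d_\K$ of radius $\asymp e^{-t}$ but inflated by the cusp geometry, and the conformal density estimate yields the mass $\asymp e^{-\d t} e^{d(t)(k-\d)}$. Concretely I would (a) reduce to $p$ being a standard parabolic at $\infty$ in upper-half-space-type coordinates, (b) use that $\ps_o$ near $p$ is, up to the conformal factor, a $\G_p$-invariant measure whose total mass on a ``fundamental box'' is finite (since $\d>k/2$ — Sullivan/Dal'bo--Otal--Peigné), (c) estimate the shadow as a box of the appropriate size in these coordinates and count the $\G_p$-translates it meets.

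\textbf{Step 3: gluing and uniformity.} Finally, one checks that the two regimes match at the horoball boundary ($d(t)=O(1)$, where both formulas give $\asymp e^{-\d t}$) and that all implied constants can be taken uniform: there are finitely many cusps $p_1,\dots,p_s$, the thick part $\mrm{Hull}(\L_\G)\setminus\bigcup H_i\G$ is compact mod $\G$, and the horoballs are disjoint and locally finite, so every $\xi(t)$ falls into exactly one of finitely many ``model'' situations. This yields the single constant $C=C(\G,o)$.

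\textbf{Main obstacle.} The hard part is Step 2 — making the cusp estimate uniform and getting the exponent $d(t)(k(\xi(t))-\d)$ exactly right. One must carefully relate three things that are only coarsely comparable: the penetration depth $d(t)$, the time parameter $t$ (for the Euclidean-type size $e^{-t}$ of the shadow as viewed from $o$), and the orbital growth rate of the parabolic subgroup (which enters through the definition of rank with its factor of $2$). Controlling the distortion of shadows under deep excursions, and verifying that the conformal density has the claimed local behavior near each parabolic point (which uses the global finiteness of $\bms$, hence $\d$ strictly exceeds half the maximal cusp rank), is where the real work lies; the thick-part estimate in Step 1 is essentially classical.
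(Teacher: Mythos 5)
The paper does not prove this statement: Theorem~\ref{thm:global measure formula} is quoted from Schapira's work (with the real hyperbolic case due to Stratmann--Velani and the complex/quaternionic case to Newberger), so there is no internal proof to compare your reconstruction against; the theorem is used in Section~\ref{sec:doubling} as a black box.

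Judged as a reconstruction of the known argument, your outline follows the standard Sullivan/Stratmann--Velani strategy (shadow lemma in the thick part, parabolic orbit counting in the cusp excursion, gluing at the horoball boundary), which is indeed how the cited proofs proceed. Two caveats. First, there is a quantitative slip at the heart of Step 2: with the paper's normalization the rank is $k=2\d_{\G_p}$, so the orbit counting function of $\G_p$ in \emph{hyperbolic} distance grows like $e^{kn/2}=e^{\d_{\G_p}n}$, not $e^{kn}$; the exponent $k$ only appears when one counts in the horospherical (Euclidean, resp.\ Cygan) metric, since a displacement $R$ along a horosphere corresponds to hyperbolic distance $\approx 2\log R$. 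Your parenthetical shows you know a factor of two enters through the rank normalization, but as written the counting estimate is misplaced, and the exponent $d(t)(k(\xi(t))-\d)$ in the formula is exactly sensitive to this bookkeeping. Second, what you defer as the ``main obstacle'' is the actual content of the theorem: the two-sided bound must also cover the exit leg of a cusp excursion (where your claim $t-t_0\asymp d(t)$ fails, since the depth decreases while $t-t_0$ keeps growing), and in the non-real-hyperbolic cases the parabolic subgroups sit inside $MN^+$ with $N^+$ nilpotent of step two, so the box-counting in your step (c) must be carried out in the anisotropic Cygan geometry, possibly with non-trivial rotational parts of the parabolic elements. This is precisely why the general statement is due to Newberger and Schapira rather than a routine adaptation of Stratmann--Velani. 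So your plan identifies the right mechanism, but the uniform cusp estimate, which is the theorem's real content, is not yet carried out.
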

A version of Theorem~\ref{thm:global measure formula} was obtained earlier for real hyperbolic spaces in~\cite{StratmannVelani} and for complex and quaternionic hyperbolic spaces in~\cite{Newberger}.

\subsection{Proof of Proposition~\ref{prop:doubling}}
Assume that $\s \leq 1$, the proof in the case $\s> 1$ is similar.

Fix a non-negative $C^\infty$ bump function $\psi$ supported inside $N_{1}^+$ and having value identically $1$ on $N_{1/2}^+$.
Given $\e>0$, let
$   \psi_{\e}(n) =\psi(\Ad(g_{-\log \e})(n))$. 
Note that the condition that $\psi_\e(\id)=\psi(\id)=1$ implies that for $x\in X$ with $x^+\in \L$, 
\begin{equation}
    \mu_x^u(\psi_\e)>0, \qquad \forall \e>0.
\end{equation}
Note further that for any $r>0$, we have that
$\chi_{N_r^+}\leq  \psi_r\leq \chi_{N_{2r}^+}$.

First, we establish a uniform bound over $x\in\Omega$.
Consider the following function $f_\s:\Omega \to (0,\infty)$:
\begin{equation*}
    f_\s(x) = \sup_{0<r\leq 1} \frac{\mu_x^u(\psi_{\s r})}{\mu_x^u(\psi_r)}.
\end{equation*}
We claim that it suffices to prove that
\begin{equation}\label{eq:bound on f_sigma}
    f_\s(x)\ll \s^{\Delta},
\end{equation}
uniformly over all $x\in \Omega$ and $0<\s\leq 1$.
Indeed, fix some $0< r\leq 1$ and $0<\s\leq 1$. 
By enlarging our implicit constant if necessary, we may assume that $\s\leq 1/4$.
From the above properties of $\psi$, we see that
\begin{align*}
    \mu_x^u(N_{\s r}^+) \leq \mu_x^u(\psi_{(4\s) (r/2)})
    \ll \s^{\Delta} \mu_x^u(\psi_{ r/2})
    \leq \s^{\Delta} \mu_x^u(N_r^+).
\end{align*}

Hence, it remains to prove~\eqref{eq:bound on f_sigma}.
By~\cite[Lemme 1.16]{Roblin}, for each given $r>0$, the map $x\mapsto \mu_x^u(\psi_{\s r})/\mu_x^u(\psi_r)$ is a continuous function on $\Omega$.
Indeed, the weak-$\ast$ continuity of the map $x\mapsto \mu_x^u$ is the reason we work with bump functions instead of indicator functions directly.
Moreover, continuity of these functions implies that $f_\s$ is lower semi-continuous.

The crucial observation regarding $f_\s$ is as follows. In view of~\eqref{eq:g_t equivariance}, we have for $t\geq 0$,
\begin{equation*}
    f_\s(g_tx) =\sup_{0<r\leq e^{-t}} \frac{\mu_x^u(\psi_{\s r})}{\mu_x^u(\psi_r)} \leq f_\s(x).
\end{equation*}
Hence, for all $B\in \R$, the sub-level sets $\Omega_{<B}:=\set{f_\s <B}$ are invariant by $g_t$ for all $t\geq 0$.
On the other hand, the restriction of the (forward) geodesic flow to $\Omega$ is topologically transitive.
In particular, any invariant subset of $\Omega$ with non-empty interior must be dense in $\Omega$.
Hence, in view of the lower semi-continuity of $f_\s$, to prove~\eqref{eq:bound on f_sigma}, it suffices to show that $f_\s$ satisfies~\eqref{eq:bound on f_sigma} for all $x$ in some open subset of $\Omega$.

Recall we fixed a basepoint $o\in \H_\K^d$ belonging to the hull of the limit set.
Let $x_o\in G$ denote a lift of $o$ whose projection to $G/\G$ belongs to $\Omega$.
Let $E$ denote the unit neighborhood of $x_o$.
We show that $E\cap \Omega \subset \set{f_\s \ll \s^{\Delta}}$. 
Without loss of generality, we may further assume that $\s< 1/2$, by enlarging the implicit constant if necessary.

First, note that the definition of the conditional measures $\mu_x^u$ immediately gives
   \begin{equation*}
       \mu_x^u|_{N_{4}^+} \asymp \ps_o|_{\left(N_{4}^+\cdot x\right)^+},\qquad  \forall x\in E.
   \end{equation*}
   It follows that
   \begin{equation*}
   \ps_o((N_{r}^+\cdot x)^+) \ll
       \mu_x^u(\psi_r)\ll \ps_o((N_{2r}^+\cdot x)^+),
   \end{equation*}
  for all $0\leq r\leq 2$ and $x\in E$.
  Hence, it will suffice to show that for all $0<\s<1$,
  \begin{equation*}
      \frac{\ps_o((N_{\s r}^+\cdot x)^+)}{\ps_o((N_{ r}^+\cdot x)^+)} \ll \s^{\Delta}.
  \end{equation*}

   To this end, there is a constant $C_1\geq 1$ such that the following holds; cf.~\cite[Theorem 2.2]{Corlette}\footnote{The quoted result in~\cite{Corlette} is stated in terms of the so-called Carnot-Caratheodory metric $d_{cc}$ on $N^+$, which enjoys the same scaling property in~\eqref{eq:scaling of Carnot metric}. In particular, this metric is Lipschitz equivalent to the Cygan metric in~\eqref{eq:Carnot} by compactness of the unit sphere in the latter and continuity of the map $n\mapsto d_{cc}(n,\id)$.}.
   For all $x\in E$, if $\xi = x^+$, then, the shadow $S_r=\set{(nx)^+: n\in N_r^+}$ satisfies 
   \begin{equation}\label{eq:approximate shadow}
      \Ocal(\xi(|\log r|+C_1))\subseteq  S_r \subseteq \Ocal(\xi(|\log r|-C_1)),
      \qquad \forall 0<r\leq 2.
   \end{equation}
   Here, and throughout the rest of the proof, if $s\leq 0$, we use the convention 
   \[\Ocal(\xi(s)) = \Ocal(\xi(0))=\partial \H_\K^d.\]
   Fix some arbitrary $x\in E$ and let $\xi=x^+$.
   To simplify notation, set for any $t, r >0$,
   \begin{align*}
        t_\s &:= \max\set{|\log \s r|-C_1,0},
        &t_r &:=  |\log r|+C_1,
        \\
       d(t) &:= \dist(\xi(t),\G\cdot o),  
       &       k(t) &:= k(\xi(t)),
   \end{align*}
   where $k(\xi(t))$ is as in the notation of Theorem~\ref{thm:global measure formula}. 
   
   By further enlarging the implicit constant, we may assume for the rest of the argument that
   \begin{equation*}
       -\log \s > 2C_1.
   \end{equation*}
    This insures that $t_\s \geq t_r$ and avoids some trivialities.

    Let $0<r\leq 1$ be arbitrary.
    We define constants $\s_0:=\s\leq \s_1 \leq \s_2 \leq \s_3:=1$ as follows.
    If $\xi(t_\s)$ is in the complement of the cusp neighborhoods, we set $\s_1=\s$.
    Otherwise, we define $\s_1$ by the property that $\xi(|\log \s_1 r|)$ is the first point along the geodesic segment joining $\xi(t_\s)$ and $\xi(t_r)$ (traveling from the former point to the latter) meets the boundary of the horoball containing $\xi(t_\s)$.
    Similarly, if $\xi(t_r)$ is outside the cusp neighborhoods,
    we set $\s_2=1$.
    Otherwise, we define $\s_2$ by the property that $\xi(|\log \s_2 r|)$ is the first point along the same segment, now traveling from $\xi(t_r)$ towards $\xi(t_\s)$, which intersects the boundary of the horoball containing $\xi(t_r)$.
    Define
    \begin{equation*}
        t_{\s_0}:= t_\s, \qquad  t_{\s_3}:= t_r, \qquad
        t_{\s_i}:= |\log \s_i r| \quad \text{for } i=1,2.
    \end{equation*}
    
    In this notation, we first observe that
    $  k(t_{\s_1})=k(t_{\s_2})=\d$. 
    In particular, Theorem~\ref{thm:global measure formula} yields
    \begin{equation*}
     \frac{\ps_o(S_{\s_1 r})}{\ps_o(S_{\s_2 r})}
        \ll \left(\frac{\s_1}{\s_2}\right)^\d.
    \end{equation*}

    Note further that since geodesics in $\H_\K^d$ are unique distance minimizers, we have that the distance between $\xi(t_{\s_i})$ and $\xi(t_{\s_{i+1}})$ is equal to  $|t_{\s_i}-t_{\s_{i+1}}|$, for $i=0,2$.
    Moreover, by our choice of basepoint $o$ and standard horoballs (cf.~Remark~\ref{rem:o outside cusp}), we have that
    \begin{align*}
        \G\cdot o \cap \bigcup_{j=1}^s H_j = \emptyset.
    \end{align*}
    Let $H(\s_0)$ denote the element of the collection of standard horoballs $\Gamma \cdot H_j$, $j=1,\dots, s$, which contains the point $\xi(t_{\s_0})$ if the latter point is inside a cusp neighborhood, and otherwise set $H(\s_0)$ to be the unit ball around $o$.
    Then, there is a constant $C_2\geq 1$, depending only on on the constant $C_1$ as well as 
    the distance between the orbit $\Gamma \cdot o$ and the standard horoballs $H_j$, such that
    \begin{align*}
        d(t_{\s_0}) 
        &\leq \dist(\xi(t_{\s_0}), \partial H(\s_0) ) 
        + \dist ( \partial H(\s_0) , \Gamma \cdot o)
        \nonumber\\
        &\leq \dist( \xi(t_{\s_0}), \xi(t_{\s_{1}})) 
        + \dist ( \partial H(\s_0) , \Gamma \cdot o) 
        \leq -\log (\s_0/\s_{1}) + C_2,
    \end{align*}
    where $\partial H(\s_0)$ denotes the boundary of $H(\s_0)$.
    Similarly, we also get that
    \begin{equation*}
        d(t_{\s_3}) \leq 
         \dist( \xi(t_{\s_2}), \xi(t_{\s_{3}}) ) +C_2 
         \leq -\log (\s_2/\s_{3})+C_2.
    \end{equation*}

    Hence,
    it follows using Theorem~\ref{thm:global measure formula} and the above discussion that
    \begin{align*}
        \frac{\ps_o(S_{\s_0 r})}{\ps_o(S_{\s_1r})}
        \ll 
        \left(\frac{\s_0}{\s_1}\right)^\d 
        e^{d(t_{\s_0})(k(t_{\s_0})-\d)} \ll
        \begin{cases}
        \left(\frac{\s_0}{\s_1}\right)^{2\d - k(t_{\s_0})} & \text{ if } k(t_{\s_0}) \geq \d, \\
        \left(\frac{\s_0}{\s_1}\right)^{\d} & \text{ otherwise}.
        \end{cases}
    \end{align*}
    Similarly, we obtain
    \begin{align*}
        \frac{\ps_o(S_{\s_2 r})}{\ps_o(S_{\s_3 r})}
         \ll \left(\frac{\s_2}{\s_3}\right)^\d 
        e^{-d(t_{\s_3})(k(t_{\s_3})-\d)} \ll
        \begin{cases}
            \left(\frac{\s_2}{\s_3}\right)^{k(t_{\s_3})}
            & \text{ if } k(t_{\s_3}) \leq \d, \\
            \left(\frac{\s_2}{\s_3}\right)^{\d} & \text{ otherwise}.
        \end{cases}
    \end{align*}
    In all cases, we get for $i=0,1,2$ that
    \begin{align*}
         \frac{\ps_o(S_{\s_i r})}{\ps_o(S_{\s_{i+1} r})} \ll 
         \left(\frac{\s_i}{\s_{i+1}}\right)^\Delta,
    \end{align*}
    where $\Delta$ is as in the statement of the proposition.
    Therefore, using the following trivial identity
    \begin{equation*}
        \frac{\ps_o(S_{\s r})}{\ps_o(S_r)}
        =\frac{\ps_o(S_{\s_0 r})}{\ps_o(S_{\s_1r})} \frac{\ps_o(S_{\s_1 r})}{\ps_o(S_{\s_2 r})}
        \frac{\ps_o(S_{\s_2 r})}{\ps_o(S_r)},
    \end{equation*}
    we see that
    $  f(x)\ll \s^{\Delta}$.
    As $x\in E$ was arbitrary, we find that $E\subset \set{f_\s \ll \s^{\Delta}}$, thus concluding the proof in the case $\s\leq 1$.
    Note that in the case $\s>1$, the constants $\s_i$ satisfy $\s_i/\s_{i+1}\geq 1$, so that combining the $3$ estimates requires taking the maximum over the exponents, yielding the bound with $\Delta_+$ in place of $\Delta$ in this case.

Now, let $r\in (0,1]$ and suppose $x=u^-y$ for some $y\in \Omega$ and $u^-\in N^-_2$.
By~\cite[Theorem 2.2]{Corlette}, the analog of~\eqref{eq:approximate shadow} holds, but with shadows from the viewpoint of $x$ and $y$, in place of the fixed basepoint $o$. Recalling the map $n\mapsto u^+(n)$ in~\eqref{eq:switching order of N- and N+}, one checks that this implies that this map is Lipschitz on $N_1^+$ with respect to the Cygan metric, with Lipschitz constant $\asymp C_1$. Moreover, the Jacobian of the change of variables associated to this map with respect to the measures $\mu_x^u$ and $\mu_y^u$ is bounded on $N_1^+$, independently of $y$ and $u^-$; cf.~\eqref{eq:stable equivariance} for a formula for this Jacobian.
Hence, the estimates for $x\in N_2^-\Omega$ follow from their counterparts for points in $\Omega$.


\section{Margulis Functions In Infinite Volume}
	\label{section: height function rank 1}

    We construct Margulis functions on $\Omega$ which allow us to obtain quantitative recurrence estimates to compact sets.
    Our construction is similar to the one in~\cite{BQ-RandomWalkRecurrence} in the case of lattices in rank $1$ groups.
    We use geometric finiteness of $\G$ to establish the analogous properties more generally.
    The idea of Margulis functions originated in~\cite{EskinMargulisMozes}.
    
    Throughout this section, we assume $\G$ is a non-elementary, geometrically finite group containing parabolic elements. The following is the main result of this section.
    A similar result in the special case of quotients of $\SL$ follows from combining Lemma 9.9 and Proposition 7.6 in~\cite{MohammadiOh-Isolation}.

    \begin{thm}\label{thm:Margulis function}
    Let $\Delta >0$ denote the constant in~\eqref{eq:Delta}.
    For every $0<\b<\Delta/2$,
    there exists a proper function $V_\b:N_1^-\Omega\to \R_+$ such that the following holds.
    There is a constant $c \geq 1$ such that for all $x\in N_1^-\Omega$ and $t \geq 0$, 
     \begin{equation*}
         \frac{1}{\mu_x^u(N_1^+)} \int_{N_1^+} V_\b(g_tnx)\;d\mu_x^u(n) \leq c e^{-\b  t}V_\b(x) + c.
     \end{equation*}
    \end{thm}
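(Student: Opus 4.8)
The plan is to construct $V_\b$ as a power of a ``height function'' that measures penetration into the cusp neighborhoods, following the template of Margulis functions but adapted to the infinite-volume, fractal setting. Concretely, for $x\in N_1^-\Omega$ let $\mrm{ht}(x)$ measure how deep the geodesic endpoint $x^+$ (or the footpoint of $x$) sits inside one of the standard horoballs $H_i\G$, normalized so that $\mrm{ht}\equiv 1$ outside the union of cusp neighborhoods and $\mrm{ht}(g_t x)$ grows like $e^{t}$ while $x$ travels into a horoball and decays like $e^{-t}$ while leaving it. One then sets $V_\b(x) := \mrm{ht}(x)^{\b}$ for the given $0<\b<\Delta/2$ (possibly with a bounded multiplicative adjustment accounting for the rank of the cusp, chosen so that the exponent interacts correctly with $\Delta$). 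Properness of $V_\b$ is immediate from properness of $\mrm{ht}$, which in turn follows from property (4) of the standard horoballs.

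The core of the argument is the contraction inequality, and it reduces to a \emph{one-cusp local estimate}: for $x$ whose forward orbit is about to enter (or is currently inside) a single horoball $H$, estimate
\begin{equation*}
    \frac{1}{\mu_x^u(N_1^+)} \int_{N_1^+} \mrm{ht}(g_t n x)^\b \; d\mu_x^u(n).
\end{equation*}
Here one decomposes $N_1^+$ according to how long the orbit $g_s n x$ stays inside $H$ for $s\in[0,t]$: the set of $n$ for which the orbit is still inside $H$ at time $t$ having traveled depth roughly $\ell$ is, by the change of variables \eqref{eq:approximate shadow} relating $N_r^+$-balls to shadows, comparable to a shell $N_{r_1}^+\setminus N_{r_2}^+$ with $r_1/r_2 \asymp e^{-\ell}$ type scales; its $\mu_x^u$-measure is controlled by Proposition~\ref{prop:doubling} (Global Doubling and Decay). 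The decay exponent $\Delta$ coming from that proposition, weighed against the growth $e^{\b\ell}$ of $\mrm{ht}^\b$ on that shell and the factor $e^{-\d_\G t}$ (really $e^{\d_\G t}$ via \eqref{eq:g_t equivariance} combined with the $N^+$-ball shrinking), must leave a surplus exponent; this is precisely where the hypothesis $\b<\Delta/2$ is consumed. Summing the geometric series over depths $\ell$ then yields a bound of the shape $c\, e^{-\b t}\mrm{ht}(x)^\b + c$ on the contribution of that cusp, and combining over the finitely many cusps (which are disjoint, so at most one is relevant along any short orbit segment) plus the bounded contribution of the thick part gives the claimed inequality. A semigroup/iteration step upgrades the estimate from a single well-chosen time $t_0$ to all $t\geq 0$ in the standard way.

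The main obstacle I expect is controlling the \emph{shape} and measure of the relevant subsets of $N_1^+$ uniformly, in exactly the way the paper flags in its outline for the companion linear-expansion result: the sets ``orbit enters cusp $H$ to depth $\ge \ell$ by time $t$'' are not literal metric balls in $N^+$ but preimages of shadows, and in variable (non-real-hyperbolic) curvature the metric $d_{N^+}$ is only a Cygan quasi-metric and $\Ad(g_t)$ acts non-conformally on $\mf n^+ = \mf n^+_\a \oplus \mf n^+_{2\a}$. Matching the horoball geometry to $N^+$-balls via \eqref{eq:approximate shadow} and \cite[Theorem 2.2]{Corlette}, and then applying Proposition~\ref{prop:doubling} with genuinely uniform constants (including the passage from $\Omega$ to $N_1^-\Omega$ via the Lipschitz holonomy maps $u^+$), is the technically delicate part; the rest is a bookkeeping of geometric series once the measure estimates for these shells are in hand. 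A secondary subtlety is the correct handling of cusps of different ranks, which forces the use of the combined exponent $\Delta = \min\{\d,2\d-k_{\max},k_{\min}\}$ rather than a single $\d_\G$, and requires checking that the worst-case cusp still yields a positive surplus when $\b<\Delta/2$.
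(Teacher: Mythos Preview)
Your strategy—defining $V_\b$ as a power of a geometric horoball-depth function and decomposing $N_1^+$ by excursion depth—is a viable template close to classical Margulis-function constructions, but the paper takes a genuinely different route. It sets $V_\b(g\G) = \max_w \norm{w}^{-\b/\chi_\K}$ over vectors $w$ in finitely many $g\G$-orbits of highest-weight vectors in the adjoint representation (Eq.~\eqref{defn: height function}), one per cusp. An isolation lemma (Prop.~\ref{prop:height function properties}\eqref{item:isolation}) shows that deep in a cusp the maximum is realized by a \emph{unique} vector $v_\star$, which stays unique along the whole piece $\{g_{t_0}nx : n \in N_1^+\}$; this reduces contraction to the single-vector linear estimate of Prop.~\ref{prop:linear expand}. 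That proposition is exactly where the paper resolves the obstacle you flag: after projecting to the top weight space, the relevant sublevel set $\{n : \norm{\pi_+(n\cdot v_\star)}^2 \le \e\}$ is shown to have diameter $\le C_0\,\e^{1/4\chi_\K}$ via a convexity argument (Claim~\ref{claim:diameter estimate}, using that this polynomial has at most one zero on $N^+$ by the Bruhat decomposition, Cor.~\ref{cor:1 point}), so Prop.~\ref{prop:doubling} applies to a genuine Cygan ball. The threshold $\b < \Delta/2$ then drops out of this diameter exponent combined with a layer-cake integral—not from any $e^{\pm\d_\G t}$ bookkeeping as you suggest, which does not by itself produce the factor $1/2$. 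Your shell description (with shells centered at the $n_p \in N^+$ pointing toward each parabolic point, not at the identity as written) can probably be pushed through in the real-hyperbolic case, but the non-conformal $\Ad(g_t)$-action on $\mf{n}^+_\a \oplus \mf{n}^+_{2\a}$ is exactly why the paper detours through linear algebra in general; you correctly name this as the main obstacle but do not resolve it.
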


    Our key tool in establishing Theorem~\ref{thm:Margulis function} is Proposition~\ref{prop:linear expand}, which is a statement regarding average expansion of vectors in linear represearntations of $G$.
    The fractal nature of the conditional measures $\mu_x^u$ poses serious difficulties in establishing this latter result.
    
    \subsection{Construction of Margulis functions}

    Let $p_1,\dots, p_d\in \L$ be a maximal set of inequivalent parabolic fixed points and for each $i$, let $\G_i$ denote the stabilizer of $p_i$ in $\G$.
    Let $P_i<G$ denote the parabolic subgroup of $G$ fixing $p_i$.
    Denote by $U_i$ the unipotent radical of $P_i$ and by $A_i$ a maximal $\R$-split torus inside $P_i$.
    Then, each $U_i$ is a maximal connected unipotent subgroup of $G$ admitting a closed (but not necessarily compact) orbit from identity in $G/\G$.
    As all maximal unipotent subgroups of $G$ are conjugate, we fix elements $h_i\in G$ so that $h_iU_ih_i^{-1}=N^+$.
    Note further that $G$ admits an Iwasawa decomposition of the form $G=KA_iU_i$ for each $i$, where $K$ is our fixed maximal compact subgroup.

    Denote by $W$ the adjoint representation of $G$ on its Lie algebra.
    The specific choice of representation is not essential for the construction, but is convenient for making some parameters more explicit.
    We endow $W$ with a norm that is invariant by $K$.
    
    Let $0\neq v_0\in W$ denote a vector that is fixed by $N^+$.
     In particular, $v_0$ is a highest weight vector for the diagonal group $A$ (with respect to the ordering determined by declaring the roots in $N^+$ to be positive).
    Let $v_i=h_iv_0/\norm{h_i v_0}$.
    Note that each of the vectors $v_i$ is fixed by $U_i$ and is a weight vector for $A_i$.
    In particular, there is an additive character $\chi_i:A_i\to \R$ such that
    \begin{equation}\label{eq:chi_i}
        a \cdot v_i = e^{\chi_i(a)}v_i, \qquad\forall a\in A_i.
    \end{equation}
    We denote by $A_i^+$ the subsemigroup of $A_i$ which expands $U_i$ (i.e.~the positive Weyl chamber determined by $U_i$).
    We let $\a_i:A_i\to \R$ denote the simple root of $A_i$ in $\mrm{Lie}(U_i)$.
    Then,
    \begin{equation}\label{eq:multiple of simple root}
    \chi_i = \chi_\K\a_i, \qquad 
        \chi_\K =
        \begin{cases}
            1, & \text{if } \K=\R, \\
            2 & \text{if } \K=\C, \H,\mathbb{O}.
        \end{cases}
    \end{equation}

    Given $\b>0$, we define a function $V_\b: G/\G \r \R_+$ as follows:
    \begin{equation} \label{defn: height function}
    V_\b(g\G) := \max_{w \in \bigcup_{i=1}^d g\G\cdot v_i } \norm{w}^{-\b/\chi_\K}.
    \end{equation}
    The fact that $V_\b(g\G)$ is indeed a maximum will follow from Lemma~\ref{lem:isolation}.

\subsection{Linear expansion}

    The following result is our key tool in establishing the contraction estimate on $V_\b$ in Theorem~\ref{thm:Margulis function}.
    \begin{prop}\label{prop:linear expand}
    For every $0\leq \b<\Delta/2$, there exists $C=C(\b)\geq 1$ so that for all $t>0$, $x\in N_1^-\Omega$, and all non-zero vectors $v$ in the orbit $G\cdot v_0 \subset W$, we have 
    \begin{equation*}
       \frac{1}{\mu_x^u(N_1^+)} \int_{N_1^+} \norm{g_t n \cdot v}^{-\b/\chi_\K} \;d\mu^u_x(n) \leq C e^{-\b  t} \norm{v}^{-\b/\chi_\K}.
    \end{equation*} 
    \end{prop}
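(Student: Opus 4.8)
## Proof Proposal

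The plan is to reduce the integral over $N_1^+$ to a sum of localized pieces at a well-chosen scale, on each of which the Cygan metric geometry lets us compare $\|g_t n \cdot v\|$ to its value at a single base point, and then to exploit the decay of the leafwise measure $\mu_x^u$ on small balls (Proposition~\ref{prop:doubling}) to absorb the cost of this comparison, with the gain coming from the scaling property~\eqref{eq:scaling of Carnot metric} of $g_t$ on $N^+$ together with the hypothesis $\b < \Delta/2$.

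First I would record the basic estimate for the norm $\|g_t n \cdot v\|$ along a single unstable orbit. Write $v = g\cdot v_0$ for some $g\in G$ and decompose $g_t n g$ using the Iwasawa/Bruhat decomposition; since $v_0$ is the highest weight vector for $A$ and is fixed by $N^+$, the norm $\|g_t n g \cdot v_0\|$ is governed by the $A$-component of $g_t n g$ in an appropriate decomposition $G = N^- A M N^+$ (or $G=KAN^+$), and more concretely $\|g_t n \cdot v\|$ behaves, up to bounded multiplicative constants, like $e^{t}$ times a function of $n$ that is comparable to $\max(1, d_{N^+}(n, n_0)^{-1})^{-1}$-type expressions — i.e. it vanishes to first order (in the appropriate quasi-norm sense) only at the at-most-one point $n_0 \in N^+$ where $n_0 g$ lands in the ``bad'' Bruhat cell. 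The upshot I want is a clean inequality of the shape
\begin{equation*}
    \|g_t n \cdot v\| \gg e^{t} \, \|v\| \, \min\left\{1,\ e^{-t}\, d_{N^+}(n,n_0)\right\}
\end{equation*}
valid for $n \in N_1^+$, with $n_0 = n_0(v)$ possibly outside $N_1^+$ (in which case the $\min$ is just $1$ on the relevant range and the estimate is trivial). This is the step where the non-conformality in variable curvature could bite, but because we only need the norm of the \emph{highest weight vector} $v_0$ — whose orbit map $N^+ \to W$, $n\mapsto n\cdot v_0$ is a polynomial embedding of controlled shape — the quasi-norm $\|\cdot\|'$ on $\mf{n}^+$ from~\eqref{eq:Carnot norm} is exactly matched to the geometry, and I expect this to go through without the ``shape of sublevel sets'' difficulty alluded to in the text (that difficulty arises for general $v$, not for $v_0$; hence the restriction to $v \in G\cdot v_0$ in the statement).

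With such a pointwise bound in hand, the integral becomes, up to constants,
\begin{equation*}
    e^{-\b t/\chi_\K} \|v\|^{-\b/\chi_\K} \cdot \frac{1}{\mu_x^u(N_1^+)} \int_{N_1^+} \max\left\{1,\ e^{t} d_{N^+}(n,n_0)^{-1}\right\}^{\b/\chi_\K} d\mu_x^u(n),
\end{equation*}
wait — being careful with exponents, the factor $\|g_t n\cdot v\|^{-\b/\chi_\K}$ produces $e^{-\b t/\chi_\K}$ and an integrand comparable to $d_{N^+}(n,n_0)^{-\b/\chi_\K}$ truncated at scale $e^{-t}$. So the task reduces to bounding
\begin{equation*}
    \int_{N_1^+ \cap B(n_0, 1)} \min\left\{ e^{\b t /\chi_\K},\ d_{N^+}(n,n_0)^{-\b/\chi_\K}\right\} d\mu_x^u(n).
\end{equation*}
I would dyadically decompose the ball $B(n_0,1)$ into annuli $d_{N^+}(n,n_0) \asymp 2^{-k}$ for $0 \le k \lesssim t/\log 2$, on which the integrand is $\asymp 2^{k\b/\chi_\K}$, plus the core ball of radius $e^{-t}$ on which it is $\asymp e^{\b t/\chi_\K}$. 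By Proposition~\ref{prop:doubling} (applied with $x$ ranging over $N_2^-\Omega \supseteq N_1^-\Omega$, after noting $n_0$ may be taken as a center in the limit set via the usual $N^+$-equivariance~\eqref{eq:N equivariance}, or else handling the case $n_0\notin \supp\mu_x^u$ trivially), each annulus has mass $\ll 2^{-k\Delta}\,\mu_x^u(B(n_0,1)) \ll 2^{-k\Delta}\,\mu_x^u(N_1^+)$, and the core ball has mass $\ll e^{-\Delta t}\mu_x^u(N_1^+)$. Summing the geometric series $\sum_k 2^{k(\b/\chi_\K - \Delta)}$ — which converges precisely because $\b/\chi_\K < \Delta/2 \le \Delta$, in fact one only needs $\b/\chi_\K<\Delta$ here — gives a bound of $O(\mu_x^u(N_1^+))$ from the annuli, while the core contributes $e^{(\b/\chi_\K - \Delta)t}\mu_x^u(N_1^+) = o(\mu_x^u(N_1^+))$. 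Altogether the normalized integral is $\ll e^{-\b t/\chi_\K}\|v\|^{-\b/\chi_\K}$.

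Hmm, but the target exponent in the statement is $e^{-\b t}$, not $e^{-\b t/\chi_\K}$. Let me reconcile: in the complex/quaternionic/Cayley cases $\chi_\K = 2$, so $e^{-\b t/\chi_\K} = e^{-\b t/2}$, which is \emph{weaker} than claimed. The resolution must be that the correct pointwise growth is $\|g_t n\cdot v_0\| \gg e^{\chi_\K t}\|v\|\cdot(\cdots)$ — i.e. the highest weight $v_0$ has $A$-weight equal to $\chi_\K$ times the simple-root normalization, consistent with~\eqref{eq:multiple of simple root} — and simultaneously the $g_t$-scaling on $N^+$ should be read with the same normalization, so that the truncation scale and the annuli decay rate both rescale by $\chi_\K$ and the $\chi_\K$'s cancel, leaving exactly $e^{-\b t}$. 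So in the writeup I would fix the normalization of $A$ once and for all (matching~\eqref{eq:chi_i}–\eqref{eq:multiple of simple root} and the Cygan scaling~\eqref{eq:scaling of Carnot metric}) at the outset, so that $\|g_t n \cdot v_0\| \asymp e^{\chi_\K t}$ for $n$ in a fixed compact set away from the bad locus, and the annulus scale is $e^{-\chi_\K t}$; then the geometric series is $\sum_k 2^{k(\b/\chi_\K-\Delta)}$ contributing $O(1)$ and the core ball of radius $e^{-\chi_\K t}$ has $\mu_x^u$-mass $\ll e^{-\chi_\K\Delta t}$, yielding core contribution $e^{\chi_\K t\b/\chi_\K}e^{-\chi_\K\Delta t} = e^{\b t - \chi_\K\Delta t} \ll e^{-\b t}$ since $\b < \chi_\K\Delta - \b$ i.e. $\b < \chi_\K\Delta/2$, which holds as $\b<\Delta/2\le \chi_\K\Delta/2$. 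Good — the $\b<\Delta/2$ hypothesis is exactly what makes both pieces work with room to spare (the constraint is genuinely needed, roughly, for $2\b < \chi_\K\Delta$, and in the worst arithmetic it degrades to $2\b<\Delta$).

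\medskip

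\textbf{Summary of steps.}
\emph{(1)} Normalize $A$ and the Cygan metric compatibly; reduce to $v = g\cdot v_0$ and isolate the ``bad point'' $n_0 = n_0(v) \in N^+$ (the unique $n$ with $n g$ in the non-expanding Bruhat cell), handling $n_0 \notin N_2^+$ or $n_0\notin\supp\mu_x^u$ by a trivial estimate.
\emph{(2)} Prove the pointwise lower bound $\|g_t n\cdot v\| \gg e^{\chi_\K t}\|v\|\min\{1, e^{-\chi_\K t}d_{N^+}(n,n_0)\}$ on $N_1^+$ using that $v_0$ is the $N^+$-fixed highest weight vector and that $n\mapsto n\cdot v_0$ is a polynomial embedding adapted to $\|\cdot\|'$.
\emph{(3)} Substitute, dyadically decompose $B(n_0,1)\cap N_1^+$ into annuli plus a core ball of radius $e^{-\chi_\K t}$, bound the $\mu_x^u$-mass of each piece by Proposition~\ref{prop:doubling} (using $N^+$-equivariance~\eqref{eq:N equivariance} to recenter), and sum the resulting geometric series, which converges thanks to $\b < \Delta/2$, to obtain $\ll e^{-\b t}\|v\|^{-\b/\chi_\K}$ after normalizing by $\mu_x^u(N_1^+)$.

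\medskip

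\textbf{Main obstacle.} The crux is step (2): establishing the pointwise norm estimate with a \emph{uniform} implicit constant as $v$ ranges over the full orbit $G\cdot v_0$ and as $x$ ranges over $N_1^-\Omega$ (non-compact). The uniformity in $v$ requires understanding how the bad point $n_0(v)$ and the transversality of the orbit $n\mapsto n\cdot v$ to the vanishing locus depend on $v$; the uniformity in $x$ is exactly what Proposition~\ref{prop:doubling} was built to provide, but one must check the geometric comparison of shadows/Cygan balls survives the passage through the $N^-$-coordinate, as in the last paragraph of the proof of Proposition~\ref{prop:doubling}. I expect step (3) to be routine once (2) is clean, and step (1) to be purely bookkeeping.
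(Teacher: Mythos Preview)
Your overall architecture—layer-cake/dyadic decomposition of the integrand followed by Proposition~\ref{prop:doubling} on the pieces—is exactly the paper's. The paper writes it as Fubini on $\int \|\pi_+(n\cdot v)\|^{-\b/\chi_\K}\,d\mu_x^u$ and bounds the sublevel sets $S(v,\e)=\{n:\|\pi_+(n\cdot v)\|^2\le\e\}$, which is equivalent to your annulus sum; and the constraint $\b<\Delta/2$ enters for the same reason (the polynomial $p_v$ has degree $\le 4\chi_\K$, so the diameter bound on $S(v,\e)$ is $\e^{1/4\chi_\K}$, yielding integrand $\asymp d_{N^+}(n,n_0)^{-2\b}$ after taking $-\b/\chi_\K$ powers, and $\int d^{-2\b}\,d\mu_x^u<\infty$ precisely when $2\b<\Delta$).

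The genuine gap is your step~(2), and your diagnosis of the difficulty is backwards. You write that the shape-of-sublevel-sets issue ``arises for general $v$, not for $v_0$'' and therefore disappears under the restriction $v\in G\cdot v_0$. But $N^+$ \emph{fixes} $v_0$, so $n\mapsto n\cdot v_0$ is constant—there is no ``polynomial embedding adapted to $\|\cdot\|'$'' to speak of. The relevant polynomial is $n\mapsto \pi_+(n\cdot v)$ for $v=g\cdot v_0$ with $g$ arbitrary, and the problem is precisely that for such $v$ the sublevel sets $S(v,\e)\cap N_1^+$ are not a priori Cygan balls (when $N^+$ is non-abelian the Cygan metric is not a norm, so even convex polynomial sublevel sets need not be metric balls). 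What the restriction to $G\cdot v_0$ buys is the uniqueness of the bad point $n_0$ (via the Bruhat decomposition; this is the paper's Corollary~5.4), not any simplification of the shape. The paper then handles the shape by (i) using the equivariance $S(nm\cdot v_1,\e)=\Ad(m^{-1})(S(v_1,\e))\cdot n^{-1}$ to reduce the uniformity over $v\in G\cdot v_0$ to just two vectors $v_0$ and $v_1=w\cdot v_0$; (ii) observing that for these fixed vectors the sublevel sets are convex in the Lie algebra for small $\e$; and (iii) slicing by lines and invoking the $(C,\a)$-good property of one-variable polynomials to get the diameter bound $\mathrm{diam}(S(v,\e)\cap N_1^+)\ll\e^{1/4\chi_\K}$. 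None of this is in your proposal, and without it your pointwise lower bound in step~(2) has no proof and no uniform constant.
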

    
    We postpone the proof of Proposition~\ref{prop:linear expand} to Section~\ref{sec:linear expand}.
    Let $\pi_+:W\to W^+$ denote the projection onto the highest weight space of $g_t$.
    The difficulty in the proof of Proposition~\ref{prop:linear expand} beyond the case $G=\SL$ lies in controlling the \emph{shape} of the subset of $N^+$ on which $\norm{\pi_+(n\cdot v)}$ is small, so that we may apply the decay results from Proposition~\ref{prop:doubling}, that are valid only for balls of the form $N_\e^+$.
    We deal with this problem by using a convexity trick.
    A suitable analog of the above result holds for any non-trivial linear representation of $G$.

    The following proposition establishes several geometric properties of the functions $V_\b$ which are useful in proving, and applying, Theorem~\ref{thm:Margulis function}.
    This result is proved in Section~\ref{sec:geometric properties}.
    
    \begin{prop}
    \label{prop:height function properties}
    Suppose $V_\b$ is as in~\eqref{defn: height function}. Then,
    	\begin{enumerate}
    	\item \label{item:V is proper} For every $x$ in the unit neighborhood of $\Omega$,
   we have that
    	\begin{equation*}
    	   \inj(x)^{-1} \ll_\G V_\b^{\chi_\K/\b}(x),
    	\end{equation*}
    	where $\inj(x)$ denotes the injectivity radius at $x$.
    In particular, $V_\b$ is proper on $\Omega$.
    	\item \label{item:log Lipschitz} For all $g\in G$ and all $x\in X$,
        	\[ \norm{g}^{-\b} V_\b(x) \leq V_\b(gx) \leq \norm{g^{-1}}^{\b} V_\b(x). \]

        \item \label{item:isolation} There exists a constant $\e_0 >0$ such that for all $x = g\G \in X$, there exists at most one vector $v \in \bigcup_i g\G \cdot v_i$ satisfying $\norm{v} \leq \e_0$. 
    	\end{enumerate}
    \end{prop}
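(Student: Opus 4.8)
The plan is to handle the three items separately: (2) is formal, while (1) and (3) both reduce to a single \emph{dictionary} comparing the norms $\norm{\Ad(g\g)v_i}$ to the depth of the cusp excursion of $g\cdot o$. For (2), write $x=h\G$ and note that the set $\bigcup_i gx\cdot v_i$ defining $V_\b(gx)$ is the image under $\Ad(g)$ of the set $\bigcup_i x\cdot v_i$ defining $V_\b(x)$; hence every vector $w$ in the latter satisfies $\norm{\Ad(g^{-1})}^{-1}\norm{w}\le\norm{\Ad(g)w}\le\norm{\Ad(g)}\norm{w}$, and raising to the power $-\b/\chi_\K$ and taking the maximum gives the claimed inequalities with exponent $\b/\chi_\K$; since the $K$-invariant norm makes $\norm{\Ad(g)}=\norm{\Ad(a)}\ge1$ for a $KAK$-factor $a$ (as $\Ad(a)$ fixes $\Lie(A)$ pointwise) and $\chi_\K\ge1$, the stated form with exponent $\b$ follows.

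The dictionary is the technical heart. Using that $v_i$ is, up to the fixed scalar $\norm{\Ad(h_i^{-1})v_0}$, the $U_i$-fixed highest-weight vector attached to $p_i$, the $K$-invariance of the norm on $W$, and the Iwasawa decomposition $G=KAN^+$ adapted to $g_t$, one computes, writing $g\g h_i^{-1}=kan$,
\[
\norm{\Ad(g\g)v_i}=\frac{\norm{\Ad(ka)v_0}}{\norm{\Ad(h_i^{-1})v_0}}=\frac{e^{\chi_\K\theta(a)}\norm{v_0}}{\norm{\Ad(h_i^{-1})v_0}},
\]
where $\theta$ is the linear form on $A$ with $\Ad(a)v_0=e^{\chi_\K\theta(a)}v_0$. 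One then identifies the Iwasawa coordinate $\theta(a)$ of $g\g h_i^{-1}$ with $-\b_{\g p_i}(o,g\cdot o)$ up to an additive $O_\G(1)$ (absorbing the displacements of the $h_i$ and, away from the horoballs, the bounded geometry of $\G\cdot o$ near the convex core; this can also be read off the shadow estimates underlying Proposition~\ref{prop:doubling}), yielding a constant $C_0=C_0(\G)\ge1$ with
\[
C_0^{-1}e^{-\chi_\K\b_{\g p_i}(o,g\cdot o)}\le\norm{\Ad(g\g)v_i}\le C_0\,e^{-\chi_\K\b_{\g p_i}(o,g\cdot o)}\qquad(\forall\,g,\g,i),
\]
where $\b_{\g p_i}(o,\cdot)$ is normalised so that $\g H_i=\{\b_{\g p_i}(o,\cdot)>T_i\}$. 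Thus $\norm{\Ad(g\g)v_i}$ is small precisely when $g\cdot o$ lies deep inside $\g H_i$, and $\norm{\Ad(g\g)v_i}\ge C_0^{-1}e^{-\chi_\K T_i}$ whenever $g\cdot o\notin\g H_i$.

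For (1), observe that $V_\b(x)^{\chi_\K/\b}=\big(\min_{i,\g}\norm{\Ad(g\g)v_i}\big)^{-1}$. By the dictionary and the disjointness of the standard horoballs recalled in Section~\ref{sec:cuspnbhd}, $g\cdot o$ is deep in at most one $\g H_i$, so this minimum lies between $C_0^{-1}e^{-\chi_\K D}$ (with $D$ the excursion depth) and a constant $C_\G$ bounding it whenever $g\cdot o$ is near the convex core; in particular $V_\b\ge c_\G>0$ on the unit neighbourhood of $\Om$. Conversely, if $\inj(x)$ is below the Margulis constant then $x$ is in the thin part of a neighbourhood of $\Om$; Margulis tubes are excluded because closed geodesics have length $\ge\ell_0(\G)>0$, so (for torsion-free $\G$; otherwise only cosmetic changes are needed) $x$ lies in a cusp, where the cross-section shrinks at rate $\asymp e^{-D}$ while the smallest vector has norm $\asymp e^{-\chi_\K D}$; as $\chi_\K\ge1$ this gives $\inj(x)\gg_\G\big(\min_{i,\g}\norm{\Ad(g\g)v_i}\big)^{1/\chi_\K}\ge\min_{i,\g}\norm{\Ad(g\g)v_i}$ once the latter is $\le1$, i.e.\ $\inj(x)^{-1}\ll_\G V_\b(x)^{\chi_\K/\b}$; when $\inj(x)$ is not small the same bound is immediate from $V_\b\ge c_\G$. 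Properness of $V_\b$ on $\Om$ then follows since $\{V_\b\le R\}\cap\Om$ is a closed subset of $\{\inj\ge cR^{-\chi_\K/\b}\}\cap\Om$, which is compact by geometric finiteness.

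For (3), fix $\e_0<\min_i\big(C_0^{-1}e^{-\chi_\K T_i}\big)$. If $\norm{g\g v_i}\le\e_0$ then the dictionary places $g\cdot o$ in $\g H_i$ at depth $\ge\chi_\K^{-1}\log(C_0\e_0)^{-1}$; since $\b_{\g p_i}$ depends only on the coset $\g\G_{p_i}$, disjointness of the standard horoballs forces all such small vectors to lie in a single coset $\g_0\G_{p_{i_0}}$, hence (using $\G_{p_{i_0}}\subset M_{i_0}U_{i_0}$ and that $U_{i_0}$ fixes $v_{i_0}$) to be $\Ad(g\g_0)$ applied to the holonomy orbit $M_{i_0}\cdot v_{i_0}$; this is a finite set of vectors of mutually comparable norm, reducing to a single vector---hence the stated uniqueness---when the cusp holonomy is trivial, in particular always in the complex, quaternionic and Cayley cases, where the $N^+$-fixed space is an $M$-fixed line. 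Either way $\bigcup_i g\G\cdot v_i$ has a norm-minimiser, so $V_\b(g\G)$ is a genuine maximum. I expect the dictionary to be the main obstacle: making the $O_\G(1)$ error genuinely uniform over all of $\G$---especially for excursions that fall just short of a horoball---needs careful Iwasawa/Busemann bookkeeping, and extracting the sharp ``at most one'' in (3) in the presence of nontrivial cusp holonomy requires the finer argument behind Lemma~\ref{lem:isolation}.
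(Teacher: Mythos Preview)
Your approach is the paper's. Item~(2) is formal in both; for (1) and (3) the paper uses exactly the Iwasawa/horoball dictionary you describe, packaged as the equivalence~\eqref{eq:e_i} together with the computation $\norm{gv_i}=e^{\chi_i(a)}$ in the proof of Lemma~\ref{lem:isolation}. For (3), the paper reaches your single-coset conclusion from horoball disjointness and then asserts that $\g_1^{-1}\g_2\in\G_{p_{i_0}}$ fixes $v_{i_0}$ ``by definition'', without further discussion of holonomy. For (1), rather than a cross-section heuristic, the paper picks $\g\in\G$ realizing small $\inj(x)$, writes $g=kau$ in the Iwasawa decomposition adapted to the cusp, and directly computes $\dist(g\g g^{-1},\id)\asymp\norm{\Ad(au)(\g-\id)}\gg e^{\chi_\K\a(a)}$, since $e^{\chi_\K\a(a)}$ is the smallest eigenvalue of $\Ad(a)$ on the parabolic Lie algebra; combined with $V_\b^{1/\b}(x)\ge e^{-\a(a)}$ this gives the inequality.

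One concrete error in your proposal: the claim that the $N^+$-fixed space in the adjoint representation is an $M$-fixed line in the quaternionic and Cayley cases is false. For $\K=\H$ one has $\mf{n}^+_{2\a}\cong\mrm{Im}\,\H\cong\R^3$ and the $\mrm{Sp}(1)$ factor of $M$ acts on it as $\mrm{SO}(3)$; for $\K=\mathbb{O}$ one has $\dim\mf{n}^+_{2\a}=7$. Only for $\K=\C$ is this space a line. So your attempt to dispose of cusp holonomy by this route does not go through in those two cases, and you are right that the sharp ``at most one'' conclusion in (3) ultimately rests on the argument of Lemma~\ref{lem:isolation}.
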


  \subsection{Proof of Theorem~\ref{thm:Margulis function}}
  In this section, we use Proposition~\ref{prop:height function properties} to translate the linear expansion estimates in Proposition~\ref{prop:linear expand} into a contraction estimate for the functions $V_\b$.

    	Let $t_0>0$ be be given and define
        \[ \omega_0 := \sup_{n\in N_1^+} \max\set{\norm{g_{t_0} n}^{1/\chi_\K}, \norm{(g_{t_0} n)^{-1}}^{1/\chi_\K} }, \]
        where $\norm{\cdot}$ denotes the operator norm of the action of $G$ on $W$.
        Then, for all $n \in N_1^+$ and all $x\in X$, we have
        \begin{equation} \label{eqn: Lipschitz property}
        	\omega_0^{-1} V_1(x)\leq V_1(g_{t_0} n x) 
            	\leq \omega_0 V_1(x),
        \end{equation}
        where $V_1=V_\b$ for $\b=1$.
        
        Let $\e_0$ be as in Proposition~\ref{prop:height function properties}\eqref{item:isolation}.
        Suppose $x\in X$ is such that $V_1(x) \leq \omega_0/\e_0$.
        Then, by~\eqref{eqn: Lipschitz property}, for any $\b>0$, we have that
        \begin{equation} \label{eqn: bounded case}
        \frac{1}{\mu_x^u(N_1^+)} \int_{N_1^+} V_\b(g_{t_0} n x) \;d\mu_x^u(n) \leq 
        	B_0:=(\omega_0^2 \e_0^{-1})^{\b}.
        \end{equation}
         
        Now, suppose $x\in N_1^-\Omega$ is such that $V_1(x) \geq \omega_0/\e_0$ and write $x=g\G$ for some $g\in G$.
        Then, by Proposition~\ref{prop:height function properties}\eqref{item:isolation}, there exists a unique vector $v_\star \in \bigcup_i g\G \cdot v_i$ satisfying $V_1(x) = \norm{v_\star}^{-1/\chi_\K}$.
        Moreover, by~\eqref{eqn: Lipschitz property}, we have that $V_1(g_{t_0} n x) \geq 1/\e_0$ for all $n\in N_1^+$.
        And, by definition of $\omega_0$, for all $n\in N_1^+$, $\norm{g_{t_0} n v_\star}^{1/\chi_\K} \leq \e_0$.
        Thus, applying Proposition~\ref{prop:height function properties}\eqref{item:isolation} once more, we see that $g_{t_0} n v_\star$ is the unique vector in $\bigcup_i g_{t_0}n g\G \cdot v_i $ satisfying 
        \begin{equation*}
        	V_\b(g_{t_0}n x) = \norm{g_{t_0}nv_\star}^{-1/\chi_\K},
        	\qquad\forall n\in N_1^+.
        \end{equation*}
        
        Moreover, since the vectors $v_i$ all belong to the $G$-orbit of $v_0$, it follows that $v_\star$ also belongs to $G\cdot v_0$.
        Thus, we may apply Proposition~\ref{prop:linear expand} as follows.
        Fix some $\b>0$ and let $C=C(\b) \geq 1$ be the constant in the conclusion of the proposition. Then,
        \begin{equation*}
        	\frac{1}{\mu_x^u(N_1^+)} \int_{N_1^+} V_\b(g_{t_0} n x) d\mu_x^u
            	= \frac{1}{\mu_x^u(N_1^+)} \int_{N_1^+} \norm{g_{t_0}nv_\star}^{-\b/\chi_\K} d\mu_x^u
                \leq C e^{-\b  t_0} \norm{v_\star}^{-\b/\chi_\K} 
                = C e^{-\b  t_0} V_\b( x).
        \end{equation*}
        Combining this estimate with~\eqref{eqn: bounded case}, we obtain for any fixed $t_0$,
        \begin{equation}\label{eq:estimate for fixed t_0}
            \frac{1}{\mu_x^u(N_1^+)}\int_{N_1^+}
            V_\b(g_{t_0}nx)\;d\mu_x^u(n)
            \leq C e^{-\b t_0}V_\b(x) +B_0,
        \end{equation}
        for all $x\in \Omega$.
    We claim that there is a constant $c_1=c_1(\b)>0$ such that, if $t_0$ is large enough, depending on $\b$, then
         \begin{equation}\label{eq:induction claim}
         \frac{1}{\mu_x^u(N_1^+)} \int_{N_1^+} V_\b(g_{kt_0}nx)\;d\mu_x^u(n) \leq c_1^k e^{-\b  kt_0}V_\b(x) + 3c_1 B_0, 
     \end{equation}
     for all $k\in\N$. 
     By Proposition~\ref{prop:height function properties}, this claim completes the proof since $V_\b(g_ty)\ll V_\b(g_{\lfloor t/t_0\rfloor t_0}y)$, for all $t\geq 0$ and $y\in X$, with an implied constant depending only on $t_0$ and $\b$.
     
     The proof of~\eqref{eq:induction claim} is by now a standard argument, with the key ingredient in carrying it out being the doubling estimate Proposition~\ref{prop:doubling}.
    We proceed by induction. Let $k\in \N$ be arbitrary and assume that~\eqref{eq:induction claim} holds for such $k$.
    Let $\set{n_i\in \Ad(g_{kt_0})(N_1^+):i\in I}$ denote a finite collection of points in the support of $\mu_{g_{kt_0}x}^u$ such that $N_1^+n_i$ covers the part of the support inside $\Ad(g_{kt_0})(N_1^+)$.
    We can find such a cover with uniformly bounded multiplicity, depending only on $N^+$.
    That is
    \begin{equation*}
        \sum_{i\in I} \chi_{N_1^+n_i}(n) \ll \chi_{\cup_i N_1^+n_i}(n), \qquad \forall n\in N^+.
    \end{equation*}
    Let $x_i=n_ig_{kt_0}x$.
    By~\eqref{eq:estimate for fixed t_0}, and a change of variable, cf.~\eqref{eq:g_t equivariance} and~\eqref{eq:N equivariance}, we obtain
    \begin{align*}
        e^{\d kt_0}
        \int_{N_1^+}V_\b(g_{(k+1)t_0}nx)\;d\mu_x^u
        \leq
        \sum_{i\in I} \int_{N_1^+}V_\b(g_{t_0}nx_i)\;d\mu_{x_i}^u
        \leq 
        \sum_{i\in I}\mu_{x_i}^u(N_1^+)\left( Ce^{-\b t_0}V_\b(x_i)+B_0\right).
    \end{align*}
    It follows using Proposition~\ref{prop:height function properties} that
    $\mu_{y}^u(N_1^+)V_\b(y)\ll \int_{N_1^+}V_\b(ny)\;d\mu_y^u(n)$ for all $y\in X$.
    Hence,
    \begin{align*}
        \int_{N_1^+}V_\b(g_{(k+1)t_0}nx)\;d\mu_x^u(n)
        \ll  e^{-\d kt_0}
        \sum_{i\in I}
        \int_{N_1^+}
        \left( Ce^{-\b t_0}V_\b(nx_i)+B_0\right)\;d\mu^u_{x_i}(n).
    \end{align*}
    Note that since $g_t$ expands $N^+$ by at least $e^t$, we have
    \begin{equation*}
        \Acal_k:= \Ad(g_{-kt_0})\left(\bigcup_i N_1^+n_i\right) \subseteq N_2^+.
    \end{equation*}
    Using bounded multiplicity property of the cover, for any non-negative function $\vp$, we have
    \begin{align*}
        \sum_{i\in I}
        \int_{N_1^+} \vp(nx_i)\;d\mu^u_{x_i}
        = \int_{N^+} \vp(n g_{kt_0}x) 
        \sum_{i\in I} \chi_{N_1^+n_i}(n)
        \;d\mu^u_{g_{kt_0}x}
        \ll \int_{\bigcup_i N_1^+n_i} \vp(n g_{kt_0}x) \;d\mu^u_{g_{kt_0}x}.
    \end{align*}
    Changing variables back so the integrals take place against $\mu_x^u$, we obtain
    \begin{align*}
    e^{-\d kt_0}
        \sum_{i\in I}
        \int_{N_1^+}
        \big( Ce^{-\b t_0}  V_\b(nx_i)+B_0
         \big) 
        \;d\mu^u_{x_i}
        &\ll \int_{\Acal_k} 
        \left( Ce^{-\b t_0}V_\b(g_{kt_0}nx)+B_0\right)
        \;d\mu^u_{x}
        \nonumber\\
        &\leq
        Ce^{-\b t_0} \int_{N_2^+}
        V_\b(g_{kt_0}nx)
        \;d\mu^u_{x}
        + B_0\mu_x^u(N_2^+).
    \end{align*}
    
    To apply the induction hypothesis, we again pick a cover of $N_2^+$ by balls of the form $N_1^+n$, for a collection of points $n\in N_2^+$ in the support of $\mu_x^u$.
    We can arrange for such a collection to have a uniformly bounded cardinality and multiplicity.
    By essentially repeating the above argument, and using our induction hypothesis for $k$, in addition to the doubling property in Prop.~\ref{prop:doubling}, we obtain
    \begin{align*}
        Ce^{-\b t_0} \int_{N_2^+}
        V_\b(g_{kt_0}nx)
        \;d\mu^u_{x}
        + B_0\mu_x^u(N_2^+)
        \ll (C c_1^k e^{-\b(k+1)t_0} V_\b(x) + 2B_0C e^{-\b t_0}+ B_0)
        \mu_x^u(N_1^+),
    \end{align*}
    where we also used Prop.~\ref{prop:height function properties} to ensure that $V_\b(nx)\ll V_\b(x)$, for all $n\in N_3^+$.
    Taking $c_1$ to be larger than the product of $C$ with all the uniform implied constants accumulated thus far in the argument, we obtain
    \begin{align*}
    \frac{1}{\mu_x^u(N_1^+)}
        \int_{N_1^+}V_\b(g_{(k+1)t_0}nx)\;d\mu_x^u(n)
        \leq c_1^{k+1} e^{-\b (k+1)t_0} V_\b(x)+ 2c_1e^{-\b t_0} B_0 + c_1 B_0.
    \end{align*}
   This completes the proof.

  \subsection{Geometric properties of Margulis functions and proof of Proposition~\ref{prop:height function properties}}
\label{sec:geometric properties}

    In this section, we give a geometric interpretation of the functions $V_\b$ which allows us to prove Proposition~\ref{prop:height function properties}.
    Item~\eqref{item:log Lipschitz} follows directly from the definitions, so we focus on the remaining properties.

    The data in the definition of $V_\b$ allows us to give a linear description of cusp neighborhoods as follows.
    Given $g\in G$ and $i$, write $g=kau$ for some $k\in K$, $a\in A_i$ and $u\in U_i$.
    Geometrically, the size of the $A$ component in the Iwasawa decomposition $G=KA_iU_i$ corresponds to the value of the Busemann cocycle $|\b_{p_i}(Kg,o)|$, where $Kg$ is the image of $g$ in $K\backslash G$; cf.~\cite[Remark 6.5]{BenoistQuint-book} and the references therein for the precise statement. This has the following consequence. We can find $0<\e_i<1$ such that
    \begin{equation}\label{eq:e_i}
        \norm{\Ad(a)|_{\mrm{Lie}(U_i)}} <\e_i
        \Longleftrightarrow Kg\in H_{p_i},
    \end{equation}
    where $H_{p_i}$ is the standard horoball based at $p_i$ in $\H^d_\K\cong K\backslash G$.
    
     The functions $V_\b(x)$ roughly measure how far into the cusp $x$ is. More precisely, we have the following lemma.
    
    \begin{lem}\label{lem:V is proper}
    The restriction of $V_\b$ to any bounded neighborhood of $\Omega$ is a proper map.
    \end{lem}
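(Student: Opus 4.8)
The approach is to prove that $V_\b$ has precompact sublevel sets on $\mc{N}$; since $V_\b$ is continuous and nonnegative, this is equivalent to properness of $V_\b|_{\mc N}$. So I would take an arbitrary sequence $(x_n)$ in $\mc N$ that leaves every compact subset of $X$ and show that $V_\b(x_n)\to\infty$.

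First I would reduce this to a single deep cusp excursion. Enlarging $\mc N$ if necessary, assume $\mc N=\mc U\,\Omega$ for a compact set $\mc U\subset G$, and write $x_n=h_ny_n$ with $h_n\in\mc U$ and $y_n\in\Omega$; compactness of $\mc U$ forces $y_n\to\infty$ in $X$. The projection $X\to\H^d_\K/\G$ is proper (its fibers are $K$-orbits, hence compact), so the image of $y_n$ escapes every compact subset of $\H^d_\K/\G$, and it lies in $\mrm{Hull}(\L_\G)/\G$. By property~(4) of the standard horoballs, $\mrm{Hull}(\L_\G)\setminus\bigcup_j H_{p_j}\G$ is compact mod~$\G$, while for each fixed $D>0$ the depth-at-most-$D$ part of $\bigcup_j H_{p_j}\G$ inside $\mrm{Hull}(\L_\G)$ is also compact mod~$\G$ (it fibers with compact fibers over the quotients of the bounding horospheres by the stabilizers $\G_j$, which are compact precisely because the $p_j$ are bounded parabolic points). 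Hence $y_n$ eventually lies inside the horoballs with depth tending to $\infty$, and since there are only finitely many cusps I would pass to a subsequence and fix an index $i$, lifts $\tilde y_n\in G$ of $y_n$, and elements $\g_n\in\G$ with $K\tilde y_n\g_n\in H_{p_i}$ at depth $d_n\to\infty$.

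Next I would translate into the linear picture underlying the definition of $V_\b$. Writing $\tilde y_n\g_n=k_na_nu_n$ in the Iwasawa decomposition $G=KA_iU_i$ (with $k_n\in K$, $a_n\in A_i$, $u_n\in U_i$), the fact that the $A_i$-coordinate records the Busemann cocycle $\b_{p_i}$ makes the depth $d_n$ comparable to $-\log\norm{\Ad(a_n)|_{\Lie(U_i)}}$; cf.~\eqref{eq:e_i} and the reference cited there. Thus $d_n\to\infty$ forces $\norm{\Ad(a_n)|_{\Lie(U_i)}}\to 0$, and since the eigenvalues of $\Ad(a_n)$ on $\Lie(U_i)$ are $e^{\a_i(a_n)}$ and $e^{2\a_i(a_n)}$, this gives $\a_i(a_n)\to-\infty$, hence $\chi_i(a_n)=\chi_\K\,\a_i(a_n)\to-\infty$ by~\eqref{eq:multiple of simple root}. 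Since $h_n\tilde y_n\g_n$ represents $x_n$, taking the term with index $i$ and group element $\g_n$ in the maximum defining $V_\b$, and using that $u_n$ fixes $v_i$ together with $a_n\cdot v_i=e^{\chi_i(a_n)}v_i$ from~\eqref{eq:chi_i}, I obtain
\[
V_\b(x_n)\ \ge\ \norm{h_n\tilde y_n\g_n\cdot v_i}^{-\b/\chi_\K}\ =\ \norm{e^{\chi_i(a_n)}\,h_nk_n\cdot v_i}^{-\b/\chi_\K}\ \ge\ C_{\mc U}^{-\b/\chi_\K}\,e^{-\b\,\a_i(a_n)}\ \longrightarrow\ \infty,
\]
where $C_{\mc U}:=\sup_{h\in\mc U}\norm{h}$ is finite (operator norm of the $G$-action on $W$, $\mc U$ compact) and I used the $K$-invariance of the norm together with $\norm{v_i}=1$. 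This yields $V_\b(x_n)\to\infty$, completing the argument.

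The hard part is really just the bookkeeping in the last two paragraphs: reading off from~\eqref{eq:e_i} and the Busemann interpretation of the Iwasawa $A_i$-coordinate that ``$y_n$ goes arbitrarily deep into $H_{p_i}$'' is precisely the regime ``$\a_i(a_n)\to-\infty$'', so that the $U_i$-fixed weight vector $v_i$ — whose $A_i$-weight $\chi_i=\chi_\K\a_i$ is positive on the expanding chamber $A_i^+$, hence very negative deep in the complementary chamber — is scaled toward $0$ along $\g_n$; and then absorbing the harmless left translation by $h_n\in\mc U$ into $C_{\mc U}$. The cusp-reduction step is a routine consequence of geometric finiteness, and once it is in place the displayed estimate is immediate. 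In particular, nothing about the fractal nature of the conditionals $\mu^u_x$, or about recurrence, enters the proof of this lemma.
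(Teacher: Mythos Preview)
Your proof is correct and follows essentially the same route as the paper: reduce to a sequence escaping into a single cusp via geometric finiteness, pass to the Iwasawa decomposition $G=KA_iU_i$, and conclude that the $A_i$-component sends the highest weight vector $v_i$ to zero. The paper first reduces from a bounded neighborhood of $\Omega$ to $\Omega$ itself using the log-Lipschitz property of $V_\b$ (Proposition~\ref{prop:height function properties}\eqref{item:log Lipschitz}), whereas you absorb the compact displacement $h_n\in\mc U$ directly into the final estimate via $C_{\mc U}$; these are equivalent. One minor difference: to force $\chi_i(a_n)\to -\infty$, the paper additionally arranges (using bounded parabolicity of $p_i$) that $(g_n\g_n)^-$ stays in a compact set, hence $u_n$ is bounded, and then argues that since $g_n\g_n$ is unbounded while $k_n,u_n$ are bounded, $a_n$ must be unbounded with bounded-above eigenvalues. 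You instead invoke the Busemann interpretation of the $A_i$-coordinate directly to equate depth with $-\a_i(a_n)$, which is slightly more streamlined and bypasses the need to control $u_n$ (indeed $u_n$ fixes $v_i$ regardless).
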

    \begin{proof}
    In view of Property~\eqref{item:log Lipschitz} of Proposition~\ref{prop:height function properties}, it suffices to prove that $V_\b$ is proper on $\Omega$.
    Now, suppose that for some sequence $g_n \in G$, we have $g_n \G$ tends to infinity in $\Omega$.
    Then, since $\G$ is geometrically finite, this implies that the injectivity radius at $g_n\G$ tends to $0$.
    Hence, after passing to a subsequence, we can find $\g_n \in \G$ such that $g_n\g_n$ belongs to a single horoball among the horoballs constituting our fixed standard cusp neighborhood; cf.~Section~\ref{sec:cuspnbhd}.
    By modifying $\g_n$ on the right by a fixed element in $\G$ if necessary, we can assume that $Kg_n\g_n$ converges to one of the parabolic points $p_i$ (say $p_1$) on the boundary of $\H^d_\K\cong K\backslash G$.
    
    Moreover, geometric finiteness implies that $(\L_\G\setminus\set{p_1})/\G_1$ is compact. Thus, by multiplying $g_n\g_n$ by an element of $\G_1$ on the right if necessary, we may assume that $(g_n\g_n)^-$ belongs to a fixed compact subset of the boundary, which is disjoint from $\set{p_1}$.
    
    Thus, for all large $n$, we can write $g_n\g_n=k_n a_n u_n$, for $k_n\in K$, $a_n\in A_i$ and $u_n\in U_i$, such that the eigenvalues of $\Ad(a_n)$ are bounded above; cf.~\eqref{eq:e_i}.
    Moreover, as $(g_n\g_n)^-$ belongs to a compact set that is disjoint from $\set{p_1}$ and $(g_n\g_n)^+\to p_1$, the set $\set{u_n}$ is bounded.
    To show that $V_\b(g_n\G)\to\infty$, since $U_i$ fixes $v_i$ and $K$ is a compact group, it remains to show that $a_n$ contracts $v_i$ to $0$.
    Since $g_n\g_n$ is unbounded in $G$ while $k_n$ and $u_n$ remain bounded, this shows that the sequence $a_n$ is unbounded. Upper boundedness of the eigenvalues of $\Ad(a_n)$ thus implies the claim.
    \end{proof} 
    
    \begin{remark}
    The above lemma is false without restricting to $\Omega$ in the case $\G$ has infinite covolume since the injectivity radius is not bounded above on $G/\G$.
    Note also that this lemma is false in the case $\G$ is not geometrically finite, since the complement of cusp neighborhoods inside $\Omega$ is compact if and only if $\G$ is geometrically finite.
    \end{remark}

    The next crucial property of the functions $V_\b$ is the following linear manifestation of the existence of cusp neighborhoods consisting of disjoint horoballs.
    This lemma implies Proposition~\ref{prop:height function properties}\eqref{item:isolation}. 
    \begin{lem}\label{lem:isolation}
    There exists a constant $\e_0 >0$ such that for all $x = g\G \in X$, there exists at most one vector $v \in \bigcup_i g\G \cdot v_i$ satisfying $\norm{v} \leq \e_0$.
    \end{lem}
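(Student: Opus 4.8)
The plan is to translate the algebraic smallness $\norm{g\gamma v_i}\le\e_0$ into the geometric statement that the point $Kg\in\H^d_\K=K\backslash G$ lies inside one of the standard horoballs fixed in Section~\ref{sec:cuspnbhd}, and then to invoke their pairwise disjointness. First I would record a linear dictionary for the depth in the cusp at $p_i$. Fix $i$, and using the Iwasawa decomposition $G=KA_iU_i$ write $g=k\,a\,u$ with $k\in K$, $a\in A_i$, $u\in U_i$. Since $v_i$ is fixed by $U_i$ and is an $A_i$-weight vector of weight $\chi_i$ (cf.~\eqref{eq:chi_i}), and the norm on $W$ is $K$-invariant with $\norm{v_i}=1$, we get $\norm{g\,v_i}=e^{\chi_i(a)}$. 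Because $\chi_i=\chi_\K\a_i$ (cf.~\eqref{eq:multiple of simple root}) and the eigenvalues of $\Ad(a)$ on $\mrm{Lie}(U_i)$ are $e^{\a_i(a)}$ together with $e^{2\a_i(a)}$ when $\K\ne\R$, one has $\norm{\Ad(a)|_{\mrm{Lie}(U_i)}}=e^{\a_i(a)}=\norm{g\,v_i}^{1/\chi_\K}$ whenever $\a_i(a)\le 0$. Combined with~\eqref{eq:e_i}, this yields, with $\e_i$ as there,
\begin{equation*}
    \norm{g\,v_i}<\e_i^{\chi_\K}\ \Longleftrightarrow\ Kg\in H_{p_i},
\end{equation*}
and hence, applying this to $g\gamma$ in place of $g$, that $\norm{g\gamma v_i}<\e_i^{\chi_\K}$ holds iff $Kg$ lies in the $\G$-translate of $H_{p_i}$ based at the parabolic point stabilised by $\gamma U_i\gamma^{-1}$.

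Now set $\e_0:=\tfrac12\min_{1\le i\le s}\e_i^{\chi_\K}$ and suppose $v=g\gamma v_i$ and $v'=g\gamma'v_j$ both satisfy $\norm{\cdot}\le\e_0$. By the previous paragraph, $Kg$ lies in the closure of the standard horoball attached to the parabolic point $q$ stabilised by $\gamma U_i\gamma^{-1}$, and also in the closure of the one attached to the point $q'$ stabilised by $\gamma'U_j(\gamma')^{-1}$. By properties~(2)--(3) of Section~\ref{sec:cuspnbhd}, the closures of two distinct standard horoballs (including all $\G$-translates) are disjoint; since both contain $Kg$, we get $q=q'$. As $p_1,\dots,p_s$ are pairwise inequivalent, this forces $i=j$; and, using $N_G(U_i)=P_i$ and that parabolic subgroups are self-normalising, it forces $(\gamma')^{-1}\gamma\in P_i\cap\G=\G_{p_i}$. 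Writing $\gamma'=\gamma\eta$ with $\eta\in\G_{p_i}$, and using that $\G_{p_i}$ contains no hyperbolic elements (as $p_i$ is a bounded parabolic point of the geometrically finite $\G$), the element $\eta$ has trivial $A_i$-component; since $U_i$ fixes $v_i$ and $v_i$ spans the highest-weight line attached to $P_i$, this gives $\eta v_i=v_i$, and therefore $v'=g\gamma\eta v_i=g\gamma v_i=v$. As $v,v'$ were arbitrary, at most one vector of $\bigcup_i g\G\cdot v_i$ can have norm $\le\e_0$, which is the assertion (and yields Proposition~\ref{prop:height function properties}\eqref{item:isolation} with this $\e_0$).

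The conceptual core — and the step I expect to require the most care — is the linear dictionary of the first paragraph: one must pass from the representation-theoretic quantity $\norm{g\,v_i}$ to honest geometric information about the location of $Kg$ relative to the standard horoballs, and do so with a constant $\e_i$ that is uniform in $i$ (hence in $\gamma$). Once this dictionary is available, the remainder is essentially a bookkeeping consequence of the disjointness built into the choice of standard horoballs together with the structure of bounded parabolic subgroups. One technical caveat: to upgrade ``$\eta$ preserves the line $\R v_i$'' to ``$\eta$ fixes $v_i$'' it is convenient to know that $\R v_i$ is the unique $U_i$-fixed highest-weight line of $P_i$, which can be arranged by a suitable choice of the representation $W$ (inessential, as noted in the text).
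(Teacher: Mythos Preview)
Your proposal is correct and follows essentially the same route as the paper: translate smallness of $\norm{g\gamma v_i}$ into membership of $Kg$ in a standard horoball via the Iwasawa decomposition and~\eqref{eq:e_i}, then invoke the disjointness properties of Section~\ref{sec:cuspnbhd}. The paper phrases the dictionary with an $\asymp$ (taking $\e_0=\min_i\e_i/C$) rather than your exact equality, and in the final step simply asserts that $\gamma_1^{-1}\gamma_2$ fixes $v_{i_0}$ ``by definition'' where you spell out the reasoning via the trivial $A_i$-component and explicitly flag the highest-weight-line caveat.
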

    \begin{remark}
     The constant $\e_0$ roughly depends on the distance from a fixed basepoint to the cusp neighborhoods.
    \end{remark}
    \begin{proof}[Proof of Lemma~\ref{lem:isolation}]

    Let $g\in G$ and $i$ be given.
    Write $g= kau$, for some $k\in K$, $a\in A_i$ and $u\in U_i$.
    Since $U_i$ fixes $v_i$ and the norm on $W$ is $K$-invariant, we have $\norm{g\cdot v_i}=\norm{a\cdot v_i}=e^{\chi_i(a)}$; cf.~\eqref{eq:chi_i}.
    Moreover, since $W$ is the adjoint representation, we have
    \begin{equation*}
        \norm{\Ad(a)|_{\mrm{Lie}(U_i)}} \asymp e^{\chi_i(a)},
    \end{equation*}
    and the implied constant, denoted $C$, depends only on the norm on the Lie algebra.

    Let $0<\e_i<1$ be the constants in~\eqref{eq:e_i} and define $\e_0:=\min_i \e_i/C$.
    Let $x=g\G\in G/\G$.
    Suppose that there are elements $\g_1,\g_2\in \G$ and vectors $v_{i_1},v_{i_2}$ in our finite fixed collection of vectors $v_i$ such that $\norm{g\g_j\cdot v_{i_j}}<\e_0$ for $j=1,2$.
    Then, the above discussion, combined with the choice of $\e_i$ in~\eqref{eq:e_i}, imply that $Kg\g_j$ belongs to the standard horoball $H_j$ in $\H^d_\K$ based at $p_{i_j}$.
    However, this implies that the two standard horoballs $H_1\g_1^{-1}$ and $H_2\g_2^{-1}$ intersect non-trivially.
    By choice of these standard horoballs, this implies that the two horoballs $H_j\g_j^{-1}$ are the same and that the two parabolic points $p_{i_j}$ are equivalent under $\G$. In particular, the two vectors $v_{i_1},v_{i_2}$ are in fact the same vector, call it $v_{i_0}$.
    It also follows that $\g_1^{-1}\g_2$ sends $H$ to itself and fixes the parabolic point it is based at.
    Thus, $\g_1^{-1}\g_2$ fixes $v_{i_0}$ by definition.
    But, then, we get that
    \begin{equation*}
        g\g_2\cdot v_{i_0} = g\g_1 (\g_1^{-1}\g_2)\cdot v_{i_0} = g\g_1 \cdot v_{i_0}.
    \end{equation*}
    This proves uniqueness of the vector in $\bigcup_i g\G\cdot v_i$ of norm $\leq \e_0$, if it exists, and concludes the proof.
   
    \end{proof}

    The following lemma verifies Proposition~\ref{prop:height function properties}\eqref{item:V is proper} relating the injectivity radius to $V_\b$.
    \begin{lem}
    For all $x$ in the unit neighborhood of $\Omega$, we have
    \begin{equation*}
        \inj(x)^{-1} \ll_\G V_\b^{\chi_\K/\b}(x),
    \end{equation*}
    where $\chi_\K$ is given in~\eqref{eq:multiple of simple root}. 
    \end{lem}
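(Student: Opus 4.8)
The plan is to exploit the linear description of the cusp neighbourhoods from~\eqref{eq:e_i} together with the standard fact that the injectivity radius of a geometrically finite manifold decays at a controlled exponential rate as one enters a cusp. Writing $x=g\G$, observe that $V_\b^{\chi_\K/\b}(x)=\max_w\norm{w}^{-1}$, the maximum running over $w\in\bigcup_i g\G\cdot v_i$; so it is enough to produce one $\g\in\G$ and one index $i$ with $\norm{g\g\cdot v_i}\ll_\G\inj(x)$.

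I would first reduce to the case $x\in\Omega$. By Proposition~\ref{prop:height function properties}\eqref{item:log Lipschitz}, $V_\b$ changes only by a bounded factor under translation by an element of bounded norm; and if $x$ lies in the unit neighbourhood of $\Omega$ with $\inj(x)$ small, then, since short closed geodesics occur only deep in cusps (thick--thin decomposition, cf.~\S\ref{sec:cuspnbhd}), $x$ sits within bounded distance of a point $x'\in\Omega$ deep in the same cusp with $\inj(x')\asymp_\G\inj(x)$. I would also dispose of the region where $\inj(x)^{-1}$ is bounded trivially, using that $V_\b$ is continuous, strictly positive, and proper on the unit neighbourhood of $\Omega$ (Lemma~\ref{lem:V is proper}), hence bounded below there by a positive constant. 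So I may assume $x\in\Omega$ with $\inj(x)^{-1}>R_0$ for a large $R_0=R_0(\G)$.

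Next, for $R_0$ large the point $Kg$ lies deep inside one of the standard horoballs; replacing $g$ by $g\g$ for a suitable $\g\in\G$, and then modifying by an element of $\G_i$ using compactness of $(\L\setminus\set{p_i})/\G_i$ (geometric finiteness), I may arrange $Kg\in H_{p_i}$ and $(Kg)^-$ in a fixed compact subset of $\L\setminus\set{p_i}$. Writing $g=kau$ in the Iwasawa decomposition $G=KA_iU_i$ (so that $\norm{u}'\ll_\G 1$), and using that $U_i$ fixes $v_i$ and the norm on $W$ is $K$-invariant, I get $\norm{g\cdot v_i}=\norm{a\cdot v_i}=e^{\chi_i(a)}=e^{\chi_\K\a_i(a)}$ by~\eqref{eq:chi_i} and~\eqref{eq:multiple of simple root}, with $\a_i(a)\le 0$ because $Kg\in H_{p_i}$, cf.~\eqref{eq:e_i}. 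The main point will be to establish the lower bound
\begin{equation*}
    \inj(x)\gg_\G e^{\a_i(a)}.
\end{equation*}
For this one has $\inj(x)\asymp\min_{\g}\dist(Kg,Kg\g)$ over nontrivial $\g\in\G$; the displacement is $\gg_\G 1$ for $\g\notin\G_i$ once $Kg$ is deep enough, by disjointness of the finitely many horoballs, whereas for $\g\in\G_i$, conjugating $\g$ into $MN^+$ and invoking the scaling property~\eqref{eq:scaling of Carnot metric} of the Cygan metric gives $\dist(Kg,Kg\g)\asymp_\G e^{\a_i(a)}\norm{n_\g}'$, where $n_\g\in N^+$ is the associated parabolic translation, whose Cygan norm is $\gg_\G 1$ by discreteness of $\G$ (boundedness of $u$ being used to absorb its contribution into the implicit constant). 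Granting this, since $\chi_\K\ge 1$ and $\a_i(a)\le 0$ we have $e^{\a_i(a)}\ge e^{\chi_\K\a_i(a)}=\norm{g\cdot v_i}\ge V_\b(x)^{-\chi_\K/\b}$ --- the last step because $\norm{g\cdot v_i}^{-\b/\chi_\K}$ is one of the terms in the maximum defining $V_\b(x)$ --- so $\inj(x)\gg_\G V_\b(x)^{-\chi_\K/\b}$, which is the assertion.

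The hard part will be the injectivity-radius bound $\inj(x)\gg_\G e^{\a_i(a)}$: correctly identifying the depth into the cusp with the weight $\a_i(a)$, bounding the displacement of \emph{every} nontrivial $\g\in\G_i$ from below (not merely the shortest), and handling the Heisenberg-type cusp cross-sections of the complex, quaternionic, and Cayley cases uniformly --- all of which hinge on the precise scaling~\eqref{eq:scaling of Carnot metric} of the Cygan metric. The remainder is bookkeeping with the Iwasawa decomposition and the properties recorded in Proposition~\ref{prop:height function properties}.
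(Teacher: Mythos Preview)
Your overall strategy---reduce to $x\in\Omega$ deep in a cusp, write $g=kau$ in the Iwasawa decomposition with $u$ bounded (via compactness of $(\L\setminus\{p_i\})/\G_i$), and compare $\inj(x)$ to the depth parameter $\a_i(a)$---is exactly the paper's, and the reductions are handled correctly.

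The gap is in the key estimate. Your claimed bound $\inj(x)\gg_\G e^{\a_i(a)}$ is too strong when $\K\neq\R$. The scaling property~\eqref{eq:scaling of Carnot metric} describes how the \emph{Cygan} metric on $N^+$ transforms under $\Ad(a)$, but the injectivity radius is measured in the Riemannian metric on $G$ (or $K\backslash G$), and the two are not Lipschitz comparable in the $\mf{n}^+_{2\a}$ direction. Concretely, if $\g\in\G_i$ has $\log n_\g=(0,r)\in\mf{n}^+_\a\oplus\mf{n}^+_{2\a}$ with $r\neq 0$---and such elements exist whenever $\G_i$ contains commutators---then $\Ad(a)$ contracts $r$ by $e^{2\a_i(a)}$, so the Riemannian displacement is $\asymp e^{2\a_i(a)}\norm{r}$, not $e^{\a_i(a)}\norm{n_\g}'$. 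Hence $\inj(x)$ can be as small as $e^{2\a_i(a)}\ll e^{\a_i(a)}$.

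The paper avoids this by working directly with the operator norm on the adjoint representation: for $v=\g-\id$ one has $\dist(g\g g^{-1},\id)\asymp\norm{\Ad(au)(v)}\geq e^{\chi_\K\a_i(a)}\norm{\Ad(u)(v)}$, since $e^{\chi_\K\a_i(a)}$ is the smallest eigenvalue of $\Ad(a)$ on $\mf{g}$. This yields $\inj(x)\gg e^{\chi_\K\a_i(a)}=\norm{g\cdot v_i}$ in one step, without your detour through $e^{\a_i(a)}\geq e^{\chi_\K\a_i(a)}$. Replace your Cygan-metric step with this eigenvalue bound and the argument goes through.
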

    
    \begin{proof}
    Let $x\in \Omega$ and set $\tilde{x}_0=Kx$.
    Let $x_0\in K\backslash G\cong \H^d_\K$ denote a lift of $\tilde{x}_0$.
    Then, $x_0$ belongs to the hull of the limit set of $\G$; cf.~Section~\ref{sec:prelims}.
    
    Since $\inj(\cdot)^{-1}$ and $V_\b$ are uniformly bounded above and below on the complement of the cusp neighborhoods inside $\Omega$, it suffices to prove the lemma under the assumption that $x_0$ belongs to some standard horoball $H$ based at a parabolic fixed point $p$.
    We may also assume that the lift $x_0$ is chosen so that $p$ is one of our fixed finite set of inequivalent parabolic points $\set{p_i}$.

    Geometric finiteness of $\G$ implies that there is a compact subset $\Kcal_p$ of $\partial \H^d_\K\backslash\set{p}$, depending only on the stabilizer $\G_p$ in $\G$, with the following property. Every point in the hull of the limit set is equivalent, under $\G_p$, to a point on the set of geodesics joining $p$ to points in $\Kcal_p$.
    Thus, after adjusting $x_0$ by an element of $\G_p$ if necessary, we may assume that $x_0$ belongs to this set.
    In particular, we can find $g\in G$ so that $x_0=Kg$ and $g$ can be written as $kau$ in the Iwasawa decomposition associated to $p$, for some $k\in K, a\in A_p$, and $u\in U_p$\footnote{The groups $A_p$ and $U_p$ were defined at the beginning of the section.} with the property that $\Ad(a)$ is contracting on $U_p$ and $u$ is of uniformly bounded size.

    Note that it suffices to prove the statement assuming the injectivity radius of $x$ is sufficiently small, depending only on the metric on $G$,
    while the distance of $x_0$ to the boundary of the cusp horoball $H_p$ is at least $1$.
    Now, let $\g\in \G$ be a non-trivial element such that $x_0\g$ is at distance at most $2\inj(x)$ from $x_0$. 
    Then, this implies that both $x_0$ and $x_0\g$ belong to $H_p$.
    Let $v=\g -\id$.
    In view of the discreteness of $\G$, we have that $\norm{v}\gg 1$.
    Since the exponential map is close to an isometry near the origin, we see that
    \begin{align*}
        \dist(g\g g^{-1},\id)
        \asymp \norm{g\g g^{-1}-\id} 
        =  \norm{gv g^{-1}} 
        = \norm{\Ad(au)(v)}\geq e^{\chi_\K\a(a)}\norm{\Ad(u)(v)},
    \end{align*}
    where $\chi_\K$ is given in~\eqref{eq:multiple of simple root} and we used $K$-invariance of the norm.
    Here, $\a$ is the simple root of $A_p$ in the Lie algebra of $U_p$ and $e^{\chi_\K\a(a)}$ is the smallest eigenvalue of $\Ad(a)$ on the Lie algebra of the parabolic group stabilizing $p$.
    Note that since $x_0$ belongs to $H_p$, $\a(a)$ is strictly negative.
    
    Recalling that $u$ belongs to a uniformly bounded neighborhood of identity in $G$ and that $\norm{v}\gg 1$, it follows that $\dist(g\g g^{-1},\id)\gg e^{\chi_\K\a(a)}$.
    Since $\g$ was arbitrary, this shows that the injectivity radius at $x$ satisfies the same lower bound.
    
   Finally, let $v_p\in\set{v_i}$ denote the vector fixed by $U_p$. Using the above Iwasawa decomposition, we see that $V_\b^{1/\b}(x) \geq \norm{a v_p}^{-1/\chi_\K} = e^{-\chi_p(a)/\chi_\K}$, where $\chi_p$ is the character on $A_p$ determined by $v_p$, cf.~\eqref{eq:chi_i}. This concludes the proof in view of~\eqref{eq:multiple of simple root} and the fact that $\chi_p=\chi_\K \a$.
    \end{proof}

    Finally, we record the following useful quantitative form of Lemma~\ref{lem:V is proper} which follows by similar arguments to those discussed in this section.
    We leave the details to the reader.
    \begin{lem}\label{lem:ht vs dist}
        For all $x$ in a bounded neighborhood of $\Omega$, we have $e^{\dist(x,o)} \ll V_\b(x)^{O_\b(1)}$.
    \end{lem}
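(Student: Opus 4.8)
The plan is to follow the horoball analysis already used in Lemma~\ref{lem:V is proper} and in the proof of Proposition~\ref{prop:height function properties}\eqref{item:V is proper}; the only new ingredient is a comparison between the distance to the basepoint $o$ and the depth of a point into a cusp. First I would make two reductions. Writing a point $x$ of the given bounded neighborhood as $x=hy$ with $y\in\Omega$ and $h$ in a fixed bounded subset of $G$, one has $\dist(x,o)\le\dist(y,o)+O(1)$ while Proposition~\ref{prop:height function properties}\eqref{item:log Lipschitz} gives $V_\b(x)\gg V_\b(y)$, with constants depending only on the neighborhood; so it suffices to treat $x\in\Omega$. Next, geometric finiteness of $\G$ implies that the complement in $\Omega$ of the union of standard horoballs $\bigcup_i H_i\G$ is compact, and on that compact set $\dist(x,o)$ is bounded while $V_\b(x)$ is bounded below by a positive constant, so there is nothing to prove. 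Hence I may assume $x=g\G$ has a lift $x_0=Kg\in\H^d_\K$ lying in a standard horoball $H_p$ based at one of the parabolic fixed points $p\in\set{p_i}$.

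For the main estimate I would argue exactly as in the proof of Proposition~\ref{prop:height function properties}\eqref{item:V is proper}: using that $(\L_\G\setminus\set{p})/\G_p$ is compact, adjust $x_0$ by an element of $\G_p$ (which does not change the image of $x$ in $\H^d_\K/\G$) so that, in the Iwasawa decomposition $G=KA_pU_p$, we may write $g=kau$ with $u$ in a fixed bounded neighborhood of the identity and $\Ad(a)$ contracting $\mrm{Lie}(U_p)$, so that $\a(a)<0$ for the simple root $\a$ of $A_p$ in $\mrm{Lie}(U_p)$. Then $aua^{-1}$ is bounded and, since right translation is an isometry of $\H^d_\K$,
\[
\dist(Kg,K)\le\dist(Kaua^{-1},K)+\dist(Ka,K)\ll |\a(a)|+O(1),
\]
using that $A_p$ is one-dimensional; passing to $\H^d_\K/\G$, and recalling our standing assumption $o\notin\bigcup_i\overline{H_i}\G$, this gives $\dist(x,o)\ll|\a(a)|+O_\G(1)$. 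On the other hand, as in the same proof, $V_\b^{1/\b}(x)\ge\norm{a\cdot v_p}^{-1/\chi_\K}=e^{-\chi_p(a)/\chi_\K}=e^{|\a(a)|}$ since $\chi_p=\chi_\K\a$, hence $|\a(a)|\le\b^{-1}\log V_\b(x)$. Combining the two bounds, and using once more that $V_\b$ is bounded below by a positive constant, yields $e^{\dist(x,o)}\ll V_\b(x)^{O_\b(1)}$.

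The step that requires the most care is the geometric comparison $\dist(x,o)\ll|\a(a)|+O(1)$ --- that inside a standard horoball the distance to the basepoint exceeds the cusp depth by at most a bounded additive amount --- which is precisely where geometric finiteness (compactness of $(\L_\G\setminus\set{p})/\G_p$ and of $\mrm{Hull}(\L_\G)$ minus the standard horoballs) and the placement of $o$ outside the horoballs enter. I would also note that the exponent in the conclusion is allowed to depend on $\b$ --- it comes out of order $\b^{-1}$ --- in accordance with the notation $O_\b(1)$ in the statement.
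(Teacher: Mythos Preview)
Your proposal is correct and follows exactly the approach the paper indicates: the paper does not actually prove this lemma, merely stating that it ``follows by similar arguments to those discussed in this section'' and leaving the details to the reader. Your argument supplies those details via the same Iwasawa decomposition and horoball analysis used in the proof of Proposition~\ref{prop:height function properties}\eqref{item:V is proper}, and the two key estimates --- $\dist(x,o)\ll|\a(a)|+O(1)$ from right-invariance of the metric together with boundedness of $u$ and $aua^{-1}$, and $|\a(a)|\le\b^{-1}\log V_\b(x)$ from the highest-weight computation --- are both sound. The invocation of $o\notin\bigcup_i\overline{H_i}\G$ is not strictly needed for the upper bound (any lift gives $\dist_{\H^d_\K/\G}(x,o)\le\dist_{\H^d_\K}(Kg,o)$), but it does no harm.
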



\section{Shadow Lemmas, Convexity, and Linear Expansion}
\label{sec:linear expand}

The goal of this section is to prove Proposition~\ref{prop:linear expand} estimating the average rate of expansion of vectors with respect to leafwise measures.
This completes the proof of Theorem~\ref{thm:Margulis function}.

    \subsection{Proof of Proposition~\ref{prop:linear expand}}

    We may assume without loss of generality that $\norm{v}=1$.
    Let $W^+$ denote the highest weight subspace of $W$ for $A_+=\set{g_t:t>0}$. Denote by $\pi_+$ the projection from $W$ onto $W^+$.
    In our choice of representation $W$, the eigenvalue of $A_+$ in $W^+$ is $e^{\chi_\K t}$, , where $\chi_\K$ is given in~\eqref{eq:multiple of simple root}.
    It follows that
    \begin{align*}
         \frac{1}{\mu_x^u(N_1^+)}\int_{N_1^+} \norm{g_t n \cdot v}^{-\b/\chi_\K} \;d\mu^u_x(n)
         \leq e^{-\b  t}  \frac{1}{\mu_x^u(N_1^+)}\int_{N_1^+} \norm{\pi_+( n \cdot v)}^{-\b/\chi_\K} \;d\mu^u_x(n).
    \end{align*}
    Hence, it suffices to show that, for a suitable choice of $\b$, the integral on the right side is uniformly bounded, independently of $v$ and $x$ (but possibly depending on $\b$).
    
    For simplicity, set $\b_\K=\b/\chi_\K$.
    A simple application of Fubini's Theorem yields
    \begin{align*}
        \int_{N_1^+} \norm{\pi_+( n \cdot v)}^{-\b_\K} \;d\mu^u_x(n)
        =\int_0^\infty \mu_x^u \bigg( n\in N_1^+: \norm{\pi_+( n \cdot v)}^{\b_\K} \leq t^{-1} \bigg) \;dt.
    \end{align*}
    For $v\in W$, we define a polynomial map on $N^+$ by $n\mapsto p_v(n):= \norm{\pi_+(n\cdot v)}^2$ and set
    \begin{equation*}
       S(v,\e):= \set{n\in N^+: p_v(n)\leq \e}.
    \end{equation*}
    To apply Proposition~\ref{prop:doubling}, we wish to efficiently estimate the radius of a ball in $N^+$ containing the sublevel sets $  S\big(v, t^{-2/\b_\K}\big)  \cap N_1^+$.
    We have the following claim.
    \begin{claim}\label{claim:diameter estimate}
    There exists a constant $C_0>0$, such that, for all $\e>0$, the diameter of $S(v,\e)\cap N_1^+$ is at most $ C_0 \e^{1/4 \chi_\K}$.
    \end{claim}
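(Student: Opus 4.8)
The plan is to reduce the diameter estimate to a quantitative lower bound on the restriction of the polynomial $p_v$ to a single coordinate direction, exploiting the fact that $\pi_+(n\cdot v)$, as a function of $n = \exp(u,s) \in N^+$, has a controlled polynomial structure coming from the adjoint representation. First I would set up coordinates: write $n = \exp(u,s)$ with $(u,s) \in \npls_\a \oplus \npls_{2\a} \cong \K^{d-1}\oplus \Im\K$, and observe that $\pi_+(n\cdot v) = \pi_+(\Ad(n) v)$ is a polynomial map in $(u,s)$ whose total degree is bounded (by $2$, since $W$ is the adjoint representation and $N^+$ is two-step nilpotent). The point is that the highest-weight component carries genuine dependence on $n$: because $v_0$ is the highest weight vector and $\pi_+$ is the projection onto its line, the map $n\mapsto \pi_+(\Ad(n)v_0)$ is (up to a unipotent change) essentially the product of a linear and a quadratic term in the Cygan coordinates, and this structure persists for any $v$ in the $G$-orbit $G\cdot v_0$.

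The key step is to show that if $n_1, n_2 \in S(v,\e)\cap N_1^+$, then $d_{N^+}(n_1, n_2)$ is small. The natural move is to consider the one-variable polynomial $t \mapsto p_v(n_1 \gamma(t))$ where $\gamma(t)$ is the (unit-speed in Cygan metric) path from $\id$ toward $n_1^{-1}n_2$; by the scaling property~\eqref{eq:scaling of Carnot metric} of the Cygan metric and the bounded degree of $p_v$, this is a polynomial of bounded degree in the Cygan parameter, and the worst-case behavior is governed by the quartic weighting $\norm{(u,s)}' = (\norm{u}^4 + |s|^2)^{1/4}$ in~\eqref{eq:Carnot norm}. The exponent $1/4\chi_\K$ in the claim is exactly what one expects: along the $\npls_{2\a}$ (center) direction the Cygan distance scales like $|s|^{1/4}$ rather than $|s|^{1/2}$, and $\chi_\K$ accounts for the fact that the highest-weight eigenvalue is $e^{\chi_\K t}$ rather than $e^t$, so the relevant power of $\Ad(g_{-\log\e})$ contracting $S(v,\e)$ into a unit ball is $\e^{1/\chi_\K}$ in the linear direction, hence $\e^{1/4\chi_\K}$ in the metric $d_{N^+}$. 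Concretely I would argue: $\Ad(g_{-\log\e^{1/\chi_\K}})$ maps $W^+$ by a factor $\e$ and maps $S(v,\e)$ into $S(\Ad(g)_* v, 1)$; combined with the fact that $\{n\in N^+ : p_v(n) \le 1\}$ for unit $v$ is contained in a Cygan ball of radius $O(1)$ (which follows because $p_v$ is a nonzero polynomial of bounded degree whose leading coefficients are uniformly bounded below on the compact orbit $\{v : \norm{v}=1\}\cap G\cdot v_0$), one transports back under the scaling~\eqref{eq:scaling of Carnot metric} to get radius $O(\e^{1/4\chi_\K})$.

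The main obstacle I expect is establishing the uniform lower bound on $p_v$ needed to say $\{p_v \le 1\}\cap N^+$ sits in a bounded Cygan ball \emph{uniformly over all unit} $v\in G\cdot v_0$ — since $G\cdot v_0$ is non-compact (it is the orbit of a highest weight vector and includes vectors of all norms and "directions"), one cannot simply invoke compactness directly. The resolution is that after normalizing $\norm{v}=1$, what matters is only the direction of $v$ modulo the parabolic $P^- = N^-AM$ stabilizing the line of highest-weight-\emph{lowest} type, or more precisely that $\pi_+(\Ad(n)v)$ depends on $v$ only through finitely many "coordinates" transverse to the kernel directions, and the set of relevant normalized data does lie in a compact set; this is where the structure of $W$ as the adjoint representation and the explicit root space decomposition $\npls = \npls_\a\oplus\npls_{2\a}$ enter. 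Once the uniform non-degeneracy of the leading part of $p_v$ is in hand, the diameter bound is a routine consequence of the homogeneity of the Cygan metric under $\Ad(g_t)$ together with \L ojasiewicz-type control of sublevel sets of polynomials of bounded degree and bounded coefficients.
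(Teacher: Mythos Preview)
Your scaling approach has a genuine gap: the claim that $\{n: p_v(n)\le 1\}$ lies in a uniformly bounded Cygan ball over all unit $v\in G\cdot v_0$---equivalently that the leading coefficients of $p_v$ are bounded below---is false. In $\SL$ acting by the adjoint representation, the unit vector $v_a=(1+a^2)^{-1}\bigl(\begin{smallmatrix} a & 1\\ -a^2 & -a\end{smallmatrix}\bigr)$ lies on $G\cdot v_0$ for every $a>0$, one computes $p_{v_a}(n_u)=(1-au)^4/(1+a^2)^2$, and $\{p_{v_a}\le 1\}$ is an interval of length $\asymp 1/a\to\infty$ as $a\to 0$. What rescues the Claim is that this long interval is centered at $u=1/a$, far from $N_1^+$; your scaling argument discards precisely this localization by inflating $N_1^+$ to $N_{e^t}^+$. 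Note also that the unit sphere in the null cone \emph{is} compact---the obstruction is degeneration of $p_v$ as $v\to v_0$, not a failure of compactness.

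The paper's route is different. First, compactness of (unit null-cone vectors) $\times$ (lines through a fixed convex $B_1\supset\log N_1^+$), together with the fact that $p_v$ has at most one zero on $N^+$ (Corollary~\ref{cor:1 point}), yields a uniform $(C,\a)$-good estimate along every such line: $|\{t:p_v(\ell(t))\le\e\}|\ll\e^{1/4\chi_\K}$, where $1/4\chi_\K$ is the reciprocal of the degree of $p_v$ on $\mf n^+$, not a Cygan-scaling exponent. To upgrade this one-dimensional bound to a diameter bound one needs the segment between two points of $\tilde S(v,\e)\cap B_1$ to remain in the sublevel set, i.e.\ \emph{convexity} of $\tilde S(v,\e)$, and the $\e$-threshold for convexity is $v$-dependent. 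The Bruhat decomposition handles this: projectively $G\cdot[v_0]=P^+\cdot[v_0]\sqcup P^+\cdot[w v_0]$, and the equivariance $S(nm\cdot v_1,\e)=\Ad(m^{-1})(S(v_1,\e))\,n^{-1}$ (with the Cygan metric right-$N^+$- and $\Ad(M)$-invariant) reduces the diameter question to the two fixed representatives $v_0$ and $v_1=w v_0$, for which the convexity threshold is a single constant. Your remark about working ``modulo the parabolic'' points toward this reduction (it should be $P^+$, not $P^-$), but neither the Bruhat step nor the convexity argument is present in your plan, and the scaling-plus-leading-coefficient ansatz cannot be repaired.
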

    
    We show how this claim concludes the proof.
    By estimating the integral over $[0,1]$ trivially, we get
    \begin{align}\label{eq:splitting into large and small radii}
        \int_0^\infty \mu_x^u \bigg( n\in N_1^+:& \norm{\pi_+( n \cdot v)}^{\b_\K} \leq t^{-1} \bigg) \;dt 
        \leq 
         \mu_x^u(N_1^+)
        + \int_{1}^{\infty}
        \mu_x^u \bigg(
         S\big(v, t^{-2/\b_\K}\big) 
        \cap N_1^+\bigg)\;dt. 
    \end{align}

    Claim~\ref{claim:diameter estimate} implies that if $\mu_x^u\left(S(v,\e)\cap N_1^+\right)>0$ for some $\e>0$, then $S(v,\e) \cap N_1^+$ is contained in a ball of radius $2C_0\e^{1/4\chi_\K}$, centered at a point in the support of the measure $\mu_x^u|_{N_1^+}$.
   Recalling that $\b_\K=\b /\chi_\K$, we thus obtain
   \begin{align}\label{eq:from sublevel sets to balls}
    \int_{1}^{\infty}
        \mu_x^u \bigg(
         S\big(v, t^{-2/\b_\K}\big)
        \cap N_1^+\bigg)\;dt
        \leq 
        \int_{1}^{\infty}
        \sup_{n\in \mrm{supp}(\mu_x^u)\cap N_{1}^+} \mu_x^u \bigg( B_{N^+}\big(n,2C_0 t^{-1/2\b}\big)\bigg)\;dt,
    \end{align}
   where for $n\in N^+$ and $r>0$, $B_{N^+}(n,r)$ denotes the ball of radius $r$ centered at $n$.
    
   To estimate the integral on the right side of~\eqref{eq:from sublevel sets to balls}, we use the doubling results in Proposition~\ref{prop:doubling}.
   Note that if $n\in \mrm{supp}(\mu_x^u)$, then $(nx)^+$ belongs to the limit set $\L_\G$.
   Since $x\in N_1^-\Omega$ by assumption, this implies that $nx$ belongs to $N_2^-\Omega$ for all $n\in N_1^+$ in the support of $\mu_x^u$; cf.~Remark~\ref{rem:commutation of stable and unstable}.
   Hence, changing variables using~\eqref{eq:N equivariance} and applying Proposition~\ref{prop:doubling}, we obtain for all $n\in \mrm{supp}(\mu_x^u)\cap N_{1}^+$,
   \begin{align*}
       \mu_x^u\bigg( B_{N^+}\big(n,2C_0 t^{-1/2\b}\big)\bigg)
       = \mu_{nx}^u\bigg( B_{N^+}\big(\id,2C_0 t^{-1/2\b}\big)\bigg)
       \ll t^{-\Delta/2\b} \mu^u_{nx}(N_1^+).
   \end{align*}
   Moreover, for $n\in N_1^+$, we have, again by Proposition~\ref{prop:doubling}, that
   \begin{align*}
       \mu^u_{nx}(N_1^+) \leq \mu_x^u(N_2^+)\ll \mu_x^u(N_1^+).
   \end{align*}
   Put together, this gives
   \begin{align*}
       \int_{1}^{\infty}
        \sup_{n\in \mrm{supp}(\mu_x^u)\cap N_{1}^+} \mu_x^u \bigg( B_{N^+}\big(n,2C_0 t^{-1/2\b}\big)\bigg)\;dt
        \ll \mu_x^u(N_1^+) 
        \int_{1}^{\infty} t^{-\Delta/2\b} \;dt.
   \end{align*}
    The integral on the right side above converges whenever $\b<\Delta/2$, which concludes the proof.

\subsection{Preliminary facts}

    Towards the proof of Claim~\ref{claim:diameter estimate}, we begin by recalling the Bruhat decomposition of $G$.
    Denote by $P^-$ the subgroup $MAN^-$ of $G$.
    \begin{prop}[Theorem 5.15,~\cite{BorelTits}]\label{prop:Bruhat}
    Let $w\in G$ denote a non-trivial Weyl ``element" satisfying $wg_t w^{-1}=g_{-t}$. Then,
    \begin{equation}\label{eq:Bruhat}
        G = P^-N^+ \bigsqcup P^- w.
    \end{equation}
    \end{prop}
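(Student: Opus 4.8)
The plan is to recognise the statement as the Bruhat decomposition of $G$ relative to the minimal parabolic $P^-$, specialised to real rank one. Since the restricted Weyl group $N_G(A)/Z_G(A)$ has order two, with nontrivial class represented by $w$, this is exactly the content of~\cite[Theorem 5.15]{BorelTits}; I would cite that result, but it is worth recording a direct geometric argument that makes transparent why the exceptional coset is a single copy of $P^-$.

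First I would invoke the identifications recalled in Section~\ref{sec:prelims}: the map $g\mapsto g^+=P^-g$ identifies $P^-\backslash G$ with the boundary $\partial\H^d_\K$, the group $P^-=MAN^-$ is the stabiliser of $e^+$, and $w$ interchanges the two endpoints of the reference $g_t$-geodesic, so that the coset $P^-w$ corresponds to the point $e^-$. The one substantive input is the classical description of the $N^+$-action on the boundary sphere (cf.~\cite[Section 19]{Mostow}): $N^+$ fixes the single point $e^-$, and $n\mapsto n^+$ is a bijection from $N^+$ onto $\partial\H^d_\K\setminus\{e^-\}$. Equivalently, $N^+$ acting on the right on $P^-\backslash G$ has exactly two orbits, the open dense orbit of $e^+$ --- which is the image of $P^-N^+$ --- and the fixed point $e^-$, whose preimage is the coset $P^-w$.

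Granting this, the argument is short. The quotient map $G\to P^-\backslash G$ is open and the $N^+$-orbit of $e^+$ is open, so $P^-N^+$ is open in $G$; its complement is the full preimage of the single point $e^-$, hence a single right $P^-$-coset. To see that this coset is $P^-w$ it suffices to check $w\notin P^-N^+$: if $w=pn$ with $p\in P^-$ and $n\in N^+$, then $P^-w=P^-n$, so applying $g\mapsto g^+$ would force $e^-=w^+=n^+$, contradicting $n^+\neq e^-$. Hence $G=P^-N^+\sqcup P^-w$. Openness of $P^-N^+$ can also be obtained algebraically from $P^-\cap N^+=\{e\}$ together with a dimension count, so that only the identification of the complement really uses the boundary picture.

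The main --- indeed the only --- obstacle is thus the two-orbit structure of $N^+$ on $\partial\H^d_\K$: in the real hyperbolic case this is the action of a translation group on $\R^{d-1}$, and in the complex, quaternionic and octonionic cases the simply transitive action of a Heisenberg-type group on itself. I would handle it either by quoting this directly, or by appealing to~\cite[Theorem 5.15]{BorelTits} and translating between its double-coset form $G=P^-\sqcup P^-wP^-$ and the relative-position form above via the identity $P^-wP^-=P^-N^+w$. I would present the geometric route as the main line, since it fits the hyperbolic-geometric framework of the paper.
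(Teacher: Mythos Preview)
The paper does not prove this proposition; it simply quotes it from~\cite[Theorem 5.15]{BorelTits} and moves on. Your proposal is correct and in fact goes further: you outline a self-contained geometric proof via the two-orbit structure of $N^+$ on $\partial\H^d_\K$, which the paper does not include. The geometric argument is sound, and your translation from the double-coset form $G=P^-\sqcup P^-wP^-$ to the stated form is correct once one notes $P^-wP^-=P^-N^+w$ and right-multiplies by $w^{-1}$ (using $w^2\in MA\subseteq P^-$).
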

    
     We shall need the following result, which is yet another reflection in linear representations of $G$ of the fact that $G$ has real rank $1$.

    \begin{prop}
    \label{prop:pengyu} 
    Let $V$ be a normed finite dimensional representation of $G$, and $v_0\in V$ be any highest weight vector for $g_t$ ($t>0$) with weight $e^{\l t}$ for some $\l\geq 0$.
    Let $v$ be any vector in the orbit $G\cdot v_0$ and define 
    \begin{equation*}
        G(v, V^{<\l}(g_t)) = \set{g\in G:  \lim_{t\r\infty} \frac{\log \norm{g_t gv}}{t} <\l}.
    \end{equation*}
   Then,
    there exists $g_v \in G$ such that
    
    \begin{equation*}
        G(v,V^{<\l}(g_t)) \subseteq P^- g_v.
    \end{equation*}
    
    \end{prop}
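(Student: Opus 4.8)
The plan is to analyze the set $G(v, V^{<\l}(g_t))$ directly using the Bruhat decomposition~\eqref{eq:Bruhat}, and to show that its complement contains the ``big cell'' coset structure forcing it into a single $P^-$-coset. First I would reduce to the case $v = v_0$: since $v \in G\cdot v_0$, write $v = h v_0$ for some $h\in G$, and observe that $G(v, V^{<\l}(g_t)) = G(v_0, V^{<\l}(g_t))\, h^{-1}$ by the very definition of the set (replacing $g$ by $gh^{-1}$). So it suffices to find a single coset $P^- g_{v_0}$ containing $G(v_0, V^{<\l}(g_t))$; then $g_v = g_{v_0} h$ works. Since $v_0$ is a highest weight vector, $P^- = MAN^-$ acts on the line $\R v_0$ only through $MA$ (indeed $N^-$ may move $v_0$ but $g_t$-contraction governs growth), so I expect in fact $g_{v_0} = \id$, i.e.\ the claim becomes $G(v_0, V^{<\l}(g_t)) \subseteq P^-$.

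The key step is then: for $g\in G$, decide when $\lim_{t\to\infty} t^{-1}\log\norm{g_t g v_0} < \l$. Using Proposition~\ref{prop:Bruhat}, either $g = p^- n$ with $p^-\in P^-$, $n\in N^+$, or $g \in P^- w$. In the first case, $g v_0 = p^- (n v_0)$; since $N^+$ fixes $v_0$ (as $v_0$ is the highest weight vector, annihilated by the positive root spaces — here I use that $W$, or the abstract $V$, has $v_0$ fixed by $N^+$, exactly as set up in the construction of the Margulis function), we get $n v_0 = v_0$, so $g v_0 = p^- v_0$. Now decompose $p^- = m a n^-$; then $g_t p^- v_0 = g_t m a v_0 \cdot(\text{correction from }n^-)$. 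The point is that $\norm{g_t p^- v_0}$ grows like $e^{\l t}$ up to bounded multiplicative factors: $a v_0$ is a nonzero multiple of $v_0$, $m$ preserves norms, and conjugating $n^-$ past $g_t$ contracts it, so $g_t n^- g_{-t} \to \id$; hence $g_t p^- v_0 = e^{\l t}(\text{bounded, bounded away from }0)\cdot(\text{unit vector}\to m v_0)$, giving growth rate exactly $\l$, \emph{not} strictly less. So the first Bruhat cell contributes nothing to $G(v_0, V^{<\l}(g_t))$ except... wait — this shows $P^- N^+ \cap G(v_0,V^{<\l}) = \emptyset$. In the second case $g = p^- w$: then $g v_0 = p^- (w v_0)$, and $w v_0$ is a \emph{lowest} weight vector (weight $e^{-\l t}$ under $g_t$, since $w g_t w^{-1} = g_{-t}$). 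So $g_t p^- w v_0$: now $g_t$ contracts the $w v_0$ direction, but $p^-$ contains $n^-$ which, after being pushed through $g_t$, can mix in faster-growing weight spaces. The generic behavior is growth rate $< \l$ (in fact $\leq$ something like $-\l$ plus contributions), so $G(v_0, V^{<\l}(g_t)) = P^- w$, which is a single $P^-$-coset. That is the conclusion, with $g_{v_0} = w$ (and then $g_v = wh$).

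The main obstacle I anticipate is the careful bookkeeping in the second Bruhat cell: I need to verify that for \emph{every} $p^- \in P^-$, the vector $p^- w v_0$ has $g_t$-growth rate strictly below $\l$, uniformly enough to conclude set equality rather than just containment. The subtlety is that $p^- w v_0$ might have a nonzero component in the highest weight space $V^+$ — this happens precisely when the $N^-$-part of $p^-$, after conjugation by $g_t$, rotates $w v_0$ into $V^+$; but such a component is $O(e^{\l t}\cdot e^{-2\l t}) = O(e^{-\l t})\to 0$ relative to the naive $e^{\l t}$ scale because the $N^-$ element is being contracted by $g_t$ while acting on an already-$g_{-t}$-contracting vector — so the growth rate stays $< \l$. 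Making this quantitative requires writing $g_t (mn^-) g_{-t} = m (g_t n^- g_{-t})$ and noting $g_t n^- g_{-t}$ lies in a bounded neighborhood of $\id$ for $t\geq 0$, then expanding $g_t n^- g_{-t} \cdot g_t w v_0$ in weight spaces. I would also need the elementary fact that for a highest weight vector the $g_t$-weight $\l$ strictly dominates all other weights appearing in $V$, which holds since $G$ has real rank one and $g_t$ generates the split torus (so the weights are $\l, 0$ or $\l, \l/2, 0, -\l/2, -\l$ patterns, all $<\l$ except $\l$ itself). With that in hand the argument closes cleanly.
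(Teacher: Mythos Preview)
Your approach is essentially the same as the paper's: reduce to $v_0$ via $v = hv_0$, apply the Bruhat decomposition to $gh$, and show that the big cell $P^- N^+$ forces growth rate exactly $\l$ (the paper writes this compactly as $g_t g v = e^{\l t}\,\Ad(g_t)(p)\,v_0$ with $\Ad(g_t)(p)$ converging in $G$), so that $G(v_0,V^{<\l}) \subseteq P^- w$. Your anticipated ``main obstacle'' about the second Bruhat cell is unnecessary---the proposition only asserts the containment $\subseteq$, not equality, so once the big cell is excluded you are finished (also, $g_v = w h^{-1}$, not $wh$, matching the paper's choice).
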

    \begin{proof}
    Let $h\in G$ be such that $v=hv_0$ and
    let $g\in G(v, V^{<\l}(g_t))$.
    By the Bruhat decomposition, either $gh=pn$ for some $p\in P^-$ and $n\in N^+$, or $gh=pw$ for some $p\in P^-$ and $w$ being the long Weyl ``element".
    Suppose we are in the first case, and note that $N^+$ fixes $v_0$ since it is a highest weight vector for $g_t$.
    Moreover, $\Ad(g_t)(p)$ converges to some element in $G$ as $t$ tends to $\infty$.
    Since $g_t gv= e^{\l t} \Ad(g_t)(p)v_0$, we see that $\log\norm{g_tgv}/t \to \l$ as $t$ tends to $\infty$, thus contradicting the assumption that $g$ belongs to $G(v, V^{<\l}(g_t))$.
    Hence, $gh$ must belong to $P^-w$. This implies the conclusion by taking $g_v:= wh^{-1}$.

    \end{proof}

    The following immediate corollary is the form we use this result in our arguments.
    \begin{cor}\label{cor:1 point}
    Let the notation be as in Proposition~\ref{prop:pengyu}. Then, $N^+\cap G(v,V^{<\l}(g_t))$ contains at most one point.
    \end{cor}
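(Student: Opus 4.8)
The plan is to deduce this immediately from Proposition~\ref{prop:pengyu} together with the injectivity of the multiplication map $P^-\times N^+\to G$. By Proposition~\ref{prop:pengyu}, there is an element $g_v\in G$ such that $G(v,V^{<\l}(g_t))\subseteq P^-g_v$. Hence it suffices to show that $N^+\cap P^-g_v$ contains at most one point, and in fact more generally that $N^+\cap P^-g$ contains at most one point for every $g\in G$.

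To see this, suppose $n_1,n_2\in N^+$ both lie in $P^-g$, say $n_i=p_i g$ with $p_i\in P^-$ for $i=1,2$. Then $n_1n_2^{-1}=p_1p_2^{-1}\in P^-\cap N^+$. Since $P^-=MAN^-$ and the product map $N^-\times A\times M\times N^+\to G$ is injective near the identity (indeed a diffeomorphism onto an open neighborhood of $\id$, as recorded in Section~\ref{sec:holonomy}), we have $P^-\cap N^+=\set{\id}$; concretely, the first ``cell'' $P^-N^+$ in the Bruhat decomposition~\eqref{eq:Bruhat} is precisely the image of the injective map $P^-\times N^+\to G$. Therefore $n_1n_2^{-1}=\id$, i.e.~$n_1=n_2$, which is the desired conclusion.

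There is no genuine obstacle here: the corollary is a formal consequence of Proposition~\ref{prop:pengyu} and the standard structure theory of the minimal parabolic subgroups $P^\pm$ of the rank-one group $G$. The only point worth spelling out is the identity $P^-\cap N^+=\set{\id}$, which is exactly the statement that distinct elements of $N^+$ lie in distinct right cosets of $P^-$.
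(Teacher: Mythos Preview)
Your proof is correct and uses the same core ingredient as the paper, namely that $P^-\cap N^+=\{\id\}$, so that any right coset $P^-g$ meets $N^+$ in at most one point. The paper's argument reaches the same conclusion but organizes it slightly differently: it first applies the Bruhat decomposition to $g_v$ itself, treating the two cases $g_v\in P^-N^+$ (where it pins down the unique intersection point $n_v$) and $g_v\in P^-w$ (where $P^-g_v\cap N^+=\emptyset$ because the two Bruhat cells are disjoint). Your direct coset argument bypasses this case split and is a mild streamlining; the only small expository quibble is that the reference to Section~\ref{sec:holonomy} gives the local diffeomorphism near the identity, whereas what you need is the global fact $P^-\cap N^+=\{\id\}$, which is standard structure theory and is also used (without further comment) in the paper's own proof.
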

    \begin{proof}

    Recall the Bruhat decomposition of $G$ in Proposition~\ref{prop:Bruhat}.
    Let $g_v\in G$ be as in Proposition~\ref{prop:pengyu} and suppose that $n_0\in P^-g_v \cap N^+$.
    Let $p_0\in P^-$ be such that $n_0=p_0g_v$.

    First, assume $g_v = p_v n_v$ for some $p_v\in P^-$ and $n_v\in N^+$.
    Then, $n_0=p_0p_v n_v$ and, hence, $n_0n_v^{-1}\in P^-\cap N^+=\set{\id}$.
    In particular, $n_0 = n_v$, and the claim follows in this case.
    
    Now assume that $g_v = p_v w$ for some $p_v\in P^-$, so that $n_0 = p_0p_v w\in P^-w\cap N^+$.
    This is a contradiction, since the latter intersection is empty as follows from the Bruhat decomposition.
    
    \end{proof}


\subsection{Convexity and proof of Claim~\ref{claim:diameter estimate}}

 Let $B_1\subset \mrm{Lie}(N^+)$ denote a compact convex set whose image under the exponential map contains $N_1^+$ and denote by $B_2$ a compact convex set containing $B_1$ in its interior.

    Define $\mf{n}^+_1$ to be the unit sphere in the Lie algebra $\mf{n}^+$ of $N^+$ in the following sense:
    \begin{equation*}
        \mf{n}^+_1 := \set{u\in\mf{n}^+: d_{N^+}(\exp(u),\id)=1},
    \end{equation*}
    where $d_{N^+}$ is the Cygan metric on $N^+$; cf.~Sec.~\ref{sec:Carnot}.
    Given $u,b\in \mf{n}^+$, define a line $\ell_{u,b}:\R \to \mf{n}^+$ by
    \begin{equation*}
        \ell_{u,b}(t) := tu + b,
    \end{equation*}
    and denote by $\Lcal$ the space of all such lines $\ell_{u,b}$ such that $u\in \mf{n}^+_1$.
    We endow $\Lcal$ with the topology inherited from its natural identification with its $\mf{n}_1^+\times \mf{n}^+$.
    Then, the subset $\Lcal(B_1)$ of all such lines such that $b$ belongs to the compact set $B_1$ is compact in $\Lcal$.
       
Recall that a vector $v\in W$ is said to be unstable if the closure of the orbit $G\cdot v$ contains $0$.
Highest weight vectors are examples of unstable vectors.
Let $\mc{N}$ denote the null cone of $G$ in $W$, i.e., the closed cone consisting of all unstable vectors.
Let $\Ncal_1\subset \Ncal$ denote the compact set of unit norm unstable vectors.
Note that, for any $v\in \Ncal$, the restriction of $p_v$ to any $\ell\in \Lcal$ is a polynomial in $t$ of degree at most that of $p_v$. 
 We note further that the function
    \begin{align*}
        \rho(v,\ell) := \sup \set{ p_v(\ell(t)) : \ell(t)\in B_2 }
    \end{align*}
 is continuous and non-negative on the compact space $\Ncal_1 \times \Lcal(B_1)$. We claim that
    \begin{align*}
        \rho_\star := \inf \set{\rho(v,\ell): (v,\ell)\in \Ncal_1 \times \Lcal(B_1) } 
    \end{align*}
    is strictly positive.
Indeed, by continuity and compactness, it suffices to show that $\rho$ is non-vanishing. Suppose not and let $(v,\ell)$ be such that $\rho(v,\ell)=0$. 
Since $B_1$ is contained in the interior of $B_2$, the intersection 
\begin{align*}
   I(\ell):= \set{t\in \R: \ell(t)\in B_2}
\end{align*}
is an interval (by convexity of $B_2$) with non-empty interior. Since $p_v(\ell(\cdot))$ is a polynomial vanishing on a set of non-empty interior, this implies it vanishes identically.
On the other hand, Corollary~\ref{cor:1 point} shows that $p_v$ has at most $1$ zero in all of $\mf{n}^+$, a contradiction.

Positivity of $\rho_\star$ has the following consequence.
Our choice of the representation $W$ implies that the degree of the polynomial $p_v$ is at most $4\chi_\K$, where $\chi_\K$ is given in~\eqref{eq:multiple of simple root}. This can be shown by direct calculation in this case.\footnote{In general, such a degree can be calculated from the largest eigenvalue of $g_t$ in $W$; for instance by restricting the representation to suitable subalgebras of the Lie algebra of $G$ that are isomoprhic to $\mf{sl}_2(\R)$ and using the explicit description of $\mf{sl}_2(\R)$ representations.}
By the so-called $(C,\a)$-good property (cf.~\cite[Proposition 3.2]{Kleinbock-Clay}), we have for all $\e>0$
\begin{equation*}
    |\set{t\in I(\ell): p_v(\ell(t)) \leq \e}| \leq C_d \left(\e/\rho_\star \right)^{1/4\chi_\K} |I(\ell)|,
\end{equation*}
where $C_d>0$ is a constant depending only on the degree of $p_v$, and $|\cdot|$ denotes the Lebesgue measure on $\R$.

To use this estimate, we first note that the length of the intervals $I(\ell)$ is uniformly bounded over $\Lcal(B_1)$. Indeed, suppose for some $u=(u_\a,u_{2\a}), b\in\mf{n}^+$ and $\ell=\ell_{u,b}\in \Lcal(B_1)$, $I(\ell)$ has endpoints $t_1 <t_2$ so that the points $\ell(t_i)$ belong to the boundary of $B_2$.
Recall that the Lie algebra $\mf{n}^+$ of $N^+$ decomposes into $g_t$ eigenspaces as $\mf{n}^+_\a\oplus \mf{n}^+_{2\a}$, where $\mf{n}^+_{2\a}=0$ if and only if $\K=\R$.
Set $x_1 = \ell(t_1)$ and $x_2=\ell(t_2)$.
Since $N^+$ is a nilpotent group of step at most $2$, the Campbell-Baker-Hausdorff formula implies that $\exp(x_2) \exp(-x_1) = \exp(Z)$, where $Z\in \mf{n}^+$ is given by
\begin{equation*}
    Z = x_2-x_1 + \frac{1}{2} [x_2,-x_1] = (t_2-t_1)u + \frac{1}{2}(t_2-t_1) [b,u].
\end{equation*}
Note that since $\mf{n}^+_{2\a}$ is the center of $\mf{n}^+$, $[b,u]=[b,u_\a]$ belongs to $\mf{n}^+_{2\a}$.
Hence, we have by~\eqref{eq:Carnot} that 
\begin{align*}
   d_{N^+}(\exp(x_1),\exp(x_2)) &=  \left( (t_2-t_1)^4 \norm{u_\a}^4 + (t_2^2-t_1^2)^2 \norm{u_{2\a}+ \frac{1}{2} [b,u]}^2 \right)^{1/4}.
\end{align*}
Since $\exp(u)$ is at distance $1$ from identity, at least one of $\norm{u_\a}$ and $\norm{u_{2\a}}$ is bounded below by $10^{-1}$.
Moreover, we can find a constant $\th\in (0,10^{-2})$ so that for all $b\in B_1$ and all $y_\a\in \mf{n}^+_\a$ with $\norm{y_\a}\leq \th$ such that $\norm{[b,y_\a]}\leq 10^{-2}$.
Together this implies that
\begin{align*}
    \min\set{t_2-t_1, (t_2^2-t_1^2)^{1/2} } \ll \diam{B_1},
\end{align*}
where $\diam{B_1}$ denotes the diameter of $B_1$.
This proves that $|I(\ell)|=t_2-t_1\ll 1$, where the implicit constant depends only on the choice of $B_1$.
We have thus shown that
\begin{equation}\label{eq:C alpha}
    |\set{t\in I(\ell): p_v(\ell(t)) \leq \e}| \ll \e^{1/4\chi_\K}.
\end{equation}

We now use our assumption that $v$ belongs to the $G$ orbit of a highest weight vector $v_0$.
Since $v_0$ is a highest weight vector, it is fixed by $N^+$. Hence, the Bruhat decomposition, cf.~\eqref{eq:Bruhat} with the roles of $P^-$ and $P^+$ reversed, implies that the orbit $G\cdot v_0$ can be written as
\begin{equation*}
    G\cdot v_0 = P^+\cdot v_0 \bigsqcup P^+w \cdot v_0,
\end{equation*}
where $w$ is the long Weyl ``element".
Recall that $P^+=N^+MA$, where $M$ is the centralizer of $A=\set{g_t}$ in the maximal compact group $K$. In particular, $M$ preserves eigenspaces of $A$ and normalizes $N^+$. Recall further that the norm on $W$ is chosen to be $K$-invariant.

First, we consider the case $v\in P^+w\cdot v_0$ and has unit norm.
For $v'\in W$, we write $[v']$ for its image in the projective space $\P(W)$.
Then, since $w\cdot v_0$ is a joint weight vector of $A$, we see that the image of $P^+w\cdot v_0$ in $\P(W)$ has the form $N^+M \cdot [w\cdot v_0]$.
Setting $v_1:= w\cdot v_0$, we see that
\begin{align}\label{eq:equivariance of sublevel sets}
    S(nm\cdot v_1,\e) = S(mv_1,\e)\cdot n^{-1} = \Ad(m^{-1})(S(v_1,\e))\cdot n^{-1},
\end{align}
where we implicitly used the fact that $M$ commutes with the projection $\pi_+$ and preserves the norm on $W$.
Since the metric on $N^+$ is right invariant under translations by $N^+$ and is invariant under $\Ad(M)$, the above identity implies that it suffices to estimate the diameter of $S(v_1,\e)\cap N^+_1$ in the case $v\in P^+w\cdot v_0$.
Similarly, in the case $v\in P^+\cdot v_0$, it suffices to estimate the diameter of $S(v_0,\e)\cap N_1^+$.

Let $\tilde{S}(v,\e)=\log S(v,\e)$ denote the pre-image of $S(v,\e)$ in the Lie algebra $\mf{n}^+$ of $N^+$ under the exponential map.
By Corollary~\ref{cor:1 point}, for any non-zero $v\in\mc{N}$, either $S(v,\e)$ is empty for all small enough $\e$, or there is a unique global minimizer of $p_v(\cdot)$ on $N^+$, at which $p_v$ vanishes.
    In either case, for any given $v\in\mc{N}\setminus \set{0}$ in the null cone, the set $\tilde{S}(v,\e)$ is convex for all small enough $\e>0$, depending on $v$.
    Let $s_0>0$ be such that $\tilde{S}(v,\e)$ is convex for $v\in \set{v_0,v_1}$ and for all $0\leq \e\leq s_0$.

    Fix some $v\in \set{v_0,v_1}$ and $\e\in [0,s_0]$.
     Suppose that $x_1\neq x_2\in \tilde{S}(v,\e)\cap B_1$.
     Let $r$ denote the distance $d_{N^+}(x_1,x_2)$.
    Let $u'=x_2-x_1$, $u=u'/r$ and $b=x_1$. Set $\ell=\ell_{u,b}$ and note that $\ell_{u,b}(0)=x_1$ and $\ell_{u,b}(r)=x_2$.
    Since $B_1$ is convex, the set $\tilde{S}(v,\e)\cap B_1$ is also convex.
    Hence, the entire interval $(0,r)$ belongs to the set on the left side of~\eqref{eq:C alpha} and, hence, that $r\ll \e^{1/4\chi_\K}$.
    Since $x_1$ and $x_2$ were arbitrary, this shows that the diameter of $\tilde{S}(v,\e)\cap B_1$ is $O(\e^{1/4\chi_\K})$ as desired.


\section{Anisotropic Banach Spaces and Transfer Operators}

\label{sec: spectral gap}

    In this section, we define the Banach spaces on which the transfer operator and resolvent associated to the geodesic flow have good spectral properties.

    The transfer operator, denoted $\Lcal_t$, acts on continuous functions as follows:
    \begin{equation}\label{def: to}
        \Lcal_t f := f\circ g_t , \qquad f\in C(X), t\in\R.
    \end{equation}

    For $z\in \C$, the resolvent $R(z):C_c(X)\to C(X)$ is defined formally as follows:
    \begin{equation*}
        R(z)f := \int_{0}^\infty e^{-z t} \Lcal_t f\;dt.
    \end{equation*}
    
    If $\G$ is not convex cocompact, 
    we fix a choice of $\b>0$ so that Theorem~\ref{thm:Margulis function} holds and set $V=V_\b$.
    If $\G$ is convex cocompact, we take $V=V_\b\equiv 1$ and we may take $\b$ as large as we like in this case.
    Note that the conclusion of Theorem~\ref{thm:Margulis function} holds trivially with this choice of $V$. In particular, we shall use its conclusion throughout the argument regardless of whether $\G$ admits cusps.

    Denote by $C_c^{k+1}(X)^M$ the subspace of $C_c^{k+1}(X)$ consisting of $M$-invariant functions, where $M$ is the centralizer of the geodesic flow inside the maximal compact group $K$.
    In particular, $C_c^{k+1}(X)^M$ is naturally identified with the space of $C_c^{k+1}$ functions on the unit tangent bundle of $\H^d_\K/\G$; cf.~Section~\ref{sec:prelims}.
    The following is the main result of this section.
    
    \begin{thm}[Essential Spectral Gap] \label{thm: resolvent spectrum}
    Let $k\in \N$ be given.
    Then, there exists a seminorm $\norm{\cdot}_k$ on $C_c^{k+1}(X)^M$, non-vanishing on functions whose support meets $\Omega$, and such that for every $z\in \C$, with $\Re(z)>0$, the resolvent $R(z)$ extends to a bounded operator on the completion of $C_c^{k+1}(X)^M$ with respect to $\norm{\cdot}_k$ and having spectral radius at most $1/\Re(z)$.
    Moreover, the essential spectral radius of $R(z)$ is bounded above by $1/(\Re(z)+\s_0)$, where
    \begin{equation*}
        \s_0 := \min\set{k,\b}.
    \end{equation*}
    In particular, if $\G$ is convex cocompact, we can take $\s_0=k$.
    
    \end{thm}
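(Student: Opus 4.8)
The plan is to realize $\norm{\cdot}_k$ as an anisotropic \emph{leafwise} seminorm adapted to the geodesic flow and then run the classical Lasota--Yorke/Hennion scheme for quasi-compactness, in the spirit of~\cite{BlankKellerLiverani,GouezelLiverani,AvilaGouezel}. Two ingredients are particular to the present setting: the leafwise integrals are taken against the fractal conditional measures $\mu_x^u$ of $\bms$ along $N^+$-orbits, whose metric regularity is supplied by the \emph{uniform} doubling and decay estimates of Proposition~\ref{prop:doubling}; and the seminorm carries a weight built from the Margulis function $V=V_\b$ of Theorem~\ref{thm:Margulis function}, which keeps it finite and well behaved on functions whose support enters the cusps. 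Concretely, I would fix a small scale $r_0$ and a family $\mathcal{W}$ of \emph{admissible unstable leaves}, namely bounded pieces $W=N_{r_0}^+x$ with base point $x=x_W\in N_2^-\Omega$, each carrying the restriction of $\mu_x^u$; then, for $f\in C_c^{k+1}(X)^M$ and $0\le j\le k$, define quantities of the schematic form
\[
    \norm{f}_{k,j}:=\sup_{W\in\mathcal{W}}\ \sup_{\vp}\ \frac{1}{V(x_W)\,\mu_{x_W}^u(W)}\left|\int_W f\,\vp\;d\mu_{x_W}^u\right|,
\]
where $\vp$ runs over suitably normalized $C^{j}$ test functions on $W$ vanishing to high order on $\partial W$, and set $\norm{f}_k:=\max_{0\le j\le k}\norm{f}_{k,j}$, retaining the companion weak seminorm $\norm{f}_w:=\max_{0\le j\le k-1}\norm{f}_{k,j}$ for the compactness step (working with $M$-invariant functions, i.e.\ functions on the unit tangent bundle, is compatible with all of this, since $M$ commutes with $g_t$ and preserves $\bms$ and $\mu^u$ up to $\Ad(M)$, cf.~\eqref{eq:M equivariance}). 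The weight $V(x_W)^{-1}$ makes $\norm{f}_k$ finite on $C_c^{k+1}$, while the normalization by $\mu_{x_W}^u(W)$ together with the uniformity in Proposition~\ref{prop:doubling} makes comparisons between nearby and rescaled admissible leaves scale independent. Non-vanishing on functions whose support meets $\Omega$ is immediate, since $\supp\mu_x^u$ is dense along the unstable direction through points of $\Omega$, so a nonnegative bump near such a point has strictly positive leafwise integral.

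The core is a Doeblin--Fortet/Lasota--Yorke inequality for $\Lcal_t$. Using the equivariances~\eqref{eq:g_t equivariance} and~\eqref{eq:N equivariance}, $\int_W(\Lcal_t f)\vp\;d\mu_{x_W}^u$ is recast as an integral of $f$ over the expanded leaf $g_tW$, which is covered by admissible pieces $W'$; on each piece the test function reappears precomposed with the contracting map induced by $g_{-t}$, and hence is flattened at rate $e^{-t}$ per derivative. The precise combination of leafwise regularities in the definition of $\norm{\cdot}_k$ --- $k$ balanced levels of smoothness of the test functions --- is what turns the hyperbolicity of $g_t$ (expansion and contraction rates $\geq e^t$) into a gain $Ce^{-kt}$ on the strong part at the cost of a bounded multiple of $\norm{f}_w$, along the standard anisotropic-space lines of~\cite{BlankKellerLiverani,GouezelLiverani,AvilaGouezel}; the multiplicities and $\mu^u$-masses generated by this covering are controlled uniformly, including in the cusp, by Proposition~\ref{prop:doubling} and the equivariance~\eqref{eq:g_t equivariance}. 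Simultaneously, the ratios $V(x_{W'})/V(x_W)$ over the pieces $W'\subset g_tW$ are controlled \emph{on average} by Theorem~\ref{thm:Margulis function}, contributing a competing factor $c\,e^{-\b t}+c$. Retaining the worse exponent yields, for all $t\ge 0$,
\[
    \norm{\Lcal_t f}_k\le C\,e^{-\s_0 t}\norm{f}_k+C\norm{f}_w,\qquad \norm{\Lcal_t f}_w\le C\norm{f}_w,
\]
with $\s_0=\min\set{k,\b}$ (first with a $t$-dependent weak coefficient, then uniform after iterating the estimate over a fixed time step), and in particular $\norm{\Lcal_t}\le C$ uniformly; when $\G$ is convex cocompact the weight is trivial, only the $e^{-kt}$ gain is present, and $\s_0=k$. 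The inclusion of the $\norm{\cdot}_k$-completion into the $\norm{\cdot}_w$-completion is compact, by an Arzel\`{a}--Ascoli/Rellich argument exploiting the extra derivative on the test functions together with the precompactness of $\mathcal{W}$ afforded by Proposition~\ref{prop:doubling}.

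Passing to the resolvent, for $\Re(z)>0$ the integral $R(z)f=\int_0^\infty e^{-zt}\Lcal_tf\;dt$ converges in $\norm{\cdot}_k$ since $\norm{\Lcal_t}\le C$, and from $R(z)^nf=\tfrac{1}{(n-1)!}\int_0^\infty t^{n-1}e^{-zt}\Lcal_tf\;dt$ together with $\norm{\Lcal_t}\le C$ one gets $\norm{R(z)^n}\le C\,\Re(z)^{-n}$, hence spectral radius $\le 1/\Re(z)$. Integrating the Lasota--Yorke inequality against $t^{n-1}e^{-zt}/(n-1)!$ gives $\norm{R(z)^nf}_k\le C\,(\Re(z)+\s_0)^{-n}\norm{f}_k+C\,\Re(z)^{-n}\norm{f}_w$ with the \emph{same} constant $C$ for every $n$; Hennion's theorem, applied with the compact inclusion above, then bounds the essential spectral radius of $R(z)$ by $\liminf_n\big(C(\Re(z)+\s_0)^{-n}\big)^{1/n}=1/(\Re(z)+\s_0)$, which is the assertion. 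I expect the main obstacle to be the strong-norm Lasota--Yorke estimate \emph{with constants uniform as admissible leaves descend arbitrarily deep into the cusps}: reconciling the hyperbolicity-driven gain $e^{-kt}$ with the Margulis-driven gain $e^{-\b t}$ while keeping the leaf-covering combinatorics and the test-function distortion under uniform control is exactly where the cusp-uniform doubling of Proposition~\ref{prop:doubling} and the averaged contraction of Theorem~\ref{thm:Margulis function} are indispensable; a secondary technical point is calibrating the two seminorms so that their inclusion is genuinely compact and the resolvent integrals converge throughout $\Re(z)>0$.
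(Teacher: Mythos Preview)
Your overall architecture---anisotropic leafwise seminorm weighted by $V$, Lasota--Yorke plus Hennion---is the same as the paper's, but two mechanisms are missing, and your claimed transfer-operator inequality $\norm{\Lcal_t f}_k\le Ce^{-\s_0 t}\norm{f}_k+C\norm{f}_w$ does not hold as stated.

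The first gap is the \emph{flow direction}. Your schematic norm only shows integration against test functions on $W$, but a workable anisotropic norm has to carry derivatives of $f$ along the weak stable foliation (the paper's $e_{k,\ell,\g}$ with $\g\in(\Vcal_{k+\ell}^-\cup\Vcal_{k+\ell}^0)^\ell$); otherwise the compact-embedding step fails, because one transverse direction is unaccounted for. Once flow-direction derivatives are present, the top coefficient $e_{k,k,\g}$ with some $\g_i$ in the flow direction has $\e(\g)=0$ in Lemma~\ref{lem: equicts} and gains nothing under $\Lcal_t$. The paper therefore cannot and does not prove a Lasota--Yorke for $\Lcal_t$; instead it works directly with $R(z)$ and integrates by parts in $t$ (Lemma~\ref{lem: flow by parts}): $L_\w R(z)^{n+1}f$ is rewritten as $zR(z)^{n+1}f-R(z)^nf$ with one fewer derivative, and the resulting lower-order terms are absorbed by passing to the reweighted equivalent norm $\norm{\cdot}_{k,B}$ with $B$ large. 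Your scheme of ``integrate the transfer-operator Lasota--Yorke against $t^{n-1}e^{-zt}/(n-1)!$'' breaks at exactly this coefficient.

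The second gap is that the compact embedding $(\norm{\cdot}_k)\hookrightarrow(\norm{\cdot}'_k)$ is \emph{not} available on all of $\Bcal_k$ when $\G$ has cusps: Proposition~\ref{prop:compact embedding} only gives precompactness for sequences supported in a fixed sublevel set of $V$, because the family $\mathcal W$ is not precompact. The paper's remedy (Section~\ref{sec:decomposition}) is to split $\Lcal_{jT_0}f=\sum_\varpi\Lcal_{jT_0}(\psi_\varpi f)$ according to the recurrence pattern of the orbit: pieces spending a large fraction of time in the cusp are $O(e^{-\b\th jT_0})$ by Lemma~\ref{lem:norms of operator pieces}, while the recurrent pieces have support in a fixed compact $K_{r,\th}$, so the weak remainder in the final Lasota--Yorke~\eqref{eq:Lasota-Yorke} is $\norm{\Psi_{r,\th}f}'_k$ for a compactly supported cutoff $\Psi_{r,\th}$, and only then does Hennion apply. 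Your sentence ``the ratios $V(x_{W'})/V(x_W)$ are controlled on average by Theorem~\ref{thm:Margulis function}'' correctly identifies the ingredient, but the $e^{-\b t}$ gain from the Margulis function and the $e^{-kt}$ gain from hyperbolicity (Lemma~\ref{lem: bound in degree 0}) act on different pieces of this decomposition; they are not fused into a single inequality by ``retaining the worse exponent''.
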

    
    By the completion of a topological vector space $V$ with respect to a seminorm $\norm{\cdot}$, we mean the Banach space obtained by completing the quotient topological vector space $V/W$ with respect to the induced norm, where $W$ is the kernel of $\norm{\cdot}$.

    The proof of Theorem~\ref{thm: resolvent spectrum} occupies Sections~\ref{sec: spectral gap} and~\ref{sec:ess radius}.

    \subsection{Anisotropic Banach Spaces}
    We construct a Banach space of functions on $X$ containing $C^\infty$ functions satisfying Theorem~\ref{thm: resolvent spectrum}.

    Given $r\in\N$, let $\Vcal_r^-$ denote the space of all $C^r$ vector fields on $N^+$ pointing in the direction of the Lie algebra $\mf{n}^-$ of $N^-$ and having norm at most $1$.
    More precisely, $\Vcal_r^-$ consists of all $C^r$ maps $v:N^+\to \mf{n}^-$, with $C^r$ norm at most $1$.
    Similarly, we denote by $\Vcal_r^0$ the set of $C^r$ vector fields $v:N^+\to \mf{a}:=\mrm{Lie}(A)$, with $C^r$ norm at most $1$.
    Note that if $\w\in \mf{a}$ is the vector generating the flow $g_t$, i.e.~$g_t = \exp(t \w)$, then each $v\in \Vcal_r^0$ is of the form $v(n)=\phi(n)\w$, for some $\phi\in C^r(N^+)$ such that $\norm{\phi}_{C^r(N^+)}\leq 1$.
    Define 
    \begin{equation*}
        \Vcal_r = \Vcal_r^-\cup \Vcal_r^0.
    \end{equation*}

    For $v \in \Lie(G)$, denote by $L_v$ the differential operator on $C^1(X)$ given by differentiation with respect to the vector field generated by $v$.
    Hence, for $\vp\in C^1(G/\G)$,
    \begin{equation*}
        L_v\vp(x) = \lim_{s\r0} \frac{\vp(\exp(sv)x)-\vp(x)}{s}.
    \end{equation*}
   
   For each $k\in \N$, we define a norm on $C^k(N^+)$ functions as follows. Letting $\Vcal^+$ be the unit ball in the Lie algebra of $N^+$, $0\leq \ell\leq k$, and $\phi\in C^k(N^+)$, we define $c_\ell(\phi)$ to be the supremum of $|L_{v_1}\cdots L_{v_\ell}(\phi)|$ over $N^+$ and all tuples $(v_1,\dots,v_\ell)\in(\Vcal^+)^\ell$. We define $\norm{\phi}_{C^k}$ to be $\sum_{\ell=0}^k c_\ell(\phi)/(2^\ell \ell!)$. One then checks that for all $\phi_1,\phi_2\in C^k(N^+)$, we have
   \begin{equation}\label{eq:Leibniz}
       \norm{\phi_1\phi_2}_{C^k} \leq \norm{\phi_1}_{C^k}\norm{\phi_2}_{C^k}.
   \end{equation} 
    
    Following~\cite{GouezelLiverani,GouezelLiverani2}, we define a norm on $C_c^{k+1}(X)$ as follows.
    Given $f\in C_c^{k+1}(X)$, $k,\ell$ non-negative integers, $\g=(\g_1,\dots,\g_\ell)\in \Vcal_{k+\ell}^\ell$ (i.e.~$\ell$ tuple of $C^{k+\ell}$ vector fields) and $x\in X$, define
    \begin{equation}\label{eq: ekl gamma x}
        e_{k,\ell,\g}(f;x) := 
        \frac{1}{V(x)}  \sup
        \frac{1}{\mu^u_x\left(\wu{1}\right)}
        \left|
        \int_{\wu{1}} \phi(n) L_{\g_1(n)}\cdots 
        L_{\g_\ell(n)}(f)(g_snx) \;d\mu^u_x(n)\right|,
    \end{equation}
    where the supremum is taken over all $s\in [0,1]$ and all functions $\phi \in C^{k+\ell}(N_1^+)$ which are compactly supported in the interior of $N_1^+$ and having $\norm{\phi}_{C^{k+\ell}(N_1^+)}\leq 1$.

    For $\g\in \Vcal_{k+\ell+1}^\ell$,
    we define $e'_{k,\ell,\g}(f;x)$ analogously to $e_{k,\ell,\g}(f;x)$, but where we take $s=0$ and take the supremum over $\phi\in C^{k+\ell+1}(N_{1/10}^+)$ instead\footnote{The restriction on the supports allows us to handle non-smooth conditional measures; cf.~proof of Prop.~\ref{prop:compact embedding}.} of $C^{k+\ell}(N_1^+)$.
    Given $r>0$, set
    \begin{equation}
        \Omega_r^- := N_r^-\Omega. 
    \end{equation}
    We define
    \begin{equation}\label{eq: ekl}
         e_{k,\ell,\g}(f) := \sup_{x\in \Omega_1^-} e_{k,\ell,\g}(f;x)
         ,\qquad
          e_{k,\ell}(f) = \sup_{\g\in\Vcal_{k+\ell}^\ell} e_{k,\ell,\g}(f).
    \end{equation}
    Finally, we define $\norm{f}_k$ and $\norm{f}'_k$ by
    \begin{equation}\label{eq: norms}
        \norm{f}_{k} :=  \max_{0\leq \ell \leq k} e_{k,\ell}(f), \qquad
        \norm{f}'_k := \max_{0\leq \ell \leq k-1} \sup_{\g\in\Vcal_{k+\ell+1}^\ell, x\in \Omega_{1/2}^-} e'_{k,\ell,\g}(f;x).
    \end{equation}
    Note that the (semi-)norm $\norm{f}'_k$ is weaker than $\norm{f}_k$ since we are using more regular test functions and vector fields, and we are testing fewer derivatives of $f$.

    \begin{remark}\label{rem:seminorm}
    Since the suprema in the definition of $\norm{\cdot}_k$ are restricted to points on $\Omega_1^-$, $\norm{\cdot}_k$ defines a seminorm on $C_c^{k+1}(X)^M$.
    Moreover, since $\Omega_1^-$ is invariant by $g_t$ for all $t\geq 0$, the kernel of this seminorm, denoted $W_k$, is invariant by $\Lcal_t$.
    The seminorm $\norm{\cdot}_k$ induces a norm on the quotient $C_c^{k+1}(X)^M/W_k$, which we continue to denote $\norm{\cdot}_k$.
    \end{remark}
    
    \begin{definition}
    We denote by $\Bcal_{k}$ the Banach space given by the completion of the quotient $C_c^{k+1}(X)^M/W_k$ with respect to the norm $\norm{\cdot}_k$, where $C_c^{k+1}(X)^M$ denotes the subspace consisting of $M$-invariant functions.
    
    \end{definition}
    
    Note that since $\norm{\cdot}'_k$ is dominated by $\norm{\cdot}_k$, $\norm{\cdot}'_k$ descends to a (semi-)norm on $C_c^{k+1}(X)^M/W_k$ and extends to a (semi-)norm on $\Bcal_k$, again denoted $\norm{\cdot}_k'$.

    The following is a reformulation of Theorem~\ref{thm: resolvent spectrum} in the above setup.
    \begin{thm} \label{thm:resolvent spectrum2}
    For all $z\in \C$, with $\Re(z)>0$, and for all $k\in \N$, the operator $R(z)$ extends to a bounded operator on $\Bcal_k$ with spectral radius at most $1/\Re(z)$.
    Moreover, the essential spectral radius of $R(z)$ acting on $\Bcal_k$ is bounded above by $1/(\Re(z)+\s_0)$, where
    \begin{equation*}
        \s_0 := \min\set{k,\b}.
    \end{equation*}
    In particular, if $\G$ is convex cocompact, we can take $\s_0=k$.
    
    \end{thm}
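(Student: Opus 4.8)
\emph{Strategy and boundedness of $\Lcal_t$.} The plan is to deduce Theorem~\ref{thm:resolvent spectrum2} from a Lasota--Yorke inequality combined with Hennion's quasi-compactness criterion, in the style of~\cite{GouezelLiverani,GouezelLiverani2}, with the Margulis function of Theorem~\ref{thm:Margulis function} supplying the control of the cusps that replaces compactness of $X$. First I would check that $\norm{\cdot}_k$ is finite on $C_c^{k+1}(X)^M$ (finite suprema of finite integrals), so $\Bcal_k$ is non-trivial, and then show that each $\Lcal_t$ extends to a bounded operator on $\Bcal_k$ with norm uniformly bounded in $t\ge 0$. The key manipulation is the change of variables $m=\Ad(g_t)(n)$ in the integral defining $e_{k,\ell,\g}(\Lcal_t f;x)$: writing $g_{s+t}nx=g_s\Ad(g_t)(n)g_tx$ and invoking $\Ad(g_t)_\ast\mu_x^u=e^{-\d t}\mu_{g_t x}^u$ from~\eqref{eq:g_t equivariance}, this integral becomes one over the expanded ball $N_{e^t}^+$ against $\mu_{g_t x}^u$, of $f$ differentiated along the vector fields $\Ad(g_t)\g_i$ and tested against the flattened mollifier $\phi\circ\Ad(g_{-t})$. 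Covering $N_{e^t}^+$ by $\lesssim e^{Dt}$ unit balls with bounded overlap, recentring by~\eqref{eq:N equivariance}, and summing with the log-Lipschitz bound (Proposition~\ref{prop:height function properties}\eqref{item:log Lipschitz}) and the doubling estimate (Proposition~\ref{prop:doubling}), one reduces to bounding $\norm{f}_k\int_{N_2^+}V(g_t n x)\,d\mu_x^u(n)$; by Theorem~\ref{thm:Margulis function} this is $\lesssim \norm{f}_k(e^{-\b t}V(x)+1)\mu_x^u(N_1^+)$. Dividing by $V(x)\mu_x^u(N_1^+)$ and using that $V$ is bounded below on $\Omega_1^-$ (continuous and positive on the compact core, large in the cusp by Proposition~\ref{prop:height function properties}\eqref{item:V is proper}) gives $\norm{\Lcal_t f}_k\lesssim\norm{f}_k$. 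Hence $R(z)=\int_0^\infty e^{-zt}\Lcal_t\,dt$ converges for $\Re(z)>0$, and since $R(z)^n=\int_0^\infty e^{-zt}\frac{t^{n-1}}{(n-1)!}\Lcal_t\,dt$, its spectral radius is at most $1/\Re(z)$.

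\emph{The Lasota--Yorke inequality.} The heart of the matter is a bound of the form $\norm{\Lcal_t f}_k\le Ce^{-\s_0 t}\norm{f}_k+C_t\norm{f}'_k$ with $\s_0=\min\{k,\b\}$ (a small loss in the rate being absorbed into the choice of $\b<\Delta/2$), obtained by re-running the computation above while tracking two independent sources of decay. First, each entry of $\g$ in a contracting direction $\mf{n}^-$ acquires a factor $\Ad(g_t)$ of operator norm $\lesssim e^{-t}$, so a tuple with $j$ such entries gains $e^{-jt}$, while the flattened mollifier keeps $C^{k+\ell}$-norm $\lesssim 1$ on each piece; entries in the neutral (flow) direction neither gain nor cost, and are handled at the level of the resolvent via the identity $L_\w R(z)^n=zR(z)^n-R(z)^{n-1}$, which reduces flow derivatives of $R(z)^n f$ to lower resolvent powers and is also what sharpens the spectral radius bound. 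Second, one splits the covering pieces according to whether the recentred base point lies in a fixed compact set $K\subset\Omega$: outside $K$ the weight $V$ is large, and there the averaging inequality behind Theorem~\ref{thm:Margulis function} holds \emph{without} its additive constant (that constant originates in the compact region in the proof, whereas outside it the estimate follows directly from Proposition~\ref{prop:linear expand} applied to the small vector isolated by Proposition~\ref{prop:height function properties}\eqref{item:isolation}); a last-exit-time decomposition along the flow together with iteration of this constant-free inequality bounds the contribution of such pieces by $e^{-\b t}V(x)\mu_x^u(N_1^+)\norm{f}_k$. On $K$, where $V\asymp 1$, after absorbing the parameter $s$ by an $\Ad(g_s)$ change of variables and refining the pieces to scale $1/10$, the relevant integrals of $f$ are controlled by $\norm{f}'_k$ rather than $\norm{f}_k$, contributing $C_t\norm{f}'_k$. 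Since $\max\{j,\b\}\ge\s_0$ in every case, the claimed exponent is attained.

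\emph{Compact embedding and conclusion.} Next I would verify that $\Bcal_k$ embeds compactly into the completion of $C_c^{k+1}(X)^M/W_k$ with respect to the weaker seminorm $\norm{\cdot}'_k$: on the compact core this is an Arzel\`a--Ascoli argument (the extra derivative and the smaller support in the definition of $\norm{\cdot}'_k$ render the relevant dual balls precompact), and tightness in the cusp follows from the $1/V(x)$ factor in the seminorms together with the properness of $V$. Integrating the Lasota--Yorke inequality against $e^{-zt}\frac{t^{n-1}}{(n-1)!}\,dt$ then gives $\norm{R(z)^n f}_k\le a_n\norm{f}_k+b_n\norm{f}'_k$ with $a_n^{1/n}\to(\Re(z)+\s_0)^{-1}$, and Hennion's theorem (equivalently, Nussbaum's formula for the essential spectral radius) yields $r_{\mrm{ess}}(R(z))\le(\Re(z)+\s_0)^{-1}$. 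In the convex cocompact case $V\equiv 1$ and $\b$ is unconstrained, so $\s_0=k$.

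\emph{Main obstacle.} The hard part is the Lasota--Yorke inequality, and specifically the treatment of the non-decaying additive constant in the Margulis estimate of Theorem~\ref{thm:Margulis function}: taken at face value it produces an error term $\lesssim\norm{f}_k$ with no gain, which would be fatal for quasi-compactness. Converting it into a weak-norm error requires recognising --- via the properness of $V$ --- that this contribution is localised to a fixed compact set, and then a stopping-time argument (last exit from that set) together with iteration of the constant-free version of the contraction that holds where $V$ is large. A secondary but pervasive difficulty is the bookkeeping of neighbourhoods: one must keep the auxiliary points $g_s n x$, $m_i g_t x$ and $g_s m_i g_t x$ inside $N_2^-\Omega$, $\Omega_1^-$ and $\Omega_{1/2}^-$ respectively, so that Proposition~\ref{prop:doubling} and the seminorm definitions apply, which is exactly what forces the nested radii $1,1/2,1/10$ in the definitions and brings in the commutation relations~\eqref{eq:switching order of N- and N+} and Remark~\ref{rem:commutation of stable and unstable}.
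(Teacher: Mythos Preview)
Your overall architecture (Hennion's criterion, a Lasota--Yorke inequality, compact embedding, Margulis function for the cusps) matches the paper, and you correctly identify the additive constant in Theorem~\ref{thm:Margulis function} as the main obstacle. However, two concrete technical ingredients that the paper relies on are missing from your proposal, and without them the Lasota--Yorke step does not close.

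\textbf{Regularisation of test functions.} Your claim that ``on $K$ the relevant integrals are controlled by $\norm{f}'_k$'' is not justified for $\ell<k$. The weak coefficients $e'_{k,\ell}$ use test functions in $C^{k+\ell+1}$ and vector fields in $\Vcal_{k+\ell+1}$, one degree smoother than those for $e_{k,\ell}$; composing $\phi$ with $\Ad(g_{-t})$ shrinks derivatives but does not raise the differentiability class. The paper bridges this gap in Lemma~\ref{lem: bound in degree 0} by mollifying the test function at scale $\e=e^{-kt}$: the mollified part is $C^{k+\ell+1}$ with controlled norm (weak term), while $\phi-\Mcal_\e(\phi)$ has $C^{k+\ell}$ norm $O(e^{-kt})$ after composition with $\Ad(g_{-t})$ (strong term with gain $e^{-kt}$). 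This is what produces the exponent $k$ in $\s_0=\min\{k,\b\}$; your derivative-counting ``gain $e^{-jt}$ from $j$ stable entries'' gives nothing here since $j\le\ell<k$.

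\textbf{Equivalent norms for the flow direction.} Your identity $L_\w R(z)^n=zR(z)^n-R(z)^{n-1}$ is correct, but it trades one $k$th-order flow derivative for a $(k{-}1)$st-order term multiplied by $|z|$, which can be arbitrarily large. The paper absorbs this via the $z$-dependent equivalent norm $\norm{\cdot}_{k,B}$ of~\eqref{eq:equivalent norm}: dividing $e_{k,k,\omega}$ by $B^k$ with $B$ chosen large depending on $|z|$ makes the reduction to $e_{k,k-1}$ harmless (Lemma~\ref{lem: flow by parts}). Without this device, the Lasota--Yorke bound you state for $\norm{\cdot}_k$ cannot hold uniformly in $\Im(z)$.

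Your ``last-exit-time'' treatment of the cusp is in the right spirit, but note that the stopping time depends on the integration variable $n$, so the argument must be organised as a decomposition of the transfer operator rather than of the base point. The paper does this explicitly via the word decomposition $\Lcal_{jT_0}f=\sum_{\varpi\in\{1,2\}^j}\Lcal_{jT_0}(\psi_\varpi f)$ of Section~\ref{sec:decomposition}, then separates words by the proportion $\th_\varpi$ of cusp symbols: words with $\th_\varpi\ge\th j$ get decay $e^{-\b\th jT_0}$ from iterating Lemma~\ref{lem:norms of operator pieces} (this is the precise form of your ``constant-free Margulis outside $K$''), while words with $\th_\varpi<\th j$ have $\psi_\varpi$ supported in a fixed compact set and are handled by Lemma~\ref{lem:recurrent contribution}. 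The final Lasota--Yorke~\eqref{eq:Lasota-Yorke} is therefore proved for $R(z)^{r+1}$ in the norm $\norm{\cdot}_{k,B}$, not for $\Lcal_t$ in $\norm{\cdot}_k$ as you propose.
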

    
    \subsection{Hennion's Theorem and Compact Embedding}

Our key tool in estimating the essential spectral radius is the following refinement of Hennion's Theorem, based on Nussbaum's formula.
 \begin{thm}[cf.~\cite{hennion} and Lemma 2.2 in~\cite{BardetGouezelKeller}]
    \label{thm: hennion}
    Suppose that $\Bcal$ is a Banach space with norm $\norm{\cdot}$ and that $\norm{\cdot}'$ is a seminorm on $\Bcal$ so that the unit ball in $(\Bcal,\norm{\cdot})$ is relatively compact in $\norm{\cdot}'$.
    Suppose $R $ is a bounded operator on $\Bcal$ such that for some $n\in \N$, there exist constants $r>0$ and $C>0$ satisfying
    \begin{equation}\label{eq: hennion ineq}
        \norm{R^nv} \leq r^n \norm{v} + C \norm{v}',
    \end{equation}
    for all $v\in\Bcal$.
    Then, the essential spectral radius of $R$ is at most $r$.
    \end{thm}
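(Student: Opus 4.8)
The plan is to derive the bound on $r_{\mathrm{ess}}(R)$ from Nussbaum's formula for the essential spectral radius in terms of a measure of non-compactness, fed by an iterated version of~\eqref{eq: hennion ineq}. For a bounded set $A\subseteq\Bcal$ let $\beta(A)$ be the infimum of $\rho>0$ such that $A$ is covered by finitely many $\norm{\cdot}$-balls of radius $\rho$, and for a bounded operator $T$ put $\beta(T):=\beta(TB)$, where $B$ is the closed unit ball of $(\Bcal,\norm{\cdot})$. I will use the standard facts that $\beta$ is subadditive and positively homogeneous on sets, that $\beta(A)=0$ iff $\overline A$ is compact, that $\beta(ST)\leq\beta(S)\beta(T)$, and Nussbaum's identity $r_{\mathrm{ess}}(R)=\lim_{m\to\infty}\beta(R^{m})^{1/m}$; since this limit exists it may be computed along the subsequence $(mn)_{m\ge1}$, so it suffices to bound $\beta(R^{mn})$ for large $m$.

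The first step is to iterate~\eqref{eq: hennion ineq} \emph{at the level of a vector}: writing $R^{mn}=R^{n}\circ R^{(m-1)n}$ and applying~\eqref{eq: hennion ineq} repeatedly one gets, for every $m\ge1$ and $v\in\Bcal$,
\begin{equation}\label{eq:iterated DF}
  \norm{R^{mn}v}\ \leq\ r^{mn}\norm{v}\ +\ C\sum_{j=0}^{m-1}r^{jn}\,\norm{R^{(m-1-j)n}v}'.
\end{equation}
The reason to iterate $v$ rather than compose the crude bound $\beta(R^{n})\leq 2r^{n}$ with itself $m$ times is that in~\eqref{eq:iterated DF} the coefficient of $r^{mn}$ equals $1$ independently of $m$; a composition argument would instead produce a constant $2^{m}$, which does not disappear on taking $(mn)$-th roots.

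The second step is a compactness bookkeeping. For fixed $m$, the map $\Phi_{m}(v):=(v,R^{n}v,\dots,R^{(m-1)n}v)$ sends $B$ into $B\times R^{n}B\times\cdots\times R^{(m-1)n}B$; each factor is contained in a dilate of $B$, hence is totally bounded in the seminorm $\norm{\cdot}'$ by hypothesis, so $\Phi_{m}(B)$ is totally bounded in the product seminorm $\max_{i}\norm{\cdot}'$ on $\Bcal^{m}$. Thus for any $\delta>0$ there is a finite cover $B=\bigcup_{\alpha=1}^{N}V_{\alpha}$ and points $v_{\alpha}\in V_{\alpha}$ with $\norm{R^{in}(v-v_{\alpha})}'\leq\delta$ for all $v\in V_{\alpha}$ and $0\leq i<m$. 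Applying~\eqref{eq:iterated DF} to $v-v_{\alpha}$, and using $\norm{v-v_{\alpha}}\leq2$, gives $\norm{R^{mn}v-R^{mn}v_{\alpha}}\leq 2r^{mn}+C_{m}\delta$, where $C_{m}:=C\sum_{j=0}^{m-1}r^{jn}<\infty$ depends only on $m,n,r$. Hence $R^{mn}B$ is covered by $N$ balls of radius $2r^{mn}+C_{m}\delta$, so $\beta(R^{mn})\leq 2r^{mn}+C_{m}\delta$; letting $\delta\to0$ yields $\beta(R^{mn})\leq 2r^{mn}$ for every $m$.

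Finally, Nussbaum's formula along the subsequence $(mn)$ gives
\begin{equation*}
  r_{\mathrm{ess}}(R)=\lim_{m\to\infty}\beta(R^{mn})^{1/(mn)}\ \leq\ \lim_{m\to\infty}\bigl(2r^{mn}\bigr)^{1/(mn)}=r,
\end{equation*}
which is the assertion. I expect the only real subtlety to be the coordination of the two middle steps: one must keep the $r^{mn}$-coefficient in~\eqref{eq:iterated DF} free of $m$, and one must extract a \emph{single} finite cover of $B$ controlling $\norm{R^{in}(\cdot-v_{\alpha})}'$ simultaneously for all $i<m$ — it is precisely for the latter that relative compactness of $B$ in $\norm{\cdot}'$, not mere $\norm{\cdot}'$-boundedness, is needed.
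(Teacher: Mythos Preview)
The paper does not supply its own proof of this theorem; it is quoted as a known result with citations to \cite{hennion} and \cite[Lemma 2.2]{BardetGouezelKeller}, and is then used as a black box in the analysis of the resolvent. Your proof is correct and is essentially the standard argument from those references: iterate the Lasota--Yorke inequality~\eqref{eq: hennion ineq} to keep the coefficient of $r^{mn}$ equal to $1$, use relative compactness of the unit ball in $\norm{\cdot}'$ to absorb the remaining terms into an arbitrarily small $\delta$, and conclude via Nussbaum's formula $r_{\mathrm{ess}}(R)=\lim_{m}\beta(R^{m})^{1/m}$. There is nothing to compare against in the paper itself.
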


    The following proposition, roughly speaking, verifies the compactness assumption of Theorem~\ref{thm: hennion} for $\norm{\cdot}_k$ and $\norm{\cdot}'_k$.
   
    \begin{prop}\label{prop:compact embedding}
    Let $K\subseteq X$ be such that
    \begin{equation*}
        \sup \set{V(x): x\in K} <\infty.
    \end{equation*}
    Then, every sequence $f_n\in C_c^{k+1}(X)^M$, such that $f_n$ is supported in $K$ and has $\norm{f_n}_k \leq 1$ for all $n$, admits a Cauchy subsequence in $\norm{\cdot}'_k$.
    
    \end{prop}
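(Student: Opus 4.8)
The plan is to prove Proposition~\ref{prop:compact embedding} by a standard Arzelà--Ascoli argument, but carried out at the level of the ``dual'' picture: the seminorms $e_{k,\ell,\g}$ and $e'_{k,\ell,\g}$ test a function $f$ by integrating its derivatives against smooth bump functions along unstable plaques, so the key is to extract uniform equicontinuity of the relevant families of integrals. The main subtlety — and the reason the paper uses the weaker test functions $\phi\in C^{k+\ell+1}(N_{1/10}^+)$ and drops one derivative of $f$ in $\norm{\cdot}'_k$ — is that the conditional measures $\mu_x^u$ are only defined weak-$\ast$-continuously in $x$ and are genuinely fractal, so one cannot differentiate in $x$ directly; the extra regularity buys a compactness/equicontinuity gain via integration by parts along $N^+$.

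\textbf{Step 1: Reduce to a countable family of seminorms and a diagonal argument.}
Fix the compact-support set $K$ with $\sup_{x\in K} V(x) =: M_0 <\infty$, and a sequence $f_n$ supported in $K$ with $\norm{f_n}_k\leq 1$. It suffices to produce, for each fixed $\ell\in\{0,\dots,k-1\}$, each fixed tuple $\g\in \Vcal_{k+\ell+1}^\ell$, and each $x\in \Omega_{1/2}^-$, a subsequence along which $e'_{k,\ell,\g}(f_n - f_m;x)\to 0$ uniformly in $x$ and $\g$. Since $\Omega_{1/2}^-$ is $\sigma$-compact, the vector fields in $\Vcal_{k+\ell+1}^\ell$ form a compact family in the $C^{k+\ell}$-topology, and the test functions $\phi$ range over a $\norm{\cdot}_{C^{k+\ell+1}}$-bounded set, I would first fix a countable dense subset of all this data and run a diagonal extraction; the uniformity statement then follows from an equicontinuity estimate in the parameters $(x,\g,\phi)$.

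\textbf{Step 2: The core equicontinuity estimate.}
The heart of the matter is: given $f$ with $\norm{f}_k\leq 1$ supported in $K$, the map
\[
(x,\g,\phi,s)\;\longmapsto\; \frac{1}{\mu_x^u(N_1^+)}\int_{N_1^+}\phi(n)\,L_{\g_1(n)}\cdots L_{\g_\ell(n)}(f)(g_s n x)\,d\mu_x^u(n)
\]
is uniformly equicontinuous in all parameters, with modulus independent of $f$, \emph{provided} we only ask for $\ell\leq k-1$ derivatives and test against $C^{k+\ell+1}$ data (as in $\norm{\cdot}'_k$). For equicontinuity in $s$ and in the ``transverse'' direction (moving $x$ within an unstable plaque, or perturbing $\g$), the control $\norm{f}_k\leq 1$ directly bounds the $(\ell+1)$-st derivative integrals, so Lipschitz-type estimates follow. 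The genuinely new point is equicontinuity in $x$ transverse to $N^+$: here one moves $x$ to $x'=u^- x$ with $u^-\in N^-$ small, uses the local stable holonomy change of variables~\eqref{eq:stable equivariance} to compare $\mu_x^u$ and $\mu_{x'}^u$ (both the Jacobian and the holonomy map are uniformly $C^1$ by Section~\ref{sec:holonomy}), and then estimates the difference of the two integrals. Since $\mu_x^u$ carries no pointwise regularity, the extra derivative on $\phi$ is used to integrate by parts along $N^+$ to transfer a derivative off $f$ onto $\phi$ and the Jacobian, exhibiting the difference as bounded by $\norm{f}_k\cdot O(\text{dist}(x,x'))$ times $C^{k+\ell+1}$-norms of the auxiliary data, which are uniformly bounded. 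The factor $1/V(x)$ is harmless because $V$ is bounded on $K$ and $\log$-Lipschitz by Proposition~\ref{prop:height function properties}\eqref{item:log Lipschitz}.

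\textbf{Step 3: Conclude via Arzelà--Ascoli.}
With the equicontinuity estimate of Step~2 and the uniform bound $|e'_{k,\ell,\g}(f_n;x)|\leq \norm{f_n}_k\leq 1$, the functions $x\mapsto$ (the supremum over $\g,\phi$ of the integral above) form, for the diagonal subsequence, a uniformly bounded equicontinuous family on each compact piece of $\Omega_{1/2}^-$. Arzelà--Ascoli gives uniform convergence on compacta; combined with the support restriction to $K$ (so only finitely many compact pieces matter, and the seminorm only sees $x\in\Omega_{1/2}^-$ with $N_1^+ x$ meeting $K\cdot N_1^-$, a bounded set), this yields $\norm{f_n - f_m}'_k\to 0$ along the subsequence.

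\textbf{Expected main obstacle.} The hard part is Step~2 in the transverse-$x$ direction: making the integration-by-parts comparison between $\mu_x^u$ and $\mu_{x'}^u$ completely quantitative and uniform, controlling the holonomy Jacobian's dependence on the basepoint and ensuring the ``lost'' derivative is genuinely absorbed by the gap between $C^{k+\ell}$ (used in $\norm{\cdot}_k$) and $C^{k+\ell+1}$ (used in $\norm{\cdot}'_k$), as well as between $\ell\leq k$ and $\ell\leq k-1$. The restriction of the test functions' supports to the interior ($N_{1/10}^+$ versus $N_1^+$) is what lets the integration by parts proceed without boundary terms despite the non-smoothness of the conditionals, and verifying this carefully is where the real work lies.
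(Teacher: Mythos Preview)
Your high-level architecture --- reduce to finitely many basepoints via an equicontinuity estimate, use stable holonomy to compare integrals at nearby transversal points, then diagonally extract --- matches the paper's. But the specific mechanism you propose for the transverse-$x$ equicontinuity in Step~2 is wrong and would fail.

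You write that ``the extra derivative on $\phi$ is used to integrate by parts along $N^+$ to transfer a derivative off $f$ onto $\phi$ and the Jacobian.'' Integration by parts along $N^+$ against $\mu_x^u$ is precisely what \emph{cannot} be done: the conditionals $\mu_x^u$ are genuinely fractal and carry no density against which one could integrate by parts. The paper states this explicitly at the start of its proof: ``we bypass the step involving integration by parts over $N^+$ since our conditionals $\mu_x^u$ need not be smooth in general.'' Your own sentence is internally inconsistent --- you acknowledge $\mu_x^u$ has no pointwise regularity and then invoke an operation that requires exactly that regularity.

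The correct mechanism, and the real reason $\norm{\cdot}'_k$ drops to $\ell\leq k-1$, is that the holonomy comparison is controlled by \emph{one more weak-stable derivative of $f$}, not by shifting an unstable derivative onto $\phi$. Concretely: writing $z_1=p_1^- z_0$ and interpolating along $z_t=p_t^- z_0$, the difference of the two integrals becomes $\int_0^1 \int \Phi_t(n)\, L_{w_t}\bigl(L_{v_1}\cdots L_{v_\ell} f\bigr)(n z_t)\,d\mu_{z_t}^u\,dt$, where $w_t$ is a weak-stable vector field with $\norm{w_t}\ll \e$. This is an $(\ell+1)$-derivative integral of exactly the type tested by $e_{k,\ell+1}$, and since $\ell+1\leq k$ it is bounded by $\e\,\norm{f}_k$. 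The extra $C^{k+\ell+1}$ regularity on $\phi$ is needed because the test function $\Phi_t$ for this $(\ell+1)$-derivative integral must lie in $C^{k+(\ell+1)}$, and because one reduces to finitely many test functions via the compact embedding $C^{k+\ell+1}\hookrightarrow C^{k+\ell}$ --- not for any integration by parts. Likewise, the support restriction to $N_{1/10}^+$ is there so that after composing with the holonomy map (which distorts the plaque slightly) the new test function $\Phi_t$ is still supported inside $N_1^+$; it has nothing to do with avoiding boundary terms.
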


    \subsection{Proof of Proposition~\ref{prop:compact embedding}}

    We adapt the arguments in~\cite{GouezelLiverani,GouezelLiverani2} with the main difference being that we bypass the step involving integration by parts over $N^+$ since our conditionals $\mu_x^u$ need not be smooth in general.
      The idea is to show that since all directions in the tangent space of $X$ are accounted for in the definition of $\norm{\cdot}_k$ (differentiation along the weak stable directions and integration in the unstable directions), one can estimate $\norm{\cdot}'_k$ using finitely many coefficients $e_{k}(f;x_i)$.
    More precisely, we first show that there exists $C\geq 1$ so that for all sufficiently small $\e>0$, there exists a finite set $\Xi \subset \Omega$ so that for all $f\in C_c^{k+1}(X)^M$, which is supported in $K$,
    \begin{equation}\label{eq: finitely many u pieces}
        \norm{f}'_k \leq C \e \norm{f}_{k} + C \sup     \int_{\wu{1}} \phi L_{v_1}\cdots L_{v_\ell} f\;d\mu_{x_i}^u,
    \end{equation}
    where the supremum is over all $0\leq \ell\leq k-1$, all $(v_1,\dots,v_\ell)\in \Vcal_{k+\ell+1}^\ell$, all functions $\phi\in C^{k+\ell+1}(N_2^+)$ with $\norm{\phi}_{C^{k+\ell+1}}\leq 1$
    and all $x_i\in \Xi$.
    
    First, we show how~\eqref{eq: finitely many u pieces} completes the proof. Let $f_n\in C_c^{k+1}(K)$ be as in the statement.
    Let $\e>0$ be small enough so that~\eqref{eq: finitely many u pieces} holds.
    Since $C^{k+\ell+1}(N_2^+)$ is compactly included inside $C^{k+\ell}(N_2^+)$, we can find a finite collection $\set{\phi_j:j} \subset C^{k+\ell}(N_2^+)$ which is $\e$ dense in the unit ball of $C^{k+\ell+1}(N_2^+)$.
    Similarly, we can find a finite collection of vector fields  $\set{(v_1^m,\dots,v^m_\ell):m} \subset \Vcal_{k+\ell}^\ell$ which is $\e$ dense in $\Vcal_{k+\ell+1}^\ell$ in the $C^{k+\ell+1}$ topology.
    Then, we can find a subsequence, also denoted $f_n$, so that the finitely many quantities
    
    \begin{align*}
        \set{\int_{\wu{1}} \phi_j L_{v^m_1}\cdots L_{v^m_\ell} f_n\;d\mu_{x_i}^u : i,j,m }
    \end{align*}
    converge.
    Together with~\eqref{eq: finitely many u pieces}, this implies that
    \begin{equation*}
        \norm{f_{n_1}-f_{n_2}}_k' \ll \e,
    \end{equation*}
    for all large enough $n_1,n_2$, where we used the fact that $\norm{f_n}_k\leq 1$ for all $n$.
    As $\e$ was arbitrary, one can extract a Cauchy subequence by a standard diagonal argument.
     Thus, it remains to prove~\eqref{eq: finitely many u pieces}.
     
     Fix some $f\in C_c^{k+1}(X)^M$ which is supported inside $K$.
    Let an arbitrary tuple $\g=(v_1,\dots,v_\ell)\in \Vcal_{k+\ell+1}^\ell$ be given and set $$\psi = L_{v_1}\cdots L_{v_\ell}f.$$
    Let $\phi\in C^{k+\ell+1}(N_{1/10}^+)$ and write $Q= N_{1/10}^+$. To estimate $e'_{k,\ell,\g}(f;z)$ using the right side of~\eqref{eq: finitely many u pieces}, we need to estimate integrals of the form
    \begin{equation} \label{eq:1 point 1 function}
       \frac{1}{V(z)} \frac{1}{\mu^u_{z}\left(\wu{1}\right)} \int_{\wu{1}} \phi(n) \psi(nz) \;d\mu_{z}^u(n),
    \end{equation}
    for all $z\in \Omega_{1/2}^-$. 
    
    Denote by $\rho:X\to [0,1]$ a smooth function which is identically one on the $1$-neighborhood $\Omega^1$ of $\Omega$ and vanishes outside its $2$-neighborhood.
    Note that if $f$ is supported outside of $\Omega^1$, then the integral in~\eqref{eq:1 point 1 function} vanishes for all $z$ and the estimate follows. 
    The same reasoning implies that
    \begin{align*}
        \norm{\rho f}_k = \norm{f}_k, \qquad \norm{\rho f}'_k = 
        \norm{f}'_k.
    \end{align*}
    Hence, we may assume that $f$ is supported inside the intersection of $K$ with $\Omega^1$.
    In particular, for the remainder of the argument, we may replace $K$ with (the closure of) its intersection with $\Omega^1$.

    This discussion has the important consequence that we may assume that $K$ is a compact set in light of Proposition~\ref{prop:height function properties}.
    Let $K_1$ denote the $1$-neighborhood of $K$ and fix some $z\in K_1\cap \Omega_{1/2}^-$.
    By shrinking $\e$, we may assume it is smaller than the injectivity radius of $K_1$.
    Hence, we can find a finite cover $B_1,\dots,B_M$ of $K_1\cap\Omega^-_{1/2}$ with flow boxes of radius $\e$ and with centers $\Xi:=\set{x_i}\subset \Omega^-_{1/2}$.

   \textbf{Step 1:}
   We first handle the case where $z$ belongs to the same unstable manifold as one of the $x_i$'s.
    Note that we may assume that $Q$ intersects the support of $\mu_z^u$ non-trivially, since otherwise the integral in question is $0$.
    Let $u\in Q$ be one point in this intersection and let $x=uz$. 
    Thus, by~\eqref{eq:N equivariance}, we get
    \begin{align*}
          \int_{\wu{1}} \phi(n) \psi(nz) \;d\mu_{z}^u(n)
          = \int_{Q} \phi(n) \psi(nz) \;d\mu_{z}^u(n)
         = \int_{Qu^{-1}}\phi(nu) \psi(nx) \;d\mu_{x}^u(n).
    \end{align*}
    Let $\phi_u(n):= \phi(nu)$.
    Then, $\phi_u$ is supported inside $Qu^{-1}$.
    Moreover, since $u\in Q$, $Q_u:=Qu^{-1}$ is a ball of radius $1/10$ containing the identity element.
    Hence, $Qu^{-1}\subset N_1^+$ and, thus,
    \begin{equation*}
       \int_{Q_u}\phi(nu) \psi(nx) \;d\mu_{x}^u(n)
        = \int_{\wu{1}}\phi_u(n) \psi(nx) \;d\mu_{x}^u(n).
    \end{equation*}

    Fix some $\e>0$. We may assume that $\e<1/10$. Note that $x$ belongs to the $1$-neighborhood of $K$.
    Then, $x=u_2^{-1} x_i$ for some $i$ and some $u_2\in N^+_\e$, by our assumption in this step that $z$ belongs to the unstable manifold of one of the $x_i$'s.
    By repeating the above argument with $z$, $u$, $x$, $Q$ and $\phi$ replaced with $x$, $u_2$, $x_i$, $Q_u$ and $\phi_u$ respectively, we obtain 
     \begin{align*}
         \int_{\wu{1}} \phi_u(n) \psi(nx) \;d\mu_{x}^u(n)
        = \int_{Q_uu_2^{-1}}\phi_u(nu_2) \psi(nx_i) \;d\mu_{x_i}^u(n).
    \end{align*}
    Note that $Q_u$ is contained in the ball of radius $1/5$ centered around identity.
    Since $u_2\in N_\e^+$ and $\e<1/10$, we see that $Q_uu_2^{-1}\subset N_1^+$.
    It follows that
    \begin{equation*}
       \int_{\wu{1}} \phi_u(n) \psi(nx_i) \;d\mu_{x_i}^u(n)
        = \int_{N_1^+}\phi_{u_2u}(n) \psi(nx_i) \;d\mu_{x_i}^u(n),
    \end{equation*}
    where $\phi_{u_2u}(n)=\phi_u(nu_2)=\phi(nu_2 u)$.
    The function $\phi_{u_2u}$ satisfies $\norm{\phi_{u_2u}}_{C^{k+\ell+1}}=\norm{\phi}_{C^{k+\ell+1}}\leq 1$.
    Finally, let $\vp_1,\vp_2 :N^+\to [0,1]$ be non-negative bump $C^0$ functions where $\vp_1\equiv 1$ on $N_{1}^+$ and while $\vp_2$ is equal to $1$ at identity and its support is contained inside $N_1^+$. 
    Since $y\mapsto \mu_y^u(\vp_i)$ is continuous for $i=1,2$, by~\cite[Lemme 1.16]{Roblin}, and is non-zero on $\Omega_1^-$, we can find, by compactness of $K_1$, a constant $C\geq 1$, depending only on $K$ (and the choice of $\vp_1,\vp_2$), such that
    \begin{equation}\label{eq:bounded conditionals}
      1/C\leq \mu^u_{y}\left(\wu{1}\right) \leq C , \qquad \forall y\in K_1 \cap \Omega_1^-.
    \end{equation}
    Hence, recalling that $\psi=L_{v_1}\cdots L_{v_\ell}f$ and that $V(z)\gg 1$, we conclude that the integral in~\eqref{eq:1 point 1 function} is bounded by the second term in~\eqref{eq: finitely many u pieces}.

    \textbf{Step 2:} We reduce to the case where $z$ is contained in the unstable manifolds of the $x_i$'s.
    Let $i$ be such that $z\in B_i$. 
    Set $z_1 = z$ and let $z_0\in (N_\e^+\cdot x_i)$ be the unique point in the intersection of $N_\e^+\cdot x_i$ with the local weak stable leaf of $z_1$ inside $B_i$.
    Let $p_1^-\in P^-:=MAN^-$ be an element of the $\e$ neighborhood of identity $P^-_\e$ in $P^-$ such that $z_1=p_1^-z_0$.

    We will estimate the integral in~\eqref{eq:1 point 1 function} using integrals at $z_0$.
    The idea is to perform weak stable holonomy between the local strong unstable leaves of $z_0$ and $z_1$.
    To this end, we need some notation.
    Let $Y\in \mf{p}^-$ be such that $p_1^-=\exp(Y)$ and set 
    \[p_t^-=\exp(t Y),\qquad z_t = p_t^-z_0,\]
    for $t\in [0,1]$.
    Let us also consider the following maps
    $  u_t^+: N_1^+ \to N^+$ and $\tilde{p}_t^-: N_1^+ \to P^-  $
    defined by the following commutation relations
    \begin{align*}
        np_t^- = \tilde{p}_t^-(n) u_t^+(n), \qquad \forall n\in N_1^+.
    \end{align*}
    Recall we are given a test function $\phi\in C^{k+\ell+1}(N_{1/10}^+)$. We can rewrite the integral we wish to estimate as follows:
    \begin{align*}
        \int_{\wu{1}} \phi(n) \psi(nz_1) \;d\mu_{z_1}^u(n) 
        =
        \int_{\wu{1}}\phi(n)\psi(n p^-_1 z_0)
        \;d\mu_{z_1}^u(n) 
        =\int \phi(n)\psi( \tilde{p}^-_1(n) u_1^+(n)    z_0)
        \;d\mu_{z_1}^u(n).
    \end{align*}

    Let $U_t^+\subset N^+$ denote the image of $u^+_t$.
    Note that if $\e$ is small enough, $U_t^+\subseteq N_2^+$ for all $t\in[0,1]$.
    We may further assume that $\e$ is small enough so that the map $u_t^+$ is invertible on $U_t^+$ for all $t\in [0,1]$ and write $\phi_t:= \phi\circ (u_t^+)^{-1}$. For simplicity, set
    \begin{equation*}
        p_t^-(n) := \tilde{p}_t^- ( (u_t^+)^{-1}(n)).
    \end{equation*}
    Write $m_t(n)\in M$ and $b_t^-(n)\in AN^-$ for the components of $p^-_t(n)$ along $M$ and $AN^-$ respectively so that
    \[p_t^-(n) = m_t(n) b_t^-(n).\]
    We denote by $J_t$ the Radon-Nikodym derivative of the pushforward of $\mu_{z_1}^u$ by $u_t^+$ with respect to $\mu_{z_t}^u$; cf.~\eqref{eq:stable equivariance} for an explicit formula.
    Thus, changing variables using $n\mapsto u^+_1(n)$, and using the $M$-invariance of $f$, we obtain
    \begin{align*}
        \int_{\wu{1}} \phi(n) \psi(nz_1) \;d\mu_{z_1}^u 
        =\int \phi_1(n)\psi( p^-_1(n) n z_0) J_1(n)
        \;d\mu_{z_0}^u
        = \int \phi_1(n)\tilde{\psi}_1( b^-_1(n) n z_0) J_1(n)
        \;d\mu_{z_0}^u,
    \end{align*}
    where $\tilde{\psi}_t$ is given by
    \begin{align*}
        \tilde{\psi}_t :=  L_{\tilde{v}^t_1}\cdots L_{\tilde{v}^t_\ell}f, \qquad \tilde{v}^t_i(n):= \Ad(m_t((u_t^+)^{-1}(n)))(v_i((u_t^+)^{-1}(n))).
    \end{align*}
    Here, we recall that $\Ad(M)$ commutes with $A$ and normalizes $N^-$ so that $\tilde{v}^t_i$ is a vector field with the same target as $v_i$.
    
    Let $\mf{b}^-$ denote the Lie algebra of $AN^-$ and denote by $\tilde{w}'_t:U_t^+\times [0,1]\to \mf{b}^-$ the vector field tangent to the paths defined by $b_t^-$.
    More explicitly, $\tilde{w}'_t$ is given by the projection of $tY$ to $\mf{b}^-$.
    Denote $\tilde{w}_t(n) := \Ad(m_t(n))(\tilde{w}'_t(n))$.
    Then, using the $M$-invariance of $f$ as above once more, we can write
    \begin{align*}
        \psi( b^-_1(n) n  z_0)-\psi(nz_0)) = \int_0^1 \frac{\partial}{\partial t} \tilde{\psi}_t( b^-_t(n) n    z_0) \;dt
        =\int_0^1 L_{\tilde{w}_t}( \tilde{\psi}_t)( p^-_t(n) n z_0)  \;dt.
    \end{align*}
    To simplify notation, let us set $w_t = \tilde{w}_t\circ u^+_t$, and
    \begin{equation*}
        F_t :=  L_{\tilde{v}^t_1\circ u^+_t}\cdots L_{\tilde{v}^t_\ell\circ u^+_t} f .
    \end{equation*}
    Using a reverse change of variables, we obtain for every $t\in [0,1]$ that
    \begin{align*}
        \int \phi_1(n) L_{\tilde{w}_t}(\tilde{\psi}_t)( p^-_t(n) n z_0) J_1(n)
        \;d\mu_{z_0}^u
        &=\int (\phi_1J_1)\circ u^+_t (n) L_{w_t}(F_t)( \tilde{p}^-_t(n) u^+_t(n) z_0) J_t^{-1}(n)
        \;d\mu_{z_t}^u
        \nonumber\\
        &=\int (\phi_1J_1)\circ u^+_t (n)\cdot 
       L_{w_t}(F_t)(n  z_t)\cdot J_t^{-1}(n)
        \;d\mu_{z_t}^u(n),
    \end{align*}
    where we used the identities $\tilde{p}^-_t(n) u^+_t(n) =np_t^-$ and $z_t=p_t^-z_0$.
    Let us write 
    \[
        \Phi_t(n):= (\phi_1J_1)\circ u^+_t (n)\cdot J_t^{-1}(n),
    \]
    which we view as a test function\footnote{The Jacobians are smooth maps as they are given in terms of Busemann functions; cf.~\eqref{eq:stable equivariance}.}.
    Hence, the last integral above amounts to integrating $\ell+1$ weak stable derivatives of $f$ against a $C^{k+\ell}$ function.
    Moreover, since $\phi$ is supported in $N_{1/10}^+$, we may assume that $\e$ is small enough so that $\Phi_t$ is supported in $N_1^+$ for all $t\in[0,1]$, and meets the requirements on the test functions in the definition of $\norm{f}_k$.
    Since $z=z_1$ belongs to $\Omega^-_{1/2}$ by assumption, we may further shrink $\e$ if necessary so that the points $z_t$ all\footnote{This type of estimate is the reason we use stable thickenings $\Omega_r^-$ of $\Omega$ in the definition of the norm instead of $\Omega$.} belong to $\Omega_1^-$.
    Thus, decomposing $w_t$ into its $A$ and $N^-$ components, and noting that $\norm{w_t}\ll \e$, we obtain the estimate
    \begin{align}\label{eq:z_t terms}
        \int \Phi_t(n)\cdot 
        L_{w_t}(F_t)(n  z_t)
        \;d\mu_{z_t}^u(n) \ll \e \norm{f}_k V(z_t) \mu_{z_t}^u(N_1^+).
    \end{align}

    To complete the argument, note that the integral we wish to estimate satisfies
    \begin{align}\label{eq:split into z_0 and z_t}
        \int_{\wu{1}} \phi(n) \psi(nz_1) \;d\mu_{z_1}^u 
        = \int (\phi_1 J_1)(n)\psi( n z_0)
        \;d\mu_{z_0}^u
        + \int_0^1 \int \Phi_t(n)\cdot 
        L_{w_t}(F_t)(n  z_t)
        \;d\mu_{z_t}^u(n)\;dt.
    \end{align}
     Moreover, recall that $z_0$ belongs to the same unstable manifold as some $x_i\in \Xi$.
     Additionally, since $\phi$ is supported in $N_{1/10}^+$, by taking $\e$ small enough, we may assume that $\phi_1$ is supported inside $N_{1/5}^+$. 
    Hence, arguing similarly to Step 1, viewing $\phi_1 J_1$ as a test function, we can estimate the first term on the right side above using the right side of~\eqref{eq: finitely many u pieces}. 
    
    The second term in~\eqref{eq:split into z_0 and z_t} is also bounded by the right side of~\eqref{eq: finitely many u pieces}, in view of~\eqref{eq:z_t terms}.
    Here we are using that $y\mapsto \mu_y^u(N_1^+)$ and $y\mapsto V(y)$ are uniformly bounded as $y$ varies in the compact set $K_1$; cf.~\eqref{eq:bounded conditionals}.
    This completes the proof of~\eqref{eq: finitely many u pieces} in all cases, since $\phi$ and $z$ were arbitrary.


    \section{The Essential Spectral Radius of Resolvents}
    \label{sec:ess radius}
    In this section, we study the operator norm of the transfer operators $\Lcal_t$ and the resolvents $R(z)$ on the Banach spaces constructed in the previous section. These estimates constitute the proof of Theorem~\ref{thm: resolvent spectrum}.
   With these results in hand, we deduce Theorem~\ref{thm:intro meromorphic} at the end of the section.

 \subsection{Strong continuity of transfer operators}   
    Recall that a collection of measurable subsets $\set{B_i}$ of a space $Y$ is said to have \textit{intersection multiplicity} bounded by a constant $C\geq 1$ if for all $i$, the number of sets $B_j$ in the collection that intersect $B_i$ non-trivially is at most $C$.
    In this case, one has
    \[\sum_i \chi_{B_i}(y) \leq C \chi_{\cup_i B_i}(y), \qquad \forall y\in Y.\] 
    
    The following lemma implies that the operators $\Lcal_t$ are uniformly bounded on $\Bcal_k$ for $t\geq 0$.
    \begin{lem}\label{lem: equicts}
     For every $k,\ell\in \N\cup\set{0}$, $\g\in\Vcal_{k+\ell}^\ell$, $t\geq 0$, and $x\in\Omega_1^-$,
    \begin{equation*}
        e_{k,\ell,\g}(\Lcal_tf;x) \ll_\b e^{- \e(\g) t} e_{k,\ell,\g}(f)(e^{-\b t}+1/V(x)) ,
    \end{equation*}
    where $\e(\g) \geq 0$ is the number of stable derivatives determined by $\g$.
    In particular, $\e(\g)=0$ if and only if $\ell=0$ or all components of $\g$ point in the flow direction.
    
    \end{lem}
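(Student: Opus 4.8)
The plan is to rewrite $e_{k,\ell,\g}(\Lcal_t f;x)$ --- an average over $N_1^+$ based at $x$ --- as a controlled sum of averages over $N_1^+$ of derivatives of $f$ based at nearby points, following the change-of-variables scheme of~\cite{GouezelLiverani,GouezelLiverani2}. First I would commute the weak-stable derivatives past the transfer operator. Since $\Lcal_t$ is right translation by $g_t$, for any $v\in\Lie(G)$ one has $L_v\Lcal_t=\Lcal_t L_{\Ad(g_t)v}$, hence, using also $g_{s+t}n=g_s\Ad(g_t)(n)g_t$,
\[ L_{\g_1(n)}\cdots L_{\g_\ell(n)}(\Lcal_t f)(g_snx)=\bigl(L_{\Ad(g_t)\g_1(n)}\cdots L_{\Ad(g_t)\g_\ell(n)}f\bigr)\bigl(g_s\,\Ad(g_t)(n)\,g_t x\bigr). \]
Then I would substitute $m=\Ad(g_t)(n)$ and invoke~\eqref{eq:g_t equivariance} and~\eqref{eq:scaling of Carnot metric}: the integral over $N_1^+$ against $\mu_x^u$ becomes $e^{-\d t}$ times the integral over $N^+_{e^t}$ against $\mu^u_{g_tx}$ of $\tilde\phi(m):=\phi(\Ad(g_{-t})m)$ times $\ell$ weak-stable derivatives of $f$ along the transformed vector fields $\tilde\g_j(m):=\Ad(g_t)\g_j(\Ad(g_{-t})m)$, evaluated at $g_s m\,g_t x$.

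Next I would cover the portion of $\mrm{supp}(\mu^u_{g_tx})$ inside $N^+_{e^t}$ by unit balls $N_1^+m_i$ of uniformly bounded intersection multiplicity with $m_i$ in the support, attach a smooth partition of unity with uniformly bounded $C^{k+\ell}$-norms, and translate each piece to the identity via~\eqref{eq:N equivariance}, landing at base points $y_i=m_i g_tx$. Here the two quantitative inputs enter. Because $\Ad(g_t)$ contracts $\mf{n}^-$ by a factor $\le e^{-t}$ while fixing the flow direction $\w$, and because $\Ad(g_{-t})$ contracts $N^+$, the transformed tuple $(\tilde\g_j)$ has $C^{k+\ell}$-norm $\ll e^{-\e(\g)t}$, with $\e(\g)$ the number of stable-directed components; pulling this scalar out leaves admissible vector fields in $\Vcal_{k+\ell}^\ell$ of the same type as $\g$, while the cut-off, translated functions $\bar\chi_i(\cdot\,m_i)\tilde\phi(\cdot\,m_i)$ remain admissible test functions (supported in $N_1^+$, $C^{k+\ell}$-norm $\ll 1$), by the right-invariance of the $C^{k+\ell}$-norm and~\eqref{eq:Leibniz}. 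Consequently each piece is $\ll V(y_i)\mu^u_{y_i}(N_1^+)\,e_{k,\ell,\g}(f)$, and summing yields
\[ \Bigl|\int_{N_1^+}\phi(n)\,L_{\g_1(n)}\cdots L_{\g_\ell(n)}(\Lcal_t f)(g_snx)\,d\mu_x^u(n)\Bigr|\ll e^{-\d t}e^{-\e(\g)t}\,e_{k,\ell,\g}(f)\sum_i V(y_i)\,\mu^u_{y_i}(N_1^+). \]

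The final step, and the one carrying the actual content, is to show $\sum_i V(y_i)\mu^u_{y_i}(N_1^+)\ll e^{\d t}\mu_x^u(N_1^+)\bigl(e^{-\b t}V(x)+1\bigr)$; dividing by $V(x)\mu_x^u(N_1^+)$ (the weights in the definition of $e_{k,\ell,\g}(\cdot;x)$) then produces exactly the asserted factor $e^{-\e(\g)t}(e^{-\b t}+1/V(x))$, the $e^{\d t}$'s cancelling. To prove this bound I would: (i) replace $V(y_i)\mu^u_{y_i}(N_1^+)$ by $\int_{N_1^+}V(ny_i)\,d\mu^u_{y_i}(n)$ up to constants, using the log-Lipschitz estimate Proposition~\ref{prop:height function properties}\eqref{item:log Lipschitz} and the doubling property Proposition~\ref{prop:doubling}; (ii) undo the covering (bounded multiplicity) and the translation~\eqref{eq:N equivariance} to reach $\ll\int_{N^+_{ce^t}}V(m\,g_tx)\,d\mu^u_{g_tx}(m)$ for a fixed constant $c$; (iii) change variables back via~\eqref{eq:g_t equivariance} and~\eqref{eq:scaling of Carnot metric} to obtain $e^{\d t}\int_{N^+_c}V(g_tnx)\,d\mu_x^u(n)$; and (iv) apply the Margulis-function inequality Theorem~\ref{thm:Margulis function} --- in its $N^+_c$-version, obtained from an $O(1)$-ball cover plus Proposition~\ref{prop:doubling} --- to bound this by $\ll e^{\d t}\mu_x^u(N_1^+)(e^{-\b t}V(x)+1)$.

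Throughout, one must track that $g_tx$ and all intermediate base points $y_i=m_ig_tx$ remain in a fixed stable-thickening $\Omega^-_c$ of $\Omega$, so that the supremum defining $e_{k,\ell,\g}(f)$ controls the individual terms: since $t\ge 0$ the flow $g_t$ contracts the $N^-$-direction, and for $m_i$ in the support of $\mu^u_{g_tx}$ the Bruhat/holonomy computation of Section~\ref{sec:holonomy} (cf.~Remark~\ref{rem:commutation of stable and unstable}) forces the $N^-$-component of $m_ig_tx$ to stay of size $\ll 1$. I expect the main obstacle to be precisely this last combination --- packaging the covering sum into the Margulis-function estimate while verifying that every base point encountered lies in the admissible neighborhood of $\Omega$ --- since that is exactly where the weight $V$ in the norm is engineered to absorb cusp excursions and where the extra term $1/V(x)$ becomes unavoidable.
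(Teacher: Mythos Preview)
Your proposal is correct and follows essentially the same approach as the paper's proof: commute the weak-stable derivatives through $\Lcal_t$ to extract the factor $e^{-\e(\g)t}$, change variables to $\Ad(g_t)(N_1^+)$, cover the support by unit balls with a bounded-multiplicity partition of unity, bound each localized piece by $e_{k,\ell,\g}(f)V(x_i)\mu^u_{x_i}(N_1^+)$, and then sum via the log-Lipschitz property of $V$, the Margulis-function inequality (Theorem~\ref{thm:Margulis function}), and doubling (Proposition~\ref{prop:doubling}). The paper additionally disposes of the case $t+s<1$ by a one-line appeal to the definition of $e_{k,\ell,\g}$ (the extra $g_s$ in the norm absorbs small $t$), and makes explicit that the assumption $t\geq 1$ is what ensures the translated base points $x_i$ land back in $N_1^-\Omega$; both are minor points already implicit in your outline.
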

    
    \begin{proof}
     Fix some $x\in \Omega$ and $\g=(v_1,\dots,v_\ell)\in \Vcal_{k+\ell}^\ell$.
    Since the Lie algebra of $N^-$ has the orthogonal decomposition $\mf{g}_{-\a}\oplus \mf{g}_{-2\a}$, where $\a$ is the simple positive root in $\mf{g}$ with respect to $g_t$, we have that $g_t$ contracts the norm of each stable vector $v\in \Vcal_{k+\ell}^-$ by at least $e^{-t}$.
    It follows that for all $v\in \Vcal_{k+\ell}^-$ and $w\in \Vcal_{k+\ell}^0$, 
    \begin{equation}\label{eq:weak stable derivatives}
        L_{v}(\Lcal_tf)(x) = e^{-t} L_{\bar{v}^t} (f)(g_t x), \qquad
        L_w(\Lcal_t f)(x) = L_w (f)(g_t x),
    \end{equation}
    for all $f\in C^{k+1}(X)^M$, where $v^t =\mrm{Ad}(g_t)(v)$ and $\bar{v}^t=e^tv^t$ if $v\in \Vcal_{k+\ell}^-$ and $\bar{v}^t = v^t$ if $v\in \Vcal^0_{k+\ell}$. Moreover, we have
    \begin{equation*}\label{eq:uniform hyperbolicity}
        \norm{v^t} \leq e^{-t} \norm{v} \leq e^{-t}.
    \end{equation*}
   Let $\phi$ be a test function, $f\in C^{k+1}(X)^M$, and set $\psi = L_{\bar{v}^t_1}\cdots L_{\bar{v}^t_\ell}f$.
   Then, we get
     \begin{align*}
         \left|\int_{N_1^+} \phi(n) L_{v_1}\cdots L_{v_\ell}(\Lcal_t f)( nx) \;d\mu_x^u(n)\right|
         &= e^{-\e(\g) t} \left|\int_{N_1^+} 
         \phi( n )
         \psi( g_t n x) \;d\mu_{ x}^u(n)\right|.
    \end{align*}

    Let $\set{\rho_i:i\in I}$ be a partition of unity of $\mrm{Ad}(g_t) (\wu{1})$ so that each $\rho_i$ is non-negative, $C^\infty$, and supported inside some ball of radius $1$ centered inside $\Ad(g_t)(N_1^+)$.
    Such a partition of unity can be chosen so that the supports of $\rho_i$ have a uniformly bounded multiplicity\footnote{Note that the analog of the classical Besicovitch covering theorem fails to hold for $N^+$ with the Cygan metric when $N^+$ is not abelian; cf.~\cite[pg.~17]{KoranyiReimann}. Instead, such a partition of unity can be constructed using the Vitali covering lemma with the aid of the right invariance of the Haar measure. To obtain a uniform bound on the multiplicity here and throughout, it is important that such an argument is applied to balls with uniformly comparable radii; cf.~Prop.~\ref{prop:cusp adapted partition} where a suitable substitute to bounded multiplicity is used when the radii are not of comparable size.},
    depending only on $N^+$.
    Denote by $I(\L)$ the subset of indices $i\in I$ such that there is $n_i\in N^+$ in the support of the measure $\mu_{g_tx}^u$ with the property that the support of $\rho_i$ is contained in $N_{1}^+\cdot n_i $.
    In particular, for $i\in I\setminus I(\L)$, $\rho_i\mu_{g_tx}^u$ is the $0$ measure.
    Then, using~\eqref{eq:g_t equivariance} to change variables, we obtain
    \begin{align*}
        \int_{\mrm{Ad}(g_t)(N_1^+)} 
         \phi(g_{-t} n g_t)
         \psi(n g_t x) \;d\mu_{g_t x}^u(n)
         = \sum_{i\in I(\L)} \int_{N_1^+\cdot n_i}  
         \rho_i(n) \phi(g_{-t} n g_t )
         \psi(n g_t x) \;d\mu_{g_t x}^u(n).
    \end{align*}
    Setting $x_i=n_ig_tx$ and changing variables using~\eqref{eq:N equivariance}, we obtain 
    \begin{align}\label{eq:apply equivariance}
         \int_{N_1^+} \phi(n) \psi(g_t nx) \;d\mu_x^u(n)= e^{-\d t} \sum_{i\in I(\L)} \int_{N_1^+}  
         \rho_i(nn_i) \phi(g_{-t} nn_i g_t )
         \psi(nx_i ) \;d\mu_{ x_i}^u(n).
    \end{align}
    
    The bounded multiplicity of the partition of unity implies that the balls $N_1^+\cdot n_i$ have intersection multiplicity bounded by a constant $C_0$, depending only on $N^+$.
    Enlarging $C_0$ if necessary, we may also choose $\rho_i$ so that $\norm{\rho_i}_{C^{k+\ell}} \leq C_0$.
    In particular, $C_0$ is independent of $t$ and $x$.

    For each $i$, let $\bar{\phi}_i(n)= \rho_i(nn_i) \phi(g_{-t} nn_i g_t )$.
    Since $\rho_i$ is chosen to be supported inside $N_1^+n_i$, then $\bar{\phi}_i$ is supported inside $N_1^+$.
    Moreover, since $\rho_i$ is $C^\infty$, $\bar{\phi}_i$ is of the same differentiability class as $\phi$.
    Since conjugation by $g_{-t}$ contracts $N^+$, we see that $\norm{\phi\circ \mrm{Ad}(g_{-t})}_{C^{k+\ell}} \leq \norm{\phi}_{C^{k+\ell}}\leq 1$ (note that the supremum norm of $\phi\circ \mrm{Ad}(g_{-t})$ does not decrease, and hence we do not gain from this contraction).
    Hence, since $\norm{\rho_i}_{C^{k+\ell}}\leq C_0$,~\eqref{eq:Leibniz} implies that
    $\norm{\bar{\phi}_i}_{C^{k+\ell}} \leq C_0$.

    First, let us suppose that $t\geq 1$. Then, using Remark~\ref{rem:commutation of stable and unstable}, since $x\in N_1^-\Omega$, one checks that $x_i$ belongs to $N_1^-\Omega$ as well for all $i$.
    Hence, we obtain
    \begin{align}\label{eq:reduce to ht estimate}
         \left| \int_{N_1^+} \phi(n) 
         \psi(g_t nx) \;d\mu_x^u\right|
        &\leq e^{-\d t}  \sum_{i\in I(\L)}
        \left|\int_{N_1^+}  
          \bar{\phi}_i( n )
         \psi(nx_i ) \;d\mu_{ x_i}^u\right|
         \nonumber\\
         &\leq C_0 e_{k,\ell,\g}(f)\norm{\phi\circ \Ad(g_{-t})}_{C^{k+\ell}}
         e^{-\d t}  \sum_{i\in I(\L) }
         \mu_{x_i}^u(N_1^+) V(x_i).
    \end{align}
    By the log Lipschitz property of $V$ provided by Proposition~\ref{prop:height function properties}, and by enlarging $C_0$ if necessary, we have $V(x_i)\leq C_0 V(nx_i)$ for all $n\in N_1^+$. It follows that
    \begin{equation*}
        \sum_{i\in I(\L) }
         \mu_{x_i}^u(N_1^+) V(x_i) \leq 
         C_0 \sum_{i\in I(\L) }\int_{N_1^+} V(nx_i)\;d\mu_{x_i}^{u}(n).
    \end{equation*}
    Recall that the balls $N_1^+\cdot n_i$ have intersection multiplicity at most $C_0$.
    Moreover, since the support of $\rho_i$ is contained inside $\mrm{Ad}(g_t)(N_1^+)$, the balls $N_1^+n_i$ are all contained in $N_2^+\mrm{Ad}(g_t)(N_1^+)$.
    Hence, applying the equivariance properties~\eqref{eq:g_t equivariance} and~\eqref{eq:N equivariance} once more yields
    \begin{align*}
        \sum_{i\in I(\L) }\int_{N_1^+} V(nx_i)\;d\mu_{x_i}^{u}(n)
       \leq C_0 \int_{N_2^+\mrm{Ad}(g_t)(N_1^+)}
       V(ng_tx) \;d\mu_{g_tx}^u(n) 
       \leq
       C_0 e^{\d t} \int_{N_3^+}
       V(g_tnx) \;d\mu_{x}^u(n) .
    \end{align*}
     Here, we used the positivity of $V$ and that $\mrm{Ad}(g_{-t})(N_2^+)N_1^+\subseteq N_3^+$.
     Combined with~\eqref{eq:apply equivariance} and the contraction estimate on $V$, Theorem~\ref{thm:Margulis function}, it follows that
     \begin{align*}
        \int_{N_1^+} \phi(n) \psi(g_t nx) \;d\mu_x^u
        \leq C_0^3 (c e^{-\b t}V(x) + c) \mu_x^u(N_3^+)e_{k,\ell,\g}(f),
     \end{align*}
     for a constant $c\geq 1$ depending on $\b$.
    By Proposition~\ref{prop:doubling}, we have $\mu_x^u(N_3^+) \leq C_1 \mu_x^u(N_1^+)$, for a uniform constant $C_1\geq 1$, which is independent of $x$. 
    This estimate concludes the proof in view of~\eqref{eq:weak stable derivatives}.

    Now, let $s\in [0,1]$ and $t\geq 0$. If $t+s\geq 1$, then the above argument applied with $t+s$ in place of $t$ implies that
    \begin{align*}
        \left|\int_{N_1^+}\phi(n) L_{v_1}\cdots L_{v_\ell}(\Lcal_tf)(g_{s}nx)\;d\mu_x^u \right|
        \ll_\b e^{- \e(\g) t} e_{k,\ell,\g}(f)(e^{-\b t} V(x)+1)\mu_x^u(N_1^+) ,
    \end{align*}
    as desired.
    Otherwise, if $t+s<1$, then by definition of $e_{k,\ell,\g}$, we have that
    \begin{align*}
    \left|\int_{N_1^+}\phi(n) L_{v_1}\cdots L_{v_\ell}(\Lcal_tf)(g_{s}nx)\;d\mu_x^u \right|
        \leq e_{k,\ell,\g}(f)V(x)\mu_x^u(N_1^+).
    \end{align*}
    Since $t$ is at most $1$ in this case, the conclusion of the lemma follows in this case as well.
    
    \end{proof}

   As a corollary, we deduce the following strong continuity statement which implies that the infinitesimal generator of the semigroup $\Lcal_t$ is well-defined as a closed operator on $\Bcal_k$ with dense domain. When restricted to $C_c^{k+1}(X)^M$, this generator is nothing but the differentiation operator in the flow direction. 
   This strong continuity is also important in applying the results of~\cite{Butterley} to deduce exponential mixing from our spectral bounds on the resolvent in Section~\ref{sec:Dolgopyat}.
   \begin{cor}\label{cor:strong continuity}
    The semigroup $\set{\Lcal_t:t\geq 0}$ is strongly continuous; i.e.~for all $f\in \Bcal_k$,
    \begin{align*}
        \lim_{t \downarrow 0} \norm{\Lcal_tf- f}_k =0.
    \end{align*}
   \end{cor}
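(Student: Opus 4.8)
The plan is to reduce the statement to a density argument: since $C_c^{k+1}(X)^M/W_k$ is dense in $\Bcal_k$ by construction, and the operators $\Lcal_t$ are uniformly bounded for $t\in[0,1]$ by Lemma~\ref{lem: equicts} (the bound there is uniform in $t$ once one absorbs the $1/V(x)\leq 1$ factor), it suffices to prove $\norm{\Lcal_tf-f}_k\to 0$ as $t\downarrow 0$ for $f$ ranging over a dense subset, namely $f\in C_c^{k+1}(X)^M$. Indeed, given $f\in\Bcal_k$ and $\e>0$, pick $h\in C_c^{k+1}(X)^M$ with $\norm{f-h}_k<\e$; then $\norm{\Lcal_tf-f}_k\leq \norm{\Lcal_t(f-h)}_k+\norm{\Lcal_th-h}_k+\norm{h-f}_k\leq (M_0+1)\e+\norm{\Lcal_th-h}_k$, where $M_0=\sup_{t\in[0,1]}\norm{\Lcal_t}_k<\infty$, so the general case follows once the smooth case is handled.

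For $f\in C_c^{k+1}(X)^M$, I would estimate $e_{k,\ell,\g}(\Lcal_tf-f;x)$ directly from the definition~\eqref{eq: ekl gamma x}. Fix a tuple $\g=(\g_1,\dots,\g_\ell)$ and write $\psi=L_{\g_1}\cdots L_{\g_\ell}f\in C_c^{1}(X)$; then $L_{\g_1}\cdots L_{\g_\ell}(\Lcal_tf-f)$ differs from $\psi\circ g_t-\psi$ only by the fact that the vector fields $\g_i$ get transported by $\Ad(g_t)$ — more precisely, using~\eqref{eq:weak stable derivatives} as in the proof of Lemma~\ref{lem: equicts}, $L_{\g_i}(\Lcal_tf)=e^{-\e_i t}L_{\bar\g_i^t}(f)\circ g_t$ where $\bar\g_i^t\to\g_i$ in $C^{k+\ell}$ as $t\to0$ and $\e_i\in\{0,1\}$. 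Thus the integrand in~\eqref{eq: ekl gamma x} for $\Lcal_tf-f$ is
\[
\phi(n)\Big[\big(\textstyle\prod e^{-\e_i t}\big)\,\psi_t(g_{s+t}nx)-\psi(g_snx)\Big],
\]
with $\psi_t:=L_{\bar\g_1^t}\cdots L_{\bar\g_\ell^t}f\to\psi$ uniformly as $t\to0$ (since $f\in C_c^{k+\ell+1}$ would suffice, but $f\in C_c^{k+1}$ with $\ell\leq k$ gives $C^{2k}$-regularity, ample for $\ell\leq k$ — here one uses $\g\in\Vcal_{k+\ell}^\ell$ so $\ell\leq k$). Now $|\big(\prod e^{-\e_i t}\big)\psi_t\circ g_{s+t}-\psi\circ g_s|\leq |1-\prod e^{-\e_i t}|\,\norm{\psi_t}_\infty + \norm{\psi_t-\psi}_\infty + \sup_{s\in[0,1]}\norm{\psi\circ g_{s+t}-\psi\circ g_s}_\infty$, and each of these three terms tends to $0$ as $t\downarrow0$, uniformly in $s\in[0,1]$, by uniform continuity of $\psi$ (compactly supported) and smoothness of the flow. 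Since $\phi$ has $\norm{\phi}_{C^{k+\ell}}\leq 1$ and hence $\norm{\phi}_\infty\leq1$, and since $V(x)\geq 1$ and $\mu_x^u(N_1^+)$ cancels, we get $e_{k,\ell,\g}(\Lcal_tf-f;x)=o(1)$ uniformly over $x\in\Omega_1^-$ and over $\g$. Taking the max over $0\leq\ell\leq k$ and the supremum over $\g$ and $x$ gives $\norm{\Lcal_tf-f}_k\to0$.

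The only mild subtlety — and the main point to be careful about — is that the convergence $\psi_t\to\psi$ and the equicontinuity of $s\mapsto\psi\circ g_s$ must be \emph{uniform over all admissible tuples $\g$} and over $x\in\Omega_1^-$; this holds because $f$ is fixed with compact support, the vector fields range over the \emph{unit} balls $\Vcal_{k+\ell}^\ell$ (so the transported fields $\bar\g_i^t$ stay in a fixed bounded set and converge at a rate governed only by $\Ad(g_t)\to\id$), and $g_{s}$ for $s\in[0,1]$ moves points within a fixed compact neighborhood of $\supp(f)$ on which $\psi$ is uniformly continuous. No integration by parts on $N^+$ is needed, so the non-smoothness of $\mu_x^u$ plays no role here. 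This completes the proof.
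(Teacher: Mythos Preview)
Your proof is correct and follows essentially the same approach as the paper. Both arguments reduce to the dense subspace $C_c^{k+1}(X)^M$ using uniform boundedness of $\Lcal_t$ (Lemma~\ref{lem: equicts}), and for smooth $f$ both bound $\norm{\Lcal_t f - f}_k$ by a $C^k$-type quantity that tends to zero; the paper simply compresses your three-term splitting into the one-line estimate $\norm{\Lcal_t f - f}_k \ll \sup_{0\leq s\leq 1}\norm{\Lcal_{t+s}f - \Lcal_s f}_{C^k(X)}$ and invokes the mean value theorem, while your version unpacks the transported vector fields explicitly. Your $\e/3$ density argument is the standard formulation of what the paper does via a contradiction/diagonal argument.
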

   \begin{proof}
   For all $f\in C_c^{k+1}(X)^M$, one easily checks that, since $V(\cdot)\gg 1$ on any bounded neighborhood of $\Omega$, then
   \begin{align*}
       \norm{\Lcal_t f- f}_k \ll \sup_{0\leq s\leq 1} \norm{\Lcal_{t+s} f-\Lcal_sf}_{C^{k}(X)}.
   \end{align*}
   Moreover, since $f$ belongs to $C^{k+1}$, the right side above inequality tends to $0$ as $t\to 0$ by the mean value theorem.
     {
    Now, let $f$ be a general element of $\Bcal_k$ and suppose that $\norm{\Lcal_tf-f}_k \nrightarrow 0$. Then, there is $t_n\to 0$ such that $\norm{\Lcal_{t_n}f-f}_k \to c \neq 0$.
    For every $j\in\N$, let $f_j\in C_c^{k+1}(X)^M$ be such that $\norm{f-f_j}_k<1/j$.
    For each $j$, let $n_j$ be large enough such that $\norm{\Lcal_{t_{n_j}}f_j-f_j}_k <1/j$.
    Then,
    \begin{align*}
        \norm{\Lcal_{t_{n_j}}f-f}_k\leq 
        \norm{\Lcal_{t_{n_j}}f-\Lcal_{t_{n_j}}f_j}_k
        +\norm{\Lcal_{t_{n_j}}f_j-f_j}_k
        + \norm{f_j-f}_k.
    \end{align*}
    The last two terms on the right side are each bounded by $1/j$ by construction. By Lemma~\ref{lem: equicts}, we also have that $\norm{\Lcal_{t_{n_j}}f - \Lcal_{t_{n_j}}f_j}_k$
    is $O(\norm{f-f_j}_k) $. 
    It follows that $\norm{\Lcal_{t_{n_j}}f-f}_k\ll 1/j \to 0$, which contradicts the hypothesis that $\norm{\Lcal_{t_{n}}f-f}_k \to c\neq 0$.
    }
   \end{proof}

    \subsection{Towards a Lasota-Yorke inequality for the resolvent}

    Recall that for all $n\in\N$,
    \begin{equation}
        R(z)^n = \int_0^\infty \frac{t^{n-1}}{(n-1)!} e^{-z t} \Lcal_t \;dt,
    \end{equation}
    as follows by induction on $n$.
    The following corollary is immediate from Lemma~\ref{lem: equicts} and the fact that
    \begin{equation}\label{eq:integral value}
        \left|\int_0^\infty \frac{t^{n-1}}{(n-1)!} e^{-z t}\;dt\right|
        \leq \int_0^\infty \frac{t^{n-1}}{(n-1)!} e^{-\Re(z) t}\;dt =
        1/\Re(z)^n,
    \end{equation}
    for all $z\in\C$ with $\Re(z)>0$.
   \begin{cor}\label{cor:norm of resolvent}
    For all $n,k,\ell \in\N\cup \set{0}$, $f\in C^{k+1}_c(X)^M$ and $z\in \C$ with $\Re(z)>0$, we have
    \begin{equation*}
        e_{k,\ell}(R(z)^nf;x) \ll_\b e_{k,\ell}(f) \left(\frac{1}{(\Re(z)+\b)^n}+\frac{V(x)^{-1}}{\Re(z)^n}\right)
       \ll_\b  e_{k,\ell}(f)/\Re(z)^n.
    \end{equation*}
    In particular, $R(z)$ extends to a bounded operator on $\Bcal_k$ with spectral radius at most $1/\Re(z)$.
    \end{cor}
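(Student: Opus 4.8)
The plan is to reduce the estimate to Lemma~\ref{lem: equicts} via the integral representation $R(z)^n = \int_0^\infty \frac{t^{n-1}}{(n-1)!}e^{-zt}\Lcal_t\,dt$. Fixing $0\le\ell\le k$, a tuple $\g\in\Vcal_{k+\ell}^\ell$, a point $x\in\Omega_1^-$, an admissible test function and $s\in[0,1]$, I would first interchange the $t$-integral with the differential operators $L_{\g_i}$ and with integration against $\mu_x^u$ in the defining integral for $e_{k,\ell,\g}(R(z)^nf;x)$; since these operations are continuous and linear and the $t$-integrand admits an integrable majorant of the form $C\,t^{n-1}e^{-\Re(z)t}$ that is uniform over the relevant compact range of the spatial variables, this is justified by Fubini's theorem together with differentiation under the integral sign. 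The absolute value of the resulting expression is then bounded by $\int_0^\infty \frac{t^{n-1}}{(n-1)!}e^{-\Re(z)t}$ times the analogous integral for $\Lcal_tf$.

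Next I would divide through by $V(x)\mu_x^u(N_1^+)$, take the supremum over the test function and over $s$, and bound the inner factor using Lemma~\ref{lem: equicts} by $C_\b\,e^{-\e(\g)t}e_{k,\ell,\g}(f)\big(e^{-\b t}+V(x)^{-1}\big)$ for a constant $C_\b\ge 1$; since $\e(\g)\ge 0$, the factor $e^{-\e(\g)t}\le 1$ is simply discarded. It then remains to evaluate two copies of the Gamma integral already recorded in~\eqref{eq:integral value}:
\[
\int_0^\infty \frac{t^{n-1}}{(n-1)!}e^{-(\Re(z)+\b)t}\,dt=\frac{1}{(\Re(z)+\b)^n},\qquad
\int_0^\infty \frac{t^{n-1}}{(n-1)!}e^{-\Re(z)t}\,dt=\frac{1}{\Re(z)^n}.
\]
This yields $e_{k,\ell,\g}(R(z)^nf;x)\ll_\b e_{k,\ell,\g}(f)\big((\Re(z)+\b)^{-n}+V(x)^{-1}\Re(z)^{-n}\big)$; taking the supremum over $\g\in\Vcal_{k+\ell}^\ell$ gives the first displayed bound of the corollary, and the second follows from $\Re(z)+\b\ge\Re(z)$ together with the uniform lower bound $V(x)\gg 1$ valid on the bounded set $\Omega_1^-$.

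For the last assertion, taking in addition the supremum over $x\in\Omega_1^-$ and the maximum over $0\le\ell\le k$ yields $\norm{R(z)^nf}_k\ll_\b\norm{f}_k\,\Re(z)^{-n}$ for all $f\in C_c^{k+1}(X)^M$. In particular, the case $n=1$ shows that $R(z)$, initially defined on $C_c^{k+1}(X)^M$, is bounded for $\norm{\cdot}_k$; since the bound forces $\norm{R(z)f}_k=0$ whenever $\norm{f}_k=0$, the operator descends to the quotient by $W_k$ and extends uniquely by density to a bounded operator on $\Bcal_k$, and the power bound persists on $\Bcal_k$ by density. Gelfand's spectral radius formula then gives a spectral radius at most $\lim_{n\to\infty}(C_\b)^{1/n}\Re(z)^{-1}=\Re(z)^{-1}$. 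The only step here that is not purely formal is the interchange of the $t$-integral with differentiation and with integration against $\mu_x^u$, and this is routine; so I do not expect any genuine obstacle in this corollary.
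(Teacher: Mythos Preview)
Your argument is correct and is exactly the route the paper takes: the paper simply declares the corollary ``immediate from Lemma~\ref{lem: equicts} and~\eqref{eq:integral value}'', and you have spelled out precisely that computation. One small slip: $\Omega_1^-$ is not a bounded set when $\G$ has cusps, so your justification of $V(x)\gg 1$ is not quite right; the correct reason is that $V$ is bounded below on $\Omega$ (compact core plus cusp neighborhoods where $V\to\infty$) and the log-Lipschitz property in Proposition~\ref{prop:height function properties}\eqref{item:log Lipschitz} then extends this lower bound to $N_1^-\Omega$.
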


    Note that Lemma~\ref{lem: equicts} does not provide contraction in the part of the norm that accounts for the flow direction.
    In particular, the estimate in this lemma is not sufficient to control the essential spectral radius of the resolvent.
    The following lemma provides the first step towards a Lasota-Yorke inequality for resolvents for the coefficients $e_{k,\ell}$ when $\ell<k$.
    The idea, based on regularization of test functions, is due to~\cite{GouezelLiverani}.
    The doubling estimates on conditional measures in Proposition~\ref{prop:doubling} are crucial for carrying out the argument.

    \begin{lem}\label{lem: bound in degree 0}
    For all $t\geq 2$ and $0\leq \ell< k$, we have
    \begin{equation*}
        e_{k,\ell}(\Lcal_t f) \ll_{k,\b}   e^{-kt} e_{k,\ell}(f) +  e'_{k,\ell}( f).
    \end{equation*}
    \end{lem}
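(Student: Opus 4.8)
The plan is to run the transport argument from the proof of Lemma~\ref{lem: equicts}, with one extra ingredient: a Taylor expansion of the pushed-forward test function, exploiting that $\Ad(g_{-t})$ acts on $N^+$ as a \emph{linear} contraction in exponential coordinates (diagonal, with eigenvalues $e^{-t}$ on $\nonepls$ and $e^{-2t}$ on $\ntwopls$). Concretely, fix $x\in\Omega_1^-$, $\g=(v_1,\dots,v_\ell)\in\Vcal_{k+\ell}^\ell$, $s\in[0,1]$, a test function $\phi\in C^{k+\ell}(\wu 1)$ with $\norm{\phi}_{C^{k+\ell}}\le 1$, and set $t'=t+s\ge 2$. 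As in Lemma~\ref{lem: equicts}, rewrite $L_{v_1}\cdots L_{v_\ell}(\Lcal_t f)(g_s nx)=e^{-\e(\g)t}(L_{\tilde v_1}\cdots L_{\tilde v_\ell}f)(g_{t'}nx)$ with $\tilde\g\in\Vcal_{k+\ell}^\ell$ obtained by conjugation, change variables via~\eqref{eq:g_t equivariance}--\eqref{eq:N equivariance}, and use a smooth partition of unity $\{\rho_i\}$ of $\Ad(g_{t'})(\wu 1)$ of uniformly bounded multiplicity with $\norm{\rho_i}_{C^{k+\ell+1}}$ uniformly bounded, to obtain
\begin{align*}
\int_{\wu 1}\phi(n)L_{v_1}\cdots L_{v_\ell}(\Lcal_t f)(g_s nx)\,d\mu^u_x(n)=e^{-\e(\g)t}e^{-\d t'}\sum_i\int_{\wu 1}\bar\phi_i(n)(L_{\hat\g}f)(nx_i)\,d\mu^u_{x_i}(n),
\end{align*}
where $x_i=n_ig_{t'}x\in\Omega_1^-$, $\hat\g\in\Vcal_{k+\ell}^\ell$, and $\bar\phi_i(n)=\rho_i(nn_i)\,\phi\big(\Ad(g_{-t'})(nn_i)\big)$ is supported in $\wu 1$.

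The new input is an analysis of $\bar\phi_i$. Since $\Ad(g_{-t'})$ is a group automorphism of $N^+$ contracting $\nonepls$ by $e^{-t'}$ and $\ntwopls$ by $e^{-2t'}$, the order-$j$ part of $\phi\circ\Ad(g_{-t'})$ (measured in the left-invariant $C^{k+\ell}$-norm of the paper) has $C^0$-norm $\ll e^{-jt'}\norm{\phi}_{C^{k+\ell}}$ for $1\le j\le k+\ell$. Hence, writing $\Psi_i:=\phi\circ\Ad(g_{-t'})\circ R_{n_i}$ and Taylor-expanding $\Psi_i$ about the identity to order $k-1$ (legitimate since $\phi\in C^{k+\ell}\subseteq C^k$ and $\ell<k$ is not even needed here), we get $\Psi_i=P_i+R_i$ with $P_i$ a polynomial of degree $\le k-1$ whose coefficients are $O(1)$, hence $\norm{P_i}_{C^{k+\ell+1}(\wu 1)}\ll_k 1$, and with $\norm{R_i}_{C^{k+\ell}(\wu 1)}\ll_k e^{-kt'}$. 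Multiplying by $\rho_i(\cdot\, n_i)$ and invoking~\eqref{eq:Leibniz}, this yields $\bar\phi_i=\rho_i(\cdot\, n_i)P_i+\theta_i$ with $\norm{\rho_i(\cdot\, n_i)P_i}_{C^{k+\ell+1}}\ll_k 1$ and $\norm{\theta_i}_{C^{k+\ell}}\ll_k e^{-kt'}$.

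It then remains to estimate the two resulting sums. For the $\theta_i$-part, $\big|\int\theta_i(L_{\hat\g}f)(nx_i)d\mu^u_{x_i}\big|\le\norm{\theta_i}_{C^{k+\ell}}e_{k,\ell,\hat\g}(f;x_i)\mu^u_{x_i}(\wu 1)V(x_i)\ll_k e^{-kt'}e_{k,\ell}(f)\mu^u_{x_i}(\wu 1)V(x_i)$; summing over $i$ exactly as in the proof of Lemma~\ref{lem: equicts} — bounded multiplicity of $\{\wu 1 n_i\}$, the doubling estimate Proposition~\ref{prop:doubling}, and the contraction estimate Theorem~\ref{thm:Margulis function} — gives $\sum_i\mu^u_{x_i}(\wu 1)V(x_i)\ll e^{\d t'}(e^{-\b t'}V(x)+1)\mu^u_x(\wu 1)$, so after dividing by $V(x)\mu^u_x(\wu 1)$ and using $V\gg 1$, $t'\ge t$, this part is $\ll_{k,\b}e^{-kt}e_{k,\ell}(f)$. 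For the $P_i$-part, $\rho_i(\cdot\, n_i)P_i$ is smooth with uniformly bounded $C^{k+\ell+1}$-norm and supported in $\wu 1$; subdividing it into boundedly many pieces supported in balls of radius $1/10$ and re-centering each resulting integral onto a base point of $\Omega_{1/2}^-$ with test function supported in $\wu{1/10}$ (using $N^+$-equivariance~\eqref{eq:N equivariance} as in Step~1 of the proof of Proposition~\ref{prop:compact embedding}, where $s=0$ automatically and $t\ge 2$ keeps the transported stable component inside $N_{1/2}^-$) bounds each such integral by $e'_{k,\ell}(f)\,\mu^u_{x_i}(\wu 1)$; summing over $i$ with bounded multiplicity and Proposition~\ref{prop:doubling} gives $\ll e'_{k,\ell}(f)$. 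Taking the supremum over $s,\phi$, then over $\g$ and $x$, yields the lemma. The main obstacle is the interplay in the middle step: verifying that the Taylor remainder genuinely carries the full factor $e^{-kt}$ when the test-function norm is the left-invariant $C^{k+\ell}$-norm on the non-abelian group $(N^+,d_{N^+})$, and constructing the partition of unity with multiplicity uniform in $t$ and $x$ despite the failure of the Besicovitch covering theorem for the Cygan metric; the re-centering of the polynomial piece onto $e'_{k,\ell}$ also requires the standard bookkeeping with supports, vanishing flow displacement, and the stable thickening.
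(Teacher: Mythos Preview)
Your approach is correct and genuinely different from the paper's. The paper does not Taylor-expand; it regularizes the test function by a mollifier $\Mcal_\e$ on $N^+$ at scale $\e=e^{-kt}$, splitting $\phi=(\phi-\Mcal_\e\phi)+\Mcal_\e\phi$ and showing via~\eqref{eq:regularization bounds} that, after composition with $\Ad(g_{-t})$, the first piece has $C^{k+\ell}$-norm $\ll e^{-kt}$ while the second has bounded $C^{k+\ell+1}$-norm --- the extra derivative being manufactured by integration by parts against Haar measure inside the convolution (at cost $\e^{-1}$, absorbed by the contraction). Your Taylor split $\Psi_i=P_i+R_i$ achieves the same dichotomy more directly: the polynomial piece is automatically $C^\infty$, and the remainder carries the full $e^{-kt}$ from the $k$-th derivative estimate. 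The mollification route is the standard Gou\"ezel--Liverani device and is more robust (it needs only a smooth contraction), whereas yours is more elementary, avoids integration by parts, and exploits the specific diagonal form of $\Ad(g_{-t'})$ in exponential coordinates. Both routes then finish with the same partition-of-unity and basepoint bookkeeping.

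One point that neither argument makes explicit: $e'_{k,\ell}$ in~\eqref{eq: norms} is defined with vector fields $\g\in\Vcal_{k+\ell+1}^\ell$, but the transported fields $\hat v_j(m')=\bar v_j^t\bigl(\Ad(g_{-t'})(m'n_i)\bigr)$ lie a priori only in $\Vcal_{k+\ell}^\ell$. Since they too are precomposed with the same contraction $\Ad(g_{-t'})\circ R_{n_i}$, your Taylor device applies to them verbatim --- split each $\hat v_j$ into a polynomial vector field (hence in $\Vcal_{k+\ell+1}$) plus a remainder of $C^{k+\ell}$-norm $\ll e^{-kt'}$, and expand the product $L_{\hat v_1}\cdots L_{\hat v_\ell}$ accordingly. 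A minor slip: your intermediate bound on the $P_i$-piece by $e'_{k,\ell}(f)\,\mu^u_{x_i}(\wu 1)$ is missing the factor $V(x_i)$ from the definition of $e'$; it is recovered by exactly the Margulis-function summation you already invoke for the $\theta_i$-piece.
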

    
    \begin{proof}

    Fix some $0\leq \ell < k$.
    Let $x\in \Omega_1^-$ and $\phi \in C^{k+\ell}(N^+_1)$.
    Let $(v_i)_i\in \Vcal_{k+\ell}^\ell$ and
    set $F=L_{v_1}\cdots L_{v_\ell}f$.
    We wish to estimate the following:
    \begin{align*}
        \sup_{0\leq s\leq 1} \int_{N_1^+} \phi(n) F(g_{t+s}nx)\;d\mu_x^u.
    \end{align*}
    To simplify notation, we prove the desired estimate for $s=0$, the general case being essentially identical.

    Let $\e>0$ to be determined and choose $\psi_\e$ to be a $C^\infty$ bump function supported inside $N^+_\e$ and satisfying $\norm{\psi_\e}_{C^1}\ll \e^{-1}$.
    Define the following regularization of $\phi$
    \begin{align*}
        \Mcal_\e(\phi)(n) = \frac{\int_{N^+} \phi(un)\psi_\e(u)\;du}{\int_{N^+} \psi_\e(u)\;du},
    \end{align*}
    where $du$ denotes the right-invariant Haar measure on $N^+$.
    Recall the definition of the coefficients $c_r$ above~\eqref{eq:Leibniz}.
    Let $0\leq m <k+\ell$ and $(w_j)\in (\Vcal^+)^m$.
    Then,
    \begin{align*}
        |L_{w_1}\cdots L_{w_m}(\phi-\Mcal_\e(\phi))(n)|
        &\leq 
        \frac{\int |L_{w_1}\cdots L_{w_m}(\phi)(n)-L_{w_1}\cdots L_{w_m}(\phi)(un)|\psi_\e(u)\;du}{\int \psi_\e(u)\;du}
        \nonumber\\
        &\ll c_{m+1}(\phi)\frac{\int \dist(n,un) \psi_\e(u)\;du}{\int \psi_\e(u)\;du}.
    \end{align*}
    Now, note that if $\psi_\e(u)\neq 0$, then $\dist(u,\id)\leq \e$.
    Hence, right invariance of the metric on $N^+$ implies that $c_m(\phi-\Mcal_\e(\phi))\ll \e c_{m+1}(\phi)$.
    
    Moreover, we have that $c_{m}(\Mcal_\e(\phi))\leq c_{m}(\phi)$ for all $0\leq m\leq k+\ell$.
    It follows that $c_{k+\ell}(\phi-\Mcal_\e(\phi))\leq 2c_{k+\ell}(\phi)$.
    Finally, given $(w_i)\in (\Vcal^+)^{k+\ell+1}$, integration by parts gives
    \begin{equation*}
        L_{w_1}\cdots L_{w_{k+\ell+1}}(\Mcal_\e(\phi))(n) = \frac{-\int_{N^+} L_{w_2}\cdots L_{w_{k+\ell+1}}(\phi)(un)\cdot L_{w_1}(\psi_\e)(u)\;du}{\int_{N^+} \psi_\e(u)\;du}.
    \end{equation*}
    In particular, since $\norm{\psi_\e}_{C^1}\ll \e^{-1}$, we get $c_{k+\ell+1}(\Mcal_\e(\phi))\ll \e^{-1}c_{k+\ell}(\phi)$.
    Since $g_t$ expands $N^+$ by at least $e^t$, this discussion shows that for any $t\geq 0$, if $\norm{\phi}_{C^{k+\ell}}\leq 1$, then
    \begin{align}\label{eq:regularization bounds}
        \norm{(\phi-\Mcal_\e(\phi))\circ\Ad(g_{-t})}_{C^{k+\ell}}
        &\ll
        \e\sum_{m=0}^{k+\ell-1} \frac{ e^{-m t}}{2^m}
        +\frac{e^{-(k+\ell)t}}{2^{k+\ell}},
        \nonumber\\
         \norm{\Mcal_\e(\phi)\circ\Ad(g_{-t})}_{C^{k+\ell+1}}
        &\ll \sum_{m=0}^{k+\ell} \frac{ e^{-m t}}{2^m}+ \frac{\e^{-1} e^{-(k+\ell+1)t}}{2^{k+\ell+1}}.
    \end{align}

    Then, taking $\e=e^{-kt}$, we obtain
    \begin{align}\label{eq:introduce regularization}
        \int_{N^+_1} \phi(n) F(g_t nx)
        \;d\mu_x^u
        &=\int \phi(n) F(g_t nx)
        \;d\mu_x^u
        \nonumber\\
        &= \int (\phi-\Mcal_\e(\phi))(n) F(g_t nx)
        \;d\mu_x^u +
        \int \Mcal_\e(\phi)(n) F(g_t nx)
        \;d\mu_x^u.
    \end{align}

    To estimate the second term, we recall that the test functions for the weak norm were required to be supported inside $N_{1/10}^+$.
    On the other hand, the support of $\Mcal_\e(\phi)$ may be larger, but still inside $N_{1+\e}^+$.
    To remedy this issue, we pick a partition of unity $\set{\rho_i:i\in I}$ of $N_2^+$, so that each $\rho_i$ is smooth, non-negative, and supported inside some ball of radius $1/20$.
    We also require that $\norm{\rho_i}_{C^{k+\ell+1}}\ll_k 1$.
    We can find such a partition of unity with bounded cardinality and multiplicity, depending only on $N^+$ (through its dimension and metric).
    
    Similarly to Lemma~\ref{lem: equicts}, we denote by $I(\L)\subseteq I$, the subset of those indices $i$ such that there is some $n_i\in N^+$ in the support of of $\mu_x^u$ so that the support of $\rho_i$ is contained inside $N_{1/10}^+\cdot n_i$.
    In particular, for $i\in I\setminus I(\L)$, $\rho_i\mu_x^u$ is the $0$ measure.

    Now, observe that the functions $n\mapsto \rho_i(nn_i)\Mcal_\e(\phi)(n n_i)$ are supported inside $N_{1/10}^+$.
    Thus, writing $x_i=n_i g_1 x$, using a change of variable, and arguing as in the proof of Lemma~\ref{lem: equicts}, cf.~\eqref{eq:reduce to ht estimate}, we obtain
    
     \begin{align*}
        \int \Mcal_\e(\phi)(n) F(g_t nx)
        \;d\mu_x^u
        &= e^{-\d} \sum_{i\in I(\L)}
        \int (\rho_i\Mcal_\e(\phi))\circ\Ad(g_{-1})(n) F(g_{t-1} ng_1x)
        \;d\mu_{g_1x}^u
        \nonumber\\
        &\ll  
        e'_{k,\ell }( f) \cdot
        \sum_{i\in I(\L)}\norm{(\rho_i\Mcal_\e(\phi))\circ\Ad(g_{-t})}_{C^{k+\ell+1}}\cdot V(x_i)\mu_{x_i}^u(N_1^+).
    \end{align*}
    The point of replacing $x$ with $g_1x$ is that since $x$ belongs to $N_1^-\Omega$, $g_1x$ belongs to $N^-_{1/2}\Omega$, which satisfies the requirement on the basepoints in the definition of the weak norm.
    
    Note that the bounded multiplicity property of the partition of unity, together with the doubling property in Proposition~\ref{prop:doubling}, imply that
    $$ \sum_{i\in I} \mu_{x_i}^u(N_1^+)\ll \mu_x^u(N_3^+)\ll \mu_x^u(N_1^+).$$
    Moreover, combining the Leibniz estimate~\eqref{eq:Leibniz} with~\eqref{eq:regularization bounds}, we see that the $C^{k+\ell+1}$ norm of $(\rho_i\Mcal_\e(\phi))\circ\Ad(g_{-t})$ is $O_k(1)$.
   Hence, by properties of the height function $V$ in Proposition~\ref{prop:height function properties}, it follows that
    \begin{align*}
        \int \Mcal_\e(\phi)(n) F(g_t nx)
        \;d\mu_x^u
        &\ll_k e'_{k,\ell}(f) V(x)\mu_x^{u}(N_1^+).
    \end{align*}
    
    Using a completely analogous argument to handle the issues of the support of the test function, we can estimate the first term in~\eqref{eq:introduce regularization} as follows:
    \begin{align*}
        \frac{1}{V(x)\mu_{x}^u(N_1^+)}
        \int_{N^+_1} (\phi-\Mcal_\e(\phi))(n) F(g_t nx)
        \;d\mu_x^u
        &\ll_k  e^{-kt}e_{k,\ell }(f) .
    \end{align*}
    Since $(v_i)\in \Vcal_{k+\ell}^\ell$, $x\in \Omega_1^-$ and $\phi\in C^{k+\ell}(N_1^+)$ were all arbitrary, this completes the proof.
    \end{proof}

   
   It remains to estimate the coefficients $e_{k,k}$.
   First, the following estimate in the case all the derivatives point in the stable direction follows immediately from Lemma~\ref{lem: equicts}.
   \begin{lem}\label{lem:all stable}
   For all $\g=(v_i)\in (\Vcal_{2k}^-)^k$, we have
   \begin{align*}
       e_{k,k,\g}(R(z)^nf) \ll_\b \frac{1}{(\Re(z)+k)^n}e_{k,k}(f).
   \end{align*}
   \end{lem}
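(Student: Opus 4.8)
The plan is to combine Lemma~\ref{lem: equicts} with the integral representation $R(z)^n = \int_0^\infty \tfrac{t^{n-1}}{(n-1)!}e^{-zt}\Lcal_t\,dt$, following verbatim the computation behind Corollary~\ref{cor:norm of resolvent}. The only genuinely new point is the elementary observation that when every component of $\g$ is a stable vector field one has $\e(\g)=k$, which upgrades the exponential gain available from Lemma~\ref{lem: equicts} from $e^{-\b t}$ to $e^{-(k+\b)t}$.

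Concretely, first I would fix $x\in\Omega_1^-$, a parameter $s\in[0,1]$, and a test function $\phi$ admissible in the definition of $e_{k,k,\g}(\cdot\,;x)$, and use linearity of $L_{v_1}\cdots L_{v_k}$ and of integration against $\mu_x^u$, together with the (absolutely convergent, since $f\in C_c^{k+1}(X)^M$ and by the uniform bounds of Lemma~\ref{lem: equicts}) integral defining $R(z)^n f$, to write
\[
\left|\int_{N_1^+}\phi(n)L_{v_1}\cdots L_{v_k}(R(z)^n f)(g_s n x)\,d\mu_x^u(n)\right| \leq \int_0^\infty \frac{t^{n-1}}{(n-1)!}e^{-\Re(z)t}\left|\int_{N_1^+}\phi(n)L_{v_1}\cdots L_{v_k}(\Lcal_t f)(g_s n x)\,d\mu_x^u(n)\right|dt .
\]
Dividing by $V(x)\mu_x^u(N_1^+)$ and taking the supremum over admissible $\phi$ and $s\in[0,1]$, which passes inside the $t$-integral since $\sup\int\le\int\sup$, yields $e_{k,k,\g}(R(z)^n f;x)\le \int_0^\infty \tfrac{t^{n-1}}{(n-1)!}e^{-\Re(z)t}\,e_{k,k,\g}(\Lcal_t f;x)\,dt$.

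Next I would apply Lemma~\ref{lem: equicts} with $\ell=k$. Since $\g\in(\Vcal_{2k}^-)^k$ has all $k$ of its components pointing in stable directions, the quantity $\e(\g)$ in that lemma equals $k$, so $e_{k,k,\g}(\Lcal_t f;x)\ll_\b e^{-kt}\,e_{k,k}(f)\,(e^{-\b t}+V(x)^{-1})$. Substituting this and evaluating the two resulting Gamma integrals via~\eqref{eq:integral value} gives
\[
e_{k,k,\g}(R(z)^n f;x) \ll_\b e_{k,k}(f)\left(\frac{1}{(\Re(z)+k+\b)^n} + \frac{V(x)^{-1}}{(\Re(z)+k)^n}\right).
\]
Finally, using $V\gg 1$ on $\Omega_1^-$ (trivially $V\equiv1$ in the convex cocompact case, and in the remaining case $V=V_\b$ is bounded below by a positive constant, by properness together with the log-Lipschitz property in Proposition~\ref{prop:height function properties}) and $\Re(z)+k+\b\ge\Re(z)+k$, both terms on the right are $\ll_\b e_{k,k}(f)/(\Re(z)+k)^n$; taking the supremum over $x\in\Omega_1^-$ gives the claim.

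I do not anticipate a real obstacle: the statement is a bookkeeping refinement of Corollary~\ref{cor:norm of resolvent}, isolating the extra decay one gets when no flow-direction derivatives are present. The only items requiring (routine) care are the legitimacy of interchanging the $t$-integral with the $\mu_x^u$-integration and with the supremum over test functions, and the direct verification from the definition that $\e(\g)=k$ for $\g\in(\Vcal_{2k}^-)^k$.
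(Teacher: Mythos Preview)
Your proposal is correct and follows essentially the same approach as the paper: apply Lemma~\ref{lem: equicts} with $\e(\g)=k$ (since all components are stable) to get $e_{k,k,\g}(\Lcal_t f)\ll_\b e^{-kt}e_{k,k}(f)$, then integrate against $t^{n-1}e^{-zt}/(n-1)!$ using~\eqref{eq:integral value}. The paper is simply terser, absorbing the factor $(e^{-\b t}+V(x)^{-1})\ll 1$ immediately rather than tracking the two terms separately as you do.
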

   \begin{proof}
   Indeed, Lemma~\ref{lem: equicts} shows that
   \begin{align*}
       e_{k,k,\g}(\Lcal_t f) \ll e^{-k t} e_{k,k}(f).
   \end{align*}
   Moreover, induction and integration by parts give $|\int_0^\infty t^{n-1} e^{-(z+k)t}/(n-1)!dt|\leq 1/(\Re(z)+k)^n$. This completes the proof.
   \end{proof}
   
    To give improved estimates on the the coefficient $e_{k,k,\g}$ in the case some of the components of $\g$ point in the flow direction, the idea (cf.~\cite[Lem.~8.4]{AvilaGouezel} and~\cite[Lem~4.5]{GiuliettiLiveraniPollicott}) is to take advantage of the fact that the resolvent is defined by integration in the flow direction, which provides additional smoothing. This is leveraged through integration by parts to estimate the coefficient $e_{k,k}$ by $e_{k,k-1}$.
    
    To see how such estimate can be turned into a gain on the norm of the resolvents, following~\cite{AvilaGouezel}, we define the following equivalent norms to $\norm{\cdot}_k$.
    First, let us define the following coefficients:
 \begin{align*}
     e_{k,\ell,s} := \begin{cases}
     e_{k,\ell} & 0\leq \ell < k,
     \\
     \sup_{\g\in (\Vcal_{2k}^-)^k} e_{k,k,\g} 
     & \ell =k,
     \end{cases}
     ,\qquad e_{k,k,\w} := \sup_{\g\in \Vcal_{2k}^k\setminus (\Vcal_{2k}^-)^k} e_{k,k,\g}.
 \end{align*}
 Given $B\geq 1$, define
 \begin{align*}
     \norm{f}_{k,B,s} := \sum_{\ell=0}^k \frac{e_{k,\ell,s}(f)}{B^\ell},
     \qquad \norm{f}_{k,B,\w}:= \frac{e_{k,k,\w}(f)}{B^k}.
 \end{align*}
 Finally, we set
    \begin{equation}\label{eq:equivalent norm}
        \norm{f}_{k,B} := 
       \norm{f}_{k,B,s} + \norm{f}_{k,B,\w}.
    \end{equation}

    \begin{lem}\label{lem: flow by parts}
    Let $n, k \in \N$ and $z\in \C$ with $\Re(z)>0$ be given.
    Then, if $B$ is large enough, depending on $n,k,\b$ and $z$, we obtain for all $f\in C_c^{k+1}(X)^M$ that
        \begin{equation*}
            \norm{R(z)^nf}_{k,B,\w} \leq \frac{1}{(\Re(z)+k+1)^n}
         \norm{f}_{k,B}.
        \end{equation*}
       
    \end{lem}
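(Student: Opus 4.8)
The plan is to exploit the operator identity
\begin{equation*}
    L_\w R(z)^n = zR(z)^n - R(z)^{n-1}, \qquad n\geq 1, \quad R(z)^0:=\mathrm{Id},
\end{equation*}
where $L_\w$ is differentiation along the flow direction. This is nothing but integration by parts in $t$ in $R(z)^n=\int_0^\infty \tfrac{t^{n-1}}{(n-1)!}e^{-zt}\Lcal_t\,dt$, using $L_\w(f\circ g_t)=\tfrac{d}{dt}(f\circ g_t)$ (and interchange of $L_\w$ with the integral); the boundary terms vanish because $f\in C_c^{k+1}(X)^M$ forces $\Lcal_tf$ and its flow/stable derivatives to stay bounded while $t^{n-1}e^{-zt}\to 0$, and $t^{n-1}$ kills the $t=0$ contribution when $n\geq 2$ (for $n=1$ the $t=0$ contribution is exactly the summand $-f=-R(z)^0f$). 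The conceptual content is then that the flow-direction smoothing built into the resolvent allows one to trade one of the $k$ differentiations recorded by $e_{k,k,\w}$ for a factor $1/\Re(z)$, supplied by Corollary~\ref{cor:norm of resolvent}.

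Concretely, I would fix $\g=(\g_1,\dots,\g_k)\in\Vcal_{2k}^k\setminus(\Vcal_{2k}^-)^k$, a point $x\in\Omega_1^-$, $s\in[0,1]$ and a test function $\phi$. By definition $\g$ has at least one flow component, say $\g_{j_0}(n)=\phi_{j_0}(n)\w$ with $\norm{\phi_{j_0}}_{C^{2k}(N^+)}\leq 1$. First I would commute $L_{\g_{j_0}(n)}$ rightwards past $L_{\g_{j_0+1}(n)},\dots,L_{\g_k(n)}$ into the innermost slot of $L_{\g_1(n)}\cdots L_{\g_k(n)}$ acting on $R(z)^nf$. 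Each transposition costs (up to sign) a commutator term $L_{[\w,\g_j(n)]}$; since $[\w,\mf{n}^-]\subseteq\mf{n}^-$ and $[\w,\mf{a}]=0$, the vector field $[\w,\g_j(n)]$ is again $C^{2k}$, points in $\mf{n}^-$ (or vanishes), and after dividing by a constant depending only on $\Lie(G)$ belongs to $\Vcal_{2k}^-$. Hence each of the (at most $k$) commutator terms involves only $k-1$ differentiations of $R(z)^nf$, and testing it against $\phi$, with the scalar $\phi_{j_0}(n)$ absorbed into the test function (legitimate by~\eqref{eq:Leibniz} and $C^{2k}\subseteq C^{2k-1}$), it is bounded by a $\Lie(G)$-constant times $e_{k,k-1}(R(z)^nf)\,V(x)\mu_x^u(N_1^+)\ll_\b \Re(z)^{-n}e_{k,k-1}(f)\,V(x)\mu_x^u(N_1^+)$ by Corollary~\ref{cor:norm of resolvent}.

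The leftover main term has $L_{\g_{j_0}(n)}=\phi_{j_0}(n)L_\w$ innermost, so after applying the identity above it equals $\phi_{j_0}(n)\,L_{\g_1(n)}\cdots\widehat{L_{\g_{j_0}(n)}}\cdots L_{\g_k(n)}\bigl(zR(z)^nf-R(z)^{n-1}f\bigr)$, with the hat denoting omission. Its surviving $k-1$ operators are admissible for $e_{k,k-1}$, so testing against $\phi\phi_{j_0}$ gives a bound $\bigl(|z|\,e_{k,k-1}(R(z)^nf)+e_{k,k-1}(R(z)^{n-1}f)\bigr)V(x)\mu_x^u(N_1^+)\ll_\b\bigl(\tfrac{|z|}{\Re(z)^n}+\tfrac{1}{\Re(z)^{n-1}}\bigr)e_{k,k-1}(f)\,V(x)\mu_x^u(N_1^+)$, again by Corollary~\ref{cor:norm of resolvent}. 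Adding the main term and the commutator terms, dividing by $V(x)\mu_x^u(N_1^+)$, and taking suprema over $s$, $\phi$, $x\in\Omega_1^-$ and $\g\in\Vcal_{2k}^k\setminus(\Vcal_{2k}^-)^k$ yields
\begin{equation*}
    e_{k,k,\w}(R(z)^nf)\ll_{\b,k}\Bigl(\tfrac{|z|+1}{\Re(z)^n}+\tfrac{1}{\Re(z)^{n-1}}\Bigr)e_{k,k-1}(f).
\end{equation*}
Dividing by $B^k$ and using $e_{k,k-1}(f)\leq B^{k-1}\norm{f}_{k,B,s}\leq B^{k-1}\norm{f}_{k,B}$ gives $\norm{R(z)^nf}_{k,B,\w}\leq \tfrac{C}{B}\norm{f}_{k,B}$ for a constant $C=C(n,k,\b,z)$; taking $B\geq C\,(\Re(z)+k+1)^n$ finishes the proof.

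The main obstacle is bookkeeping rather than conceptual: one must carefully verify that moving the flow derivative to the innermost position produces only terms with $\leq k-1$ differentiations, and that every vector field and test function that appears — the commutators $[\w,\g_j(n)]$ and the products $\phi\phi_{j_0}$ — remains admissible with a controlled $C^{2k-1}$-norm after the obvious rescalings. The remaining point requiring a (routine) argument is the justification of differentiating $R(z)^n$ under the integral sign and of the vanishing of the boundary terms, which both follow from $f\in C_c^{k+1}(X)^M$ together with the uniform boundedness of $\Ad(g_t)v$ for $t\geq 0$ and $v\in\mf{n}^-\oplus\mf{a}$.
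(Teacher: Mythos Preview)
Your proof is correct and follows essentially the same approach as the paper. Both arguments commute a flow derivative to the innermost position (producing at most $k$ commutator terms with $k-1$ derivatives, each controlled by $e_{k,k-1}(R(z)^nf)\ll_\b \Re(z)^{-n}e_{k,k-1}(f)$ via Corollary~\ref{cor:norm of resolvent}), then integrate by parts in $t$ to obtain your operator identity $L_\w R(z)^n=zR(z)^n-R(z)^{n-1}$, and finally absorb the resulting $e_{k,k-1}$-bound into $\norm{f}_{k,B}$ by choosing $B$ large. The only cosmetic difference is that the paper carries out the integration by parts explicitly inside the integral $\int_0^\infty \tfrac{t^n e^{-zt}}{n!}\Lcal_{t+s}f\,dt$ rather than packaging it as an operator identity.
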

    \begin{proof}
    
    Fix an integer $n\geq 0$.
    We wish to estimate integrals of the form
    \begin{align*}
        \int_{N_1^+} \phi(u) L_{v_1}\cdots L_{v_k}
        \bigg(\int_0^\infty \frac{t^n e^{-zt}}{n!}
        &\Lcal_{t+s} f\;dt \bigg)(ux)\;d\mu_x^u(u)
        \nonumber\\
       & =\int_{N_1^+} \phi(u) \int_0^\infty \frac{t^n e^{-zt}}{n!}
        L_{v_1}\cdots L_{v_k}(\Lcal_{t+s} f)(ux)\;dt\;d\mu_x^u(u),
    \end{align*}
    with $0\leq s\leq 1$ and at least one of the $v_i$ pointing in the flow direction.
    
    First, let us consider the case $v_k$ points in the flow direction.
    Then, $v_k(u) = \psi_k(u)\w$, where $\w$ is the vector field generating the geodesic flow, for some function $\psi_k$ in the unit ball of $C^{2k}(N^+)$.
    Hence, for a fixed $u\in N_1^+$, integration by parts in $t$, along with the fact that $f$ is bounded, yields 
    \begin{align*}
        &\int_0^\infty 
        \frac{t^n e^{-zt}}{n!}
        L_{v_1}L_{v_2}\cdots L_{v_k}(\Lcal_{t+s}f)(ux)\;dt 
        \nonumber\\
        &=\psi_k(u)z\int_0^{\infty} \frac{t^n e^{-zt}}{n!} L_{v_1}\cdots L_{v_{k-1}}(\Lcal_{t+s}f)(ux)\;dt
        -\psi_k(u)\int_0^{\infty} \frac{t^{n-1} e^{-zt}}{(n-1)!} L_{v_1}\cdots L_{v_{k-1}}(\Lcal_{t+s}f)(ux)\;dt
        \nonumber\\
        &=\psi_k(u)z L_{v_1}\cdots L_{v_{k-1}}(\Lcal_sR(z)^{n+1}f)(ux)
        -\psi_k(u)L_{v_1}\cdots L_{v_{k-1}}(\Lcal_sR^{n}(z)f)(ux).
    \end{align*}
    Recall by Lemma~\ref{lem: equicts} that $e_{k,\ell}(R(z)^nf)\ll_\b e_{k,\ell}(f)/\Re(z)^n$ for all $n\in\N$; cf.~Corollary~\ref{cor:norm of resolvent}.
    It follows that 
     \begin{align*}
         e_{k,k,\g}(R(z)^{n+1}f)\leq e_{k,k-1}(R(z)^nf) +|z|e_{k,k-1}(R(z)^{n+1}f)
         \ll_\b \left(\frac{\Re(z)+|z|}{\Re(z)^{n+1}}\right)
         e_{k,k-1}(f).
     \end{align*}
      
     In the case $v_k$ points in the stable direction instead, we note that $L_{v}L_w=L_wL_v + L_{[v,w]}$ for any two vector fields $v$ and $w$, where $[v,w]$ is their Lie bracket.
     In particular, we can write $L_{v_1}\cdots L_{v_k}$ as a sum of at most $k$ terms involving $k-1$ derivatives in addition to one term of the form $L_{w_1}\cdots L_{w_k}$, where $w_k$ points in the flow direction.
     Each of the terms with one fewer derivative can be bounded by $e_{k,k-1}(R(z)^{n+1}f)\ll_\b e_{k,k-1}(f)/\Re(z)^{n+1}$, while the term with $k$ derivatives is controlled as in the previous case.
    Hence, taking the supremum over $\g\in \Vcal_{2k}^k\setminus (\Vcal_{2k}^-)^k$ and choosing $B$ to be large enough, we obtain the conclusion.
    \end{proof}


\subsection{Decomposition of the transfer operator according to recurrence of orbits} \label{sec:decomposition}
    In order to make use of the compact embedding result in Proposition~\ref{prop:compact embedding}, we need to localize our functions to a fixed compact set. This is done with the help of the Margulis function $V$.
    In this section, we introduce some notation and prove certain preliminary estimates for that purpose.

    Recall the notation in Theorem~\ref{thm:Margulis function}.
    Let $T_0\geq 1$ be a constant large enough so that $e^{\b T_0}>2$. We will enlarge $T_0$ over the course of the argument to absorb various auxiliary uniform constants. Define $V_0$ by
    \begin{equation}\label{eq:V_0}
        V_0 = e^{3\b  T_0}.
    \end{equation}
    Let $\rho_{V_0}\in C^\infty_c(X)$ be a non-negative $M$-invariant function satisfying $\rho_{V_0}\equiv 1$ on the unit neighborhood of $\set{x\in X: V(x)\leq V_0}$ and $\rho_{V_0}\equiv 0$ on $\set{V> 2 V_0}$.
    Moreover, we require that $\rho_{V_0}\leq 1$.
    Note that since $T_0$ is at least $1$, we can choose $\rho_{V_0}$ so that its $C^{2k}$ norm is independent of $T_0$.

    Let $\psi_1 =\rho_{V_0}$ and $\psi_2 = 1-\psi_1$.
    Then, we can write 
    \begin{equation*}
        \Lcal_{T_0}f = \tilde{\Lcal}_1 f + \tilde{\Lcal}_2 f,
    \end{equation*}
    where $\tilde{\Lcal}_i f = \Lcal_{T_0}(\psi_i f)$, for $i\in \set{1,2}$.
    It follows that for all $j\in\N$, we have
    \begin{equation}\label{eq:sum over gamma}
        \Lcal_{jT_0}f = \sum_{\varpi\in\set{1,2}^j} \tilde{\Lcal}_{\varpi_1}\cdots \tilde{\Lcal}_{\varpi_j}f
        =\sum_{\varpi\in\set{1,2}^j}
        \Lcal_{jT_0}(\psi_\varpi f),
        \qquad
         \psi_\varpi = \prod_{i=1}^j \psi_{\varpi_i} \circ g_{-(j-i)T_0}.
    \end{equation}
    Note that if $\varpi_i =1$ for some $1\leq i\leq j$, then, by Proposition~\ref{prop:height function properties}, we have
    \begin{equation}\label{eq:supp psi_gamma}
        \sup_{x\in \mrm{supp} (\psi_\varpi)} V(x) \leq e^{\b I_\varpi T_0} V_0, \qquad 
        I_\varpi =j- \max\set{1\leq i\leq j: \varpi_i=1}.
    \end{equation}
The following lemma estimates the effect of multiplying by a fixed smooth function such as $\psi_\varpi$.
To formulate the lemma, we need the following definition.
\begin{definition}
    Given $\psi\in C^{r}(X)$, we use the notation $\norm{\psi}_{C^r}^u$ to denote the $C^r$-norm of $\psi$ along the unstable foliation.
    More precisely, we set
    \begin{align}\label{def:unstable norm}
        \norm{\psi}_{C^r}^u = \sum_{i=0}^r \frac{c^u_i(\psi)}{2^i i!},
    \end{align}
    where $c^u_i(\psi)$ denotes the maximum of the sup norm of all order-$i$ derivatives of $\psi$ along directions tangent to $N^+$.
\end{definition}

    \begin{lem}\label{lem:multiply by smooth}
    Let $\psi\in C^{2k}(X)$ be given.
    Then, if $B\geq 1$ is large enough, depending on $k$ and $\norm{\psi}_{C^{2k}}$, we have
    \begin{equation*}
        \norm{\psi f}_{k,B,s} \leq  2\norm{\psi}^u_{C^{2k}(X)}  \norm{f}_{k,B,s},
    \end{equation*}
    where $\norm{\psi}^u_{C^{2k}(X)}$ is defined in~\eqref{def:unstable norm}.
    \end{lem}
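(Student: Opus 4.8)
The plan is a Leibniz-rule computation followed by a geometric-series absorption of the lower-order terms into the parameter $B$. Fix $0\le\ell\le k$, a base point $x\in\Omega_1^-$, an admissible tuple $\g=(\g_1,\dots,\g_\ell)$, a test function $\phi$ and $s\in[0,1]$; the quantity to control is $\int_{N_1^+}\phi(n)\,L_{\g_1(n)}\cdots L_{\g_\ell(n)}(\psi f)(g_snx)\,d\mu_x^u(n)$. Since each $L_{\g_i(n)}$ is first order, I would expand by Leibniz,
\begin{equation*}
  L_{\g_1(n)}\cdots L_{\g_\ell(n)}(\psi f)=\sum_{S\subseteq\{1,\dots,\ell\}}(D^\g_S\psi)\,(D^\g_{S^c}f),
\end{equation*}
where $D^\g_S$ applies the $L_{\g_i(n)}$ with $i\in S$ in increasing order of index, and then group $\phi(n)\,(D^\g_S\psi)(g_snx)=:\phi_S(n)$ into a new test function, leaving $\int\phi_S(n)\,(D^\g_{S^c}f)(g_snx)\,d\mu_x^u$. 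The key points are that $(\g_j)_{j\in S^c}$ still points in the weak-stable directions with $C^{k+\ell-|S|}$-norm at most $1$, hence is admissible for the coefficient $e_{k,\ell-|S|,s}$, and that $\psi\in C^{2k}$ together with $\ell\le k$ guarantees $\phi_S\in C^{k+\ell-|S|}(N_1^+)$ and supported in the interior; by the submultiplicativity \eqref{eq:Leibniz} of the $C^r$-norm along $N^+$, this term is then at most $\norm{n\mapsto(D^\g_S\psi)(g_snx)}_{C^{k+\ell-|S|}(N^+)}\cdot V(x)\mu_x^u(N_1^+)\,e_{k,\ell-|S|,s}(f)$.

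It remains to bound the $\psi$-factors. For $S=\emptyset$ the factor is $\norm{n\mapsto\psi(g_snx)}_{C^{k+\ell}(N^+)}$; rather than differentiate through the geodesic reparametrization, I would push the integral forward by $n\mapsto\Ad(g_s)n$ so that $\psi$ is evaluated at $mg_sx$ with no distortion, the distortion passing instead to $\phi$ (which only contracts), to the vector fields, and to the domain $N_{e^s}^+$, which is recovered by a bounded covering by unit balls using \eqref{eq:N equivariance} and the doubling estimate Proposition~\ref{prop:doubling} (with Remark~\ref{rem:commutation of stable and unstable} to keep the new base points in a stable thickening of $\Omega$); this yields the main term $\norm{\psi}^u_{C^{2k}}\,e_{k,\ell,s}(f)$. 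For $|S|\ge1$ the factor involves at most $k+\ell\le 2k$ derivatives of $\psi$ in mixed directions, hence is bounded by $C_k\norm{\psi}_{C^{2k}}$ (the full norm), while the term carries only $\ell-|S|\le\ell-1$ derivatives of $f$. Summing over $S$ and taking the relevant suprema gives
\begin{equation*}
  e_{k,\ell,s}(\psi f)\le\norm{\psi}^u_{C^{2k}}\,e_{k,\ell,s}(f)+C_k\norm{\psi}_{C^{2k}}\sum_{m=1}^{\ell}e_{k,\ell-m,s}(f).
\end{equation*}
Dividing by $B^\ell$, summing over $0\le\ell\le k$, and reindexing the double sum by $\ell'=\ell-m$ (so that $\sum_\ell B^{-\ell}\sum_{m\ge1}e_{k,\ell-m,s}(f)\le\tfrac1{B-1}\norm{f}_{k,B,s}$), one obtains $\norm{\psi f}_{k,B,s}\le\bigl(\norm{\psi}^u_{C^{2k}}+\tfrac{C_k\norm{\psi}_{C^{2k}}}{B-1}\bigr)\norm{f}_{k,B,s}$; choosing $B$ large enough, in terms of $k$ and $\norm{\psi}_{C^{2k}}$, so that the second summand is at most $\norm{\psi}^u_{C^{2k}}$ finishes the proof.

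The hard part will be bookkeeping rather than any single estimate. The delicate step is the $S=\emptyset$ term: a direct bound on $\norm{n\mapsto\psi(g_snx)}_{C^{k+\ell}(N^+)}$ costs an $s$-dependent expansion factor on the derivatives of $\psi$, so it is the change of variables $n\mapsto\Ad(g_s)n$ and the subsequent reduction of the enlarged domain $N_{e^s}^+$ to unit balls — together with the doubling and stable-thickening compatibility this requires — that keeps the constant clean. One must also check systematically that every $\phi_S$ has the correct support inside $N_1^+$ and regularity class, and that each truncated tuple $(\g_j)_{j\in S^c}$ is admissible for the lower-order coefficient it is matched against, so that no term of the Leibniz expansion is left uncontrolled.
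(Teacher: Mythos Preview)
Your proof is correct and follows the paper's approach: Leibniz expansion, absorbing each $D_S^\g\psi$ into the test function, and choosing $B$ large to kill the lower-order contributions. The paper is simpler only in the $S=\emptyset$ term, where it takes $\phi(n)\psi(g_snx)$ directly as the new test function (the $\Ad(g_s)$-expansion on the $\psi$-derivatives is bounded since $s\in[0,1]$), so your change of variables and covering, while valid, are unnecessary --- they merely trade one $O_k(1)$ factor for another.
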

    \begin{proof}
    Given $0\leq \ell\leq k$ and $0\leq s\leq 1$, we wish to estimate integrals of the form
    \begin{align*}
        \int_{N_1^+} \phi(n) L_{v_1}\cdots L_{v_\ell}(\psi f)(g_snx)\;d\mu_x^u(n).
    \end{align*}
    The term $L_{v_1}\cdots L_{v_\ell}(\psi f)$ can be written as a sum of $2^\ell$ terms, each consisting of a product of an order-$i$ derivative of $\psi$ by an order-$(\ell-i)$ derivative of $f$, for $0\leq i\leq \ell$.
    Viewing the product of $\phi$ by an order-$i$ derivative of $\psi$ as a $C^{k+\ell-i}$ test function, and using~\eqref{eq:Leibniz} to bound the $C^{k+\ell-i}$ norm of such a product, we obtain a bound of the form
    \begin{align*}
        B^{-\ell} e_{k,\ell,s}(\psi f) 
        &\leq B^{-1} C_{k,\psi} \sum_{i=0}^{\ell-1} \binom{\ell}{i} B^{-i} e_{k,i,s}(f)
        + B^{-\ell} \norm{\psi}^u_{C^{2k}} e_{k,\ell,s}(f)
        \nonumber\\
        &\leq B^{-1} C_{k,\psi}2^k \sum_{i=0}^{\ell-1} B^{-i} e_{k,i,s}(f)
        + B^{-\ell} \norm{\psi}^u_{C^{2k}} e_{k,\ell,s}(f),
    \end{align*}
    for a suitably large constant depending on $k$ and the $C^{2k}$-norm of $\psi$.
    Here, we note that the terms that contribute to the $e_{k,\ell,s}(f)$ term in the above sum all have the form $\int_{N_1^+} \phi \psi L_{v_1}\cdots L_{v_\ell}(f)\;d\mu_x^u$.

    Summing over $\ell$, we obtain
    \begin{align*}
         \norm{\psi f}_{k,B,s} &= \sum_{\ell=0}^k \frac{1}{B^\ell} e_{k,\ell,s}(\psi f) 
        \leq 
        B^{-1} C_{k,\psi}2^k
        \sum_{\ell=0}^k  \sum_{i=0}^{\ell-1} B^{-i} e_{k,i,s}(f)
        + \norm{\psi}^u_{C^{2k}} \norm{f}_{k,B,s}
        \nonumber\\
         &\leq  (B^{-1} C_{k,\psi}2^k k + \norm{\psi}^u_{C^{2k}}) 
         \norm{f}_{k,B,s}.
    \end{align*}
    Taking $B$ large enough completes the proof of the lemma.
    \qedhere
    
    \end{proof}

    The above lemma allows us to estimate the norms of the operators $\tilde{\Lcal}_i$, for $i=1,2$ as follows.
    \begin{lem}\label{lem:norms of operator pieces}
    There exists a constant $C_{k,\b} \geq 1$, depending only on $\b$ and $\norm{\rho_{V_0}}_{C^{2k}}$, such that for all large enough $B\geq 1$, we have
    \begin{align*}
     \norm{\tilde{\Lcal}_1 f}_{k,B,s} \leq C_{k,\b} \norm{f}_{k,B,s},
     \qquad \norm{\tilde{\Lcal}_2 f}_{k,B,s}\leq C_{k,\b} e^{-\b T_0} \norm{f}_{k,B,s}.
 \end{align*}
    \end{lem}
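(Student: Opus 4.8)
The plan is to prove the lemma by feeding the output of Lemma~\ref{lem:multiply by smooth} (multiplication by the cutoffs $\psi_i$) into Lemma~\ref{lem: equicts} (the action of $\Lcal_{T_0}$), and, for the second estimate, to extract the gain $e^{-\b T_0}$ from the \emph{support} of $\psi_2 f$ rather than from the dynamical contraction directly.

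First I would record that $\psi_1=\rho_{V_0}$ and $\psi_2=1-\rho_{V_0}$ have $C^{2k}$-norms — hence, a fortiori, $\norm{\cdot}^u_{C^{2k}}$-norms — bounded by a constant depending only on $k$: indeed $\rho_{V_0}$ interpolates between $1$ and $0$ on the shell $\set{V_0\le V\le 2V_0}$, whose geometric width is, by Lemma~\ref{lem:ht vs dist}, bounded independently of $T_0$, so $\rho_{V_0}$ can be chosen with $T_0$-independent derivatives (this is the remark in the setup). Consequently, for all $B$ large enough depending only on $k$, Lemma~\ref{lem:multiply by smooth} gives $\norm{\psi_i f}_{k,B,s}\le 2\norm{\psi_i}^u_{C^{2k}}\norm{f}_{k,B,s}\le C_k\norm{f}_{k,B,s}$ for $i=1,2$, with $C_k$ depending only on $k$ and $\norm{\rho_{V_0}}_{C^{2k}}$.

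For $\tilde{\Lcal}_1 f=\Lcal_{T_0}(\psi_1 f)$ I would apply Lemma~\ref{lem: equicts} with $f$ replaced by $\psi_1 f$ and $t=T_0\ge 1$: for each $\ell$ and each admissible tuple $\g$, $e_{k,\ell,\g}(\Lcal_{T_0}(\psi_1 f);x)\ll_\b e^{-\e(\g)T_0}e_{k,\ell,\g}(\psi_1 f)\bigl(e^{-\b T_0}+V(x)^{-1}\bigr)\ll_\b e_{k,\ell,\g}(\psi_1 f)$, using $e^{-\e(\g)T_0}\le 1$, $e^{-\b T_0}\le 1$, and the uniform lower bound $V\gg 1$ on $\Omega_1^-$. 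Taking suprema over $\g$ (noting that the rescaled vector fields produced in the proof of Lemma~\ref{lem: equicts} still lie in $\Vcal_{k+\ell}^\ell$) and summing over $0\le\ell\le k$ against the weights $B^{-\ell}$ yields $\norm{\tilde{\Lcal}_1 f}_{k,B,s}\ll_\b\norm{\psi_1 f}_{k,B,s}\ll_k\norm{f}_{k,B,s}$, which is the first bound.

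The substantive step is the gain $e^{-\b T_0}$ for $\tilde{\Lcal}_2 f=\Lcal_{T_0}(\psi_2 f)$; here Lemma~\ref{lem: equicts} alone is insufficient, since it only produces $e^{-\b T_0}+V(x)^{-1}$ and $V(x)^{-1}$ need not be small. The point is that $\psi_2$ vanishes on $\set{V\le V_0}$ with $V_0=e^{3\b T_0}$, so that — by the log-Lipschitz property of $V$ (Prop.~\ref{prop:height function properties}\eqref{item:log Lipschitz}) together with $\norm{g_{-T_0}}^{\b}\ll e^{\chi_\K\b T_0}$ and $\chi_\K\le 2$ — the function $\Lcal_{T_0}(\psi_2 f)=(\psi_2 f)\circ g_{T_0}$ is supported in $\set{V\ge e^{-\chi_\K\b T_0}V_0}\subseteq\set{V\ge e^{\b T_0}}$; this is precisely why $V_0$ was defined with the exponent $3\b T_0$. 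Applying the same log-Lipschitz bound along the compact family $\set{g_s n:n\in N_1^+,\ s\in[0,1]}$ shows that $e_{k,\ell,\g}(\Lcal_{T_0}(\psi_2 f);x)=0$ unless $V(x)\ge c_3 e^{\b T_0}$ for a fixed $c_3>0$, and for such $x$ one has $V(x)^{-1}\ll_\b e^{-\b T_0}$, so Lemma~\ref{lem: equicts} now gives $e_{k,\ell,\g}(\Lcal_{T_0}(\psi_2 f);x)\ll_\b e^{-\b T_0}e_{k,\ell,\g}(\psi_2 f)$. Taking suprema, summing against $B^{-\ell}$, and invoking the multiplication bound from the second paragraph gives $\norm{\tilde{\Lcal}_2 f}_{k,B,s}\ll_\b e^{-\b T_0}\norm{\psi_2 f}_{k,B,s}\ll_{k,\b}e^{-\b T_0}\norm{f}_{k,B,s}$. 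I expect the only genuine obstacle to be bookkeeping: checking that every constant entering the argument — the intersection-multiplicity and partition-of-unity constants hidden inside Lemma~\ref{lem: equicts}, the doubling constant from Proposition~\ref{prop:doubling}, the lower bound for $V$ on $\Omega_1^-$, and $C_2:=\sup_{n\in N_1^+,\,s\in[0,1]}\norm{(g_s n)^{-1}}$ — is genuinely independent of $T_0$, so that the resulting $C_{k,\b}$ depends only on $k$, $\b$, and $\norm{\rho_{V_0}}_{C^{2k}}$.
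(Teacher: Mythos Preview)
Your proposal is correct and follows essentially the same approach as the paper: combine Lemma~\ref{lem:multiply by smooth} with Lemma~\ref{lem: equicts} for $\tilde{\Lcal}_1$, and for $\tilde{\Lcal}_2$ use the support condition $\psi_2(g_{T_0}nx)\neq 0\Rightarrow V(g_{T_0}nx)\ge V_0$ together with the log-Lipschitz property of $V$ (Proposition~\ref{prop:height function properties}) to force $V(x)\gg e^{\b T_0}$, so that the $1/V(x)$ term in Lemma~\ref{lem: equicts} yields the gain. Your write-up is more explicit about the bookkeeping (the $T_0$-independence of $\norm{\rho_{V_0}}_{C^{2k}}$, the factor $\chi_\K\le 2$, the extra $g_s n$ in the definition of the coefficients), but the argument is the same.
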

    \begin{proof}
    The first inequality follows by Lemmas~\ref{lem: equicts} and~\ref{lem:multiply by smooth}.
    The second inequality follows similarly since
    \begin{align*}
        \psi_2(g_{T_0}nx)\neq 0 \Longrightarrow V(g_{T_0}nx)\geq V_0.
    \end{align*}
 By Proposition~\ref{prop:height function properties}, this in turn implies that, whenever $\psi_2(g_{T_0}nx)\neq 0$ for some $n\in N_1^+$, then $V(x)\gg e^{\b T_0}$, by choice of $V_0$.
    \end{proof}
    
\subsection{Proof of Theorems~\ref{thm: resolvent spectrum} and~\ref{thm:resolvent spectrum2}}

    Theorem~\ref{thm: resolvent spectrum} follows at once from~\ref{thm:resolvent spectrum2}.
     Theorem~\ref{thm:resolvent spectrum2} will follow upon  verifying the hypotheses of Theorem~\ref{thm: hennion}.
    The boundedness assertion follows by Corollary~\ref{cor:norm of resolvent}.
    It remains to estimate the essential spectral radius of the resolvent $R(z)$.
    
    Write $z=a+ib\in \C$.
    Fix some parameter $0<\th< 1$ and define
    \begin{equation*}
        \s:= \min\set{k,\b\th}.
    \end{equation*}
    Let $0<\epsilon<\s/5$ be given. 
    We show that for a suitable choice of $r$ and $B$, the following Lasota-Yorke inequality holds:
    \begin{align}\label{eq:Lasota-Yorke}
    \norm{R(z)^{r+1} f}_{k,B}\leq \frac{\norm{f}_{k,B}}{(a+\s-2\epsilon)^{r+1}}
    + C'_{k,\b, B,r,T_0}\norm{\Psi_{r,\th} f}_k',
\end{align}
where $C'_{k,\b, B,r,T_0}\geq 1$ is a constant depending on the parameters in its subscript, while $\Psi_{r,\th}:X\to [0,1]$ is a smooth function vanishing outside a sublevel set of the Margulis function $V$, and whose support depends on $r$ and $\th$.

    First, we show how~\eqref{eq:Lasota-Yorke} implies the result.
    Hennion's Theorem, Theorem~\ref{thm: hennion}, applied with the norm $\norm{\cdot}=\norm{\cdot}_{k,B}$ and the semi-norm $\norm{\cdot}'=\norm{\Psi_{r,\th}\bullet}_k'$, implies that the essential spectral radius of $R(z)$, with respect to the norm $\norm{\cdot}_{k,B}$, is at most $ 1/(a+\s-2\epsilon)$.
    Equivalence of the norms $\norm{\cdot}_k$ and $\norm{\cdot}_{k,B}$ implies that the same estimate also holds for the essential spectral radius $\rho_{ess}(R(z))$ with respect to $\norm{\cdot}_k$.
Note that the compact embedding requirement follows by Proposition~\ref{prop:compact embedding} again by equivalence of the norms $\norm{\cdot}_k$ and $\norm{\cdot}_{k,B}$.
Since $\epsilon>0$ was arbitrary, this shows that $\rho_{ess}(R(z))\leq 1/(a+\s)$.
Finally, as $0<\th<1$ was arbitrary, we obtain that 
\begin{equation*}
    \rho_{ess}(R(z))\leq \frac{1}{\Re(z)+\s_0},
\end{equation*}
completing the proof.

    To show~\eqref{eq:Lasota-Yorke}, let an integer $r\geq 0$ be given and $J_r\in \N$ to be determined. Using~\eqref{eq:sum over gamma} and a change of variable, we obtain
\begin{align*}
     R(z)^{r+1} f 
    &= \int_0^\infty \frac{t^r e^{-zt}}{r!} \Lcal_t f\; dt
    \\
    &= \int_0^{T_0} \frac{t^r e^{-zt}}{r!} \Lcal_t f\; dt
    + \int_{(J_r+1)T_0}^\infty \frac{t^r e^{-zt}}{r!} \Lcal_t f\; dt
    +
    \sum_{j= 1}^{J_r}  \int_{jT_0}^{(j+1)T_0} \frac{t^r e^{-zt}}{r!}  \Lcal_{t}f\; dt.
\end{align*}

First, by Lemma~\ref{lem: flow by parts}, if $B$ is large enough, depending on $r,k$ and $z$, we obtain 
\begin{align*}
    \norm{R(z)^{r+1}(z)f}_{k,B,\w} \leq  \frac{1}{(a+k+1)^{r+1}} \norm{f}_{k,B}.
\end{align*}

It remains to estimate $\norm{R(z)^{r+1}f}_{k,B,s}$.
Note that $\int_0^{T_0} \frac{t^r e^{-at}}{r!} dt\leq T_0^{r+1}/r!$.
Hence, taking $r$ large enough, depending on $k$, $a$, $\b$ and $T_0$, and using Lemma~\ref{lem: equicts}, we obtain for any $B\geq 1$,
\begin{align*}
    \norm{\int_0^{T_0} \frac{t^r e^{-zt}}{r!} \Lcal_t f\; dt}_{k,B,s} \ll_\b \norm{f}_{k,B} \int_0^{T_0} \frac{t^r e^{-at}}{r!} dt
   \leq  \frac{1}{(a+k+1)^{r+1}}\norm{f}_{k,B}.
\end{align*}
Similarly, taking $J_r$ to be large enough, depending on $k$, $a$, $\b$, and $r$, we obtain for any $B\geq 1$,
\begin{align*}
    \norm{\int_{(J_r+1)T_0}^\infty \frac{t^r e^{-zt}}{r!} \Lcal_t f\; dt}_{k,B,s}
    \ll_\b \norm{f}_{k,B} \int_{(J_r+1)T_0}^\infty \frac{t^r e^{-at}}{r!} \; dt
    \leq  \frac{1}{(a+k+1)^{r+1}}\norm{f}_{k,B}.
\end{align*}

To estimate the remaining term in $R(z)^{r+1}f$, 
let $1\leq j\leq J_r$ and $\varpi=(\varpi_i)_i\in\set{1,2}^j$ be given.
Let $\th_\varpi$ denote the number of indices $i$ such that $\varpi_i=2$.
Then, 
it follows from Lemma~\ref{lem: equicts} and induction on Lemma~\ref{lem:norms of operator pieces} that
 \begin{align}\label{eq:exp recur from cusp}
     \norm{\Lcal_{t+jT_0}(\psi_\varpi f)}_{k,B,s}
     \ll_\b
     \norm{\Lcal_{jT_0}(\psi_\varpi f)}_{k,B,s}
     = \norm{\tilde{\Lcal}_{\varpi_1}\circ\cdots \circ \tilde{\Lcal}_{\varpi_j} f}_{k,B,s}
     \leq C_{k,\b}^{ j} e^{-\b \th_\varpi T_0} \norm{f}_{k,B,s},
 \end{align}
 where $C_{k,\b}$ is the constant provided by Lemma~\ref{lem:norms of operator pieces}.
  We shall assume that $C_{k,\b}$ is taken than the implicit constant in the first inequality.

 Suppose $\th_\varpi\geq \th j$.
 Then, by taking $T_0$ to be large enough so that $C_{k,\b}^{j+1}\leq e^{\epsilon j T_0}$, we obtain
 \begin{align*}
     \norm{\Lcal_{t+jT_0}(\psi_\varpi f)}_{k,B,s}
     \leq e^{-(\b \th-\epsilon)jT_0} \norm{f}_{k,B,s}.
 \end{align*}
    The case $\th_\varpi<\th j$ is addressed in the following lemma.
    Its proof is given in Section~\ref{sec:recurrent contribution lem} below and is an application of Lemmas~\ref{lem: equicts},~\ref{lem: bound in degree 0}, and~\ref{lem:multiply by smooth}.
 
    \begin{lem}\label{lem:recurrent contribution}
        Assume $B\geq 1$ is chosen large enough, depending on $k$ and $r$, and that $T_0\geq 1$ is chosen large enough depending $k,\b$ and $\epsilon$.
        Then, there exists a sublevel set $K_{r,\th}$ of the Margulis function $V$ and a smooth function $\Psi_{r,\th}:X\to [0,1]$ vanishing outside the unit neighborhood of $K_{r,\th}$ so that the following hold.
        For all $1\leq j\leq J_r$, and all $\varpi\in \set{1,2}^j$ with $\th_\varpi <\th j$, we have
        \begin{align*}
            \norm{\Lcal_{t+jT_0}(\psi_\varpi f)}_{k,B,s}
            \leq e^{-(k-\epsilon)(t+  j T_0)} \norm{f}_{k,B,s} + C_{k,\b,B,r,T_0} \norm{\Psi_{r,\th} f}'_k,
        \end{align*}
        for a suitably large constant $C_{k,\b,B,r,T_0} \geq 1$.
    \end{lem}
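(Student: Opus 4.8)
The plan is to treat $\Lcal_{t+jT_0}(\psi_\varpi f)$ as a single transfer operator $\Lcal_{t+jT_0}$ applied to $h:=\psi_\varpi f$ over the total time $t+jT_0\geq T_0\geq 2$, and to split the norm $\norm{\cdot}_{k,B,s}=\sum_{\ell=0}^{k}B^{-\ell}e_{k,\ell,s}(\cdot)$ into its top-order coefficient ($\ell=k$, all derivatives stable) and the lower-order coefficients ($0\leq\ell<k$). For the top-order coefficient I would invoke Lemma~\ref{lem: equicts} with $\g\in(\Vcal_{2k}^-)^{k}$, for which $\e(\g)=k$, using that $V$ is bounded below on $\Omega_1^-$ so that the factor $e^{-\b(t+jT_0)}+1/V(x)$ is $O(1)$; this gives $e_{k,k,s}(\Lcal_{t+jT_0}h)\ll_\b e^{-k(t+jT_0)}e_{k,k,s}(h)$. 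For $0\leq\ell<k$, Lemma~\ref{lem: bound in degree 0} gives $e_{k,\ell}(\Lcal_{t+jT_0}h)\ll_{k,\b}e^{-k(t+jT_0)}e_{k,\ell}(h)+e'_{k,\ell}(h)$. Summing against the weights $B^{-\ell}$ then yields
\[
    \norm{\Lcal_{t+jT_0}(\psi_\varpi f)}_{k,B,s}
    \ll_{k,\b} e^{-k(t+jT_0)}\,\norm{\psi_\varpi f}_{k,B,s}
    + \sum_{\ell=0}^{k-1}B^{-\ell}\,e'_{k,\ell}(\psi_\varpi f).
\]

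For the first term I would note that $g_{-s}$ contracts $N^+$ for $s\geq 0$, so each factor $\psi_{\varpi_i}\circ g_{-(j-i)T_0}$ has unstable $C^{2k}$-norm at most that of $\psi_{\varpi_i}\in\{\rho_{V_0},1-\rho_{V_0}\}$; hence $\norm{\psi_\varpi}^u_{C^{2k}}\leq C_k^{j}$ with $C_k:=\max\{1,\norm{\rho_{V_0}}^u_{C^{2k}},\norm{1-\rho_{V_0}}^u_{C^{2k}}\}$ independent of $T_0$. Lemma~\ref{lem:multiply by smooth} (legitimate once $B$ is large enough, depending on $k$ and on $\norm{\psi_\varpi}_{C^{2k}}\leq e^{O_k(J_rT_0)}$, hence on $k$ and $r$) then gives $\norm{\psi_\varpi f}_{k,B,s}\leq 2C_k^{j}\norm{f}_{k,B,s}$. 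Since $jT_0\leq t+jT_0$, taking $T_0$ large enough, depending only on $k$, $\b$ and $\epsilon$, so that the product of $C_k^{j}$ with the implicit constants accumulated above is at most $e^{\epsilon jT_0}\leq e^{\epsilon(t+jT_0)}$ bounds the first term by $e^{-(k-\epsilon)(t+jT_0)}\norm{f}_{k,B,s}$.

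For the second term I would localize. Since $\th_\varpi<\th j$ and $j\leq J_r$, the number $I_\varpi$ of trailing $2$'s in $\varpi$ satisfies $I_\varpi\leq\th_\varpi<\th j\leq\th J_r$, so by~\eqref{eq:supp psi_gamma} the support of $\psi_\varpi$ lies in the sublevel set $K_{r,\th}:=\{x\in X:V(x)\leq e^{\b\th J_rT_0}V_0\}$, \emph{uniformly} over all $j\leq J_r$ and all $\varpi\in\{1,2\}^{j}$ with $\th_\varpi<\th j$. Let $\Psi_{r,\th}\colon X\to[0,1]$ be smooth, identically $1$ on $K_{r,\th}$, and supported in its unit neighbourhood; then $\psi_\varpi f=\psi_\varpi\,(\Psi_{r,\th}f)$. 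Running the Leibniz-rule argument from the proof of Lemma~\ref{lem:multiply by smooth}, now for the coefficients $e'_{k,\ell}$ and absorbing the (stable and flow) derivatives of $\psi_\varpi$ into the test functions, bounds $e'_{k,\ell}(\psi_\varpi\Psi_{r,\th}f)$ by a constant multiple of $\norm{\psi_\varpi}_{C^{2k}}\,\norm{\Psi_{r,\th}f}'_k\leq C_{k,\b,B,r,T_0}\norm{\Psi_{r,\th}f}'_k$. Summing over $\ell$ completes the argument.

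The point that needs care is the bookkeeping that produces the first summand with coefficient exactly $e^{-(k-\epsilon)(t+jT_0)}$ and no leftover constant: all implicit constants from Lemmas~\ref{lem: equicts} and~\ref{lem: bound in degree 0}, together with the factor $C_k^{j}$ coming from the smooth multiplier $\psi_\varpi$, must be absorbed into $e^{\epsilon jT_0}$ by enlarging $T_0$ — legitimate since $T_0$ is fixed before $r$, $J_r$ and $B$ — while the powers of $B$ introduced in passing between $\norm{\cdot}_{k,B,s}$ and the individual $e_{k,\ell,s}$ must cancel. The hypothesis $\th_\varpi<\th j$ is used only to secure a single sublevel set $K_{r,\th}$ containing $\supp(\psi_\varpi)$ for all the relevant $\varpi$; in the complementary regime $\th_\varpi\geq\th j$ no such uniform set exists, which is exactly why that case is instead handled by the Margulis-function decay estimate~\eqref{eq:exp recur from cusp}.
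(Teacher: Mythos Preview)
Your proposal is correct and follows essentially the same route as the paper: you split $\norm{\cdot}_{k,B,s}$ into the top stable coefficient (handled by Lemma~\ref{lem: equicts} with $\e(\g)=k$) and the lower coefficients (handled by Lemma~\ref{lem: bound in degree 0}), bound $\norm{\psi_\varpi}^u_{C^{2k}}$ by $C_k^{j}$ via the contraction of $N^+$ under $g_{-s}$ and Lemma~\ref{lem:multiply by smooth}, absorb all implicit constants into $e^{\epsilon(t+jT_0)}$ by enlarging $T_0$, and localize the weak-norm term using~\eqref{eq:supp psi_gamma} with $I_\varpi\leq\th_\varpi<\th J_r$ to produce a single sublevel set $K_{r,\th}$ and the cutoff $\Psi_{r,\th}$. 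The paper does exactly this, with the same parameter dependencies and the same Leibniz-rule argument (from the proof of Lemma~\ref{lem:multiply by smooth}) applied to $\norm{\psi_\varpi\,\Psi_{r,\th}f}'_k$.
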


 Putting the above estimates together, we obtain 
\begin{align*}
\norm{
\sum_{j= 1}^{J_r}  \int_{jT_0}^{(j+1)T_0} \frac{t^r e^{-zt}}{r!}  \Lcal_{t}f\; dt}_{k,B,s}
    &\leq
    \sum_{j= 1}^{J_r}e^{-ajT_0}
    \sum_{\varpi \in \set{1,2}^j} \int_{0}^{T_0} \frac{(t+jT_0)^r e^{-at}}{r!}  \norm{\Lcal_{t+jT_0}(\psi_\varpi f)}_{k,B,s}\; dt
    \nonumber\\
    &\leq \norm{f}_{k,B,s}
    \sum_{j= 1}^{J_r}
    e^{-(a+\s-\epsilon)jT_0} \int_{0}^{T_0} \frac{(t+jT_0)^r e^{-at}}{r!} dt
    \nonumber\\
    &+C_{k,\b, B,r,T_0} \norm{\Psi_r f}'_k
    \sum_{j= 1}^{J_r}
    2^j
    e^{-ajT_0}
     \int_{0}^{T_0} \frac{(t+jT_0)^r e^{-at}}{r!}  \; dt
     \nonumber\\
     &\leq e^{(\s-\epsilon) T_0} \norm{f}_{k,B,s}
     \int_1^{J_r} \frac{t^r e^{-(a+\s-\epsilon)t}}{r!} \;dt
     + C'_{k,\b, B,r,T_0} \norm{\Psi_r f}'_k,
\end{align*}
where we take $C'_{k,\b, B,r,T_0}\geq 1$ to be a constant large enough so that the last inequality holds.

Next, we note that
\begin{align*}
    \int_1^{J_r} \frac{t^r e^{-(a+\s-\epsilon)t}}{r!} \;dt
    \leq \int_0^{\infty} \frac{t^r e^{-(a+\s-\epsilon)t}}{r!} \;dt
    =\frac{1}{(a+\s-\epsilon)^{r+1}}.
\end{align*}
Thus, taking $r$ to be large enough depending on $a$ and $T_0$, and combining the estimates on $\norm{R(z)^{r+1}f}_{k,B,\w}$ and $\norm{R(z)^{r+1}f}_{k,B,s}$, we obtain~\eqref{eq:Lasota-Yorke} as desired. 

\subsubsection{Proof of Lemma~\ref{lem:recurrent contribution}}
\label{sec:recurrent contribution lem}
     
  Let $\varpi\in \set{1,2}^j$ be such that $\th_\varpi<\th j$. 
  By Lemma~\ref{lem: bound in degree 0}, for all $0\leq \ell <k$, we have
\begin{align*}
    e_{k,\ell}(\Lcal_{t+jT_0}(\psi_\varpi f))
    \ll_{k,\b} e^{-k(t+jT_0)} e_{k,\ell}(\psi_\varpi f)
    + e'_{k,\ell}(\psi_\varpi f),
\end{align*}
where we may assume that $T_0$ is at least $2$ so that the hypothesis of Lemma~\ref{lem: bound in degree 0}. 
For the coefficient $e_{k,k}$, Lemma~\ref{lem: equicts} shows that for any $\g\in (\Vcal_{2k}^-)^k$, we have
 \begin{align*}
     e_{k,k,\g}(\Lcal_{t+jT_0}(\psi_\varpi f))
     \ll_\b e^{-k(t+jT_0)}  e_{k,k,s}(\psi_\varpi f).
 \end{align*}
 Hence, summing over $\ell$, we obtain
 \begin{align*}
     \norm{\Lcal_{t+jT_0}(\psi_\varpi f)}_{k,B,s}
     \leq C_{k,\b} e^{-k(t+jT_0)} \norm{\psi_\varpi f}_{k,B,s} + C_{k,\b,B} \norm{\psi_\varpi f}'_k,
 \end{align*}
 for suitable constants $C_{k,\b}\geq 1$ and $C_{k,\b,B}\geq 1$ depending on the parameters in their respective subscripts.
 
 Our next task is to remove the dependence over $\varpi$ in the right side of the above estimate.
 By taking $B$ large enough, depending on the maximum over $1\leq j\leq J_r$ and $\varpi\in \set{1,2}^j$ of the $C^{2k}$-norm of the functions $\psi_\varpi$, we may apply Lemma~\ref{lem:multiply by smooth} to get
 \begin{align*}
     \norm{\psi_\varpi f}_{k,B,s} \leq 2\norm{\psi_\varpi}^u_{C^{2k}} \norm{f}_{k,B,s},
 \end{align*}
 where the unstable norm $\norm{\cdot}^u_{C^{2k}}$ is defined in~\eqref{def:unstable norm}.
 
 By the formula~\eqref{eq:sum over gamma} for $\psi_\varpi$, the functions $\psi_\varpi$ are given by a product of $j$ functions of the form $\rho_{V_0}$ and $1-\rho_{V_0}$ composed by $g_{-t}$ for suitable $t>0$.
 Since composition by $g_{-t}$, $t>0$, is non-expanding on the unstable norm $\norm{\cdot}^u_{C^{2k}}$, we get
 \begin{align*}
     \norm{\psi_\varpi}^u_{C^{2k}} \leq \norm{\rho_{V_0}}^j_{C^{2k}}.
 \end{align*}
 By enlarging the constant $C_{k,\b}$ if necessary, we may assume it is larger than $2\norm{\rho_{V_0}}_{C^{2k}}$.
 Thus, we obtain the bound:
 \begin{align*}
     \norm{\Lcal_{t+jT_0}(\psi_\varpi f)}_{k,B,s}
     &\leq C^{j+1}_{k,\b} e^{-k(t+jT_0)} \norm{f}_{k,B,s} + C_{k,\b,B} \norm{\psi_\varpi f}'_k.
 \end{align*}

To put the term $\norm{\psi_\varpi f}'_k$ in a form where we can apply Hennion's Theorem~\ref{thm: hennion}, we take advantage of the bound $\th_\varpi <\th j$.
 To this end, note that the bound $\th_\varpi < \th j$ and the formula~\eqref{eq:supp psi_gamma} for the support of $\psi_{\varpi}$ imply that there is a sublevel set $K_{r,\th}$ of the Margulis function $V$, depending only on $\th$ and $J_r$, such that the following holds.
 For every $1\leq j\leq J_r$ and all $\varpi\in \set{1,2}^j$ with $\th_\varpi< \th j$, the function $\psi_{\varpi}$ is supported inside $K_{r,\th}$.
Let $\Psi_{r,\th}:X\to [0,1]$ denote a smooth bump function which is identically $1$ on $K_{r,\th}$ and vanishes outside the unit neighborhood of $K_{r,\th}$.
Then, for every $\varpi$ with $\th_\varpi <\th j$, we have that $\psi_\varpi = \psi_\varpi \Psi_{r,\th}$.
Hence, arguing as in the proof of Lemma~\ref{lem:multiply by smooth} with $\norm{\cdot}_k'$ in place of $\norm{\cdot}_{k,B,s}$, we obtain
\begin{align*}
    \norm{\psi_\varpi f}'_k = \norm{\psi_\varpi \Psi_{r,\th} f}'_k\ll_{k,T_0,J_r} \norm{\Psi_{r,\th} f}'_k .
\end{align*}
Here, the dependence of the implicit constant arises from the norm $\norm{\psi_\varpi}_{C^{2k}}$.

Hence, taking $T_0$ large enough so that $C^{j+1}_{k,\b} \leq e^{\epsilon k (t+jT_0)}$, and combining the above estimates, we obtain
\begin{align*}
    \norm{\Lcal_{t+jT_0}(\psi_\varpi f)}_{k,B,s} 
    \leq  e^{ - (k -\epsilon)(t+ jT_0) }
    \norm{ f}_{k,B,s}
    + C_{k,\b,B,r,T_0} \norm{\Psi_{r,\th} f}'_k,
\end{align*}
for a suitably large constant $C_{k,\b,B,r,T_0}\geq 1$.


\subsection{Proof of Theorem~\ref{thm:intro meromorphic}}
\label{sec:proof of intro meromorphic}

Recall the notation in the statement of the theorem.
We note that switching the order of integration in the definition of the Laplace transform shows that
\begin{equation*}
    \hat{\rho}_{f,g}(z) = \int R(z)(f) g\;d\bms, \qquad \Re(z)>0.
\end{equation*}
In particular, the poles of $\hat{\rho}_{f,g}$ form a subset of the set of poles the resolvent $R(z)$.

On the other hand, Corollary~\ref{cor:strong continuity} implies that the infinitesimal generator $\mf{X}$ of the semigroup $\Lcal_t$ is well-defined as a closed operator on $\Bcal_k$ with dense domain.
Moreover, $R(z)$ coincides with the resolvent operator $(\mf{X}-z\id)^{-1}$ associated to $\mf{X}$, whenever $z$ belongs to the resolvent set (complement of the spectrum) of $\mf{X}$.

We further note that the spectra of $\mf{X}$ and $R(z)$ are related by the formula $\s(\mf{X}) = z-1/\s(R(z))$.
In particular, by Theorem~\ref{thm:resolvent spectrum2}, in the half plane $\Re(z)>-\s_0$, the poles of $R(z)$ coincide with the eigenvalues of $\mf{X}$.
In view of this relationship between the spectra, the fact that the imaginary axis does not contain any poles for the resolvent, apart from $0$, follows from the mixing property of the geodesic flow with respect to $\bms$ as shown in Lemma~\ref{lem:spectrum on imaginary axis} below.

Finally, we note that in the case $\G$ has cusps, $\b$ was an arbitrary constant in $(0,\Delta/2)$, so that we may take $\s_0$ in the conclusion of Theorem~\ref{thm:resolvent spectrum2} to be the minimum of $k$ and $\Delta/2$ in this case.
This completes the proof of Theorem~\ref{thm:intro meromorphic}.

\subsection{Resonances on the imaginary axis}
In this section, we study the intersection of the spectrum of $\mf{X}$ with the imaginary axis.

\begin{lem}\label{lem:spectrum on imaginary axis}
    The intersection of the spectrum of $\mf{X}$ with the imaginary axis consists only of the eigenvalue $0$ which has algebraic multiplicity one. 
\end{lem}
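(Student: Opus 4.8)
The statement asserts two things: (i) the only point of $\s(\mf{X})$ on the imaginary axis is $0$, and (ii) the eigenvalue $0$ has algebraic multiplicity one. The plan is to translate the question about $\mf{X}$ into a question about the resolvent $R(z_0)$ for a fixed $z_0$ with $\Re(z_0)>0$ via the spectral mapping identity $\s(\mf X)=z_0-1/\s(R(z_0))$ recorded just above, and then to exploit the fact that eigenvalues of $\mf X$ on the line $\Re(z)\geq 0$ lie in the part of the spectrum of $R(z_0)$ outside the essential spectral radius (by Theorem~\ref{thm:resolvent spectrum2}, $\rho_{ess}(R(z_0))<1/\Re(z_0)$), hence are genuine isolated eigenvalues of finite algebraic multiplicity. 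So the whole discussion reduces to: (a) no eigenvalue $ib$ of $\mf X$ with $b\in\R$; and (b) $0$ is a simple eigenvalue.

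\textbf{Step 1: the eigenvalue $0$ exists and the constant function is an eigenfunction.} Since $\bms$ is $g_t$-invariant and of full support on $\Omega$, $\Lcal_t \mathbf 1=\mathbf 1$, so $\mathbf 1\in\Bcal_k$ (it lies in $C^{k+1}$ near $\Omega$, which is all the seminorm sees) is a fixed vector of the semigroup and thus $\mf X\mathbf 1=0$; equivalently $R(z_0)\mathbf 1=\frac1{z_0}\mathbf 1$, so $1/z_0$ is an eigenvalue of $R(z_0)$, which since $|1/z_0|=1/\Re(z_0)$ on the critical circle only if $z_0$ is real — more cleanly, I would fix $z_0>0$ real from the start, so $1/z_0$ is exactly on the spectral radius circle.

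\textbf{Step 2: no other spectrum on the imaginary axis and simplicity.} This is the heart of the matter and the step I expect to be the main obstacle. The standard route, following the Anosov/Axiom~A literature (e.g. Giulietti--Liverani--Pollicott, Baladi--Demers--Liverani), is: suppose $f\in\Bcal_k$ with $\mf X f=ibf$, $b\in\R$; then $\Lcal_t f=e^{ibt}f$ in $\Bcal_k$. One first shows $f$ can be represented by (or identified with) an actual $g_t$-eigendistribution, pairs it against test functions, and uses the mixing of $(g_t,\bms)$ — which is known (Babillot, Flaminio--Spatzier, Winter for geometrically finite quotients) — to conclude. Concretely: for $f$ an eigenvector with eigenvalue $ib$ and $\psi$ a suitable smooth observable, $\langle \Lcal_t f,\psi\,d\bms\rangle=e^{ibt}\langle f,\psi\,d\bms\rangle$; if $b\neq 0$ the left side, by mixing applied to the (distributional) correlation, must converge to $\langle f,d\bms\rangle\int\psi\,d\bms$, forcing $\langle f,d\bms\rangle=0$ and $e^{ibt}c=o(1)$, so $c=0$, i.e. $f$ pairs to zero against a dense set of observables; combined with a density/duality argument on $\Bcal_k$ (the pairing of $\Bcal_k$ with measures is well-behaved because the seminorm controls integration against $\mu^u_x$ and the conditionals generate, via the product structure of $\bms$, all of $C_c(\Omega)^*$-type functionals) one gets $f=0$. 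For simplicity of $0$: a generalized eigenvector $f$ with $(\mf X)^2 f=0$, $\mf X f=c\mathbf 1$ gives $\Lcal_t f=f+ct\mathbf 1$; testing against $d\bms$ and using $\Lcal_t$-invariance of $\bms$ forces $c=0$, and then the eigenspace argument above (the $b=0$ case) forces $f\in\mathbb C\mathbf 1$, so the algebraic multiplicity is one.

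\textbf{Main obstacle and how I would handle it.} The delicate point is that elements of $\Bcal_k$ are abstract completions of smooth functions in an anisotropic seminorm, not honest functions or distributions a priori, so ``pair $f$ against $\psi\,d\bms$ and apply mixing'' needs justification. I would make this precise exactly as in Gouëzel--Liverani: (1) show there is a continuous injection $\Bcal_k\hookrightarrow \mathcal D'(\Omega\text{-nbhd})$ (or at least a continuous functional $f\mapsto \int f\,g\,d\bms$ for $g\in C_c^2$, which is what the final statements of the paper need anyway), using that $\bms$ disintegrates into the $\mu^u_x$ along $N^+$-orbits and that $e_{k,0}$ precisely controls such integrals; (2) note $\Lcal_t$ on $\Bcal_k$ intertwines with $g_t$ on this distribution space; (3) invoke mixing of $(g_t,\bms)$ for $C_c^\infty$ observables (a theorem, not proved here — I would cite Winter / Babillot for geometrically finite rank-one quotients) to kill the oscillatory eigenvalues and collapse the Jordan block at $0$. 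A subtlety worth flagging: one must check the eigenfunction $f$, built from a finite-dimensional spectral projector $\Pi$ of $R(z_0)$, is ``concentrated on $\Omega$'' and $M$-invariant so that the pairing with $\bms$ is non-degenerate and the mixing input applies; this follows because $C_c^{k+1}(X)^M$ is dense, the seminorm's kernel $W_k$ consists of functions vanishing near $\Omega$, and $\Lcal_t$ preserves this structure. I will present the argument at this level of rigor, citing the known mixing result and the Gouëzel--Liverani embedding lemma, rather than redoing either from scratch.
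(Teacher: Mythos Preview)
Your proposal is correct and follows essentially the same approach as the paper: build the continuous pairing $\Bcal_k\to C_c^2(X)^\ast$ via integration against $\bms$ (the paper does this through Lemma~\ref{lem:norm controls correlations} and proves injectivity using the same compact-embedding machinery you anticipate), then use mixing of $(g_t,\bms)$ (the paper cites Rudolph and Babillot) to force any $ib$-eigenvector to map to a multiple of $\bms$, hence to equal a multiple of $\mathbf{1}$; simplicity at $0$ is handled by the standard Jordan-block/quasi-compactness argument (the paper defers to \cite[Corollary 5.4]{BaladiDemersLiverani}, your explicit $\Lcal_t f=f+ct\mathbf{1}$ computation is the same thing). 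The only place the paper is slightly more careful than your sketch is in making the mixing step rigorous for abstract $f$: rather than pairing $\Lcal_t f$ directly, it approximates $f$ by smooth $f_n$, applies mixing to $f_n$, and chooses times $t_n\to\infty$ with $e^{ibt_n}\to 1$ to control the phase---but you have correctly flagged exactly this as the main obstacle and pointed to the right resolution.
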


First, we need the following lemma relating our norms to correlation functions.

\begin{lem}\label{lem:norm controls correlations}
    For all $f,\vp\in C^{2}_c(X)^M$, we have that
    $
        \int f\cdot \vp\;d\bms \ll_\vp \norm{f}'_1$,
    where the implied constant depends on $\norm{\vp}_{C^2}$ and the injectivity radius of its support.
    \end{lem}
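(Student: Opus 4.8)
The statement to prove is that for $f, \varphi \in C^2_c(X)^M$, one has $\int f \cdot \varphi \, d\bms \ll_\varphi \norm{f}'_1$, with the implied constant depending on $\norm{\varphi}_{C^2}$ and the injectivity radius of $\supp(\varphi)$. The essential point is that $\norm{\cdot}'_1$ controls integrals of $f$ against the BMS measure, so one must disintegrate $\bms$ along the strong unstable foliation and recognize $\int f \cdot \varphi \, d\bms$ as (essentially) an average of the kind of local integrals $\int_{N_1^+} \phi(n) f(nz) \, d\mu^u_z(n)$ that appear in the definition of $e'_{1,0,\g}(f;z)$ (with $\ell=0$, no derivatives on $f$, and $\phi$ a suitable test function built from $\varphi$ and the transverse density).

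**Key steps, in order.** First, I would recall the local product structure of $\bms$: on a flow box, $\bms$ disintegrates as $d\mu^u_z(n) \, d\nu(z)$ where $\nu$ is a finite transverse measure (built from the stable conditionals of $\bms$ and the flow direction), against which one integrates the unstable conditionals. Since $\varphi$ has compact support meeting $\Omega$, cover $\supp(\varphi) \cap \Omega^-_{1/2}$ (enlarged slightly) by finitely many flow boxes $B_i$ with centers $z_i \in \Omega^-_{1/2}$ and radius comparable to the injectivity radius of $\supp(\varphi)$; the number of boxes depends only on $\varphi$ through its support and injectivity radius. Second, using a partition of unity $\{\chi_i\}$ subordinate to this cover with $\norm{\chi_i}_{C^2}$ controlled, write $\int f \varphi \, d\bms = \sum_i \int \chi_i \varphi f \, d\bms$, and on each box express this via the disintegration as $\int_{T_i} \left( \int_{N^+} (\chi_i \varphi)(nz) f(nz) \, d\mu^u_z(n) \right) d\nu_i(z)$ over the transverse slice $T_i$. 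Third, for each fixed $z$ in the slice, the inner integral has the form $\int_{N^+_1} \phi_z(n) f(nz) \, d\mu^u_z(n)$ where $\phi_z(n) = (\chi_i \varphi)(nz)$ is a $C^2$ (in particular $C^{1+1}$, i.e. $C^{k+\ell+1}$ with $k=1,\ell=0$) function supported — after shrinking the box radius if needed — inside $N^+_{1/10}$, with $\norm{\phi_z}_{C^2} \ll \norm{\varphi}_{C^2}$ uniformly; by the definition of $e'_{1,0}$ (no derivatives on $f$, $s=0$) this inner integral is bounded by $C \, V(z) \, \mu^u_z(N^+_1) \, e'_{1,0}(f;z) \le C \, V(z) \, \mu^u_z(N^+_1) \, \norm{f}'_1$. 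Fourth, integrate this bound over the transverse slice: since $z$ ranges over a compact set, $V(z) \ll 1$ and $\mu^u_z(N^+_1) \ll 1$ uniformly (cf.~\eqref{eq:bounded conditionals} and properness of $V$), and $\nu_i$ has finite total mass bounded in terms of $\bms$ and the box; summing over the finitely many boxes yields $\int f \varphi \, d\bms \ll_\varphi \norm{f}'_1$.

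**Main obstacle.** The genuinely delicate point is the bookkeeping in Step 3: making sure that, after localizing, the test function $\phi_z$ obtained by restricting $\chi_i \varphi$ to a strong unstable leaf is supported in the small ball $N^+_{1/10}$ required by the definition of $e'_{1,0}$, and that its $C^2$ norm along $N^+$ is controlled uniformly in $z$ by $\norm{\varphi}_{C^2}$. This forces the flow-box radius to be chosen small relative to both the injectivity radius of $\supp(\varphi)$ and the $C^2$-geometry of $\varphi$ — hence the stated dependence of the implied constant. A secondary, more routine point is justifying the disintegration and the finiteness/uniform-boundedness of the transverse measures $\nu_i$; this is standard from the local product structure of $\bms$ and the continuity of $z \mapsto \mu^u_z$ (Roblin), and I would simply cite it rather than belabor it. One should also note $M$-invariance is used only to pass freely between $X = G/\Gamma$ and the unit tangent bundle so that the conditionals $\mu^u_z$ and the norm $\norm{\cdot}'_1$ are the relevant objects; no extra work is needed there.
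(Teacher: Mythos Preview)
Your proposal is correct and follows essentially the same approach as the paper: reduce via a partition of unity to a single flow box, disintegrate $\bms$ along the strong unstable foliation, and bound each inner integral by viewing the restriction of $\varphi$ (times the partition function) to a local unstable leaf as a test function in the definition of $e'_{1,0}$. The paper's proof is terser but makes the identical moves; your additional care about the support of $\phi_z$ lying in $N^+_{1/10}$ and the uniform $C^2$ control is exactly the bookkeeping the paper suppresses.
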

    \begin{proof}
    Using a partition of unity, we may assume $\vp$ is supported inside a flow box.
    The implied constant then depends on the number of elements of the partition of unity needed to cover the support of $\vp$.
    Inside each such flow box, the measure $\bms$ admits a disintegration in terms of the conditional measures $\mu_x^u$ averaged against a suitable measure on the transversal to the strong unstable foliation.
    Thus, the lemma follows by definition of the norm by viewing the restriction of $\vp$ to each local unstable leaf as a test function.
    \end{proof}

\begin{proof}[Proof of Lemma~\ref{lem:spectrum on imaginary axis}]
    In what follows, we endow elements $\vp$ of $C_c^2(X)$ with the norm $\norm{\vp}'_{C^2}$ given by multiplying the $C^2$-norm of $\vp$ with a suitable power of the reciprocal of the injectivity radius of its support so that $\norm{\vp}'_{C^2}$ dominates the implicit constant depending on $\vp$ in Lemma~\ref{lem:norm controls correlations}. Such power exists by the proof of the lemma. 
    The dual space $C_c^2(X)^\ast$ is endowed with the corresponding dual norm.

    First, we note that, since $\Bcal_k\subseteq \Bcal_1$ for all $k\geq 1$, it suffices to prove the lemma for the action of $\mf{X}$ on $\Bcal_1$.
    Let $\Phi:\Bcal_1 \to C_c^2(X)^\ast$ denote the linear map which extends the mapping $f\mapsto (\vp\mapsto \int f\vp\;d\bms)$ from $C_c^2(X)^M$ to the dual space $C_c^2(X)^\ast$.
    The fact that this mapping extends continuously to $\Bcal_1$ follows by Lemma~\ref{lem:norm controls correlations}.
    We claim that $\Phi$ is injective. This claim is routine in the absence of cusps, and we briefly outline why it also holds in general. 

    To prove this claim, note first that the coefficients $e_{1,0}(\cdot;x)$ and $e_{1,1}(\cdot;x)$ extend from $C_c^2$ to define seminorms on $\Bcal_1$.
    In particular, given any $f\in\Bcal_1$ and $f_n\in C_c^2(X)^M$ tending to $f$ in $\Bcal_1$, we have $e_{1,\ell}(f;x) = \lim_{n\to\infty} e_{1,\ell}(f_n;x)$ for $\ell=0,1$ and for every $x\in N_1^-\Omega$.
    Since the coefficient $e_{1,\ell}(f)$ is defined by taking a supremum over $x$, it follows that we can find a sequence $x_m\in N^-\Omega$ such that 
    $e_{1,\ell}(f;x_m)$ converges to $e_{1,\ell}(f)$.
    In particular, we obtain 
    \begin{align}\label{eq:switch sup and lim}
        e_{1,\ell}(f) = \lim_{m\to\infty }\lim_{n\to\infty} e_{1,\ell}(f_n; x_m).
    \end{align}

    Now, suppose $ f\in\Bcal_1$ is in the kernel of $\Phi$ and
    let $f_n\in C_c^2(X)^M$ be a sequence of functions converging to $f$. 
    By continuity, $\Phi(f_n)$ tends to $0$ in $C_c^2(X)^\ast$. One then checks that this implies that for every fixed $x\in N_1^-\Omega$, we have that\footnote{This is similar to the argument in the proof of~\eqref{eq: finitely many u pieces}. One proceeds by thickening test functions on $N_1^+\cdot x$ to functions supported in a small box around $x$ and controlling the difference between the integrals using $e_{1,0}(f_n;x)$ and the integral against the thickened functions using $e_{1,1}(f_n)$. The seminorms $e_{1,1}(f_n)$ remain bounded since $f_n\to f$, while the integrals against thickened functions tend to $0$ since $\Phi(f_n)\to 0$.} 
    $e_{1,0}(f_n;x)\to 0$ as $n\to \infty$.
    Hence, by~\eqref{eq:switch sup and lim}, we get that $e_{1,0}(f)=0$.
    Since $\norm{f}_1'\leq e_{1,0}(f)$,  this shows that $\norm{f}'_1=0$, and hence $\Phi$ is injective as claimed.

    We now show that this injectivity implies the lemma.
    Via the relationship between the spectra of $\mf{X}$ and the resolvents (cf.~Section~\ref{sec:proof of intro meromorphic}), Theorem~\ref{thm:resolvent spectrum2} implies that the intersection of the spectrum $\s(\mf{X})$ with the imaginary axis consists of a discrete set of eigenvalues.
    Similarly, finiteness of the multiplicities of each of these eigenvalues is a consequence of quasi-compactness of the resolvent. 
    
    Let $b\in \R$ be such that $ib$ is one such eigenvalue with eigenvector $0\neq f\in\Bcal_1$ and note that this implies that $\Lcal_t f = e^{ibt} f$.
    We show that $\Phi(f)$ is a multiple of the measure $\bms$. This implies that $b=0$ by injectivity since $\bms$ is the image of the constant function $1$ under $\Phi$.
    To do so, we use the fact that the geodesic flow is mixing\footnote{We refer the reader to~\cite[Corollary 5.4]{BaladiDemersLiverani} for this deduction using only ergodicity of the flow.} with respect to $\bms$ by work of Rudolph~\cite{Rudolph} and Babillot~\cite{Babillot}.
    Let $\vp\in C_c^2(X)$ be arbitrary and let $\th_n = \int f_n\;d\bms$ and $\xi = \int\vp\;d\bms$.
    Then, for every $t\geq 0$ and $n\in \N$, we have
    \begin{align}\label{eq:triangle ineq 1}
        \left| \Phi(f)(\vp) - \th_n \xi \right| \leq 
        \left| \Phi( f)(\vp) - \int \vp \Lcal_tf_n\;d\bms \right|
        + \left| \int \vp \Lcal_tf_n\;d\bms -\th_n \xi \right|.
    \end{align}
    By mixing, for every fixed $n$, the second term can be made arbitrarily small by taking $t$ large enough.
    Moreover, since $\Phi(f) = e^{-ibt} \Phi(\Lcal_t f)$, the first term is bounded by
    \begin{align}\label{eq:triangle ineq 2}
       \left|e^{-ibt} \Phi(\Lcal_t f)(\vp) - e^{-ibt}\int \vp \Lcal_tf_n\;d\bms \right|
       +| e^{-ibt}-1| \left|  \int \vp \Lcal_tf_n\;d\bms \right|.
    \end{align}
    The first term in~\eqref{eq:triangle ineq 2} is equal to $|\Phi(\Lcal_t(f-f_n)(\vp)|$, which is $O_\vp(\norm{f-f_n}_1)$ in view of Lemmas~\ref{lem:norm controls correlations} and~\ref{lem: equicts}.
    Similarly, since $f_n$ converges to $f$ in $\Bcal_1$, the second term is $O_\vp(| e^{-ibt}-1|\norm{f}_1)$.
    To bound this term, note that one can find arbitrarily large $t$ so that $e^{ibt}$ is arbitrarily close to $1$.
    
    Therefore, using a diagonal argument, this implies that we can find a sequence $t(n)$ tending to infinity so that the upper bound in~\eqref{eq:triangle ineq 1} tends to $0$ with $n$.
    If $\xi\neq 0$, the above argument implies that $\th_n$ is $O_\vp(\Phi(f)(\vp))$ and hence converges (along a subsequence) to some $\th\in \R$. In particular, the values of $\Phi(f)$ and $ \th \bms$ agree on $\vp$ in this case. If $\xi=0$, then the above argument shows that $\Phi(f)(\vp)=0$ so that the same conclusion also holds.
    
    The assertion on the algebraic multiplicity, which in particular involves ruling out the presence of Jordan blocks, is standard and can be deduced from quasi-compactness of the resolvent and the bound on its norm given in Corollary~\ref{cor:norm of resolvent} following very similar lines to~\cite[Corollary 5.4]{BaladiDemersLiverani} to which we refer the interested reader for details.
    \qedhere 
    
\end{proof}

\subsection{Exponential recurrence from the cusp and Proof of Theorem~\ref{thm:exp recurrence intro}}\label{sec:exp recur}

As a corollary of our analysis, we obtain the following stronger form of Theorem~\ref{thm:exp recurrence intro} regarding the exponential decay of the measure of orbits spending a large proportion of their time in the cusp. This result is crucial to our arguments in later sections. 
The deduction of Theorem~\ref{thm:exp recurrence intro} in its continuous time formulation from the following result follows using Proposition~\ref{prop:height function properties} and is left to the reader.

\begin{thm}\label{thm:exp recurrence}
For every $\e >0$, there exists $r_0\asymp_\b 1/\e$ such that the following holds for all $m\in\N, r\geq r_0, 0<\th<1$ and $x\in N_1^- \Omega$.
Let $H = e^{3\b r_0}$, and let $\chi_H$ be the indicator function of the set $\set{x: V(x)> H}$.
Then,
    \begin{align*}
        \mu_x^u
        \left(n\in N_1^+: \sum_{1\leq\ell\leq m}  \chi_H(g_{r\ell} nx) > \th m\right)
        \leq e^{-(\b\th -\e )m } V(x) \mu_x^u(N_1^+).
    \end{align*}
\end{thm}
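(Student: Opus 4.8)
The plan is to estimate the measure of the ``bad'' set by a union bound over all itineraries $\varpi\in\set{1,2}^m$ with more than $\th m$ visits to the deep cusp region $\set{V>H}$, exploiting the contraction estimate built into the Margulis function $V$. Concretely, set $T_0=r$ (so $r_0$ plays the role of the threshold $T_0$ from Section~\ref{sec:decomposition}) and $V_0=e^{3\b r_0}=H$, and recall from~\eqref{eq:sum over gamma} the decomposition $\Lcal_{jT_0}f=\sum_{\varpi\in\set{1,2}^j}\Lcal_{jT_0}(\psi_\varpi f)$ with $\psi_\varpi=\prod_{i}\psi_{\varpi_i}\circ g_{-(j-i)T_0}$, where $\psi_1=\rho_{V_0}$ and $\psi_2=1-\psi_1$. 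The point is that the event $\set{n\in N_1^+:\sum_{1\leq\ell\leq m}\chi_H(g_{r\ell}nx)>\th m}$ is covered, up to the collar between $\set{V>H}$ and $\set{V>2H}$, by the union over $\varpi$ with $\th_\varpi>\th m$ of the supports of $n\mapsto\psi_\varpi(g_{mT_0}nx)$; here $\th_\varpi$ is the number of $i$ with $\varpi_i=2$. So it suffices to bound $\mu_x^u\big(\set{n\in N_1^+:\psi_\varpi(g_{mT_0}nx)\neq0}\big)$ for each such $\varpi$ and sum.

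First I would establish the single-itinerary bound: for $\varpi\in\set{1,2}^m$,
\begin{align*}
    \mu_x^u\big(\set{n\in N_1^+:\psi_\varpi(g_{mT_0}nx)\neq0}\big)
    \ll C_{k,\b}^{\,m}\, e^{-\b\th_\varpi T_0}\, V(x)\, \mu_x^u(N_1^+).
\end{align*}
This is proved by induction on $m$, peeling off one factor at a time exactly as in the proof of Lemma~\ref{lem:norms of operator pieces} and equation~\eqref{eq:exp recur from cusp}: writing the integral of $\chi_{\set{\psi_\varpi\circ g_{mT_0}\neq0}}$ against $\mu_x^u$, bound $\chi\le 1$ and split according to $\varpi_1$; if $\varpi_1=1$ one uses the doubling estimate Proposition~\ref{prop:doubling} together with the uniform bound on $\mu_y^u(N_1^+)$ over a compact set (since $\psi_1$ is supported where $V\le 2V_0$) to absorb a bounded constant $C_{k,\b}$; if $\varpi_1=2$ one additionally uses the Margulis contraction Theorem~\ref{thm:Margulis function}, i.e. $\frac1{\mu_x^u(N_1^+)}\int_{N_1^+}V(g_{T_0}nx)\,d\mu_x^u(n)\le ce^{-\b T_0}V(x)+c$, combined with Chebyshev/Markov on the set where $V(g_{T_0}nx)>V_0=e^{3\b r_0}$, which produces the gain $e^{-\b T_0}$ per symbol-$2$ step (modulo the additive $+c$, which is dominated once $T_0$ is large since $V_0$ is large). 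Iterating $m$ times and tracking that the bound is a genuine sub-probability relative to $\mu_x^u(N_1^+)$ rather than an operator norm gives the displayed estimate, with the $V(x)$ factor appearing only from the last step where the flow has not yet had a chance to contract $V(x)$.

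Then I would sum over itineraries. The number of $\varpi\in\set{1,2}^m$ with $\th_\varpi=\th'm$ is $\binom{m}{\th'm}\le 2^m$, and for those $\th'>\th$ we have $e^{-\b\th_\varpi T_0}\le e^{-\b\th m T_0}$. Hence
\begin{align*}
    \mu_x^u\Big(n\in N_1^+:\sum_{1\le\ell\le m}\chi_H(g_{r\ell}nx)>\th m\Big)
    \ll (2C_{k,\b})^{m}\, e^{-\b\th m T_0}\, V(x)\,\mu_x^u(N_1^+).
\end{align*}
Choosing $r_0=T_0$ large enough, depending on $\b$ and $\e$, so that $(2C_{k,\b})\le e^{\e r_0}\le e^{\e T_0}$ — which forces the stated dependence $r_0\asymp_\b 1/\e$ — and replacing $m T_0\ge m$ in the exponent where convenient, the right-hand side is at most $e^{-(\b\th-\e)m}V(x)\mu_x^u(N_1^+)$, as claimed; the leftover $\ll$-constant is absorbed by a further modest enlargement of $r_0$. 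One technical point to handle carefully is the passage from the combinatorial event $\set{\sum\chi_H>\th m}$ to a union of supports of the $\psi_\varpi\circ g_{mT_0}$: since $\psi_1=\rho_{V_0}\equiv1$ on $\set{V\le V_0}$ and $\equiv0$ on $\set{V>2V_0}$, a point with $V(g_{r\ell}nx)>H=V_0$ need not satisfy $\psi_{\varpi_\ell}=\psi_2$ for the ``obvious'' $\varpi$; but it does satisfy $\psi_2(g_{r\ell}nx)>0$ unless $V(g_{r\ell}nx)\le V_0$, so every such $n$ lies in the support of $\psi_\varpi\circ g_{mT_0}$ for \emph{some} $\varpi$ with $\th_\varpi\ge\#\set{\ell:V(g_{r\ell}nx)>H}>\th m$, which is all that is needed for the union bound.

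The main obstacle I anticipate is bookkeeping the constants so that the final exponent is genuinely $\b\th-\e$ and not something worse: one must be sure that the multiplicative constant accrued per step ($2C_{k,\b}$, coming from the binomial count, the bounded-multiplicity covers in the doubling argument, and the additive $+c$ in the Margulis inequality) is independent of $m$, $x$, and $r\ge r_0$, and only then can it be defeated by choosing $r_0$ large — this is exactly the mechanism already used in~\eqref{eq:exp recur from cusp} and in the proof of Lemma~\ref{lem:recurrent contribution}, so the argument is structurally in place; the care is in not letting the $V(x)$ factor proliferate (it should appear to the first power only) and in confirming uniformity of the doubling constant over the relevant compact sublevel sets of $V$.
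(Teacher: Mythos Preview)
Your strategy is the paper's: union bound over itineraries $\varpi$, a contraction at each symbol-$2$ step coming from the Margulis function, and absorption of the per-step constants by taking $r_0$ large. The execution differs in one deliberate way worth noting. Rather than the genuine decomposition $\tilde{\Lcal}_i=\Lcal_r(\psi_i\cdot)$ you invoke, the paper sets $\tilde{\Lcal}_1=\Lcal_r$ (no cutoff) and $\tilde{\Lcal}_2=\Lcal_r(\psi\cdot)$ with $\psi=1-\rho_{V_0}$, so $\psi_\varpi=\prod_{\ell:\varpi_\ell=2}\psi\circ g_{(\ell-m)r}$ carries constraints \emph{only at the $2$-symbols}; and it takes $V_0=e^{2\b r_0}$ strictly below $H=e^{3\b r_0}$, so that $V>H$ forces $\psi=1$ outright. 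Together these make the covering $\{n:\sum_\ell\chi_H>\th m\}\subseteq\bigcup_{\th_\varpi>\th m}E_\varpi$ immediate: for bad $n$ one takes $\varpi_\ell=2$ exactly when $V(g_{r\ell}nx)>H$, and there is nothing to check at the $1$-symbols. Your version sets $V_0=H$ and keeps both cutoffs, and then the implication ``$V>V_0\Rightarrow\psi_2>0$'' you use fails on the smooth collar (recall $\rho_{V_0}\equiv1$ on the unit \emph{neighborhood} of $\{V\le V_0\}$, not just on $\{V\le V_0\}$); this is fixable by separating $V_0$ from $H$, but the paper's non-decomposition trick removes the issue entirely. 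The paper also runs the single-itinerary bound through $e_{1,0}$ applied to the constant function $1$, iterating the operator estimates~\eqref{eq:operator pieces exp recur} directly; this is equivalent to your outline but packages the $V(x)$ bookkeeping (your stated main worry) without extra effort.
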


\begin{proof}
    
    The argument is very similar to the proof of the estimate~\eqref{eq:exp recur from cusp}, with small modifications allowing for the height $H$ to be independent of the step size $r$.
    This subtle difference from~\eqref{eq:exp recur from cusp} will be important later in the proof of Corollary~\ref{cor:aff non-conc of projections}.
    
    Let $r_0\geq 1$ to be chosen later in the argument depending on $\e$ and $\b$ and set $V_0 = e^{2\b r_0}$.
    As before, let $\rho_{V_0}:X\to [0,1]$ denote a smooth compactly supported function which is identically $1$ on $\set{V\leq V_0}$ and vanishing outside $\set{V>2V_0}$.
    Let $\psi = 1-\rho_{V_0}$.
    Let $r\geq r_0$ and define the following operators:
    \begin{align*}
        \tilde{\Lcal}_1(f) := \Lcal_rf, \qquad \tilde{\Lcal}_2(f) = \Lcal_r(\psi f).
    \end{align*}
    Note that, unlike our previous arguments, the operators $\tilde{\Lcal}_i$ do not provide a decomposition of $\Lcal_r$, i.e., $\Lcal_r\neq \tilde{\Lcal}_1 +\tilde{\Lcal}_2$. 
    Given $m\in\N$ and $\varpi \in \set{1,2}^m$, let $\Lcal_\varpi = \tilde{\Lcal}_{\varpi_1} \circ \cdots \circ \tilde{\Lcal}_{\varpi_m}$. We also have that
    \begin{align*}
        \Lcal_\varpi(f) = \Lcal_{mr}(\psi_\varpi f), \qquad \text{where } \quad
        \psi_\varpi = \prod_{\ell:\varpi_\ell=2} \psi\circ g_{(\ell-m)r}.
    \end{align*}
    Similarly to Lemma~\ref{lem:norms of operator pieces}, Lemma~\ref{lem: equicts} implies the bounds
    \begin{align}\label{eq:operator pieces exp recur}
        e_{1,0}(\tilde{\Lcal}_1f)\ll_\b e_{1,0}(f), \qquad e_{1,0}(\tilde{\Lcal}_2 f) \ll_\b e^{-\b r_0} e_{1,0}(\psi f)\ll e^{-\b r_0} e_{1,0}(f).
    \end{align}
    
    Let $H=e^{3\b r_0}$. 
    We shall assume that $r_0$ is large enough so that $H>2V_0$.
    Define
    \begin{align*}
        E_\varpi = \set{n\in N_1^+: \varpi_\ell = 2 \Rightarrow V(g_{\ell r}nx)>  H , \text{ for all } \ell=1,\dots,m}.
    \end{align*}
    Then, for all $n\in N_1^+$,
    \begin{align}\label{eq:lower bound on psi_varpi}
        \psi_\varpi(g_{mr}nx)\geq \mathbbm{1}_{E_\varpi}(n).
    \end{align}
    Indeed, if $\mathbbm{1}_{E_\varpi}(n)=1$, 
    and $\ell$ is such that $\varpi_\ell=2$, then $V(g_{\ell r}nx)>H>2V_0$ and, hence,
    $\psi(g_{\ell r}nx)=1$.
    It follows that
    \begin{align*}
        \psi_\varpi(g_{mr}nx)
        = \prod_{\ell:\varpi_\ell=2} \psi(g_{\ell r}nx) =1.
    \end{align*}
    This verifies~\eqref{eq:lower bound on psi_varpi}.
    
    Denote by $\th_\varpi$ the number of indices $\ell$ for which $\varpi_\ell=2$.
    Then, we see that
    \begin{align*}
        \set{n\in N_1^+: \sum_{1\leq \ell\leq m} \chi_H(g_{r\ell} nx) > \th m}
        \subseteq \bigcup_{\varpi:\th_\varpi >\th m} E_\varpi.
    \end{align*}
    We wish to apply~\eqref{eq:operator pieces exp recur} with $f$ the constant function on $X$.
    One checks that this $f$ belongs to the space $\Bcal_{1}$ and $e_{1,0}(f)\ll 1$.
    Let $C_1\geq 1$ denote a constant larger than $e_{1,0}(f)$ and the two implicit constants in~\eqref{eq:operator pieces exp recur}.
    Then, applying~\eqref{eq:operator pieces exp recur} iteratively $m$ times, and using~\eqref{eq:lower bound on psi_varpi}, we obtain
    \begin{align*}
        \muxu(E_\varpi) \leq e_{1,0}(\Lcal_\varpi(f))\leq C_1^{m} e^{-\b \th_{\varpi} r_0} V(x)\muxu(N_1^+) e_{1,0}(f) \leq C_1^{m+1} e^{-\b \th mr_0} V(x)\muxu(N_1^+).
    \end{align*}
    Since there are at most $2^m$ choices of $\varpi$, the result follows by taking $r_0$ large enough so that $2^mC_1^{m+1}\leq e^{\e mr_0}$.
    \qedhere
   
\end{proof}


\section{Fractal Mollifiers}
\label{sec:mollifiers}

In this section, we introduce certain mollification operators on smooth functions on $X$. These operators have the advantage that, roughly speaking, their Lipschitz norms are dominated by the norms introduced in~\eqref{eq: norms}. This property is very convenient in the estimates carried out in Section~\ref{sec:Dolgopyat}. The idea of using mollifiers to handle analogous steps is due to~\cite{BaladiLiverani}.

\subsection{Definition and regularity of mollifiers}
Fix a non-negative $C^\infty$ bump function $\psi$ supported inside $N_{1/2}^+$ and having value identically $1$ on $N_{1/4}^+$.
We also choose $\psi$ to be symmetric and $\Ad(M)$-invariant, i.e.
\begin{equation}\label{eq:psi symmetry}
    \psi(n) = \psi(n^{-1}), \qquad \psi(mnm^{-1})=\psi(n), \quad \forall n\in N^+, m\in M.
\end{equation}
Given $\e>0$, define $\M_\e:C(X)\to C(X)$ be the operator defined by
\begin{align}\label{eq:psi_epsilon}
    \M_\e(f)(x) = \int \frac{\psi_{\e}(n)}{\int \psi_\e \;d\mu_{nx}^u} f(nx)\;d\mu_{x}^u(n),
\qquad 
    \psi_{\e}(n) =\psi(\Ad(g_{-\log \e})(n)).
\end{align}
Note that $\psi_\e$ is supported inside $N_{\e/2}^+$.

\begin{remark}
The condition that $\psi_\e(\id)=\psi(\id)=1$ implies that for $x\in X$ with $x^+\in \L_\G$, 
\begin{equation}
    \mu_x^u(\psi_\e)>0, \qquad \forall \e>0.
\end{equation}
In particular, since the conditional measures $\mu_x^u$ are supported on points $nx$ with $(nx)^+\in \L_\G$, the mollifier $\M_\e(f)$ is a well-defined function on all of $X$.
That $\M_\e(f)$ is continuous follows by continuity of the map $x\mapsto \mu_x^u$ in the weak-$\ast$ topology; cf.~\cite[Lemme 1.16]{Roblin}.
\end{remark}

\begin{remark}\label{remark:M invariance}
We note that $\Ad(M)$-invariance of $\psi_\e$ and the conditional measures $\mu_x^u$ (cf.~\eqref{eq:M equivariance}) implies that $\M_\e(f)$ is $M$-invariant whenever $f$ is.
\end{remark}

To simplify arguments related to the regularity of the function $n\mapsto \psi_\e(n)/\mu_{nx}^u(\psi_\e)$, we introduce the following slightly stronger version of the norm $\norm{\cdot}_1$ which suffices for our purposes.

\begin{definition}
    [The Banach Space $\Bcal_\star$]
    \label{def:Bcal_star}
Let $C^{k,\a}(N_1^+)$ denote the space of $C^k-$functions $\phi$ on $N_1^+$, all of whose derivatives of order $k$ are $\a$-H\"older continuous functions on $N_1^+$.
We endow this space with the standard norm denoted $\norm{\phi}_{C^{k,\a}}$.
We define coefficients $e^\star_{1,0}(f)$ and $e^\star_{1,1}(f)$, similarly to the coefficients $e_{1,0}$ and $e_{1,1}$ respectively in~\eqref{eq: ekl gamma x} and~\eqref{eq: ekl}, but where, in both coefficients, the supremum is taken over all test functions $\phi\in C^{0,1}(N_1^+)$ with $\norm{\phi}_{C^{0,1}}\leq 1$, instead of  $C^1(N_1^+)$ and $C^2(N_1^+)$.
Using these definitions, we introduce the following seminorm on $C_c^2(X)$:
\begin{align}\label{eq:norm star}
    \norm{f}^\star_1 =e^\star_{1,0}(f) + e^\star_{1,1}(f).
\end{align}
We denote by $\Bcal_\star$ the Banach space completion of the quotient space $C_c^2(X)^M$ of $M$-invariant compactly supported $C^2$-functions by the kernel of the seminorm $\norm{\cdot}^\star_1$ with respect to the induced norm on the quotient. 
\end{definition}

The first result asserts that $\M_\e(f)$ is a good approximation of $f$.
\begin{prop}\label{prop:mollifiers are close}
For all $0<\e\leq 1/10$, and $t\geq 1$, we have
\begin{align*}
    e^\star_{1,0}(\Lcal_t(f-\M_\e(f))) \ll (\e+1) e^{-t}  e^\star_{1,0}(f).
\end{align*}
\end{prop}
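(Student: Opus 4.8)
The plan is to transport the estimate to the base point $y=g_{t+s}x$ via equivariance of the leafwise measures, rewrite $f-\M_\e(f)$ tested against an unstable test function as $f$ tested against a small correction of that test function, and then reassemble the unstable integral by a partition of unity as in the proof of Lemma~\ref{lem: equicts}. Concretely, fix $x\in\Omega_1^-$, $s\in[0,1]$ and a test function $\phi\in C^{0,1}(\wu{1})$ supported in the interior of $\wu{1}$ with $\norm{\phi}_{C^{0,1}}\le 1$; it suffices to bound $\frac{1}{V(x)\muxu(\wu{1})}\big|\int_{\wu{1}}\phi(n)(f-\M_\e(f))(g_{t+s}nx)\,d\muxu(n)\big|$. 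Writing $g_{t+s}nx=\Ad(g_{t+s})(n)\,y$ and changing variables via~\eqref{eq:g_t equivariance} turns this into $e^{-\d(t+s)}\int\Phi(\tilde n)(f-\M_\e(f))(\tilde n y)\,d\mu^u_y(\tilde n)$, where $\Phi:=\phi\circ\Ad(g_{-(t+s)})$ is supported in $N^+_{e^{t+s}}$. Since $\Ad(g_{-(t+s)})$ scales the Cygan metric by exactly $e^{-(t+s)}$ (cf.~\eqref{eq:scaling of Carnot metric}), we have $\norm{\Phi}_\infty\le1$ and $\Lip_{N^+}(\Phi)\le e^{-(t+s)}\le e^{-t}$.

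Expanding $\M_\e(f)(\tilde n y)$ from~\eqref{eq:psi_epsilon}, applying~\eqref{eq:N equivariance} to move the base point from $\tilde n y$ back to $y$, and using Fubini on the resulting (compactly supported) double integral gives
\[
\int\Phi(\tilde n)(f-\M_\e(f))(\tilde n y)\,d\mu_y^u(\tilde n)=\int(\Phi-\widetilde\Phi)(n)\,f(ny)\,d\mu_y^u(n),\qquad \widetilde\Phi(u):=\int\Phi(\tilde n)\,\frac{\psi_\e(u\tilde n^{-1})}{\mu^u_{uy}(\psi_\e)}\,d\mu^u_y(\tilde n).
\]
The symmetry of $\psi_\e$ in~\eqref{eq:psi symmetry} together with~\eqref{eq:N equivariance} yields the identity $\int\psi_\e(u\tilde n^{-1})\,d\mu^u_y(\tilde n)=\mu^u_{uy}(\psi_\e)$, so that $\widetilde\Phi$ is an honest average of $\Phi$ against a probability measure supported on the $(\e/2)$-ball around $u$ (here $\e\le 1/10$ keeps $\psi_\e$ supported in $N^+_{1/20}$). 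Consequently $\norm{\Phi-\widetilde\Phi}_\infty\le\tfrac{\e}{2}\Lip_{N^+}(\Phi)$; and writing $\widetilde\Phi(u)-\Phi(u)$ as the normalized integral of $\Phi(\tilde n)-\Phi(u)$ over that ball shows that the factor $\e^{-1}$ produced by differentiating $\psi_\e$ is absorbed by the $O(\e)$-variation of $\Phi$, while the Federer property of the leafwise measures (the $\s>1$ case of Proposition~\ref{prop:doubling}) controls the variation of the normalization $u\mapsto\mu^u_{uy}(\psi_\e)$. Hence $\Lip_{N^+}(\Phi-\widetilde\Phi)\ll\Lip_{N^+}(\Phi)$, and therefore $\norm{\Phi-\widetilde\Phi}_{C^{0,1}}\ll(\e+1)e^{-t}$.

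Finally, $R:=\Phi-\widetilde\Phi$ is supported in $N^+_{e^{t+s}+1}$. Cover this ball by a partition of unity $\{\rho_i\}$ of uniformly bounded multiplicity with $\norm{\rho_i}_{C^{0,1}}\ll1$, each $\rho_i$ supported in a ball of radius $1/4$ centered at a point $n_i$ in the support of $\mu^u_y$ (discarding the empty pieces). For $t\ge1$ the translate $n\mapsto(R\rho_i)(nn_i)$ is supported in the interior of $\wu{1}$ with $C^{0,1}$-norm $\ll(\e+1)e^{-t}$, and moving the base point to $y_i:=n_iy$ via~\eqref{eq:N equivariance} (which lies in $N^-_1\Omega$ after the usual bookkeeping with stable thickenings, cf.~Remark~\ref{rem:commutation of stable and unstable}), each resulting integral is $\ll(\e+1)e^{-t}\,e^\star_{1,0}(f)\,V(y_i)\,\mu^u_{y_i}(\wu{1})$ by definition of $e^\star_{1,0}$. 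Summing over $i$ and arguing exactly as in Lemma~\ref{lem: equicts} — bounded multiplicity, the doubling property of Proposition~\ref{prop:doubling}, the log-Lipschitz property of $V$ (Proposition~\ref{prop:height function properties}), and the Margulis contraction of Theorem~\ref{thm:Margulis function} — gives $\sum_i V(y_i)\mu^u_{y_i}(\wu{1})\ll e^{\d(t+s)}V(x)\muxu(\wu{1})$. Substituting back, the factor $e^{-\d(t+s)}$ cancels, and dividing by $V(x)\muxu(\wu{1})$ yields $e^\star_{1,0}(\Lcal_t(f-\M_\e(f));x)\ll(\e+1)e^{-t}e^\star_{1,0}(f)$; taking the supremum over $s$, $\phi$ and $x\in\Omega_1^-$ completes the proof. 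The main obstacle is the Lipschitz estimate on $R$: since $\muxu$ is fractal, one must check that averaging $\Phi$ against the $\mu^u_y$-weighted bump $\psi_\e(u\,\cdot^{-1})/\mu^u_{uy}(\psi_\e)$ neither displaces $\Phi$ by more than $O(\e)\Lip(\Phi)$ nor inflates its Lipschitz constant — which is precisely why one works with the $C^{0,1}$ test functions defining $\norm{\cdot}^\star_1$, so that only the smooth bump $\psi_\e$, never $f$ or $\muxu$, is ever differentiated, and why the Federer property of Proposition~\ref{prop:doubling} is indispensable.
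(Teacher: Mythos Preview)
Your approach is essentially identical to the paper's: both change variables to $y=g_{t+s}x$, rewrite $\int\Phi\,(f-\M_\e f)\,d\mu^u_y$ as $\int(\Phi-\widetilde\Phi)\,f\,d\mu^u_y$ with the same averaged test function $\widetilde\Phi$, establish $\norm{\Phi-\widetilde\Phi}_{C^{0,1}}\ll(\e+1)e^{-t}$ via the doubling/Federer property, and then feed this into the partition-of-unity machinery of Lemma~\ref{lem: equicts}. The only point you glossed over that the paper makes explicit is that $\widetilde\Phi(u)$ is only defined on the $(\e/4)$-neighborhood of $\mrm{supp}(\mu^u_y)$, so before treating $\Phi-\widetilde\Phi$ as a genuine $C^{0,1}$ test function one multiplies by a smooth cutoff $\th_\e$ supported in that neighborhood (with $\norm{\th_\e}_{C^1}\ll\e^{-1}$), which is harmless for the estimates.
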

In light of this statement, we will in fact only use $\M_\e$ with $\e=1/10$. However, for clarity, we state and prove the remaining results for a general value of $\e$.

The following results estimate the regularity of mollifiers.
Recall the constant $\Delta_+\geq 0$ in~\eqref{eq:Delta}.
The first result is an estimate of $L^\infty$ type.
\begin{prop}\label{prop:upper estimate on mollifiers}
For all $0<\e\leq 1$ and $x\in N_1^-\Omega$, we have
\begin{align*}
    |\M_\e(f)(x)| \ll \e^{-\Delta_+-1} e^\star_{1,0}(f)V(x).
\end{align*}
\end{prop}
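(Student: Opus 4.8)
The plan is to estimate $|\M_\e(f)(x)|$ directly from the definition~\eqref{eq:psi_epsilon} by bounding the numerator using the coefficient $e^\star_{1,0}(f)$ and the denominator from below using the doubling estimate in Proposition~\ref{prop:doubling}. Write
\begin{align*}
    \M_\e(f)(x) = \int \frac{\psi_\e(n)}{\mu_{nx}^u(\psi_\e)} f(nx)\;d\mu_x^u(n).
\end{align*}
Since $\psi_\e$ is supported in $N_{\e/2}^+$ and $\psi_\e\leq \chi_{N_\e^+}$, the function $n\mapsto \psi_\e(n)/\mu_{nx}^u(\psi_\e)$, once we show the denominator is essentially constant over the relevant range of $n$, can be treated (after rescaling) as an admissible test function in the definition of $e^\star_{1,0}$. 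The cleanest route is: first rescale via $\Ad(g_{\log\e})$ so that $\psi_\e$ becomes $\psi$ supported in $N_{1/2}^+$, using the equivariance~\eqref{eq:g_t equivariance} and~\eqref{eq:N equivariance}; the rescaled integral is an average over $N_{1/2}^+$ of $f\circ g_{-\log\e}$ evaluated at points $ng_{\log\e}x$, against a test function whose $C^{0,1}$-norm I need to control.

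Concretely, first I would observe that by~\eqref{eq:g_t equivariance}, for $n$ in the support of $\psi_\e$,
\begin{align*}
    \mu_{nx}^u(\psi_\e) = \mu_{g_{\log\e}nx}^u(\psi)\cdot e^{-\d\log\e} \cdot (\text{Jacobian factor}),
\end{align*}
and more usefully that $\mu_x^u(\psi_\e)$ and $\mu_{nx}^u(\psi_\e)$ are comparable up to the Federer constant: for $n\in N_{\e/2}^+$ in the support of $\mu_x^u$, we have $N_{\e/4}^+nx \subseteq N_\e^+ x$ and $N_\e^+ x \subseteq N_{2\e}^+ nx$ (using right-invariance of the metric and the support condition), so by Proposition~\ref{prop:doubling} applied with $\s>1$ and radius $\leq 5/\s$ — which is available since $\e\leq 1$ — the ratios $\mu_x^u(N_\e^+)/\mu_{nx}^u(N_{\e/4}^+)$ and similar are bounded by a uniform constant times $\s^{\Delta_+}$ for $\s$ a bounded factor like $8$, hence by an absolute constant. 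This gives $\mu_{nx}^u(\psi_\e)\asymp \mu_x^u(\psi_\e)$ uniformly, and moreover $\mu_x^u(\psi_\e)\gg \e^{\Delta_+}\mu_x^u(N_1^+)$... wait, here one needs the \emph{lower} bound $\mu_x^u(\psi_\e)\gg \mu_x^u(N_{\e/4}^+) \gg \e^{\Delta_+}\mu_x^u(N_1^+)$ from the $\s<1$ case of Proposition~\ref{prop:doubling} (decay estimate), since $\Delta_+$ is the larger exponent and $N_{\e/4}^+\subseteq N_1^+$ means $\mu_x^u(N_1^+) \ll (\text{something}/\e)^{\Delta_+}\mu_x^u(N_{\e/4}^+)$, i.e. $\mu_x^u(\psi_\e)\gg \e^{\Delta_+}\mu_x^u(N_1^+)$.

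With the denominator handled, write $g_\e := g_{-\log\e}$ so $\Ad(g_\e^{-1})(N_{\e/2}^+) = N_{1/2}^+$, and change variables in the $N^+$-integral using $n = \Ad(g_\e)(m)$, $m\in N_{1/2}^+$; by~\eqref{eq:N equivariance} and~\eqref{eq:g_t equivariance} this converts the integral against $\mu_x^u$ into (a constant multiple of) an integral against $\mu_{g_\e x}^u$ of the function $m\mapsto \phi(m) (f\circ g_\e)(m g_\e x)$ where $\phi(m) = \psi(m)\cdot \mu_x^u(\psi_\e)/\mu_{\Ad(g_\e)(m)x}^u(\psi_\e)$ is a test function with $\norm{\phi}_{C^{0,1}}\ll 1$ — here the Lipschitz bound uses that $\psi$ is smooth, that $\Ad(g_\e)$ expands $N^+$ (so composing with it only improves Lipschitz constants after the rescaling, cf.\ the argument in Lemma~\ref{lem: bound in degree 0}), and the comparability of the measure ratio already established, together with the weak-$\ast$ continuity estimates from~\cite[Lemme 1.16]{Roblin}. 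Then by the definition of $e^\star_{1,0}$ (which allows $s\in[0,1]$ and test functions on $N_1^+$), taking $s=0$ and noting $g_\e x\in N_1^-\Omega$ when $x\in N_1^-\Omega$ (cf.~Remark~\ref{rem:commutation of stable and unstable}), we get
\begin{align*}
    |\M_\e(f)(x)| \ll \frac{\mu_x^u(N_1^+)}{\mu_x^u(\psi_\e)}\cdot e^\star_{1,0}(f\circ g_\e) V(g_\e x) \cdot \frac{\mu_{g_\e x}^u(N_1^+)}{\mu_x^u(N_1^+)}.
\end{align*}
Applying Lemma~\ref{lem: equicts} to bound $e^\star_{1,0}(\Lcal_{-\log\e} f)$ — actually one wants $e^\star_{1,0}(f\circ g_\e)\ll \e^{-\b}(\cdots)$, but since $\e\leq 1$ means $-\log\e\geq 0$... hmm, $g_\e = g_{-\log\e}$ moves \emph{forward} when $\e\leq 1$, so Lemma~\ref{lem: equicts} gives $e^\star_{1,0}(f\circ g_\e)\ll e^\star_{1,0}(f)(e^{\b\log\e}+\cdots)\ll e^\star_{1,0}(f)$, with the factor $V(g_\e x)$ absorbed; combining with $\mu_x^u(\psi_\e)^{-1}\ll \e^{-\Delta_+}\mu_x^u(N_1^+)^{-1}$ and $\mu_{g_\e x}^u(N_1^+)\ll \mu_x^u(N_1^+)$ via Proposition~\ref{prop:doubling}, and carrying the $V(x)$ factor through Theorem~\ref{thm:Margulis function}/Proposition~\ref{prop:height function properties} (the extra $\e^{-1}$ in the exponent likely comes from a crude bound on the Jacobian of the change of variables or the $V$-growth over the $g_\e$-orbit segment of length $-\log\e\leq$ bounded, since in fact we only ever use $\e=1/10$), we arrive at the claimed bound $|\M_\e(f)(x)|\ll \e^{-\Delta_+-1}e^\star_{1,0}(f)V(x)$.

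The main obstacle I anticipate is the Lipschitz-regularity bookkeeping for the test function $\phi(m) = \psi(m)\mu_x^u(\psi_\e)/\mu_{\Ad(g_\e)(m)x}^u(\psi_\e)$: one must show its $C^{0,1}$-norm is bounded independently of $\e$ and $x$, which requires a quantitative modulus of continuity for the map $y\mapsto \mu_y^u(\psi_\e)$ along $N^+$-orbits that is uniform in the cusp. This is where the doubling/Federer property of Proposition~\ref{prop:doubling} does the real work — it shows the ratio of nearby conditional masses stays bounded — but making the Lipschitz (as opposed to merely bounded) estimate precise, uniformly over $x\in N_1^-\Omega$, is the delicate point. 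Everything else is a sequence of changes of variables using~\eqref{eq:g_t equivariance},~\eqref{eq:N equivariance} and the measure estimates from Section~\ref{sec:doubling}.
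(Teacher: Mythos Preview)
Your rescaling detour is both unnecessary and does not close. Two concrete problems:

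\textbf{Sign/direction errors.} With $g_\e := g_{-\log\e}$ you have $\Ad(g_\e^{-1})(N_{\e/2}^+) = N_{\e^2/2}^+$, not $N_{1/2}^+$; the correct substitution is $m = \Ad(g_{-\log\e})(n)$. After that correction, $f(nx) = (f\circ g_{\log\e})(m\, g_{-\log\e}x)$, i.e.\ you land on $\Lcal_{\log\e}f$ with $\log\e\leq 0$. This is \emph{backward} flow, and Lemma~\ref{lem: equicts} gives no control on $e^\star_{1,0}(\Lcal_s f)$ for $s<0$. So the route through a rescaled basepoint cannot be completed as written.

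\textbf{Source of the $\e^{-1}$.} It does not come from a Jacobian or from $V$-growth along the orbit. It comes from the Lipschitz constant of the test function itself: both $\psi_\e$ and $n\mapsto 1/\mu_{nx}^u(\psi_\e)$ vary on scale $\e$.

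The paper's argument avoids all of this by never changing variables. Work directly at $x$: set
\[
\Psi(n) := \psi_\e(n)\,\frac{\mu_x^u(N_1^+)}{\mu_{nx}^u(\psi_\e)},
\]
so that $\M_\e(f)(x) = \mu_x^u(N_1^+)^{-1}\int (\Psi\th_\e)(n)\,f(nx)\,d\mu_x^u(n)$, where $\th_\e$ is a smooth cutoff equal to $1$ on a small neighborhood of $\mrm{supp}(\mu_x^u)$ (needed because $\mu_{nx}^u(\psi_\e)$ may vanish away from the support). Since $\Psi\th_\e$ is supported in $N_{\e/2}^+\subset N_1^+$, the definition of $e^\star_{1,0}$ with $s=0$ gives immediately
\[
|\M_\e(f)(x)| \ll e^\star_{1,0}(f)\,V(x)\,\norm{\Psi\th_\e}_{C^{0,1}}.
\]
The whole proof is then the norm estimate $\norm{\Psi\th_\e}_{C^{0,1}}\ll \e^{-\Delta_+-1}$. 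The $C^0$ part, $\mu_x^u(N_1^+)/\mu_{nx}^u(\psi_\e)\ll \e^{-\Delta_+}$, is exactly the doubling bound you already derived. For the Lipschitz part one uses Lemma~\ref{lem:reciprocal difference}, which gives
\[
\left|\frac{1}{\mu_{n_1x}^u(\psi_\e)} - \frac{1}{\mu_{n_2x}^u(\psi_\e)}\right| \ll \frac{\e^{-1}d_{N^+}(n_1,n_2)}{\mu_{n_2x}^u(\psi_\e)};
\]
together with $\norm{\psi_\e}_{C^1}\ll\e^{-1}$, $\norm{\th_\e}_{C^1}\ll\e^{-1}$, and the product rule, this yields Lipschitz constant $\ll \e^{-\Delta_+-1}$. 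This is precisely the ``main obstacle'' you flagged, and Lemma~\ref{lem:reciprocal difference} (proved from the symmetry of $\psi$, $\norm{\psi_\e}_{C^1}\ll\e^{-1}$, and Lemma~\ref{lem:doubling away from omega}) is the missing ingredient.
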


Finally, we need the following Lipschitz estimate on mollifiers along the stable direction.
Recall the stable parabolic group $P^-=N^-AM$ parametrizing the weak stable manifolds of $g_t$. 
\begin{prop}\label{prop:stable derivatives of mollifiers}
For all $0<\e\leq 1/10$, $p^-\in P^-$, and $x\in X$ so that $x$ belongs to $N^-_{3/4}\Omega$ and $p^-$ is of the form $u^-g_tm$ for $u^-\in N^-_{1/10}$, $|t|\leq 1/10$ and $m\in M$, we have that
\begin{equation*}
    |\M_\e (f)(p^-x) - \M_{\e }(f)(x)|\ll
    \dist(p^-,\id)
    \e^{-\Delta_+-2}\cdot \norm{f}_1^\star V(x).
\end{equation*}
\end{prop}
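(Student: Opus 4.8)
\textbf{Proof of Proposition~\ref{prop:stable derivatives of mollifiers} (proposal).}
The plan is to reduce the finite displacement to infinitesimal ones, and then to differentiate the mollifier using the stable holonomy of Section~\ref{sec:holonomy}. Write $p^-=u^-g_tm$ and pass from $x$ to $p^-x$ in three steps: by $m$, then by $g_t$, then by $u^-$. The step by $m$ changes nothing, since $\M_\e f$ is $M$-invariant whenever $f$ is (Remark~\ref{remark:M invariance}). For the other two steps the fundamental theorem of calculus reduces matters to the pointwise bounds $|L_\w(\M_\e f)(z)|\ll\e^{-\Delta_+-2}\norm{f}_1^\star V(z)$ and $|L_Y(\M_\e f)(z)|\ll\e^{-\Delta_+-2}\norm{f}_1^\star V(z)$ for unit $Y\in\mf n^-$, uniformly over $z$ in the bounded set $N_1^-\Omega$, which by the smallness hypotheses on $u^-$, $t$ and $x$ contains all points on the relevant paths; multiplying by $\dist(p^-,\id)$ at the end recovers the statement. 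I will treat $Y\in\mf n^-$; the case $Y=\w$ (where $g_t$ normalizes $N^+$ so there is no holonomy, only the scaling $\M_\e(f)(g_tz)=\M_{\e e^{-t}}(\Lcal_tf)(z)$) is analogous and simpler.

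Using the holonomy decomposition $n\exp(sY)=\tilde p^-_s(n)u^+_s(n)$ (cf.~\eqref{eq:switching order of N- and N+}) together with the change of variables~\eqref{eq:stable equivariance}, I would rewrite the mollifier at the displaced point as an integral over the \emph{fixed} leaf $N^+z$:
\begin{equation*}
    \M_\e(f)(\exp(sY)z)=\int W_s(n)\,f\big(b_s(n)nz\big)\,d\mu^u_z(n),\qquad
    W_s(n)=\frac{\psi_\e\big((u^+_s)^{-1}(n)\big)\,e^{\d\b_s(n)}}{\mu^u_{b_s(n)nz}(\psi_\e)},
\end{equation*}
where $b_s(n)\in AN^-$ is the $AN^-$-part of $\tilde p^-_s((u^+_s)^{-1}(n))$, its $M$-part being absorbed using $M$-invariance of $f$ and $\Ad(M)$-invariance of $\psi_\e$, and $W_0(n)=\psi_\e(n)/\mu^u_{nz}(\psi_\e)$, $b_0=\id$, $\b_0=0$. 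Differentiating at $s=0$ gives
\begin{equation*}
    L_Y(\M_\e f)(z)=\int W_0(n)\,L_{\dot b_0(n)}f(nz)\,d\mu^u_z(n)
    +\int \big(\partial_sW_s\big)\big|_{s=0}(n)\,f(nz)\,d\mu^u_z(n),
\end{equation*}
with $\dot b_0(\cdot)$ a smooth weak-stable vector field of size $O(1)$; the first integral is of the form controlled by $e^\star_{1,1}(f;z)$ (after decomposing $\dot b_0$ into its $\mf a$- and $\mf n^-$-parts), and the second, whose kernel is supported in $N^+_{O(\e)}$, of the form controlled by $e^\star_{1,0}(f;z)$ (cf.~Def.~\ref{def:Bcal_star}).

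It then remains to bound the $C^{0,1}(N_1^+)$-norms of $W_0$ and of $(\partial_sW_s)|_{s=0}$ by $O\big(\e^{-\Delta_+-2}\mu^u_z(N_1^+)^{-1}\big)$; combined with $\mu^u_z(N_2^+)\ll\mu^u_z(N_1^+)$, $V(z)\gg1$, and the definitions of $e^\star_{1,0}$, $e^\star_{1,1}$, $\norm{\cdot}_1^\star$, this yields the required pointwise bounds. The bookkeeping rests on: (i) $\psi_\e=\psi\circ\Ad(g_{-\log\e})$, so each $N^+$-derivative of $\psi_\e$ in the Cygan metric costs a factor $\e^{-1}$; (ii) $\mu^u_{nz}(\psi_\e)\asymp\mu^u_{nz}(N_\e^+)\gg\e^{\Delta_+}\mu^u_{nz}(N_1^+)\asymp\e^{\Delta_+}\mu^u_z(N_1^+)$ by the doubling and Federer bounds of Proposition~\ref{prop:doubling} (uniform over $z$ in a compact set), which produces the $\e^{-\Delta_+}$; (iii) the holonomy $u^+_s$ and the cocycle $\b_s$ are smooth in $n$ and lie within $O(s)$ of the identity and of $0$; (iv) the base point $b_s(n)nz$ moves in weak-stable directions at unit speed, and the crucial point $|\partial_s\log\mu^u_{b_s(n)nz}(\psi_\e)|\ll1$ — here a naive estimate, comparing $|\partial_s\mu^u(\psi_\e)|\ll\mu^u(N_\e^+)\ll\e^{\Delta}\mu^u(N_1^+)$ with the lower bound $\mu^u(\psi_\e)\gg\e^{\Delta_+}\mu^u(N_1^+)$, would lose the uncontrolled power $\e^{\Delta-2\Delta_+}$, and must instead be handled by noting that $\partial_s\psi_\e$ is supported on a Cygan-annulus at scale $\e$ whose $\mu^u$-mass is comparable to $\mu^u(\psi_\e)$ by the Federer property. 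I expect point (iv), together with the parallel computation of Cygan-$C^{0,1}$ norms when $N^+$ is non-abelian — where the $\a$- and $2\a$-layers of $\mf n^\pm$ scale differently under $\Ad(g_t)$ and the Cygan metric is only H\"older-comparable to the Euclidean one — to be the main technical obstacle; uniformity of all implicit constants as $z$ varies in a compact set relies throughout on Proposition~\ref{prop:doubling}.
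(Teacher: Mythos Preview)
Your approach coincides with the paper's: the paper's proof (given only as a sketch) proceeds by precisely this weak-stable holonomy reduction, referring back to Step~2 of the proof of Proposition~\ref{prop:compact embedding} and the kernel estimates of Proposition~\ref{prop:mollifiers are close}, together with the check that the intermediate basepoints $\exp(sw)g_rmx$ remain in $N_1^-\Omega$. One refinement on your point (iv): the bound $|\partial_s\log\mu^u_{b_s(n)nz}(\psi_\e)|\ll1$ is correct, but the Federer/annulus comparison alone does not deliver it --- you also need that the induced holonomy vector field $v'(n)$ (the $\mf n^+$-component of $\Ad(n)Y$) vanishes at $n=\id$ with Cygan-homogeneity, so that $\sup_{N^+_\e}|L_{v'}\psi_\e|=O(1)$ despite $\|\nabla\psi_\e\|\sim\e^{-1}$; this Lie-algebraic scaling is what makes the starting point $|\partial_s\mu^u(\psi_\e)|\ll\mu^u(N^+_\e)$ of your ``naive estimate'' valid in the first place.
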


The above results are straightforward in the case of smooth mollifiers, however some care is required in our case due to the fractal nature of the conditionals and (possible) non-compactness of $\Omega$.
This is in part the reason for the non-standard shape of the chosen mollifier.
The proofs of the above results are rather technical and can be skipped on a first reading.

\subsection{Preliminary estimates}
We begin by providing some tameness estimates for our mollifiers. 
The first lemma extends the applicability of Proposition~\ref{prop:doubling} to points that are near, but not necessarily in, $\Omega$.

\begin{lem}\label{lem:doubling away from omega}
For all $x\in N_1^-\Omega$, and $0<\e \leq 1$, we have
\begin{equation*}
    \frac{\mu^u_{nx}(N_{5\e}^+)}{\mu_{nx}^u(\psi_\e)} \ll 1,
\end{equation*}
uniformly over $n\in N_1^+$ in the $(\e/10)$-neighborhood of the support of $\mu_x^u$.
\end{lem}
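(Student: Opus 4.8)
The plan is to reduce the statement to the doubling/decay estimates of Proposition~\ref{prop:doubling} by relating all quantities to measures of genuine metric balls centered in $\Omega$. The key point is that $\psi_\e$ is sandwiched between indicators of metric balls around the identity, so $\mu_{nx}^u(\psi_\e) \geq \mu_{nx}^u(N_{\e/4}^+)$, and similarly $\mu_{nx}^u(N_{5\e}^+)$ is what we want to bound from above; the ratio is then controlled by the Federer (doubling) property, provided the relevant basepoint lies in $N_2^-\Omega$ so that Proposition~\ref{prop:doubling} applies.

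First I would make the sandwiching precise: from the definition $\psi_\e(n) = \psi(\Ad(g_{-\log\e})(n))$ and the fact that $\psi \equiv 1$ on $N_{1/4}^+$ and is supported in $N_{1/2}^+$, together with the scaling property~\eqref{eq:scaling of Carnot metric}, one has $\chi_{N_{\e/4}^+} \leq \psi_\e \leq \chi_{N_{\e/2}^+}$. Hence
\begin{equation*}
    \mu_{nx}^u(\psi_\e) \geq \mu_{nx}^u(N_{\e/4}^+),
    \qquad
    \mu_{nx}^u(N_{5\e}^+) \leq \mu_{nx}^u(N_{5\e}^+),
\end{equation*}
so it suffices to show $\mu_{nx}^u(N_{5\e}^+) \ll \mu_{nx}^u(N_{\e/4}^+)$ uniformly. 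This is precisely a doubling statement with dilation factor $\s = 20$, which is of the form covered by the second case of Proposition~\ref{prop:doubling} (note $20 > 1$ and we need the inner radius $\e/4 \leq 5/20 = 1/4$, which holds since $\e \leq 1$), yielding the bound with implied constant depending only on $\Delta_+$ — hence uniform.

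The one technical wrinkle — and the step I expect to require the most care — is checking that the basepoint $nx$ lies in $N_2^-\Omega$ so that Proposition~\ref{prop:doubling} is applicable. We are given $x \in N_1^-\Omega$, so write $x = u^-y$ with $u^- \in N_1^-$ and $y \in \Omega$, and $n \in N_1^+$ in the $(\e/10)$-neighborhood of $\supp(\mu_x^u)$. Since $\e \leq 1$, $n$ lies within distance $1/10$ of a point $n' \in \supp(\mu_x^u)$, and for such $n'$ one has $(n'x)^+ \in \L_\G$, so $n'x \in N_2^-\Omega$ as in Remark~\ref{rem:commutation of stable and unstable}; a short argument using the product structure $N^-\times A\times M\times N^+ \to G$ near the identity (and enlarging the $N^-$-radius slightly) then shows $nx \in N_2^-\Omega$ as well, possibly after harmlessly enlarging the constant $2$ to a fixed constant $\leq 3$, which Proposition~\ref{prop:doubling} tolerates. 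Finally, since the conditional measures are supported on the limit set, $\mu_{nx}^u(N_{\e/4}^+) > 0$, so the ratio is well-defined, and the uniform bound follows. This completes the proof.
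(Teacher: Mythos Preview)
Your overall strategy is correct, but the step where you argue that $nx\in N_2^-\Omega$ has a genuine gap. Being in $N_r^-\Omega$ for some $r$ requires in particular that the \emph{forward} endpoint $(nx)^+$ lie in the limit set $\L_\G$, since the $N^-$-action does not change forward endpoints. You only know that $n$ lies within $\e/10$ of a support point $n'$, so $(n'x)^+\in\L_\G$; but $nx=(n(n')^{-1})\,n'x$ differs from $n'x$ by an element of $N^+$, which \emph{does} move the forward endpoint. When $\G$ is not a lattice, $\L_\G$ is typically a fractal, so proximity to a limit point does not put $(nx)^+$ in $\L_\G$. Hence $nx$ need not lie in $N_r^-\Omega$ for any $r$, and Proposition~\ref{prop:doubling} cannot be invoked at the basepoint $nx$.

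The fix is exactly what the paper does: use the $N^+$-equivariance~\eqref{eq:N equivariance} to transfer the problem to the nearby support point. With $u:=n'$ at distance $<\e/10$ from $n$, one has $ux\in N_2^-\Omega$ and
\[
\frac{\mu_{nx}^u(N_{5\e}^+)}{\mu_{nx}^u(N_{\e/4}^+)}
=\frac{\mu_{ux}^u\bigl(N_{5\e}^+\cdot(nu^{-1})\bigr)}{\mu_{ux}^u\bigl(N_{\e/4}^+\cdot(nu^{-1})\bigr)}
\le \frac{\mu_{ux}^u(N_{6\e}^+)}{\mu_{ux}^u(N_{\e/8}^+)}\ll 1,
\]
the inclusions coming from $d_{N^+}(nu^{-1},\id)<\e/10$ and the last bound from Proposition~\ref{prop:doubling} applied at $ux\in N_2^-\Omega$. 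Your sandwich $\mu_{nx}^u(\psi_\e)\ge\mu_{nx}^u(N_{\e/4}^+)$ and the positivity remark are fine; only the basepoint needed shifting.
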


\begin{proof}
Since $\psi_\e\equiv 1$ on $N^+_{\e/4}$, 
$  \mu_{nx}^u(\psi_\e) \geq \mu_{nx}^u(N_{\e/4}^+)$.
Let $u$ be in the support of $\mu_{x}^u$, which is at distance $\e/10$ from $n$.
In particular, $ux\in N_2^-\Omega$ by Remark~\ref{rem:commutation of stable and unstable}.
Hence, using a change of variables and Proposition~\ref{prop:doubling}, we obtain
\begin{equation*}
   \frac{\mu_{nx}^u(N_{5\e}^+)}{\mu_{nx}^u(\psi_\e)}
    \leq \frac{\mu_{nx}^u(N_{5\e}^+)}{\mu_{nx}^u(N^+_{\e/4})}
   \leq \frac{\mu^u_{u x}(N^+_{5\e}\cdot (n u^{-1}))}{\mu^u_{u x}(N^+_{\e/4}\cdot (n u^{-1}))}
   \leq  \frac{\mu^u_{u x}(N^+_{6\e})}{\mu^u_{u x}(N^+_{\e/8})}
   \ll 1.
\end{equation*}

\end{proof}

The next statement is roughly a Lipschitz estimate on conditional measures.
\begin{lem}\label{lem:reciprocal difference}
For all $0<\e \leq 1$ and $x\in N_1^-\Omega$, we have the following.
For all $n_1,n_2\in N_1^+$ with $d_{N^+}(n_1,n_2)\leq \e/2$, we have
\begin{equation*}
    \left|\frac{1}{\mu_{n_1x}^u(\psi_\e)}- \frac{1}{\mu_{n_2x}^u(\psi_\e)}\right| \ll \frac{\e^{-1}d_{N^+}(n_1,n_2)}{\mu_{n_2x}^u(\psi_\e)},
\end{equation*}
provided $n_1$ is at distance at most $\e/10$ from the support of $\mu_x^u$. 
\end{lem}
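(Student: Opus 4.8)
The strategy is to reduce the estimate on the difference of reciprocals to an estimate on $\mu_{n_1x}^u(\psi_\e) - \mu_{n_2x}^u(\psi_\e)$, using the algebraic identity
\begin{equation*}
    \frac{1}{\mu_{n_1x}^u(\psi_\e)} - \frac{1}{\mu_{n_2x}^u(\psi_\e)}
    = \frac{\mu_{n_2x}^u(\psi_\e) - \mu_{n_1x}^u(\psi_\e)}{\mu_{n_1x}^u(\psi_\e)\,\mu_{n_2x}^u(\psi_\e)}.
\end{equation*}
So the task splits into two parts: (i) a numerator bound $|\mu_{n_1x}^u(\psi_\e) - \mu_{n_2x}^u(\psi_\e)| \ll \e^{-1} d_{N^+}(n_1,n_2)\,\mu_{n_2x}^u(\psi_\e)$, and (ii) showing that $\mu_{n_1x}^u(\psi_\e) \gg \mu_{n_2x}^u(\psi_\e)$ (or at least comparable), so that the extra factor of $\mu_{n_1x}^u(\psi_\e)^{-1}$ in the denominator is harmless.

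For (ii), I would argue as follows. Let $u$ be a point in the support of $\mu_x^u$ at distance $\le \e/10$ from $n_1$. Using the change-of-variables relation~\eqref{eq:N equivariance}, we have $\mu_{n_ix}^u(\psi_\e) = \mu_{ux}^u\big(\psi_\e(\cdot\, (n_iu^{-1}))\big)$; since $\psi_\e \equiv 1$ on $N_{\e/4}^+$ and is supported in $N_{\e/2}^+$, and since $n_1u^{-1}, n_2u^{-1} \in N_{\e/2}^+$ (using $d_{N^+}(n_1,n_2)\le \e/2$ and right-invariance of the metric), both quantities are sandwiched between $\mu_{ux}^u(N_{\e/4}^+ \cdot n_iu^{-1})$ and $\mu_{ux}^u(N_{\e/2}^+ \cdot n_iu^{-1})$, which in turn (translating the balls and using $ux \in N_2^-\Omega$ by Remark~\ref{rem:commutation of stable and unstable}) lie between $\mu_{ux}^u(N_{\e/8}^+)$ and $\mu_{ux}^u(N_{2\e}^+)$. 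The doubling property, Proposition~\ref{prop:doubling} (applied with $\s = 16$, $r = \e/8$), shows these are all comparable, giving $\mu_{n_1x}^u(\psi_\e) \asymp \mu_{n_2x}^u(\psi_\e)$. This is essentially the argument already used in Lemma~\ref{lem:doubling away from omega}.

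For (i), the numerator bound, write $\mu_{n_1x}^u(\psi_\e) - \mu_{n_2x}^u(\psi_\e) = \int \psi_\e(n)\, d\mu_{n_1x}^u(n) - \int \psi_\e(n)\, d\mu_{n_2x}^u(n)$. Again using~\eqref{eq:N equivariance} to pull both integrals back to $\mu_{ux}^u$, this becomes $\int \big[\psi_\e(n (n_1u^{-1})) - \psi_\e(n (n_2u^{-1}))\big]\, d\mu_{ux}^u(n)$. The integrand is supported where $n \in N_\e^+ \cdot (n_1u^{-1})^{-1} \cup N_\e^+\cdot (n_2 u^{-1})^{-1}$, a set contained (after translating) in a ball $N_{5\e}^+ \cdot v$ for a point $v$ at distance $\le \e/10$ from the support, so $\mu_{ux}^u$ assigns it mass $\ll \mu_{ux}^u(N_{5\e}^+\cdot v) \asymp \mu_{n_2x}^u(\psi_\e)$ by the doubling estimate and Lemma~\ref{lem:doubling away from omega}. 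Pointwise, $|\psi_\e(n m_1) - \psi_\e(n m_2)| \le \Lip(\psi_\e)\, d_{N^+}(n m_1, n m_2) = \Lip(\psi_\e)\, d_{N^+}(m_1, m_2) = \Lip(\psi_\e)\, d_{N^+}(n_1,n_2)$ by right-invariance of the metric (here $m_i = n_iu^{-1}$). Since $\psi_\e(n) = \psi(\Ad(g_{-\log\e})(n))$ and $\Ad(g_{-\log\e})$ expands the Cygan metric by a factor $\asymp \e^{-1}$, we get $\Lip(\psi_\e) \ll \e^{-1}\Lip(\psi) \ll \e^{-1}$. Combining the measure bound on the support with the pointwise Lipschitz bound yields $|\mu_{n_1x}^u(\psi_\e) - \mu_{n_2x}^u(\psi_\e)| \ll \e^{-1} d_{N^+}(n_1,n_2)\, \mu_{n_2x}^u(\psi_\e)$.

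\textbf{Main obstacle.} The routine but slightly delicate point is bookkeeping of radii and basepoints: one must check that all the auxiliary balls appearing after the changes of variables (centered at $n_1u^{-1}$, etc.) stay inside $N_1^+$ or its small dilates, that the relevant points remain in $N_2^-\Omega$ so that Proposition~\ref{prop:doubling} applies with a uniform constant, and that the various dilation factors ($\e/8$ vs. $\e/4$ vs. $5\e$, etc.) are all within the admissible range $0 < r \le 5/\s$ of that proposition. The hypothesis that $n_1$ lies within $\e/10$ of $\supp(\mu_x^u)$ and $d_{N^+}(n_1,n_2) \le \e/2$ are exactly what make this bookkeeping go through, and no genuinely new idea beyond the doubling property and right-invariance of the Cygan metric is needed.
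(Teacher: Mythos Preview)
Your overall strategy matches the paper's: use the reciprocal identity, bound the numerator by a Lipschitz estimate on $\psi_\e$, and control the denominator ratio via Lemma~\ref{lem:doubling away from omega}. Part (ii) is fine. But part (i) contains a real error in the non-abelian case: you assert $d_{N^+}(nm_1, nm_2) = d_{N^+}(m_1, m_2)$ ``by right-invariance of the metric''. That is \emph{left}-invariance, which the Cygan metric does not have when $N^+$ is non-abelian (i.e., outside the real hyperbolic case). Concretely, writing $n_1 n_2^{-1} = \exp(v, r)$ and $n = \exp(w, s)$, one has $n(n_1 n_2^{-1})n^{-1} = \exp(v, r + [w,v])$; taking $r = 0$, $\norm{v} = d_{N^+}(n_1,n_2) =: \delta$, and $\norm{w} \asymp \e$ (which is the size of $n$ on the support of the integrand), the conjugated Cygan norm is $\asymp (\e\delta)^{1/2}$, not $\delta$. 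Feeding this into your argument would give a bound of order $\e^{-1}(\e\delta)^{1/2} = \e^{-1/2}\delta^{1/2}$, which does not imply the stated $\e^{-1}\delta$.

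The paper fixes exactly this by invoking the symmetry $\psi_\e(n) = \psi_\e(n^{-1})$ from~\eqref{eq:psi symmetry}. It also avoids the auxiliary point $u$: pulling $\mu_{n_2x}^u$ back to $\mu_{n_1x}^u$ directly gives $\int \big[\psi_\e(n) - \psi_\e(n\s)\big]\,d\mu_{n_1x}^u$ with $\s = n_1 n_2^{-1}$. Symmetry converts the integrand to $|\psi_\e(n^{-1}) - \psi_\e(\s^{-1}n^{-1})|$, and now $d_{N^+}(n^{-1}, \s^{-1}n^{-1}) = d_{N^+}(\id, \s^{-1}) = d_{N^+}(n_1, n_2)$ is genuine right-invariance. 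The support of the integrand lies in $N_\e^+$, and $\mu_{n_1x}^u(N_\e^+) \ll \mu_{n_1x}^u(\psi_\e)$ by Lemma~\ref{lem:doubling away from omega}, completing the numerator bound. Your argument is correct as written only when $N^+$ is abelian; in general you must insert the symmetry step.
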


\begin{proof}
Let $\s=n_1n_2^{-1}$.
Since $\psi_\e$ is supported inside $N_{\e/2}^+$.
we have by the symmetry of $\psi$ in~\eqref{eq:psi symmetry} and the right invariance of the metric $d_{N^+}$ on $N^+$ that
\begin{align*}
    |\mu_{n_1x}^u(\psi_\e) - \mu_{n_2x}^u(\psi_\e)|
    &\leq \int |\psi_\e(n)-\psi_\e(n\s)|\;d\mu_{n_1x}^u(n) 
    =\int |\psi_\e(n^{-1})-\psi_\e(\s^{-1}n^{-1})|\;d\mu_{n_1x}^u(n) 
    \\
    &\ll \norm{\psi_\e}_{C^1} d_{N^+}(n_1,n_2) \mu^u_{n_1x}(N_\e^+),
\end{align*}
where on the last line we used that the integrands are non-zero only on the union $N_{\e/2}^+\cup N_{\e/2}^+\s\subseteq N_\e^+$. Lemma~\ref{lem:doubling away from omega} also implies that $ \mu_{n_1x}^u(N_\e^+)/\mu_{n_1x}^u(\psi_\e)\ll 1$.
The lemma follows since $\norm{\psi_\e}_{C^1}\ll \e^{-1}$. 
\end{proof}

\subsection{Regularity of mollifiers and proof of Proposition~\ref{prop:mollifiers are close}}

Let $\vp\in C^{0,1}(N_{1}^+)$ be a test function and let $x\in N_1^-\Omega$. 
Set $\vp_t = \vp\circ \Ad(g_{-t})$ and $x_t=g_t x$.
Then, using~\eqref{eq:g_t equivariance} to change variables, we obtain
\begin{align*}
    \int \vp(n) \M_\e(f)(g_t nx)\;d\mu_x^u(n)
    = e^{-\d t} \int \vp_t(n) \M_\e(f)(nx_t)\;d\mu_{x_t}^u(n).
\end{align*}

We can rewrite the integral on the right side in a convenient form using the following series of formal manipulations.
Let $\psi_{\e,y}(n)=\psi_\e(n)/\mu_y^u(\psi_\e)$.
First, using the definition of $\M_\e$ and~\eqref{eq:N equivariance} to change variables, we get
\begin{align*}
    \int \vp_t(n)\M_\e(f)(nx_t) \;d\mu_{x_t}^u(n)
    &= \int \vp_t(n)\int \psi_{\e,n'nx_t}(n') f(n'nx_t)\;d\mu_{nx_t}^u(n') \;d\mu_{x_t}^u(n)
    \\  
    &=\int \vp_t(n)\int \psi_{\e,n'x_t}(n'n^{-1}) f(n'x_t)\;d\mu_{x_t}^u(n') \;d\mu_{x_t}^u(n).
\end{align*}
Next, using Fubini's Theorem and the symmetry of $\psi_\e$ provided by~\eqref{eq:psi symmetry}, we get
\begin{align*}
    \int \vp_t(n)\M_\e(f)(nx_t) \;d\mu_{x_t}^u(n)
    &=\int \bigg(\int \vp_t(n)\psi_{\e,n'x_t}(n'n^{-1})\;d\mu_{x_t}^u(n)\bigg) f(n'x_t)\;d\mu_{x_t}^u(n')
    \\
    &= \int \bigg(\int \vp_t(n)\psi_{\e,n'x_t}(n(n')^{-1})\;d\mu_{x_t}^u(n)\bigg) f(n'x_t)\;d\mu_{x_t}^u(n').
\end{align*}
Finally, we obtain the desired convenient form of the integral upon changing variables  using~\eqref{eq:N equivariance} once more to get
\begin{align*}
    \int \vp_t(n)\M_\e(f)(nx_t) \;d\mu_{x_t}^u(n)
    =\int \bigg(\int \vp_t(nn')\psi_{\e,n'x_t}(n)\;d\mu_{n'x_t}^u(n)\bigg) f(n'x_t)\;d\mu_{x_t}^u(n').
\end{align*}
It is thus natural to define the following function:
\begin{equation*}
    \Phi_{\e,x,t}(n'):=\int \vp_t(nn')\psi_{\e, n'x_t}(n)\;d\mu_{n'x_t}^u(n)= \frac{\int \vp_t(nn')\psi_{\e}(n)\;d\mu_{n'x_t}^u(n)}{\mu_{n'x_t}^u(\psi_\e)} .
\end{equation*}
Note that, since $\vp_t$ and $\psi_\e$ are supported in $N_{e^t}^+$ and $N_{\e/2}^+$ respectively, $\Phi_{\e,x,t}$ is supported inside $N_{e^t+\e/2}^+\subset N_{2e^t}^+$.

We wish to estimate integrals of the form
\begin{align*}
    \int  (\vp_{t+s}(n') - \Phi_{\e,x,t+s}(n') f(n'x_{t+s})\;d\mu_{x_{t+s}}^u(n'),
\end{align*}
for arbitrary $t\geq 1$, $s\in [0,1]$, basepoints $x$ and test functions $\vp$.
First, we note that it suffices to estimate the integrals when $s=0$ since $t$ is at least $1$ by assumption.
We proceed by essentially regarding $\vp_t -\Phi_{\e,x,t}$ itself as a test function.
Note that $\Phi_{\e,x,t}$ may not be well-defined for arbirary $n'\in N^+$, since $\mu_{n'x_t}^u(\psi_\e)$ could be $0$ for those $n'$ with $(n'x_t)^+\notin \L_\G$.
However, $\Phi_{\e,x,t}$ is well-defined on the $(\e/4)$-neighborhood of the support of $\mu_{x_t}^u$ by definition of $\psi_\e$.

For this reason, let $\th_\e:N^+\to [0,1]$ be a smooth bump function which is identically $1$ on the $(\e/100)$-neighborhood of the support of the measure $\mu_{x_t}^u$ and vanishes outside of its $(\e/50)$-neighborhood.
We can choose such a function to satisfy
\begin{equation*}
    \norm{\th_\e}_{C^0}\leq 1, \qquad \norm{\th_\e}_{C^1}\ll \e^{-1},
\end{equation*}
for instance by convolving (with respect to the Haar measure) the indicator function of the $(\e/100)$-neighborhood of the support with $\psi_{\e/200}$.
Then, we observe that
\begin{equation*}
\int (\vp_t-\Phi_{\e,x,t})(n') f(n'x_t)\;d\mu_{x_t}^u(n') = \int ((\vp_t-\Phi_{\e,x,t})\th_\e)(n')f(n'x_t)\;d\mu_{x_t}^u(n').
\end{equation*}
The upshot is that $\vartheta :=(\vp_t-\Phi_{\e,x,t})\th_\e$ is a well-defined function on $N^+$.
Thus, arguing exactly as in the proof of Lemma~\ref{lem: equicts}, the conclusion of the proposition will follow as soon as we estimate the norm $\norm{\vartheta}_{C^{0,1}}$; cf.~\eqref{eq:reduce to ht estimate}.

We begin by estimating $\norm{\vartheta}_{C^0}$.
Let $n'\in N^+$ be in the support of $\th_\e$.
Note that
\begin{equation}\label{eq:prop measure}
    \vp_t(n') = \frac{\int \vp_t(n') \psi_\e(n) \;d\mu_{n'x_t}^u(n)}{\mu^u_{n'x_t}(\psi_\e)},
\end{equation} 
and, hence,
\begin{equation*}
    |\vp_t(n')-\Phi_{\e,x,t}(n')| \leq 
    \frac{\int |\vp_t(n')-\vp_t(nn')| \psi_\e(n) \;d\mu^u_{n'x_t}(n)}{\mu^u_{n'x_t}(\psi_\e)}.
\end{equation*}
We further observe that if $\psi_{\e}(n)\neq 0$ for some $n\in N^+$, then $n$ is at distance at most $\e/2$ from identity.
Moreover, since $\norm{\vp}_{C^{0,1}}\leq 1$ and $g_t$ expands $N^+$ by at least $e^t$, the Lipschitz constant of $\vp_t$ is at most $e^{-t}$.
Hence, using the right invariance of the metric on $N^+$, for any such $n$, $|\vp_t(n')-\vp_t(nn')|\leq e^{-t}\e/2$.
As $n'$ was arbitrary and $|\th_\e(n')|\leq 1$, 
it follows that $\norm{\vartheta}_{C^0}\leq e^{-t}\e/2$.

It remains to estimate the Lipschitz constant of $\vartheta$. Let $n_1,n_2\in N_1^+$ be arbitrary points in the support of $\th_\e$.
Then, note that since $\norm{\theta_\eta}_{C^0}\leq 1$ and $\norm{\th_\e}_{C^1} \ll \e^{-1}$, we have
\begin{align*}
    &|\vartheta(n_1)-\vartheta(n_2)|
    \nonumber\\
    &\ll 
    |(\vp_t-\Phi_{\e,x,t})(n_1)
    - (\vp_t-\Phi_{\e,x,t})(n_2)|+ |(\vp_t-\Phi_{\e,x,t})(n_1)|\norm{\th_\e}_{C^1}
    d_{N^+}(n_1,n_2)
    \nonumber\\
    &\ll |(\vp_t-\Phi_{\e,x,t})(n_1)
    - (\vp_t-\Phi_{\e,x,t})(n_2)|
    + e^{-t}  d_{N^+}(n_1,n_2).
\end{align*}

Let $\s=n_1n_2^{-1}$.
Using~\eqref{eq:prop measure} and a change of variable, we have
\begin{align*}
   (\vp_t-\Phi_{\e,x,t})(n_1)
    - &(\vp_t-\Phi_{\e,x,t})(n_2)
    \nonumber\\
    &=\frac{\int (\vp_t(n_1)-\vp_t(nn_1))(\psi_\e(n)-\psi_\e(n\s))\;d\mu^u_{n_1 x_t}(n) }{\mu^u_{n_1 x_t}(\psi_\e)}
    \nonumber\\
    &+ \int (\vp_t(n_2)-\vp_t(nn_2))\psi_\e(n)\;d\mu_{n_2x_t}^u(n)
    \times \left(\frac{1}{\mu_{n_1x_t}^u(\psi_\e)}- \frac{1}{\mu_{n_2x_t}^u(\psi_\e)} \right).
\end{align*}

In estimating the Lipschitz constant, without loss of generality, we may assume that the distance between $n_1$ and $n_2$ is at most $\e/2$.
Hence, arguing as before and using Lemma~\ref{lem:reciprocal difference}, we obtain the following estimate on the second term:
\begin{align*}
    \int (\vp_t(n_2)-\vp_t(nn_2))
    \psi_\e(n)\;d\mu_{n_2x_t}^u(n)
    \times 
    &\left(\frac{1}{\mu_{n_1x_t}^u(\psi_\e)}- \frac{1}{\mu_{n_2x_t}^u(\psi_\e)} \right)
    \nonumber\\
    &\ll \e e^{-t} \mu^u_{n_2 x_t}(\psi_\e) \frac{\e^{-1}d_{N^+}(n_1,n_2)}{\mu^u_{n_2 x_t}(\psi_\e)} 
    \leq  e^{-t} d_{N^+}(n_1,n_2).
\end{align*}

To estimate the first term, note that symmetry of $\psi_\e$ (cf.~\eqref{eq:psi symmetry}) implies that $|\psi_\e(n)-\psi_\e(n\s)|$ is $O( \e^{-1}  d_{N^+}(n_1,n_2))$.
Moreover, since $\s=n_1n_2^{-1}$ and the support of $\psi_\e$ are contained in $N^+_{\e/2}$, the function $\psi_\e(n)-\psi_\e(n\s)$ is supported inside $N_{\e}^+$.
Hence,
\begin{align*}
    \int (\vp_t(n_1)-\vp_t(nn_1))(\psi_\e(n)-\psi_\e(n\s))\;d\mu^u_{n_1 x_t}(n)
    \ll \e e^{-t}
    \times \e^{-1}  d_{N^+}(n_1,n_2)
    \times  \mu^u_{n_1 x_t}(N^+_\e).
\end{align*}
Combined with Lemma~\ref{lem:doubling away from omega} and the estimate on the second term, this shows that
\begin{align*}
    |\vartheta(n_1)-\vartheta(n_2)|
    \ll e^{-t} d_{N^+}(n_1,n_2),
\end{align*}
thus completing the proof.

\subsection{Pointwise estimates and proof of Proposition~\ref{prop:upper estimate on mollifiers}}
As in the proof of Proposition~\ref{prop:mollifiers are close}, let $\th_\e:N^+\to [0,1]$ denote a smooth function that is identicaly $1$ on the $(\e/100)$-neighborhood of the support of $\mu_x^u$ and vanishing outside its $(\e/50)$-neighborhood.
We again note that we can find such $\th_\e$ with $\norm{\th_\e}_{C^1}\ll \e^{-1}$.
Set $\Psi(n)=\psi_\e(n)\mu_x^u(N_1^+)/\mu_{nx}^u(\psi_\e)$ and note that~\eqref{eq:N equivariance} implies that the function $\Psi\th_\e $ belongs to $C^0(N_1^+)$.
Moreover, we have that
\begin{align*}
    \mathbb{M}_\e(f)(x) &= \frac{1}{\mu_x^u(N_1^+)}\int \Psi(n)f(nx)\;d\mu_x^u = 
    \frac{1}{\mu_x^u(N_1^+)}
    \int (\Psi\th_\e)(n)f(nx)\;d\mu_x^u
    \nonumber\\
    &\ll e^\star_{1,0}(f)V(x) \norm{\Psi\th_\e}_{C^{0,1}}.
\end{align*}

Hence, the result follows once we estimate the norm $\norm{\Psi \th_\e}_{C^{0,1}}$.
We begin by proving that $\norm{\Psi\th_\e}_{C^0}$ is $O( \e^{-\Delta_+})$.
As a first step, we show that
\begin{equation}\label{eq:ratio of ball to psi}
    \frac{\mu_x^u(N_1^+)}{\mu^u_{nx}(\psi_\e)}\ll \e^{-\Delta_+}, \qquad \forall n\in N^+, \psi_\e(n)\th_\e(n)\neq 0.
\end{equation}
Since $\norm{\th_\e\psi_\e}_{C^0}\leq 1$, this will show that $\norm{\Psi\th_\e}_{C^0}\ll \e^{-\Delta_+}$.

Fix some $n$ with $\psi_\e(n)\th_\e(n)\neq 0$.
Then, we can find $u$ in the $\e/2$ ball around identity in $N^+$ such that $ux$ belongs to $N_2^-\Omega$ (cf.~Remark~\ref{rem:commutation of stable and unstable}) and $u$ is at distance at most $10^{-2}\e$ from $n$.
Since $\psi_\e \equiv 1$ on $N_{\e/4}^+$, we have by~\eqref{eq:N equivariance} that
\begin{equation}\label{eq:psi is a fat indicator}
    \mu_{nx}^u(\psi_\e) \geq \mu_{nx}^u(N_{\e/4}^+)
    =\mu_{ux}^u(N_{\e/4}^+\cdot (nu^{-1}))
    \geq \mu_{ux}^u(N_{\e/10}^+).
\end{equation}
Similarly, we have that
\begin{equation*}
    \mu_x^u(N_1^+) \leq  
    \mu_{ux}^u(N_2^+).
\end{equation*}

Let $k\in \N$ be the smallest integer such that $2^{-k}\leq \e/4$.
Applying Proposition~\ref{prop:doubling} with $\s =2^{k+1}$ and $r=2^{-k}$, since $ux\in \Omega$, we obtain

\begin{equation*}
   \mu_{ux}^u(N_2^+)=\mu_{ux}^u(N_{2^{k+1}2^{-k}}^+) 
   \ll 2^{(k+1)\Delta_+} \mu_{ux}^u(N_{2^{-k}}^+) 
   \ll \e^{-\Delta_+} \mu_{ux}^u(N_{\e/4}^+).
\end{equation*}
Together with~\eqref{eq:psi is a fat indicator}, this concludes the proof of~\eqref{eq:ratio of ball to psi}.

Next, we estimate the Lipschitz norm of $\Psi\th_\e$ as a function on $N_1^+$.
Let $n_1,n_2\in N^+$ be such that $n_1n_2^{-1}\in N^+_{\e/10}$, and $(\th_\e\psi_\e)(n_i)\neq 0$ for $i=1,2$.
Then, Lemma~\ref{lem:reciprocal difference} and~\eqref{eq:ratio of ball to psi} imply that
\begin{align*}
    |\Psi(n_1)-\Psi(n_2)|
    &\leq 
    \mu_{x}^u(N_1^+) \left(\left|\frac{1}{\mu_{n_1x}^u(\psi_\e)}
    - \frac{1}{\mu_{n_2x}^u(\psi_\e)}\right|
    + 
    \frac{|\psi_\e(n_1)-\psi_\e(n_2)|}{\mu_{n_2x}^u(\psi_\e)}
    \right)
    \nonumber\\
    &\ll \e^{-1}d_{N^+}(n_1,n_2)
    \frac{ \mu_{x}^u(N_1^+)}{\mu_{n_2x}^u(\psi_\e)}
    \ll \e^{-\Delta_+-1} d_{N^+}(n_1,n_2).
\end{align*}
Since $\norm{\th_\e}_{C^0}\leq 1$ and $\norm{\th_\e}_{C^1}\ll \e^{-1}$, this shows that the Lipschitz norm of $\Psi\th_\e$ is at most $\e^{-\Delta_+-1}$ and concludes the proof.


\subsection{Weak stable derivatives and proof of Proposition~\ref{prop:stable derivatives of mollifiers}}

The idea of the proof is based on performing local stable holonomy between the strong unstable disks $N_1^+\cdot x$ and $N_1^+\cdot p^-x$ and proceeding exactly as in the proof of Prop.~\ref{prop:compact embedding}.
The main ingredient is an estimate on the regularity of the test functions arising from composing $\psi_\e(n)/\mu^u_{nx}(\psi_\e)$ with holonomy maps from $x$ to intermediate points between $x$ and $p^-x$ along the weak stable manifold. We omit the details of the proof since it follows by elaborating the same ideas in the proof of Prop.~\ref{prop:mollifiers are close}.
We only remark that for $p^-=u^-g_tm$ as in the statement, letting $w$ in the Lie algebra of $N^-$ be so that $u^-=\exp(w)$, then for all $r\in \R$ with $|r|\leq |t|$ and all $s\in [0,1]$, one checks that the points $\exp(sw)g_rmx$ all belong to $N_1^-\Omega$.
This is relevant in ensuring that the basepoints arising over the course of carrying out the analogous estimate to~\eqref{eq:z_t terms} all satisfy the requirement on basepoints for the norm $\norm{\cdot}_1^\star$.


\section{Spectral gap for resolvents with large imaginary parts}
\label{sec:Dolgopyat}

In this section, we establish the key estimate in the proof of Theorems~\ref{thm:intro mixing} and~\ref{thm:intro strip}.
The estimates in Sections~\ref{sec: spectral gap} and~\ref{sec:ess radius} allow us to show that there is a half plane $\set{\Re(z)>-\eta}$, for a suitable $\eta>0$, containing at most countably many isolated eigenvalues for the generator of the geodesic flow. To show exponential mixing, it remains to rule out the accumulation of such eigenvalues on the imaginary axis as their imaginary part tends to $\infty$.

\begin{remark}\label{remark:choice of V}
Throughout the rest of this section, if $X$ has cusps, we require the Margulis function $V=V_\b$ in the definition of all the norms we use to have 
\begin{equation}\label{eq:small beta}
    \gls{beta}= \Delta/4
\end{equation}
in the notation of Theorem~\ref{thm:Margulis function}.
In particular, the contraction estimate in Theorem~\ref{thm:Margulis function} holds with $V^p$ in place of $V$ for all $1\leq p \leq 2$. Recall that the constant $\Delta$ is given in~\eqref{eq:Delta}.
\end{remark}

Similarly to~\eqref{eq:equivalent norm}, we define for $B\neq 0$ an equivalent norm to $\norm{\cdot}^\star_1$ defined in~\eqref{eq:norm star} as follows:
\begin{equation}\label{eq:norm star b}
    \norm{f}^\star_{1,B} := e^\star_{1,0}(f) + \frac{e^\star_{1,1}(f)}{B}.
\end{equation}

The following result is one of the main technical contributions of this article.

\begin{thm}\label{thm:Dolgopyat}
There exist constants $b_\star\geq 1$, and $\varkappa, a_\star,\s_\star>0$,
such that the following holds.
For all $z=a_\star+ib\in \C$ with $|b|\geq b_\star$ and for $m= \lceil \log |b|\rceil$, we have that

\begin{equation*}
    e^\star_{1,0}(R(z)^mf) \leq C_\G 
    \frac{\norm{f}^\star_{1,B}}{(a_\star+\s_\star)^m},
\end{equation*}
where $C_\G\geq 1$ is a constant depending only on the fundamental group $\G$ and $  B = |b|^{1+\varkappa}$.

\end{thm}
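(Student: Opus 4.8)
The plan is to prove a Dolgopyat-type contraction for the resolvent iterate $R(z)^m$ with $m\asymp\log|b|$, where the large imaginary part $b$ is exploited to beat the trivial bound $1/a_\star^m$ by a fixed factor through cancellation in oscillatory integrals. Since the resolvent is an integral over time of $\Lcal_t$, and $\Lcal_t$ decomposes via the Margulis function into a ``recurrent'' part (orbits spending most time in the thick part) and a ``cuspidal'' part, the first step is to reduce, using Theorem~\ref{thm:exp recurrence} (the exponential recurrence estimate) together with the norm bounds of Section~\ref{sec:ess radius}, to estimating contributions of orbit pieces that spend a definite proportion of their length in a fixed compact set $K\subset\Omega$. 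On the cuspidal part one simply uses that the operator norm contracts by $e^{-\b\cdot(\text{time in cusp})}$ as in~\eqref{eq:exp recur from cusp} and~\eqref{eq:operator pieces exp recur}, and the choice $\b=\Delta/4$ (Remark~\ref{remark:choice of V}) gives room to absorb the polynomial loss $B=|b|^{1+\varkappa}$ coming from the weak norm. The heart of the matter is therefore the recurrent part.

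\textbf{The oscillatory integral estimate on the recurrent part.} For orbit segments of length $\asymp m\asymp\log|b|$ that recur to $K$, I would write the quantity $e^\star_{1,0}(R(z)^mf;x)$ as an average against the conditional measure $\mu^u_x$ of integrals of the form $\int_{N_1^+}\phi(n)\,e^{-ib\tau(n)}\,(\text{amplitude})(nx)\,d\mu^u_x(n)$, where $\tau$ is (a linearization of) the temporal distance function studied in Section~\ref{sec:temporal function}; this is the point where the mollifiers of Section~\ref{sec:mollifiers} enter, replacing $f$ by $\M_{1/10}(f)$ so that the amplitude has controlled Lipschitz norm dominated by $\norm{f}^\star_1$ (Propositions~\ref{prop:mollifiers are close}, \ref{prop:upper estimate on mollifiers}, \ref{prop:stable derivatives of mollifiers}). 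The non-joint integrability of the stable and unstable foliations forces $\tau$ to be genuinely non-constant along directions transverse to the flow, and — crucially — its gradient, read off in the abelianization $\Nab^+$, is non-degenerate on a set of unstable directions that is \emph{not} concentrated near any proper affine subspace, by the linearization of Section~\ref{sec:temporal function} and the friendliness statement Theorem~\ref{thm:friendly intro}/Corollary~\ref{cor:flattening} verified in Section~\ref{sec:friendly}. The standard integration-by-parts argument is unavailable because $\mu^u_x$ is fractal; instead one uses that $e^\star_{1,0}$ is itself an \emph{average} of such oscillatory integrals over $x$, and applies the $L^2$-flattening output of Corollary~\ref{cor:flattening intro}/\ref{cor:flattening} (via the decay of the Fourier transform of the relevant projected conditionals outside a sparse frequency set) to gain a fixed power $|b|^{-\t}$, which after summing over the $\asymp\log|b|$-long combinatorics of cusp excursions translates into the multiplicative gain $(a_\star+\s_\star)^m$ over $a_\star^m$. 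In the variable curvature case one works with the projection of $\mu^u_x$ to the slowest-expanding directions throughout, using Remark~\ref{rem:role of friendliness} and the fact, asserted in Remark~\ref{rem:examples+expanded ball}(3), that these projections also satisfy the hypotheses of Corollary~\ref{cor:flattening}.

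\textbf{Assembling the estimate.} With the gain on the recurrent part and the exponential contraction on the cuspidal part in hand, I would sum over the dyadic decomposition $\{1,2\}^j$ of excursion patterns from Section~\ref{sec:decomposition}, as in the proof of~\eqref{eq:Lasota-Yorke}: patterns with $\th_\varpi\ge\th j$ cusp steps contract by $e^{-\b\th jT_0}$ with plenty of room to kill the $|b|^{1+\varkappa}$ loss once $m=\lceil\log|b|\rceil$ and $T_0$ are chosen appropriately; patterns with $\th_\varpi<\th j$ are the recurrent ones, where the oscillatory cancellation applies. Choosing $a_\star>0$ small but fixed (smaller than the essential spectral gap $\s_0$ so that the essential spectral radius is irrelevant here), $\varkappa>0$ small, $b_\star$ large, and $\s_\star>0$ small enough that all the above inequalities survive, yields the claimed bound with $C_\G$ absorbing the finitely many $\G$-dependent constants (diameter of $K$, doubling constants from Proposition~\ref{prop:doubling}, the friendliness constants of Theorem~\ref{thm:friendly intro}).

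\textbf{Main obstacle.} The principal difficulty is the oscillatory integral estimate itself: one must show that the phase $b\tau$ has a genuinely non-degenerate, non-concentrated gradient in the unstable directions \emph{uniformly} over the relevant basepoints $x\in K$ and over the $\asymp\log|b|$-step history, and then extract cancellation from a \emph{fractal} measure by averaging and invoking Corollary~\ref{cor:flattening} — the replacement for integration by parts. Controlling the dependence of all constants on the excursion depth (so that the polynomial-in-$|b|$ loss from the weak norm $\norm{\cdot}^\star_{1,B}$ is genuinely absorbed, which is exactly why Theorem~\ref{thm:exp recurrence} was arranged to have a cusp-height threshold independent of the step size) is the delicate bookkeeping that makes this work in the presence of cusps.
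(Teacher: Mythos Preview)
Your proposal correctly identifies the main ingredients---mollifiers, the temporal distance function and its linearization, exponential recurrence for the cusp, projections to the slowest-expanding directions, and Corollary~\ref{cor:flattening} as the substitute for integration by parts---but it misidentifies the mechanism by which the flattening estimate actually enters, and this is a genuine gap.

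The seminorm $e^\star_{1,0}$ is a \emph{supremum} over basepoints $x$, not an average, so the sentence ``$e^\star_{1,0}$ is itself an average of such oscillatory integrals over $x$'' is not where the cancellation comes from. In the paper, for a fixed $x$ one flows by $g^\g=g_{(w+jT_0)/2}$ and partitions the image of $N_1^+\cdot x$ into the many pieces $W_\ell$, $\ell\in I_{\rho,j}$, landing in each narrow flow box $B_\rho$ (of unstable width $\iota_b=b^{-2/3}$). After stable holonomy and the stable-derivative step (Proposition~\ref{prop:stable derivatives of mollifiers}; this is precisely where the full $\norm{f}^\star_1$ rather than just $e^\star_{1,0}(f)$ appears, explaining the scaling $B=|b|^{1+\varkappa}$), one is left with $\int \Psi_\rho\, F_\g\,d\mu^u_{y_\rho}$ where $\Psi_\rho=\sum_\ell e^{-ib(t-\t_\ell)}\phi_{\rho,\ell}$. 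The crucial missing step is \emph{Cauchy--Schwarz}: one bounds this by $\bigl(\int|\Psi_\rho|^2\,d\mu^u_{y_\rho}\bigr)^{1/2}$ times a $V^2$-integral, and then \emph{expands the square} to obtain $\sum_{k,\ell}\int e^{-ib(\t_k-\t_\ell)}\,d\mu^u_{y_\rho}$. After the linearization of Lemma~\ref{lem:temp function formula} and a cusp-adapted rescaling (Proposition~\ref{prop:cusp adapted partition}), this becomes a sum of Fourier coefficients $\hat{\nu}_i(bw^i_{k,\ell})$ of the projected conditional. The ``average'' to which Corollary~\ref{cor:flattening} applies is this sum over \emph{pairs of returns to the same flow box}: most frequencies $bw^i_{k,\ell}$ avoid the sparse bad set (Theorem~\ref{thm:counting bad frequencies}), while close pairs are counted separately by Proposition~\ref{prop:close pairs} via Theorem~\ref{thm:flat implies friendly}. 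Without Cauchy--Schwarz there is no pairing structure and hence no route from a single oscillatory integral against a fractal measure to an $L^2$-type flattening input.

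A secondary structural point: the paper does not organize the cusp/recurrent split via the $\{1,2\}^j$ operator decomposition of Section~\ref{sec:decomposition}. Instead it expands $R(z)^m=\int_0^\infty \tfrac{t^{m-1}e^{-zt}}{(m-1)!}\Lcal_t\,dt$, uses a Stirling-type estimate (Lemma~\ref{lem:large j}) to localize to $t\asymp m/a_\star$, and discards non-recurrent unstable pieces directly at the measure level via Theorem~\ref{thm:exp recurrence} applied to $\mu_x^u$. Your $\{1,2\}^j$ organization is not obviously wrong, but each recurrent $\varpi$ would still require the flow-box and Cauchy--Schwarz machinery above.
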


\begin{remark}
    The constants $\varkappa,a_\star$, and $\s_\star$ depend only on non-concentration parameters of the Patterson-Sullivan measure near proper subvarieties of the boundary at infinity; cf.~Definition~\ref{def:aff non-conc} for the precise definition of non-concentration and Corollary~\ref{cor:aff non-conc of projections} where this non-concentration is established.
    This non-concentration property is used to apply the results of Section~\ref{sec:flattening} in the proof of Prop.~\ref{prop:close pairs} and Theorem~\ref{thm:counting bad frequencies}, which are the key steps in the proof of Theorem~\ref{thm:Dolgopyat}.
\end{remark}

\subsection{Sketch of the proof}
\label{sec:sketch}

We begin with a rough sketch of the proof of Theorem~\ref{thm:Dolgopyat}. To describe the main new idea based on additive combinatorics concisely, we will ignore many technical difficulties, including those posed by the presence of cusps. 

Fix $z=a+ib$, where $a>0$ is suitably small and $|b|$ is sufficiently large. Let $m=\lceil \log |b|\rceil$.
The integrals we wish to estimate take the form
\begin{align*}
     \int_0^{\infty} \frac{t^{m-1} }{(m-1)!} e^{-zt} \int_{N_1^+} \phi(n)  \Lcal_tf(nx)\;d\mu_x^u dt.
\end{align*}

Using convergence of the integral defining $R(z)^m$, we will trivially estimate over the parts of the integral where $t$ is very large and relatively small. 
The bulk of the work lies in finding $\s>0$ so that the following bound holds in the range $T\asymp \log |b|$:
\begin{align*}
    \left|\int_{T}^{T+1} e^{-zt} \int_{N_1^+} \phi(n) f(g_t nx)\;d\mu^u_x dt \right|
    \ll e^{-aT} \norm{f} |b|^{-\s}.
\end{align*}

The key to the proof is to exploit the oscillations of the phase function $e^{ibt}$ and (average) Fourier decay properties of the measures $\mu_x^u$ obtained in Section~\ref{sec:flattening}. To do this, we put the integral in a form where this phase is integrated against $\mu_x^u$. This is achieved using non-joint integrability of the stable and unstable foliations (i.e. that $N^+$ and $N^-$ do not commute).

To this end, we partition the space into flow boxes ${B_\rho}$, apply the geodesic flow by amount $T$, and group pieces of the expanded unstable manifold according to which flow box they land in to get:
\begin{align*}
    e^{-\d T}
    \sum_{\text{flow boxes } B_\rho} \quad \sum_{\substack{\text{ conn. comps. of } \\ g_TN_1^+x \cap B_\rho}} 
    \quad 
    \int_0^1 e^{-zt}\int_{\text{conn. component}(x_{\rho,\ell})} 
    \phi \circ g_{-T} \cdot
    f(g_t n x_{\rho,\ell}) \;d\mu^u_{x_{\rho,\ell}}(n) dt.
\end{align*}
Here, the points $\set{x_{\rho,\ell}:\ell}$ are transverse intersection points of the expanded unstable manifolds $g_TN_1^+x$ with a fixed transversal to the unstable foliation inside the flow box $B_\rho$.

Since the derivatives of $\phi \circ g_{-T}$ along $N^+$ are $O(e^{-T})$, these functions are nearly constant along each connected component and we can ignore them in the sequel.

Fix a box $B_\rho$ and a reference point $y_\rho\in B_\rho$.
We view the above integrals as taking place on the \textit{weak} unstable manifold of $x_{\rho,\ell}$ for each $\ell$.
We change variables using local strong stable holonomy so that all the integrals are taking place along the local weak unstable manifold of our fixed reference point $y_\rho$ to get:
\begin{align*}
     \sum_\ell \int_0^1\int_{N_1^+} e^{-ib(t- \t_\ell(n))} \;d\mu_{y_\rho}^u dt,
\end{align*}
where we ignored the Jacobian of the change of variables for simplicity.

The functions $\t_\ell(n)$ are known as the \textit{temporal distance functions} in the literature.
Roughly, they are defined dynamically as follows: for every $(t,n) \in (0,1)\times N_1^+$, there exists a unique pair $(\t_\ell(n), u_\ell(t,n))\in (0,1)\times N_1^+$ such that the point $g_{t-\t_\ell(n)} u_\ell(t,n) \cdot y_\rho $ is on the same strong stable leaf of $g_t nx$.
Now, non-joint integrability of the strong stable and unstable foliations imply that the derivative of $\t_\ell(n)$ is uniformly bounded away from $0$.
Variants of this property are crucial in carrying out analysis of oscillatory integrals.

To proceed, we partition the domain of integration into small balls $\set{A_j}$ so we approximate $\t_\ell$ by its linearization.
Fix one such ball and apply a suitable amount of geodesic flow to scale this ball to be a ball of radius $1$ to get
\begin{align}\label{eq:sketch Dolgopyat}
    \sum_\ell \int_{N_1^+} e^{-ib \langle v_{\ell,j}, n\rangle } \;d\mu_{z_j}^u(n),
\end{align}
where, roughly speaking, $v_{\ell,j}$ is the derivative of $\t_\ell$ at the center of the ball $A_j$.

Up to this point, the argument is very similar to that appearing in Liverani's original work~\cite{Liverani} and its subsequent generalizations, e.g.~\cite{BaladiLiverani,BaladiDemersLiverani,GiuliettiLiveraniPollicott,GiuliettiLiveraniPolicott-Erratum}.
The crucial difference comes at the next step.
In these previous works, the measure $\mu^u_{z_i}$ was always absolutely continuous to Lebesgue, and proof proceeds by integration by parts. 

In our case, these measures have fractional Hausdorff dimension in general.
The novelty of our approach is to note that the sum in~\eqref{eq:sketch Dolgopyat}, when properly normalized, is an \textit{average} over Fourier coefficients of the measure $\mu^u_{z_i}$. This makes it amenable to our flattening results (Cor.~\ref{cor:flattening}) which establish verifiable criteria under which measures enjoy polynomial Fourier decay outside of an arbitrarily sparse set of frequencies.

\subsection{Proof of Theorem~\ref{thm:Dolgopyat}}  
The remainder of this section is dedicated to the proof of Theorem~\ref{thm:Dolgopyat}. Let $a\in (0,1]$ to be determined. We assume that $z=a+ib$ with $b>0$, the other case being identical.
For convenience, we summarize the notation used in this section in Table~\ref{tab:section 9}.    

\begin{table}
    \centering
    \begin{tabular}{c|c}
     Notation & Definition \\
    \hline\\
        $\d=\d_\G$ & critical exponent\\
        $\L=\L_\G$, $\Omega=\Omega_\G$ & limit set and non-wandering set\\
      $\Delta,\Delta_+$   & \eqref{eq:Delta}  \\
       $\b$  & Remark~\ref{remark:choice of V}\\
       $m$ & $\lceil \log b \rceil$ \\
       $T_0$ & time discretization \\
       $p_j$ & partition of time variable~\eqref{eq:time partition}\\ 
       $j$ & summand index in resolvent~\eqref{eq:j small}\\ 
       $\Ecal_j,\Ecal_b$ & sets of recurrent orbits~\eqref{eq:Ecal_j} and~\eqref{eq:tilde Ecal_b}\\
       $\a$ & proportion of time in cusp~\eqref{eq:Ecal_j} \\
       $K_j$ & fixed compact set \eqref{eq:K_j and iota_j and iota_b} \\
       $\iota_j,\iota_b$ & dimensions of flow boxes~\eqref{eq:K_j and iota_j and iota_b} \\
       $p_{j,w}$ & \eqref{eq:p j w} \\
       $w$ & discretization of $(0,2T_0)$ \eqref{eq:p j w}\\ 
       $g_j^w$ & $g_{w+jT_0}$ \eqref{eq:gjw} \\
       $\mathbb{M}$ & mollifier \eqref{eq:gjw}\\
       $F$ & $\mathbb{M}(f)$ \eqref{eq:gjw}\\ 
       $\Pcal, \Pcal_b$ & collections of partitions of unity~\eqref{eq:recurrent boxes} \\ 
       $\g$ & $1/2$\\ 
       $g^\g$ & amount of time we flow $g_{(w+jT_0)/2}$ \eqref{eq:ggamma}\\
       $x_j$ & $g^\g x $ \eqref{eq:x_j} \\
       $N_1^+(j)$ & neighborhood of $N_1^+$ \eqref{eq:N_1^+(j)} \\
       $F_\g$ & $\Lcal_{(w+jT_0)/2} F$ \eqref{eq:F_gamma}\\ 
       $y_\rho$ & center of flow box $B_\rho$ \eqref{eq:box notation} \\
       $T_\rho$ & transversal to strong unstable in $B_\rho$ \eqref{eq:box notation} \\
       $I_{\rho,j}$ & indexes unstable leaves landing in $B_\rho$ at time $(w+jT_0)/2$ \\
       $W_\ell$ & $\ell^{th}$ unstable piece in $B_\rho$\\
       $x_{\rho,\ell}$ & center of $W_\ell$ \eqref{eq:centers} \\
       $s_{\rho,\ell}$ & return time to compact for $x_{\rho,\ell} $ \eqref{eq:return time of x_rho,ell}\\
       $W_\rho$ & local unstable leaf of $y_\rho$ \eqref{eq:W_rho} \\ 
       $\t_\ell$ & the temporal distance function~\eqref{eq:def of tau_ell}\\ 
       $\phi_{\rho,\ell}$ & test function after change of variables~\eqref{eq:def of tau_ell}\\
       $J\Phi_\ell$ & Jacobian of stable holonomy~\eqref{eq:def of tau_ell} \\
       $\varkappa$ & \eqref{eq:stable derivative} \\ 
       $J_\rho$ & support of integration in $t$ \eqref{eq:pre-CauchySchwarz}\\ 
       $A_i$ & cusp adapted partition~\eqref{eq:remove divergent orbits} \\
        $t_i, r_i, \yrhoi$ & cusp-adapted partition parameters \eqref{eq:after moving to cpt}\\
       $w^i_{k,\ell}$ & frequencies \eqref{eq:frequency vectors def}\\
    
       $C_{\rho,j}/ S_{\rho,j}$ & close/separated pairs of unstable disks \eqref{eq:S_rho,j}\\
       $\k_0$ & Proposition~\ref{prop:close pairs} \\ 
       $\e,\l$ & Theorem~\ref{thm:counting bad frequencies} \\
       \hline 
    \end{tabular}
    \vspace{5pt}
    \caption{Summary of notation in the proof of Theorem~\ref{thm:Dolgopyat}.}
    \label{tab:section 9}
\end{table}

\subsection*{Time partition}     
Let $p:\R\to [0,1]$ be a smooth bump function supported in $(-1,1)$ such that
\begin{equation}\label{eq:p}
    \sum_{j\in\Z}p(t-j) =1, \qquad \forall t\in\R.
\end{equation}
Let $\gls{T0}>0$ be a parameter to be chosen large depending only on $\G$ and let
\begin{equation}\label{eq:m and b}
    m=\lceil\log b\rceil.
\end{equation}
Changing variables, we obtain
\begin{align}\label{eq:expand resolvent}
     &R(z)^{m}  
    = \int_0^\infty  \frac{t^{m-1} e^{-zt}}{(m-1)!} \Lcal_t \; dt
    \nonumber\\
    &= \int_0^\infty \frac{t^{m-1}e^{-zt}}{(m-1)!}
    p(t/T_0) \Lcal_t \;dt
    +\sum_{j= 0}^\infty  \frac{((j+2)T_0)^{m-1} e^{-zjT_0}}{(m-1)!} \int_\R p_j(t)
    e^{-zt}  \Lcal_{t+jT_0}\; dt,
\end{align}
where we define $p_j$ as follows:
\begin{equation}\label{eq:time partition}
    \gls{pj}(t):=
    \left(\frac{jT_0+t}{(j+2)T_0} \right)^{m-1}
    p\left(\frac{t-T_0}{T_0}\right).
\end{equation}
Note that $p_j$ is supported in the interval $(0,2T_0)$ for all $j\geq 0$.

\subsection*{Contribution of very small and very large times}

We will estimate the contribution of each term in the sum over $j$ in~\eqref{eq:expand resolvent} individually.
We will restrict our attention to values of $j$ of size $\asymp \log b$.
We begin by estimating the first term in~\eqref{eq:expand resolvent} trivially.
Since $p(t/T_0)$ is supported in $(-T_0,T_0)$ and $a\leq 1$, by taking $b$ large enough and using the triangle inequality for the seminorm $e^\star_{1,0}$ and Lemma~\ref{lem: equicts}, we obtain
\begin{align}\label{eq:first term}
    e^\star_{1,0}\left( \int_0^\infty \frac{t^{m-1}e^{-zt}}{(m-1)!}
    p(t/T_0) \Lcal_t  f\;dt\right)
    \ll e^\star_{1,0}(f) T_0^{m-1}/(m-1)!
    \ll \frac{ e^\star_{1,0}(f)}{(a+1)^{m}}.
\end{align}

Next, we let
\begin{align}\label{eq:eta1 and eta2}
    \eta_1 = 4/3, \qquad \eta_2 = 2/a.
\end{align}
Note that since $a\leq 1$, we have $\eta_1<\eta_2$.
We wish to find a trivial bound on the terms corresponding to $j\notin [\eta_2,\eta_2]m/T_0$.
First, we have the following bound on the sum of the terms $j\leq \eta_1m/T_0$.
\begin{align}\label{eq:initial segment of sum}
    \sum_{j:jT_0<\eta_1 m-2T_0}  \frac{((j+2)T_0)^{m-1} e^{-ajT_0}}{(m-1)!} \int_\R p_j(t)
    e^{-at}  e^\star_{1,0}
    \left(\Lcal_{t+jT_0}f \right)\;dt
    \ll e^\star_{1,0}(f) \int_0^{\eta_1m}\frac{t^{m-1} e^{-at}}{(m-1)!}
    dt.
\end{align}
Similarly, we have the following bound on the tail of the sum:
\begin{align}\label{eq:tail}
   \sum_{j: jT_0> \eta_2 m} \frac{((j+2)T_0)^{m-1} e^{-ajT_0}}{(m-1)!} \int_\R p_j(t)
    e^{-at} e^\star_{1,0}(\Lcal_{t+jT_0}f)\; dt
\ll  e^\star_{1,0}(f)\int_{\eta_2 m}^\infty \frac{t^{m-1}e^{-at}}{(m-1)!}\;dt 
\end{align}
The following lemma provides the desired bound on the integrals appearing in the above bounds.

\begin{lem}\label{lem:large j}
Suppose that 
$a\eta >1$. Then, there exists $\th\in (0,1)$ such that
\begin{align*}
    \int_{\eta m}^\infty \frac{t^{m-1}e^{-at}}{(m-1)!}\;dt
    \ll_{a,\eta} \left(\th/a\right)^m.
\end{align*}
On the other hand, if $a\eta < 1/e$, then there exists $\th\in (0,1)$ such that 
\begin{align*}
    \int_0^{\eta m}\frac{t^{m-1}e^{-at}}{(m-1)!}\;dt \ll  \left(\th/a\right)^m.
\end{align*} 
In both cases, we may take $\th= a\eta e^{1-a\eta}$.
\end{lem}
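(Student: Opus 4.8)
The plan is to reduce both inequalities to the elementary Chernoff (exponential Markov) bound for the Gamma density. First I would rescale by substituting $t=s/a$, which turns the two integrals into
\[
\int_{\eta m}^\infty \frac{t^{m-1}e^{-at}}{(m-1)!}\,dt = \frac{1}{a^m}\int_{a\eta m}^\infty \frac{s^{m-1}e^{-s}}{(m-1)!}\,ds,
\qquad
\int_0^{\eta m} \frac{t^{m-1}e^{-at}}{(m-1)!}\,dt = \frac{1}{a^m}\int_0^{a\eta m} \frac{s^{m-1}e^{-s}}{(m-1)!}\,ds.
\]
Writing $c:=a\eta$ and noting $\theta=a\eta e^{1-a\eta}=ce^{1-c}$, it then suffices to prove the tail bounds $\int_{cm}^\infty \frac{s^{m-1}e^{-s}}{(m-1)!}\,ds\leq (ce^{1-c})^m$ when $c>1$, and $\int_0^{cm}\frac{s^{m-1}e^{-s}}{(m-1)!}\,ds\leq (ce^{1-c})^m$ when $c<1$. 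Here I would use the identity $\int_0^\infty s^{m-1}e^{-\mu s}\,ds=(m-1)!\,\mu^{-m}$ repeatedly; with $\mu=1$ it records the $\theta=1$ endpoint $\int_0^\infty \frac{t^{m-1}e^{-at}}{(m-1)!}\,dt=a^{-m}$, which serves as a handy sanity check.

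For the upper tail, given $\lambda\in(0,1)$ I would write $e^{-s}=e^{-(1-\lambda)s}e^{-\lambda s}$, use $e^{-\lambda s}\leq e^{-\lambda cm}$ on $\{s\geq cm\}$, extend the range of integration to $(0,\infty)$, and integrate, obtaining the bound $\bigl(e^{-\lambda c}/(1-\lambda)\bigr)^m$. Minimizing $e^{-\lambda c}/(1-\lambda)$ over $\lambda\in(0,1)$ gives $\lambda=1-1/c$, which lies in $(0,1)$ precisely because $c>1$, and the minimal value equals $ce^{1-c}$. For the lower tail the argument is symmetric: for $\lambda>0$ write $e^{-s}=e^{-(1+\lambda)s}e^{\lambda s}$, use $e^{\lambda s}\leq e^{\lambda cm}$ on $\{0<s<cm\}$, extend to $(0,\infty)$, and integrate to get $\bigl(e^{\lambda c}/(1+\lambda)\bigr)^m$; minimizing over $\lambda>0$ gives $\lambda=1/c-1$, positive since $c<1$, and again the value $ce^{1-c}$. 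Finally, since $c\mapsto ce^{1-c}$ is maximized at $c=1$ with value $1$, one has $\theta=ce^{1-c}\in(0,1)$ as soon as $c\neq1$; in the second case the hypothesis $a\eta<1/e$ gives in particular $c<1$, so the conclusion applies.

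I do not expect a genuine obstacle here: the whole argument is a soft computation. The only points needing a little care are ensuring that the auxiliary integral $\int_0^\infty s^{m-1}e^{-(1\mp\lambda)s}\,ds$ converges in each case—one needs $1-\lambda>0$ for the upper tail, which is automatic from $\lambda=1-1/c$ with $c>1$, while $1+\lambda>0$ always holds for the lower tail—and checking that both optimization problems yield the same closed form $ce^{1-c}$, which is a one-line computation in each instance.
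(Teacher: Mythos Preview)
Your proof is correct and takes a genuinely different route from the paper. The paper uses the closed-form identity
\[
\int_{\eta m}^\infty \frac{t^{m-1}e^{-at}}{(m-1)!}\,dt=\frac{e^{-a\eta m}}{a^m}\sum_{k=0}^{m-1}\frac{(a\eta m)^k}{k!},
\]
obtained by repeated integration by parts, and then estimates the partial sum via Stirling's formula; the lower-tail bound is obtained by subtracting from the full integral and again invoking Stirling, which is where the stronger hypothesis $a\eta<1/e$ (rather than merely $a\eta<1$) enters. Your Chernoff/exponential-Markov argument is more elementary---no Stirling, no closed-form identity---and in fact yields sharp inequalities with no implied constant, and the lower-tail bound works under the weaker assumption $a\eta<1$. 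The paper's approach has the minor advantage of displaying the exact formula for the tail, but for the purposes of the lemma your argument is cleaner and slightly stronger.
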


\begin{proof}

Integration by parts and induction on $m$ yield
\begin{align}\label{eq:integral tail formula}
  \int_{\eta m}^\infty \frac{t^{m-1}e^{-at}}{(m-1)!}\;dt
    =\frac{ e^{-a\eta m}}{a^{m}}
    \sum_{k=0}^{m-1}\frac{(a\eta m)^k}{k!}
    = \frac{ e^{-a\eta m} (a\eta m)^{m}}{a^{m}m!}
    \sum_{k=0}^{m-1}\frac{m\cdots (k+1)}{(a\eta m)^{m-k}}.
\end{align}
Note that the $k^{th}$ term of the latter sum is at most $(a\eta)^{-m+k}$. 
Moreover, from Stirling's formula, we have that $m!\gg m^{m+1/2} e^{-m}$.
Hence, when $a\eta>1$, we get
\begin{align*}
    \int_{\eta m}^\infty \frac{t^{m-1}e^{-at}}{(m-1)!}\;dt
    \ll \frac{e^{(1-a\eta) m} (a\eta)^m }{a^m}.
\end{align*}
Taking $\th=a\eta e^{1-a\eta}$ and noting that $xe^{1-x}$ is strictly less than $1$ for all $x\geq 0$ with $x\neq 1$, concludes the proof of the first assertion.
For the second assertion, assume that $a\eta e<1$. Then, combining~\eqref{eq:integral value} with~\eqref{eq:integral tail formula}, we get
\begin{align*}
    \int_0^{\eta m} \frac{t^{m-1}e^{-at}}{(m-1)!}\;dt
    =\frac{ e^{-a\eta m}}{a^{m}}
    \sum_{k=m}^{\infty}\frac{(a\eta m)^k}{k!}.
\end{align*}
Stirling's formula shows that the $k^{th}$-term of the above sum is $O((a\eta e)^k)$, for all $k\geq m$. Since $a\eta e<1$, we get that the integral is $\ll (\th/a)^m$ for $\th= a\eta e^{1-a\eta}$.
\end{proof}

In view of~\eqref{eq:first term},~\eqref{eq:initial segment of sum},~\eqref{eq:tail}, and Lemma~\ref{lem:large j}, in what follows, we restrict our attention to the terms where $j$ satisfies $-2T_0+4m/3\leq jT_0 \leq 2 m/a$.
We shall assume $b$ is large enough so that $4m/3-2T_0 \geq 2m/3$. In particular, we estimate the terms satisfying
\begin{equation}
    \label{eq:j small}
    2m/3\leq jT_0 \leq 2 m/a.
\end{equation}
Finally, in view of~\eqref{eq:integral value}, we have that
\begin{align}\label{eq:sum over j}
    \sum_{j: jT_0\geq 2m/3}\frac{((j+2)T_0)^{m-1} e^{-ajT_0}}{(m-1)!}
    \leq e^{2aT_0} \left(1+6/m\right)^m \int_0^\infty \frac{t^{m-1}e^{-at}}{(m-1)!} \;dt
    \ll  \frac{e^{2aT_0}}{a^m},
\end{align}
where we used the bound $\left(1+6/m\right)^m\ll 1$ for all large enough $m$.

\subsection*{Contribution of points in the cusp}
We estimate the contribution of each term in the sum over $j$ in~\eqref{eq:expand resolvent} individually. 
We begin by reducing to the case where the basepoint has bounded height.
Let $\gls{alpha}\geq 0$ be a small parameter to be chosen at the end of the argument and satisfies
\begin{align}\label{eq:a and alpha}
    \a \leq a/40.
\end{align}
Let $x\in N_1^-\Omega$ be arbitrary. 
Suppose that $V(x)>e^{\b\a jT_0}$.
Then, Lemma~\ref{lem: equicts} implies that
\begin{equation*}
     e^\star_{1,0}(\Lcal_{t+jT_0}f;x) \ll_\b e^{-\b\a jT_0}e^\star_{1,0}(f).
 \end{equation*}
 In light of~\eqref{eq:sum over j}, summing the above errors over $j$, we obtain an error term of the form
 \begin{align}\label{eq:cusp contribution}
     e^{2aT_0}e^\star_{1,0}(f)
     \frac{1}{\left(a+\b\a \right)^m}.
 \end{align}
Thus, we may assume for the remainder of the section that
\begin{equation}\label{eq:V(x)}
    V(x)\leq e^{\b\a jT_0/2}.
\end{equation}


\subsection*{Approximation with mollifiers}

To begin our estimates, fix a suitable test function $\phi$ for $e^\star_{1,0}$.
In particular, $\phi$ has $C^{0,1}(N^+)$ norm at most $1$.
The integrals we wish to estimate take the form
\begin{align*}
    \int_{N_1^+}\phi(n) \int_\R p_j(t) &e^{-zt}\Lcal_{t+ jT_0} (f)(g_snx) \; dt d\mu_x^u(n) 
    \nonumber\\
    &= \int_\R e^{-zt}
    \int_{N_1^+}p_j(t) \phi(n) f (g_{s+t+ jT_0} nx) \;d\mu_x^u(n)dt,
\end{align*}
for all $s\in [0,1]$. We again only provide the estimate in the case $s=0$ to simplify notation, the general case being essentially identical.

Recall that $p_j$ is supported in the interval $(0,2T_0)$. In particular, the extra $t$ in $\Lcal_{t+ jT_0}$ could be rather large, which will ruin certain trivial estimates later.
To remedy this, recall the partition of unity of $\R$ given in~\eqref{eq:p} and set
\begin{equation}\label{eq:p j w}
    \gls{pjw}(t) := p_j(t+w)p(t), \qquad \forall w\in\Z.
\end{equation} 
Using a change of variable, we obtain
\begin{align}\label{eq:smaller time steps}
    \int_\R e^{-zt}
    \int_{N_1^+}p_j(t) &\phi(n) f (g_{t+ jT_0} nx) \;d\mu_x^u(n)dt
    \nonumber\\
    &=
    \sum_{w\in\Z}e^{-zw}
    \int_\R e^{-zt}
    \int_{N_1^+}p_{j,w}(t) \phi(n) f (g_{t+w+ jT_0} nx) \;d\mu_x^u(n)dt.
\end{align}
Note the above sum is supported on $0\leq w\ll T_0$, and the support of each integral in $t$ is now $(-1,1)$.
For the remainder of the section, we fix some $\gls{w}\in\Z$ in that support.

Let $ \M := \M_{1/10}$,
where for $\e>0$, $\M_\e$ denotes the mollifier defined in Section~\ref{sec:mollifiers}.
To simplify notation, we set
\begin{equation}\label{eq:gjw}
    \gls{gjw} := g_{w+jT_0},\qquad 
    \gls{F}:=\M(f).
\end{equation}
Since $\phi\in C^{0,1}(N_1^+)$ with $\norm{\phi}_{C^{0,1}}\leq 1$,
it follows by Proposition~\ref{prop:mollifiers are close} that
\begin{align*}
    \bigg|\int_{N_1^+}\phi(n)
    \Lcal_{t} 
    (f-F) (g_j^w nx ) 
    \;d\mu_{ x}^u\bigg|
    \ll 
      e^{-(t+w+ jT_0)}  e^\star_{1,0}(f) V(x) \mu_{x}^u(N_1^+).
\end{align*}
Arguing as in~\eqref{eq:cusp contribution}, summing the above errors over $j$, we get an error term of the form
\begin{align}\label{eq:replace with mollifier}
    O_{T_0}\left(\frac{e^\star_{1,0}(f) V(x) \mu_{x}^u(N_1^+)}{\left(a+1\right)^{m}}\right).
\end{align}
Hence, we may replace $f$ with $F$ in~\eqref{eq:smaller time steps}.
We will frequently use the following observation.
Writing $F=F-f+f$ and using Proposition~\ref{prop:mollifiers are close}, we have that
\begin{equation}\label{eq:mollifier is at most f}
    e^\star_{1,0}(F) \ll e^\star_{1,0}(f).
\end{equation}



\subsection*{Partitions of unity and flow boxes}

We begin by finding convenient partitions of the space by \textit{flow boxes}, i.e., sets of the form $P^-_r N^+_s\cdot x$ for $r,s>0$ and $x\in G/\G$ and such that the map $P^-_r N^+_s \ni g \mapsto gx$ is injective.
To this end, we have to restrict our attention to the part of the space where the injectivity radius is bounded away from $0$.
Define
\begin{equation}\label{eq:K_j and iota_j and iota_b}
    \gls{Kj}:=\set{y\in X: V(y)\leq e^{(2\b \a j+3\b) T_0}}, 
    \qquad \gls{iotaj}:=\min\set{1/10,\inj(K_j)},
    \qquad \iota_b := b^{-2/3}.
\end{equation}
We note that Proposition~\ref{prop:height function properties} implies that
\begin{equation}\label{eq:bound iota_j}
    \iota_j^{-1} \ll e^{(4\a j+6)T_0},
\end{equation}
where we used the fact that $\chi_\K\leq 2$; cf.~\eqref{eq:multiple of simple root}.
\begin{remark}
    Since we are working in the regime where $\a$ is small and $j$ is bounded linearly in $\log b$, cf.~\eqref{eq:j small},~\eqref{eq:m and b}, and~\eqref{eq:a and alpha}, the bound~\eqref{eq:bound iota_j} implies that
    $\iota_b$ is much smaller than $\iota_j$ in general.
\end{remark}

The following lemma provides an efficient cover of $K_j\cap N^-_{1/2}\Omega$ by flow boxes which are very narrow in the unstable direction.
This will be useful in the proof of Lemma~\ref{lem:temp function formula} where we linearize the phase functions of the oscillatory integrals that arise over the course of the proof.
\begin{lem}\label{lem:count flow boxes}
    The collection of flow boxes $\set{P^-_{\iota_j} N^+_{\iota_b}\cdot x:x\in K_j\cap N^-_1\Omega}$ admits a finite subcover $\Bcal$ of $N^-_{1/2}\Omega$ with uniformly bounded multiplicity; i.e.~for all $x\in K_j\cap N^-_{1/2}\Omega$, $\sum_{B\in\Bcal} \mathbbm{1}_B(x)\ll 1$.
\end{lem}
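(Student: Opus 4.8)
The plan is to establish Lemma~\ref{lem:count flow boxes} as a standard Vitali-type covering argument, with the only subtlety being the uniformity of the multiplicity bound as the parameters $\iota_j$ and $\iota_b$ vary. First I would fix $j$ and observe that by definition of $K_j$ and~\eqref{eq:bound iota_j}, the injectivity radius is bounded below by $\iota_j$ on the (slightly enlarged) set $K_j \cap N_1^-\Omega$, so that for each $x$ in this set the map $P^-_{\iota_j}N^+_{\iota_b}\ni g\mapsto gx$ is injective and the flow box $P^-_{\iota_j}N^+_{\iota_b}\cdot x$ is an embedded submanifold of controlled shape. Since $\iota_b\leq \iota_j$, these boxes are contained in the $\iota_j$-balls, and the compactness of $K_j$ (Proposition~\ref{prop:height function properties}) guarantees that finitely many of them cover $K_j\cap N_{1/2}^-\Omega$.

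The key step is the bounded multiplicity. I would extract the subcover using the Vitali covering lemma applied to the collection $\{P^-_{\iota_j/5}N^+_{\iota_b/5}\cdot x\}$: pass to a maximal disjoint subfamily, then the $5$-times dilated boxes $\{P^-_{\iota_j}N^+_{\iota_b}\cdot x_i\}$ cover $K_j\cap N_{1/2}^-\Omega$. Bounded multiplicity then follows from a volume-comparison argument: if a point $x$ lies in $k$ of the dilated boxes $P^-_{\iota_j}N^+_{\iota_b}\cdot x_i$, then the $k$ disjoint boxes $P^-_{\iota_j/5}N^+_{\iota_b/5}\cdot x_i$ all lie inside $P^-_{2\iota_j}N^+_{2\iota_b}\cdot x$, and comparing Haar measures (which for the product $P^-N^+$ factors as a product of the respective Haar measures, each scaling by a fixed power of the radius under the dilations~\eqref{eq:scaling of Carnot metric}) gives $k\cdot (\iota_j/5)^{\dim P^-}(\iota_b/5)^{\dim N^+}\ll (2\iota_j)^{\dim P^-}(2\iota_b)^{\dim N^+}$, so $k\ll 1$ with a constant depending only on the dimensions, i.e.\ only on $G$. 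Crucially the powers of $\iota_j$ and $\iota_b$ cancel, which is exactly why the bound is uniform in $j$ and $b$; this is the point the text alludes to in the footnote after Lemma~\ref{lem: equicts} where it stresses that Besicovitch fails for the Cygan metric but a Vitali argument with uniformly comparable radii still works.

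The main obstacle — really the only place care is needed — is making sure the ``boxes'' behave like genuine product neighborhoods at the relevant scales: one must check that the Jacobian of $g\mapsto gx$ is uniformly bounded above and below on $P^-_{\iota_j}N^+_{\iota_b}$, that products of two such boxes are contained in a comparable box (using that $P^-$ and $N^+$ are, together, a Zariski-open cell so the multiplication map is a diffeomorphism near identity with uniformly controlled distortion on the small scales $\iota_j,\iota_b\leq 1/10$), and that the injectivity radius lower bound genuinely holds on the enlargement of $K_j$ needed to run Vitali. All of these are routine consequences of the smoothness of the group operations together with the compactness of $K_j$ and the homogeneity of $X$, so I would state them and refer to the scaling property~\eqref{eq:scaling of Carnot metric} and Proposition~\ref{prop:height function properties} rather than belabor the computation. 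The finiteness of the subcover is then immediate from compactness of $K_j\cap N_{1/2}^-\Omega$.
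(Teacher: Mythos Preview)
Your approach is correct and somewhat more direct than the paper's. You run a single Vitali argument on the anisotropic boxes $P^-_{\iota_j/5} N^+_{\iota_b/5}\cdot x$ and extract bounded multiplicity from a Haar-volume comparison, the point being that the volume ratio depends only on $G$ and not on $\iota_j,\iota_b$ separately. The paper instead proceeds in two steps: it first covers the unit neighborhood of $K_j$ by ``square'' boxes $P^-_{\iota_j}N^+_{\iota_j}\cdot x$ (same scale in both directions) via ordinary Vitali with multiplicity $C_G$, then refines each such box $Q$ in the $N^+$ direction alone by a second Vitali cover of $N^+_{\iota_j}\cdot x_Q$ by $N^+_{\iota_b}$-balls; bounded multiplicity follows because the union of the thin boxes arising from a single $Q$ sits inside a slightly enlarged square box $Q^+=P^-_{\iota_j}N^+_{2\iota_j}\cdot x'_Q$, and the $Q^+$ still have bounded multiplicity. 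The advantage of the paper's route is that each Vitali step involves balls of a single fixed radius (in $G$ for the first step, in $N^+$ for the second), so one never needs the anisotropic engulfing containment $P^-_{\iota_j/5}N^+_{\iota_b/5}\cdot(P^-_{\iota_j}N^+_{\iota_b})^{-1}\subseteq P^-_{c\iota_j}N^+_{c\iota_b}$, which in your argument is load-bearing and requires the $P^-/N^+$ commutation control you mention. Your route is more economical once that estimate is in hand. One minor correction: the Haar volume of $N^+_r$ scales as $r^Q$ with $Q$ the homogeneous dimension (cf.~\eqref{eq:scaling of Carnot metric}), not $r^{\dim N^+}$; this does not affect your ratio argument.
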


\begin{proof}

    Let $\Qcal$ denote a cover of the unit neighborhood of $K_j$ by flow boxes of the form $ P^-_{\iota_j} N^+_{\iota_j}\cdot x$, where $\iota_j$ is as in~\eqref{eq:K_j and iota_j and iota_b}.
    With the help of the Vitali covering lemma, such cover can be chosen to have multiplicity $C_G\geq 1$, depending only on the dimension of $G$.
    We will build our collection of boxes $\Bcal$ by refining this cover as follows.
    
    Let $\Qcal^0$ denote the subcollection of boxes $Q\in\Qcal$ such that $Q$ intersects $  K_j\cap N_{1/2}^-\Omega$ non-trivially.
    For each $Q\in \Qcal^0$, fix some $x_Q\in Q\cap N_{1/2}^-\Omega$.
    Then, we can find a finite set of points $\set{u_i:i\in I_Q}\subset N^+_{2\iota_j}$ such that the points $x_i:=u_i x_Q$ belong to $N_{1}^-\Omega$ and so that the balls $N^+_{\iota_b}\cdot x_i$ provide a cover of $N^-_{1/2}\Omega\cap N^+_{\iota_j} \cdot x_Q $
    with uniformly bounded multiplicity (i.e. with multiplicity that is independent of $b$ and $j$).
    This is again possible thanks to the Vitali covering lemma.
   Now, define 
    \begin{align*}
        \Bcal :=\set{ P^-_{\iota_j} N^+_{\iota_b} \cdot u_i x_Q: i\in I_Q, Q \in \Qcal^0 }.
    \end{align*}
    To bound the multiplicity of $\Bcal$, let $x\in K_j\cap N_{1/2}^-\Omega$ be arbitrary, and note that
    \begin{align*}
        \sum_{B\in \Bcal} \mathbbm{1}_B(x) =
        \sum_{Q\in \Qcal^0} \sum_{i\in I_Q} \mathbbm{1}_{P^-_{\iota_j} N^+_{\iota_b} \cdot u_i x_Q} (x)
        \ll \sum_{Q\in \Qcal^0} \mathbbm{1}_{\cup_{i\in I_Q} P^-_{\iota_j} N^+_{\iota_b} \cdot u_i x_Q} (x).
    \end{align*}
    Moreover, if $Q=P^-_{\iota_j} N^+_{\iota_j} \cdot  x'_Q$ for some $x'_Q$, then the union $\cup_{i\in I_Q} P^-_{\iota_j} N^+_{\iota_b} \cdot u_i x_Q$ is contained inside $Q^+:= P^-_{\iota_j} N^+_{2\iota_j} \cdot  x'_Q$. Finally, bounded multiplicity of $\Qcal^0$ implies that $\sum_{Q\in\Qcal^0} \mathbbm{1}_{Q^+}(x) \ll 1$.
    This concludes the proof.
    \qedhere
\end{proof}

Let $\Bcal$ be the finite cover provided by Lemma~\ref{lem:count flow boxes} and let $\Pcal$ denote a partition of unity 
subordinate to it. 
For each $\rho\in \Pcal$, we denote by $B_\rho $ the element of $\Bcal$ containing the support of $\rho$.
In particular, such partition of unity can be chosen so that for all $\rho\in \Pcal$, we have
\begin{equation}\label{eq:norm of rho at most b}
    \norm{\rho}_{C^1} \ll b^{2/3}.
\end{equation}
Over the course of the argument, we need to apply the geodesic flow to enlarge the width of the boxes $B_\rho$ in the $N^+$ direction to be $\asymp 1$, for e.g.~to apply Theorems~\ref{thm:Margulis function} and~\ref{thm:exp recurrence}. 
It will be important to ensure that these boxes meet the compact set $K_j$ after flowing.
To this end, we define the following subset of $\Pcal$ consisting of boxes which return to $K_j$ at time $b^{2/3}$:
\begin{align}\label{eq:recurrent boxes}
    \Pcal_b := \set{\rho\in \Pcal: g_{-\log \iota_b}B_\rho \cap K_j \neq \emptyset }.
\end{align}
Note that for each $B\in \Bcal$, $g_{-\log \iota_b}B$ has diameter $O(1)$.

\subsection*{Transversals}
We fix a system of transversals $\{T_\rho\}$ to the strong unstable foliation inside the boxes $B_\rho$.
Since $B_\rho$ meets $N_{1/2}^-\Omega$ for all $\rho\in \Pcal$, we take $\gls{yrho}$ in the intersection $B_\rho\cap N_{1/2}^-\Omega$. 
In this notation, we can find neighborhoods of identity $P^-_\rho \subset P^-= MAN^-$ and $N^+_\rho \subset N^+$ such that
\begin{equation}\label{eq:box notation}
    B_\rho = P^-_\rho N^+_\rho  \cdot y_\rho, \qquad \gls{Trho} = P^-_\rho \cdot y_\rho.
\end{equation}
We also let $M_\rho, A_\rho$, and $N^-_\rho$ be neighborhoods of identity in $M, A$, and $N^-$ respectively so that $P^-_\rho=M_\rho A_\rho N^-_\rho$.

\subsection*{Localizing away from the cusp}

Our next step is to restrict the support of the integral away from the cusp.
Define the following smoothed cusp indicator function $\zeta_j:X\to [0,1]$:
\begin{equation*}
    \zeta_j(y) := 1- \sum_{\rho\in\Pcal} \rho(y).
\end{equation*} 
Let 
\begin{equation}\label{eq:ggamma}
    \gls{gamma} = 1/2, \qquad \gls{ggamma} := g_{\g(w+jT_0)}.
\end{equation}
It will be convenient to take $T_0$ large enough so that
\begin{align}\label{eq:gamma time at least 1}
    (1-\g)(w+jT_0)=\g(w+jT_0)\geq 4.
\end{align}
First, we note that Proposition~\ref{prop:upper estimate on mollifiers} implies 
$|\Lcal_t  F(g_j^wnx)| \ll e^\star_{1,0}(f) \Lcal_{t}V(g_j^wnx)$. 
Hence, since $|\phi|$ is bounded by $1$ and $\zeta_j$ is non-negative, we obtain
\begin{align*}
    \Bigg| \int_{N_1^+}\phi(n)\zeta_j(g^\g nx) \Lcal_t F(g_j^wnx)\;d\mu_x^u \Bigg|
    \ll e^\star_{1,0}(f) \left|\int_{N_1^+}\zeta_j(g^\g nx) \Lcal_{t} V(g_j^w nx) \;d\mu_x^u \right|.
\end{align*}
To proceed, we show that the support of the above integral is in the cusp to apply Theorem~\ref{thm:Margulis function}.
\begin{lem}\label{lem:supp of zeta_j}
    For every $n\in \supp(\mu_x^u)\cap N_1^+$, we have $\zeta_j(g^\g nx)>0 \Longrightarrow V(g^\g nx)> e^{2\b \a jT_0}$.
\end{lem}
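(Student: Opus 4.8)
The plan is a short argument by contradiction. Suppose $n\in\supp(\mu_x^u)\cap N_1^+$ is such that $\zeta_j(g^\g nx)>0$ but $V(g^\g nx)\le e^{2\b\a jT_0}$. Since $\zeta_j=1-\sum_{\rho\in\Pcal}\rho$ and $\Pcal$ is a partition of unity subordinate to the cover $\Bcal$ of $K_j\cap N_{1/2}^-\Omega$ supplied by Lemma~\ref{lem:count flow boxes}, we have $\sum_{\rho\in\Pcal}\rho\equiv 1$ on $K_j\cap N_{1/2}^-\Omega$, hence $\zeta_j$ vanishes there. So it is enough to show $g^\g nx\in K_j\cap N_{1/2}^-\Omega$, since that forces $\zeta_j(g^\g nx)=0$, contradicting the hypothesis.

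The only step with any content is the inclusion $g^\g nx\in N_{1/2}^-\Omega$, which in fact holds unconditionally. Because $x\in N_1^-\Omega$ and $n\in\supp(\mu_x^u)\cap N_1^+$, writing $x=u^-y$ with $u^-\in N_1^-$, $y\in\Omega$ and $nu^-=p^-(n)u^+(n)$ as in~\eqref{eq:switching order of N- and N+}, the point $u^+(n)y$ has both endpoints in $\L_\G$ (its forward endpoint equals $(nx)^+\in\L_\G$ since $P^-$ preserves forward endpoints, and its backward endpoint equals $y^-$ since $N^+$ preserves backward endpoints), so $u^+(n)y\in\Omega$ by the Hopf-coordinate description of $\Omega$; by Remark~\ref{rem:commutation of stable and unstable} the $N^-$-component of $p^-(n)$ lies in $N_2^-$, and since $\Omega$ is $AM$-invariant this gives $nx\in N_2^-\Omega$. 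Now $g^\g=g_{\g(w+jT_0)}$ with $\g(w+jT_0)\ge 4$ by~\eqref{eq:gamma time at least 1}, and conjugation by $g_t$ contracts $N^-$ by at least $e^{-t}$ for $t\ge 0$; hence $g^\g(N_2^-\Omega)\subseteq N_{2e^{-4}}^-\Omega\subseteq N_{1/10}^-\Omega$ (using $g^\g\Omega=\Omega$), and in particular $g^\g nx\in N_{1/10}^-\Omega\subset N_{1/2}^-\Omega$.

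Finally, under the contradiction hypothesis $V(g^\g nx)\le e^{2\b\a jT_0}\le e^{(2\b\a j+3\b)T_0}$, so $g^\g nx\in K_j$ by the definition of $K_j$ in~\eqref{eq:K_j and iota_j and iota_b}; combined with the previous paragraph, $g^\g nx\in K_j\cap N_{1/2}^-\Omega$ and therefore $\zeta_j(g^\g nx)=0$, a contradiction. Equivalently, $\zeta_j(g^\g nx)>0$ forces $g^\g nx\notin K_j$, i.e.\ $V(g^\g nx)>e^{(2\b\a j+3\b)T_0}>e^{2\b\a jT_0}$, which is the assertion. I expect the main (mild) obstacle to be the second paragraph: one must check that the bounded $N^-$-displacement of $nx$ off $\Omega$ is genuinely contracted below $1/10$ after flowing for time at least $4$, and that $u^+(n)y$ really lands in $\Omega$; everything else is direct bookkeeping with the definitions of $\zeta_j$, $K_j$ and $\Bcal$.
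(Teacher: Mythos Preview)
Your proof is correct and follows essentially the same approach as the paper's own proof: show $g^\g nx\in N_{1/2}^-\Omega$ using the commutation relation~\eqref{eq:switching order of N- and N+}, Remark~\ref{rem:commutation of stable and unstable}, and the contraction of $N^-$ under $g^\g$, then use that $\zeta_j$ vanishes on $K_j\cap N_{1/2}^-\Omega$. You give more detail than the paper does in justifying $nx\in N_2^-\Omega$ (in particular your endpoint argument for $u^+(n)y\in\Omega$), but the structure and ingredients are identical.
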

\begin{proof}
    Let $n\in \supp(\mu_x^u)\cap N_1^+$.
    Since $x\in N_1^-\Omega$, it follows by~\eqref{eq:switching order of N- and N+} and Remark~\ref{rem:commutation of stable and unstable} that $nx\in N_2^-\Omega$.
    Then, $g^\g nx \in N^-_{r}\Omega$, for $r =2e^{-\g(w+jT_0)}$.
    By~\eqref{eq:gamma time at least 1}, we get that $g^\g nx \in N^-_{1/2}\Omega$.
    On the other hand, if $\zeta_j(g^\g nx)>0$, then $g^\g nx \notin K_j \cap N^-_{1/2}\Omega$.
    The lemma now follows by definition of $K_j$ in~\eqref{eq:K_j and iota_j and iota_b}.
\end{proof}
Lemma~\ref{lem:supp of zeta_j} and the Cauchy-Schwarz inequality thus yield
\begin{align*}
    \left|\int_{N_1^+}\zeta_j(g^\g nx) \Lcal_{t} V(g_j^w nx) \;d\mu_x^u \right|^2
    \leq  \mu_x^u\left(n\in N_1^+:  V(g^\g nx)> e^{2\b\a jT_0}\right)  
    \times \int_{N_1^+} \Lcal_{t} V^2(g_j^wnx)\;d\mu_x^u.
\end{align*}
By Theorem~\ref{thm:Margulis function} and Chebyshev's inequality, we have that the set on the right side has measure $O(e^{-2\b\a jT_0} V(x) \mu_x^u(N_1^+))$.
Moreover, recall that we are assuming that $V^2$ satisfies the Margulis inequality in Theorem~\ref{thm:Margulis function}; cf.~Remark~\ref{remark:choice of V}.
Hence, applying Theorem~\ref{thm:Margulis function} once more shows that the integral on the right side is at most $O(V^2(x)\mu_x^u(N_1^+))$.
These bounds together yield
\begin{align*}
    \Bigg| \int_{N_1^+}\phi(n)\zeta_j(g^\g nx) \Lcal_t F(g_j^wnx)\;d\mu_x^u \Bigg|
  & \ll e^\star_{1,0}(f)\mu_x^u(N_1^+) V^{3/2}(x) e^{-\b\a jT_0}.
\end{align*}
Using the bound on $V(x)$ in~\eqref{eq:V(x)}, we get
\begin{align}\label{eq:localize away from cusp}
    & \int_{N_1^+}\phi(n) \Lcal_t  F(g_j^wnx)\;d\mu_x^u(n)
  \nonumber\\
     &=  \sum_{\rho\in \Pcal} \int_{N_1^+}\phi(n) \rho(g^\g nx) \Lcal_t F(g_j^wnx)\;d\mu_x^u
     + O\left(e^\star_{1,0}(f)\mu_x^u(N_1^+) V(x) e^{-3\b\a jT_0/4} \right) .
\end{align}
Using~\eqref{eq:sum over j} to sum the above errors over $j$, we obtain an error term of the form
\begin{align}\label{eq:zeta_j contribution}
   O\left(
    \frac{ e^\star_{1,0}(f) \mu_x^u(N_1^+) V(x)}{ (a+3\b\a/4)^m}
    \right) .
\end{align}

\subsection*{Saturation and localization to flow boxes}

Next, we partition the integral over $N_1^+$ into pieces according to the flow box they land in under flowing by $g^\g$.
To simplify notation, we write
\begin{equation}\label{eq:x_j}
      \gls{xj}:= g^\g x.
\end{equation}
We denote by $\gls{N1j}$ a neighborhood of $N_1^+$ defined by the property that the intersection 
$$ B_\rho\cap (\mrm{Ad}(g^\g)(N_1^+(j))\cdot x_j)$$
consists entirely of full local strong unstable leaves in $B_\rho$.
We note that since $\mrm{Ad}(g^\g)$ expands $N^+$ and $B_\rho$ has radius $<1$, $N_1^+(j)$ is contained inside $N_2^+$.
Since $\phi$ is supported inside $N_1^+$, we have
\begin{equation}\label{eq:N_1^+(j)}
    \chi_{N_1^+}(n)\phi(n)  = \chi_{N_1^+(j)}(n)\phi(n), \qquad \forall n \in N^+.
\end{equation}
For simplicity, we set
\begin{equation*}
    \vp_j(n) := \phi(\Ad(g^\g)^{-1} n ), \qquad \Acal_j :=  \mrm{Ad}(g^\g)(N_1^+(j)).
\end{equation*}
For $\rho\in \Pcal$, we let $\tilde{\Wcal}_{\rho,j}$ denote the collection of connected components of the set
\begin{equation*}
    \set{n\in\Acal_j: nx_j\in B_\rho}.
\end{equation*}
To simplify notation, let
\begin{equation}\label{eq:F_gamma}
    \gls{Fgamma} := \Lcal_{(1-\g)(w+jT_0)} (F).
\end{equation}
In view of~\eqref{eq:N_1^+(j)}, changing variables using~\eqref{eq:g_t equivariance} yields
\begin{align}\label{eq:localize space}
  \sum_{\rho\in \Pcal} \int_{N_1^+}\phi(n) \rho(g^\g nx) \Lcal_t F(g_j^wnx)\;d\mu_x^u
 =e^{-\d\g(w+jT_0) } 
    \sum_{\rho\in \Pcal, W\in \tilde{\Wcal}_{\rho,j}}
    \int_{n\in W}  \vp_j(n) 
        \rho(nx_j)  F_\g(g_tn x_j)\; d\mu_{x_j}^u.
\end{align}

\subsection*{Contribution of non-recurrent orbits}
In this subsection, we wish to restrict our attention to those $n\in N_1^+$ for which the orbit $(g_t nx)$ spends most of its time away from the cusp.

To this end, let $\e_1 = \b\a/2(1+2\a)$ and apply Theorem~\ref{thm:exp recurrence} with $\e=\e_1$ to find $H=H(\e_1,T_0) \geq 1$ such that the conclusion of the theorem holds. Let $\chi_H$ denote the indicator function of the set $\set{x:V(x)>H}$ and define
\begin{align*}
    \tilde{\Ecal}_j = \set{n\in N_1^+: \int_0^{(1+2\a)\g(w+ jT_0)} \chi_H(g_tnx)\;dt  >2\a\g(w+ jT_0) }.
\end{align*}
We wish to define a saturated version of the set $\tilde{\Ecal}_j$, which we denote by $\Ecal_j$.
The goal of doing so is to discard all the disks $W\in\tilde{\Wcal}_{\rho,j}$ with the property that it contains the image of a point in $\tilde{\Ecal}_j$ under $g^\g$.
In particular, $\Ecal_j$ is contained inside the $O(e^{-\g(w+ jT_0)}\iota_b)$-neighborhood of $\tilde{\Ecal}_j$.
This is made precise in the following lemma.

\begin{lem}\label{lem:saturate tilde Ecal_j}
    Let $\rho$ and $W\in \tilde{\Wcal}_{\rho,j}$ be arbitrary. 
    Then, for all $n_1,n_2\in N_1^+$ with $g^\g n_ix$, $i=1,2$, we have that $V(g_t n_1x) \asymp V(g_t n_2 x)$, uniformly over all $\rho, W$ and $0\leq t\leq (1+2\a)\g(w+jT_0)$.
\end{lem}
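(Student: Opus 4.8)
The plan is to compare the orbit points $g_t n_1 x$ and $g_t n_2 x$ directly: since $n_1$ and $n_2$ differ by an element of $N^+$ which, after being pushed forward by $g_t$, stays of small size throughout the relevant range of $t$, the two heights $V(g_t n_i x)$ will be comparable by the log-Lipschitz property of $V$. First I would unwind the hypothesis. Writing $\tilde n_i := \Ad(g^\g)(n_i)$, the assumption (that $g^\g n_i x$ lands in the disk $W$, i.e.~$\tilde n_i\in W$ and hence $\tilde n_i x_j\in B_\rho$) forces $d_{N^+}(\tilde n_1,\tilde n_2)\ll\iota_b$, because $W$ is contained in a single flow box $B_\rho$ whose extent in the $N^+$ direction is $N^+_{\iota_b}$. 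Since $n_i=\Ad(g_{-\g(w+jT_0)})(\tilde n_i)$ and $\Ad(g_{-s})$ scales the Cygan metric by the factor $e^{-s}$ (cf.~\eqref{eq:scaling of Carnot metric}), this gives $d_{N^+}(n_1,n_2)\ll e^{-\g(w+jT_0)}\iota_b$.

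Next I would set $\s:=n_2n_1^{-1}\in N^+$, so that $g_t n_2 x=\Ad(g_t)(\s)\cdot g_t n_1 x$ for every $t$. By right-invariance of $d_{N^+}$ and the scaling property again, $d_{N^+}(\Ad(g_t)(\s),\id)=e^{t}\,d_{N^+}(n_1,n_2)\ll e^{t-\g(w+jT_0)}\iota_b$. For $0\le t\le(1+2\a)\g(w+jT_0)$ the exponent is at most $2\a\g(w+jT_0)$, and a direct computation combining $jT_0\le 2m/a$ from~\eqref{eq:j small}, $\a\le a/40$ from~\eqref{eq:a and alpha}, $m=\lceil\log b\rceil$, and $\iota_b=b^{-2/3}$ shows that $e^{2\a\g(w+jT_0)}\iota_b\ll b^{-1/3}\le 1$ once $b$ is large. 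Consequently $\Ad(g_t)(\s)$ lies in a fixed compact neighborhood of the identity in $N^+$, uniformly over $\rho$, $W$, and $t$ in the stated range, so its operator norm on $W$ and that of its inverse are $\ll 1$.

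Finally, Proposition~\ref{prop:height function properties}\eqref{item:log Lipschitz} applied with $g=\Ad(g_t)(\s)$ gives $\norm{g}^{-\b}\,V(g_t n_1 x)\le V(g_t n_2 x)\le\norm{g^{-1}}^{\b}\,V(g_t n_1 x)$, and the uniform bound on $\norm{g^{\pm1}}$ yields $V(g_t n_1 x)\asymp V(g_t n_2 x)$ with implied constants depending only on $\G$. The only non-routine point --- and hence the main obstacle --- is the parameter bookkeeping ensuring that $e^{2\a\g(w+jT_0)}\iota_b$ stays bounded; this is exactly where the smallness of $\a$ relative to $a$ (together with the choice $\iota_b=b^{-2/3}$ and the bound $jT_0\le 2m/a$) is used, and once it is in place the rest is standard manipulation with the Cygan metric and the log-Lipschitz estimate.
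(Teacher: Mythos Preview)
Your proof is correct and follows essentially the same approach as the paper: bound $d_{N^+}(n_1,n_2)\ll e^{-\g(w+jT_0)}\iota_b$ from the width of $B_\rho$, push forward by $\Ad(g_t)$ using the scaling of the Cygan metric, check via the parameter constraints~\eqref{eq:j small} and~\eqref{eq:a and alpha} that the resulting element stays in a bounded neighborhood of identity, and conclude by the log-Lipschitz property of $V$. The only differences are cosmetic (you use $\s=n_2n_1^{-1}$ where the paper uses $n_1n_2^{-1}$, and you record the coarser bound $b^{-1/3}$ rather than $b^{-1/2}$).
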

\begin{proof}
    Since $B_\rho$ has width $\iota_b$ along $N^+$, we have $d_{N^+}(n_1,n_2) \ll e^{-\g(w+jT_0)} \iota_b$.
    Hence, letting $n^t = g_t n_1 n_2^{-1} g_{-t}$, we get $d_{N^+}(n^t,\id)\ll e^{2\a \g(w+jT_0)}\iota_b$, for all $t \leq (1+2\a)\g(w+jT_0)$.
    By~\eqref{eq:j small} and~\eqref{eq:a and alpha},  $d_{N^+}(n^t,\id)\ll_{T_0} b^{-1/2}\ll 1$. 
    The lemma follows from Prop.~\ref{prop:height function properties} since $g_t n_1x = n^t g_tn_2x$. 
\end{proof}

 Given $n\in N_1^+$, let $\rho(n)$ be a flow box index satisfying $g^\g n x\in B_{\rho(n)}$. We also let $W(n)\in \tilde{\Wcal}_{\rho(n),j} $ be such that $g^\g nx \in W(n)\subset B_{\rho(n)}$.
With this notation, we define $\Ecal_j$ as follows:
\begin{align}\label{eq:Ecal_j}
    \Ecal_j := \set{n\in N_1^+: \text{ there is } n'\in\tilde{\Ecal}_j \text{ such that } W(n) = W(n')}.
\end{align}
By Lemma~\ref{lem:saturate tilde Ecal_j}, there is a uniform constant $C\geq 1$ such that
\begin{align*}
    \Ecal_j \subseteq \set{n\in N_1^+: \int_0^{(1+2\a)\g(w+ jT_0)} \chi_{H/C}(g_tnx)\;dt  >2\a\g(w+ jT_0)  }.
\end{align*}
Hence, Theorem~\ref{thm:exp recurrence} shows that $\muxu(\Ecal_j)$ is $ O(e^{-(\b\th -\e_1)(1+2\a)\g(w+ jT_0)} V(x) \mu_x^u(N_1^+))$, where $\th = 2\a/(1+2\a)$.
Here, we used Prop.~\ref{prop:height function properties} to deduce this continuous time version of Theorem~\ref{thm:exp recurrence} from its discrete time formulation.
Moreover, recalling that $\g=1/2$, we obtain by~\eqref{eq:V(x)} that $V(x)\leq e^{\b\a\g jT_0}$.
Since $\e_1=\b\a/2(1+2\a)$, these bounds thus yield
\begin{align}\label{eq:measure of Ecal_j in Dolgopyat}
    \mu_x^u(\Ecal_j) \ll e^{-\b\a\g(w+ jT_0)/2} \mu_x^u(N_1^+).
\end{align}

Next, we let $\Wcal_{\rho,j}$ denote the connected components that avoid $\Ecal_j$. More precisely, let
\begin{align}\label{eq:Wcal_rho,j}
    \Wcal_{\rho,j}:= \set{W\in \tilde{\Wcal}_{\rho,j}: W \neq W(n) \text{ for any } n\in \Ecal_j }.
\end{align}
We now restrict the sum in~\eqref{eq:localize space} to the subsets $\Wcal_{\rho,j}$.
Reversing the change of variables in~\eqref{eq:localize space} and using the fact that our test functions have $C^0$-norm at most $1$, we get
\begin{align}\label{eq:split into Ecal_j and complement}
    \eqref{eq:localize space}
    = e^{-\d\g(w+jT_0) } 
    \sum_{\rho\in \Pcal, W\in \Wcal_{\rho,j}}
    \int_{n\in W}  \vp_j(n) 
    \rho(nx_j)  F_\g(g_tn x_j)\; d\mu_{x_j}^u 
     + O\left(\int_{\Ecal_j}  |\Lcal_{t}F(g_j^wnx)| \;d\mu^u_x \right).
\end{align}

To estimate the integral on the right side, we argue as before using Proposition~\ref{prop:upper estimate on mollifiers}, to get that $ |\Lcal_t  F(g_j^wnx)|$ is at most $O(e^\star_{1,0}(f) \Lcal_{t}V(g_j^wnx))$.
Then, Cauchy-Schwarz gives
\begin{align*}
    \left|\int_{\Ecal_j} \Lcal_tV(g_j^wnx)\;d\mu_x^u\right|^2
    \leq \mu_x^u(\Ecal_j)\times \int_{N_1^+}\Lcal_tV^2(g_j^wnx)\;d\mu_x^u.
\end{align*}
Applying Theorem~\ref{thm:Margulis function} on integrability of $V^2$ and~\eqref{eq:measure of Ecal_j in Dolgopyat}, we obtain
\begin{align*}
    \int_{\Ecal_j}  |\Lcal_{t}F(g_j^wnx)| \;d\mu^u_x 
    \ll e^{-\b\a\g(w+ jT_0)/4} e^\star_{1,0}(f)V(x)\mu_x^u(N_1^+).
\end{align*}

Our next step is to restrict the sum in~\eqref{eq:split into Ecal_j and complement} to the recurrent boxes $\Pcal_b$ defined in~\eqref{eq:recurrent boxes} using a similar argument.
Let
\begin{align}\label{eq:tilde Ecal_b}
    \tilde{\Ecal}_b = \set{n\in N_1^+: g^\g nx \notin \bigcup_{\rho \in \Pcal_b} B_\rho} ,
\end{align}
and define its saturation $\Ecal_b$ analogously to~\eqref{eq:Ecal_j}.
Then, by definition of $\Pcal_b$ and a similar argument to Lemma~\ref{lem:saturate tilde Ecal_j}, we can find a uniform constant $C\geq 1$ so that $\Ecal_b$ is contained in the set of $n\in N_1^+$ so that $V(g_{-\log \iota_b}g^\g nx) > e^{2\b\a jT_0}/C $. 
Theorem~\ref{thm:Margulis function} then gives that $\Ecal_b$ has measure $O( e^{-2\b\a jT_0} V(x)\mu_x^u(N_1^+) )$.
Hence, splitting the sum in~\eqref{eq:split into Ecal_j and complement} into sum over $\Pcal_b$ and $\Pcal\setminus \Pcal_b$, and recalling that $w\leq 2T_0$, we get
\begin{align}\label{eq:split into Ecal_b and complement}
    \eqref{eq:localize space} =
    e^{-\d\g(w+jT_0) } 
    \sum_{\rho\in \Pcal_b, W\in \Wcal_{\rho,j}}
    \int_{ W}  \vp_j(n) 
        \rho(nx_j)  F_\g(g_tn x_j)\; d\mu_{x_j}^u 
    + O(e^{-\b\a\g(w+ jT_0)/4} V(x) \mu_x^u(N_1^+)).
\end{align}
As in~\eqref{eq:sum over j}, summing over $j$, we obtain an error term of the form
 \begin{align}\label{eq:divergent contribution}
     O\left(\frac{e^\star_{1,0}(f)}{(a+\b\a/4)^m}\right).
 \end{align} 
The remainder of the section, is dedicated to estimating the sum on the right side of~\eqref{eq:split into Ecal_b and complement}.

\subsection*{Centering the integrals}
It will be convenient to center all the integrals in~\eqref{eq:localize space} so that their basepoints belong to the transversals $T_\rho$ of the respective flow box $B_\rho$; cf.~\eqref{eq:box notation}.

Let $\gls{Irhoj}$ denote an index set for $\Wcal_{\rho,j}$.
For $W\in \Wcal_{\rho,j}$ with index $\ell\in I_{\rho,j}$, let $n_{\rho,\ell}\in W$, $m_{\rho,\ell}\in M_\rho$, $n_{\rho,\ell}^-\in N^-_\rho$, and $t_{\rho,\ell}\in(-\iota_j,\iota_j)$ be such that
\begin{equation}\label{eq:centers}
    \gls{xrhoell} := m_{\rho,\ell}g_{-t_{\rho,\ell}}n_{\rho,\ell}\cdot x_j = n_{\rho,\ell}^- \cdot y_\rho \in T_\rho.
\end{equation}
Arguing as in the proof of Lemma~\ref{lem:supp of zeta_j}, since $x$ belongs to $N_1^-\Omega$, we have that
\begin{equation}\label{eq:x_rho,ell in omega}
    x_{\rho,\ell}\in N_1^-\Omega.
\end{equation}
Moreover, if we let $u_\ell= \Ad((g^\g)^{-1})(n_{\rho,\ell}) \in N^+_1(j)$, then
in light of the fact that the components $W\in \Wcal_{\rho,j}$ correspond to recurrent orbits, cf.~\eqref{eq:Wcal_rho,j} for a precise definition, we may and will assume that there is $\gls{sell}>0$ such that 
\begin{align}\label{eq:return time of x_rho,ell}
    \g(w+jT_0)\leq  s_{\rho,\ell} 
    \leq (1+2\a)\g(w+jT_0) , \qquad
    V(g_{s_{\rho,\ell}}u_\ell x) \ll_{T_0} 1.
\end{align}

\subsection*{Regularity of test functions}
For each such $\ell$ and $W$, let $\gls{Well}=\Ad(m_{\rho,\ell}g_{t_{\rho,\ell}})(Wn_{\rho,\ell}^{-1})$ and
\begin{equation}\label{eq:phi tilde}
    \gls{phitilde}(t,n) := p_{j,w}(t-t_{\rho,\ell})\cdot
         e^{z t_{\rho,\ell}}\cdot \phi( \Ad(m_{\rho,\ell} g^\g g_{ -t_{\rho,\ell}})^{-1}(n n_{\rho,\ell}))
         \cdot \rho(g_{t_{\rho,\ell}} nx_{\rho,\ell}).
\end{equation}  
Note that $\widetilde{\phi}_{\rho,\ell}$ has bounded support in the $t$ direction and~\eqref{eq:norm of rho at most b} implies
\begin{equation} \label{eq:phi tilde is bounded}
    \norm{\widetilde{\phi}_{\rho,\ell}}_{C^0(\R\times N^+)}\ll 1,
    \qquad \norm{\widetilde{\phi}_{\rho,\ell}(t,\cdot)}_{C^{0,1}( N^+)} \ll \iota_b^{-1},
\end{equation}
for all $t\in\R$.
Moreover, recalling~\eqref{eq:time partition}, we see that
\begin{equation}\label{eq:phi tilde Lipschitz}
    \norm{\widetilde{\phi}_{\rho,\ell}}_{C^{0,1}(\R\times N^+)} \ll \iota_b^{-1} m.
\end{equation}
Integrating the main term in~\eqref{eq:split into Ecal_b and complement} in $t$, and changing variables using~\eqref{eq:g_t equivariance} and~\eqref{eq:N equivariance}, we get
\begin{align}\label{eq:center integrals on transversal}
         e^{-\d\g(w+jT_0) }  \int_\R
         &e^{-zt}
         p_{j,w}(t) \sum_{\rho\in \Pcal_b, W\in \Wcal_{\rho,j}}
          \int_{n\in W}  
          \vp_j( n  )
         \rho(nx_j) F_\g(g_tn x_j) \;d\mu_{x_j}^u(n) dt
         \nonumber\\
         &=e^{-\d\g(w+jT_0) }  
         \sum_{\rho\in \Pcal_b} 
         \sum_{\ell\in I_{\rho,j}} 
         \int_\R  e^{-zt}
           \int_{n\in W_\ell}  
          \widetilde{\phi}_{\rho,\ell}(t,n)
           F_\g(g_{t+t_{\rho,\ell}} n x_{\rho,\ell}) \;d\mu_{x_{\rho,\ell}}^u(n)dt,
\end{align}  
where we also used $M$-invariance of $F_\g$; cf.~Remark~\ref{remark:M invariance}.

\subsection*{Mass estimates}
   
We record here certain counting estimates which will allow us to sum error terms in later estimates over $\Pcal$.
Note that by definition of $N_1^+(j)$, we have $\bigcup_{\rho\in\Pcal,W\in \Wcal_{\rho,j}}W\subseteq \Acal_j$. 
Thus, using the log-Lipschitz and contraction properties of $V$, it follows that
\begin{align}\label{eq:total mass}
    \sum_{\rho\in\Pcal,\ell\in I_{\rho,j}}
    \mu_{ x_{\rho,\ell}}^u(W_\ell)V( x_{\rho,\ell})
    & \ll \int_{\Acal_j}V(nx_j)\;d \mu_{x_j}^u(n)
    \nonumber\\
    &=
    e^{\d\g(w+jT_0)} \int_{N_1^+(j)}V(g_j^wnx)\;d \mu_{ x}^u(n)
    \ll  e^{\d\g(w+jT_0)} \mu_{ x}^u(N_1^+)V(x), 
\end{align}
where we used that $|t_{\rho,\ell}|<1$ and the last inequality follows by Proposition~\ref{prop:doubling} since $N_1^+(j)\subseteq N_2^+$.
We also used the uniformly bounded multiplicity of the partition of unity $\Pcal$.

We also need the following weighted number of flow boxes parametrized by $\Pcal$.
\begin{lem}\label{lem:weighted number of boxes}
 Recall that $\iota_b=b^{-2/3}$. Then, we have
 \begin{align*}
     \sum_{ \rho\in \Pcal } \mu^u_{y_\rho}(N^+_{\iota_b}) \ll e^{O_\b(\a jT_0)}.
 \end{align*} 
\end{lem}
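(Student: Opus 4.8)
The plan is to compare each summand $\mu^u_{y_\rho}(N^+_{\iota_b})$ with the Bowen--Margulis--Sullivan mass of its flow box $B_\rho$, and then to use the bounded multiplicity of the cover $\Bcal$ (Lemma~\ref{lem:count flow boxes}) together with the finiteness of $\bms$. Recall from~\eqref{eq:box notation} that $B_\rho=P^-_\rho N^+_\rho\cdot y_\rho$, where $P^-_\rho$ is a ball of radius $\asymp\iota_j$ in $P^-=MAN^-$, where $N^+_\rho=N^+_{\iota_b}$, and where $y_\rho\in B_\rho\cap N^-_{1/2}\Omega$. In any such box the measure $\bms$ on $X$ is, by~\eqref{eq:BMS} and the definition of the leafwise measures, the product of the unstable conditional $\mu^u_{y_\rho}$ along $N^+$, the leafwise measure $\mu^-_{y_\rho}$ of $\bms$ along $N^-$-orbits, Lebesgue along $A$, and Haar along $M$; since $B_\rho$ has diameter $\ll 1$, the Busemann cocycle factors in~\eqref{eq:BMS} vary by a uniformly bounded amount over $B_\rho$, and therefore I would first establish
\[
    \bms(B_\rho)\;\gg\;\iota_j^{1+\dim M}\cdot\mu^-_{y_\rho}(N^-_{\iota_j})\cdot\mu^u_{y_\rho}(N^+_{\iota_b}).
\]

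The next step is to bound $\iota_j^{-1}$ and $1/\mu^-_{y_\rho}(N^-_{\iota_j})$ from above, uniformly over $\rho\in\Pcal$. The first is exactly~\eqref{eq:bound iota_j}: $\iota_j^{-1}\ll e^{O_\b(\a jT_0)}$. For the second, since each $B_\rho\in\Bcal$ is built from a box centered at a point of $K_j\cap N^-_1\Omega$, the log-Lipschitz property of $V$ (Proposition~\ref{prop:height function properties}) gives $V(y_\rho)\ll e^{(2\b\a j+3\b)T_0}$, whence $\dist(y_\rho,o)\ll_\b\a jT_0$ by Lemma~\ref{lem:ht vs dist}. As $y_\rho\in N^-_{1/2}\Omega$ has backward endpoint in $\L$, the measure $\mu^-_{y_\rho}$ is nondegenerate near the identity, and I would apply the lower bound in the global measure formula (Theorem~\ref{thm:global measure formula}) to the reversed flow $g_{-t}$---for which $N^-$ is the expanding horospherical subgroup---together with the change of variables from $\mu^-_{y_\rho}(N^-_{\iota_j})$ to a shadow mass, carried out exactly as in the last paragraph of the proof of Proposition~\ref{prop:doubling} to absorb the fact that $y_\rho$ lies slightly off $\Omega$. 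This yields a lower bound of the form $e^{-\d s}\,e^{d(s)(k(s)-\d)}$ times $e^{-O(\dist(y_\rho,o))}$, with $s\asymp|\log\iota_j|+\dist(y_\rho,o)$, $d(s)\leq s$, and $k(s)$ a cusp rank bounded by a constant depending only on $\G$; since also $|\log\iota_j|\ll_\b\a jT_0$ by~\eqref{eq:bound iota_j}, every exponent here is $O_\b(\a jT_0)$, so $\mu^-_{y_\rho}(N^-_{\iota_j})\gg e^{-O_\b(\a jT_0)}$. Combining the three bounds gives $\mu^u_{y_\rho}(N^+_{\iota_b})\ll e^{O_\b(\a jT_0)}\bms(B_\rho)$ for each $\rho$, and summing over $\Pcal$ and using $\sum_{\rho\in\Pcal}\bms(B_\rho)\ll\bms(X)\ll 1$---which is where bounded multiplicity and finiteness of $\bms$ enter---completes the proof. (Any $e^{O_\b(T_0)}$ factors appearing en route are harmless, since $T_0$ is ultimately a constant depending only on $\G$ and is therefore absorbed into the implicit constant.)

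The step I expect to be the main obstacle is the uniform lower bound $\mu^-_{y_\rho}(N^-_{\iota_j})\gg e^{-O_\b(\a jT_0)}$: one must track simultaneously three quantities that each vary exponentially in $\a jT_0$---the cusp depth of $y_\rho$, the radius $\iota_j$, and the cusp-rank correction $k(s)-\d$ in the global measure formula---and verify that the net exponential loss remains $e^{O_\b(\a jT_0)}$ and does not deteriorate to something like $e^{O_\b(jT_0)}$. The fact that $y_\rho$ is only near, rather than in, $\Omega$ is a secondary technical point, handled by the same Lipschitz--Jacobian argument appearing at the end of the proof of Proposition~\ref{prop:doubling}.
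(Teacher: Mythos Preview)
Your proposal is correct and takes essentially the same approach as the paper: local product structure of $\bms$ on the flow boxes gives $\bms(B_\rho)\asymp\iota_j^{1+\dim M}\mu^s_{y_\rho}(N^-_{\iota_j})\,\mu^u_{y_\rho}(N^+_{\iota_b})$, bounded multiplicity of $\Bcal$ gives $\sum_\rho\bms(B_\rho)\ll 1$, and the remaining factors are controlled using~\eqref{eq:bound iota_j} and Lemma~\ref{lem:ht vs dist}. The only difference is that the paper dispatches the stable-conditional lower bound in one line---writing $\mu^s_{y_\rho}(N^-_{\iota_j})\gg e^{-\d\,\dist(o,y_\rho)}$ directly from the definition of $\mu^s$---whereas you route through the global measure formula and track the cusp-rank correction explicitly; your more careful route is equally valid and makes transparent exactly the point you flag as the main obstacle.
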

\begin{proof}
Recall the Bowen-Margulis-Sullivan measure defined below~\eqref{eq:BMS}, and its conditional measures $\mu^s_\bullet$ along orbits of $N^-$ defined analogously to~\eqref{eq:unstable conditionals}.
Recall further that each $B_\rho$ is of the form $P^-_\rho N^+_\rho\cdot y_\rho$, where $P^-\rho$ and $N^+_\rho$ are identity neighborhoods of radius $\asymp \iota_j$ and $\asymp \iota_b$ respectively.
Bounded multiplicity of $\Pcal$ implies that $\sum_\rho \bms(B_\rho)\ll 1$.
Hence, the local product structure of $\bms$ implies that 
\begin{align*}
\bms(B_\rho)\asymp \iota_j^{\dim M +1} \mu^u_{y_\rho}(N^+_{\iota_b}) \mu^s_{y_\rho}(N^-_{\iota_j}),    
\end{align*}
where $M$ is the centralizer of the geodesic flow inside the maximal compact group $K$, and $\mu^s_\bullet$ are the conditional measures along $N^-$-orbits defined similarly to~\eqref{eq:unstable conditionals}.
This estimate implicitly uses the uniform doubling property from Prop.~\ref{prop:doubling} to assert that the ratio of the measures of all local strong (un)stable disks inside $B_\rho$ is uniformly $O(1)$.
Finally, by definition of $\mu^s_{y_\rho}$, we have that $\mu^s_{y_\rho}(N^-_{\iota_j}) \gg e^{-\d \dist(o,y_\rho)}$. 
By Lemma~\ref{lem:ht vs dist}, we have that $e^{\dist(y_\rho,o)} \ll V(y_\rho)^{O_\b(1)} \ll e^{O_\b(\a jT_0)}$, since $y_\rho $ belongs to the unit neighborhood of the set $K_j$ defined in~\eqref{eq:K_j and iota_j and iota_b}. The lemma follows by combining the above estimates with~\eqref{eq:bound iota_j}.
\end{proof}

\begin{remark}
The proof of Lemma~\ref{lem:weighted number of boxes} shows that the sum in question is $O_\G(1)$ when $\G$ is convex cocompact. The point of the above lemma is that this sum has, in general, much fewer terms than the sum in~\eqref{eq:total mass}. 
\end{remark}

\subsection*{Stable holonomy} Fix some $\rho\in \Pcal_b$.
Recall the points $y_{\rho}\in T_\rho$ and $n^-_{\rho,\ell}\in N^-_\rho$ satisfying~\eqref{eq:centers}.
The product map $M\times N^-\times A \times N^+\to G$ is a diffeomorphism on a ball of radius $1$ around identity; cf.~Section~\ref{sec:holonomy}. 
Hence, given $\ell\in I_{\rho,j}$, we can define maps $\tilde{u}_\ell$, $\tilde{\t}_\ell$, $m_\ell$ and $\tilde{u}^-_\ell$ from $ W_\ell$ to $N^+$, $\R$, $M$ and $N^-$ respectively by the following formula
\begin{equation}\label{eq:stable hol commutation}
    g_{t+t_{\rho,\ell}} n n_{\rho,\ell}^- =
    g_{t+t_{\rho,\ell}} m_\ell(n)\tilde{u}^-_\ell(n) g_{\tilde{\t}_\ell(n)} \tilde{u}_\ell(n)
    =m_\ell(n)\tilde{u}^-_\ell(t,n) g_{t+t_{\rho,\ell}+\tilde{\t}_\ell(n)} \tilde{u}_\ell(n),
\end{equation}
where we set $\tilde{u}^-_\ell(t,n)=\Ad(g_{t+t_{\rho,\ell}})(\tilde{u}^-_\ell(n))$. We define the following change of variable map:
\begin{align}\label{eq:stable hol map}
    \Phi_\ell : \R\times W_\ell\to \R\times N^+,
\qquad \Phi_\ell(t,n)=(t+\tilde{\t}_\ell(n),\tilde{u}_\ell(n)).
\end{align}
We suppress the dependence on $\rho$ and $j$ to ease notation.
Then, $\Phi_\ell$ induces a map between the weak unstable manifolds of $x_{\rho,\ell}$ and $y_\rho$, also denoted $\Phi_\ell$, and defined by
\begin{equation*}
    \Phi_\ell( g_t nx_{\rho,\ell}) = g_{t+\tilde{\t}_\ell(n)} \tilde{u}_\ell(n)y_\rho.
\end{equation*}
In particular, this induced map coincides with the local strong stable holonomy map inside $B_\rho$.

Note that we can find a neighborhood $\gls{Wrho}\subset N^+$ of identity of radius $\asymp \iota_b$ such that
\begin{equation}\label{eq:W_rho}
    \Phi_\ell(\R\times W_\ell) \subseteq \R\times W_\rho,
\end{equation}
for all $\ell\in I_{\rho,j}$.
Moreover, we may assume that $b$ is large enough (and hence $\iota_b$ is small enough), depending only on $G$, so that all the maps $\Phi_\ell$ in~\eqref{eq:stable hol map} are invertible on $\R\times W_{\rho}$.
Hence, we can define the following:
\begin{align}\label{eq:def of tau_ell}
    \gls{tauell}(n) &= \tilde{\t}_\ell( \tilde{u}_\ell^{-1}(n))+t_{\rho,\ell}\in \R,
    \qquad u^-_\ell(t,n) = \tilde{u}_\ell^-(t-\t_\ell(n),\tilde{u}_\ell^{-1}(n)) \in N^-,
    \nonumber\\
     \gls{phirhoell}(t,n) &= e^{-a(t-\t_\ell(n))}\times J\Phi_\ell(n)\times  
    \widetilde{\phi}_{\rho,\ell}(t-\t_\ell(n), \tilde{u}_\ell^{-1}(n)), 
\end{align}
and $\gls{Jphiell}$ denotes the Jacobian of the change of variable $\Phi_\ell$; cf.~\eqref{eq:stable equivariance}.

Changing variables and using $M$-invariance of $F_\g$, we obtain
\begin{align}\label{eq:stable hol}
    \eqref{eq:center integrals on transversal}
    = e^{-\d\g(w+jT_0) }  
         \sum_{\rho\in \Pcal_b} 
        \sum_{\ell\in I_{\rho,j}}
         \int_\R  \int_{W_\rho}
         e^{-ib(t-\t_\ell(n))}
         \phi_{\rho,\ell}(t,n)
    F_\g(u^-_\ell(t,n)g_{t} ny_{\rho})    \;d\mu_{y_{\rho}}^u(n)dt.
\end{align}

\subsection*{Stable derivatives}
Our next step is to remove $F_\g$ from the sum over $\ell$ in~\eqref{eq:stable hol}.
Due to non-joint integrability of the stable and unstable foliations, our estimate involves a derivative of $f$ in the flow direction.
In particular, in view of the way we obtain contraction in the norm of flow derivatives in Lemma~\ref{lem: flow by parts}, this step is the most ``expensive" estimate in our argument.
In essence, all the prior setup was aimed at optimizing the gain in this step.

Recall the definition of $F_\g$ in~\eqref{eq:F_gamma}.
Since $y_\rho$ belongs to $N_{1/2}^-\Omega$ and $u^-_\ell(t,n)$ belongs to a neighborhood of identity in $N^-$ of radius $O( \iota_j)$, uniformly over $(t,n)$ in the support of our integrals, Proposition~\ref{prop:stable derivatives of mollifiers} yields
\begin{align}\label{eq:mollifier derivative}
    |F_\g(u^-_\ell(t,n)g_{t} ny_{\rho}) - F_\g(g_{t} ny_{\rho})|
    \ll  e^{-(1-\g)(w+jT_0)} \norm{f}^\star_1
     V(y_{\rho}),
\end{align}
where we implicitly used the fact that $W_\rho \subset N_1^+$ and $|t|\leq 1$ so that $V(g_tny_\rho)\ll V(y_\rho)$. 
Indeed, the additional gain is due to the fact that $g_s$ contracts $N^-$ by at least $e^{-s}$ for all $s\geq 0$.

To sum the above errors over $\ell$ and $\rho$, we wish to use~\eqref{eq:total mass}. We first note that Propositions~\ref{prop:doubling} and~\ref{prop:height function properties} allow us to use closeness of $y_\rho$ and $x_{\rho,\ell}$ along with regularity of holonomy to deduce that 
\begin{equation}\label{eq:the x rho ells are comparable}
    V(y_\rho)  \mu^u_{y_\rho}(W_\rho)\asymp V(x_{\rho,\ell})  \mu^u_{x_{\rho,\ell}}(W_\ell).
\end{equation}
Here, we also use the fact that both $x_{\rho,\ell}$ and $y_\rho$ belong to $N_1^-\Omega$; cf.~\eqref{eq:x_rho,ell in omega}. 
Hence, we can use~\eqref{eq:total mass} to estimate the sum of the errors in~\eqref{eq:mollifier derivative}
yielding
\begin{align*}
    \eqref{eq:stable hol} 
    &= e^{-\d\g(w+jT_0) }  
    \sum_{\rho\in \Pcal_b} \sum_{\ell\in I_{\rho,j}}
         \int_\R  \int_{W_\rho}
        \Bigg(\sum_{\ell\in I_{\rho,j}}
        e^{-ib(t-\t_\ell(n))}
         \phi_{\rho,\ell}(t,n)\Bigg)
        F_\g(g_{t} ny_{\rho})    \;d\mu^u_{y_\rho} dt
    \nonumber\\
        &+ O\left(e^{-(1-\g)(w+jT_0)} \norm{ f}^\star_1
     \mu^u_{x}(N^+_{1}) V(x)\right),
\end{align*}
where we used that the above integrands have uniformly bounded support in the $\R$ direction, independently of $\ell$ (and $\rho$).
Indeed, this boundedness follows from that of the partition of unity $p_j$; cf.~\eqref{eq:time partition}.
We also used~\eqref{eq:phi tilde is bounded} to bound the $C^0$ norm of $\phi_{\rho,\ell}$.
Summing over $j$ and $w$ using~\eqref{eq:sum over j}, and recalling that $\g=1/2$, we obtain
\begin{align*}
     O_{T_0}\left(\frac{\norm{ f}^\star_1
     \mu^u_{x}(N^+_{1}) V(x) }{ (a+1/2)^m}\right).
\end{align*}

Recall the norm $\norm{\cdot}^\star_{1,B}$ defined in~\eqref{eq:norm star b} and note that $\norm{\cdot}^\star_1\leq B\norm{\cdot}^\star_{1,B}$.
Choosing $a$ and $\gls{varkappa}>0$ small enough, we can ensure that $e^{1+\varkappa}/(a+1/2)$ is at most
$1/(a+1/6)$.
With this choice, taking $B=b^{1+\varkappa}$ yields an error term of the form:
\begin{align}\label{eq:stable derivative}
     O\left(\frac{ \norm{ f}^\star_{1,B}
     \mu^u_{x}(N^+_{1}) V(x)}{ (a+1/6)^{m}}\right).
\end{align}

\subsection*{Mollifiers and Cauchy-Schwarz}
\label{sec:CauchySchwarz}
We are left with estimating integrals of the form:
\begin{align}\label{eq:pre-CauchySchwarz}
       \int_{\R\times W_\rho} \Psi_{\rho}(t,n)
    F_\g(g_{t} ny_{\rho})   \;d\mu^u_{y_\rho} dt,
    \qquad
    \Psi_{\rho}(t,n):=\sum_{\ell\in I_{\rho,j}} e^{-ib(t-\t_\ell(n))}
         \phi_{\rho,\ell}(t,n). 
\end{align}

We begin by giving an apriori bound on $\Psi_\rho$.
Denote by $\gls{Jrho}\subset\R$ the bounded support of the integrand in $t$ coordinate of the above integrals.
Note that~\eqref{eq:phi tilde is bounded} and the fact that $|t|\ll 1$ imply
 \begin{align}\label{eq:bound phi without tilde}
     \norm{\phi_{\rho,\ell}}_{L^\infty(J_\rho\times W_\rho)}
     \ll  1, \qquad \norm{\Psi_\rho}_{L^\infty(J_\rho\times W_\rho)} \ll \# I_{\rho,j}.
 \end{align}

To simplify notation, we let 
\begin{equation*}
    \gls{r}=(1-\g)(w+jT_0).
\end{equation*}
Note that we have that $y_\rho\in N_1^-\Omega$, $|J_\rho|\ll 1$, and $r\geq 1$.
Hence, Proposition~\ref{prop:upper estimate on mollifiers}, along with~\eqref{eq:mollifier is at most f}, the definition of $F_\g$ in~\eqref{eq:F_gamma} and the Cauchy-Schwarz inequality, yield
\begin{align*}
    \left|\int_{\R\times W_\rho} \Psi_\rho(t,n) F_\g(g_tny_\rho)\;d\mu^u_{y_\rho} dt\right|^2
   \ll 
       e^\star_{1,0}(f)^2 
     \int_{J_\rho\times W_\rho} |\Psi_\rho(t,n)|^2 \;d\mu^u_{y_\rho} dt
     \int_{W_\rho} V^2(g_rny_\rho)\;d\mu_{y_\rho}^u.
\end{align*}
The following lemma estimates the integral of $V^2$ on the right side of the above inequality.
\begin{lem}\label{lem:Margulis ineq on W_rho}
    We have the bound $\int_{W_\rho} V^2(g_rny_\rho)\;d\mu_{y_\rho}^u \ll_{T_0} e^{4\b\a jT_0} \murho(W_\rho)$.
\end{lem}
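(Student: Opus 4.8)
\textbf{Proof plan for Lemma~\ref{lem:Margulis ineq on W_rho}.}
The goal is to bound $\int_{W_\rho} V^2(g_r n y_\rho)\,d\mu_{y_\rho}^u$ where $r=(1-\g)(w+jT_0)=\g(w+jT_0)$ and $W_\rho\subset N^+$ is a ball of radius $\asymp\iota_b$ around identity. The plan is to reduce this to the contraction estimate in Theorem~\ref{thm:Margulis function} applied at $y_\rho$ (using the convention $V^2$ in place of $V$, valid by Remark~\ref{remark:choice of V}). The obstruction to applying Theorem~\ref{thm:Margulis function} directly is that it is stated as an average over the \emph{full} ball $N_1^+$, whereas here we integrate only over the small ball $W_\rho$; but since $W_\rho\subset N_1^+$ and $V$ is non-negative, the integral over $W_\rho$ is dominated by the integral over $N_1^+$, so
\begin{align*}
    \int_{W_\rho} V^2(g_r n y_\rho)\,d\mu_{y_\rho}^u
    \leq \int_{N_1^+} V^2(g_r n y_\rho)\,d\mu_{y_\rho}^u
    \ll \left(e^{-2\b r} V^2(y_\rho) + 1\right)\mu_{y_\rho}^u(N_1^+),
\end{align*}
where the last step is Theorem~\ref{thm:Margulis function} (with $V^2$).

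Next I would compare $\mu_{y_\rho}^u(N_1^+)$ with $\mu_{y_\rho}^u(W_\rho)$. Since $y_\rho\in N_{1/2}^-\Omega$ and $W_\rho$ has radius $\asymp\iota_b=b^{-2/3}$, this ratio is governed by the decay part of Proposition~\ref{prop:doubling}: $\mu_{y_\rho}^u(N_1^+)\ll \iota_b^{-\Delta_+}\mu_{y_\rho}^u(W_\rho)$, up to the uniform constants there, so a factor of $b^{O(1)}$ is lost. This is where the polynomial-in-$b$ slack enters; it will be absorbed later into the $b^{1+\varkappa}$ in the statement of Theorem~\ref{thm:Dolgopyat} via the norm $\norm{\cdot}_{1,B}^\star$, exactly as in~\eqref{eq:stable derivative}. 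The remaining task is to bound $V^2(y_\rho)$: since $\rho\in\Pcal_b$, by~\eqref{eq:recurrent boxes} the flow box $B_\rho$ returns to $K_j$ at time $-\log\iota_b=\log b^{2/3}$, i.e.\ there is a point $y'$ with $g_{-\log\iota_b}y'\in K_j$ and $y'\in B_\rho$ close to $y_\rho$; using the log-Lipschitz property of $V$ (Proposition~\ref{prop:height function properties}\eqref{item:log Lipschitz}) together with $V\leq e^{(2\b\a j+3\b)T_0}$ on $K_j$ (definition~\eqref{eq:K_j and iota_j and iota_b}) and the fact that flowing by $\log\iota_b$ changes $V$ by at most a factor $\iota_b^{-O_\b(1)}=b^{O_\b(1)}$, one gets $V(y_\rho)\ll b^{O_\b(1)} e^{(2\b\a j+3\b)T_0}\ll_{T_0} e^{O_\b(\a jT_0)}$, again at the cost of a $b^{O(1)}$ factor.

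Putting the three estimates together, $\int_{W_\rho}V^2(g_rny_\rho)\,d\mu_{y_\rho}^u \ll b^{O(1)}\big(e^{-2\b r}e^{O_\b(\a jT_0)}+1\big)\mu_{y_\rho}^u(W_\rho)$; since $r=\g(w+jT_0)\geq 0$ the first term is $\ll e^{O_\b(\a jT_0)}$, and the claimed bound $\ll_{T_0} e^{4\b\a jT_0}\mu_{y_\rho}^u(W_\rho)$ follows once $T_0$ is taken large enough to absorb the $b^{O(1)}$ into $e^{O_\b(\a jT_0)}$ — here one uses that $j\asymp \log b$ by~\eqref{eq:j small} and~\eqref{eq:m and b}, so $b^{O(1)}= e^{O(\log b)} = e^{O(j)}\leq e^{\b\a j T_0}$ for $T_0$ large, which is why the exponent $4\b\a j T_0$ (rather than the bare $2\b\a jT_0$ coming from $K_j$) appears in the statement. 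The main obstacle is bookkeeping: tracking that every $b^{O(1)}$ loss (from the doubling comparison $N_1^+$ vs $W_\rho$, and from the height bound on $y_\rho$) is genuinely $e^{O(j)}$ and hence harmless after enlarging $T_0$; the dynamical input (Theorem~\ref{thm:Margulis function} for $V^2$, Proposition~\ref{prop:doubling}, and the recurrence of $\Pcal_b$-boxes) is used as a black box.
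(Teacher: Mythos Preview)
Your absorption step contains a genuine error. You write ``$b^{O(1)}= e^{O(\log b)} = e^{O(j)}$'', but from~\eqref{eq:m and b} and~\eqref{eq:j small} one has $\log b \asymp m$ and $jT_0 \geq 2m/3$, so $\log b \leq \tfrac{3}{2}\,jT_0$; that is, $j \asymp (\log b)/T_0$, not $j \asymp \log b$. Hence the loss from Proposition~\ref{prop:doubling} is $b^{2\Delta_+/3} \leq e^{\Delta_+ jT_0}$, with the exponent $\Delta_+$ independent of $T_0$ and $\a$. The inequality $e^{\Delta_+ jT_0} \leq e^{4\b\a jT_0}$ would require $\Delta_+ \leq 4\b\a$, which fails because $\a$ is a small parameter (cf.~\eqref{eq:a and alpha}) while $\Delta_+ \geq \d$ is fixed. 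Enlarging $T_0$ does not help: both sides scale identically in $T_0$. Your fallback to the $B$-norm trick of~\eqref{eq:stable derivative} is also unavailable here: that device converts $\norm{f}_1^\star$ to $B\norm{f}_{1,B}^\star$ by downweighting $e_{1,1}^\star$, but the contribution of Lemma~\ref{lem:Margulis ineq on W_rho} through~\eqref{eq:Cauchy-Schwarz} involves only $e_{1,0}^\star(f)$, and the eventual gain in~\eqref{eq:osc int error simplified} is only $(a+a\l/4)$, far too small to swallow a stray $e^{\Delta_+ jT_0}$.

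The paper avoids any $b$-polynomial loss by rescaling \emph{before} invoking the Margulis inequality. Set $y_\rho^b = g_{-\log\iota_b}\,y_\rho$ and $W_\rho^b = \Ad(g_{-\log\iota_b})(W_\rho)$, a ball of radius $\asymp 1$. The equivariance~\eqref{eq:g_t equivariance} gives
\[
    \int_{W_\rho} V^2(g_r n y_\rho)\,d\mu_{y_\rho}^u
    = \iota_b^{\d} \int_{W_\rho^b} V^2(g_{r_1} n\, y_\rho^b)\,d\mu_{y_\rho^b}^u,
    \qquad r_1 = r + \log\iota_b,
\]
and now Theorem~\ref{thm:Margulis function} (for $V^2$, Remark~\ref{remark:choice of V}) applies at unit scale with no loss. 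The restriction $\rho\in\Pcal_b$ in~\eqref{eq:recurrent boxes} is exactly what guarantees $V(y_\rho^b)\ll_{T_0} e^{2\b\a jT_0}$, after which reversing the change of variables yields the claimed bound directly.
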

\begin{proof}
    Recall that $W_\rho$ has radius $\asymp\iota_b =b^{-2/3}$, and hence the expanded disk $W_\rho^b = \Ad(g_{-\log\iota_b})(W_\rho)$ has radius $\asymp 1$. 
    We also recall from~\eqref{eq:j small} that $r \geq -\log\iota_b$. 
    We also that $\rho$ is an element of $\Pcal_b$ defined in~\eqref{eq:recurrent boxes}, so that $V^2(g_{-\log \iota_b} y_\rho) \ll_{T_0} e^{4\b\a jT_0}$.
    Let $r_1 = r + \log\iota_b \geq 0$ and $y^b_\rho = g_{-\log b }y_\rho$. 
    Changing variables using~\eqref{eq:g_t equivariance}, and using Remark~\ref{remark:choice of V} and the Margulis inequality for $V^2$ in Theorem~\ref{thm:Margulis function}, we deduce the lemma from the following estimate
    \begin{align*}
        \int_{W_\rho} V^2(g_rny_\rho)\;d\mu_{y_\rho}^u
        = \iota_b^\d \int_{W^b_\rho} V^2(g_{r_1}n y^b_\rho)\;d\mu_{y^b_\rho}^u
        \ll \iota_b^\d V^2(y^b_\rho) 
        \mu^u_{y^b_\rho}(W^b_\rho) 
        = V^2(y^b_\rho) \mu^u_{y_\rho}(W_\rho).
    \end{align*}   
    \qedhere
\end{proof}
The above lemma hence yields the bound
\begin{align}\label{eq:Cauchy-Schwarz}
    \left|\int_{\R\times W_\rho} \Psi_\rho(t,n) F_\g(g_tny_\rho)\;d\mu^u_{y_\rho} dt\right|^2 
     &\ll_{T_0} e^\star_{1,0}(f)^2 e^{4\b\a jT_0} \mu^u_{y_\rho}(W_\rho)
     \int_{J_\rho\times W_\rho} |\Psi_\rho(t,n)|^2 \;d\mu^u_{y_\rho} dt.
\end{align}

\subsection*{Cusp-adapted partitions}
To estimate the right side of~\eqref{eq:Cauchy-Schwarz}, it will be convenient to linearize the phase functions $\t_{k}$.
For this purpose, we need to pick a cover of $W_\rho$ by balls with radius determined by a certain return time  of their centers to a given compact set.
\begin{prop}
     \label{prop:cusp adapted partition}
     There exists $\b_0 \asymp \b$ such that the following holds.
    For all $b\geq 1$ and $\rho\in \Pcal_b$, there exist a cover $\set{A_i:i}$ of $W_\rho$ and a set $\Rcal_\rho\subseteq W_\rho$ with $\murho(W_\rho\setminus \Rcal_\rho)\ll_{T_0} b^{-\b_0}e^{2\a\b jT_0}\murho(W_\rho)$ such that for all $i$ with $A_i\cap \Rcal_\rho\neq \emptyset$, we have
    \begin{enumerate}
        \item $A_i$ has the form $A_i=N^+_{r_i}\cdot u_i$ for some $r_i>0$ and $u_i\in W_\rho$.
        \item If $t_i=-\log r_i$, then $V(g_{t_i}u\yrho)\ll_\b 1$ for all $u\in A_i$.
        \item $b^{-8/10}\ll r_i\ll b^{-7/10}$.
        \item\label{item:bounded mult} $ \sum_i \murho(A_i) \ll \murho(W_\rho)$.
    \end{enumerate}
\end{prop}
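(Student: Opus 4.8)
The plan is to build the cover by attaching to each point $u\in W_\rho$ a \emph{recurrence time} $t(u)$ lying in the window $\left[\tfrac7{10}\log b,\tfrac8{10}\log b\right]$, at which the geodesic orbit of $uy_\rho$ re-enters a fixed compact set of the form $\set{V\leq H}$, then letting $\Rcal_\rho$ be the set of $u$ for which such a $t(u)$ exists and taking $\set{A_i}$ to be a Vitali subcover of $W_\rho$ by the (variable-scale) balls $N^+_{e^{-t(u)}}\cdot u$. Quantitative recurrence (Theorem~\ref{thm:exp recurrence}) will control $\murho(W_\rho\setminus\Rcal_\rho)$; the log-Lipschitz property of $V$ (Proposition~\ref{prop:height function properties}) will give property~(2); and the doubling property (Proposition~\ref{prop:doubling}) together with the disjointness of the Vitali cores will give property~(4). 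The fractions $7/10$ and $8/10$ are chosen precisely so that $e^{-t(u)}$ lands in $[b^{-8/10},b^{-7/10}]$, which is property~(3), while the window still has length $\tfrac1{10}\log b$; it is this length that produces the exponential gain $b^{-\b_0}$.

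In detail, I would first fix $\e$ proportional to $\b$, small enough that $\tfrac34\b-\e>0$ (recall $\b=\Delta/4$ is fixed throughout this section by Remark~\ref{remark:choice of V}), and let $H=H(\b,\G)\geq 1$ be the height threshold produced by Theorem~\ref{thm:exp recurrence} for this $\e$ --- a genuine constant, since $\b$ is. For $u\in W_\rho\cap\supp(\murho)$, let $t(u)$ be the least $t\in\left[\tfrac7{10}\log b,\tfrac8{10}\log b\right]$ with $V(g_tuy_\rho)\leq H$ when one exists, and set $\Rcal_\rho:=\set{u\in W_\rho\cap\supp(\murho):t(u)\text{ is defined}}$. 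To estimate $\murho(W_\rho\setminus\Rcal_\rho)$ \emph{relative to $\murho(W_\rho)$}, the essential move is to rescale to unit scale: put $y_\rho^b:=g_{-\log\iota_b}y_\rho$ and $W_\rho^b:=\Ad(g_{-\log\iota_b})(W_\rho)$, so that by~\eqref{eq:scaling of Carnot metric} the set $W_\rho^b$ is sandwiched between two balls of radius $\asymp 1$ about the identity, and $y_\rho^b\in N_1^-\Omega$ since $g_{-\log\iota_b}$ contracts $N^-$. As $\rho\in\Pcal_b$, the translate $g_{-\log\iota_b}B_\rho$ has diameter $O(1)$ and meets $K_j$, so~\eqref{eq:K_j and iota_j and iota_b} and the log-Lipschitz property of $V$ give $V(y_\rho^b)\ll_{T_0}e^{2\b\a jT_0}$. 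Under $\Ad(g_{-\log\iota_b})$ the set $W_\rho\setminus\Rcal_\rho$ corresponds, with $\murho$-ratios preserved by~\eqref{eq:g_t equivariance}, to the set of $u\in W_\rho^b$ whose orbit stays in $\set{V>H}$ throughout the interval $\left[\tfrac1{30}\log b,\tfrac2{15}\log b\right]$; such orbits spend at least a $\tfrac34$-proportion of the time $[0,\tfrac2{15}\log b]$ in the cusp, so Theorem~\ref{thm:exp recurrence} (in continuous time, via Proposition~\ref{prop:height function properties}, and applied on the finitely many unit balls covering $W_\rho^b$) bounds the $\mu^u_{y_\rho^b}$-measure of this set by $\ll b^{-\b_0}e^{2\b\a jT_0}\,\mu^u_{y_\rho^b}(N_1^+)$ for some $\b_0\asymp\b$. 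Finally Proposition~\ref{prop:doubling} gives $\mu^u_{y_\rho^b}(N_1^+)\ll\mu^u_{y_\rho^b}(W_\rho^b)$, and undoing the rescaling yields $\murho(W_\rho\setminus\Rcal_\rho)\ll_{T_0}b^{-\b_0}e^{2\b\a jT_0}\murho(W_\rho)$.

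For the cover itself, I would apply the Vitali covering lemma to the family $\set{N^+_{r(u)}\cdot u:u\in\Rcal_\rho}$, where $r(u):=e^{-t(u)}\in[b^{-8/10},b^{-7/10}]$, extracting a countable pairwise-disjoint subfamily $\set{N^+_{r_i}\cdot u_i}$ whose fixed dilates $A_i:=N^+_{5r_i}\cdot u_i$ cover $\Rcal_\rho$; the cover of $W_\rho$ is then completed by adjoining the leftover set $W_\rho\setminus\bigcup_iA_i$, which is disjoint from $\Rcal_\rho$ and hence exempt from properties~(1)--(3). Properties~(1) and~(3) hold by construction, with radius $5r_i\asymp r_i\in[b^{-8/10},b^{-7/10}]$. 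For~(2), writing $t_i:=-\log(5r_i)$ and $u\in A_i$, the element $g_{t_i}uu_i^{-1}g_{-t_i}$ satisfies $d_{N^+}(g_{t_i}uu_i^{-1}g_{-t_i},\id)=e^{t_i}d_{N^+}(u,u_i)\leq 1$, hence has operator norm $\ll 1$, so the log-Lipschitz estimate of Proposition~\ref{prop:height function properties} applied to $g_{t_i}uy_\rho=(g_{t_i}uu_i^{-1}g_{-t_i})\,g_{t_i}u_iy_\rho$, together with $V(g_{t_i}u_iy_\rho)\ll_\b V(g_{t(u_i)}u_iy_\rho)\leq H$, gives $V(g_{t_i}uy_\rho)\ll_\b 1$. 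For~(4), disjointness of the Vitali members and the doubling property (Proposition~\ref{prop:doubling}, valid since $u_iy_\rho\in N_2^-\Omega$ and $5r_i\leq 1$) yield
\begin{align*}
    \sum_i\murho(A_i)\ll\sum_i\murho\big(N^+_{r_i}\cdot u_i\big)\leq\murho\Big(\textstyle\bigcup_i N^+_{r_i}\cdot u_i\Big)\leq\murho\big(N^+_{Cb^{-2/3}}\big)\ll\murho(W_\rho),
\end{align*}
the first and last steps again by doubling, and the adjoined leftover set contributes at most $\murho(W_\rho)$.

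The main obstacle --- and the reason for the rescaling step --- is that a naive application of Theorem~\ref{thm:exp recurrence} directly on $W_\rho$ bounds the exceptional set only in terms of $\murho(N_1^+)$, and since $W_\rho$ has radius $b^{-2/3}$ the comparison $\murho(N_1^+)\ll b^{O(\Delta_+)}\murho(W_\rho)$ would cost a polynomial factor that swamps the gain $b^{-\b_0}$. Expanding $W_\rho$ to unit scale fixes this but forces one to control the height of the rescaled basepoint $y_\rho^b$, which is exactly where the defining property of $\Pcal_b$ --- that the box returns to $K_j$ at time $-\log\iota_b$ --- is used. The remaining points, namely arranging that $H$ and $\b_0$ depend only on $\G$ by taking $\e\asymp\b$, and checking that $\Ad(g_{-\log\iota_b})$ preserves the relevant $\murho$-ratios, are routine.
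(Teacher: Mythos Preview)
Your proposal is correct and follows essentially the same approach as the paper: rescale $W_\rho$ to unit size via $g_{-\log\iota_b}$, use membership in $\Pcal_b$ to bound $V(y_\rho^b)$, invoke Theorem~\ref{thm:exp recurrence} to control the exceptional set, and build the cover by Vitali from the balls $N^+_{e^{-t(u)}}\cdot u$ with doubling giving property~(4). The only cosmetic difference is that the paper applies the discrete-time Theorem~\ref{thm:exp recurrence} with $\e=\b/40$ and $\th=1/20$ and then argues by contrapositive that a point outside the exceptional set must see a return in the window $[(7/10-2/3)\log b,(8/10-2/3)\log b]$, whereas you pass to the continuous-time formulation with $\th=3/4$; both yield $\b_0$ depending only on $\b$.
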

\begin{proof}
Let $y^b_\rho = g_{-\log \iota_b } y_\rho$, where $\iota_b = b^{-2/3}$. 
Then, since $\rho\in\Pcal_b$, by~\eqref{eq:recurrent boxes}, $V(y^b_\rho)\ll_{T_0} e^{2\b\a jT_0}$.
As in Lemma~\ref{lem:Margulis ineq on W_rho}, the disk $W_\rho^b = \Ad(g_{-\log \iota_b})(W_\rho)$ has radius $\asymp 1$.

Let $r_0\geq 1$ be the constant provided by Theorem~\ref{thm:exp recurrence} applied with $\e=\b/40$.
Let $m_0= \lceil r_0^{-1}\log b \rceil$ and let $H=e^{3\b r_0}$ be the height provided by Theorem~\ref{thm:exp recurrence}.
Let $\chi_H$ denote the indicator function of the set of points of height at least $H$, i.e. the set $\set{y:V(y)>H}$.
Then, Theorem~\ref{thm:exp recurrence} yields
\begin{align*}
    \mu^u_{y^b_\rho}\left( n\in W^b_\rho:\sum_{1\leq \ell\leq m_0} \chi_H(g_{\ell r_0} n y^b_\rho) > m_0/20 \right) \ll_\b b^{-\b/40r_0}  V(y^b_\rho) \mu^u_{y^b_\rho}(W^b_\rho).
\end{align*}
Denote the set on the left side in the above estimate by $\Ecal_{\rho}^b$.
Let $c=7/10-2/3$ and $d=8/10-2/3$.
We claim that, if $b$ is large enough, then for every $n\in W^b_\rho\setminus \Ecal_\rho^b$, we can find $\eta\in [c,d]$ such that $V(g_{\eta \log b}n y^b_\rho)\leq H$.
Indeed, suppose not. 
Then, it follows that
\begin{align*}
    \sum_{1\leq \ell\leq m_0} \chi_H(g_{\ell r_0} nx) \geq \frac{\log b}{10r_0}-1 \geq m_0/10 -2.
\end{align*}
This contradicts the fact that $n\notin \Ecal^b_\rho$ when $b$ is large enough.

Let $\Rcal^b_\rho:=\mrm{supp}(\mu^u_{y^b_\rho})\cap W^b_\rho\setminus \Ecal^b_\rho$, and define $\Rcal_\rho$ to be its preimage in $W_\rho$.
More precisely, $\Rcal_\rho = \Ad(g_{\log\iota_b})(\Rcal^b_\rho) \subseteq W_\rho$.
Define a function $\varsigma:\Rcal_\rho \to [7/10,8/10]$ by setting $\varsigma(n)$ to be the least value of $\eta\in [7/10,8/10]$ such that
$V(g_{\eta \log b}n y_\rho)\leq H$.
Consider the cover $\set{\tilde{A}_u:u\in \Rcal_\rho}$, where each $A_u$ is the ball around each $u$ of radius $b^{-\varsigma(u)}$. 
Using the Vitali covering lemma and the uniform doubling in Prop.~\ref{prop:doubling}, we can find a finite subcover $\set{A_{u_i}:i}$ such that $ \sum_i \murho(A_{u_i}) \ll \murho(W_\rho)$.
This completes the proof by taking $\b_0 = \b/40r_0$, $A_i:=A_{u_i}$, and $r_i = 5b^{-\varsigma(u_i)}$.
   \qedhere
\end{proof} 
Let $\set{A_i}$ be the cover provided by Proposition~\ref{prop:cusp adapted partition}.
Combining this result with~\eqref{eq:bound phi without tilde}, we obtain
\begin{align}\label{eq:remove divergent orbits}
    \int_{J_\rho\times W_\rho} |\Psi_\rho(t,n)|^2\;d\mu_{y_\rho}^udt
    \leq \sum_i \int_{J_\rho\times A_i} |\Psi_\rho(t,n)|^2\;d\mu_{y_\rho}^udt
    + O_{T_0}\left( b^{-\b_0} \# I_{\rho,j}^2  e^{2\b\a jT_0}\mu_{y_\rho}^u(W_\rho)
    \right).
\end{align}

\subsection*{Linearizing the phase}
We now turn to estimating the sum of oscillatory integrals in~\eqref{eq:remove divergent orbits}.
For $k,\ell\in I_{\rho,j}$, we let
\begin{equation*}
    \psi_{k,\ell}(t,n) := \phi_{\rho,k}(t,n)
    \overline{\phi_{\rho,\ell}(t,n)}.
\end{equation*}
Expanding the square, we get
\begin{align}\label{eq:expanding square}
    \sum_i \int_{J_\rho\times A_i} |\Psi_\rho(t,n)|^2\;d\mu_{y_\rho}^udt
    = 
     \sum_i \sum_{k,\ell\in I_{\rho,j}}
     \int_{J_\rho\times A_i}  e^{-ib(\t_{k}(n)-\t_{\ell}(n))} \psi_{k,\ell}(t,n)  \;d\mu^u_{y_{\rho}} dt .
\end{align}

Using~\eqref{eq:g_t equivariance} and~\eqref{eq:N equivariance}, we change variables in the integrals using the maps taking each $A_i$ onto $N_1^+$.
More precisely, recall that $A_i$ is a ball of radius $r_i$ around $u_i\in W_\rho$.
Letting  
\begin{align}\label{eq:after moving to cpt}
    t_i = -\log r_i, \quad
    \yrho^i = g_{t_i}u_i \yrho, 
    \quad
    \t^i_k = \t_k(\Ad(g_{-t_i})(n)u_i), 
    \quad
    \psi^i_{k,\ell}(t,n) = \psi_{k,\ell} (t, \Ad(g_{-t_i})(n)u_i),
\end{align}
we can bound the above sum as follows:
\begin{align}\label{eq:from A_i to N_1}
    \eqref{eq:expanding square} 
    \leq  \sum_{i} e^{-\d t_i}
    \sum_{k,\ell\in I_{\rho,j}}
       \left| \int_{J_\rho\times N_1^+} 
       e^{-ib(\t^i_{k}(n)-\t^i_\ell(n))} \psi^i_{k,\ell}(t,n) 
       d\murhoi dt
       \right|.
\end{align}
 We note that the radius $r_i$ of $A_i$ satisfies 
\begin{align}\label{eq:size of t_i}
   b^{-8/10}\ll  e^{-t_i}=r_i \ll b^{-7/10}.
\end{align}
We also recall from Proposition~\ref{prop:cusp adapted partition} that $r_i$ was chosen so that
\begin{align}\label{eq:height of y_rho,i}
    V(\yrho^i)\ll 1, \qquad \forall i.
\end{align}
This is important for the proof of Theorem~\ref{thm:counting bad frequencies} below.

Next, we use the coordinate parametrization of $N^+$ by its Lie algebra $\mf{n}^+:=\mrm{Lie}(N^+)$ via the exponential map.
We suppress composition with $\exp$ from our notation for simplicity and continue to denote by $\murhoi$ and $N_1^+$ their preimage to $\mf{n}^+$ under $\exp$.

Recall from Section~\ref{sec:Carnot} the parametrization of $N^-$ by its Lie algebra $\mf{n}^-=\mf{n}^-_\a\oplus \mf{n}^-_{2\a}$ via the exponential map and similarly for $N^+$.
Let $w_i=(v_i,r_i)\in\mf{n}^+_\a\times \mf{n}^+_{2\a}$ be such that $u_i=\exp(w_i)$, where $u_i$ is the center of the ball $A_i$.
Recall the notation for transverse intersection points $n^-_{\rho,k}$ in~\eqref{eq:centers}.
For each $k\in I_{\rho,j}$, write
\begin{align*}
    n^-_{\rho,k} = \exp(u_k+s_k)
\end{align*}
with $u_k\in\mf{n}_\a^-$ and $s_k\in \mf{n}_{2\a}^-$.
With this notation, we have the following formula for the temporal functions $\t_k$.
The proof of this lemma is given in Section~\ref{sec:temporal function}.
\begin{lem}\label{lem:temp function formula}
    For every $i$, there exists a bilinear form $\langle\cdot,\cdot\rangle :  \nminus \times \nonepls \rightarrow \R$ such that the following holds.
    For every $k\in I_{\rho,j}$, there is a constant $c^i_{k}\in \R$ such that for all $n=\exp(v,r)\in N_1^+$ with $v\in\nonepls$ and $r\in \ntwopls$, we have that
    \begin{align*}
        \t^i_k(n)-\t^i_\ell(n) = c^i_{k,\ell}
        + e^{-t_i} \langle u_k-u_\ell +s_k-s_\ell , v\rangle
        + O(b^{-4/3}).
    \end{align*}
    Moreover, for every $(u,s)\in \noneminus\times \ntwominus$, the linear functional $\langle u+s,\cdot\rangle:\nonepls \to \R$ satisfies
    \begin{align*}
        \norm{\langle u+s,\cdot\rangle} \gg \norm{u},
    \end{align*}
    where $\norm{\langle u+s,\cdot\rangle} := \sup_{\norm{v}=1} |\langle u+s,v\rangle|$.
\end{lem}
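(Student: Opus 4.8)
The plan is to compute $\t^i_k(n)$ explicitly from its defining commutation relation~\eqref{eq:stable hol commutation} rewritten at the rescaled basepoint $\yrho^i$, and extract the linear term via a Baker–Campbell–Hausdorff expansion. First I would unwind the definitions: by~\eqref{eq:def of tau_ell} and~\eqref{eq:after moving to cpt}, $\t^i_k(n)$ is, up to the additive constant $t_{\rho,k}+c$ coming from $t_i$ and $u_i$, the $A$-component exponent appearing when one writes $g_{t} \cdot n \cdot (\text{rescaled version of } n^-_{\rho,k})$ in the Bruhat-type coordinates $M N^- A N^+$. Conjugating the relation~\eqref{eq:stable hol commutation} by $g_{t_i}$ replaces $n^-_{\rho,k}=\exp(u_k+s_k)$ by $\Ad(g_{t_i})(\exp(u_k+s_k)) = \exp(e^{-t_i}u_k + e^{-2t_i}s_k)$, since $\mf n^-_\a,\mf n^-_{2\a}$ are the $e^{-t}$, $e^{-2t}$ eigenspaces. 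So the whole dependence on $k$ enters through a stable-group element of size $O(e^{-t_i})=O(b^{-7/10})$, which is the smallness that will make the linearization work.

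Next I would do the local coordinate computation. Write $n=\exp(v,r)$ with $(v,r)\in\mf n^+_\a\oplus\mf n^+_{2\a}$ of bounded size, and let $\xi_k := e^{-t_i}u_k + e^{-2t_i}s_k \in \mf n^-$. One wants the $A$-component of $n\exp(\xi_k)$ in the decomposition $N^+ \exp(\xi_k) \subset N^- A M N^+$ (the $g_t$ factor is harmless: it contributes the variable $t$ which cancels in the difference $\t^i_k - \t^i_\ell$ and is absorbed into $c^i_{k,\ell}$). Using the $\mathfrak{sl}_2$-triple structure of rank one — i.e.\ expanding $\exp(-v,-r)\exp(\xi_k)\exp(v,r)$ and the resulting $N^-AMN^+$ factorization to second order — the $A$-exponent is a smooth function of $(\xi_k, v, r)$ vanishing when $\xi_k=0$, whose differential in $\xi_k$ at $\xi_k=0$ is a fixed linear functional $\langle\,\cdot\,, (v,r)\rangle$ depending (bi)linearly on the $\mf n^+_\a$-coordinate $v$; the $\mf n^+_{2\a}$-coordinate $r$ only enters the error because $[\mf n^-,\mf n^+_{2\a}]$ and higher brackets are of higher order in $\xi_k$. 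Since $\|\xi_k\|\ll b^{-7/10}$ and the difference of two such linear terms has size $\ll e^{-t_i}\ll b^{-7/10}$ while the quadratic-and-higher remainder is $O(\|\xi_k\|^2)=O(b^{-14/10})=O(b^{-4/3})$... wait, $14/10 = 1.4 > 4/3 \approx 1.333$, so indeed $O(\|\xi_k\|^2) = O(b^{-7/5}) \subseteq O(b^{-4/3})$ and the stated error is correct. Collecting, $\t^i_k(n)-\t^i_\ell(n) = c^i_{k,\ell} + e^{-t_i}\langle u_k-u_\ell+s_k-s_\ell, v\rangle + O(b^{-4/3})$, where I absorb the $e^{-t_i}\langle s_k - s_\ell,v\rangle$ piece into $\langle\cdot,\cdot\rangle$ as written, or note it is itself $O(e^{-2t_i})=O(b^{-7/5})\subseteq O(b^{-4/3})$ and could be dropped — but keeping it as in the statement is harmless since the bilinear form is defined on all of $\mf n^-\times\mf n^+_\a$.

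The non-degeneracy claim $\|\langle u+s,\cdot\rangle\|\gg\|u\|$ is a structural fact about rank one groups: it says the stable direction $u\in\mf n^-_\a$ is not "invisible" to the unstable directions under the Lie bracket pairing — equivalently, $[\mf n^-_\a,\mf n^+_\a]$ together with the resulting projection to $\mf a$ separates $\mf n^-_\a$. This follows from the explicit description of the root system (only roots $\pm\a,\pm2\a$, with $[\mf g_{-\a},\mf g_\a]\supseteq \mf a$ nonzero) and the invariance of the Killing form, exactly as in the computation in Section~\ref{sec:linear expand} where $N^+\cap G(v,V^{<\l})$ was shown to have at most one point (Corollary~\ref{cor:1 point}); concretely the leading bilinear term in $\langle u+s,\cdot\rangle$ is essentially the Killing pairing $v\mapsto B([u,v]^{\mf a\text{-part}}, \w)$, which is a nonzero multiple of $B(u,\cdot)$ restricted appropriately and hence bounded below by $\|u\|$ up to a constant depending only on $G$. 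I expect this last structural step to be the only genuinely delicate point — everything else is a BCH bookkeeping exercise — and the cleanest route is to reduce to the $\mathfrak{sl}_2(\R)$ (or the relevant rank one) computation by restricting to the subalgebra generated by $u$ and a matching $\mf n^+_\a$ vector, where the pairing is visibly nondegenerate, and then use compactness of the unit sphere in $\mf n^-_\a$ to get a uniform constant.
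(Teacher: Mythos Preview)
Your overall strategy and the non-degeneracy argument are sound, and close in spirit to the paper. There is however a genuine gap in the first part. You claim that after conjugating by $g_{t_i}$, the quantity $\t^i_k(n)$ becomes, up to an additive constant coming from $u_i$, the $A$-exponent of $n\exp(\xi_k)$ with $n\in N_1^+$ bounded and $\xi_k$ small. This is not correct: unwinding~\eqref{eq:after moving to cpt} gives $\t^i_k(n)=\t_k(Y^i)$ with $\exp(Y^i)=\Ad(g_{-t_i})(n)\cdot u_i$, so conjugating by $g_{t_i}$ produces $n\cdot U_i\cdot\exp(-\xi_k)$ where $U_i:=\Ad(g_{t_i})(u_i)$. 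Since $u_i\in W_\rho$ has Cygan size $\asymp\iota_b=b^{-2/3}$ while $e^{t_i}\asymp b^{7/10}$ to $b^{8/10}$, the element $U_i$ has size growing like $b^{2/15}$; it is \emph{not} merely an additive constant --- for instance its contribution to the $A$-exponent already depends on $\xi_k$. Your clean quadratic remainder bound $O(\|\xi_k\|^2)=O(b^{-7/5})$ implicitly assumes the second derivative in $\xi$ is $O(1)$, which it is not in the presence of $U_i$. One can still push your expansion through by a weight-by-weight analysis (the growing factor from $U_i$ is always accompanied by a compensating power of $e^{-t_i}$ from $\xi_k$), but this cancellation requires work you have not indicated.

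The paper avoids this entirely by linearizing in the \emph{other} direction: it keeps the stable element $X_k=u_k+s_k$ at its original bounded size and expands in the small unstable coordinate $Y^i$, which already has $\|Y^i\|\ll b^{-2/3}$ precisely because the flow-box width $\iota_b=b^{-2/3}$ was chosen for this purpose. The computation is carried out in a proximal linear representation of $G$, where the $A$-exponent is read off from the top-left matrix entry $\pi_0$ and the bilinear form is simply $\langle X,Y_\a\rangle=\l^{-1}r_1(Y_\a)\cdot c_1(X)$; the error $O(\|Y^i\|^2)=O(b^{-4/3})$ then falls out immediately with no hidden growing constants. For the lower bound $\|\langle u+s,\cdot\rangle\|\gg\|u\|$, the paper's argument is essentially the one you sketch: one evaluates at $Y=\th(u)/\|u\|$ (with $\th$ the Cartan involution), uses that $\langle s,\cdot\rangle$ vanishes on $\mf n^+_\a$ by a root-space consideration, and concludes via injectivity of $X\mapsto c_1(X)$ on $\mf n^-_\a$, which follows from positivity of $-B(\cdot,\th(\cdot))$.
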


We apply Lemma~\ref{lem:temp function formula} to linearize the phase and amplitude functions in~\eqref{eq:from A_i to N_1}.
Let
\begin{align}\label{eq:frequency vectors def}
    w^i_{k,\ell} := e^{-t_i}(u_k -u_\ell + s_k-s_\ell).
\end{align}
Note that $\Ad(g_{-t_i})$ contracts $N^+$ by at least $e^{-t_i}\ll b^{-7/10}$; cf.~\eqref{eq:size of t_i}.
Hence, in light of~\eqref{eq:phi tilde Lipschitz}, the Lipschitz norm of $\psi^i_{k,\ell}$ along $N^+$ is $O(b^{2/3-7/10})$.
Moreover, linearizing the phase function introduces an error $O(b^{1-4/3})$.
Hence, recalling that $|J_\rho|\ll 1$, we get
\begin{align}\label{eq:linearize phase}
    \eqref{eq:from A_i to N_1}
    \ll 
    \sum_i e^{-\d t_i}
    \sum_{k,\ell\in I_{\rho,j}}
    \left| \int_{N_1^+} e^{-ib  \langle w^i_{k,\ell},v\rangle}  d\murhoi
       \right|
        +b^{-3/100} 
        \murho(W_\rho)
        \# I_{\rho,j}^2,
\end{align}
where we used the estimate $\sum_i \murhoi(A_i) \ll \murho(W_\rho)$.

\subsection*{Excluding close pairs of unstable manifolds}
Consider the following partition of $I_{\rho,j}^2$:
\begin{equation}\label{eq:S_rho,j}
    C_{\rho,j} = \set{(k,\ell)\in I_{\rho,j}^2: 
     \norm{u_k-u_\ell}\leq b^{-1/10} }, 
     \qquad
     S_{\rho,j} = 
    I_{\rho,j}^2\setminus C_{\rho,j}.
\end{equation}
Then, $C_{\rho,j}$ parametrizes pairs of unstable manifolds which are too close along the $ \ntwominus$ direction in the stable foliation.
In particular, since $ \ntwominus=\set{0}$ when $X$ is real hyperbolic, $C_{\rho,j}$ simply parametrizes pairs of unstable manifolds which are too close along the stable foliation in this case.
With this notation, the sum on the right side of~\eqref{eq:linearize phase} can be estimated as follows:
\begin{align}\label{eq:exclude close pairs} 
    \sum_i e^{-\d t_i}
    \sum_{k,\ell\in I_{\rho,j}}
    \left| \int_{N_1^+} e^{-ib  \langle w^i_{k,\ell},v\rangle}  d\murhoi
       \right|
      \ll
    \# C_{\rho,j} \mu^u_{y_\rho}( W_\rho)
    + \sum_i e^{-\d t_i}
    \sum_{(k,\ell) \in S_{\rho,j}}
       \left| \int_{N_1^+} e^{-ib  \langle w^i_{k,\ell},v\rangle}  d\murhoi
       \right|.
\end{align}

 We estimate the first term in~\eqref{eq:exclude close pairs} via the following proposition, proved in Section~\ref{sec:close}.
 
\begin{prop}\label{prop:close pairs}
Assume that the parameter $\a$ is chosen sufficiently small.
Then, there exists a constant $\k_0>0$ such that for all $\ell\in I_{\rho,j}$, 
\[ \# \set{k \in I_{\rho,j}:(k,\ell)\in C_{\rho,j}} \ll_{T_0}
(b^{-\k_0/10} + e^{-\k_0 \g(w+jT_0)}) e^{\d \g(w+jT_0)}.
 \]
We may take $\k_0=\k/2$, where $\k$ is the constant provided by Theorem~\ref{thm:flat implies friendly}.
\end{prop}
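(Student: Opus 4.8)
The plan is to reduce the counting of close pairs $(k,\ell)\in C_{\rho,j}$ to a measure estimate for $\murho$ of a neighborhood of a proper subvariety, and then to invoke the "friendliness" result Theorem~\ref{thm:flat implies friendly} (the general form of Theorem~\ref{thm:friendly intro}) together with the mass estimate~\eqref{eq:total mass}. Fix $\ell\in I_{\rho,j}$. For each $k$ with $(k,\ell)\in C_{\rho,j}$, the transverse intersection points $x_{\rho,k}$ and $x_{\rho,\ell}$ have stable-holonomy data whose $\nminus_\a$-components $u_k,u_\ell$ satisfy $\norm{u_k-u_\ell}\le b^{-1/10}$. First I would translate this into a statement about the centers $n_{\rho,k}$ of the disks $W_k$: since $x_{\rho,k}= n^-_{\rho,k}\cdot y_\rho$ with $n^-_{\rho,k}=\exp(u_k+s_k)$, and since all these disks are images of disjoint pieces of $\Acal_j = \Ad(g^\g)(N_1^+(j))$ flowed into the box $B_\rho$, distinct indices $k$ correspond to points $u_k$ that are $\gg \iota_b$-separated in $N^-$. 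Thus the $u_k$'s with $(k,\ell)\in C_{\rho,j}$ are a $\gg\iota_b$-separated subset of the ball $\set{u: \norm{u-u_\ell}\le b^{-1/10}}$ inside $\nminus_\a$; but this only bounds their number by $(b^{-1/10}/\iota_b)^{\dim\nminus_\a}$, which is far too weak. The correct route is instead to bound the number of such $k$ by the $\murho$-mass they occupy on the \emph{unstable} side.

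The key step: by the local product structure of $\bms$ and the definition of the disks $W_\ell$, the number of $k$ with $(k,\ell)\in C_{\rho,j}$ is, up to uniform constants and the doubling estimate Proposition~\ref{prop:doubling}, comparable to $\murho$ applied to the union of the holonomy images $\Phi_k(W_k)$ over such $k$. These images all lie in $W_\rho$, and their union is contained in the $O(\iota_b)$-neighborhood, inside $W_\rho$, of the preimage under the local stable holonomy of the set of $x_{\rho,k}$ with $\norm{u_k-u_\ell}\le b^{-1/10}$. Now I would argue that this set is contained in $\Lcal^{(\e)}\cap N_1^+\cdot(\text{something})$ for $\Lcal$ the preimage in $N^+$ of a proper affine subspace of the abelianization $\Nab^+$ — concretely, the direction picked out is the one dual, via the bilinear pairing of Lemma~\ref{lem:temp function formula}, to the constraint $\norm{u_k-u_\ell}\le b^{-1/10}$; the relevant $\e$ is a suitable power of $b$ times $e^{\d\g(w+jT_0)}$-type normalization coming from rescaling $W_\rho$ to a unit-size disk via $\Ad(g_{-\log\iota_b})$. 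After this rescaling, $\murho$ becomes a PS-type conditional measure at the basepoint $y_\rho^b = g_{-\log\iota_b}y_\rho$, which has bounded height because $\rho\in\Pcal_b$, cf.~\eqref{eq:recurrent boxes}; hence Theorem~\ref{thm:flat implies friendly} applies with uniform constants and gives $\murho(\Lcal^{(\e)}\cap W_\rho)\ll \e^{\k}\murho(W_\rho)$ with $\e$ a fixed positive power of $\max\set{b^{-1/10}, e^{-\g(w+jT_0)}}$ (the second term accounting for the scale at which the affine constraint is genuinely proper, which degrades when the relevant disks are too short — this is exactly why $\a$ must be taken small, so that the flowing time $\g(w+jT_0)\asymp \log b$ dominates the cusp-excursion corrections $e^{O(\a jT_0)}$).

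Putting this together with $\murho(W_\rho)\asymp e^{\d\g(w+jT_0)}\mu^u_{y_\rho}(N^+_{\iota_b})\cdot(\text{normalization})$ — more precisely, via~\eqref{eq:total mass} and~\eqref{eq:the x rho ells are comparable}, the number of indices in $I_{\rho,j}$ is $\ll e^{\d\g(w+jT_0)}$ up to the bounded-height factor from $\rho\in\Pcal_b$ — yields $\#\set{k:(k,\ell)\in C_{\rho,j}}\ll (b^{-\k_0/10}+e^{-\k_0\g(w+jT_0)})e^{\d\g(w+jT_0)}$ with $\k_0=\k/2$, the loss of a factor $2$ in the exponent absorbing the various doubling and comparison constants. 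The main obstacle I anticipate is bookkeeping the normalizations carefully: one must verify that after the $\Ad(g_{-\log\iota_b})$ rescaling the measure $\murho$ genuinely satisfies the non-concentration hypotheses of Theorem~\ref{thm:flat implies friendly} with constants uniform in $\rho,j,b$ — this relies on $y_\rho^b$ having height $O_{T_0}(e^{2\b\a jT_0})$ and on the uniformity of the doubling constant in Proposition~\ref{prop:doubling} over $N_2^-\Omega$ — and that the "proper affine subspace" produced really is independent of $b$ (it is the kernel of the linear functional $u\mapsto u$ composed with the pairing of Lemma~\ref{lem:temp function formula}, which is a fixed geometric datum of $\H^d_\K$). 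The other delicate point, already flagged in Remark~\ref{rem:role of friendliness}, is that in the non-real-hyperbolic case $C_{\rho,j}$ only controls closeness along $\nminus_\a$, not all of $\nminus$; so the affine subspace one uses is the preimage of a hyperplane in $\Nab^+$ corresponding to the $\nminus_{2\a}$-directions, and one needs the $\norm{\langle u+s,\cdot\rangle}\gg\norm{u}$ lower bound from Lemma~\ref{lem:temp function formula} to ensure the constraint is non-degenerate on $\nonepls$.
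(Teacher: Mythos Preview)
Your proposal has a genuine gap: you attempt to convert the count into a $\murho$-measure estimate on the \emph{unstable} leaf $W_\rho$, but the closeness condition $(k,\ell)\in C_{\rho,j}$ is a constraint on the \emph{stable} coordinates $u_k\in\noneminus$ of the transverse intersection points $x_{\rho,k}\in T_\rho$. The holonomy images $\Phi_k(W_k)$ are all essentially the full disk $W_\rho$, independently of $k$; they do not concentrate near any proper subspace of $N^+$ as $k$ varies over $C_{\rho,j}(\ell_0)$. Consequently the claimed containment ``this set is contained in $\Lcal^{(\e)}\cap N_1^+$'' is false, and the appeal to Theorem~\ref{thm:flat implies friendly} on the unstable side has no content. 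The bilinear pairing of Lemma~\ref{lem:temp function formula} is also not relevant here; it enters only later in Theorem~\ref{thm:counting bad frequencies}.

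The paper's argument runs on the \emph{stable} side. Writing $\tilde{u}_k^- = n^-_{\rho,k}(n^-_{\rho,\ell_0})^{-1}$, the condition $(k,\ell_0)\in C_{\rho,j}$ forces $\tilde{u}_k^-$ to lie in $Z^{(\epsilon)}\cap N^-_{\iota_j}$ with $Z=\exp(\ntwominus)$ and $\epsilon=b^{-1/10}$. One pulls back by $\Ad(g^\g)^{-1}$ (which \emph{expands} $N^-$) so that $u_k^-:=\Ad(g^\g)^{-1}(\tilde{u}_k^-)\in Z^{(e^{t_\star}\epsilon)}\cap N^-_{e^{t_\star}\iota_j}$ with $t_\star=\g(w+jT_0)$, and observes that the points $p^+_k x = u_k^-\cdot p^+_{\ell_0}x$ are $\gg\iota_j$-separated in $N^-$ by injectivity-radius considerations (after covering $P^+_C$ by $O(\iota_j^{-\dim P^+})$ balls). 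The count is then a packing estimate: the disjoint balls $N^-_{r_j}\cdot u_k^-$ all lie in the enlarged set $Q$, so the number of $k$'s is bounded by $\mu^s_{p^+_{\ell_0}x}(Q)/\min_k\mu^s_{p^+_{\ell_0}x}(N^-_{r_j}\cdot u_k^-)$. After changing basepoint to $y_k = g_{s_{\rho,k}}p^+_k x$ (bounded height by~\eqref{eq:return time of x_rho,ell}), the denominator is $\gg_{T_0} e^{-\d(1+2\a)t_\star}\iota_j^\d$ by the global measure formula, and the numerator is $\ll b^{-\k/10}+e^{-\k t_\star}$ by Theorem~\ref{thm:flat implies friendly} applied to the \emph{stable} conditional $\mu^s_{y_k}$, since $Q'=\Ad(g_{s_{\rho,k}})(Q)$ sits in a small neighborhood of $Z\subset N^-$, which is the preimage of $0$ under the projection to the abelianization. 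Taking $\a$ small enough to absorb the factor $\iota_j^{-\dim P^+}\ll e^{O(\a t_\star)}$ yields the claim with $\k_0=\k/2$.
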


\begin{remark}\label{rem:role of friendliness}
    When $X$ is non-real hyperbolic, Prop.~\ref{prop:close pairs} requires a polynomial decay estimate for PS measures near certain proper subspaces of the boundary, Theorem~\ref{thm:flat implies friendly}.
    When $X$ additionally has cusps, the latter result in turn requires the full strength of the $L^2$-flattening results in Section~\ref{sec:flattening}.
    These estimates are not needed in the real hyperbolic case.
\end{remark}

   In what follows, we shall assume that $\a$ is chosen small enough so that Prop.~\ref{prop:close pairs} holds.
Summarizing our estimates in~\eqref{eq:remove divergent orbits},~\eqref{eq:linearize phase},~\eqref{eq:exclude close pairs}, and Proposition~\ref{prop:close pairs}, we have shown that
\begin{align}\label{eq:before inverse theorem}
    &\int_{J_\rho\times W_\rho} |\Psi_\rho(t,n)|^2\;d\mu^u_{y_\rho} dt
    \nonumber\\
    &\ll
     \sum_i e^{-\d t_i}
    \sum_{(k,\ell) \in S_{\rho,j}}
       \left| \int_{N_1^+} e^{-ib  \langle w^i_{k,\ell},v\rangle}  d\murhoi
       \right|
    \nonumber\\
       &+
    \left((b^{-\b_0}e^{2\b\a jT_0}+b^{-3/100})\#I_{\rho,j}
    + (b^{-\k_0/10 } + e^{-\k_0\g(w+jT_0)}) e^{\d \g(w+jT_0)} 
    \right)
    \times \#I_{\rho,j}
    \mu^u_{y_\rho}(W_\rho).
\end{align}

\subsection{The role of additive combinatorics} 
To proceed, we wish to make use of the oscillations due to the large frequencies $b w^i_{k,\ell}$ to obtain cancellations.
First, we note that Lemma~\ref{lem:temp function formula} and the separation between pairs of unstable manifolds with indices in $S_{\rho,j}$ imply that the frequencies $bw^i_{k,\ell}$ have large size.
More precisely, the linear functionals $\langle w^i_{k,\ell},\cdot\rangle: \nonepls \to \R$ satisfy
\begin{align}\label{eq:size of separated frequencies}
    b^{-9/10}\ll  \norm{\langle w^i_{k,\ell},\cdot \rangle }\ll b^{-7/10} .
\end{align}
Let $\pi: \npls \to \nonepls$ denote the projection parallel to $\ntwopls$ and note that the integrands on the right side of~\eqref{eq:before inverse theorem} depend only on the $\nonepls$ component of the variable.
To simplify notation, we let\footnote{Note that $\pi$ is the identity map in the real hyperbolic case.}
\begin{align}\label{eq:proj of PS}
    \nu_i := \pi_\ast \murhoi\left|_{N_1^+} \right. .
\end{align}

\begin{remark}\label{rem:projections}
    It is worth emphasizing that the linearization provided by Lemma~\ref{lem:temp function formula} only depends on the unstable directions with weakest expansion under the flow.
    The reason we do so is that our metric on $\npls$ is not invariant by addition when $X$ is not real hyperbolic (it is invariant by the nilpotent group operations), but our non-concentration estimates for the measures $\mu^u_\bullet$ only hold for this metric. This in particular means the results of Section~\ref{sec:flattening} do not apply to these measures in this case, which is the reason we work with projections.
    It is possible to develop the theory in Section~\ref{sec:flattening} for measures and convolutions on nilpotent groups such as $N^+$ to avoid working with projections, however we believe the approach we adopt here is more amenable to generalizations beyond the algebraic setting of this article.
\end{remark}

For $w\in \nminus$, let
\begin{align}\label{eq:Fourier coefficient at x^i_rho,k}
    \hat{\nu}_i(w):= 
    \int_{\nonepls} e^{-i \langle w, v\rangle} \;d\nu_i(v).
\end{align}
Note that the total mass of $\nu_i$, denoted $|\nu_i|$, is $\murhoi(N_1^+)$.
Let $\gls{lambda}>0$ be a small parameter to be chosen using Theorem~\ref{thm:counting bad frequencies} below.
Define the following set of frequencies where $\hat{\nu}_i$ is large:
\begin{align}
    B(i,k,\l):= \set{\ell\in I_{\rho,j}: (k,\ell)\in S_{\rho,j} \text{ and } |\hat{\nu}_i(bw_{k,\ell}^i)| > b^{-\l} |\nu_i|}.
\end{align}
Then, splitting the sum over frequencies according to the size of the Fourier transform $\hat{\nu}_i$ and reversing our change variables to go back to integrating over $A_i$, we obtain
\begin{align}\label{eq:bound by count}
    \sum_i e^{-\d t_i} \sum_{(k,\ell)\in S_{\rho,j}}
    \int_{\nonepls} e^{-ib \langle w^i_{k,\ell}, v\rangle}
     \;d\nu_i(v)
    \ll
     \left( 
     \max_{i,k} \# B(i,  k,\l) 
    +  b^{-\l} \#I_{\rho,j} 
    \right)  \# I_{\rho,j}
    \mu^u_{y_\rho}(W_\rho),
 \end{align}
 where we again used the estimate $\sum_i \murhoi(A_i)\ll \mu^u_{y_\rho}(W_\rho)$.
The following key counting estimate for $B(i,k,\l)$ is
deduced from Corollary~\ref{cor:flattening}. Its proof is given in Section~\ref{sec:count bad freqs}.
\begin{thm}\label{thm:counting bad frequencies}
For every $\e>0$, there exists $\l>0$ such that for all $i$ and $k$, we have
\begin{align*}
    \# B(i,  k,\l) \ll_{\e}
    b^{\e} \left( b^{-\k_0/10} 
    + e^{-\k_0\g(w+jT_0)} \right)
    e^{\d \g(w+jT_0)} ,
\end{align*}
where $\k_0>0$ is the constant provided by Proposition~\ref{prop:close pairs}.
\end{thm}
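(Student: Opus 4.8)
\textbf{Proof plan for Theorem~\ref{thm:counting bad frequencies}.}
The plan is to bound $\#B(i,k,\l)$ by first passing from a counting problem for the frequencies $bw^i_{k,\ell}$ to a measure estimate for a neighborhood of a proper subvariety of the boundary, and then to invoke the flattening machinery of Section~\ref{sec:flattening}. The key geometric input is that the index $\ell\mapsto u_\ell\in\mf{n}^-_\a$ (the $\nminus_\a$-component of the transverse intersection point $n^-_{\rho,\ell}$, cf.~\eqref{eq:centers}) is, up to bounded distortion coming from the stable holonomy and the flow by $g_{s_{\rho,\ell}}$, a parametrization of the (projected) Patterson--Sullivan measure $\murho$ on $W_\rho$ by points in the limit set. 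More precisely, I would first record that the map sending $\ell$ to $u_\ell$ is injective with uniformly separated image at scale $\asymp e^{-\g(w+jT_0)}$ on each disk, so that $\#\set{\ell: u_\ell\in E}$ for a set $E\subseteq\mf{n}^-_\a$ is controlled by $e^{\d\g(w+jT_0)}\mu^s_{y_\rho}$-mass of a neighborhood of $E$ of that scale (this is where the doubling Proposition~\ref{prop:doubling} and the comparison~\eqref{eq:the x rho ells are comparable} enter); by the return time estimate~\eqref{eq:return time of x_rho,ell} and Proposition~\ref{prop:cusp adapted partition}, the relevant points all sit in a fixed compact set, so these constants are uniform.

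Next I would translate the largeness condition $|\hat\nu_i(bw^i_{k,\ell})|>b^{-\l}|\nu_i|$ into a non-concentration statement. Since $w^i_{k,\ell}=e^{-t_i}(u_k-u_\ell+s_k-s_\ell)$ and $e^{-t_i}\asymp r_i$ with $b^{-8/10}\ll r_i\ll b^{-7/10}$ by~\eqref{eq:size of t_i}, the frequencies $bw^i_{k,\ell}$ have size $\asymp b^{1/5}$ to $b^{3/10}$ in the $\nonepls$ direction by~\eqref{eq:size of separated frequencies}. Fixing $i$ and $k$, the set of $\ell$ with $|\hat\nu_i(bw^i_{k,\ell})|$ large is contained in the set of $\ell$ for which $b e^{-t_i}(u_k-u_\ell+s_k-s_\ell)$ lies in the ``large Fourier coefficient'' set $L_T:=\set{\norm{\xi}\leq T: |\hat\nu_i(\xi)|>T^{-\l'}}$ for appropriate $T\asymp b\cdot b^{-t_i}$ and a suitable $\l'$ comparable to $\l$; note $\nu_i=\pi_\ast\murhoi|_{N_1^+}$, and since $V(\yrho^i)\ll 1$ by~\eqref{eq:height of y_rho,i}, the measure $\murhoi$ has uniformly bounded conditional mass, so the constant in Corollary~\ref{cor:flattening} is uniform. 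The point is then that the map $\ell\mapsto u_\ell-u_k$ pushes (a scaled copy of) the measure $\mu^s_{y_\rho}$ forward, so the count of such $\ell$ is bounded by $e^{\d\g(w+jT_0)}$ times the $\mu^s$-measure of the preimage of $L_T$ under an affine map; by Corollary~\ref{cor:flattening} (the projected/non-uniform form of Corollary~\ref{cor:flattening intro}), $|L_T|\ll_\e T^\e$, and by the non-concentration of PS measures near proper affine subspaces established in Corollary~\ref{cor:aff non-conc of projections} together with Theorem~\ref{thm:flat implies friendly}, a set of small Lebesgue measure in the frequency side pulls back to a set of small PS measure (of the form $b^{-\k_0/10}+e^{-\k_0\g(w+jT_0)}$ after accounting for both the $b^{-1/10}$-separation defining $S_{\rho,j}$ and the disk width). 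Assembling, $\#B(i,k,\l)\ll_\e b^\e(b^{-\k_0/10}+e^{-\k_0\g(w+jT_0)})e^{\d\g(w+jT_0)}$, with $\l$ chosen small enough in terms of $\e$ and the decay exponent $\t$ in Corollary~\ref{cor:flattening}.

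The main obstacle I anticipate is keeping all the error exponents uniform across the scales that appear simultaneously: the frequency size is governed by $b^{1-t_i/\log b}$ with $t_i$ varying in a window, the disk $W_\rho$ has width $\iota_b=b^{-2/3}$, the separation threshold in $S_{\rho,j}$ is $b^{-1/10}$, and the ``recurrence to compact'' losses contribute factors $e^{O(\a j T_0)}$. Making sure the application of Corollary~\ref{cor:flattening} produces a genuine power saving $b^{-\k_0/10}$ (and not merely $b^{-\e}$) requires carefully tracking that the Lebesgue measure of the large-coefficient set is polynomially small relative to the total measure of the frequency ball \emph{after} rescaling by $be^{-t_i}$, and that the non-concentration exponent $\k$ from Theorem~\ref{thm:flat implies friendly} survives the affine pullback; this bookkeeping, rather than any single conceptual step, is where the proof will be delicate. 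A secondary point is that Corollary~\ref{cor:flattening} is stated for the measures themselves, whereas here we need it for the specific projections $\nu_i=\pi_\ast\murhoi|_{N_1^+}$, so one must cite the projected version (Remark~\ref{rem:examples+expanded ball}(3) and Corollary~\ref{cor:flattening}) and verify its non-concentration hypothesis via Corollary~\ref{cor:aff non-conc of projections}, which is precisely the reason Section~\ref{sec:friendly} is needed.
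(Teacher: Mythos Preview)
Your proposal has the right ingredients---Corollary~\ref{cor:flattening} applied to $\nu_i$ (with uniform non-concentration parameters coming from $V(\yrho^i)\ll 1$ via Corollary~\ref{cor:aff non-conc of projections}) and a count of how many frequencies $bw^i_{k,\ell}$ can cluster together---but the logical wiring in your second paragraph has a genuine gap. The claim that ``a set of small Lebesgue measure in the frequency side pulls back to a set of small PS measure'' is not correct as stated, and is not what Theorem~\ref{thm:flat implies friendly} or Corollary~\ref{cor:aff non-conc of projections} provide: those results control PS mass near \emph{proper affine subspaces}, not near arbitrary sets of small Lebesgue volume. A sparse set $L_T$ covered by $T^\e$ unit balls need not be contained in any subspace neighborhood, so non-concentration alone does not bound $\mu^s$ of its preimage.

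The paper's argument is both cleaner and shorter, and it bypasses this issue entirely by factoring through Proposition~\ref{prop:close pairs}, which you never invoke. The structure is a straight product estimate: Corollary~\ref{cor:flattening} (with $T=b^{4/10}$) covers the bad-frequency set $\mf{B}(\l)=\{\norm{\xi}\le T:|\hat\mu(\xi)|\ge T^{-\l}\}$ by $O_\e(T^\e)$ balls of radius~$1$; then, by the separation estimate from Lemma~\ref{lem:temp function formula}, two indices $\ell_1,\ell_2$ whose frequencies $bw_{\ell_1},bw_{\ell_2}$ lie in the same unit ball satisfy $\norm{u_{\ell_1}-u_{\ell_2}}\ll b^{-2/10}\le b^{-1/10}$, i.e.\ they form a close pair in the sense of~\eqref{eq:S_rho,j}. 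Proposition~\ref{prop:close pairs} then bounds the number of such $\ell$'s per ball by $(b^{-\k_0/10}+e^{-\k_0\g(w+jT_0)})e^{\d\g(w+jT_0)}$, and multiplying by the number of balls gives the theorem. There is no need to push forward or pull back any PS measure in this step; all of that work is already packaged inside Proposition~\ref{prop:close pairs}. Your anticipated ``main obstacle'' of tracking error exponents across scales largely dissolves once you use the covering-number form of Corollary~\ref{cor:flattening} rather than the Lebesgue-measure form, and cite Proposition~\ref{prop:close pairs} as a black box for the per-ball count.
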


 \subsection*{Combining estimates on oscillatory integrals}
Let $\b_0$ and $\k_0>0$ be as in Propositions~\ref{prop:cusp adapted partition} and~\ref{prop:close pairs} respectively.
In what follows, we assume $\e$ is chosen smaller than $\k_0/100$ and that $\l\leq \min\set{\b_0, 3/100,\k_0/20} $.
Let 
\begin{align*}
    Q = (b^{-\k_0/20} + b^{\e} e^{-\k_0 \g(w+jT_0)}) e^{\d \g(w+jT_0)}.
\end{align*}
Theorem~\ref{thm:counting bad frequencies}, combined with~\eqref{eq:Cauchy-Schwarz},~\eqref{eq:before inverse theorem} and~\eqref{eq:bound by count}, yields:
\begin{align}\label{eq:combined good and bad frequencies}
    \int_{\R\times W_\rho} \Psi_\rho(t,n) F_\g(g_tny_\rho)\;d\mu^u_{y_\rho} dt
    \ll e^\star_{1,0}(f) e^{3\b\a jT_0}
      \mu^u_{y_\rho}(W_\rho)
      \times \bigg(b^{-\l/2} \#I_{\rho,j}
      + \sqrt{\#I_{\rho,j}\times Q}\bigg),
\end{align}
where we used the elementary inequality $\sqrt{x+y}\leq \sqrt{x}+\sqrt{y}$ for any $x,y\geq 0$.

Our next goal is to estimate the sum of the above bound over $\rho$.
Recall that $\murho(W_\rho)\asymp \mu^u_{x_{\rho,\ell}}(W_\ell) \asymp \murho(N^+_{\iota_b})$ for all $\ell\in I_{\rho,j}$ by Prop.~\ref{prop:doubling}.
Hence, the Cauchy-Schwarz inequality yields
\begin{align*}
    \sum_{\rho\in\Pcal_b}  \mu^u_{y_\rho}(W_\rho)
      \sqrt{ \#I_{\rho,j} }
     &\ll
    \left(\sum_{\rho\in\Pcal_b} \mu_{y_\rho}^u(W_\rho)
    \times
    \sum_{\rho\in\Pcal_b,\ell \in I_{\rho,j}} \mu^u_{x_{\rho,\ell}}(W_\ell)\right)^{1/2}
    \nonumber\\
    &\ll  e^{O_\b(\a jT_0)} \times  \mu_x^u(N_1^+)^{1/2} 
    e^{\d(\g(w+jT_0)/2 }
    ,
\end{align*}
where the second inequality follows by Lemma~\ref{lem:weighted number of boxes} and~\eqref{eq:total mass}.
By definition of $\muxu$ and Lemma~\ref{lem:ht vs dist}, we have that $\muxu(N_1^+)\gg e^{-\d \dist(x,o)} \gg V(x)^{O_\b(1)}$.
Hence, we get
\begin{align}\label{eq:estimate on close pairs}
    \sum_{\rho\in\Pcal_b}  \mu^u_{y_\rho}(W_\rho)
      \sqrt{ \#I_{\rho,j} }
    \ll \mu_x^u(N_1^+) 
    \times  e^{\d(\g(w+jT_0)/2 +O_\b(\a jT_0)},
\end{align}
We also note that a similar argument to~\eqref{eq:total mass} yields $
     \sum_{\rho\in\Pcal_b}  \mu^u_{y_\rho}(W_\rho) \#I_{\rho,j}
    \ll  
     e^{\d\g(w+jT_0)} \mu_x^u(N_1^+)$.

 Recall that $\l/2\leq \k_0/40$.
It follows that upon combining the above estimate with~\eqref{eq:combined good and bad frequencies} and~\eqref{eq:estimate on close pairs}, we obtain the following bound on the sum of the integrals in~\eqref{eq:combined good and bad frequencies}:
\begin{align*}
 & e^{-\d\g(w+jT_0)} 
    \sum_{\rho\in \Pcal_b } 
     \int_{\R\times W_\rho} \Psi_\rho(t,n) F_\g(g_tny_\rho)\;d\mu^u_{y_\rho} dt 
          \nonumber\\ 
   & \ll  e^\star_{1,0}(f)   \mu_{x}^u(N_1^+)\times e^{O_\b(\a jT_0/2)}
  \times
         \left(  
     b^{-\l/2}
    +
         b^{\e/2} e^{-\k_0 \g(w+jT_0)/2} \right), 
\end{align*} 
where we again used the inequality $\sqrt{x+y}\leq \sqrt{x}+\sqrt{y}$.

Using~\eqref{eq:sum over j} to sum the above error terms over $j$ and $w$, we obtain 
\begin{align}\label{eq:oscillatory int error}
    O_{T_0,\e}\Bigg ( 
   e^\star_{1,0}(f)  \mu_{x}^u(N_1^+) 
    \times \Bigg[ 
   \frac{b^{-\l/2}}{(a -O_\b(\a))^m}
    + \frac{b^{\e/2}}
    {(a +\k_0\g/2)^m}
    \Bigg]\Bigg).
\end{align}

To simplify the above bound, recall that $\l$ is chosen according to Theorem~\ref{thm:counting bad frequencies} and hence its size depends on $\e$, however $\k_0$ is given by Proposition~\ref{prop:close pairs} and is independent of $\e$. Moreover, $\g=1/2,\l$ and $\k_0$ are independent of $a$ and $\a$, and we are free to choose the parameter $\a$ as small as needed.
We also recall that $m=\lceil\log b\rceil$; cf.~\eqref{eq:m and b}.
Hence, we may choose $a,$ and $\e$ small enough relative to $\k\g$ to ensure that
\begin{align*}
    \frac{e^{\e/2}}{a+\k\g/2} \leq 
    \frac{1}{a+\k\g/3}.
\end{align*}
Using the bound $e^{-\l/2}\leq 1/(1+\l/2)$ and taking $\a$ small enough, depending on $a,\b$ and $\l$, we obtain
\begin{align*}
    \frac{e^{-\l/2}}{a -O_\b(\a)} \leq 
    \frac{1}{a+a\l/4}.
\end{align*}
Hence, taking $a$ small enough so that $a\l/4\leq \k\g/3$, the error term in~\eqref{eq:oscillatory int error} becomes
\begin{align}\label{eq:osc int error simplified}
    O\left( \frac{e^\star_{1,0}(f) V(x)  \mu_{x}^u(N_1^+)}{(a +a\l/4)^m} \right),
\end{align}
where we used the inequality $V(x)\gg 1$.

\subsection{Parameter selection and conclusion of the proof} \label{sec:parameter}
In this subsection, we finish the proof of Theorem~\ref{thm:Dolgopyat} assuming Lemma~\ref{lem:temp function formula}, Proposition~\ref{prop:close pairs}, and Theorem~\ref{thm:counting bad frequencies}.

Collecting the error terms in Lemma~\ref{lem:large j},~\eqref{eq:first term},~\eqref{eq:cusp contribution},~\eqref{eq:replace with mollifier},~\eqref{eq:zeta_j contribution},~\eqref{eq:divergent contribution},~\eqref{eq:stable derivative}, and~\eqref{eq:osc int error simplified}, and letting $\s_\star>0$ be the minimum of all the gains in these error terms, we obtain
\begin{align*}
    e^\star_{1,0}(R(z)^{m}f)
    \ll  \frac{\norm{f}^\star_{1,B}}{(a+\s_\star)^m}.
\end{align*}
Letting $C_\G$ denote the implied constant, this estimate concludes the proof of Theorem~\ref{thm:Dolgopyat}.



\section{The temporal function and proof of Lemma~\ref{lem:temp function formula}}
\label{sec:temporal function}

In this section, we give an explicit formula for the temporal functions $\t_{k,\ell}$ appearing in Section~\ref{sec:Dolgopyat} and prove Lemma~\ref{lem:temp function formula}.
Our argument is Lie theoretic.
We refer the reader to~\cite[Chapter 1]{Knapp} for background on the material used in this section. 
Similar results are known more generally outside of the homogeneous setting by more dynamical/geometric arguments building on work of Katok and Burns~\cite{KatokBurns}.

\subsection{Proximal representations and temporal functions}

Let $\rho: G\r H:=\mrm{SL}_n(\R)$ be a proximal irreducible representation of $G$, i.e., $\rho$ is irreducible and the top eigenspace of $\rho(g_1)$ is one-dimensional.
The existence of such a representation is guaranteed by~\cite{Tits-RepsReductiveGps}.
In what follows, we suppress $\rho$ from the notation and view $G$ as a subgroup of $H$ and view elements of the Lie algebra of $G$ as (traceless) matrices in $\mf{h}=\mrm{Lie}(H)$.

Let $\set{e_i}$ denote the standard basis of $\R^n$. Without loss of generality, we assume that $e_1$ is a top eigenvector for $g_1$ and denote by $e^\l>1$ the corresponding top eigenvalue.
Up to a change of basis, we shall further assume that $N^+$ (resp.~$N^-$) consists of upper (resp.~lower) triangular matrices.

Given a matrix $h\in \mrm{SL}_n(\R)$, we let $\pi_0(h)$ denote its top left entry.
In particular, $\pi_0(h) =1$ for all $h\in N^+\cup N^-$.
Moreover, since $\rho$ is proximal, $M$ acts trivially on the top eigenspace of $g_1$, where we recall that $M$ denotes the centralizer of the geodesic flow inside the maximal compact subgroup of $G$.
It follows that $\pi_0(m)=1$ for all $m\in M$.
Finally, we have the simple formula
\begin{align*}
    t = \l^{-1}\log \pi_0(g_t).
\end{align*}
These observations will allow us to compute the functions $\t_k^i$ using elementary matrix calculations.

Let $X\in \mf{n}^-$ and $Y\in \mf{n}^+$ be sufficiently close to $0$.
As the product map $ M\times A\times N^+ \times N^-\to G$ is a diffeomorphism near identity, 
there exist unique $\t_X(Y)\in \R$ and $\phi_X(Y)\in \mf{n}^+$ such that
\begin{align}\label{eq:product temporal}
    \exp(Y)\exp(X) \in N^-M g_{\t_X(Y)} \exp(\phi_X(Y)), 
\end{align}
where $\exp(Z) = \sum_{n\geq 0} \frac{Z^n}{n!}$.
To compute $\t_X(Y)$, for a matrix $M$, let $r_1(M)$ and $c_1(M)$ denote its top row and first column respectively.
Then,~\eqref{eq:product temporal} shows that $\pi_0(\exp(Y)\exp(X)) = 1+ r_1(Y)\cdot c_1(\exp(X)) + O(\norm{Y}^2)$.
Hence, 
\begin{align}
    \t_X(Y) = \l^{-1}\log (1+ r_1(Y)\cdot c_1(\exp(X)) + O(\norm{Y}^2)).
\end{align}

\subsection{Proof of the first assertion of Lemma~\ref{lem:temp function formula}}

Fix $k\in I_{\rho,j}$
 and recall the elements $n_{\rho,k}^-\in N^-$ which were defined by the displacement of the points $x_{\rho,k}$ from $y_\rho$ along $N^-$ inside the flow box $B_\rho$; cf.~\eqref{eq:centers}.
 We also recall the elements $u_k\in\noneminus$ and $s_k\in \ntwominus$ chosen so that $n^-_{\rho,k} = \exp(u_k+s_k)$.
In what follows, we set $ X_k:=u_k+s_k$.
Given $Y\in \npls$, we write $Y_\a$ and $Y_{2\a}$ for its $\nonepls$ and $\ntwopls$ components respectively.

Recall the vectors $w_i=v_i+r_i\in\npls$ defined above Lemma~\ref{lem:temp function formula}, where $v_i$ and $r_i$ denoted the $\nonepls$ and $\ntwopls$ components of $w_i$ respectively.
We also recall the return times $t_i$ in~\eqref{eq:after moving to cpt}.
For $Y\in \npls$, let $Y^i = \log(\exp(\Ad(g_{-t_i})(Y))\exp(w_i))\in\npls$.
In particular, $Y^i$ takes the form
\begin{align*}
    Y^i = w_i +  e^{-t_i}Y_\a + e^{-t_i}[Y_\a,v_i]/2 + e^{-2t_i}Y_{2\a}  .
\end{align*}
In this notation, we have by definition of the functions $\t_k$ (cf.~\eqref{eq:def of tau_ell}) and $\t_k^i$ (cf.~\eqref{eq:after moving to cpt}) that $\t_k^i(Y)=\t_k(Y^i)$.
Moreover, it follows from the definition of $\t_k$ and~\eqref{eq:stable hol commutation} that 
\begin{align*}
    g_{t_{\rho,k}}\exp(\phi_{X_k}^{-1}(Y^i)) \exp(X_k) \in N^-M g_{\t_k(Y^i)} \exp(Y^i).
\end{align*}
Rearranging this identity and using the fact that $g_t$ normalizes $N^-M=MN^-$, we obtain
\begin{align*}
    \exp(Y^i) \exp(-X_k) \in N^- M g_{t_{\rho,k}-\t_k(Y^i)}\exp(\phi_{X_k}^{-1}(Y^i)),
\end{align*}
where we used the fact that $\exp(X_k)^{-1}=\exp(-X_k)$.
We thus obtain the formula
\begin{align}\label{eq:temp formula}
    \t_k(Y^i) =  t_{\rho,k}-\t_{-X_k}(Y^i) ,
\end{align}
where the notation is as in~\eqref{eq:product temporal} above.

Define a bilinear form $\langle \cdot,\cdot \rangle: \nminus \times \nonepls \r \R$ by
\begin{align}\label{eq:bilinear form}
    \langle X, Y_\a \rangle := \l^{-1}  r_1(Y_\a) \cdot c_1\left(X \right).
\end{align}
Recall that our flow boxes $B_\rho$ have radius $\leq b^{-2/3}$ in the unstable direction; cf.~\eqref{eq:recurrent boxes}.
In particular, $\norm{w_i}\ll b^{-2/3}$.
Moreover, by~\eqref{eq:size of t_i}, we have that $e^{-t_i}\ll b^{-7/10}$. Hence, we get that $\norm{Y^i}\ll b^{-2/3}$. 
Note further that since $e^{-t_i}v_i = O(b^{-4/3})$ by~\eqref{eq:size of t_i}, we also have that $Y^i = w_i+e^{-t_i}Y_\a + O(b^{-4/3})$.
Finally, note by Lemma~\ref{lem:elementary properties}\eqref{item:perp roots} below that for all $n\geq 2$ and $X\in \mf{n}^+$, $r_1(Y_\a)\cdot c_1(X^n)=0$.
In particular, since $Y_\a$ is strictly upper triangular, $r_1(Y_\a)\cdot c_1(\exp(X_k))=r_1(Y_\a)\cdot c_1(X_k)$.

Let $d_k^i := 1+r_1(w_i)\cdot c_1(X_k)$.
Then, using the estimate $\log(1+x)=x +O(|x|^2)$ for $x$ near $0$, the above discussion yields
\begin{align*}
    \t_k(Y^i) = t_{\rho,k}- \l^{-1} d_k^i +  
    e^{-t_i} \langle X_k, Y_\a \rangle +O(b^{-4/3}).
\end{align*}
Thus, taking $c^i_{k,\ell} = t_{\rho,k}- \l^{-1} d_k^i-t_{\rho,\ell}+ \l^{-1} d_\ell^i$, we obtain
\begin{align*}
    \t_k^i(Y)-\t_\ell^i(Y) = c^i_{k,\ell} + e^{-t_i}
    \langle X_k-X_\ell ,Y_\a \rangle + O(b^{-4/3}),
\end{align*}
which completes the proof of the first assertion of Lemma~\ref{lem:temp function formula}.

\subsection{Norm of the bilinear form}
The following lemma is needed to prove the second assertion of Lemma~\ref{lem:temp function formula} in the next subsection.
Recall that $\mf{h}$ denotes the Lie algebra of $\mrm{SL}_n(\R)$. Let $\mf{h}_{\b}$ denote the $\Ad(g_t)$-eigenspace with eigenvalue $e^{\b t}$. In particular, $\ntwominus \subseteq \mf{h}_{-2\a}$. 
\begin{lem}\label{lem:elementary properties}
    \begin{enumerate}
        
        \item\label{item:perp roots} 
        For every $Y\in \mf{h}_\a$ and $Z\in \mf{h}_{-k\a}$, $k\geq 2$, we have $r_1(Y)\cdot c_1(Z)=0$.

        \item\label{item:injective} The restriction of the linear map $X\mapsto c_1(X)$ to $\noneminus$ is injective.

    \end{enumerate}
\end{lem}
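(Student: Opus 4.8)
The statement to prove is Lemma~\ref{lem:elementary properties}, consisting of two elementary facts about the proximal representation $\rho:G\to\mrm{SL}_n(\R)$. Both are claims about the interaction between the weight-space decomposition under $\Ad(g_t)$ and the distinguished rank-one vector $e_1$ (the top eigenvector of $g_1$), so the plan is to extract everything from the eigenvalue bookkeeping of $g_t$ acting on $\R^n$.

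For part~\eqref{item:perp roots}: the key observation is that the row vector $r_1(\cdot)$ picks out the $e_1$-coordinate of a row, equivalently pairs against $e_1^\ast$, while $c_1(\cdot)$ picks out the first column, equivalently the image of $e_1$. Since $e_1$ spans the top $g_1$-eigenspace with eigenvalue $e^\l$, conjugation by $g_t$ acts on the entry $(M)_{1,1}$-type pairing $r_1(Y)\cdot c_1(Z)$ by a definite scalar. Concretely, for $Y\in\mf{h}_\a$ and $Z\in\mf{h}_{-k\a}$, the matrix entry $e_1^\ast(YZ e_1)$ transforms under $\Ad(g_t)$: on one hand $\Ad(g_t)Y = e^{\a t}Y$ (here $\a$ denotes the simple restricted root exponent, normalized so $\mf{h}_\a$ is the $e^{\a t}$-eigenspace — I will be careful to match the paper's convention that $\l = \chi_\K\a$ or the appropriate multiple), and similarly for $Z$; on the other hand $g_t e_1 = e^{\l t}e_1$ and $e_1^\ast g_{-t} = e^{-\l t}e_1^\ast$, so the pairing $e_1^\ast(g_t Y g_{-t}\,g_t Z g_{-t}\, e_1)\cdot e^{\l t}$ versus $e^{(\a - k\a)t}$ times the original forces the pairing to vanish whenever the two exponents disagree, i.e. whenever $k\neq 1$, in particular for $k\geq 2$. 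I would spell this out as: $e_1^\ast(YZe_1)$ is an eigenvector for the $\Ad(g_t)$-action weighted appropriately, and a nonzero number cannot be a nontrivial eigenvector, hence it is zero. A cleaner phrasing: $Ze_1 \in \mf{h}_{-k\a}\cdot e_1$ lies in the $g_t$-eigenspace of eigenvalue $e^{(\l - k\a)t}$ (or is zero), and $r_1(Y) = e_1^\ast Y$ is a linear functional supported on the $g_t$-eigenspace of eigenvalue $e^{(\l+\a)t}$ when precomposed appropriately; since $\l - k\a \neq \l + \a$ for $k\geq 2$ (indeed $\geq 2$ forces $-k\a < \a$ strictly, as $\a$ corresponds to an expanding direction), these are orthogonal, giving the claim. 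Actually the simplest route: $c_1(Z) = Ze_1$; for $Z \in \mf{h}_{-k\a}$ with $k \ge 2$, $Ze_1$ lies in a $g_t$-weight space strictly below the top, hence is orthogonal to $e_1^\ast$ composed with any weight-$\a$ operator applied — I will just compute $r_1(Y)\cdot c_1(Z) = e_1^\ast Y Z e_1$ and show $YZe_1$ has no $e_1$-component by weight reasons.

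For part~\eqref{item:injective}: I need that $X\mapsto c_1(X) = Xe_1$ is injective on $\noneminus = \mf{n}^-_\a$. Suppose $Xe_1 = 0$ for some $X\in\mf{n}^-_\a$. Irreducibility of $\rho$ means $\R^n$ is generated by $e_1$ under $G$; more usefully, since $v_0$ (here $e_1$) is a highest weight vector and $N^-$ together with $M$, $A$ generates the lowered weight spaces, the map $X \mapsto Xe_1$ from $\mf{n}^-_\a$ should be injective because $\mf{n}^-_\a e_1$ spans the next weight space down and the action is faithful there. The rigorous argument: the kernel of $X\mapsto Xe_1$ on $\mf{n}^-$ is an ideal-like object; using the $\mf{sl}_2$-triples coming from restricted roots (as the paper does elsewhere, e.g. in Section~\ref{sec:linear expand}), each root vector $X_{-\a}$ in $\mf{n}^-_\a$ corresponds to a lowering operator of an $\mf{sl}_2$-copy acting nontrivially on the highest weight vector $e_1$ (since $e_1$ has positive weight under the corresponding Cartan element, being the top eigenvector of $g_1$), so $X_{-\a}e_1 \neq 0$; decomposing a general $X\in\mf{n}^-_\a$ into root vectors and using that distinct root vectors send $e_1$ to linearly independent (distinct weight) vectors gives injectivity. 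I expect \textbf{part~\eqref{item:injective} to be the main obstacle}, since it requires genuinely invoking irreducibility/proximality rather than pure weight bookkeeping, and some care is needed about multiplicities of the root space $\mf{n}^-_\a$ and whether all of it acts injectively on $e_1$ — the proximality hypothesis (top eigenspace one-dimensional) is what should rescue this, ensuring the weight of $e_1$ is extremal and that the lowering operators from $\mf{n}^-_\a$ inject. I would write this out carefully using the $\mf{sl}_2$-reduction already employed in the paper, citing \cite{Knapp} for the representation theory of $\mf{sl}_2(\R)$.
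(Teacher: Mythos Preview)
Your approach to part~\eqref{item:perp roots} is correct and matches the paper's: the paper notes $\pi_0([Y,Z])=r_1(Y)\cdot c_1(Z)$ and observes that $[Y,Z]\in\mf{h}_{-(k-1)\a}$ is strictly lower triangular for $k\geq 2$, which is your weight bookkeeping in matrix form.

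For part~\eqref{item:injective} there is a genuine gap. The decomposition into root vectors is a red herring: the map $X\mapsto Xe_1$ is linear, so injectivity is simply $Xe_1\neq 0$ for each nonzero $X\in\noneminus$, and your ``distinct weight'' claim is false since all of $\noneminus$ carries the single restricted weight $-\a$. More importantly, your $\mf{sl}_2$ argument tacitly assumes that the Cartan element $H$ of the triple through $X$ lies in $\Lie(A)$, so that $e_1$ automatically has positive $H$-weight; but a generic $\mf{sl}_2$-triple only gives $H\in\Lie(M)\oplus\Lie(A)$, and you do not explain why the $\Lie(A)$-component is nonzero with the correct sign. The paper supplies precisely this via the Cartan involution: taking $\th$ to be the restriction to $\Lie(G)$ of the negative-transpose map, one has $\th(X)\in\nonepls$ and $\th([X,\th(X)])=-[X,\th(X)]$, forcing $[X,\th(X)]\in\Lie(A)$. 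Then $B([X,\th(X)],\w)=-B(X,\th(X))>0$ by positive-definiteness of $-B(\cdot,\th(\cdot))$, so $[X,\th(X)]$ is a nonzero element of the one-dimensional space $\Lie(A)$ and hence $\pi_0([X,\th(X)])\neq 0$. A direct matrix computation (using $r_1(X)=0$ and $\th(X)=-X^T$) identifies $\pi_0([X,\th(X)])$ with $\norm{c_1(X)}^2$, giving injectivity. This is your $\mf{sl}_2$ idea made rigorous, with the Cartan involution the key missing ingredient.
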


\begin{proof}
    
    Let $\th:\Lie(G)\to \Lie(G)$ denote a Cartan involution preserving $\Lie(K)$ and acting by $-\id$ on $\Lie(A)$. In particular, $\th$ sends $\npls$ onto $\nminus$ while respecting their decompositions into $\Ad(g_t)$-eigenspaces. 
    By~\cite{Mostow-Selfadjoint}, we may assume that $\th$ is the restriction to $\Lie(G)$ of the map on $\mf{h}$ that sends each matrix to its negative transpose.
    Finally, let $B(\cdot,\cdot)$ denote the Killing form on $\Lie(G)$ and recall that the quadratic form $-B(\cdot, \th(\cdot))$ is positive definite.
    
To show Item~\eqref{item:perp roots}, note that, since $Z$ is strictly lower triangular and $Y$ is strictly upper triangular, we have $\pi_0([Y,Z]) = r_1(Y)\cdot c_1(Z)$, where we recall that $\pi_0(\cdot)$ is the top left entry.
On the other hand, $[Y,Z]$ is strictly lower triangular since $k\geq 2$ and $[\mf{h}_\a,\mf{h}_{-k\a}] \subseteq \mf{h}_{-(k-1)\a}$.
The claim follows.
    
For Item~\eqref{item:injective}, fix an arbitrary non-zero element $X\in \noneminus$ and let $Z=[X,\th(X)]$. 
Note that, since $\th$ is a Lie algebra homomorphism, then $\th(Z) = -Z$.
In particular, $Z$ belongs to $\Lie(A)$.
Moreover, we claim that $Z\neq 0$, and, hence, $\pi_0(Z)\neq 0$.
Indeed, let $\w\in \Lie(A)$ be such that $[\w,\cdot]$ fixes $\nonepls$ pointwise.
Then, by properties of the Killing form, we obtain
\begin{align*}
    B(Z,\w) = B(X,[\th(X),\w]) = -B(X,\th(X))\neq 0,
\end{align*}
since $X\neq 0$ and $-B(\cdot,\th(\cdot))$ is positive definite.
In particular, arguing as in the first part, we have that $\norm{c_1(X)}^2= |\pi_0(Z)| \neq 0$, concluding the proof.
\qedhere
\end{proof}

\subsection{Proof of the second assertion of Lemma~\ref{lem:temp function formula}}
Fix $(u,s)\in \noneminus\times \ntwominus$. 
We show that $\sup |\langle u+s, Y\rangle| \gg \norm{u}$, where the supremum ranges over all $Y\in \nonepls$ with $\norm{Y}=1$.
Let $Y= \th(u)/\norm{u}$.
    Then, by Lemma~\ref{lem:elementary properties}\eqref{item:perp roots}, we have that 
    \begin{align*}
        \langle u+s, Y\rangle = \l^{-1} r_1(Y)\cdot c_1(u).
    \end{align*}
    Hence, since $\th(u)$ is the negative transpose of $u$,  $  \langle u+s, Y\rangle = -\l^{-1}\norm{c_1(u)}^2/\norm{u}$.
    It follows by Lemma~\ref{lem:elementary properties}\eqref{item:injective} and equivalence of norms that $\norm{u}\asymp \norm{c_1(u)}$, concluding the proof of Lemma~\ref{lem:temp function formula}.

\section{Dimension Increase Under Iterated Convolutions}
\label{sec:flattening}

The goal of this section is to prove that measures that do not concentrate near proper affine subspaces in $\R^d$ become smoother under iterated self-convolutions in the sense of quantitative increase in their $L^2$-dimension; cf.~Theorem~\ref{thm:quant ellq improvement} below. This result immediately implies Theorem~\ref{thm:ellq improvement}.
As a corollary, we deduce that the Fourier transforms of such measures enjoy polynomial decay outside of a very sparse set of frequencies; cf.~ Corollary~\ref{cor:flattening}.
In fact, we prove that such results hold for certain \textit{projections} of non-concentrated measures.

Corollary~\ref{cor:flattening} provides the key ingredient in the proof of Theorem~\ref{thm:counting bad frequencies} where it is applied to (projections of) conditional measures of the BMS measure.
Moreover, the proof of Proposition~\ref{prop:close pairs} in the case of cusped non-real hyperbolic manifolds requires a polynomial non-concentration estimate near hyperplanes which we deduce from Theorem~\ref{thm:quant ellq improvement}; cf.~Theorem~\ref{thm:flat implies friendly}.

\subsection{General setting}\label{sec:projection setup}

Throughout this section, $N$ denotes a connected nilpotent group, equipped with a right-invariant metric. 
Affine subspaces of $N$ are defined analogously to Definition~\ref{def:aff subspace of nilpotent}.
Given $\rho>0$, $W\subset N$ and $x\in N$, we write $W^{(\rho)}$ and $B(x,\rho)$ for $\rho$-neighborhood of $W$ and $\rho$-ball around $x$ respectively.
We fix a surjective homomorphism $\pi:N \r \R^d $ from $N$ onto $\R^d$.
We assume that $N$ is equipped with a compatible $1$-parameter group of dilation automorphisms, which we denote $g_t$, such that for all $x\in N$, $t\in\R$ and $r>0$,
\begin{align}\label{eq:scaling equivariance under pi}
    \pi(g_t \cdot x) = e^t \pi(x), \qquad \text{and} \qquad \dist(g_t\cdot x,\id) = e^t \dist(x,\id).
\end{align}

 We further fix $\widetilde{\L}$ to be a lattice in $N$ (i.e.~a discrete cocompact subgroup) and let $D\subset N$ be a fundamental domain for $\widetilde{\L}$ containing the identity element.
By scaling $\widetilde{\L}$ using $g_{-t}$ if necessary, we shall assume without loss of generality that $D\subseteq B(\id,1)$.
Up to composing $\pi$ with a linear change of basis, we shall assume that
\begin{align*}
    \pi(\widetilde{\L})=\Z^d, \qquad 
    \pi(D) = [0,1)^d.
\end{align*}
Finally, we fix a norm on $\R^d$ and assume that its induced metric is compatible with the metric on $N$ in the sense that there is a uniform $c\geq 1$ such that for all sets $E$ and $\rho >0$, we have
\begin{align}\label{eq:pullback metric}
    \pi^{-1}(E^{(\rho)}) \subseteq \pi^{-1}(E)^{(c\rho)}.
\end{align}

\begin{examples}\label{ex:examples of framework}
\begin{enumerate}
    \item In our application in this article, $N$ will be $N^{\pm}$, equipped with the Cygan metric from Section~\ref{sec:Carnot}, and  $\pi$ will be the projection onto the abelianization $N/[N,N]\cong \R^d$ (which is the identity map in the real hyperbolic case where $N$ is abelian).

    \item Another interesting example that falls under our setting is $N=\R^D$ for some $D\geq d$, $\pi$ is a standard projection, and $g_t$ is a diagonal matrix, where one equips $N$ with a suitable analog of the Cygan metric which satisfies the above scaling properties.
    In particular, we anticipate that the results of this section will have applications towards the study of fractal geometric properties of self-affine measures and their projections.
\end{enumerate}
\end{examples}

\subsection{Non-uniform affine non-concentration} 

We begin by introducing our non-concentration hypothesis, which allows for exceptional sets of points and scales where concentration may happen.

\begin{definition}\label{def:aff non-conc}
    
    Let positive functions $\l$, $\vp$, and $C$ on $(0,1]$ be given such that $\vp(x) \xrightarrow{x\to 0} 0$.
    We say a Borel measure $\mu$ on $N$ is \textit{$(\l,\vp,C)$-affinely non-concentrated at almost every scale} (or $(\l,\vp,C)$-ANC for short)  
    if the following holds. 
    For every $0<\e,\th\leq 1$, $k\in\N$, and $r\geq C(\th)$:
    \begin{enumerate}
        
        \item 
        There is an exceptional set $\Ecal = \Ecal(k,\e,\th,r)\subset  B(\id,2)$ with $\mu(\Ecal)\leq C(\th) 2^{-\l(\th) k} \mu(B(\id,2))$.

        \item\label{item:non-conc def 4} For every $x\in B(\id,2)\cap \mrm{supp}(\mu)\setminus \Ecal$, there is a set of good scales $\Ncal(x)\subseteq [0,k]\cap \N$ with $\# \Ncal(x) \geq (1-\th) k$.
        
        \item For every $x\in B(\id,2)\cap \mrm{supp}(\mu)\setminus \Ecal$, every affine subspace $W < N$, every $\ell\in\Ncal(x)$ and $\rho \asymp 2^{-r\ell}$, we have
        \begin{align}\label{eq:affine non-conc}
            \mu( W^{(\e \rho)} \cap B(x,\rho)) \leq (\vp(\th)+C(\th)\vp(\e)) \mu(B(x,\rho)).
        \end{align}
    \end{enumerate}
    We say $\mu$ is ANC at almost every scale when the parameters are understood from context.
    \end{definition}

In words, this definition says that $\mu$ exhibits strong non-concentration near proper subspaces at nearly all scales outside of a small exceptional set, however the size of the exceptional set is allowed to depend on the strength and frequency of non-concentration.

\begin{remark}

    \begin{enumerate}
        \item Note that we do not require $\mu$ to be a probability measure or compactly supported. 
        Instead, we require that non-concentration holds uniformly over all balls centered in a given ball around identity (outside of some exceptional set).
        This flexibility allows us to avoid edge effects in verifying~\eqref{eq:affine non-conc} for the restrictions of the PS conditional measures $\mu_x^u$ to bounded balls.

        \item For purposes of following the arguments in this section, there is no harm in considering the example $\l(x)=\b x$ for some $\b>0$ and the stronger bound
     \begin{align*}
            \mu( W^{(\e \rho)} \cap B(x,2^{-r\ell})) \leq C(\th)\vp(\e) \mu(B(x,\rho)),
    \end{align*}
    in place of~\eqref{eq:affine non-conc}. In fact, the measures $\muxu$ are shown to satisfy this bound in Corollary~\ref{cor:aff non-conc of projections}.
    \end{enumerate}
\end{remark}

\subsection{The $L^2$-flattening theorem}

For $k\in \N$, let 
\begin{align*}
    \L_k:= 2^{-k} \Z^d,
\end{align*}
and let $\Dcal_k$ be the dyadic partition of $\R^d$ given by translates of $2^{-k}[0,1)^d$ by $\L_k$.
    For $x\in \R^d$, we denote by $\Dcal_k(x)$ the unique element of $\Dcal_k$ containing $x$.
    For a Borel probability measure $\nu$, we define $\nu_k\in \mrm{Prob}(\L_k)$ to be the scale-$k$ discretization of $\nu$, i.e. 
    \begin{align}\label{eq:discretized measure}
        \nu_k = \sum_{\l\in\L_k} \nu(\Dcal_k(\l)) \d_{\l}.
    \end{align}
    For any $\mu\in \mrm{Prob}(\L_k)$ and $0<q<\infty$, we set $
        \norm{\mu}_q^q :=  \sum_{\l\in\L_k} \mu(\l)^q$.
    The \textit{convolution} $\mu\ast \nu$ of two probability measures $\mu$ and $\nu$ on $\R^d$ is defined by
    \begin{align*}
        \mu\ast \nu (A) = \int \int 1_A(x+y) \;d\mu(x)\;d\nu(y),
    \end{align*}
    for all Borel sets $A\subseteq \R^d$.
      Recall the setup in Section~\ref{sec:projection setup}.

    \begin{thm}\label{thm:quant ellq improvement}
    Let $\l$, $\vp$ and $C$ be given.
    For every $\e>0$, there exist $n,k_1\in \N$ such that the following holds.
    Let $\tilde{\mu}$ be a $(\l,\vp,C)$-ANC Borel measure on the gorup $N$.  
    Let $\mu$ be the projection to $ \R^d$ of $\tilde{\mu}\left|_{B(\id,1)}\right.$ under $\pi$, normalized to be a probability measure.
   Then, for every $k\geq k_0$, we have
    \begin{align*}
        \norm{\mu_k^{\ast n}}^2_2 \ll_{\e,d,n} 2^{-(d-\e)k},
    \end{align*}
    with implicit constant depending only on $d$ and the non-concentration parameters of $\tilde{\mu}$.
    In particular, for all $P\in \Dcal_k$, we have
    \begin{align*}
        \mu_k^{\ast 2n}(P) \ll_{\e,d,n} 2^{-(d-\e)k}.
    \end{align*}
\end{thm}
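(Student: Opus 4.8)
\textbf{Proof strategy for Theorem~\ref{thm:quant ellq improvement}.}
The plan is to run a ``bootstrap'' / multiscale increment argument in the spirit of Shmerkin's proof of the Furstenberg intersection conjecture, but using the higher-dimensional additive-combinatorial machinery (the asymmetric Balog--Szemer\'edi--Gowers Lemma and Hochman's inverse theorem for the entropy of convolutions) in place of Shmerkin's $1$-dimensional inverse theorem. The key quantitative engine will be Proposition~\ref{prop:discretized flattening} (the discretized $L^q$-flattening statement advertised in the introduction): it should say that if a discretized measure $\nu$ at scale $k$ has $\norm{\nu}_q$ not much smaller than the trivial bound (i.e.\ $\dim_q$ bounded away from $d$), then either its self-convolution $\nu\ast\nu$ has strictly larger $L^q$-dimension, or else $\nu$ must concentrate near a proper affine subspace at a definite proportion of scales. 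The ANC hypothesis on $\tilde\mu$, pushed through $\pi$ using~\eqref{eq:pullback metric} and the scaling equivariance~\eqref{eq:scaling equivariance under pi}, is exactly designed to rule out the second alternative for $\mu$ (and for the intermediate measures that arise), so the first alternative must hold and one gains a fixed amount $\e_0=\e_0(\text{parameters})$ of $L^2$-dimension per convolution step.

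First I would set up the discretization: fix $\e>0$, and choose $n$ large enough (depending only on $d$ and $\e_0$, hence only on $d,\e$ and the non-concentration data) that $n\e_0 \geq d$, so that after $n$ self-convolutions the $L^2$-dimension has been driven to within $\e$ of $d$; equivalently $\norm{\mu_k^{\ast n}}_2^2 \ll 2^{-(d-\e)k}$ for all $k$ past a threshold $k_1$. Then I would iterate: at step $j$, apply Proposition~\ref{prop:discretized flattening} to $\mu_k^{\ast 2^{j-1}}$ (or to the dyadic building blocks of $\mu_k^{\ast j}$, depending on how the inductive statement is phrased). The step yielding the dimension increase requires verifying the non-concentration hypothesis of that proposition for the relevant measure. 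This is where the ANC structure is used: one decomposes $[0,k]$ into the good scales $\Ncal(x)$ and the $\leq \th k$ bad scales, absorbs the exceptional set $\Ecal$ of mass $\leq C(\th)2^{-\l(\th)k}$ into an acceptable error (this is why the exceptional set must be exponentially small, and why we get to choose $\th$ small \emph{after} fixing $\e$), and on the good scales invokes~\eqref{eq:affine non-conc}. Crucially, non-concentration is stable under convolution and under the projection $\pi$: a convolution $\mu\ast\nu$ concentrates near an affine subspace $W$ only if one of the factors does near a translate of $W$, and~\eqref{eq:pullback metric} transfers neighborhoods of affine subspaces of $\R^d$ to neighborhoods of affine subspaces of $N$. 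The final sentence of the theorem, the pointwise bound $\mu_k^{\ast 2n}(P)\ll 2^{-(d-\e)k}$, is then immediate from $\mu_k^{\ast 2n}(P)\leq \norm{\mu_k^{\ast 2n}}_\infty \cdot 1 \leq \norm{\mu_k^{\ast n}\ast\mu_k^{\ast n}}_\infty \leq \norm{\mu_k^{\ast n}}_2^2$ by Young's / Cauchy--Schwarz.

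The main obstacle, and the bulk of the work, is the inverse-theorem step itself: proving Proposition~\ref{prop:discretized flattening} in $\R^d$. The difficulty is that the trichotomy ``$L^q$-dimension increases, or $\nu$ is a near-arithmetic-progression-like structure, or $\nu$ concentrates near an affine hyperplane'' must be extracted from Hochman's entropy inverse theorem via a Balog--Szemer\'edi--Gowers-type argument that is genuinely multidimensional: the structured pieces produced are approximate coset progressions / generalized arithmetic progressions of bounded rank, and one has to argue that a bounded-rank GAP which fails to have nearly full dimension must lie in (a neighborhood of) a proper affine subspace — this is the step where the geometry of $\R^d$ and the affine-span of the GAP generators enters, and it is what replaces the interval-arithmetic of the $1$-dimensional case. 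A second technical point is bookkeeping the loss of scales: each convolution and each passage from a measure to its dyadic components costs a bounded number of scales and a bounded multiplicative constant, so one must check that after the fixed number $n$ of iterations the accumulated losses are still absorbed by taking $k\geq k_1$; this is routine but must be done carefully so that $n$, $k_1$, and all implied constants depend only on $d$, $\e$, and the ANC parameters $(\l,\vp,C)$, as claimed.
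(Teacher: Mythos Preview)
Your overall plan --- iterate Proposition~\ref{prop:discretized flattening} a bounded number of times to drive the $L^2$-dimension to $d-\e$, then get the pointwise bound via $\mu_k^{\ast 2n}(P)\leq\norm{\mu_k^{\ast n}}_2^2$ --- matches the paper. However, you mis-state the content of Proposition~\ref{prop:discretized flattening}: it is \emph{asymmetric}. It asserts that for an \emph{arbitrary} probability measure $\nu$ on $\L_{kr}$ with $\norm{\nu}_2^2 > 2^{-(1-\g)dkr}$, one has $\norm{\mu_{kr}\ast\nu}_2 \leq 2^{-\eta kr}\norm{\nu}_2$, where $\mu_{kr}$ is the fixed discretized projection of the ANC measure $\tilde\mu$. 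The ANC hypothesis is invoked only for $\tilde\mu$; nothing beyond the size bound is assumed about $\nu$. In the iteration one simply takes $\nu=\mu_{kr}^{\ast j}$ and convolves once more with $\mu_{kr}$ to reach $\mu_{kr}^{\ast(j+1)}$. Thus your step ``verify the non-concentration hypothesis for the relevant measure'' and your claim that ``non-concentration is stable under convolution'' are unnecessary --- and the latter, while plausible, would be genuinely non-trivial to establish for Definition~\ref{def:aff non-conc} with its exceptional sets $\Ecal(k,\e,\th,r)$ and point-dependent good-scale sets $\Ncal(x)$.

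The asymmetry is precisely what the asymmetric Balog--Szemer\'edi--Gowers lemma (Theorem~\ref{thm:asymmetric Balog-Szemeredi-Gowers}) is there to deliver: assuming $\norm{\mu_{kr}\ast\nu}_2$ fails to drop, one extracts level sets $A\subset\supp(\nu)$ and $B\subset\supp(\mu_{kr})$, passes to $A'\subseteq A$, $B'\subseteq B$ with $|A'+B'|\ll L^{\e'}|A'|$, and then Hochman's inverse theorem (Theorem~\ref{thm:Hochman}) shows that the component measures of the uniform measure on $B'$ are $(V_i,\e)$-concentrated at most scales $i$. This concentration, lifted back to $N$ via $\pi$ and Lemma~\ref{lem:non-conc of discretization}, contradicts the ANC of $\tilde\mu$ by an induction on scales. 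The only structural input from the $\nu$-side is the cardinality bound $|A|\leq 2^{(1-\g/2)dkr}$, used to rule out $V_i=\R^d$ at a positive proportion of scales (Lemma~\ref{lem:count saturation indices}). In particular the paper does not pass through generalized arithmetic progressions; Hochman's theorem outputs the subspace structure directly.
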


The following is a more precise version of Corollary~\ref{cor:flattening intro}.

\begin{cor}\label{cor:flattening}
    Let $\tilde{\mu}$ and $\mu$ be as in Theorem~\ref{thm:quant ellq improvement}.
    Then,
    for every $\e>0$, there is $\d>0$, depending only on the non-concentration parameters of $\tilde{\mu}$, such that for every $T\geq 1$, the set
    \begin{align*}
        \set{w\in \R^d: \norm{w}\leq T \text{ and } |\hat{\mu}(w)|\geq T^{-\d}}
    \end{align*}
    can be covered by $O_\e(T^{\e})$ balls of radius $1$, where
    $\hat{\mu}$ denotes the Fourier transform of $\mu$.
    The implicit constant depends only on $\e$, the diameter of the support of $\mu$ and the non-concentration parameters.

\end{cor}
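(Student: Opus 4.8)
The plan is to deduce Corollary~\ref{cor:flattening} from the $L^2$-flattening bound in Theorem~\ref{thm:quant ellq improvement} via a standard energy/Plancherel argument, carried out at a single dyadic scale chosen in terms of $T$. First I would fix $\e>0$, apply Theorem~\ref{thm:quant ellq improvement} with $\e/2$ (say) in place of $\e$ to obtain $n\in\N$ such that $\|\mu_k^{\ast 2n}\|_2^2 \ll 2^{-(d-\e/2)k}$ for all large $k$, and then choose $k = k(T)$ with $2^k \asymp T$. Since the Fourier transform of $\mu^{\ast 2n}$ is $\hat\mu^{2n}$ (the $2n$-fold product, using that all measures are real and the convolution is abelian after projecting by $\pi$), I would first replace the continuous measure $\mu^{\ast 2n}$ by its scale-$k$ discretization $\mu_k^{\ast 2n}$: for $\|w\|\leq T\asymp 2^k$ the phase $e^{-i\langle w,\cdot\rangle}$ varies by $O(1)$ across a dyadic cube of side $2^{-k}$, so $|\widehat{\mu_k^{\ast 2n}}(w) - \hat\mu(w)^{2n}| = O(1)$ uniformly, and hence $|\hat\mu(w)^{2n}| \leq |\widehat{\mu_k^{\ast 2n}}(w)| + C_0$ for some absolute $C_0$.

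Next I would run the energy count. Let $E = \{w : \|w\|\leq T,\ |\hat\mu(w)| \geq T^{-\d}\}$ for a $\d>0$ to be chosen. Take a maximal $1$-separated subset $\{w_j\}$ of $E$; covering $E$ by the unit balls centered at the $w_j$, it suffices to bound $\#\{w_j\}$. For $\d$ small enough (depending only on $n$ and $\e$) we have $T^{-2n\d} \geq 2 C_0$, so on $E$ the inequality above forces $|\widehat{\mu_k^{\ast 2n}}(w)| \geq \tfrac12 T^{-2n\d} \gg 2^{-2n\d k}$. Now I would estimate $\sum_j |\widehat{\mu_k^{\ast 2n}}(w_j)|^2$ from above by a constant times $\int_{[-T,T]^d} |\widehat{\mu_k^{\ast 2n}}(w)|^2\,dw$ — this is where $1$-separation of the $w_j$ enters, via a standard almost-orthogonality / large-sieve type inequality for exponential sums with separated frequencies against a measure supported in a ball of bounded diameter (here the diameter of $\mathrm{supp}(\mu^{\ast 2n})$, which is $O(1)$ times that of $\mathrm{supp}(\mu)$, enters the implicit constant). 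The latter integral is, by Plancherel on the torus after periodizing at scale $2^k$ (equivalently, summing $|\widehat{\mu_k^{\ast 2n}}|^2$ over the dual lattice and using that $\widehat{\mu_k^{\ast 2n}}$ is $2^k$-periodic up to the diameter cutoff), bounded by $2^{dk}\|\mu_k^{\ast 2n}\|_2^2 \ll 2^{dk} 2^{-(d-\e/2)k} = 2^{(\e/2)k}$. Combining, $\#\{w_j\}\cdot 2^{-4n\d k} \ll 2^{(\e/2)k}$, so $\#\{w_j\} \ll 2^{(\e/2 + 4n\d)k} \asymp T^{\e/2 + 4n\d}$. Choosing $\d \leq \e/(8n)$ gives $\#\{w_j\} \ll_\e T^\e$, completing the proof; here $\d$ depends only on $\e$, $n$, $C_0$, and $n$ depends only on $\e,d$ and the non-concentration parameters of $\tilde\mu$, as required.

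I expect the main technical obstacle to be the passage from the continuous Fourier transform $\hat\mu(w)^{2n}$ on the ball $\|w\|\leq T$ to the discretized, periodized exponent sum where Plancherel applies cleanly, i.e. making precise the comparison $\sum_j|\widehat{\mu_k^{\ast 2n}}(w_j)|^2 \ll \int_{[-T,T]^d}|\widehat{\mu_k^{\ast 2n}}|^2 \ll 2^{dk}\|\mu_k^{\ast 2n}\|_2^2$. This requires a standard but slightly fiddly lemma (a Plancherel–Pólya / large sieve inequality): for a measure $\sigma$ supported in $B(0,R)$ and a $1$-separated set $\{w_j\}$, $\sum_j |\hat\sigma(w_j)|^2 \ll_{R,d} \int_{\mathbb R^d} |\hat\sigma(w)|^2 \psi(w)\,dw$ for a suitable smooth majorant $\psi$ of the indicator of the unit ball, and then relating $\int |\widehat{\mu_k^{\ast 2n}}|^2$ over a box to $\|\mu_k^{\ast 2n}\|_2^2$. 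All constants here depend only on $d$ and $R = O(\mathrm{diam}\,\mathrm{supp}\,\mu)$, which is exactly the dependence claimed. The rest — choosing $k\asymp\log_2 T$, choosing $\d$, and absorbing the $O(1)$ error $C_0$ — is routine bookkeeping. I would also remark that the same argument shows the stronger statement that $E$ is covered by $O_\e(T^\e)$ balls of any fixed radius $\geq 1$, and that if one only wants the Lebesgue-measure bound of Corollary~\ref{cor:flattening intro} it follows immediately since $O_\e(T^\e)$ unit balls have total volume $O_\e(T^\e)$.
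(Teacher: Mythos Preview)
Your discretization step has a genuine gap. With $2^k\asymp T$ the phase $e^{-i\langle w,\cdot\rangle}$ indeed varies by $O(1)$ across a cube of side $2^{-k}$, so $|\widehat{\mu_k^{\ast 2n}}(w)-\hat\mu(w)^{2n}|\le C_0$ for some fixed $C_0>0$. But then your claim ``for $\d$ small enough we have $T^{-2n\d}\ge 2C_0$'' is simply false for large $T$: for any $\d>0$ one has $T^{-2n\d}<1$ once $T>1$, so this can never dominate a positive constant. An $O(1)$ discretization error swamps the signal $T^{-2n\d}$ you are trying to detect, and the rest of the argument collapses. A repair is available: choose instead $2^k\asymp T^{1+\a}$ for a small $\a>0$, making the error $O(T^{-\a})$ and hence absorbable for $\d<\a/(2n)$; the Plancherel/large sieve step then yields $2^{(\e/2)k}\asymp T^{(1+\a)\e/2}$, still acceptable after starting with $\e/(1+\a)$ in place of $\e$.

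The paper avoids this discretization issue altogether by never comparing $\hat\mu$ with $\widehat{\mu_k}$. It works directly with $\nu=\mu^{\ast n}$ and invokes the elementary inequality (from~\cite{FengNguyenWang})
\[
\int_{\|\xi\|\le T}|\hat\nu(\xi)|^2\,d\xi \;\ll_d\; T^{2d}\int \nu(B(x,T^{-1}))^2\,dx \;\ll_d\; T^{d}\,\|\nu_k\|_2^2,
\]
together with $\|\nu_k\|_2\asymp\|\mu_k^{\ast n}\|_2$ (Lemma~\ref{lem:disc and conv commute}) and Theorem~\ref{thm:quant ellq improvement}, to obtain $\int_{\|\xi\|\le T}|\hat\mu|^{2n}\ll T^{\e/2}$ directly. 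To pass from this moment bound to a covering count, the paper also forgoes the large sieve: it uses instead that $\hat\mu$ is Lipschitz with constant $\ll\mathrm{diam}(\mathrm{supp}\,\mu)$, so each bad frequency $\xi$ carries with it a ball of radius $\asymp T^{-\d}$ on which $|\hat\mu|>T^{-\d}/2$, and Chebyshev on the moment bound then controls the number of unit balls.
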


\begin{remark}   
\begin{enumerate}
        \item As noted in~\cite{BaYu}, the proof of Theorem~\ref{thm:quant ellq improvement} and Corollary~\ref{cor:flattening} goes through under the weaker hypothesis replacing the ball $B(x,\rho)$ in~\eqref{eq:affine non-conc} with the larger ball $B(x,c\rho)$, for some fixed $c\geq 1$.
        Indeed, the proof relies on the discretized form of~\eqref{eq:affine non-conc} in Lemma~\ref{lem:non-conc of discretization}, where such weaker inequality naturally appears.
        This weaker hypothesis is very useful however for applying the above results to non-doubling measures; cf.~\cite{BaYu}.

        \item Note that Def.~\ref{def:aff non-conc} requires ANC to hold at points in $B(\id,2)$, while Theorem~\ref{thm:quant ellq improvement} and Corollary~\ref{cor:flattening} concern the restriction of the measure to $B(\id,1)$. The same arguments work for any two fixed nested balls, after suitably enlarging the implicit constants.
    In particular, such requirements can be vacuously satisfied if $\tilde{\mu}$ is compactly supported.
    This flexibility however allows us to avoid certain edge effects when working with restrictions of $\tilde{\mu}$ to a ball.
    \end{enumerate}
\end{remark}

\subsection{Preliminary lemmas on discretized measures}

The first lemma asserts that convolution and discretization essentially commute.  This justifies the statement of Theorem~\ref{thm:quant ellq improvement}. 
    \begin{lem}\label{lem:disc and conv commute}
       Let $\mu$ and $\nu$ be Borel probability measures on $\R^d$. Then, for all $q>1$ and $k\in \N$, we have $\norm{(\mu\ast\nu)_k}_q \asymp_{q,d} \norm{\mu_k\ast \nu_k}_q$.
    \end{lem}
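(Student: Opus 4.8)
\textbf{Proof plan for Lemma~\ref{lem:disc and conv commute}.}
The plan is to show that the discretization $(\mu\ast\nu)_k$ and the convolution of discretizations $\mu_k\ast\nu_k$ are comparable measures on $\L_k$ up to a bounded shift in the dyadic grid, and then invoke the fact that $\ell^q$ norms are essentially unchanged under such bounded coarsening/refinement. First I would recall that $\mu_k\ast\nu_k$ is a probability measure supported on $\L_k + \L_k = \L_k$, since $\L_k$ is a group, whereas $(\mu\ast\nu)_k$ is supported on $\L_k$ by definition. The point is to compare the masses these two measures assign to dyadic cubes (equivalently to lattice points).

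The key computation is the following. Fix $\lambda\in\L_k$ and consider $(\mu\ast\nu)_k(\lambda) = (\mu\ast\nu)(\Dcal_k(\lambda)) = \int\int \mathbbm{1}_{\Dcal_k(\lambda)}(x+y)\,d\mu(x)\,d\nu(y)$. Partitioning the $x$ and $y$ integrals according to which cube of $\Dcal_k$ they fall into, and using that $x\in\Dcal_k(\lambda_1)$, $y\in\Dcal_k(\lambda_2)$ forces $x+y$ to lie in $\Dcal_k(\lambda_1)+\Dcal_k(\lambda_2)$, which is a translate of $2^{-k}[0,2)^d$ meeting at most $2^d$ cubes of $\Dcal_k$ — all within the cube indexed by $\lambda_1+\lambda_2$ or its immediate neighbors — one gets
\begin{align*}
    (\mu\ast\nu)_k(\lambda) = \sum_{\lambda_1+\lambda_2 \in \lambda + F} \mu(\Dcal_k(\lambda_1))\,\nu(\Dcal_k(\lambda_2))\cdot \theta(\lambda_1,\lambda_2,\lambda),
\end{align*}
where $F\subset\L_k$ is a fixed finite set of size $O_d(1)$ (independent of $k$) and $0\le\theta\le 1$. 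Hence $(\mu\ast\nu)_k \le \sum_{f\in F} \tau_f(\mu_k\ast\nu_k)$ pointwise, where $\tau_f$ denotes translation by $f$, and conversely $\mu_k\ast\nu_k \le \sum_{f\in F'} \tau_f((\mu\ast\nu)_k)$ for another finite set $F'$ of bounded size, by the symmetric argument (decomposing $\mu_k\ast\nu_k(\lambda) = \sum_{\lambda_1+\lambda_2=\lambda}\mu(\Dcal_k(\lambda_1))\nu(\Dcal_k(\lambda_2))$ and comparing each summand to the mass $(\mu\ast\nu)$ places on a nearby cube). Then I would apply the triangle inequality for $\norm{\cdot}_q$ together with the translation-invariance $\norm{\tau_f\sigma}_q = \norm{\sigma}_q$ to conclude $\norm{(\mu\ast\nu)_k}_q \le \#F\cdot\norm{\mu_k\ast\nu_k}_q$ and $\norm{\mu_k\ast\nu_k}_q \le \#F'\cdot\norm{(\mu\ast\nu)_k}_q$, giving the claimed two-sided comparison with constants depending only on $q$ and $d$.

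The only mildly delicate point — and the step I would be most careful with — is bookkeeping the dyadic geometry: a sum of two cubes of side $2^{-k}$ is a cube of side $2\cdot 2^{-k}$, which is not itself a single cube of $\Dcal_k$ but overlaps a bounded number of them, and one must check that the "leakage" across cube boundaries is absorbed into the finite translation sets $F, F'$ uniformly in $k$. This is purely a matter of tracking which dyadic cells $\Dcal_k(\lambda_1)+\Dcal_k(\lambda_2)$ can intersect; since everything scales, it suffices to do the $k=0$ case and rescale. No genuine obstacle is expected here; the lemma is a routine but necessary sanity check that the statement of Theorem~\ref{thm:quant ellq improvement} in terms of discretized convolutions is meaningful.
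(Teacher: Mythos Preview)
Your proposal is correct and is essentially the same approach as the paper's. The paper's proof simply states the key geometric fact (a ball of radius $\asymp 2^{-k}$ is covered by $O_d(1)$ cells of $\Dcal_k$) and refers to \cite[Lemma~4.3]{Shmerkin-Furstenberg} for the details in dimension one; your write-up is precisely the fleshing-out of that argument, carried out directly in $\R^d$.
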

\begin{proof}
    This lemma is a direct consequence of the fact that a ball of radius $\rho$ with $2^{-k-1}<\rho\leq 2^{-k}$, $k\in\Z$, can be covered with $O_d(1)$ elements of $\Dcal_k$; cf.~\cite[Lemma 4.3]{Shmerkin-Furstenberg} for a detailed proof in the case $d=1$, which readily generalizes to higher dimensions. 
\end{proof}

For each $k\in\N$, set
\begin{align*}
    \widetilde{\L}_k := g_{-k\log 2}(\widetilde{\L}).
\end{align*}
In particular, $\pi(\widetilde{\L}_k)=\L_k = 2^{-k}\Z^d$.
Denote by $\Dcal_k$ the partition of $N$ consisting of translates of $g_{-k\log 2}(D)$ by $\widetilde{\L}_k$.
Using the lattices $\widetilde{\L}_k$, we define the scale-$k$ discretization $\tilde{\mu}_k$ of a Borel measure $\tilde{\mu}$ on $N$ analogously to~\eqref{eq:discretized measure}.
Given $k$ and a set $E\subset \R^d$, we let
\begin{align*}
    \L_k(E) = \set{w\in \L_k: \Dcal_k(w)\cap E \neq \emptyset}.
\end{align*}
We define $\widetilde{\L}_k(E)$ analogously for $E\subset N$.
For a set $E\subseteq \R^d$, we define its scale-$k$ smoothing by
\begin{align}\label{eq:set discretization}
    E_k := \bigsqcup_{v\in \L_k(E)} \Dcal_k(v).
\end{align}
Discretizations of subsets of $N$ are defined analogously.
The following lemma relates the discretization of a measure $\tilde{\mu}$ on $N$ to the discretization of its projection on $\R^d$.
    \begin{lem}\label{lem:discretization and projections}
    Let $\tilde{\mu}$ and $\mu$ be as in Theorem~\ref{thm:quant ellq improvement} and $k\geq 0$.
        Let $E\subseteq \R^d$ be a Borel set
        and $F=\pi^{-1}(E_k)\cap B(\id,1)$. 
        Then,  $\tilde{\mu}_{k}(F_k)\geq \mu_{k}(E)\tilde{\mu}(B(\id,1)) $.
    \end{lem}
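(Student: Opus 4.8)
The statement is essentially a bookkeeping lemma relating a discretization on the nilpotent group $N$ to a discretization of its projection $\mu$ to $\R^d$; the content is that ``mass does not get lost'' when we pass through $\pi$, provided we first thicken $E$ to its scale-$k$ smoothing $E_k$ before pulling back, and then smooth the pullback again at scale $k$. First I would unwind all the definitions: recall that $\tilde\mu_k$ is the scale-$k$ discretization of $\tilde\mu$ using the lattice $\widetilde{\L}_k = g_{-k\log 2}(\widetilde\L)$ and its fundamental domain tiling $\Dcal_k$ of $N$, that $\mu$ is the normalized projection under $\pi$ of $\tilde\mu|_{B(\id,1)}$, i.e. $\mu(A) = \tilde\mu(\pi^{-1}(A)\cap B(\id,1))/\tilde\mu(B(\id,1))$, and that $E_k = \bigsqcup_{v\in\L_k(E)}\Dcal_k(v)$ is a union of full dyadic cubes. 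The key compatibility is $\pi(\widetilde\L_k) = \L_k$ and $\pi(g_{-k\log 2}(D)) = 2^{-k}[0,1)^d$, so $\pi$ maps each cell of $\Dcal_k$ (in $N$) \emph{onto} a cell of $\Dcal_k$ (in $\R^d$), and the preimage of a dyadic cube in $\R^d$ is a union of cells of $\Dcal_k$ in $N$.

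The heart of the argument is a chain of inequalities. First, since $E_k$ is a union of cubes $\Dcal_k(v)$, $v\in\L_k(E)$, we have $\pi^{-1}(E_k) = \bigsqcup_{v\in\L_k(E)}\pi^{-1}(\Dcal_k(v))$, and each $\pi^{-1}(\Dcal_k(v))$ is a union of cells of $\Dcal_k$ in $N$; hence $F = \pi^{-1}(E_k)\cap B(\id,1)$ differs from $\pi^{-1}(E_k)\cap$(a union of $\Dcal_k$-cells meeting $B(\id,1)$) only near the boundary — but because we then pass to $F_k$, the scale-$k$ smoothing, $F_k \supseteq \pi^{-1}(E_k)\cap B(\id,1)$ and more importantly $F_k$ contains every $\Dcal_k$-cell that meets $\pi^{-1}(E)\cap B(\id,1)$: indeed if a cell $C\in\Dcal_k$ of $N$ meets $\pi^{-1}(E)\cap B(\id,1)$, then $\pi(C)$ meets $E$, so $\pi(C)=\Dcal_k(v)$ with $v\in\L_k(E)$, whence $C\subseteq \pi^{-1}(E_k)$; and $C$ meets $B(\id,1)$, so $C\subseteq F_k$. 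Therefore $F_k \supseteq (\pi^{-1}(E)\cap B(\id,1))_k \supseteq \pi^{-1}(E)\cap B(\id,1)$. Now $\tilde\mu_k(F_k) = \sum_{C\in\Dcal_k, C\subseteq F_k}\tilde\mu(C) \ge \tilde\mu(\pi^{-1}(E)\cap B(\id,1))$ because the cells $C$ comprising $F_k$ cover $\pi^{-1}(E)\cap B(\id,1)$ and are disjoint. Finally $\tilde\mu(\pi^{-1}(E)\cap B(\id,1)) = \mu(E)\tilde\mu(B(\id,1))$ by definition of $\mu$, and $\mu(E)\ge \mu_k(E)$ since $\mu_k(E) = \sum_{w\in\L_k(E)... }$ — here one must be slightly careful: $\mu_k(E)$ should be read as $\mu_k$ evaluated on $E$, and since $\mu_k = \sum_{\l}\mu(\Dcal_k(\l))\d_\l$, we get $\mu_k(E) = \sum_{\l\in E\cap\L_k}\mu(\Dcal_k(\l)) \le \sum_{\l: \Dcal_k(\l)\cap E\neq\emptyset}\mu(\Dcal_k(\l))$; in the regime used in the paper this quantity is comparable to $\mu(E)$ up to a bounded factor, or, if $E$ is itself taken to be a union of dyadic cubes (as it is in all applications, since $E_k$ appears), then $\mu_k(E) = \mu(E)$ exactly. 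I would therefore state and prove it cleanly under the mild hypothesis that suffices for the application, namely $\mu_k(E)\le\mu(E)$, giving $\tilde\mu_k(F_k)\ge \mu_k(E)\tilde\mu(B(\id,1))$.

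Concretely the write-up is: (i) observe $\pi$ carries $\Dcal_k$-cells of $N$ onto $\Dcal_k$-cells of $\R^d$ and $\pi^{-1}$(dyadic cube) is a union of $\Dcal_k$-cells of $N$ [from $\pi(\widetilde\L_k)=\L_k$, $\pi(g_{-k\log2}D)=[0,1)^d 2^{-k}$]; (ii) show any $\Dcal_k$-cell of $N$ meeting $\pi^{-1}(E)\cap B(\id,1)$ lies inside $F_k$; (iii) sum $\tilde\mu$ over those cells to bound $\tilde\mu_k(F_k)$ from below by $\tilde\mu(\pi^{-1}(E)\cap B(\id,1))$; (iv) identify $\tilde\mu(\pi^{-1}(E)\cap B(\id,1)) = \mu(E)\tilde\mu(B(\id,1)) \ge \mu_k(E)\tilde\mu(B(\id,1))$.

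\textbf{Main obstacle.} There is no deep obstacle — the lemma is purely combinatorial/measure-theoretic. The only thing to get right is the boundary/edge effect: the set $\pi^{-1}(E_k)\cap B(\id,1)$ is not a union of $\Dcal_k$-cells (because $B(\id,1)$ is cut by cells), and $F_k$ is its smoothing, so one must argue that smoothing \emph{after} intersecting with $B(\id,1)$ still captures all the $\tilde\mu$-mass of $\pi^{-1}(E)\cap B(\id,1)$ — this is exactly step (ii), and it works because smoothing only adds cells, never removes mass. A secondary point of care is the precise reading of ``$\mu_k(E)$'' versus ``$\mu(E)$''; since in every invocation of this lemma $E$ will already be dyadic (it is fed an $E_k$ or used inside another discretization), the two agree, but I would flag this explicitly so the inequality direction is unambiguous.
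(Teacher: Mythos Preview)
Your overall strategy is sound, but step (iv) contains a genuine gap that forces you to weaken the statement. The inequality $\mu(E)\ge\mu_k(E)$ is \emph{not} true in general: if $E=\{\lambda_0\}$ is a single lattice point with $\mu(\Dcal_k(\lambda_0))>0$ but $\mu(\{\lambda_0\})=0$, then $\mu_k(E)>0=\mu(E)$. You recognize this and propose adding the hypothesis that $E$ be dyadic, but the lemma as stated carries no such restriction, so your proof would only establish a weaker result.

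The paper avoids this entirely by routing through $\mu(E_k)$ rather than $\mu(E)$. The inequality $\mu_k(E)\le\mu(E_k)$ \emph{is} always true (if $\lambda\in E\cap\L_k$ then $\lambda\in\Dcal_k(\lambda)\cap E$, so $\Dcal_k(\lambda)\subseteq E_k$), and $\mu(E_k)=C^{-1}\tilde\mu(\pi^{-1}(E_k)\cap B(\id,1))=C^{-1}\tilde\mu(F)$ by the very definition of $F$. From there $\tilde\mu(F)\le\tilde\mu(F_k)=\tilde\mu_k(F_k)$ finishes it. In other words, the reason $F$ is defined as the pullback of $E_k$ (not of $E$) is precisely so that this chain works without any hypothesis on $E$.

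This also makes your steps (i)--(iii) unnecessary: the paper never needs the cell-compatibility of $\pi$ here. It uses only the two generic facts $G\subseteq G_k$ and $\nu(G_k)=\nu_k(G_k)$, valid for any measurable $G$ and any measure $\nu$, and the whole proof is the single line
\[
\mu_k(E)\le\mu(E_k)=C^{-1}\tilde\mu(F)\le C^{-1}\tilde\mu(F_k)=C^{-1}\tilde\mu_k(F_k).
\]
Your more elaborate argument via cell containment is correct as far as it goes, but it proves only $\tilde\mu_k(F_k)\ge\tilde\mu(\pi^{-1}(E)\cap B(\id,1))$, which then forces you into the problematic comparison with $\mu(E)$ rather than $\mu(E_k)$.
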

    \begin{proof}
    Note that for any measurable set $G$ and measure $\nu$, we have the following properties by the definition of discretizations: $G\subseteq G_k$ and $\nu(G)\leq \nu(G_k)=\nu_k(G_k)$. 
    Let $C= \tilde{\mu}(B(\id,1))$.
    Then, applying the previous observation to $\mu$ and $\tilde{\mu}$, we get
    \begin{align*}
        \mu_k(E) \leq \mu(E_k) = C^{-1}\tilde{\mu}(F)\leq C^{-1}\tilde{\mu}(F_k) = C^{-1}\tilde{\mu}_k(F_k).
    \end{align*}
    \qedhere
    \end{proof}

    The next lemma shows that affine non-concentration passes to discretizations.
    In what follows, in light of~\eqref{eq:scaling equivariance under pi}, we may and will assume that the constant $c\geq 1$ in~\eqref{eq:pullback metric} is chosen large enough so that the following diameter bound holds:
\begin{align}\label{eq:diameter c}
    \diam{\Dcal_j(v)} \leq c2^{-j}, \qquad \forall j\in\N, v\in \L_j \cup \widetilde{\L}_j.
\end{align}

\begin{lem}\label{lem:non-conc of discretization}

    Let $\tilde{\mu}$ be as in Theorem~\ref{thm:quant ellq improvement}.
    Let $\th\in (0,1)$ and a sufficiently large natural number $r\geq C(\th)$ be given.
    Then, for all $\e\geq 2^{-r}$ and sufficiently large $k\in\N$, the scale-$kr$ discretized measure $\mu_{kr}$ 
    is affinely non-concentrated in the following sense.

    Let $\Ecal=\Ecal(k,\e,\th,r)\subset N$ denote the exceptional set for $\tilde{\mu}$ provided by Definition~\ref{def:aff non-conc}.
    \begin{enumerate}
         \item There is an exceptional set $\Edisc = \Edisc(k,\e,\th,r)$ with $\tilde{\mu}_{kr}(\Edisc)\leq 2C(\th) 2^{-\l(\th) k}\tilde{\mu}(B(\id,2))$.
        
        \item \label{item:NUANCD N(w)}
        For every $w\in  \widetilde{\L}_{kr}(B(\id,1))\cap\mrm{supp}(\tilde{\mu}_{kr})\setminus \Edisc$, there is a set of good scales $\Ncal(w)\subseteq [0,k]$ with
         $\# \Ncal(w) \geq (1-\th) k-O(1)$.
         Moreover, there is $x\in \Dcal_{kr}(w) \cap \supp(\tilde{\mu})\setminus \Ecal$ such that $\Ncal(w)= \Ncal(x)\cap [0,k-1]$, where $\Ncal(x)$ is as in Definition~\ref{def:aff non-conc}.

        \item For every $w\in \widetilde{\L}_{kr}(B(\id,1))\cap\mrm{supp}(\tilde{\mu}_{kr})\setminus \Edisc$, every affine subspace $W<N$ and every $\ell\in\Ncal(w)$, setting $\rho_\ell = 2^{-r\ell}$, we have
        \begin{align}\label{eq:discretized nonconc}
            \tilde{\mu}_{kr} (W^{(\e \rho_\ell)} \cap \Dcal_{r\ell}(w)))
            \leq (\vp(\th)+ C(\th) \vp(2\e/c)) \tilde{\mu}(B(w,3c\rho_\ell)),
        \end{align}
        where $c\geq 1$ is as in~\eqref{eq:diameter c}.
    \end{enumerate}
\end{lem}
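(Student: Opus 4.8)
\textbf{Proof plan for Lemma~\ref{lem:non-conc of discretization}.}
The plan is to transfer the almost-everywhere-scale affine non-concentration of $\tilde\mu$ (Definition~\ref{def:aff non-conc}) to its scale-$kr$ discretization $\tilde\mu_{kr}$ by a careful comparison of the dyadic cubes $\Dcal_{r\ell}(w)$ with genuine metric balls $B(x,\rho_\ell)$, using~\eqref{eq:diameter c} and~\eqref{eq:pullback metric} to control the distortion. First I would fix $\th\in(0,1)$ and take $r\geq C(\th)$ large enough that $2^{-r}\leq 1$ (so the constraint $\e\geq 2^{-r}$ is nonvacuous) and large enough that the scales $r\ell$, $\ell\in[0,k]$, are separated enough for the error terms below to be absorbed. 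The key observation is that a cube $\Dcal_{r\ell}(w)$ has diameter $\leq c\,2^{-r\ell}=c\rho_\ell$ by~\eqref{eq:diameter c}, hence is contained in the ball $B(w,c\rho_\ell)$; conversely, for $x\in\Dcal_{r\ell}(w)$ one has $\Dcal_{r\ell}(w)\subseteq B(x,2c\rho_\ell)$ and $B(x,\rho_\ell)\subseteq B(w,2c\rho_\ell)$. This lets us pass freely between cubes at scale $r\ell$ and balls of comparable radius, at the cost of replacing $\rho_\ell$ by a bounded multiple.

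Next I would define the discretized exceptional set. Let $\Ecal=\Ecal(k,\e,\th,r)\subset B(\id,2)$ be the exceptional set for $\tilde\mu$ from Definition~\ref{def:aff non-conc}, with $\tilde\mu(\Ecal)\leq C(\th)2^{-\l(\th)k}\tilde\mu(B(\id,2))$. Set
\begin{align*}
    \Edisc := \bigcup\set{\Dcal_{kr}(w): w\in\widetilde\L_{kr},\ \Dcal_{kr}(w)\cap \Ecal\neq\emptyset}.
\end{align*}
Since $\Edisc$ is a union of cubes at the finest scale $kr$ containing a point of $\Ecal$, and each such cube lies in the $c2^{-kr}$-neighborhood of $\Ecal$, for $k$ large we have $\Ecal\subseteq \Edisc$ and $\Edisc$ is contained in the $1$-neighborhood of $\Ecal$, so $\Edisc\subseteq B(\id,3)$ and $\tilde\mu_{kr}(\Edisc)=\tilde\mu(\Edisc)\leq \tilde\mu(\Ecal^{(c2^{-kr})})$. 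Using the doubling property of $\tilde\mu$ (which for us is Proposition~\ref{prop:doubling} applied to $\tilde\mu=\mu^u_\bullet$, or simply a Federer hypothesis in the abstract setting) together with a standard covering argument, $\tilde\mu(\Ecal^{(c2^{-kr})})\leq 2\tilde\mu(\Ecal)\leq 2C(\th)2^{-\l(\th)k}\tilde\mu(B(\id,2))$ once $k$ is large; this gives item~(1). For item~(2), given $w\in\widetilde\L_{kr}(B(\id,1))\cap\supp(\tilde\mu_{kr})\setminus\Edisc$, the cube $\Dcal_{kr}(w)$ is disjoint from $\Ecal$ and meets $\supp(\tilde\mu)$ (since $\tilde\mu_{kr}(w)>0$); pick any $x\in\Dcal_{kr}(w)\cap\supp(\tilde\mu)$. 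Then $x\notin\Ecal$, so $x$ has a good-scale set $\Ncal(x)\subseteq[0,k]\cap\N$ with $\#\Ncal(x)\geq(1-\th)k$ from Definition~\ref{def:aff non-conc}; define $\Ncal(w):=\Ncal(x)\cap[0,k-1]$, which has cardinality $\geq(1-\th)k-1=(1-\th)k-O(1)$. Note also $x\in B(\id,2)$ since $w\in B(\id,1)$ and $\Dcal_{kr}(w)$ has small diameter, so the hypotheses of Definition~\ref{def:aff non-conc}\eqref{item:non-conc def 4} genuinely apply at $x$.

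Finally, for item~(3): fix such a $w$, the associated $x$, an affine subspace $W<N$, and $\ell\in\Ncal(w)\subseteq\Ncal(x)$, with $\rho_\ell=2^{-r\ell}$. Since $x\in\Dcal_{r\ell}(w)$ (as $\ell\leq k-1\leq k$ and cubes are nested), we have $\Dcal_{r\ell}(w)\subseteq B(x,2c\rho_\ell)$; enlarging the affine neighborhood accordingly and using~\eqref{eq:pullback metric}-type comparability of the $W^{(\cdot)}$ thickenings, $W^{(\e\rho_\ell)}\cap\Dcal_{r\ell}(w)\subseteq W^{(\e\rho_\ell)}\cap B(x,2c\rho_\ell)$. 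Now cover $B(x,2c\rho_\ell)$ by boundedly many balls $B(x_i,\rho_\ell)$ with $x_i\in\supp(\tilde\mu)$ at scale comparable to a good scale of $x$ — here I would invoke the doubling property again together with the fact that $r$ is large so that $\rho_{\ell}$ sits at a genuine good scale for $x$, possibly shifting $\ell$ by $O(1)$ within $\Ncal(x)$ — and apply~\eqref{eq:affine non-conc} at each such ball to bound $\tilde\mu(W^{(2c\e\rho_\ell)}\cap B(x_i,\rho_\ell))\leq(\vp(\th)+C(\th)\vp(2c\e))\tilde\mu(B(x_i,\rho_\ell))$. Summing and using $\tilde\mu_{kr}(A)=\tilde\mu(A)$ for $A$ a union of scale-$kr$ cubes together with $\tilde\mu(B(x,2c\rho_\ell))\leq\tilde\mu(B(w,3c\rho_\ell))$ yields~\eqref{eq:discretized nonconc}, after rewriting $2c\e/c=2\e$ inside $\vp$ (adjusting the constant $c$ in~\eqref{eq:diameter c} so that the same $c$ serves in all the comparisons, as permitted by the remark preceding the lemma). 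The main obstacle I anticipate is the bookkeeping of the bounded multiplicative distortions — keeping track of which $c$'s and which $O(1)$ shifts of $\ell$ are needed so that a \emph{single} constant $c$ works uniformly, and ensuring the covering of $B(x,2c\rho_\ell)$ by balls centered on $\supp(\tilde\mu)$ at a good scale does not degrade the non-concentration bound by more than a constant factor; all of this is routine given the doubling property (Proposition~\ref{prop:doubling}) but must be done with care to land exactly the constant $\vp(\th)+C(\th)\vp(2\e/c)$ claimed.
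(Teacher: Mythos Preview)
There are two genuine gaps in your plan, both avoidable by the approach the paper actually takes.

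For item~(1), your definition of $\Edisc$ as the union of all finest-scale cubes \emph{touching} $\Ecal$ leads you to bound $\tilde\mu_{kr}(\Edisc)\leq\tilde\mu(\Ecal^{(c2^{-kr})})$ and then assert that doubling plus a covering argument gives $\tilde\mu(\Ecal^{(c2^{-kr})})\leq 2\tilde\mu(\Ecal)$. This step is false in general: doubling controls the measure of a dilated ball in terms of the original ball, but says nothing about the $\epsilon$-thickening of an arbitrary measurable set in terms of the set itself (imagine $\Ecal$ of tiny measure but dense in a large region). The paper instead defines $\Edisc$ as the set of $w$ for which \emph{more than half} the mass of $\Dcal_{kr}(w)$ lies in $\Ecal$; then
\[
\tilde\mu_{kr}(\Edisc)=\sum_{w\in\Edisc}\tilde\mu(\Dcal_{kr}(w))<2\sum_{w\in\Edisc}\tilde\mu(\Dcal_{kr}(w)\cap\Ecal)\leq 2\tilde\mu(\Ecal)
\]
directly, with no doubling needed. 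This majority-mass definition still guarantees, for $w\in\supp(\tilde\mu_{kr})\setminus\Edisc$, the existence of some $x\in\Dcal_{kr}(w)\cap\supp(\tilde\mu)\setminus\Ecal$, which is all item~(2) requires.

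For item~(3), your plan to cover $B(x,2c\rho_\ell)$ by boundedly many balls $B(x_i,\rho_\ell)$ and apply~\eqref{eq:affine non-conc} at each $x_i$ does not work: Definition~\ref{def:aff non-conc} only asserts non-concentration at points outside $\Ecal$ and at scales in \emph{their} good-scale sets, and you have no control over whether the auxiliary centers $x_i$ lie outside $\Ecal$ or whether $\ell\in\Ncal(x_i)$. The paper's route is simpler and avoids any covering: since the definition allows any $\rho\asymp 2^{-r\ell}$, one applies~\eqref{eq:affine non-conc} directly at the single good point $x$ with $\rho=2c\rho_\ell$. The passage from $\tilde\mu_{kr}(W^{(\e\rho_\ell)}\cap\Dcal_{r\ell}(w))$ to $\tilde\mu(W^{(2\e\rho_\ell)}\cap B(x,2c\rho_\ell))$ uses only that each scale-$kr$ cube meeting $W^{(\e\rho_\ell)}$ has diameter $\leq c2^{-kr}\leq\e\rho_\ell$ (this is where $\e\geq 2^{-r}$, $\ell\leq k-2$, and $r$ large relative to $c$ enter) and hence lies in $W^{(2\e\rho_\ell)}$. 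No doubling is invoked anywhere in the argument.
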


    \begin{proof}
       
        Define
        $
             \Edisc= \set{w\in \widetilde{\L}_{kr}(B(\id,1)): \tilde{\mu}(\Dcal_{kr}(w)\cap \Ecal) > \tilde{\mu}(\Dcal_{kr}(w))/2 }$.
        Then, we get 
        \begin{align*}
            \tilde{\mu}_{kr}(\Edisc) 
            = \sum_{w \in \Edisc} \tilde{\mu}(\Dcal_{kr}(w))
            < 2\sum_{w \in \Edisc} \tilde{\mu}(\Dcal_{kr}(w)\cap \Ecal) \leq 2\tilde{\mu}(\Ecal)
            \leq 2C(\th) 2^{-\l(\th)k} \tilde{\mu}(B(\id,2)).
        \end{align*}
        For each $w\in \widetilde{\L}(B(\id,1))\cap\supp(\tilde{\mu}_{kr})\setminus \Edisc$, fix an arbitrary $x\in \supp(\tilde{\mu})\cap\Dcal_{kr}(w)\setminus \Ecal$.
        For each such $w$, since $\Dcal_{kr}(w)$ intersects $B(\id,1)$, $\Dcal_{kr}(w)$ is contained in $B(\id,2)$ whenever $k$ is large enough. 
        In particular, $x\in B(\id,2)$.
        Set
        \begin{align*}
            \Ncal(w) = \Ncal(x)\cap [0,k-2].
        \end{align*}
        Then, $\#\Ncal(w)\geq (1-\th)k-2$.
        For $w$ and $x$ as above, let $\ell\in \Ncal(w)$ and set $\rho_\ell = 2^{-r\ell}$.
        Then, given any proper affine subspace $W\subset N$, we have
        \begin{align*}
           \tilde{\mu}_{kr}(W^{(\e \rho_\ell)} \cap \Dcal_{r\ell}(w)))
            =\sum_{v \in \widetilde{\L}_{kr} \cap W^{(\e \rho_\ell)} \cap \Dcal_{r\ell}(w)} \tilde{\mu}(\Dcal_{kr}(v)).
        \end{align*}
        
        Next, by~\eqref{eq:diameter c}, the cell $\Dcal_{kr}(v)$ has diameter $\leq c2^{-rk}$.
        In particular, if $r$ is large enough relative to $c$, since $\e\geq 2^{-r}$ and $\ell\leq k-2$, we have that $\Dcal_{kr}(v)$ is contained inside $W^{(2\e \rho_\ell)}$ for all $v\in W^{(\e \rho_\ell)}$.
        Similarly, we have that $\Dcal_{r\ell}(w) $ is contained inside $B(x, 2c\rho_\ell)$.
        It follows that 
        \begin{align*}
            v \in \widetilde{\L}_{kr} \cap W^{(\e \rho_\ell)} \cap \Dcal_{r\ell}(w)
            \Longrightarrow
            \Dcal_{kr}(v) \subset W^{(2\e \rho_\ell)} \cap B(x,2c\rho_\ell).
        \end{align*}
        We thus get that
        \begin{align*}
            \tilde{\mu}_{kr}(W^{(\e \rho_\ell)} \cap \Dcal_{r\ell}(w)))
            \leq \tilde{\mu}(W^{(2\e \rho_\ell)} \cap B(x,2c\rho_\ell)).
        \end{align*}
        Hence, since $\tilde{\mu}$ is affinely non-concentrated, and $x\in B(\id,2)$ and $\ell\in \Ncal(x)$, we obtain
        \begin{align*}
            \tilde{\mu}_{kr}(W^{(\e \rho_\ell)} \cap \Dcal_{r\ell}(w)))
            \leq (\vp(\th)+ C(\th) \vp(2\e/c)) \tilde{\mu}(B(x,2c\rho_\ell)).
        \end{align*}
        
        Finally, we observe that since $x\in \Dcal_{kr}(w)$, the ball $B(x,2c\rho_\ell)$ is contained in $B(w,3c\rho_\ell)$.
        Together with the above estimate, this yields~\eqref{eq:discretized nonconc} and concludes the proof.
        \qedhere
    \end{proof}

    We end this section with the following useful lemma regarding intersection multiplicities.
    \begin{lem}\label{lem:intersection multiplicity on N}
        Let $C\geq 1$ be given. Then, for all $\ell\in\N$, the balls $\set{B(v,C\rho_\ell): v\in \widetilde{\L}_\ell}$ have intersection multiplicity $O_{N,C}(1)$.
    \end{lem}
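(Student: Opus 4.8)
The plan is to prove Lemma~\ref{lem:intersection multiplicity on N} by reducing the intersection-multiplicity count for the dilated lattice $\widetilde{\L}_\ell = g_{-\ell\log 2}(\widetilde{\L})$ at scale $\rho_\ell = 2^{-r\ell}$ to a uniform statement about the fixed lattice $\widetilde{\L}$ at scale $O(1)$, using the scaling equivariance~\eqref{eq:scaling equivariance under pi} of the metric under $g_t$. First I would fix $v_0\in \widetilde{\L}_\ell$ and bound the number of $v\in\widetilde{\L}_\ell$ for which $B(v,C\rho_\ell)\cap B(v_0,C\rho_\ell)\neq\emptyset$; by the triangle inequality any such $v$ satisfies $\dist(v,v_0)\leq 2C\rho_\ell$. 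Since the metric is right-invariant, translating by $v_0^{-1}$ on the right we may assume $v_0=\id$, so it suffices to bound $\#\{v\in\widetilde{\L}_\ell : \dist(v,\id)\leq 2C\rho_\ell\}$ uniformly in $\ell$.

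The key step is then to apply the dilation $g_{\ell\log 2}$, which sends $\widetilde{\L}_\ell$ bijectively onto $\widetilde{\L}$ and, by~\eqref{eq:scaling equivariance under pi}, multiplies all distances to the identity by $2^\ell$. Hence $\{v\in\widetilde{\L}_\ell : \dist(v,\id)\leq 2C\rho_\ell\}$ is in bijection with $\{w\in\widetilde{\L}: \dist(w,\id)\leq 2C\cdot 2^\ell\rho_\ell\} = \{w\in\widetilde{\L}: \dist(w,\id)\leq 2C\cdot 2^{-(r-1)\ell}\}$. Since $r\geq 1$, this radius is at most $2C$, so the set is contained in $\{w\in\widetilde{\L}: \dist(w,\id)\leq 2C\}$, whose cardinality is a finite constant depending only on $N$ (through the metric and the choice of lattice) and $C$, because $\widetilde{\L}$ is discrete. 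This gives the bound $O_{N,C}(1)$ independent of $\ell$, and unwinding the reductions, $\sum_{v\in\widetilde{\L}_\ell}\mathbbm{1}_{B(v,C\rho_\ell)}(x)\ll_{N,C} 1$ for every $x\in N$.

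There is essentially no serious obstacle here; the only mild care needed is to make sure the right-invariance of the metric is used consistently (the translation to reduce to $v_0=\id$ is a \emph{right} translation, matching the right-invariance hypothesis on the metric on $N$) and that the equivariance~\eqref{eq:scaling equivariance under pi} is applied to distances measured from the identity, which is exactly its form. I would also remark that the conclusion $r\geq 1$ is automatic since $r$ is a natural number, so no additional hypothesis is required; in fact even $r=0$ would give a uniform bound, with the radius becoming $2C$ itself. The statement about $\sum_i\mathbbm{1}_{B(v,C\rho_\ell)}$ is just a restatement of bounded intersection multiplicity, so no further argument is needed once the cardinality bound is established.
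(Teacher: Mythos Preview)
Your proof is correct and follows essentially the same route as the paper's: fix a point, use the triangle inequality to contain the relevant lattice points in a $2C\rho_\ell$-ball, right-translate to the identity, apply the dilation $g_{\ell\log 2}$ via~\eqref{eq:scaling equivariance under pi}, and conclude by discreteness of $\widetilde{\L}$. The paper's write-up is slightly terser, jumping directly to $\#\widetilde{\L}\cap B(\id,2C)$ without pausing on the factor $2^{-(r-1)\ell}$, but the argument is the same.
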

    \begin{proof}
        Fix some $x\in N$ and let $E(x)$ denote the set of $v\in\widetilde{\L}_\ell$ with $x\in B(v,C\rho_\ell)$.
        Then, $E(x)\subseteq \widetilde{\L}_\ell\cap B(v_0,2C\rho_\ell)$, for any fixed $v_0\in E(x)$. 
        By right-invariance of the metric, $\# E(x)$ is at most $\# \widetilde{\L}_\ell \cap B(\id,2C\rho_\ell)$. 
        Applying the scaling automorphism $g_{\ell\log 2}$ to the latter set, and using~\eqref{eq:scaling equivariance under pi}, we conclude that $\#E(x)\leq \#\widetilde{\L}\cap B(\id, 2C)$. 
        The lemma follows by discreteness of the lattice $\widetilde{\L}$.
    \end{proof}
    
\subsection{Asymmetric Balog-Szemer\'edi-Gowers Lemma}

The following is the asymmetric version of the Balog-Szemr\'edi-Gowers Lemma due to Tao and Vu, which is the first key ingredient in the proof of Theorem~\ref{thm:quant ellq improvement}.
For a finite set $A\subset \R^d$, $|A|$ denotes its cardinality.

\begin{thm}[Corollary 2.36,~\cite{TaoVu-book}]
\label{thm:asymmetric Balog-Szemeredi-Gowers}
    Let $A,B\subset \R^d$ be finite sets such that $\norm{1_A\ast 1_B}^2_{2}\geq 2\a |A| |B|^2$ and $|A| \leq L |B|$ for some $0<\a \leq 1$ and $L\geq 1$.
    Let $\e'>0$ be given.
    Then, there exist sets $A'\subseteq A$ and $B'\subseteq B$ such that
    \begin{enumerate}
        \item{$A'$ and $B'$ are sufficiently dense:} $|A'| \gg_{\e'} \a^{O_{\e'}(1)} L^{-\e'} |A|$ and $|B'|\gg_{\e'} \a^{O_{\e'}(1)} L^{-\e'}|B|$.
        \item{$A'$ is approximately invariant by $B'$:} $|A' + B'| \ll_{\e'} \a^{-O_{\e'}(1)} L^{\e'} |A'|$. 
    \end{enumerate}
\end{thm}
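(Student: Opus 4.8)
This statement is imported verbatim from \cite{TaoVu-book}, so I would not reprove it in the paper; but here is the strategy I would follow if forced to. The hypothesis $\norm{1_A\ast 1_B}^2_{2}\ge 2\a|A||B|^2$ says that the energy-type quantity $\#\{(a,b,a',b')\in A\times B\times A\times B : a+b=a'+b'\}$ is large. The first step is to convert this into a bipartite graph with an additive structure. Writing $f:=1_A\ast 1_B$, a dyadic pigeonhole on the level sets of $f$ isolates a set $S$ of \emph{popular sums} — points $x$ with $f(x)$ comparable to a single value which, using $|A|\le L|B|$, can be taken $\gg \a^{O(1)}L^{-o(1)}|B|$ — carrying a proportion $\gg\a^{O(1)}L^{-o(1)}$ of the mass $\sum_x f(x)=|A||B|$. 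Consider the bipartite graph $G\subseteq A\times B$ with $a\sim b$ iff $a+b\in S$; then $|E(G)|\gg \a^{O(1)}L^{-o(1)}|A||B|$, and the key point is that any length-three path $a\sim b_1\sim a_1\sim b$ in $G$ yields $a+b=(a+b_1)-(a_1+b_1)+(a_1+b)\in S-S+S$.

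The second step is the graph-theoretic heart of Balog--Szemer\'edi--Gowers, in the dependent-random-choice form of Sudakov--Szemer\'edi--Vu recorded in \cite{TaoVu-book}: from a bipartite graph of edge density $\d$ between $A$ and $B$ one extracts $A'\subseteq A$, $B'\subseteq B$ with $|A'|\gg_{\e'}\d^{O_{\e'}(1)}|A|$, $|B'|\gg_{\e'}\d^{O_{\e'}(1)}|B|$, such that every $(a,b)\in A'\times B'$ is joined by $\gg_{\e'}\d^{O_{\e'}(1)}|A||B|$ paths of length three. Feeding in $\d\gg\a^{O(1)}L^{-o(1)}$ and counting, every $y\in A'+B'$ then admits $\gg_{\e'}\a^{O_{\e'}(1)}L^{-o(1)}|A||B|$ ordered representations $y=s-s'+s''$ with $s,s',s''\in S$; summing this lower bound over $y\in A'+B'$ against the trivial identity $\sum_y r_{S-S+S}(y)=|S|^3$, and using $|S|\le |A|/\a$ together with $|A|\le L|B|$, gives $|A'+B'|\ll_{\e'}\a^{-O_{\e'}(1)}L^{1+o(1)}|A'|$.

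This argument already delivers the statement with a polynomial-in-$L$ loss, and the only genuinely delicate point is the sharpening of the exponent of $L$ down to $\e'$. That is achieved in \cite{TaoVu-book} by iterating the extraction — re-running the graph argument inside the improved sets $A'$, $B'$ — and by being more careful, when selecting the popularity threshold, to confine the entire $L$-dependence to one controllably small exponent rather than spreading it through every pigeonholing step. I expect this $L^{\e'}$ bookkeeping to be the main obstacle: the graph lemma and the additive translation to $S-S+S$ are both standard, but verifying that all the losses truly collapse to the claimed shape $\a^{O_{\e'}(1)}L^{-\e'}$, uniformly in the ambient dimension $d$, is where the work lies.
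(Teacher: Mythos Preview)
Your reading is correct: the paper does not prove this statement at all. It is cited directly as Corollary~2.36 of \cite{TaoVu-book}, and the only commentary the paper adds is the remark that the additive energy $E(A,B)$ appearing in \cite{TaoVu-book} coincides with $\norm{1_A\ast 1_B}_2^2$. Your sketch of the underlying argument (popular sums, the Sudakov--Szemer\'edi--Vu path-counting lemma, and the iteration to bring the $L$-exponent down to $\e'$) is an accurate outline of how the result is established in \cite{TaoVu-book}, but none of this appears in the paper itself.
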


\begin{remark}
    The quoted result is stated in terms of the additive energy $E(A,B)$ in \textit{loc.~cit.}, which is nothing but $\norm{1_A\ast 1_B}_{2}^2$.
\end{remark}

\subsection{Hochman's inverse theorem for entropy}

In order to be able to bring our affine non-concentration hypothesis into play, we will need to convert the approximate additive invariance provided by the Balog-Szemer\'edi-Gowers Lemma into exact additive obstructions to flattening under convolution, i.e.~affine subspaces.
Our key tool for this step is Hochman's inverse entropy theorem for convolutions of measures
We need some notation before stating the result.
    
    For a Borel probability measure $\nu$ on $\R^d$, the entropy $H_k(\nu)$ of $\nu$ at scale $k$ is defined to be
    \begin{align*}
        H_k(\nu) := -\frac{1}{k}\sum_{P\in\Dcal_k} \nu(P) \log_2 \nu(P).
    \end{align*}
    By concavity of $\log$ and Jensen's inequality, we have the following elementary inequality
    \begin{align}\label{eq:entropy and covers}
        H_k(\nu) \leq \frac{\log_2 \#\set{P\in\Dcal_k: \nu(P)\neq 0}}{k}.
    \end{align}
    It also follows from Jensen's inequality that the above inequality becomes equality if and only if $\nu$ gives equal weights to the elements $P$ of $\Dcal_k$ with $\nu(P)\neq 0$.

     Given a Borel probability measure $\nu$ on $\R^d$ and $z\in \R^d$ with $\nu(\Dcal_k(z))>0$, we define the component measure $\nu^{z,k}$ by
    \begin{align*}
        \int f\;d\nu^{z,k}:= \frac{1}{\nu(\Dcal_k(z))} \int_{\Dcal_k(z)} f(T(y))\;d\nu(y),
    \end{align*}
    where $T:\Dcal_k(z)\to \Dcal_0(\mathbf{0})$ is the affine map given by composing scaling by $2^k$ with translation by the element of $\L_k$ sending $\Dcal_k(z)$ to $\Dcal_k(\mathbf{0})$.

    Given a Borel subset $\Pcal \subseteq \mrm{Prob}(\R^d)$ and $k\in\N$, we define
    \begin{align}\label{eq:probability notation}
        \mathbb{P}_{0\leq i \leq k} (\nu^{z,i} \in \Pcal)
        := \frac{1}{k+1} \sum_{i=0}^k \int 1_{\Pcal}(\nu^{z,i})\;d\nu(z).
    \end{align}

    Given a linear subspace $0\leq V \leq \R^d$, $\e>0$ and a probability measure $\nu$, we say that $\nu$ is $(V,\e)$-\textit{concentrated} if there is a translate $L$ of $V$ such that $\nu(L^{(\e)})>1-\e$.
    We say that $\nu$ is $(V,\e,m)$-\textit{saturated} for a given $m\in\N$ if
    \begin{align}\label{eq:saturated}
        H_m(\nu) \geq H_m(\pi_W\nu) + \mrm{dim} V -\e,
    \end{align}
    where $W = V^{\perp}$ and $\pi_W\nu$ is the pushforward of $\nu$ under the orthogonal projection to $W$.

\begin{thm}[Theorem 2.8,~\cite{Hochman-InverseEntropy}]
\label{thm:Hochman}
    For every $\e, R>0$ and $r\in \N$, there are $\s>0$ and $m_0,k_0\in \N$ such that for all $k\geq k_0$ and all Borel probability measures $\nu$ and $\mu$ on $[-R,R]^d$ satisfying
    \begin{align*}
        H_{kr}(\mu \ast \nu) < H_{kr}(\nu)+\s,
    \end{align*}
    there exists a sequence of subspaces $0 \leq V_0,\dots, V_k \leq\R^d$ such that
    \begin{align*}
        \mathbb{P}_{0\leq i \leq k}\left(
            \begin{array}{c}
                 \mu^{x,ir} \text{ is } (V_i,\e)-\text{concentrated and}
                 \\
               \nu^{x,ir} \text{ is } (V_i,\e,m_0)-\text{saturated}
            \end{array}
         \right) >1-\e.
    \end{align*}
\end{thm}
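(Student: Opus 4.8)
\textbf{Proof plan for Theorem~\ref{thm:Hochman}.}
This is Hochman's inverse theorem for the entropy of convolutions, quoted verbatim from~\cite{Hochman-InverseEntropy}, so the ``proof'' here is really a roadmap of the argument in \emph{loc.~cit.}; I record it for completeness. The plan is to argue by a multiscale (component measure) analysis, converting a global statistical statement about entropy growth into a pointwise statement about the local structure of $\mu$ and $\nu$ at most scales. First I would set up the entropy bookkeeping: recall the additivity of the conditional entropy $H_{kr}(\nu) = \frac{1}{k}\sum_{i=0}^{k-1} \mathbb{E}(H_r(\nu^{x,ir}))+O(r^{-1})$, so that the hypothesis $H_{kr}(\mu\ast\nu) < H_{kr}(\nu)+\s$ forces, by a pigeonhole/concavity argument, that for at least a $(1-\e)$-fraction of scales $i\in[0,k]$ and $\nu$-typical base points $x$, the local convolution $\mu^{x,ir}\ast\nu^{x,ir}$ has entropy only $\e'$-larger (for suitable $\e'\to 0$) than that of $\nu^{x,ir}$ at some bounded scale.

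The heart of the argument is then the \emph{local inverse theorem}: a bounded-scale, $\e'$-approximate statement asserting that if a probability measure $\mu'$ on a bounded box satisfies $H_m(\mu'\ast\nu') \leq H_m(\nu')+\s$ for sufficiently large $m$ and small $\s$, then there is a subspace $V$ such that $\mu'$ is $(V,\e)$-concentrated and $\nu'$ is $(V,\e,m_0)$-saturated. Here I would invoke the structure theory underlying the Berry--Esseen/Freiman-type analysis in~\cite{Hochman-InverseEntropy}: one passes to a scale where $\nu'$ looks (entropy-wise) like a uniform measure on a discretized set, applies a Balog--Szemer\'edi--Gowers-type argument together with Freiman's theorem in $\R^d$ to extract an approximate coset structure, and then identifies the linear span $V$ of the relevant progression. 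The saturation inequality~\eqref{eq:saturated} is precisely the entropy-theoretic shadow of $\nu'$ filling out the directions in $V$, while $(V,\e)$-concentration of $\mu'$ records that $\mu'$ must be essentially supported on a translate of $V$ for the convolution not to spread entropy. Assembling the local statement across the $(1-\e)$-fraction of good scales and base points, and choosing the subspaces $V_i$ to be the ones produced at scale $i$, yields the claimed probabilistic conclusion.

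The main obstacle — and the technical core of~\cite{Hochman-InverseEntropy} — is the local inverse theorem itself: making the passage from ``entropy barely grows under convolution at one bounded scale'' to ``exact linear-algebraic obstruction (a subspace $V$)'' quantitatively robust, uniformly in dimension $d$, and with the correct dependence of $\s, m_0, k_0$ on $\e, R, r$. This requires careful control of how entropy interacts with affine maps, orthogonal projections, and the dyadic filtration, as well as the multiscale averaging needed to upgrade a single-scale conclusion to a positive-density-of-scales conclusion. Since the statement is used here only as a black box (via Theorem~\ref{thm:Hochman}) in the proof of Proposition~\ref{prop:discretized flattening} and hence Theorem~\ref{thm:quant ellq improvement}, I would simply cite~\cite[Theorem 2.8]{Hochman-InverseEntropy} and not reproduce the argument.
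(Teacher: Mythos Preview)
Your proposal is correct and matches the paper's approach: the paper does not prove this theorem but simply cites it as~\cite[Theorem~2.8]{Hochman-InverseEntropy}, with only a brief remark noting that the stated version for general $r\in\N$ follows routinely from the $r=1$ case in \emph{loc.~cit.} by working in base $2^r$. Your roadmap of Hochman's argument is a reasonable summary, but none of it is needed here.
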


    \begin{remark}
        Theorem~\ref{thm:Hochman} is stated in~\cite{Hochman-InverseEntropy} in the case $r=1$. However, the extension to general $r$ is rather routine since it roughly corresponds to working in base $2^r$ in place of base $2$.
    \end{remark}

 \subsection{Flattening of discretized measures}

    The following quantitative result is the main ingredient in the proof of Theorem~\ref{thm:quant ellq improvement}.

\begin{prop}\label{prop:discretized flattening}
        Let positive functions $\l,\vp,$ and $C$ on $(0,1]$ be given.
        Then, for every $0<\g <1$, there exist $\eta>0$, and $r\in \N$, depending on $\g, \l,C,$ and $\vp$, such that the following holds. 
        
        For every $n\geq 0$, there exists $k_1=k_1(\l,\vp,C,\g)\in\N$, so that the following hold for all $k\geq k_1$ and
        any probability measure supported on $2^{-kr}\Z^d \cap B(0,2^n)$ and satisfying
        \begin{align}\label{eq:large L2}
         \norm{\nu}^2_2>2^{-(1-\g)d kr}.  
        \end{align}
        Let $\tilde{\mu}$ be a $(\l,\vp,C)$-ANC Borel measure on $N$.  
        Let $\mu$ be the projection to $\R^d$ of $\tilde{\mu}\left|_{B(\id,2)}\right.$ under $\pi$, normalized to be a probability measure.
        Then, 
        \begin{align}\label{eq:L2 improvement}
            \norm{\mu_{kr} \ast \nu}_2
            \leq 2^{-\eta kr} \norm{\nu}_2.
        \end{align}    
    \end{prop}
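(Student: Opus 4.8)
The strategy is a standard ``inverse theorem'' argument in additive combinatorics, following the scheme pioneered by Shmerkin and Hochman but adapted to higher dimensions via Theorem~\ref{thm:Hochman}. Assume for contradiction that~\eqref{eq:L2 improvement} fails, so $\norm{\mu_{kr}\ast\nu}_2 > 2^{-\eta kr}\norm{\nu}_2$ for some small $\eta$ to be chosen. First I would pass from the $L^2$-norm statement to a statement about additive energy of level sets: writing $\nu$ and $\mu_{kr}$ as (roughly) uniform measures on suitable dyadic level sets $A$ (for $\mu_{kr}$) and $B$ (for $\nu$) — extracted by pigeonholing over the dyadic decomposition of the mass according to the size of atoms, losing only polynomial-in-$k$ factors — the hypothesis~\eqref{eq:large L2} gives a lower bound on $|B|$ (it cannot be too large, i.e.\ $|B|\ll 2^{(1-\g)dkr}$) while the failure of flattening gives $\norm{1_A\ast 1_B}_2^2 \gtrsim 2^{-O(\eta kr)}|A||B|^2$. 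Since $A\subseteq \L_{kr}\cap B(0,O(1))$ we also have $|A|\leq 2^{dkr}$, hence $|A|\leq L|B|$ with $L = 2^{\g dkr}$ up to constants.

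Next I would apply the asymmetric Balog--Szemer\'edi--Gowers lemma (Theorem~\ref{thm:asymmetric Balog-Szemeredi-Gowers}) with $\a \asymp 2^{-O(\eta kr)}$ and $\e'$ chosen small relative to $\g$, producing dense subsets $A'\subseteq A$, $B'\subseteq B$ with $|A'+B'|\ll 2^{O(\eta kr) + \e' \g dkr}|A'|$. Translating back to measures, this says that a refinement $\mu'$ of $\mu_{kr}$ has the property that $\mu'\ast\nu'$ lives on a set only slightly larger than the support of $\mu'$, which in entropy language reads $H_{kr}(\mu'\ast\nu') < H_{kr}(\mu') + \s$ after one more pigeonhole step (and absorbing the loss into $\s$, which is legitimate once $\eta,\e'$ are small enough in terms of the $\s$ supplied by Hochman's theorem). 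Here one must be slightly careful that BSG gives approximate invariance of $A'$ under $B'$ rather than the other way around; since $L^2$-norms and energies are symmetric in the convolution, one can arrange to feed whichever order into Hochman's theorem is needed — I would apply Theorem~\ref{thm:Hochman} with the roles so that $\mu'$ plays the part of ``$\nu$'' (the saturated measure) and a rescaled copy of $\mu_{kr}$ the part of ``$\mu$'' (the concentrated measure).

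Hochman's theorem then yields subspaces $V_0,\dots,V_k$ such that for a $(1-\e)$-proportion of scales $i\in[0,k]$, the component measure $(\mu_{kr})^{x,ir}$ is $(V_i,\e)$-concentrated with positive $\mu_{kr}$-probability in $x$. Concentration on an $\e$-neighborhood of a translate of a proper subspace $V_i$ (we may assume $V_i$ proper for the bulk of scales, since the saturation condition on the other measure forces $\dim V_i < d$ on a definite proportion of scales — otherwise the entropy of $\mu'\ast\nu'$ would be too large, contradicting~\eqref{eq:large L2}) translates, via Lemma~\ref{lem:discretization and projections} and the compatibility~\eqref{eq:pullback metric} of the metrics, into concentration of the ambient measure $\tilde\mu$ near a proper affine subspace $W_i = \pi^{-1}(\text{translate of }V_i)$ of $N$, at scale $\asymp 2^{-ir}$, around a positive-$\tilde\mu$-measure set of centers $x$. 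This is precisely what the discretized non-concentration statement of Lemma~\ref{lem:non-conc of discretization} forbids at every good scale $\ell\in\Ncal(w)$ for every good point $w$: the measure of $W^{(\e\rho_\ell)}\cap\Dcal_{r\ell}(w)$ is at most $(\vp(\th)+C(\th)\vp(2\e/c))\tilde\mu(B(w,3c\rho_\ell))$, which is strictly less than $1-\e$ for $\e,\th$ small. Since good points carry all but a $2^{-\l(\th)k}$ fraction of the mass and good scales are a $(1-\th)$-proportion, the overlap between the ``$(1-\e)$-proportion of concentrated scales'' from Hochman and the ``$(1-\th)$-proportion of good scales'' from ANC is nonempty (in fact a positive proportion), giving the contradiction once $\e,\th,\eta,\e'$ are chosen small enough and $k\geq k_1$ large enough — with $r$ chosen as the maximum of the $r$'s demanded by Lemma~\ref{lem:non-conc of discretization} and Theorem~\ref{thm:Hochman}.

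\textbf{Main obstacle.} The delicate point is bookkeeping the chain of parameter dependencies so that it closes: Hochman's theorem supplies $\s$ (and $m_0$) only \emph{after} $\e,R,r$ are fixed, while the BSG loss $2^{O(\eta kr)}$ and the refinement losses must be made smaller than this $\s$; meanwhile $r$ must be large enough (relative to $C(\th)$ and $c$) for Lemma~\ref{lem:non-conc of discretization} to apply, and $\th$ must be small enough that $\vp(\th) + C(\th)\vp(2\e/c) < 1-\e$, which constrains $\e$ in terms of $\th$ and hence feeds back into Hochman. I would resolve this by fixing the order: choose $\g$; choose $\th$ small and then $\e$ small depending on $\th$ (to beat the $\vp$ bound) and on $\g$; invoke Hochman with these $\e$, with $R\asymp 2^n$-independent normalization and $r$ to be enlarged, obtaining $\s$; then choose $\e'$ (BSG) and $\eta$ small enough to make all combinatorial losses $\ll \s kr$; finally take $r$ to be the larger of Hochman's requirement and Lemma~\ref{lem:non-conc of discretization}'s, and $k_1$ accordingly. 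A secondary technical nuisance is that the measures here are not compactly supported but only controlled on $B(\id,2)$; this is exactly why the lemmas of this section are phrased for restrictions to nested balls, and the edge effects are absorbed by the factor $\tilde\mu(B(\id,2))$ and by working with $B(0,2^n)$ in the discretized statement. None of these steps is conceptually hard, but the proof only works if the quantifiers are nested in the one correct order.
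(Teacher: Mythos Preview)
Your overall architecture (dyadic level-set pigeonhole $\to$ asymmetric Balog--Szemer\'edi--Gowers $\to$ entropy inequality $\to$ Hochman $\to$ contradiction via ANC) matches the paper, but two steps are genuinely wrong, not just underspecified.

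\textbf{The roles of $\mu$ and $\nu$ are inverted.} You extract $A$ from $\mu_{kr}$ and $B$ from $\nu$; then BSG yields $|A'+B'|\ll 2^{O_{\e'}(\eta kr)}L^{\e'}|A'|$, so the entropy inequality reads $H_{kr}(\mu'\ast\nu')<H_{kr}(\mu')+\s$ with $\mu'$ uniform on $A'\subseteq\supp\mu_{kr}$. Hochman's theorem is asymmetric: the measure whose entropy appears on the right is the \emph{saturated} one, and the other is \emph{concentrated}. Hence your setup makes $\mu'$ saturated and $\nu'$ concentrated --- the opposite of what you claim and of what is needed. Two things break. To rule out $V_i=\R^d$ on a positive fraction of scales one argues (as in Lemma~\ref{lem:count saturation indices}) that full saturation would force the support of the saturated measure to have size $\gtrsim 2^{(1-O(\e))dkr}$; the only measure with a smaller support bound is the $\nu$-level set, via \eqref{eq:large L2}. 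With $\mu'$ saturated you have no such bound ($|A|$ could be $\asymp 2^{dkr}$). And the concentration conclusion now falls on $\nu'$, which has no ANC property, so there is nothing to contradict. Your appeal to ``symmetry of energies'' does not rescue this: the output of asymmetric BSG and the input of Hochman are both intrinsically asymmetric. The paper takes $A$ from $\nu$ (so $|A|\le 2^{(1-\g/2)dkr}$, the saturated measure is $\nu'$, and Lemma~\ref{lem:count saturation indices} goes through) and $B$ from $\mu$ (so the concentrated measure $\mu'$ is the one against which ANC is brought to bear).

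\textbf{The endgame needs a multiscale iteration, not a single-scale overlap.} Even with correct roles, Hochman's concentration is a statement about the \emph{uniform} measure $\mu'$ on $B'$, while ANC is a statement about $\tilde\mu$. At a single good scale, ANC gives only $\tilde\mu_{kr}(\text{tube}\cap\Dcal_{r\ell}(w))\le\d\cdot\tilde\mu(B(w,O(\rho_\ell)))$ for a fixed $\d<1$ independent of $k$, whereas the lower bound you must beat is $\mu_{kr}(B')\ge 2^{-O(\e' kr)}$. One factor of $\d$ cannot reach that far down. The paper therefore passes from $B'$ to the set $B''$ of points lying in the concentration tubes $Q_i$ at $\ge|\Ccal|/2$ scales (Lemma~\ref{lem:everywhere concentration}), lifts to $N$ and removes the discretized exceptional set, pigeonholes to a fixed $2$-separated set of $m\asymp\g k$ scales (Lemma~\ref{lem:build concentrated set}), and then runs an induction over these $m$ scales: at each step the geometric containment $F\cap\Dcal_{r\ell_i}(w)\subseteq\widetilde V_w^{(c\e\rho_{\ell_i})}$ lets Lemma~\ref{lem:non-conc of discretization} extract a factor $\asymp C_N(\vp(\g_2)+C(\g_2)\vp(O(\e)))$. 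The product over $m$ scales gives $\tilde\mu_{kr}(F)\ll\d^{\Theta(\g k)}$, which \emph{does} contradict the lower bound $\tilde\mu_{kr}(F)\ge 2^{-\sqrt{\e'}k-k-1}$. This iteration is the heart of the argument and is absent from your sketch.
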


    This proposition says that the convolution of an arbitrary measure $\nu$ with a non-concentrated measure causes $\nu$ to ``spread out", i.e.~leads to a quantitative reduction in the $\ell^2$ norm of $\nu$, unless $\norm{\nu}_2$ is already very close to $0$.

    \subsubsection{From measures to sets}
    
    The remainder of this subsection is dedicated to the proof of Proposition~\ref{prop:discretized flattening}.
    Let $\g>0$ and $\eta>0$ be small parameters and $r,k\in \N$ be large integers to be specified over the course of the proof.
    We frequently assume that $\g$ is sufficiently small so that various properties hold and the values of $\eta,r$ and $k$ will depend only on $\g$ and the non-concentration parameters.
    Suppose towards a contradiction that~\eqref{eq:large L2} holds but~\eqref{eq:L2 improvement} fails.

    We first translate the failure of~\eqref{eq:L2 improvement} from measures to indicator functions of certain sets using standard arguments. 
    This allows us to apply the Balog-Szemer\'edi-Gowers Lemma.

    \begin{lem}[Lemma 3.3,~\cite{Shmerkin-Furstenberg}]
    \label{lem:set expansion to measure expansion}
        For every $\eta>0$ and $n\geq 0$, the following holds for all large enough $\ell$. Let $\mu$ and $\nu$ be probability measures such that $
        \supp(\mu)\subseteq \L_\ell\cap [0,1)^d$ and $\supp(\nu)\subseteq \L_\ell\cap B(0,2^n)$.
        Assume that $\norm{\mu\ast \nu}_2$ is at least $2^{-\eta \ell}\norm{\nu}_2$.
        Then, there exist $j,j'\leq 4\eta \ell$ such that 
        \begin{align}\label{eq:bulk of nu}
            A &:= \set{x\in \L_\ell: 2^{-j-1} \norm{\nu}_2^2 < \nu(x) \leq 2^{-j}\norm{\nu}_2^2},\\
            B &:= \set{x\in\L_\ell: 2^{-j'-1-d\ell} < \mu(x)\leq 2^{-j'-d\ell}}
        \end{align}
        satisfy
        \begin{enumerate}
            \item $\norm{1_A\ast 1_B}^2_2 \geq 2^{-4\eta \ell} |A| |B|^2$,
            \item $\norm{\nu|_A}_2 \geq 2^{-2\eta \ell} \norm{\nu}_2$, and
            \item $\mu(B)\geq 2^{-2\eta \ell}$.
        \end{enumerate}
        In particular, there exists a subset $A_0\subseteq A$ such that 
        \begin{enumerate}
            \item $A_0$ is contained in $w+[0,1)^d$, for some $w\in \Z^d$.

            \item $\norm{1_{A_0}\ast 1_B}^2_2 \geq 2^{-5\eta \ell} |A| |B|^2$
        \end{enumerate}
    \end{lem}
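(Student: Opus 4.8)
\textbf{Proof plan for Lemma~\ref{lem:set expansion to measure expansion}.}
The plan is to follow the standard pigeonholing argument (as in~\cite[Lemma 3.3]{Shmerkin-Furstenberg}), dyadically decomposing both $\nu$ and $\mu$ according to the size of their atoms, and then extracting from a single dyadic level the sets $A$ and $B$ on which the bulk of the convolution mass lives. First I would record the elementary normalization facts: since $\supp(\nu)\subseteq \L_\ell\cap B(0,2^n)$, the measure $\nu$ is supported on $O_n(2^{d\ell})$ points of $\L_\ell$, so $\norm{\nu}_2^2 \geq 2^{-d\ell - O_n(1)}$; similarly $\norm{\mu}_2^2 \leq 1$ trivially and $\mu(x)\leq 1$ for every atom, while $\supp(\mu)\subseteq \L_\ell\cap[0,1)^d$ forces $\mu$ to be supported on $O(2^{d\ell})$ points as well. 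These bounds ensure that only $O(\eta\ell)$ dyadic levels $j$ (resp.\ $j'$) carry non-negligible mass, which is what makes the pigeonholing efficient.

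The core steps, in order, would be: (1) Write $\nu = \sum_j \nu|_{A_j}$ where $A_j = \{x: 2^{-j-1}\norm{\nu}_2^2 < \nu(x)\le 2^{-j}\norm{\nu}_2^2\}$, and likewise decompose $\mu$ into levels $B_{j'}$; observe that the levels with $j$ or $j'$ larger than $\asymp 4\eta\ell$ (after accounting for the $2^{-d\ell}$ and $2^{-O_n(1)}$ normalizations) contribute at most $2^{-2\eta\ell}\norm{\nu}_2$ in $L^2$-norm to $\nu$, hence can be discarded at the cost of shrinking the constant. (2) Expand $\norm{\mu\ast\nu}_2 \le \sum_{j,j'} \norm{(\mu|_{B_{j'}})\ast(\nu|_{A_j})}_2$ over the $O(\eta^2\ell^2)$ surviving pairs; since $\norm{\mu\ast\nu}_2 \ge 2^{-\eta\ell}\norm{\nu}_2$, by pigeonhole there is a pair $(j,j')$ with $\norm{(\mu|_{B_{j'}})\ast(\nu|_{A_j})}_2 \gg (\eta\ell)^{-2} 2^{-\eta\ell}\norm{\nu}_2 \ge 2^{-2\eta\ell}\norm{\nu}_2$ for $\ell$ large, which gives clause (2) $\norm{\nu|_A}_2 \ge 2^{-2\eta\ell}\norm{\nu}_2$ and clause (3) $\mu(B)\ge 2^{-2\eta\ell}$ after absorbing the bounded multiplicative factors from the dyadic ranges. (3) Translate the $L^2$ bound on $(\mu|_B)\ast(\nu|_A)$ into clause (1): on the level sets, $\mu|_B \asymp 2^{-j'-d\ell} 1_B$ and $\nu|_A \asymp 2^{-j}\norm{\nu}_2^2 1_A$ pointwise, so $\norm{(\mu|_B)\ast(\nu|_A)}_2 \asymp 2^{-j'-d\ell} 2^{-j}\norm{\nu}_2^2 \norm{1_A\ast 1_B}_2$; combined with $|A| 2^{-2j}\norm{\nu}_2^4 \asymp \norm{\nu|_A}_2^2 \ge 2^{-4\eta\ell}\norm{\nu}_2^2$ (so $|A|\asymp 2^{2j}\norm{\nu}_2^{-2} \cdot 2^{-4\eta\ell}$ up to the dyadic factor), $|B| 2^{-2j'-2d\ell} \asymp \mu(B)^{\text{-type bound}}$, and $|B|2^{-j'-d\ell}\asymp\mu(B)\ge 2^{-2\eta\ell}$, a bookkeeping computation yields $\norm{1_A\ast 1_B}_2^2 \ge 2^{-4\eta\ell}|A||B|^2$ (adjusting the constant $4$ as needed; the exact constant is immaterial and can be replaced by any fixed multiple of $\eta\ell$). (4) Finally, for $A_0$: since $A\subseteq \supp(\nu)\subseteq B(0,2^n)$, cover $B(0,2^n)$ by $O_n(1)$ unit cubes $w+[0,1)^d$, $w\in\Z^d$; by Cauchy--Schwarz (or just pigeonhole on $\norm{1_A\ast 1_B}_2^2 = \sum_w \norm{1_{A\cap(w+[0,1)^d)}\ast 1_B}_2^2$ together with the triangle inequality in $L^2$), one of the pieces $A_0 := A\cap(w+[0,1)^d)$ satisfies $\norm{1_{A_0}\ast 1_B}_2 \gg_n \norm{1_A\ast 1_B}_2$, hence $\norm{1_{A_0}\ast 1_B}_2^2 \ge 2^{-5\eta\ell}|A||B|^2$ for $\ell$ large enough to absorb the $O_n(1)$ factor into $2^{-\eta\ell}$.

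The main obstacle is purely bookkeeping: carefully tracking how the normalization constants ($2^{-d\ell}$ from $\mu$ being a probability measure on $\approx 2^{d\ell}$ points, $\norm{\nu}_2^2$ from the rescaling of $\nu$, the $2^{-O_n(1)}$ lower bound on $\norm{\nu}_2^2$, and the polynomial-in-$\ell$ losses $(\eta\ell)^{-2}$ from pigeonholing over the dyadic levels) propagate through steps (2)--(3), so that all losses are genuinely of the form $2^{-O(\eta\ell)}$ and can be absorbed by taking $\ell$ large and relabeling the exponent constant. There is no deep idea here beyond the Shmerkin argument; the only care needed is to ensure the cube-localization in step (4) is compatible with the fact that $B$ need not be localized (which is fine, since we only localize $A$, and convolution by the un-localized $B$ only spreads mass, so the $L^2$-norm splits over the disjoint translates of $A$). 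I would cite~\cite[Lemma 3.3]{Shmerkin-Furstenberg} for the one-dimensional prototype and remark that the argument is dimension-free.
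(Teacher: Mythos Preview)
Your plan is essentially the same as the paper's: cite Shmerkin's pigeonhole argument for clauses (1)--(3), then partition $A$ by unit cubes and use triangle inequality plus Cauchy--Schwarz to extract $A_0$. One correction: the identity $\norm{1_A\ast 1_B}_2^2 = \sum_w \norm{1_{A_w}\ast 1_B}_2^2$ you write in step (4), and the claim in your last paragraph that ``the $L^2$-norm splits over the disjoint translates of $A$'', are false --- since $B\subseteq[0,1)^d$, the supports $A_w+B$ of the summands overlap for adjacent $w$. You must instead use (as the paper does, and as you also mention) the triangle inequality $\norm{1_A\ast 1_B}_2 \le \sum_w \norm{1_{A_w}\ast 1_B}_2$ followed by Cauchy--Schwarz over the $O(2^{dn})$ nonempty terms; this gives only the inequality $\norm{1_A\ast 1_B}_2^2 \ll 2^{dn}\sum_w \norm{1_{A_w}\ast 1_B}_2^2$, which is all you need.
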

    \begin{proof}
        The properties of $A$ and $B$ were proved in~\cite{Shmerkin-Furstenberg} for measures on $\R$ and for $n=0$, however the short argument, based on the pigeonhole principle, goes through for general $d$ and $n$ with minimal modifications.

    To find $A_0$ with the claimed properties, let $A=\sqcup_{w\in \Z^d} A_w$ be the partition of $A$ defined by $A_w = A\cap w+[0,1)^d$. 
        Since $A\subset B(0,2^n)$, we have that $\#\set{w: A_w\neq \emptyset}\ll  2^{dn}$.
        By linearity of convolution, the triangle inequality and Cauchy-Shwarz, we get
        \begin{align*}
            2^{-4\eta \ell} |A| |B|^2 \leq \norm{1_A\ast 1_B}_2^2 \leq \left(\sum_w \norm{1_{A_w}\ast 1_B}_2 \right)^2
            \ll 2^{dn} \sum_w \norm{1_{A_w}\ast 1_B}_2^2.
        \end{align*}
        The corollary follows for all $\ell$ large enough, depending on $d,n, $ and $\eta$, by taking $A_0=A_w$, for $w$ such that $\norm{1_{A_w}\ast 1_B}_2^2$ is maximal.
        \qedhere
    \end{proof}

    \subsubsection{From $\ell^2$-concentration to entropy concentration}
   Let $A_0\subseteq A$ and $B$ be as in Lemma~\ref{lem:set expansion to measure expansion}, applied with $\ell=kr$ and $\mu=\mu_{kr}$.
    Taking $\eta$ small enough, we get by~\eqref{eq:large L2}, the definition of $A$, and Chebyshev's inequality that
    \begin{equation}\label{eq:|A|}
       |A_0|\leq |A| \leq 2^{4\eta kr+1 +(1-\g)dkr} \leq 2^{(1-\g/2)dkr+1}.
    \end{equation}
    We now apply Theorem~\ref{thm:asymmetric Balog-Szemeredi-Gowers} with $A=A_0$, $\a=2^{-5\eta kr-1}$, $L=\max\set{1,|A_0|/|B|}$, and
    \begin{align*}
        0<\e'<1
    \end{align*}
    a parameter to be chosen small enough depending on $\e$.
    Let $A'\subseteq A_0$ and $B'\subseteq B$ be the sets provided by Theorem~\ref{thm:asymmetric Balog-Szemeredi-Gowers}.
    
    Let $\nu'$ and $\mu'$ be the uniform probability measures supported on $A'$ and $B'$ respectively.
    Combining the above estimate with~\eqref{eq:entropy and covers}, we obtain
    \begin{align*}
        H_{kr}(\mu'\ast\nu') 
        \leq \frac{\log_2 |A'+B'|}{kr} 
        \leq \frac{\log_2 |A'|}{kr} +  O_{\e'}(\eta)+\log_2 L^{\e'}/kr.
    \end{align*}
    Since $\nu'$ is the uniform measure on $A'$, the remark following~\eqref{eq:entropy and covers} thus implies that
    \begin{align*}
        H_{kr}(\mu'\ast\nu') \leq H_{kr}(\nu') + O_{\e'}(\eta)+\log_2 L^{\e'}/kr.
    \end{align*}
    By~\eqref{eq:|A|}, we have $\log_2 L^{\e'}\leq \e'\log_2 |A|\leq \e'((1-\g/2)d kr+1)$.

    Recall from Lemma~\ref{lem:set expansion to measure expansion} $A_0$, and hence the support of $\nu'$, is contained in a box of the form $w+[0,1)^d$.
    Moreover, the above inequality remains unchanged by translating $\nu'$. Hence, for the purposes of applying Theorem~\ref{thm:Hochman}, we may without loss of generality assume in the sequel that
    \begin{align*}
        \supp(\nu')\subset [0,1)^d.
    \end{align*}

    Let $R=O(1)$ be such that $[0,1)^d$ is contained in the $R$-ball around the origin.
    Let $\s>0$ and $k_0\in\N$ be the parameters provided by Theorem~\ref{thm:Hochman} applied with this $R$ and
    \begin{align*}
        \e = 2^{-r}.
    \end{align*}
    We shall assume that $k$ is chosen to be larger than $k_0$.
    Hence, taking $\e'$ small enough (depending on $\s$) and $\eta$ small enough (depending on $\e'$ and $\s$), we obtain
    \begin{align}\label{eq:entropy didn't improve}
        H_{kr}(\mu'\ast\nu') < H_{kr}(\nu') + \s.
    \end{align}

    We show that the conclusion of Theorem~\ref{thm:Hochman} is incompatible with the non-concentration properties of the measure $\mu$.
    Let $V_0,\dots,V_k$ be the subspaces provided by Theorem~\ref{thm:Hochman}
    and
    \begin{align*}
        \Scal = \set{0\leq i \leq k:  V_i = \R^d}.
    \end{align*}
    We begin by showing that a significant proportion of the $V_i's$ are proper subspaces.
    Intuitively, being $\R^d$-saturated on most scales means the measure $\nu$ is close to being absolutely continuous to Lebesgue on $\R^d$ in the sense that its $\ell^2$-norm would be very close to $2^{-dk}$. This would contradict~\eqref{eq:large L2}.

    \begin{lem}~\label{lem:count saturation indices}
        If $\e$ is chosen small enough and $k$ large enough depending on $\g$, then $\# \Scal< (1-\g/10) k$.
    \end{lem}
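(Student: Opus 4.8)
The plan is to argue by contradiction using the entropy bound~\eqref{eq:entropy didn't improve} together with the saturation conclusion of Theorem~\ref{thm:Hochman}. Suppose $\#\Scal \geq (1-\g/10)k$. Recall that for $i\in\Scal$, a proportion $>1-\e$ of the component measures $\nu'^{x,ir}$ are $(\R^d,\e,m_0)$-saturated, which by definition~\eqref{eq:saturated} (with $V=\R^d$, so $W=\{0\}$ and $\pi_W\nu'=\d_0$) simply says $H_{m_0}(\nu'^{x,ir}) \geq d-\e$. First I would recall the standard entropy decomposition identity: for any probability measure $\nu$ on $[0,1)^d$ and multiples of $r$, one has
\begin{align*}
    H_{kr}(\nu) = \frac{1}{k+1}\sum_{i=0}^{k}\mathbb{E}_{z\sim\nu}\big[H_{r}(\nu^{z,ir})\big] + O(1/k),
\end{align*}
and more generally one can group scales in blocks of size $m_0 r$ to write $H_{kr}(\nu')$ as an average of $H_{m_0 r}$ of component measures. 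The point is that if $H_{m_0 r}(\nu'^{z,ir}) \geq d r^{-1}\cdot$(something close to $d$)... — more precisely, since $H_{m_0}(\nu'^{z,ir})$ is within $\e$ of the maximum $d$, and this happens for all but an $\e$-fraction of scales $i$ (those in $\Scal$) and all but an $\e$-fraction of $z$, averaging gives $H_{kr}(\nu') \geq d - O(\e) - O(\g) - O(1/k)$.

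Next I would contradict this with~\eqref{eq:large L2}. Using the elementary relation between $\ell^2$-norm and entropy: if $\nu'$ is a probability measure on $\L_{kr}$ with $\norm{\nu'}_2^2 > 2^{-(1-\g/2)dkr}$ (which follows from $\norm{\nu'}_2 \geq 2^{-2\eta kr}\norm{\nu}_2$ and~\eqref{eq:large L2}, taking $\eta$ small), then by Jensen/convexity $H_{kr}(\nu') \leq \frac{1}{kr}\log_2\#\supp(\nu') $ is not directly the bound we want; rather, one uses that $2^{-kr H_{kr}(\nu')} = \big(\sum_P \nu'(P)^{\,}\big)$... the cleanest route is: $\sum_P \nu'(P)^2 \leq \max_P \nu'(P) \leq$ — no. Instead, recall the standard inequality (Rényi entropy monotonicity) $H_{kr}(\nu') \leq -\frac{1}{kr}\log_2 \norm{\nu'_{kr}}_2^2$ is false in general; the correct one is $H_{kr}(\nu')\geq -\frac{1}{kr}\log_2\norm{\nu'}_2^2$ by convexity of $t\mapsto t\log t$ (i.e. Shannon entropy dominates Rényi-2 entropy). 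This gives $H_{kr}(\nu') \geq -\frac{1}{kr}\log_2 2^{-(1-\g/2)dkr}$ — wrong direction. So the usable direction is the reverse: we need an \emph{upper} bound on $H_{kr}(\nu')$ coming from large $\ell^2$-norm, which does \emph{not} hold. Therefore the correct argument must instead use that $\nu'$ is \emph{uniform} on $A'$, so $H_{kr}(\nu') = \frac{1}{kr}\log_2|A'|$ exactly, and then bound $|A'|$ from above using~\eqref{eq:|A|} and Theorem~\ref{thm:asymmetric Balog-Szemeredi-Gowers}(1): $|A'| \leq |A_0| \leq 2^{(1-\g/2)dkr+1}$. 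Hence $H_{kr}(\nu') \leq (1-\g/2)d + O(1/(kr))$, which for $k$ large contradicts $H_{kr}(\nu')\geq d - O(\e)-O(\g)$ once $\e$ is small and $\g$ is small (comparing $(1-\g/2)d$ against $d-O(\e)-O(\g/10)$, the gap $\g d/2$ dominates the error terms).

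So the key steps, in order: (1) record the entropy-decomposition identity writing $H_{kr}(\nu')$ as an average over scales $ir$, $0\le i\le k$, of block entropies $H_{m_0}$ (or $H_{m_0 r}$) of component measures, with error $O(m_0/k)$; (2) for each $i\in\Scal$, use $(\R^d,\e,m_0)$-saturation to get $H_{m_0}(\nu'^{z,ir})\geq d-\e$ for a $(1-\e)$-fraction of $z$, and the trivial bound $H_{m_0}(\cdot)\geq 0$ otherwise; (3) average, using $\#\Scal\geq(1-\g/10)k$, to conclude $H_{kr}(\nu') \geq d - \e - \g/10\cdot d - O(\e) - O(m_0/k) \geq d - \g d/5$ say, for $k$ large and $\e$ small; (4) on the other hand, since $\nu'$ is the uniform measure on $A'\subseteq A_0$ and $|A_0|\leq 2^{(1-\g/2)dkr+1}$ by~\eqref{eq:|A|}, we get $H_{kr}(\nu') = \frac{\log_2|A'|}{kr} \leq (1-\g/2)d + \frac{1}{kr}$; (5) compare: $(1-\g/2)d + o(1) < d - \g d/5$ fails for small $\g$ — wait, $(1-\g/2)d = d - \g d/2 < d - \g d/5$, so in fact (4) gives the \emph{smaller} value and there is \emph{no} contradiction yet. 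The resolution: we must have chosen the saturation threshold so that step (3) yields something \emph{larger} than $(1-\g/2)d$. Indeed $d - \gamma d/5 > d - \gamma d /2 = (1-\gamma/2)d$, so steps (3) and (4) \emph{do} contradict each other: (3) says $H_{kr}(\nu')\geq d-\gamma d/5$ while (4) says $H_{kr}(\nu')\leq d - \gamma d/2 + o(1)$, and $d-\gamma d/5 > d-\gamma d/2$, contradiction. Good — so the contradiction closes, provided $\e$ and $1/k$ are small relative to $\gamma d$.

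The main obstacle I anticipate is making step (1)–(3) fully rigorous: the entropy decomposition in blocks of length $m_0$ requires care with the base-$2^r$ versus base-$2$ bookkeeping (since Theorem~\ref{thm:Hochman} gives $\e = 2^{-r}$ and saturation at scale $m_0$, i.e.\ in the rescaled coordinates of $\Dcal_{ir}(z)$), and one must verify that the $O(\e)$ and $O(m_0/k)$ error terms genuinely can be absorbed into $\gamma d/100$, which forces the quantifier order: first fix $\gamma$, then Theorem~\ref{thm:Hochman} delivers $m_0$ depending on $\e = 2^{-r}$, so $r$ must be chosen large (depending on $\gamma$) \emph{before} $m_0$ is known, and finally $k_1$ is chosen large depending on $m_0, r, \gamma$. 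Tracking that these choices are consistent with the ones already fixed earlier in the proof of Proposition~\ref{prop:discretized flattening} (where $\e' $ was chosen depending on $\s$, and $\eta$ on $\e',\s$) is the bookkeeping-heavy part, but presents no conceptual difficulty.
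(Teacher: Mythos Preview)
Your proposal is correct and follows essentially the same approach as the paper: argue by contradiction, combine $(\R^d,\e,m_0)$-saturation on scales in $\Scal$ with the entropy decomposition identity (the paper cites \cite[Lemma 3.4]{Hochman-Annals} for this) to get $H_{kr}(\nu')\geq(1-\g/10)d-O(\e)-O_r(m_0/k)$, then contradict this with $H_{kr}(\nu')=\frac{\log_2|A'|}{kr}\leq(1-\g/2)d+o(1)$ from uniformity of $\nu'$ on $A'$ and~\eqref{eq:|A|}. The paper's exposition is more compact (it converts the entropy lower bound into a lower bound on $|A|$ and compares directly with~\eqref{eq:|A|}), but the logic is identical.
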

    
    \begin{proof}
    Let $\g_1=\g/10$ and suppose that $\# \Scal \geq (1-\g_1) k$.
    Then, Theorem~\ref{thm:Hochman} and the definition of saturation (cf.~\eqref{eq:saturated}) imply that
    \begin{align*}
        \frac{1}{k+1} \sum_{i=0}^k \int H_{m_0}((\nu')^{z,ir}) \;d\nu'(z) \geq (1-\g_1) (1-\e)(d -\e) = (1-\g_1)d-O(\e).
    \end{align*}
    By~\cite[Lemma 3.4]{Hochman-Annals}\footnote{The cited result is stated for step-size $r=1$, however its short proof extends to work for any $r$ with minor changes.}, this yields the following estimate on $H_{kr}(\nu')$: 
    \begin{align*}
        H_{kr}(\nu') \geq (1-\g_1)d  - O(\e)- O_r\left( \frac{m_0}{k}\right) 
        \geq (1-\g_1)d - O(\e),
    \end{align*}
    where the second inequality holds whenever $k$ is large enough depending on $r$ and $m_0$.
    Moreover, by the remark following~\eqref{eq:entropy and covers}, we have
    $H_{kr}(\nu')= \log_2 |A'|/kr\leq \log_2 |A|/kr$.
    Hence, we obtain that
    $|A| \geq 2^{( (1-\g_1)d-O(\e))kr}$.
    This contradicts~\eqref{eq:|A|} when $\e$ is small enough compared to $\g$.
    \qedhere 
    \end{proof}

    \subsubsection{Concentration of large sets at many scales}
    Roughly speaking, our strategy is as follows. Armed with Lemma~\ref{lem:count saturation indices}, we show that the concentration provided by Theorem~\ref{thm:Hochman} holds on a set of relatively large measure and on a definite proporrtion of scales.
    On the other hand, the non-concentration property of $\mu$ and induction on scales shows that such set must have very small measure, yielding a contradiction.

    Recall that $\pi:N\r  \R^d$ is our fixed surjective homomorphism.
    Let $0<\g_2<\g/40$ be a small parameter to be chosen depending only on $\g$.
    Let $\Edisc$ be the exceptional set provided by Lemma~\ref{lem:non-conc of discretization} for our choices of $k,r$, and with $\th=\g_2$
    and $2c\e$ in place of $\e$, where $c$ is the constant in~\eqref{eq:pullback metric}. 
    We set $\g_3 = \g/40 - \g_2$.
    We use the notation
    \begin{align*}
        \rho_i= 2^{-ir}.
    \end{align*}

    \begin{lem}\label{lem:build concentrated set}
   Suppose $\e$ is small enough depending on $\g$, $\e'$ is small enough depending on $\e$, $\eta$ is small enough depending on $\e'$, and $k$ is large enough depending on all the previous parameters.
    
        Then, there exist a set $F\subseteq \widetilde{\L}_{kr}(B(\id,1))\subset N$ and a $2$-separated set of scales $\ell_1 < \ell_2 < \cdots < \ell_m$, where $m=\lceil \g_3 k\rceil $ such that the following hold for every $1\leq i \leq m$:
        
        \begin{enumerate}
        \item\label{item:lower bound on F} $\tilde{\mu}_{kr}(F)\geq 2^{-\sqrt{\e'}kr -k-1}$.
        \item\label{item:F is near V} For every $w\in \widetilde{\L}_{r\ell_i}(F)$, there exists an affine subspace $\widetilde{V}_w$ such that $
            F\cap \Dcal_{r\ell_i}(w)
            \subseteq 
            \widetilde{V}_w^{(c\e \rho_{\ell_i})}$,
        where $c\geq 1$ is as in~\eqref{eq:pullback metric}.

         \item\label{item:V's are proper} $\widetilde{V}_w$ is a proper affine subspace for every $w\in \widetilde{\L}_{r\ell_i}(F)$.
        \item\label{item:E outside of Ecal} $F$ is disjoint from the exceptional set for non-concentration, i.e. $F\cap \Edisc=\emptyset$.
        \item\label{item:varpi are good for non-conc} $\ell_i$ is a good scale for non-concentration at every point in $F$, i.e.~$\ell_i\in \Ncal(x)$ for all $x\in F$.
    \end{enumerate}
    \end{lem}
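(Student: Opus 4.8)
The plan is to extract the set $F$ and the scales $\ell_1<\dots<\ell_m$ by pushing the conclusion of Theorem~\ref{thm:Hochman} from the measure $\nu'$ over to the ambient measure $\tilde\mu_{kr}$ via the projection $\pi$, and then intersecting with the complement of the bad set $\Edisc$ from Lemma~\ref{lem:non-conc of discretization}. First I would record what Theorem~\ref{thm:Hochman} gives, combined with Lemma~\ref{lem:count saturation indices}: outside a set of scales of density $<\g/10$, the subspace $V_i$ is proper, and on a $(1-\e)$-proportion of scales (weighted by $\nu'$) the component $\mu^{z,ir}$ is $(V_i,\e)$-concentrated. Since $\e = 2^{-r}$, $(V_i,\e)$-concentration of $\mu^{z,ir}$ means that, for $\nu'$-most $z$, the rescaled piece of $\mu$ in $\Dcal_{ir}(z)$ lies in an $\e$-neighborhood of a translate of $V_i$; undoing the rescaling, the piece of $\mu$ inside $\Dcal_{ir}(z)$ lies in a $\asymp \e\rho_i$-neighborhood of an affine subspace. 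Using~\eqref{eq:pullback metric}, this transfers to $\tilde\mu$: the corresponding piece of $\tilde\mu$ inside $\pi^{-1}(\Dcal_{ir}(z))$ lies near the affine subspace $\widetilde V = \pi^{-1}(\text{translate of }V_i)$, which is proper in $N$ precisely because $V_i$ is proper in $\R^d$ (here one uses surjectivity of $\pi$ and the definition of affine subspace of $N$).

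Next I would do the counting/pigeonholing. Let $\Scal^c = \{i : V_i \neq \R^d\}$; by Lemma~\ref{lem:count saturation indices} we have $\#\Scal^c > \g k/10$. Intersecting with the set of scales on which the $\nu'$-weighted concentration event holds with probability $>1-\sqrt\e$ (obtained from the $>1-\e$ bound by Markov/Chebyshev over scales), we still keep a set of scales $\Ical$ of density $\gg \g$. For each $i\in\Ical$, the concentration event holds for a set of $z$ of $\nu'$-measure $>1-\sqrt\e$. Now the key move: average over $i\in\Ical$ and over $z$, push forward to $\tilde\mu_{kr}$ using Lemma~\ref{lem:discretization and projections} (which bounds $\tilde\mu_{kr}$ of a $\pi$-preimage from below by $\mu_{kr}$ of the base), and use that $\nu' = $ uniform measure on $A'$ with $|A'| \gg \a^{O(1)} L^{-\e'}|A|$ and $\mu_{kr}(B)\ge 2^{-2\eta kr}$, together with the lower bound $\norm{\nu'|_A}_2 \ge 2^{-2\eta kr}\norm{\nu}_2$ from Lemma~\ref{lem:set expansion to measure expansion} and~\eqref{eq:large L2}, to produce a set $F$ of points $w\in\widetilde\L_{kr}(B(\id,1))$ with $\tilde\mu_{kr}(F)\ge 2^{-\sqrt{\e'}kr - k - 1}$ (item~\eqref{item:lower bound on F}) that simultaneously satisfies the near-subspace containment~\eqref{item:F is near V}–\eqref{item:V's are proper} for a $2$-separated family of $m=\lceil\g_3 k\rceil$ scales. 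The factor $2^{-k}$ in the lower bound absorbs the loss from summing over the $\le k$ scales and choosing a single sub-collection; $\sqrt{\e'}kr$ absorbs the $\a^{O_{\e'}(1)}L^{\e'}$ losses from Balog–Szemerédi–Gowers once $\eta$ (hence $\a = 2^{-5\eta kr}$) is small enough. Finally, remove from $F$ the exceptional set $\Edisc$ from Lemma~\ref{lem:non-conc of discretization} (applied with $\th=\g_2$, $2c\e$ in place of $\e$): since $\tilde\mu_{kr}(\Edisc)\le 2C(\g_2)2^{-\l(\g_2)k}\tilde\mu(B(\id,2))$, and we may take $\g_2$ small enough that this is much smaller than $2^{-\sqrt{\e'}kr-k-1}$ for $k$ large, the set $F\setminus\Edisc$ still satisfies~\eqref{item:lower bound on F} (item~\eqref{item:E outside of Ecal}). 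For item~\eqref{item:varpi are good for non-conc}, note that each $w\in F\setminus\Edisc$ has a good-scale set $\Ncal(w)$ of density $\ge 1-\g_2-O(1/k)$ by Lemma~\ref{lem:non-conc of discretization}; since the scales $\ell_1,\dots,\ell_m$ were chosen from a set of density $\gg\g \gg \g_2$, a further pigeonhole over the finitely many possible "shapes" of $\Ncal(\cdot)\cap\Ical$ — or rather, passing to the sub-collection of scales lying in $\bigcap$ of the good-scale sets after discarding a $\g_2$-proportion — lets us retain $m=\lceil\g_3 k\rceil$ scales (with $\g_3 = \g/40 - \g_2$) that are good at every point of the retained $F$; here the $2$-separation is arranged by thinning the scale set by a factor $2$.

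The main obstacle I anticipate is the bookkeeping needed to make all the choices of parameters consistent and in the right order: $\g_2,\g_3$ depend on $\g$; $\e$ must be small relative to $\g$ (already needed in Lemma~\ref{lem:count saturation indices}); $\s$ and $m_0, k_0$ come out of Theorem~\ref{thm:Hochman} applied with $\e=2^{-r}$, so $r$ must be fixed before $\s$; then $\e'$ must be small relative to $\s$ (to get~\eqref{eq:entropy didn't improve}), $\eta$ small relative to $\e'$, and finally $k$ large relative to everything including $m_0$ and $n$. One has to be careful that the concentration scales supplied by Hochman are indexed in the "coarsened" scale $ir$ while the ANC good scales $\Ncal(x)$ and the discretization $\tilde\mu_{kr}$ live at compatible but not identical scales, so the containment in~\eqref{item:F is near V} is stated at scale $r\ell_i$ with radius $\asymp\e\rho_{\ell_i}$ — matching exactly the scale appearing in~\eqref{eq:discretized nonconc}, which is what will be used in the next step of the argument. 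A secondary technical point is ensuring that the affine subspace $\widetilde V_w$ produced is genuinely the $\pi$-preimage of an affine subspace of $\R^d$, so that "proper in $\R^d$" transfers to "proper in $N$"; this is where the surjectivity of $\pi$ and the definition of affine subspaces of $N$ (as $\pi$-preimages, in our intended application) are used, and I would state this cleanly as a preliminary observation before the main construction.
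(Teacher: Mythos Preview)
Your approach matches the paper's: define $B'' \subseteq B'$ as the set of points lying in the concentration neighborhoods $Q_i = \bigcup_w V_w^{(\e\rho_i)}\cap\Dcal_{ir}(w)$ for at least half the scales $i\in\Ccal$, lift $B''$ to $B'''\subset N$ via $\pi^{-1}$ and Lemma~\ref{lem:discretization and projections}, remove $\Edisc$, intersect pointwise with the good-scale sets $\Ncal(x)$, and then pigeonhole over the $\le 2^{k+1}$ subsets $\varpi\subseteq\{0,\dots,k\}$ of size $\lceil\g_3 k\rceil$ to extract $F=B'''_\varpi$ with a common set of scales.

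One point requires correction. The concentration conclusion in Theorem~\ref{thm:Hochman} is for the components $(\mu')^{x,ir}$, with $\mu'$ the uniform measure on $B'$, and the relevant averaging is against $\mu'$, not $\nu'$; you repeatedly write ``$\nu'$-most $z$'' and invoke the density bound $|A'|\gg \a^{O(1)}L^{-\e'}|A|$. The passage to a $\mu_{kr}$-lower bound (which is what Lemma~\ref{lem:discretization and projections} requires) must go through $B'$: one uses that $\mu_{kr}$ is essentially constant on the level set $B$ together with $|B'|\gg_{\e'}\a^{O_{\e'}(1)}L^{-\e'}|B|$ and $\mu_{kr}(B)\ge 2^{-2\eta kr}$ to obtain $\mu_{kr}(B')\ge 2^{-2d\e'kr}$ (the paper's Lemma~\ref{lem:lower bound on B'}) and then $\mu_{kr}(B'')\ge 2^{-3d\e'kr}$ (Lemma~\ref{lem:everywhere concentration}). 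If you actually worked with subsets of $A'$ you would have no handle on their $\mu_{kr}$-measure, since $A$ is a level set for $\nu$, not for $\mu_{kr}$. Once routed through $B'$ the rest of your outline goes through as written.
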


    \begin{remark}
        The proof of Lemma~\ref{lem:build concentrated set} in fact shows that for each fixed scale $\ell_i$, the projection of the spaces $\set{\widetilde{V}_w:w\in \widetilde{\L}_{r\ell_i}(F) }$ to $\R^d$ are all parallel to one another.
    \end{remark}

    We begin by deriving a lower bound on the measure of $B'$ with respect to our original discretized measure $\mu_{kr}$ (not $\mu'$).
    Recall the parameter $\e'$ chosen above~\eqref{eq:entropy didn't improve}. 
    \begin{lem}\label{lem:lower bound on B'}
       
        If $\eta$ is chosen sufficiently small depending on $\e'$, then for all sufficiently large $k$, 
        \begin{align*}
            \mu_{kr}(B') \geq 2^{-2d\e' kr}.
        \end{align*}
    \end{lem}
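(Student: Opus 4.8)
The plan is to derive the lower bound $\mu_{kr}(B')\geq 2^{-2d\e' kr}$ by bootstrapping from the three facts we already have about $B$ and $B'$: first, that $B\subseteq \supp(\mu_{kr})$ consists of atoms of $\mu_{kr}$ of size at least $2^{-j'-1-dkr}$ with $j'\leq 4\eta kr$ (from the construction in Lemma~\ref{lem:set expansion to measure expansion}); second, that $\mu_{kr}(B)\geq 2^{-2\eta kr}$; and third, that $B'\subseteq B$ is dense in $B$ in the sense $|B'|\gg_{\e'} \a^{O_{\e'}(1)} L^{-\e'}|B|$ provided by Theorem~\ref{thm:asymmetric Balog-Szemeredi-Gowers}. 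The idea is that since all atoms of $\mu_{kr}$ inside $B$ have comparable mass (within a factor $2$), retaining a definite fraction of the cardinality of $B$ retains a definite fraction of the mass.

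Concretely, I would first record the uniform lower bound on atom sizes: for every $x\in B'\subseteq B$, $\mu_{kr}(x)\geq 2^{-j'-1-dkr}\geq 2^{-4\eta kr-1-dkr}$. Hence
\begin{align*}
    \mu_{kr}(B') \geq |B'|\cdot 2^{-4\eta kr - 1 - dkr}.
\end{align*}
Next I would lower bound $|B'|$ using Theorem~\ref{thm:asymmetric Balog-Szemeredi-Gowers}: with $\a = 2^{-5\eta kr - 1}$ and $L = \max\{1,|A_0|/|B|\}$, we have $|B'| \gg_{\e'} \a^{O_{\e'}(1)} L^{-\e'}|B|$. Now $\a^{O_{\e'}(1)} = 2^{-O_{\e'}(\eta kr)}$, and since $L\leq |A_0|\leq 2^{(1-\g/2)dkr+1}$ by~\eqref{eq:|A|}, we get $L^{-\e'}\geq 2^{-\e'((1-\g/2)dkr+1)}\geq 2^{-\e' dkr - 1}$. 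Finally $|B|\geq \mu_{kr}(B)\cdot 2^{dkr}/1 \geq 2^{-2\eta kr + dkr}$, because every atom of $\mu_{kr}$ in $B$ has mass at most $2^{-dkr}$ (as $j'\geq 0$), so $\mu_{kr}(B)\leq |B| 2^{-dkr}$. Combining these,
\begin{align*}
    |B'| \gg_{\e'} 2^{-O_{\e'}(\eta kr)}\cdot 2^{-\e' dkr - 1}\cdot 2^{-2\eta kr + dkr}.
\end{align*}
Substituting into the atom-size bound gives
\begin{align*}
    \mu_{kr}(B') \gg_{\e'} 2^{-O_{\e'}(\eta kr)}\cdot 2^{-\e' dkr}\cdot 2^{-O(\eta kr)},
\end{align*}
which is at least $2^{-2d\e' kr}$ once $\eta$ is chosen small enough depending on $\e'$ (so that the $\eta$-dependent exponents are dominated by $\e' dkr$, say each is $\leq \e' dkr/4$) and $k$ is large enough to absorb the $O_{\e'}(1)$ multiplicative constant and the additive $+1$'s into one more factor of $2^{\e' dkr /4}$.

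The main obstacle is bookkeeping rather than conceptual: one has to be careful that the exceptional polynomial factors $\a^{O_{\e'}(1)}$ and $L^{\pm\e'}$ coming from Tao--Vu, together with the $2^{-j'-1-dkr}\leq \mu_{kr}(x)$ slack, are all genuinely of the form $2^{-o(kr)}$ once $\eta$ is taken small after $\e'$ is fixed (recall the quantifier order established above~\eqref{eq:entropy didn't improve}: $\e$ depends on $\g$, then $\e'$ on $\e$, then $\eta$ on $\e'$, then $k$ on everything). I would emphasize in the write-up that $\e'$ is already fixed at this point, so $O_{\e'}(\eta kr)$ can be made $\leq \e' dkr/4$ by shrinking $\eta$, and that the remaining implicit constant in $\gg_{\e'}$ depends only on $\e'$ and $d$, hence is absorbed for $k$ large. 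This completes the proof of the lemma.
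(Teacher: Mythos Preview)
Your proposal is correct and follows essentially the same route as the paper: bound $\mu_{kr}(B')$ from below by (atom size)$\times|B'|$, invoke the Tao--Vu density estimate $|B'|\gg_{\e'}\a^{O_{\e'}(1)}L^{-\e'}|B|$, and close with $\mu_{kr}(B)\geq 2^{-2\eta kr}$ and $|A|^{\e'}\ll 2^{d\e'kr}$. The only cosmetic difference is that the paper keeps the combination $2^{-j'-dkr}|B|$ intact and bounds it directly by $\mu_{kr}(B)$, whereas you split this into two separate estimates (using $j'\leq 4\eta kr$ for the atom lower bound and $j'\geq 0$ for the atom upper bound); the paper's packaging avoids the need to invoke $j'\geq 0$, but the difference is immaterial.
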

    \begin{proof}
        Recall that the set $B$ was defined in~\eqref{eq:bulk of nu} and $B'\subseteq B$ is provided by Theorem~\ref{thm:asymmetric Balog-Szemeredi-Gowers} with $\a=2^{-4\eta kr-1}$ and $L=\max\set{1,|A|/|B|}$.
        We calculate using Lemma~\ref{lem:set expansion to measure expansion} and Theorem~\ref{thm:asymmetric Balog-Szemeredi-Gowers}:
        \begin{align*}
            \mu_{kr}(B') = \sum_{u\in B'}\mu(u) \geq 2^{-j'-d kr-1}|B'|
            \gg_{\e'} 2^{-j'-dkr}2^{-O_{\e'}(\eta kr)}   L^{-\e'} |B|
            \geq 2^{-j'-dkr} 2^{-O_{\e'}(\eta kr)} |B| |A|^{-\e'}.
        \end{align*}
        By~\eqref{eq:|A|}, we have that $|A|^{\e'}\ll  2^{d\e' kr}$.
        Moreover, Lemma~\ref{lem:set expansion to measure expansion} implies that
        \begin{align*}
            2^{-2\eta kr}\leq \mu_{kr}(B) \leq 2^{-j'-dkr} |B|.
        \end{align*}
        The lemma then follows once $\eta$ is chosen sufficiently small depending on $\e'$.
    \end{proof}

    Next, we define the following set of scales where the concentration provided by Theorem~\ref{thm:Hochman} gives non-trivial information:
    \begin{align*}
        \Ccal := \set{0,\dots,k}\setminus\Scal =  \set{0\leq i\leq k :  V_i \lneq \R^d }.
    \end{align*}
    By Lemma~\ref{lem:count saturation indices}, we know that 
    \begin{align}\label{eq:cardinality of Ccal}
      |\Ccal| \geq \g_1 k, \qquad    \g_1=\g /10.
    \end{align}
    Our next goal is to transfer the concentration information provided by Theorem~\ref{thm:Hochman} for $\mu'$ to the measure $\mu$.
    To do so, we convert the probabilistic concentration provided in the theorem into geometric containment into subspace neighborhoods.
    
    Recall that $\set{\Dcal_{\ell } :\ell\in\N}$ is a refining sequence of dyadic partitions of $\R^d$ and $\L_\ell = 2^{-\ell}\Z^d$.
    For $i\in\Ccal$ and $w\in \L_{ir}$, 
    let $z\in \Dcal_{ir}(w)$ be such that for $V_w := V_i +z$, we have
    \begin{align}\label{eq:meaning of concentration}
         \mu'( V_w^{(\e\rho_i)} \cap \Dcal_{ir}(w))
        \geq (1-\e) \mu'(\Dcal_{ir}(w)).
    \end{align}
    If no such $z$ exists, we let $V_w = V_i+w$. 
    Denote by $Q_i$ the set of concentrated points at scale $ir$, i.e.,
    \begin{align*}
        Q_i = \bigcup_{w\in \L_{ir}} V_w^{(\e\rho_i)}\cap \Dcal_{ir}(w).
    \end{align*}
    For every $w\in \tilde{\L}_{ir}$, let $\widetilde{V}_{w}$ denote the affine space $\pi^{-1}(V_\pi(w))$, where $\pi:N\r \R^d$ is our fixed surjective homomorphism.
    For $x\in \R^d$, we set
    \begin{align*}
        \Ccal(x) = \set{i\in \Ccal: x\in Q_i}.
    \end{align*}
    In particular, for $x \in B'$, $\Ccal(x)$ consists of scales at which $x$ witnesses the concentration of $B'$.

    \begin{lem}
        \label{lem:everywhere concentration}
        If $\e$ is small enough and $k$ is large enough, depending on $\g$, then the subset
        \begin{align}\label{eq:B''}
            B'' = \set{x\in B': |\Ccal(x)| \geq |\Ccal|/2}
        \end{align}
        satisfies $
        \mu_{kr}(B'')\geq 2^{-3d\e' kr}$.
    \end{lem}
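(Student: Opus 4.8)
The plan is to deduce Lemma~\ref{lem:everywhere concentration} from Theorem~\ref{thm:Hochman} and Lemma~\ref{lem:lower bound on B'} by a standard pigeonholing over scales, combined with the two-sided transfer between the auxiliary measure $\mu'$ and the discretized measure $\mu_{kr}$. Recall that Theorem~\ref{thm:Hochman}, applied with $\e=2^{-r}$, gives that the concentration/saturation event holds with $\nu'$-probability at least $1-\e$, where the probability is an average over scales $0\leq i\leq k$ in the sense of~\eqref{eq:probability notation}. First I would restrict attention to the indices $i\in\Ccal$ (where $V_i$ is a proper subspace), recalling from~\eqref{eq:cardinality of Ccal} that $|\Ccal|\geq \g_1 k$. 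For each $i\in\Ccal$ and each $w\in\L_{ir}$, the $(V_i,\e)$-concentration of the component measure $\mu'^{z,ir}$ at $\nu'$-typical $z$ translates, after unscaling, into the geometric statement~\eqref{eq:meaning of concentration}, which is precisely the defining property of the set $Q_i$.

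The heart of the argument is a counting exchange: since the concentration event fails on a set of $\nu'$-measure at most $\e=2^{-r}$ averaged over all $k+1$ scales, the set of pairs $(x,i)$ with $i\in\Ccal$ and $x\notin Q_i$ has small ``area'' with respect to $\nu'\times(\text{counting on }\Ccal)$; concretely $\sum_{i\in\Ccal}\nu'(Q_i^c)\ll \e(k+1)$. Hence by Markov's inequality the set of $x$ for which $|\Ccal(x)|<|\Ccal|/2$ has $\nu'$-measure $O(\e k/|\Ccal|)=O(\e/\g_1)$, which is $<1/2$ once $r$ (hence $\e^{-1}=2^r$) is large enough compared to $\g$. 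Therefore the set $B''$ of~\eqref{eq:B''}, viewed as a subset of $A'$ on which $\nu'$ is the uniform measure, has $\nu'(B'')\geq 1/2$; note here I must be slightly careful, because $B''$ is defined as a subset of $B'$, not of $A'=\supp(\nu')$. The resolution is that the relevant concentration in Theorem~\ref{thm:Hochman} for the $\mu$-side is a statement about the component measures $\mu'^{z,ir}$, and hence the ``concentrated points'' one controls live in $\supp(\mu')=B'$; the correct bookkeeping is to run the pigeonhole over $\mu'$-typical $z$ rather than $\nu'$-typical $z$, using that the $(V_i,\e)$-concentration statement in Theorem~\ref{thm:Hochman} holds with probability $1-\e$ in the same averaged sense, which bounds $\sum_{i\in\Ccal}\mu'(Q_i^c\cap B')\ll\e(k+1)$, and thus $\mu'(B'')\geq 1/2$ by the same Markov estimate.

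Finally I would transfer this lower bound from $\mu'$ to $\mu_{kr}$. Since $\mu'$ is the \emph{uniform} probability measure on $B'$, we have $\mu'(B'')=|B''|/|B'|\geq 1/2$, so $|B''|\geq |B'|/2$. On the other hand $\mu_{kr}$ gives each point of $B'$ mass in the dyadic range specified in~\eqref{eq:bulk of nu}, namely at least $2^{-j'-1-dkr}$ with $j'\leq 4\eta kr$; combined with Lemma~\ref{lem:lower bound on B'} giving $\mu_{kr}(B')\geq 2^{-2d\e'kr}$ and the upper bound $\mu_{kr}(B')\leq 2^{-j'-dkr}|B'|$ from the same range, one solves for $|B'|$ and then bounds
\[
\mu_{kr}(B'')\geq 2^{-j'-1-dkr}|B''|\geq 2^{-j'-2-dkr}|B'|\geq \tfrac14\,\mu_{kr}(B')\geq 2^{-3d\e'kr},
\]
where the last inequality holds once $k$ is large (so the constant $1/4$ is absorbed) and $\e'$ is not too small relative to itself; if a cleaner bookkeeping is preferred one simply notes $\mu_{kr}(B'')\gg 2^{-2d\e'kr}\geq 2^{-3d\e'kr}$ for large $k$. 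This gives the claim.

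The step I expect to be the main obstacle is the careful alignment of the averaged-over-scales probabilistic statement in Theorem~\ref{thm:Hochman} with the pointwise/setwise notion of ``concentrated point'' $Q_i$: one must verify that a component measure $\mu'^{z,ir}$ being $(V_i,\e)$-concentrated genuinely forces $z$ (and hence the cell $\Dcal_{ir}(z)$) to lie in the neighborhood $V_w^{(\e\rho_i)}$ appearing in~\eqref{eq:meaning of concentration}, after correctly translating the affine subspace $V_i$ by the appropriate dyadic offset and matching the scales $ir$ versus $\rho_i=2^{-ir}$. Everything else is a routine pigeonhole plus the bookkeeping between $\mu'$ and $\mu_{kr}$ already isolated in Lemma~\ref{lem:lower bound on B'}.
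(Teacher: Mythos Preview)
Your proposal is correct and follows essentially the same route as the paper: a Markov/pigeonhole argument over scales using Theorem~\ref{thm:Hochman} to show $\mu'(B'')\geq 1-O(\e/\g)$, followed by the transfer from the uniform measure $\mu'$ on $B'$ to $\mu_{kr}$ via the dyadic mass bounds in~\eqref{eq:bulk of nu} and Lemma~\ref{lem:lower bound on B'}. Your self-correction (working with $\mu'$-typical rather than $\nu'$-typical points) is right, since the concentration clause in Theorem~\ref{thm:Hochman} concerns the components $(\mu')^{x,ir}$.

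On the subtlety you flag: you should be aware that $(V_i,\e)$-concentration of $(\mu')^{z,ir}$ does \emph{not} force $z$ itself to lie in $Q_i$; it only says that the cell $\Dcal_{ir}(z)$ belongs to $\Theta_{ir}$, i.e.\ that~\eqref{eq:meaning of concentration} holds for $w=w(z)$. The paper closes this gap by noting that~\eqref{eq:meaning of concentration} gives $\mu'(\Dcal_{ir}(w))\leq (1-\e)^{-1}\mu'(\Dcal_{ir}(w)\cap V_w^{(\e\rho_i)})$, so summing over $w\in\Theta_{ir}$ and then over $i$ yields $\sum_i \mu'(Q_i) > (1-\e)^2(k+1)$, hence $\sum_{i\in\Ccal}\mu'(Q_i^c)<2\e k$. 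From there your Markov argument gives $\mu'(B'\setminus B'')\leq 40\e/\g$, which is what the paper obtains before invoking the transfer to $\mu_{kr}$.
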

    
    \begin{proof}
        Let $E=B'\setminus B''$. First, we give an upper bound on the measure of $E$ with respect to $\mu'$.
        Let $Q_i^c = \R^d\setminus Q_i$.
        Then, the concentration provided by  Theorem~\ref{thm:Hochman} implies that
        \begin{align*}
            \mathbb{P}_{0\leq i\leq k} ( (\mu')^{x,ir} \text{ is } (V_i,\e)-\text{concentrated})
            >1-\e.
        \end{align*}
        To unpack the above inequality, let us denote by $\Theta_{ir}\subseteq \L_{ir}$ the subset consisting of those $w\in \L_{ir}$ for which~\eqref{eq:meaning of concentration} holds.
        For every $x\in \R^d$ and $1\leq i\leq k$, let $w(x)\in \L_{ir}$ be such that $\Dcal_{ir}(x) = w(x) + 2^{-ir}[0,1)^d$.
        In this notation, the above inequality reads
        \begin{align*}
            \sum_{0\leq i\leq k} 
            \mu'\left(x\in \R^d:  \mu'( V_{w(x)}^{(\e\rho_i)} \cap \Dcal_{ir}(x))
        \geq (1-\e) \mu'(\Dcal_{ir}(x)) \right)
            > (1-\e)(k+1)
        \end{align*}
        
        Note that if $\mu'( V_{w(x)}^{(\e\rho_i)} \cap \Dcal_{ir}(x))
        \geq (1-\e) \mu'(\Dcal_{ir}(x))$ holds for some $x$ and $i$, then the inequality holds for all $y\in \Dcal_{ir}(x)$ in place of $x$.
        Hence, we get
        \begin{align*}
        (1-\e)k &<
        \sum_{0\leq i\leq k}
        \mu'\left(x\in\R^d:  \mu'( V_{w(x)}^{(\e\rho_i)} \cap \Dcal_{ir}(x))
        \geq (1-\e) \mu'(\Dcal_{ir}(x)) \right)
        \nonumber\\
        &=\sum_{0\leq i\leq k}
        \sum_{w\in \Theta_{ir} } \mu'(\Dcal_{ir}(w))
        \nonumber\\
        &
        \leq (1-\e)^{-1} \sum_{0\leq i\leq k}
        \sum_{w\in \Theta_{ir}} \mu'(\Dcal_{ir}(w) \cap V_{w}^{(\e\rho_i)} )
        \leq (1-\e)^{-1} \sum_{0\leq i\leq k}
        \mu'(Q_i).
        \end{align*}
        It follows that $
            \int \sum_{i\in \Ccal} \mathbbm{1}_{Q_i^c}(x) \;d\mu'(x) <2\e k$.
        On the other hand, we have by~\eqref{eq:cardinality of Ccal} that
        \begin{align*}
            \int \sum_{i\in \Ccal} \mathbbm{1}_{Q_i^c}(x) \;d\mu'(x)
            \geq \int_E \sum_{i\in \Ccal} \mathbbm{1}_{Q_i^c}(x) \;d\mu'(x) 
            \geq |\Ccal| \mu'(E)/2 
            \geq \g k \mu'(E)/20.
        \end{align*}
        Recalling that $\mu'$ is the uniform measure on $B'$, we can assert that these inequalities imply that $|B''|\geq (1- 40\e/\g) |B'|$.
        Hence, the assertion of the lemma follows from Lemma~\ref{lem:set expansion to measure expansion} by the same argument as in the proof of Lemma~\ref{lem:lower bound on B'}.
    \end{proof}

    \begin{table}[ht]
        \centering
        \begin{tabular}{c|c}
            Parameter & Definition\\
            \hline
            
            $\e$  &    $2^{-r}$ \\
            $\rho_i$ & $2^{-ir}$ \\
           $\g_1$  & $\g/10$  \\
             $\g_2$  & small parameter depending on $\g$\\
            $\g_3$ & $\g_1/4-\g_2$\\
           $\e'$ & small parameter depending on $\e$ and $\g$ \\
           $\eta$ & small parameter depending on $\e'$  \\
           $m$ & $\lceil \g_3k\rceil$ \\
           \hline
        \end{tabular}
        \vspace{10pt}
        \caption{Summary of parameters chosen in the proof of Proposition~\ref{prop:discretized flattening}.}
        \label{tab:my_label}
    \end{table}

    \subsubsection{Lifting concentration to $N$ and proof of Lemma~\ref{lem:build concentrated set}}

    Note that the scales $\Ccal(x)$ may vary with $x$.
    Similarly, the scales at which our affine non-concentration hypothesis holds also vary from point to point.
    To arrive at a contradiction, we partition $B''$ into sets where there is a fixed subset of scales of $\Ccal$ at which the aforementioned phenomena hold simultaneously and find an upper bound on the measure of each piece separately.

    Let $0<\g_2<\g_1/4$ be a small parameter to be chosen depending only on $\g$.
    Let $\Edisc$ be the exceptional set provided by Lemma~\ref{lem:non-conc of discretization} for our choices of $k,r$, and with $\th=\g_2$
    and $2\e$ in place of $\e$. 
    By taking $r\geq C(\g_2)$ large enough, then Lemma~\ref{lem:non-conc of discretization} implies that $\tilde{\mu}_{kr}(\Edisc)\leq 2C(\g_2)2^{-\l(\g_2)k}$.
    Recall the definition of smoothed scale-$k$ sets in~\eqref{eq:set discretization} and let
    Let
    \begin{align*}
        B''' = \widetilde{\L}_{kr}(E) \setminus \Edisc \subseteq \widetilde{\L}_{kr}(B(\id,1)),
        \qquad
        \text{where }
        E = \pi^{-1}(B''_{kr}) \cap B(\id,1).
    \end{align*}
    Then, by Lemma~\ref{lem:discretization and projections}, taking $\e'$ small enough depending on $r$ and $\l(\g_2)$, we can ensure that 
    \begin{align}\label{eq:lower bound B'''}
        \tilde{\mu}_{kr}(B''')
        \geq 2^{-3d\e' kr} - 2C(\g_2)2^{-\l(\g_2)k} \tilde{\mu}(B(\id,2)) 
        \geq 2^{-k\sqrt{\e'} },
    \end{align}
    for all large enough $k$.
    Recall the sets of good scales $\Ncal(\cdot)$ provided by Lemma~\ref{lem:non-conc of discretization}.
    By a slight abuse of notation, for $x\in B'''$, we let 
    \begin{align*}
        \Ccal(x) = \Ccal(\pi(x)).
    \end{align*}
    
    We define
    \begin{align*}
        \Gcal(x) = \text{maximal } 2\text{-separated subset of } \Ccal(x)\cap \Ncal(x).
    \end{align*}
    By~\eqref{eq:cardinality of Ccal} and the definition of $B''$ in~\eqref{eq:B''}, setting $\g_3=\g_1/4-\g_2$, we also have
    \begin{align*}
        |\Gcal(x)| \geq ((\g_1/2-\g_2)k-2)/2\geq \g_3 k, \qquad
        \forall x\in B''',
    \end{align*} 
    where the second inequality holds whenever $k$ is large enough.

    Given $\varpi\subseteq \set{0,\dots, k}$, we let
    \begin{align*}
        B'''_\varpi := \set{x\in B''': \varpi\subseteq \Gcal(x)}.
    \end{align*}
    Then, the sets $\set{B'''_\varpi: |\varpi|= \lceil \g_3 k\rceil}$ provide a cover of $B'''$. Hence, we have that
    \begin{align}\label{eq:partition B'''}
        \tilde{\mu}_{kr}(B''') \leq  \sum_{ |\varpi| = \lceil \g_3 k\rceil} \tilde{\mu}_{kr}(B'''_\varpi).
    \end{align}    
    Fix a set $\varpi \subset [0,k] \cap \N$ for which $ \tilde{\mu}_{kr}(B'''_\varpi)$ is maximal and let $F=B'''_\varpi$.
    Since the sum in~\eqref{eq:partition B'''} has most $2^{k+1}$ terms,~\eqref{eq:lower bound B'''} implies that $\tilde{\mu}_{kr}(F) \geq 2^{-\sqrt{\e'}k-k-1}$.

     It remains to prove that the set of scales given by $\varpi$ satisfy items~\eqref{item:F is near V} and~\eqref{item:V's are proper} of the lemma.
    We need the following observation regarding compatibility of dyadic partitions under our projection.
    \begin{lem}\label{sublem:compatible partitions}
       Let $ \ell\geq 0$, $\tilde{w}\in \widetilde{\L}_{\ell}(F)$ and $w=\pi(\tilde{w})$.
       Then, $\pi(F\cap \Dcal_{\ell}(\tilde{w}))\subseteq B''\cap \Dcal_{\ell}(w)$.
    \end{lem}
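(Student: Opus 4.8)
\textbf{Proof plan for Lemma~\ref{sublem:compatible partitions}.} The statement is purely about how the dyadic partitions on $N$ and on $\R^d$ interact under the projection $\pi$, together with unwinding the definition of $F$. Recall that $F=B'''_\varpi \subseteq B''' = \widetilde{\L}_{kr}(E)\setminus\Edisc$, where $E=\pi^{-1}(B''_{kr})\cap B(\id,1)$, and that $\widetilde{\L}_{kr}(E)$ consists of lattice points $v\in\widetilde{\L}_{kr}$ whose cell $\Dcal_{kr}(v)$ meets $E$. The first thing I would do is observe that, since $\pi$ maps $\widetilde{\L}_j$ onto $\L_j$ and maps the cell $\Dcal_j(\tilde w)$ onto (or into) the cell $\Dcal_j(\pi(\tilde w))$ — this uses $\pi(\widetilde{\L})=\Z^d$, $\pi(D)=[0,1)^d$, and the scaling equivariance~\eqref{eq:scaling equivariance under pi}, which guarantees $\pi(g_{-j\log 2}D)=2^{-j}[0,1)^d$ — the dyadic partitions are compatible: $\pi(\Dcal_\ell(\tilde w)) = \Dcal_\ell(w)$ with $w=\pi(\tilde w)$. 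This is the key geometric input and I expect it to be essentially a one-line check from the normalizations imposed in Section~\ref{sec:projection setup}.

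Given this compatibility, the inclusion follows by a short chain of set-theoretic manipulations. Take $x\in F\cap\Dcal_\ell(\tilde w)$. Then $\pi(x)\in\Dcal_\ell(w)$ by the compatibility above, so it only remains to show $\pi(x)\in B''$. Since $x\in F\subseteq B''' = \widetilde{\L}_{kr}(E)\setminus\Edisc$, in particular $x\in\widetilde{\L}_{kr}$, so $x$ is a vertex of a cell $\Dcal_{kr}(x)$ that meets $E=\pi^{-1}(B''_{kr})\cap B(\id,1)$; pick $y\in\Dcal_{kr}(x)\cap E$. Then $\pi(y)\in B''_{kr}$ and $\pi(y)\in\Dcal_{kr}(\pi(x))$ (again by compatibility at scale $kr$). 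Now I would recall that $B''_{kr}=\bigsqcup_{v\in\L_{kr}(B'')}\Dcal_{kr}(v)$ is a union of whole scale-$kr$ cells, so the condition $\pi(y)\in B''_{kr}$ together with $\pi(y)$ and $\pi(x)$ lying in the same cell $\Dcal_{kr}(\pi(x))$ forces $\Dcal_{kr}(\pi(x))\subseteq B''_{kr}$, hence $\pi(x)\in B''_{kr}$. Strictly speaking this shows $\pi(x)\in B''_{kr}$ rather than $\pi(x)\in B''$; but since $\ell\le kr$ in all the applications (the scales $\ell_i$ of Lemma~\ref{lem:build concentrated set} lie in $[0,k]$, well below $kr$), and since $B''\subseteq B'' _{kr}$ with $B''$ itself already a subset of $\L_{kr}$-discretized data, one checks $\pi(x)$ lands in $B''$ after noting $x\in\supp(\tilde\mu_{kr})$; I would phrase the conclusion as $\pi(F\cap\Dcal_\ell(\tilde w))\subseteq B''\cap\Dcal_\ell(w)$ exactly as stated, tracking that the containment $B''_{kr}\cap(\text{vertices})=B''$ holds because $B''$ was built as a subset of $\L_{kr}$.

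The one subtlety I would be careful about — and the only place requiring genuine attention rather than bookkeeping — is the relationship between the "smoothed" set $B''_{kr}$ and $B''$ itself, and making sure the scale hierarchy $\ell\le kr$ is being used correctly so that a cell at scale $\ell$ is a union of cells at the finer scale $kr$. Because $B''\subseteq\L_{kr}$ and $B''_{kr}$ is its $kr$-smoothing, a scale-$\ell$ cell $\Dcal_\ell(w)$ meeting $B''_{kr}$ need not be contained in $B''_{kr}$, so the argument genuinely needs $x$ to be a point of $F$ (hence near $B''$ at the fine scale $kr$), not merely a point whose cell meets $B''_{kr}$. Everything else is routine unwinding of the definitions in Section~\ref{sec:projection setup} and the lines preceding Lemma~\ref{lem:build concentrated set}, so the proof should be only a few lines once the partition-compatibility observation is recorded.
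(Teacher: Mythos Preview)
Your proposal is correct and follows essentially the same approach as the paper: first verify the partition compatibility $\pi(\Dcal_\ell(\tilde w))=\Dcal_\ell(w)$ from the normalizations in Section~\ref{sec:projection setup}, then chase through the definition of $F\subseteq B'''$ to land $\pi(x)$ in $B''$. The one place where you overcomplicate matters is the passage from $\pi(x)\in B''_{kr}$ to $\pi(x)\in B''$: the scale hierarchy $\ell\le kr$ and the support condition $x\in\supp(\tilde\mu_{kr})$ are red herrings here---the clean reason is simply that $\pi(x)\in\L_{kr}$ (since $x\in\widetilde{\L}_{kr}$) and $B''_{kr}\cap\L_{kr}=B''$ because $B''\subseteq\L_{kr}$, exactly as you note at the end.
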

    \begin{proof}
    For all $\ell$, we have $\Dcal_{\ell}(\tilde{w})= g_{-\ell\log 2}(D)\cdot \tilde{w}$ by definition, and hence by~\eqref{eq:scaling equivariance under pi}, we get
        \begin{align*}
            \pi(\Dcal_{\ell}(\tilde{w})) = 2^{-\ell} \pi(D) + w = \Dcal_\ell(w).
        \end{align*}
    Next, we show that $\pi(F)\subseteq B''$.
        Let $U=B(\id,1)$ and fix some $x\in F$.
        Then, $x\in \widetilde{\L}_{kr}(\pi^{-1}(B''_{kr})\cap U)$.
        Hence, the intersection $\Dcal_{kr}(x)\cap \pi^{-1}(B''_{kr})\cap U$ is non-empty.
        Let $y$ be a point in this intersection, so that $y\in B''_{kr}=\sqcup_{v\in B''} \Dcal_{kr}(v)$, where we used that $B''$ is a subset of $\L_{kr}$.
        Letting $v\in B''$ be such that $y\in\Dcal_{kr}(v)$, it follows that $\pi(\Dcal_{kr}(x)) $ intersects $\Dcal_{kr}(v)$.
        On the other hand, we have shown that $\pi(\Dcal_{kr}(x)) =\Dcal_{kr}(\pi(x)) $. Since $\pi(x)\in \L_{kr}$, it follows that $\pi(x)=v$, concluding the proof.
    \end{proof}
    Let $\ell\in \varpi$ and $\tilde{w}\in \widetilde{\L}_{r\ell}(F)$.
    Let $x\in F\cap \Dcal_{r\ell}(\tilde{w})$ and $w=\pi(\tilde{w})$.
    Then, $\pi(x)\in \Dcal_{r\ell}(w)\cap B''$ by Lemma~\ref{sublem:compatible partitions}, and $\ell\in \Ccal(\pi(x))$ by definition.
    Hence, by definition of $B''$ in~\eqref{eq:B''}, we have that $\pi(x)\in V_w^{(\e\rho_\ell)}$.
    It follows by~\eqref{eq:pullback metric} that $x\in \widetilde{V}_{\tilde{w}}^{(c\e\rho_\ell)}$.
    Moreover, since $\ell\in \Ccal(\pi(x))$, we have that $V_w$ is a proper subspace, and hence so is $\widetilde{V}_{\tilde{w}}$.
    This completes the proof of Lemma~\ref{lem:build concentrated set}.

    \subsubsection{ANC implies a contradiction to Lemma~\ref{lem:build concentrated set}}
    
     In this section, we complete the proof of Proposition~\ref{prop:discretized flattening} by showing that the ANC condition gives a contradiction to Lemma~\ref{lem:build concentrated set} via an induction on scales argument showing that the multiscale structure of $F$ given in the lemma implies that it has very small measure.

     Let $F$, $\ell_i$, and $\widetilde{V}_w$ be as in Lemma~\ref{lem:build concentrated set}.
     We recall that $\widetilde{\L}_\ell(F)$ denotes those elements $v\in \widetilde{\L}_\ell$ for which the corresponding cells $\Dcal_\ell(v)$ intersect $F$ non-trivially.

    As a first step, we have the following basic estimate that will allow us to proceed by induction on scales:
    \begin{align}\label{eq:from F to a cover of F}
    \tilde{\mu}_{kr}(F) 
    \leq \sum_{v\in \widetilde{\L}_{r\ell_m}(F)} \tilde{\mu}_{kr}(\Dcal_{r\ell_m}(v))
    = \sum_{w\in \widetilde{\L}_{r\ell_{m-1}}(F) }
     \sum_{ \substack{ v\in \widetilde{\L}_{r\ell_m}(F) \\ \Dcal_{r\ell_m}(v) \subset \Dcal_{r\ell_{m-1}}(w) } } \tilde{\mu}_{kr}(\Dcal_{r\ell_m}(v))
    .
\end{align}

    Recall by~\eqref{eq:diameter c} that the diameter of each element of $\Dcal_{n}$ is at most $c\rho_{n}$ for a fixed uniform constant $c\geq 1$.
    Moreover, since the $\ell_i$'s are $2$-separated and $\e = 2^{-r}$, we may assume that $r$ is large enough, depending on $c$ so that
    \begin{align}\label{eq:apply 2-sep}
        c\rho_{\ell_j}\leq \e\rho_{\ell_i}/10, \qquad \forall 1\leq i<j\leq m.
    \end{align}
    Hence, if $\widetilde{V}_w^{(c\e\rho_{\ell_i})}$ intersects a box $\Dcal_{r\ell_{i+1}}(v)$ non-trivially, then we have
    \begin{align}\label{eq:F is near V}
        \Dcal_{r\ell_{i+1}}(v) \subseteq \widetilde{V}_w^{(2c\e\rho_{\ell_i})}.
    \end{align}
    This containment, along with item~\eqref{item:F is near V} of Lemma~\ref{lem:build concentrated set}, imply that for every $1\leq i<m$ and $w\in \widetilde{\L}_{r\ell_i}(F)$, we have that
    \begin{align*}
        \sum_{ \substack{ v\in \widetilde{\L}_{r\ell_{i+1}}(F) \\ \Dcal_{r\ell_{i+1}}(v) \subset \Dcal_{r\ell_{i}}(w) } } \tilde{\mu}_{kr}(\Dcal_{r\ell_{i+1}}(v))
        \leq 
        \tilde{\mu}_{kr}(\widetilde{V}_w^{(2c\e\rho_{\ell_i})} \cap \Dcal_{r\ell_i}(w))
        .
    \end{align*}

    Recall that $\tilde{\mu}$ satisfies the ANC condition in Def.~\ref{def:aff non-conc}.
    Hence, for all $i$ and $w\in\widetilde{\L}_{r\ell_i}(F)$, items~\eqref{item:V's are proper},~\eqref{item:E outside of Ecal} and~\eqref{item:varpi are good for non-conc} of Lemma~\ref{lem:build concentrated set}, along with Lemma~\ref{lem:non-conc of discretization} imply that
    \begin{align}
        \tilde{\mu}_{kr}\left( \widetilde{V}_w^{(2c\e\rho_{\ell_i})} \cap \Dcal_{r\ell_i}(w) \right) \leq
       \d(\g_2,\e) \tilde{\mu}\left( B(w,3c\rho_{\ell_i}) \right),
    \end{align}
    where $\d(\g_2,\e)= \left(\vp(\g_2)+ C(\g_2)\vp(4\e)\right)$.
    Note that the above inequality has the discretized measure $\tilde{\mu}_{kr}$ on the left side and has the original measure $\tilde{\mu}$ on the right side.
    Applying this estimate with $i=m-1$ and combining it with~\eqref{eq:from F to a cover of F}, we obtain
    \begin{align}\label{eq:flat pre-induction}
        \tilde{\mu}_{kr}(F)
        &\leq \d(\g_2,\e)
        \sum_{w\in \widetilde{\L}_{r\ell_{m-1}}(F)} 
        \tilde{\mu}\left( B(w,3c\rho_{\ell_{m-1}})\right)
        .
    \end{align}
    Our next lemma will allow us to apply induction on the above estimate.

    \begin{lem}
    There exists a uniform constant $C_N\geq 1$, depending only on the metric on the nilpotent group $N$, so that for all $2\leq i\leq m$, we have
        \begin{align*}
            \sum_{w\in \widetilde{\L}_{r\ell_{i}}(F)} 
        \tilde{\mu}\left( B(w, 4c\rho_{\ell_{i}})\right)
        &\leq C_N \d(\g_2,\e)
        \sum_{v\in \widetilde{\L}_{r\ell_{i-1}}(F)} 
        \tilde{\mu}\left( B(v,4c\rho_{\ell_{i-1}})\right).
        \end{align*}
    \end{lem}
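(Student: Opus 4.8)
The aim is a "cover refinement" estimate at consecutive scales $\ell_{i-1} < \ell_i$ from the multiscale structure of $F$. The plan is to fix $i$ and group the cells of $\widetilde{\L}_{r\ell_i}(F)$ according to which cell of $\widetilde{\L}_{r\ell_{i-1}}$ they sit inside. First I would write
\begin{align*}
    \sum_{w\in \widetilde{\L}_{r\ell_{i}}(F)}
    \tilde{\mu}\left( B(w, 4c\rho_{\ell_{i}})\right)
    = \sum_{v\in \widetilde{\L}_{r\ell_{i-1}}(F) }
      \sum_{ \substack{ w\in \widetilde{\L}_{r\ell_i}(F) \\ \Dcal_{r\ell_i}(w)\subset \Dcal_{r\ell_{i-1}}(v) } }
      \tilde{\mu}\left( B(w,4c\rho_{\ell_i})\right),
\end{align*}
which is valid since each cell of $\widetilde{\L}_{r\ell_i}$ lies in a unique cell of $\widetilde{\L}_{r\ell_{i-1}}$ (the partitions $\Dcal_{r\ell}$ are refining in $\ell$). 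For each such $w$, the ball $B(w, 4c\rho_{\ell_i})$ is contained in $\Dcal_{r\ell_i}(w)^{(4c\rho_{\ell_i})}$, which by the diameter bound \eqref{eq:diameter c} is contained in a $(5c\rho_{\ell_i})$-neighborhood of the point; and by item \eqref{item:F is near V} of Lemma~\ref{lem:build concentrated set} together with the $2$-separation consequence \eqref{eq:apply 2-sep} and the containment \eqref{eq:F is near V}, the cell $\Dcal_{r\ell_i}(w)$ lies inside $\widetilde{V}_v^{(2c\e\rho_{\ell_{i-1}})}$. Hence $B(w,4c\rho_{\ell_i}) \subseteq \widetilde{V}_v^{(2c\e\rho_{\ell_{i-1}})} \cap \Dcal_{r\ell_{i-1}}(v)^{(5c\rho_{\ell_i})}$, and after slightly enlarging the neighborhood (again using $5c\rho_{\ell_i}\ll \e\rho_{\ell_{i-1}}$) the union over these $w$ sits inside $\widetilde{V}_v^{(2c\e\rho_{\ell_{i-1}})}\cap \Dcal_{r\ell_{i-1}}(v)$.

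The second step is to control the overlap: although the balls $B(w, 4c\rho_{\ell_i})$ cover disjoint cells $\Dcal_{r\ell_i}(w)$, their $(4c)$-fattenings overlap boundedly. Here I would invoke Lemma~\ref{lem:intersection multiplicity on N} with the constant $4c$: the balls $\{B(v', 4c\rho_{\ell_i}): v'\in \widetilde{\L}_{r\ell_i}\}$ have intersection multiplicity $O_{N,c}(1)=:C_N'$. Therefore, summing the measures $\tilde{\mu}(B(w,4c\rho_{\ell_i}))$ over $w$ in a fixed cell $\Dcal_{r\ell_{i-1}}(v)$ is bounded by $C_N'$ times the mass of the union, giving
\begin{align*}
    \sum_{ \substack{ w\in \widetilde{\L}_{r\ell_i}(F) \\ \Dcal_{r\ell_i}(w)\subset \Dcal_{r\ell_{i-1}}(v) } }
      \tilde{\mu}\left( B(w,4c\rho_{\ell_i})\right)
    \leq C_N'\, \tilde{\mu}\left( \widetilde{V}_v^{(2c\e\rho_{\ell_{i-1}})}\cap \Dcal_{r\ell_{i-1}}(v)\right).
\end{align*}

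The third step applies the discretized non-concentration estimate. For $v\in \widetilde{\L}_{r\ell_{i-1}}(F)$, items \eqref{item:V's are proper}, \eqref{item:E outside of Ecal}, \eqref{item:varpi are good for non-conc} of Lemma~\ref{lem:build concentrated set} say $\widetilde{V}_v$ is a proper affine subspace, $v\notin \Edisc$, and $\ell_{i-1}\in \Ncal(v)$; hence Lemma~\ref{lem:non-conc of discretization}(3), applied with $\th=\g_2$ and the scale parameter $2c\e$ (so that $\vp(\th)+C(\th)\vp(2(2c\e)/c) = \vp(\g_2)+C(\g_2)\vp(4\e) = \d(\g_2,\e)$), yields
\begin{align*}
    \tilde{\mu}_{kr}\left( \widetilde{V}_v^{(2c\e\rho_{\ell_{i-1}})}\cap \Dcal_{r\ell_{i-1}}(v)\right)
    \leq \d(\g_2,\e)\, \tilde{\mu}\left( B(v, 3c\rho_{\ell_{i-1}})\right)
    \leq \d(\g_2,\e)\, \tilde{\mu}\left( B(v, 4c\rho_{\ell_{i-1}})\right).
\end{align*}
The only subtlety is that the left side here is the \emph{discretized} measure $\tilde{\mu}_{kr}$ while the previous step produced the \emph{original} measure $\tilde{\mu}$; but since each cell $\Dcal_{r\ell_i}(w)$ with $w\in\widetilde{\L}_{r\ell_i}(F)$ meets $F\subseteq \mathrm{supp}(\tilde{\mu}_{kr})$ and has mass $\tilde{\mu}_{kr}(\Dcal_{r\ell_i}(w))=\tilde{\mu}(\Dcal_{r\ell_i}(w))$, fattening by $4c\rho_{\ell_i}$ and using that $\tilde{\mu}_{kr}$ assigns each point of a cell the cell's $\tilde{\mu}$-mass, one has $\tilde{\mu}(B(w,4c\rho_{\ell_i}))\le \tilde{\mu}_{kr}(B(w,4c\rho_{\ell_i})^{(c\rho_{kr})})$, and a further harmless enlargement of the radius (legitimate because $\rho_{kr}\ll \rho_{\ell_i}$ since $\ell_i\le k$) lets us pass freely between $\tilde{\mu}$ and $\tilde{\mu}_{kr}$ at the cost of replacing $4c$ by, say, $5c$ throughout --- so I would simply state the lemma with a slightly larger but still uniform constant, or absorb this into $C_N$. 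Chaining the three steps with $C_N := C_N'$ (enlarged to absorb the $\tilde{\mu}$ versus $\tilde{\mu}_{kr}$ comparison) gives exactly the claimed inequality. The main obstacle is purely bookkeeping: tracking the chain of radius enlargements ($3c \to 4c \to 5c \to \cdots$) and the switches between $\tilde{\mu}$ and $\tilde{\mu}_{kr}$ so that every constant remains uniform in $i$, $k$, $r$; there is no genuine analytic difficulty, since all the work has been front-loaded into Lemma~\ref{lem:build concentrated set}, Lemma~\ref{lem:non-conc of discretization}, and Lemma~\ref{lem:intersection multiplicity on N}.
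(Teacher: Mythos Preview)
Your approach is essentially the paper's: group the fine-scale balls by the coarse cell $\Dcal_{r\ell_{i-1}}(v)$ containing them, use the concentration structure of $F$ together with \eqref{eq:apply 2-sep} to trap the union of fine balls inside $\widetilde{V}_v^{(2c\e\rho_{\ell_{i-1}})}\cap B(v,2c\rho_{\ell_{i-1}})$, invoke Lemma~\ref{lem:intersection multiplicity on N} for bounded overlap, and then apply non-concentration at scale $\ell_{i-1}$.

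Two small points. First, when you write ``$v\notin\Edisc$ and $\ell_{i-1}\in\Ncal(v)$'' you are conflating $v\in\widetilde{\L}_{r\ell_{i-1}}(F)$ with a point of $F$: items \eqref{item:E outside of Ecal} and \eqref{item:varpi are good for non-conc} of Lemma~\ref{lem:build concentrated set} apply to elements of $F\subseteq\widetilde{\L}_{kr}$, not to the coarser lattice point $v$. The fix is immediate --- pick any $w\in F\cap\Dcal_{r\ell_{i-1}}(v)$ and work at $w$, noting $\Dcal_{r\ell_{i-1}}(w)=\Dcal_{r\ell_{i-1}}(v)$. Second, the paper avoids your $\tilde{\mu}$-versus-$\tilde{\mu}_{kr}$ patching entirely: rather than invoking Lemma~\ref{lem:non-conc of discretization}(3) and then converting, it uses Lemma~\ref{lem:non-conc of discretization}\eqref{item:NUANCD N(w)} to locate a point $x\in\Dcal_{kr}(w)\cap\supp(\tilde{\mu})\setminus\Ecal$ with $\ell_{i-1}\in\Ncal(x)$, and then applies the \emph{original} ANC condition (Definition~\ref{def:aff non-conc}) directly to $\tilde{\mu}$ at $x$. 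This gives $\tilde{\mu}(\widetilde{V}_v^{(3c\e\rho_{\ell_{i-1}})}\cap B(x,3c\rho_{\ell_{i-1}}))\leq \d(\g_2,\e)\,\tilde{\mu}(B(x,3c\rho_{\ell_{i-1}}))\leq \d(\g_2,\e)\,\tilde{\mu}(B(v,4c\rho_{\ell_{i-1}}))$ with no measure-switching. Your route works too (your inequality $\tilde{\mu}(E)\leq\tilde{\mu}_{kr}(E^{(c\rho_{kr})})$ is correct and $\rho_{kr}\ll\e\rho_{\ell_{i-1}}$), but the paper's is cleaner and saves the bookkeeping you flag as the main obstacle.
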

    \begin{proof}
        
    We begin by noting the following equality that relates the scale $\rho_{\ell_{i}}$ to the scale $\rho_{\ell_{i-1}}$:
    \begin{align*}
        \sum_{w\in \widetilde{\L}_{r\ell_{i}}(F)} 
        \tilde{\mu}\left( B(w,4c\rho_{\ell_{i}})\right)
        = 
        \sum_{v\in \widetilde{\L}_{r\ell_{i-1}}(F)}
        \sum_{ \substack{ w\in \widetilde{\L}_{r\ell_{i}}(F) \\ \Dcal_{r\ell_{i}}(w) \subset \Dcal_{r\ell_{i-1}}(v) } } 
        \tilde{\mu}\left( B(w,4 c\rho_{\ell_{i}})\right)
        .
    \end{align*}

    Let $w$ be such that $w\in \widetilde{\L}_{r\ell_{i}}(F)$ and  $ \Dcal_{r\ell_{i}}(w) \subset \Dcal_{r\ell_{i-1}}(v) $.
    Since $\Dcal_{r\ell_{i}}(w)$ meets $F$, we have that $w$ is at distance at most $ c\rho_{\ell_{i}}\leq \e \rho_{\ell_{i-1}}/10$ from a point in $F$.
    We also have that $F\cap \Dcal_{r\ell_{i-1}}(v)$ is contained inside the subspace neighborhood $\widetilde{V}_v^{(c\e \rho_{\ell_{i-1}})}$ by our choices above.
    Hence, by~\eqref{eq:apply 2-sep}, we obtain that the ball $B(w,4c\rho_{\ell_{i}})$ is contained inside $\widetilde{V}_v^{(2c\e \rho_{\ell_{i-1}})}$.
    Finally, similar considerations imply that $B(w,4 c\rho_{\ell_{i}})$ is contained in $B(v,2c\rho_{\ell_{i-1}})$.
    Put together, we arrive at the following inclusion:
    \begin{align*}
        \bigcup_{ \substack{ w\in \widetilde{\L}_{r\ell_{i}}(F) \\ \Dcal_{r\ell_{i}}(w) \subset \Dcal_{r\ell_{i-1}}(v) } } 
         B(w,4c\rho_{\ell_{i}})
         \subseteq 
         \widetilde{V}_v^{(2c\e \rho_{\ell_{i-1}})}
          \cap B(v,2c\rho_{\ell_{i-1}}) .
    \end{align*}

    On the other hand, by Lemma~\ref{lem:intersection multiplicity on N},  the intersection multiplicity of the balls on the left side of the above equation is uniformly bounded by $O_{N,c}(1)$.
    Letting $C_N$ denote this bound on multiplicity, we thus obtain
    \begin{align*}
        \sum_{ \substack{ w\in \widetilde{\L}_{r\ell_{i}}(F) \\ \Dcal_{r\ell_{i}}(w) \subset \Dcal_{r\ell_{i-1}}(v) } } 
        \tilde{\mu}\left( B(w,4 c\rho_{\ell_{i}})\right)
        \leq 
        C_N \tilde{\mu}\left( \widetilde{V}_v^{(2c\e \rho_{\ell_{i-1}})}
          \cap  B(v,2c\rho_{\ell_{i-1}})
        \right).
    \end{align*}

    To apply our non-concentration hypothesis, we wish to find a suitable point $x$ in the support of $\tilde{\mu}$ which is sufficiently close to $v$. 
    To this end, recall by Lemma~\ref{lem:non-conc of discretization}\eqref{item:NUANCD N(w)} that there is $x\in \Dcal_{r\ell_{i-1}}(v)$ such that $x$ is outside the exceptional set for $\tilde{\mu}$ and such that the set of good scales $\Ncal(x)$ contains the set $\Ncal(v)$.
    In particular, since $\ell_{i-1}\in \Ncal(v)$ by construction, we also have that $\ell_{i-1}\in \Ncal(x)$.
    Finally, since $\Dcal_{r\ell_{i-1}}(v)$ intersects the unit ball around identity, it is contained in $B(\id,2)$ whenever $r$ is larger than an absolute constant.
    In particular, $x\in B(\id,2)$, and hence, we obtain by our non-concentration hypothesis that
    \begin{align*}
        \tilde{\mu}\left( \widetilde{V}_v^{(2c\e \rho_{\ell_{i-1}})}
          \cap  B(v,2c\rho_{\ell_{i-1}}) \right)
          \leq \tilde{\mu}\left( \widetilde{V}_v^{(3c\e \rho_{\ell_{i-1}})}
          \cap  B(x,3 c\rho_{\ell_{i-1}}) \right)
          \leq \d(\g_2,\e) \tilde{\mu}(B(x, 3 c\rho_{\ell_{i-1}})).
    \end{align*}
    Using that $x\in\Dcal_{r\ell}(v)$ once more, we get that the right side of the above inequality is at most $ \d(\g_2,\e) \tilde{\mu}(B(v,4c \rho_{\ell_{i-1}}))$.
    This completes the proof of the lemma.
    \end{proof}
    
    Applying the above lemma $(m-2)$-times to the right side of~\eqref{eq:flat pre-induction}, we obtain
    \begin{align*}
        \tilde{\mu}_{kr}(F)\leq \d(\g_2,\e)
        \sum_{w\in \widetilde{\L}_{r\ell_{m-1}}(F)} 
        \tilde{\mu}\left( B(w,4c\rho_{\ell_{i}})\right)
        \leq (C_N \d(\g_2,\e))^{m-1} 
        \sum_{v\in \widetilde{\L}_{r\ell_{1}}(F)} 
        \tilde{\mu}\left( B(v,4c\rho_{\ell_{1}})\right).
    \end{align*}

    Since $F\subset B(\id,1)$, each of the balls $B(v,4c\rho_{\ell_1})$ is contained in $B(\id,2)$ for all $v\in \widetilde{\L}_{r\ell_1}(F)$ whenever $r$ is large enough. 
    Moreover, by Lemma~\ref{lem:intersection multiplicity on N}, those balls have uniformly bounded multiplicity.
    We thus obtain the bound $\tilde{\mu}_{kr}(F)\ll (C_N \d(\g_2,\e))^{m-1}$.

On the other hand, by Lemma~\ref{lem:build concentrated set}\eqref{item:lower bound on F}, we have the lower bound $\tilde{\mu}_{kr}(F)\geq 2^{-\sqrt{\e'} k-k-1}$.
Moreover, by taking $\g_2$ small enough, and taking $\e$ sufficiently small depending on $\g_2$, we can ensure that $C_N \d(\g_2,\e)$ is at most $1$.
Taking $k$ large enough so that $1/k\leq \g_3/2$, we arrive at the inequality
\begin{align*}
    2^{-\sqrt{\e'}-1-1/k} \leq C_0 \left(C_N \d(\g_2,\e) \right)^{\g_3-1/k} \leq C_0 \left(C_N \d(\g_2,\e)\right)^{\g_3/2} ,
\end{align*}
where $C_0\geq 1$ is the implicit constant in the previous inequality.
Recall that $\g_1 = \g/10$, $\g_2$ is to be chosen smaller than $\g_1/4$, and $\g_3=\g_1/4-\g_2$.
Hence, by choosing $\g_2$ first to be sufficiently small relative to $\g_1$, then choosing $\e$ very small, depending on $\g_2$, we can make the right side of the above inequality at most $1/2$.
This gives a contradiction since $\e'<1$.

\subsection{Proof of Theorem~\ref{thm:quant ellq improvement}}

    Let $\eta >0$ and $r\in \N$ be the parameters provided by Proposition~\ref{prop:discretized flattening} applied with $\g = \e/d$.
    Let $n\in \N$ be the smallest integer such that $(n-1)\eta\geq (d-\e)/2$.
    Note that by Young's inequality, for all $a,b,k\in \N$, we have that
    \begin{align*}
        \norm{\mu_k^{\ast a}\ast \mu_k^{\ast b}}_2 \leq \norm{\mu_k^{\ast a}}_1 \norm{\mu_k^{\ast b}}_2
        = \norm{\mu_k^{\ast b}}_2.
    \end{align*}

    We first observe that this inequality implies that it suffices to prove the first assertion of the theorem for multiples of $r$.
    Indeed, given any probability measure $\nu$, $k\in \N$, and $0\leq s<r$, we have
    \begin{align*}
        \sum_{P\in\Dcal_{kr+s}} \nu(P)^2
        = \sum_{Q\in\Dcal_{kr}} \sum_{P\in \Dcal_{kr+s}, P\subseteq Q} \nu(P)^2 
        \leq 2^{dr}\sum_{Q\in\Dcal_{kr}} \nu(Q)^2.
    \end{align*}
    
    Let $k_1$ be the parameter provided by Prop.~\ref{prop:discretized flattening} applied with $\lceil\log n\rceil$, where $n$ is as above.
    Let $k\geq k_1$ be given and suppose that 
    \begin{align}\label{eq:good after ell steps}
        \norm{\mu_{kr}^{\ast \ell}}_2^2 \leq 2^{- (d-\e)kr},
    \end{align}
    for some $\ell\in \N$ with $1\leq \ell\leq n$.
    By Lemma~\ref{lem:disc and conv commute}, we also have that $\norm{\mu^{\ast n}_{kr}}_2^2 \asymp \norm{\mu^{\ast (n-\ell)}_{kr} \ast \mu^{\ast \ell}_{kr}}_2^2 $. 
    Hence, since convolution with $\mu^{\ast (n-\ell)}_{kr}$ does not increase the $\ell^2$-norm, we get
    that $\norm{\mu_{kr}^{\ast n}}_2^2 \ll 2^{ - (d-\e)kr}$ as desired.
    Now, suppose that~\eqref{eq:good after ell steps} fails for all $1\leq \ell\leq n$.
    Then, applying Prop.~\ref{prop:discretized flattening} $(n-1)$-times by induction, and using Lemma~\ref{lem:disc and conv commute}, we obtain for some uniform constant $c\geq 1$
    \begin{align*}
        \norm{\mu_{kr}^{\ast n}}_2 \leq c \norm{\mu_{kr}\ast \mu_{kr}^{\ast (n-1)}}_2
        \leq c 2^{-\eta kr} \norm{\mu_{kr}^{\ast (n-1)}}_2 
        \leq \cdots
        \leq c^{n-1}2^{-\eta (n-1)kr} \norm{\mu_{kr}}_2
         \leq c^{n-1}2^{-\eta (n-1)kr} ,
    \end{align*}
    where the second inequality follows since $\norm{\mu_{kr}}_2\leq 1$.
    This proves the first assertion by our choice of $n$.
    The (short) deduction of the second assertion from the first can be found for instance in~\cite[Proof of Lemma 5.2]{MosqueraShmerkin}.

\subsection{Proof of Theorem~\ref{thm:ellq improvement}, Corollary~\ref{cor:flattening intro}, and Corollary~\ref{cor:flattening} from Theorem~\ref{thm:quant ellq improvement}}

Note that being uniformly affinely non-concentration immediately implies that $\mu$ is affinely non-concentrated at almost every (in fact at every) scale with an empty exceptional set.
Hence, the second assertion of Theorem~\ref{thm:quant ellq improvement} immediately implies that $\dim_\infty \mu^{\ast n}$ tends to $d$ as $n\to \infty$.
The same holds for $\dim_q\mu^{\ast n}$ due to the inequality $\dim_q \mu \geq \dim_\infty\mu$ for all $q>1$.
Finally, the first assertion of Theorem~\ref{thm:ellq improvement} follows readily from Proposition~\ref{prop:discretized flattening}; cf.~\cite[Proof of Theorem 1.1]{RossiShmerkin} for details of this deduction.

Similarly, Corollary~\ref{cor:flattening intro} is a special case of Corollary~\ref{cor:flattening}.
Hence, it remains to deduce 
Corollary~\ref{cor:flattening} from Theorem~\ref{thm:quant ellq improvement} via the well-known relationship between $L^2$-dimension and Fourier transform. 
Namely, by~\cite[Proof of Claim 2.8]{FengNguyenWang}, we have\footnote{The reference~\cite{FengNguyenWang} proves this fact in the case $d=1$, however the proof works equally well for $\R^d$ for any $d$.} 
\begin{align}\label{eq:Feng}
    \int_{\norm{\xi}\leq 1/r} |\hat{\nu}(\xi)|^2\;d\xi
    \ll_d r^{-2d} \int \nu(B(x,r))^2 \;dx.
\end{align}
for every $r>0$ and any Borel probability measure $\nu$ on $\R^d$.
Moreover, if $k\in \N$ is such that $2^{-(k+1)}<r\leq 2^{-k}$, then $B(x,r)$ can be covered by $O_d(1)$ elements of the partition $\Dcal_k$.
Hence,
\begin{align}\label{eq:Feng discretized}
    \int \nu(B(x,2^{-k}))^2 \;dx \ll_d 2^{-dk} \sum_{P\in \Dcal_k} \nu(P)^2 = 2^{-dk}\norm{\nu_k}_2^2. 
\end{align}

Now, let $\mu$ be a measure satisfying the hypotheses of Corollary~\ref{cor:flattening} and let $\e>0$ be arbitrary.
By Theorem~\ref{thm:quant ellq improvement}, 
there are natural numbers $n,k_1$, depending only on $\e$ and the non-concentration parameters of $\mu$, such that for all $k\geq k_1$,
\begin{equation*}
    \norm{\mu^{\ast n}_k}_2^2 \ll_{\e,d,n} 2^{-(d-\e/2)k}.
\end{equation*}

Given $T> 2^{k_1}$, let $r=1/T$ and $k\in \N$ be such that $2^{-(k+1)}<r\leq  2^{-k}$.
We can apply~\eqref{eq:Feng} and~\eqref{eq:Feng discretized} with $\nu=\mu_k^{\ast n}$ to get that
\begin{equation}\label{eq:moment bound on Fourier transform}
    \int_{\norm{\xi}\leq T} |\hat{\mu}(\xi)|^{2n}\;d\xi \ll_{\mu,\e} T^{2d} 2^{-dk -(d-\e/2)k} \ll T^{\e/2}.
\end{equation}

The conclusion of the corollary will now follow by Chebyshev's inequality and the fact that the Fourier transform is Lipschitz.
Indeed,
note that
\begin{equation*}
    \left|\hat{\mu}(\xi_1)-\hat{\mu}(\xi_2)\right| \ll_\mu \norm{\xi_1-\xi_2},
\end{equation*}
where the implicit constant depends only on the radius of the smallest ball around the origin containing the support of $\mu$.

In particular, given $\d>0$, if $|\hat{\mu}(\xi)|>T^{-\d}$ for some $\xi$, then $|\hat{\mu}|$ is at least $T^{-\d}/2$ on a ball of radius $T^{-2\d}$, when $T$ is large enough depending on $\d$ and $\mu$.
It follows that 
\begin{align*}
    &\# \set{v\in \Z^d: \norm{v}\leq T, \text{ and there exists } \xi \in v+[0,1)^d \text{ such that } |\hat{\mu}(\xi)|>T^{-\d}  } \times T^{-2\d d}
    \nonumber\\
    &\ll \left|\set{\xi\in \R^d: \norm{\xi}\leq 2T \text{ and } |\hat{\mu}(\xi)|>T^{-\d}/2 }\right|.
\end{align*}
Chebyshev's inequality applied to~\eqref{eq:moment bound on Fourier transform} implies that the right side of the above inequality is $O(T^{\e/2+2\d n})$.
Thus, the number of radius one balls needed to cover the set of frequencies $\xi$ of norm at most $ T$ such that $|\hat{\mu}(\xi)| > T^{-\d}$ is $O(T^{\e/2 +\d(2n+2d)})$.
Thus, taking $\d=\e/2(2n+2d)$, we obtain the assertion of Corollary~\ref{cor:flattening} as desired.

\subsection{Polynomial affine non-concentration}

In this section, we show that Theorem~\ref{thm:quant ellq improvement} implies quantitative non-concentration estimates near proper subspaces.

\begin{thm}\label{thm:flat implies friendly}
    Let $\tilde{\mu}$ and $\mu$ be as in Theorem~\ref{thm:quant ellq improvement}.
    Then, there exist $\k>0$ and $C$, depending on the non-concentration parameters of $\tilde{\mu}$, such that for all $\e>0$ and all proper affine subspaces $W<\R^d$, we have that $\mu(W^{(\e)}) \leq C\e^\k$.
\end{thm}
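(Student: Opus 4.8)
The plan is to deduce Theorem~\ref{thm:flat implies friendly} from the $L^2$-flattening conclusion of Theorem~\ref{thm:quant ellq improvement} by a self-convolution trick: concentration of $\mu$ near a proper affine subspace $W$ forces the self-convolutions $\mu^{\ast n}$ to concentrate near the \emph{sum} $W+W+\cdots+W$, which is again a proper affine subspace of the same dimension (since $W$ is a translate of a fixed linear subspace $V$ with $\dim V<d$), and this contradicts the fact that $\dim_\infty \mu^{\ast 2n}$ is close to $d$. Concretely, first I would fix $\e>0$ and a proper affine subspace $W<\R^d$, write $W=V+x_0$ with $V$ a linear subspace of dimension $\leq d-1$, and set $\beta := \mu(W^{(\e)})$. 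The goal is to prove $\beta \leq C\e^\k$ for suitable $\k, C$ depending only on the non-concentration data of $\tilde\mu$.

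The key observation is the convolution estimate: if $x^{(1)},\dots,x^{(n)}$ are points each lying in $W^{(\e)}$, then $x^{(1)}+\cdots+x^{(n)}$ lies in $(nW)^{(n\e)}$ where $nW := W+\cdots+W = nV + nx_0$ is a translate of $V$, hence still a proper affine subspace. Therefore $\mu^{\ast n}\big((nW)^{(n\e)}\big) \geq \mu(W^{(\e)})^n = \beta^n$. Now discretize at a scale $2^{-k}$ with $2^{-k}\asymp \e$: the neighborhood $(nW)^{(n\e)}$ is covered by $O_{d,n}(2^{(d-1)k})$ dyadic cells $P\in\Dcal_k$, because it is a tubular neighborhood of thickness $\asymp \e$ around a $\dim V \leq (d-1)$-dimensional affine subspace intersected with a region of bounded diameter (here one uses that $\mu$, being the normalized projection of $\tilde\mu|_{B(\id,1)}$, is supported in a set of bounded diameter, say $B(0,R_0)$ for $R_0\asymp 1$). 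Consequently there exists a dyadic cell $P_0\in\Dcal_k$ with
\begin{align*}
    \mu^{\ast n}(P_0) \gg_{d,n} 2^{-(d-1)k}\,\beta^n.
\end{align*}
Passing from $\mu$ to $\mu_k$ (the scale-$k$ discretization) only distorts such cell masses by bounded factors via Lemma~\ref{lem:disc and conv commute}, so $\mu_k^{\ast n}$ has a cell of mass $\gg_{d,n} 2^{-(d-1)k}\beta^n$.

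On the other hand, Theorem~\ref{thm:quant ellq improvement} applied with, say, $\e_0 := 1/2$ in the role of its ``$\e$'' gives $n'\in\N$ and $k_1\in\N$, depending only on $d$ and the non-concentration parameters of $\tilde\mu$, such that $\mu_k^{\ast 2n'}(P) \ll_{d,n'} 2^{-(d-1/2)k}$ for all $P\in\Dcal_k$ and all $k\geq k_1$. Taking $n=2n'$ and combining the two bounds yields $2^{-(d-1)k}\beta^{2n'} \ll_{d,n'} 2^{-(d-1/2)k}$, i.e. $\beta^{2n'}\ll_{d,n'} 2^{-k/2} \asymp \e^{1/2}$, hence $\beta \ll_{d,n'} \e^{1/(4n')}$ for all $\e$ small enough that the corresponding $k$ exceeds $k_1$; this is the desired estimate with $\k = 1/(4n')$, after enlarging $C$ to absorb the finitely many large values of $\e$ (for which the bound $\mu(W^{(\e)})\leq 1$ already suffices, since $1 \leq C\e^\k$ once $C$ is large). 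The constants $\k$ and $C$ depend only on $d$ and the non-concentration parameters of $\tilde\mu$, as required.

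The main obstacle I anticipate is purely bookkeeping rather than conceptual: carefully justifying the covering count ``$(nW)^{(n\e)}\cap B(0,nR_0)$ is covered by $O_{d,n}(2^{(d-1)k})$ cells of $\Dcal_k$'' with the scale matched so that the pigeonhole cell $P_0$ genuinely carries mass $\gg 2^{-(d-1)k}\beta^n$, and tracking the passage between $\mu$, $\mu_k$, and convolutions of discretizations cleanly through Lemma~\ref{lem:disc and conv commute} (which is stated for $\|\cdot\|_q$ but is used here only to compare individual cell masses, so one may either invoke an $L^\infty$-type version or simply re-run its elementary proof). One should also double-check the trivial direction of the argument — that for $\e$ bounded below we may use $\beta\leq 1$ — so that $C$ can be chosen uniformly; this is immediate. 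No step requires anything beyond the results already established in the excerpt.
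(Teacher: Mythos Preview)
Your proof is correct and follows essentially the same route as the paper's: both deduce polynomial non-concentration from the Frostman bound on $\mu^{\ast 2n}$ given by Theorem~\ref{thm:quant ellq improvement}, using that sums of points near $W$ lie near a translate of the same linear space, together with the covering count $\#\{P\in\Dcal_k:P\cap W^{(\e)}\neq\emptyset\}\ll 2^{(d-1)k}$. The only cosmetic difference is packaging: the paper isolates the step ``concentration of $\nu$ near $W$ $\Rightarrow$ concentration of $\nu^{\ast 2}$ near $W+W$'' in its contrapositive form as a separate square-root lemma (Lemma~\ref{lem:sqrt friendly}) and iterates it $n$ times, whereas you go directly via $\mu^{\ast n}((nW)^{(n\e)})\geq \mu(W^{(\e)})^n$; the resulting exponents $\kappa$ are of the same shape.
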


We first need the following useful observation which translates polynomial non-concentration for self-convolution into a similar estimate for the original measure.

\begin{lem}\label{lem:sqrt friendly}
    Let $\nu$ be a Borel probability measure, $\e,\a,C>0$ be arbitrary constants, and $W<\R^d$ be an affine hyperplane. Let $V=W+W$. Assume that $\nu^{\ast 2}(V^{(\e)})\leq C\e^\a$. Then, $\nu(W^{(\e/2)}) \leq C\e^{\a/2}$.
\end{lem}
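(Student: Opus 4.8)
The plan is to exploit the elementary fact that the self-convolution of a measure that charges a neighborhood of a hyperplane must charge the neighborhood of the sum set, combined with the product structure of $\nu^{\ast 2}$. The key observation is the set inclusion $W^{(\e/2)} + W^{(\e/2)} \subseteq (W+W)^{(\e)} = V^{(\e)}$, which reduces the lemma to a one-line application of the hypothesis together with the definition of convolution as a pushforward of the product measure under addition.

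\textbf{Key steps.} First I would record that if $W$ is an affine hyperplane, say $W=\set{x\in\R^d:\langle x,u\rangle = c}$ for a unit normal $u$ and $c\in\R$, then $V=W+W=\set{x:\langle x,u\rangle = 2c}$ is again an affine hyperplane (parallel to $W$), so the hypothesis $\nu^{\ast 2}(V^{(\e)})\leq C\e^\a$ is meaningful as stated. Second, I would prove the inclusion $W^{(\e/2)}+W^{(\e/2)}\subseteq V^{(\e)}$: any point in the left-hand set is of the form $(w_1+u_1)+(w_2+u_2)$ with $w_i\in W$ and $\norm{u_i}\leq \e/2$, hence equals $(w_1+w_2)+(u_1+u_2)$ with $w_1+w_2\in V$ and $\norm{u_1+u_2}\leq \e$ by the triangle inequality. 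Third, writing $\nu^{\ast 2} = A_\ast(\nu\times\nu)$ where $A(x,y)=x+y$, I would estimate
\begin{align*}
    \nu^{\ast 2}(V^{(\e)})
    = (\nu\times\nu)\left(\set{(x,y):x+y\in V^{(\e)}}\right)
    \geq (\nu\times\nu)\left(W^{(\e/2)}\times W^{(\e/2)}\right)
    = \nu(W^{(\e/2)})^2,
\end{align*}
using the inclusion from the previous step. Fourth, combining this with the hypothesis gives $\nu(W^{(\e/2)})^2\leq C\e^\a$, so $\nu(W^{(\e/2)})\leq C^{1/2}\e^{\a/2}$; since our implicit constants satisfy the convention $C\geq 1$, we have $C^{1/2}\leq C$, yielding the claimed bound $\nu(W^{(\e/2)})\leq C\e^{\a/2}$.

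\textbf{Main obstacle.} There is essentially no analytic difficulty here; the statement is a soft consequence of the product structure of convolutions. The only points requiring a modicum of care are the bookkeeping of the constant (handled by the $C\geq 1$ convention, or alternatively by simply stating the conclusion with $C^{1/2}$ in place of $C$, which is all that is used downstream) and the verification that $W+W$ is genuinely a proper affine subspace of the same type, so that the hypothesis and conclusion are compatible. I would present the argument in full in a few lines rather than invoking any machinery.
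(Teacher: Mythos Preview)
Your proof is correct and in fact slightly more direct than the paper's. The paper argues by writing $\nu^{\ast 2}(V^{(\e)}) = \int \nu(V^{(\e)}-x)\,d\nu(x)$, applying Chebyshev to bound the $\nu$-measure of the set $B=\{x:\nu(V^{(\e)}-x)>\e^{\a/2}\}$ by $C\e^{\a/2}$, and then splitting into two cases according to whether $W^{(\e/2)}\subseteq B$ or not; in the second case one uses the same inclusion $W^{(\e/2)}\subseteq V^{(\e)}-x$ for $x\in W^{(\e/2)}$ that underlies your argument. Your route bypasses this case split by observing at once that $W^{(\e/2)}\times W^{(\e/2)}\subseteq A^{-1}(V^{(\e)})$ and reading off $\nu(W^{(\e/2)})^2\leq \nu^{\ast 2}(V^{(\e)})$. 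Both arguments land on $\nu(W^{(\e/2)})\leq C^{1/2}\e^{\a/2}$ (the paper's Case~2 in fact gives $\leq \e^{\a/2}$), so both rely on $C\geq 1$ to match the stated constant; as you note, this is harmless for the downstream application.
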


\begin{proof}
    Note that the definition of convolution implies
    \begin{align*}
        \nu^{\ast 2}(V^{(\e)}) = \int \int \mathbbm{1}_{V^{(\e)}}(x+y)\;d\nu(x)\;d\nu(y)
        = \int \nu \left( V^{(\e)}-x \right) \;d\nu(x).
    \end{align*}
    Hence, by Chebyshev's inequality and our hypothesis on $\nu$, the set
    \begin{align*}
        B= \set{ x\in\R^d: \nu \left( V^{(\e)}-x \right) >\e^{\a/2} }
    \end{align*}
    has $\nu$ measure at most $C\e^{\a/2}$.
    Hence, the conclusion of the lemma follows if $W^{(\e/2)}$ is contained inside $B$.
    Otherwise, let $x\in W^{(\e/2)}\setminus B$ and observe that $W^{(\e/2)}$ is contained inside $V^{(\e)}-x$. However, the latter set has $\nu$ measure at most $\e^{\a/2}$ since $x\notin B$. Hence, the lemma follows in this case as well.
\end{proof}

We are now ready for the proof of Theorem~\ref{thm:flat implies friendly}.
\begin{proof}[Proof of Theorem~\ref{thm:flat implies friendly}]
  
      By Theorem~\ref{thm:quant ellq improvement}, we can find $n\in\N$ and $r_0>0$, depending only on the non-concentration parameters of $\tilde{\mu}$, such that
    \begin{align}\label{eq:measure of r-ball}
        \mu^{\ast 2^n}(B(x,r)) \ll_d 2^{2^ndm} r^{d-1/2},
    \end{align}
    for all $0<r\leq r_0$ and all $x\in \R^d$.
    Fix one such value of $n$ once and for all.
    Let $\nu=\mu^{\ast 2^{n}}$ and let $B\subset \R^d$ be a large ball containing the supports of $\mu^{\ast k}$ for all $0\leq k\leq 2^n$.
    In light of Lemma~\ref{lem:sqrt friendly}, it will suffice to find $C\geq 1$ and $\a>0$ so that $\nu(V^{(\e)})\leq C\e^\a$ for all proper affine hyperplanes $V$.
    
    Let $0<\e\leq 1$ and a proper affine hyperplane $V<\R^d$ be arbitrary. 
    Then, note that $V^{(\e)}\cap B$ can be covered by $O_{B,d}(\e^{-(d-1)})$ balls of radius $\e$ with multiplicity depending only on $d$.
    Then,~\eqref{eq:measure of r-ball} implies that $\nu(V^{(\e)})\leq C'\e^{1/2}$ for a suitable constant $C'=C'(m,n,d)\geq 1$.
    Since $V$ was arbitrary, Lemma~\ref{lem:sqrt friendly} and induction on $n$ show that
    $\mu(W^{(\e/2^n)})\leq C'\e^{\k}$ for $\k=2^{-n-1}$ for all proper hyperplanes $W$.
    Since $\e>0$ was arbitrary, this completes the proof by taking $C=C'2^{\k n}$.
\end{proof}


\section{Non-concentration of Patterson-Sullivan Measures}
\label{sec:friendly}

In this section, we verify the non-concentration hypothesis in Corollary~\ref{cor:flattening} for
the measures $\mxu$.
This enables us to apply these results to prove Proposition~\ref{prop:close pairs} and Theorem~\ref{thm:counting bad frequencies} which are the remaining pieces in the proof of Theorem~\ref{thm:Dolgopyat}.

\begin{definition}\label{def:aff subspace of nilpotent}
    Let $\Lcal'$ denote the collection of all proper linear subspaces of the Lie algebra $\mf{n}^+$ and denote by $\Lcal$ the set of all $N^+$-translates of images of elements of $\Lcal'$ under the exponential map.
Elements of $\Lcal$ will be referred to as \textbf{affine subspaces} of $N^+$.
For $\e>0$ and $L\in \Lcal$, let $L^{(\e)}$ be the $\e$-neighborhood of $L$.
\end{definition}

Recall that we fixed a choice of a Margulis function $V$ in Remark~\ref{remark:choice of V} and define
\begin{equation}
\label{eq:t(epsilon)}
    t(\e) := \sup_{x\in N_1^-\Omega} \sup_{L\in\Lcal} \frac{\mu_x^u(N_1^+\cap L^{(\e)})}{V(x)\mu_x^u(N_1^+)}.
\end{equation}
We also recall that $\G$ is a geometrically finite subgroup of $G=\mathrm{Isom}^+(\H_\K^d)$.

\begin{thm}
\label{thm:t(epsilon) goes to 0}
    Assume that $\G$ is Zariski-dense inside $G$.
    We have that $t(\e)\to 0$ as $\e\to 0$.
\end{thm}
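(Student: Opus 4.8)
The quantity $t(\e)$ measures, uniformly over $x \in N_1^-\Omega$ and over all affine subspaces $L \in \Lcal$, the normalized PS mass of $L^{(\e)} \cap N_1^+$. The natural strategy is a two-step reduction: first, pass from affine subspaces of $N^+$ to affine \emph{hyperplanes} of the abelianization $\Nab^+ = N^+/[N^+,N^+]$ together with their preimages; second, realize the (projected) conditional measures $\mu_x^u$ as projections of measures on $N^+$ satisfying the ANC hypothesis of Section~\ref{sec:flattening}, and invoke Theorem~\ref{thm:flat implies friendly}. The hardest input is the verification that $\mu_x^u$ (or its projection to $\Nab^+$) is ANC at almost every scale in the sense of Definition~\ref{def:aff non-conc}; this is where Zariski density of $\G$, the global measure formula (Theorem~\ref{thm:global measure formula}), the doubling estimates (Proposition~\ref{prop:doubling}), and the exponential recurrence estimate (Theorem~\ref{thm:exp recurrence}) all enter. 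Indeed, this verification is precisely what Corollary~\ref{cor:aff non-conc of projections} (referenced repeatedly in the paper) is designed to provide, so the plan is to prove that corollary first and then deduce Theorem~\ref{thm:t(epsilon) goes to 0} from it via Theorem~\ref{thm:flat implies friendly}.

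\textbf{Step 1: reduction to hyperplanes in the abelianization.} An affine subspace $L \in \Lcal$ is contained in a translate of the preimage $\pi^{-1}(H)$ of some affine hyperplane $H$ in $\Nab^+$ (taking $\pi:N^+\to\Nab^+$ the abelianization map), because any proper linear subspace of $\mf{n}^+$ lies in a codimension-one subspace, and codimension-one subspaces containing $[\mf{n}^+,\mf{n}^+]$ correspond to hyperplanes in $\Nab^+$; a proper subspace \emph{not} containing $[\mf{n}^+,\mf{n}^+]$ can only make the mass smaller, so it suffices to handle $L = \pi^{-1}(H)$ for $H$ an affine hyperplane. Using the compatibility~\eqref{eq:pullback metric} between the Cygan metric on $N^+$ and a chosen norm on $\Nab^+\cong\R^d$, we get $\mu_x^u(N_1^+ \cap L^{(\e)}) \ll \nu_x(H^{(c\e)})$ where $\nu_x := \pi_\ast(\mu_x^u|_{N_1^+})$, normalized. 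So $t(\e) \ll \sup_x \nu_x(H^{(c\e)}) / (V(x)|\nu_x|)$, and it remains to bound the latter.

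\textbf{Step 2: ANC for the conditional measures.} This is the crux. One must show there exist functions $\l, \vp, C$ on $(0,1]$ with $\vp(\e)\to 0$ such that the lifted measures $\mu_x^u$ (restricted to a fixed ball, say $N_2^+$, centered appropriately) are $(\l,\vp,C)$-ANC uniformly over $x \in N_1^-\Omega$. The content is: at most scales $\rho \asymp 2^{-r\ell}$ and most centers $y \in \supp(\mu_x^u) \cap N_1^+$, no affine hyperplane of $\Nab^+$ captures more than $\vp(\e)$-proportion of $\mu_x^u(B(y,\rho))$. The mechanism is dynamical: a ball $B(y,\rho)$ flowed forward by $g_{-\log\rho}$ becomes a ball of radius $\asymp 1$, whose PS mass (by the global measure formula and doubling, Proposition~\ref{prop:doubling}) is comparable to $\mu_{z}^u(N_1^+)$ for $z = g_{-\log\rho} y$, provided $z$ has bounded height — which holds for all but an exponentially small (in the number of excursion steps) set of $y$, by Theorem~\ref{thm:exp recurrence}; this controls the exceptional set $\Ecal$ and gives the dependence of $\l$ on $\th$. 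Then at unit scale with bounded height, the statement is: $\mu_{z}^u$ does not concentrate near any affine hyperplane $H$, with a quantitative rate. Here Zariski density of $\G$ is essential: the support of $\mu_z^u$ is (the shadow of) the limit set $\L_\G$, which is not contained in any proper algebraic subvariety — in particular not in any hyperplane of $\Nab^+$ — because $\L_\G$ is Zariski dense when $\G$ is (the limit set of a Zariski-dense geometrically finite group spans, being the attracting set of a Zariski-dense semigroup). A compactness/continuity argument (the map $z \mapsto \mu_z^u$ is weak-$\ast$ continuous on $\Omega$ by~\cite[Lemme 1.16]{Roblin}, and the complement of the cusps in $\Omega$ is compact by geometric finiteness) then upgrades "not concentrated" to a uniform modulus $\vp(\e) \to 0$: if $\vp$ failed to tend to $0$, one extracts a sequence of centers, scales, and hyperplanes converging to a configuration with $\mu_z^u$ giving positive mass to an exact hyperplane, contradicting Zariski density. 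The main obstacle is making this uniform over \emph{all} scales and centers while tracking the height function $V$ — this is exactly why the $L^2$-flattening machinery is needed rather than a soft argument: the exceptional set of bad scales must be exponentially small, not just of small density, which is the role of Theorem~\ref{thm:exp recurrence} combined with the global measure formula.

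\textbf{Step 3: conclusion.} With ANC established (i.e., having proved Corollary~\ref{cor:aff non-conc of projections}), the hypotheses of Theorem~\ref{thm:quant ellq improvement}, and hence of Theorem~\ref{thm:flat implies friendly}, are met with $N = N^+$ and $\pi$ the abelianization map. Theorem~\ref{thm:flat implies friendly} then yields constants $\k > 0$ and $C \geq 1$, depending only on the non-concentration parameters (hence uniform), such that $\nu_x(H^{(\e)}) \leq C \e^{\k} |\nu_x|$ for every affine hyperplane $H$ — after accounting for the normalization and the factor $V(x)$, which is harmless since $V(x) \gg 1$. Combining with Step~1 gives $t(\e) \ll \e^{\k/2}$ (the extra factor $1/2$ or similar coming from the metric comparison constant $c$ and the hyperplane-vs-subspace reduction), which in particular tends to $0$ as $\e \to 0$. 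Some care is needed at the edges — the measure $\mu_x^u$ restricted to $N_1^+$ rather than a closed ball, and the requirement in Definition~\ref{def:aff non-conc} that ANC hold on $B(\id,2)$ while the conclusion concerns $B(\id,1)$ — but these are exactly the "edge effect" issues the framework of Section~\ref{sec:flattening} was set up to absorb, using that $\mu_x^u$ on $N_1^+$ extends to $\mu_x^u$ on $N_2^+$ with comparable total mass by Proposition~\ref{prop:doubling}.
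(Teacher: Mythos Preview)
Your proposal inverts the paper's logical order and is, as written, circular. In the paper, Corollary~\ref{cor:aff non-conc of projections} (the ANC property you set out to establish in Step~2) is a \emph{consequence} of Theorem~\ref{thm:t(epsilon) goes to 0}, not an input to it: the paper's proof of that corollary defines $\vp(\e):=C_1\,t(\e)$ and then invokes Theorem~\ref{thm:t(epsilon) goes to 0} to ensure $\vp(\e)\to 0$. Your Step~2 reproduces this dependence implicitly: the compactness argument you sketch (``if $\vp$ failed to tend to $0$, extract a sequence converging to a configuration with $\mu_z^u$ giving positive mass to an exact hyperplane'') is exactly the paper's proof of Theorem~\ref{thm:t(epsilon) goes to 0} itself. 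Once you have that argument, you are done --- the trivial bound when $V(x)>1/\eta$ handles the non-compact part of $N_1^-\Omega$ because of the $V(x)$ in the denominator of $t(\e)$. The subsequent apparatus (ANC $\Rightarrow$ Theorem~\ref{thm:quant ellq improvement} $\Rightarrow$ Theorem~\ref{thm:flat implies friendly}) produces the \emph{stronger} polynomial bound $t(\e)\ll\e^{\k}$ of Theorem~\ref{thm:friendly intro}, but is not needed for the qualitative statement of Theorem~\ref{thm:t(epsilon) goes to 0}.

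There is also a gap in your Step~2: you assert that $\mu_z^u$ assigning positive mass to a hyperplane ``contradicts Zariski density''. Zariski density of $\G$ only tells you the \emph{support} of $\mu_z^u$ is not contained in a proper subvariety; it does not by itself force every proper subvariety to be $\mu_z^u$-null (a measure can have full support and still charge a lower-dimensional set). The needed fact is Proposition~\ref{prop:null subvarieties}, i.e.\ $\mu_x^u(L)=0$ for every $L\in\Lcal$, which the paper imports from \cite{EdwardsLeeOh-Torus} and which rests on ergodicity of the geodesic flow (a null-or-conull dichotomy), not on Zariski density of the support alone. The paper's proof is then simply: Proposition~\ref{prop:null subvarieties} plus the compactness/continuity argument on $\{V\le 1/\eta\}$.
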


As a consequence, we verify the hypotheses of Corollary~\ref{cor:flattening}.
\begin{cor}\label{cor:aff non-conc of projections}
    For every $x\in N_1^-\Omega$, the measure $\mu_x^u$ is affinely non-concentrated at almost every scale in the sense of Definition~\ref{def:aff non-conc} with parameters depending only on $V(x)$; cf.~\eqref{eq:parameters}.
\end{cor}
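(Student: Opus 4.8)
The plan is to deduce Corollary~\ref{cor:aff non-conc of projections} by combining the ``single-scale'' non-concentration estimate in Theorem~\ref{thm:t(epsilon) goes to 0} with the exponential recurrence estimate of Theorem~\ref{thm:exp recurrence}, using the self-similar structure of the conditional measures $\mu_x^u$ under the geodesic flow expressed in~\eqref{eq:g_t equivariance} and~\eqref{eq:N equivariance}. Recall that Definition~\ref{def:aff non-conc} requires, for given $0<\e,\th\le 1$, $k\in\N$ and $r\ge C(\th)$: an exceptional set $\Ecal$ of $\mu^u_x$-measure $\le C(\th)2^{-\l(\th)k}\mu^u_x(B(\id,2))$, a set of good scales $\Ncal(y)\subseteq [0,k]\cap\N$ of size $\ge (1-\th)k$ for each non-exceptional $y$, and the bound~\eqref{eq:affine non-conc} at scales $\rho\asymp 2^{-r\ell}$ for $\ell\in\Ncal(y)$. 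The role of $r$ here is precisely to act as the time-step discretization: flowing by $g_{r\ell\log 2}$ rescales a ball of radius $2^{-r\ell}$ to a ball of radius $1$, so~\eqref{eq:affine non-conc} at scale $2^{-r\ell}$ around $y$ becomes a statement about $\mu^u_{g_{r\ell\log2}\,ny}$ at scale $1$, which is exactly what $t(\e)$ in~\eqref{eq:t(epsilon)} controls — provided $g_{r\ell\log 2}\,ny$ lies in (a bounded neighborhood of) a fixed compact set so that the factor $V(\cdot)$ is under control.

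First I would fix $\th$ and let $\b$ be as in Remark~\ref{remark:choice of V}; apply Theorem~\ref{thm:exp recurrence} with $\e\asymp\b\th$ to obtain a height $H=H(\th)$, a step $r_0\asymp 1/\th$, and the exponential bound on the set of $n\in N_1^+$ whose orbit $g_{r_0\ell}nx$ exceeds height $H$ for more than $\th$-proportion of $\ell\le k$. Take $r$ to be a large multiple of $r_0$ (at least $C(\th)$ as the theorem and Lemma~\ref{lem:non-conc of discretization} demand). Define the exceptional set $\Ecal$ to be the bad set from Theorem~\ref{thm:exp recurrence}; its measure is $\le e^{-(\b\th-\e)k}V(x)\mu^u_x(N_1^+)$, which gives the required bound with $\l(\th)\asymp\b\th/\log 2$ and $C(\th)$ absorbing $V(x)$ (and note $\mu^u_x(N_1^+)\asymp\mu^u_x(B(\id,2))$ by the doubling Proposition~\ref{prop:doubling}). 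For $y=nx$ outside $\Ecal$, let $\Ncal(y)$ be the set of scales $\ell\le k$ at which $V(g_{r\ell}nx)\le H$; by construction $\#\Ncal(y)\ge(1-\th)k$. This handles items (1) and (2) of Definition~\ref{def:aff non-conc}.

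For item (3): fix $y\in \mathrm{supp}(\mu^u_x)\cap B(\id,2)\setminus\Ecal$, an affine subspace $W<N^+$, $\ell\in\Ncal(y)$ and $\rho\asymp 2^{-r\ell}$. Write $z=g_{r\ell}\cdot y'$ where $y'$ is the appropriate point of $\mathrm{supp}(\mu^u_x)$ near $y$ (so $z\in N_2^-\Omega$ by Remark~\ref{rem:commutation of stable and unstable}, and $V(z)\le H$ by choice of $\ell$). Using the equivariance $\mu^u_{z}=e^{\d r\ell\log2}\Ad(g_{r\ell})_\ast\mu^u_{y'}$ together with $\Ad(g_{r\ell})(N^+_\rho)=N^+_1$ and the fact (from Definition~\ref{def:aff subspace of nilpotent} and the scaling property of the dilations) that $\Ad(g_{r\ell})$ maps affine subspaces of $N^+$ to affine subspaces, the quantity $\mu^u_{y}(W^{(\e\rho)}\cap B(y,\rho))/\mu^u_y(B(y,\rho))$ is comparable — up to the uniformly bounded doubling constant from Proposition~\ref{prop:doubling} used to pass between $B(y,\rho)$ and $N^+_\rho\cdot y$ — to $\mu^u_z(\Ad(g_{r\ell})(W)^{(\e)}\cap N^+_1)/\mu^u_z(N^+_1)$, which by~\eqref{eq:t(epsilon)} is at most $t(c\e)V(z)\le t(c\e)H(\th)$. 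Since $t(\cdot)\to0$ by Theorem~\ref{thm:t(epsilon) goes to 0}, setting $\vp(\e)\asymp t(c\e)$ and letting $C(\th)$ absorb $H(\th)$ gives~\eqref{eq:affine non-conc} (in fact in the stronger form noted in the second remark after Definition~\ref{def:aff non-conc}, with no $\vp(\th)$ term, since the bound $t(c\e)H(\th)$ already tends to $0$ with $\e$). Assembling these with the explicit dependencies $\l(\th)\asymp\b\th$, $\vp=t(c\cdot)$, $C(\th)\asymp H(\th)V(x)$ yields the parameters referenced in~\eqref{eq:parameters}.

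The main obstacle is the bookkeeping at the boundary between a ball $B(y,\rho)$ and its ``flowed'' image: one must be careful that $y$ lies in $\mathrm{supp}(\mu^u_x)$ (so that the flowed basepoint has limit-set endpoint and the conditional measure is positive on small balls), that the point one actually flows lies in $N_2^-\Omega$ as required by the hypotheses of Proposition~\ref{prop:doubling} and~\eqref{eq:t(epsilon)}, and that the constant $c$ relating $\e$-neighborhoods before and after passing through the exponential chart and the doubling comparisons is absorbed into $\vp$. This is precisely the kind of ``edge effect'' the flexibility in Definition~\ref{def:aff non-conc} (allowing non-probability measures, balls $B(\id,2)$ vs.\ $B(\id,1)$, and the expanded-ball variant) is designed to accommodate, and all of it is routine given Theorems~\ref{thm:exp recurrence} and~\ref{thm:t(epsilon) goes to 0} and Proposition~\ref{prop:doubling}; I would state the needed comparisons as a short preliminary observation and then verify the three items of the definition in turn.
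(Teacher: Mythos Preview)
Your proposal is correct and follows essentially the same approach as the paper: define the exceptional set via Theorem~\ref{thm:exp recurrence}, take good scales to be those where the flowed point has height $\le H$, and at each good scale rescale by the geodesic flow and apply the definition of $t(\e)$ together with the bound $V(z)\le H$. The resulting parameters $\l(\th)\asymp\b\th$, $\vp(\e)\asymp t(\e)$, $C(\th)\asymp H(\th)V(x)$ match~\eqref{eq:parameters} in the paper, and your observation that one obtains the stronger form of~\eqref{eq:affine non-conc} (with no $\vp(\th)$ term) is also noted there.
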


\subsection{Proof of Theorem~\ref{thm:t(epsilon) goes to 0}}

Our key tool is the following result which is a consequence of the ergodicity of the geodesic flow. The case of real hyperbolic spaces of this result was known earlier in~\cite{FlaminioSpatzier} by different methods.

\begin{prop}[{\hspace{0.1pt}\cite[Corollary 9.4]{EdwardsLeeOh-Torus}}]
\label{prop:null subvarieties}
    For all $x\in X$ and $L\in \Lcal$, $\mu_x^u(L)=0$.
\end{prop}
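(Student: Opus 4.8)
The plan is to argue by contradiction, combining ergodicity of the geodesic flow for $\bms$ with the equivariance relations \eqref{eq:g_t equivariance}, \eqref{eq:N equivariance}, \eqref{eq:M equivariance} for the leafwise measures. First, if $x^+\notin\L$ then $\mu_x^u$ is the zero measure and there is nothing to prove, so assume $x^+\in\L$; by \eqref{eq:N equivariance} and a translation it suffices to treat base points lying in $\Omega$. Since Patterson--Sullivan measures of non-elementary groups are non-atomic (a standard consequence of the shadow lemma, Theorem~\ref{thm:global measure formula}), the statement holds when $\dim L=0$. Suppose the conclusion fails, and let $j_0\ge 1$ be the least integer for which there exist $y\in\Omega$ and an affine subspace $L$ of $N^+$ with $\dim L=j_0$ and $\mu_y^u(L)>0$. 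Let $E$ be the set of $z\in\Omega$ such that $\mu_z^u(L'\cap N_1^+)>0$ for some affine subspace $L'$ of dimension $j_0$. Using the weak-$*$ continuity of $z\mapsto\mu_z^u$ (\cite[Lemme~1.16]{Roblin}) one checks that $E$ is Borel; and since the family of affine subspaces of a fixed dimension is preserved by $\mrm{Ad}(g_t)$, by $\mrm{Ad}(M)$, and by right translations (via the Campbell--Baker--Hausdorff formula in the two-step nilpotent group $N^+$), the equivariance relations show that $E$ is invariant under $g_t$, under $N^+$, and under $M$. Since $\bms(E)>0$ and $\bms$ is ergodic for the geodesic flow (Rudolph~\cite{Rudolph}, Babillot~\cite{Babillot}), we conclude $\bms(E)=1$; that is, for $\bms$-a.e.\ $z$ the measure $\mu_z^u$ charges an affine subspace of dimension $j_0$, and hence (enlarging it) an affine hyperplane $H_z\subset N^+$.

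The next step is to convert this into a contradiction. I would first record the rigidity input: since $\G$ is non-elementary and geometrically finite, the convex hull of $\L$ is unbounded, so $\L$ cannot be contained in the boundary of any proper totally geodesic subspace (otherwise the unit neighbourhood of the convex core would have infinite volume); equivalently, $\mrm{supp}(\mu_z^u)$ is not contained in any proper affine subspace of $N^+$. Thus $H_z$ genuinely varies with $z$, and one must rule out the following scenario: for $\bms$-a.e.\ $z$, the measure $\mu_z^u$ charges a proper hyperplane even though its support is not contained in one. Here the argument must exploit the self-similarity of Patterson--Sullivan measures. By \eqref{eq:g_t equivariance}, the ``hyperplane concentration'' of $\mu_z^u$ at scale $e^{-t}$ equals that of $\mu_{g_tz}^u$ at scale $1$; since $E$ is conull and $g_t$-invariant, a fixed $\bms$-generic $\mu_z^u$ would therefore have to concentrate on a hyperplane at a definite level along almost every scale. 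Combining this with the doubling and decay estimates of Proposition~\ref{prop:doubling}, the Global Measure Formula (Theorem~\ref{thm:global measure formula}), and the minimality of $j_0$ (which forces $\mu_z^u|_{H_z}$ to be non-atomic along every codimension-one pencil of parallel flats inside $H_z$), one would argue by induction on scales that $\mu_z^u$ is essentially supported on a single affine subspace at all scales near a density point, contradicting the rigidity input above.

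The main obstacle is exactly the last step of the second paragraph: upgrading ``for $\bms$-a.e.\ $z$, $\mu_z^u$ charges a proper hyperplane'' to an honest contradiction. This is where the fractal, rather than merely measure-theoretic, nature of $\mu_z^u$ must be used — in particular the exact-dimensionality encoded in the Global Measure Formula and the uniform doubling of Proposition~\ref{prop:doubling} — and it is the part of the proof demanding the most care; a clean formulation likely proceeds by disintegrating $\mu_z^u$ along the flow-equivariant selection $z\mapsto H_z$ and appealing to a Moore-type ergodicity argument for the induced cocycle on the space of hyperplanes. For the purposes of Section~\ref{sec:friendly} only the measure-theoretic fact $\mu_x^u(L)=0$ is needed, and this is available from \cite[Corollary~9.4]{EdwardsLeeOh-Torus}, which is the route we take in the write-up; the plan above indicates how a self-contained proof would run.
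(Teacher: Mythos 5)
The paper gives no internal proof of this proposition at all: it is imported verbatim from \cite[Corollary 9.4]{EdwardsLeeOh-Torus} (with only the remark that it is a consequence of ergodicity of the geodesic flow), and since your write-up ultimately takes exactly that route, your approach coincides with the paper's. Your self-contained sketch is, as you yourself note, incomplete at the decisive step (and even the first paragraph needs more care, since a single point $y$ with $\mu_y^u(L)>0$ does not immediately give $\bms(E)>0$: saturating along stable directions uses holonomy maps that need not preserve affine subspaces), but this does not affect the cited route, which is all the paper uses.
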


Theorem~\ref{thm:t(epsilon) goes to 0} follows from the above result and a compactness argument.
Indeed, fix an arbitrary $\eta>0$ and note that for all $x$ with $V(x)>1/\eta$, the inner supremum in the definition of $t(\e)$ is bounded above by $\eta$, for any choice of $\e>0$. We now show that $t(\e)<\eta$ for all sufficiently small $\e$ by restricting our attention to the bounded set of $x\in N_1^-\Omega$ where $V(x)\leq 1/\eta$.
Suppose not and let $x_n\in N_1^- \Omega$, $L_n\in\Lcal$, $\e_n>0$ be sequences such that $V(x_n) \leq 1/\eta$, $\e_n\to 0$, and 
\begin{align}
     \label{eq:sequence of hyperplane nbhds}
     \liminf_{n\to \infty}\frac{\mu_{x_n}^u(N_1^+\cap L_n^{(\e_n)})}{\mu_{x_n}^u(N_1^+)}  >0.
\end{align}
Passing to a subsequence if necessary, we may assume $x_n\to y \in N_1^-\Omega$ and $L_n$ converges to some $P\in\Lcal$ (in the Hausdorff topology on compact sets).
On the other hand, when $x_n$ is sufficiently close to $y$, we can change variables using~\eqref{eq:g_t equivariance},~\eqref{eq:N equivariance}, and~\eqref{eq:stable equivariance} to get
\begin{align*}
    \mu_{x_n}^u(N_1^+\cap L_n^{(\e_n)}) = \int f_n J_n \;d\mu_y^u,
\end{align*}
where $J_n$ is the Jacobian of the change of variables and $f_n$ is the indicator function of the image of $N_1^+\cap L_n^{(\e_n)}$ under this change of variables.
By Proposition~\ref{prop:null subvarieties}, since $L_n$ converges to $L$, $f_n$ converges to $0$ pointwise $\mu_y^u$-almost everywhere.
Additionally, $J_n$ converges to $1$ everywhere since $x_n$ converges to $y$. 
Finally, $\mu_{x_n}^u(N_1^+)$ remains bounded away from $0$ since $x_n$ remain within a bounded set for all $n$.
This gives a contradiction to~\eqref{eq:sequence of hyperplane nbhds} and concludes the proof.


\subsection{Non-concentration and proof of Corollary~\ref{cor:aff non-conc of projections}}

In this section, we show that the conditional measures $\mu^u_\bullet$ are affinely non-concentrated in the sense of Def.~\ref{def:aff non-conc}.
Our key tools are Theorem~\ref{thm:exp recurrence} and Theorem~\ref{thm:t(epsilon) goes to 0}.

Let $0<\th,\e< 1$ be arbitrary.
Let $H, r_0=O_{\b,\th}(1)$ be the constants provided by Theorem~\ref{thm:exp recurrence} when applied with $\e=\b\th/2$ and let $r\geq r_0$.
For $\ell\in\N$, let $t_\ell = r\ell\log 2$ and 
define 
 \begin{align*}
     \Ecal = \set{n\in N_1^+: \sum_{1\leq \ell \leq k} \chi_H(g_{t_\ell} n x) \geq \th k}.
 \end{align*}
Then, by Theorem~\ref{thm:exp recurrence}, we have that $\mu_x^u(\Ecal\cap N_1^+)\ll e^{-\b\th k/2} V(x)\mu_x^u(N_1^+)$.

It remains to show that our desired non-concentration holds outside of $\Ecal$.
For $n\in N_1^+$, define the set of scales $\Ncal(n)$ as follows:
\begin{align*}
    \Ncal(n) = \set{1\leq \ell\leq k:    V(g_{t_\ell} n x)\leq  H}.
\end{align*}
Let $n\in N_1^+\cap \mrm{supp}(\mu_x^u)\setminus \Ecal$.
By definition, we have $\# \Ncal(n) \geq (1-\th)k$.

Let $\ell\in \Ncal(n)$ and let $W<N^+$ be a proper affine subspace.
Recall the function $t(\e)$ defined in~\eqref{eq:t(epsilon)}.
Let $\rho\asymp 2^{-r\ell}$.
Let $z = g_{-\log \rho} nx$ and $W_n = \Ad(g_{-\log \rho})(Wn^{-1})$.
Then, changing variables and using the definition of $t(\e)$ along with the fact that $V(z)\ll H$, we obtain
\begin{align}\label{eq:non-conc of projections}
     \mu_x^u(W^{(\e \rho)} \cap N^+_{\rho}\cdot n )
     = \rho^\d \mu_z^u(W_n^{(\e)} \cap N_1^+)
     \ll H t(\e) \times \rho^\d \mu_z^u(N_1^+) = H t(\e) \times \mu_x^u(N_\rho^+\cdot n),
\end{align}
where the last equality follows by reversing the change of variables since $\rho^\d \mu_z^u(N_1^+) = \mu_x^u(N^+_{\rho}\cdot n )$.

Let $C_1\geq 1$ be the larger of the implicit constants in the bound on the measure of $\Ecal$ and in~\eqref{eq:non-conc of projections}.
These two estimates imply that $\mu_x^u$ satisfies Definition~\ref{def:aff non-conc} by taking
\begin{align}\label{eq:parameters}
    C(\th) := C_1 V(x)H, \qquad
    \vp(\e) := C_1t(\e), \qquad
    \l(\th) := (\b\th \log 2)/2.
\end{align}
That $\vp(\e)$ tends to $0$ as $\e\to 0$ follows by Theorem~\ref{thm:t(epsilon) goes to 0}.


\subsection{Counting close frequencies and proof of Proposition~\ref{prop:close pairs}}
\label{sec:close}

The idea of the proof is similar to that of~\cite[Lemma 6.2]{Liverani}, with the significant added difficulty being the non-concentration result for PS measures established in Theorem~\ref{thm:flat implies friendly}.
We note however that the case of real hyperbolic manifolds is much simpler in that it does not require Theorem~\ref{thm:flat implies friendly} and instead uses only the doubling result in Proposition~\ref{prop:doubling}. 

Recall our definition of the transverse intersection points $x_{\rho,\ell}$ in~\eqref{eq:centers} and of $N_1^+(j)$ in the paragraph above~\eqref{eq:N_1^+(j)}.
For each $\ell\in I_{\rho,j}$, fix some $u_\ell \in N^+_1(j)\subseteq N_3^+$ such that
\begin{equation}\label{eq:choice of p_ell}
    x_{\rho,\ell} = g^\g p^+_\ell \cdot x = n_{\rho,\ell}^- \cdot y_\rho, \qquad
    p^+_{\ell}:= m_{\rho,\ell} g_{t_{\rho,\ell}}   u_\ell.
\end{equation}
Here, we are using that the groups $A=\set{g_t:t\in\R}$ and $M$ commute.
Denote by $P^+$ the parabolic subgroup $N^+AM$ of $G$.
Since $M$ is compact, $|t_{\rho,\ell}|<1$, and $N^+_1(j)$ is contained in $N_3^+$, there is a uniform constant $C>0$ such that
\begin{equation}\label{eq:bounded set of elements}
    \set{p^+_\ell :\ell\in I_{\rho,j}}\subset P^+_C,
\end{equation}
where $P^+_C$ denotes the ball of radius $C$ around identity in $P^+$.

Fix some $\ell_0\in I_{\rho,j}$ and denote by $C_{\rho,j}(\ell_0)$ the set of indices $\ell\in I_{\rho,j}$ such that $(\ell_0,\ell)\in C_{\rho,j}$. 
To simplify notation, we set
\begin{align*}
     \epsilon:=b^{-1/10}, \qquad t_\star:= \g(w+jT_0).
\end{align*}
Let $Z=\exp(\ntwominus) \subset N^-$.
In particular\footnote{This is the reason Theorem~\ref{thm:flat implies friendly} is not needed in this case.}, $Z=\set{\id}$ is the trivial group in the real hyperbolic case.
Recalling the definition of the Cygan metric in~\eqref{eq:Carnot}, the definition of $C_{\rho,j}$ implies that
\begin{equation*}
    d_{N^-} (n^-_{\rho,\ell}(n^-_{\rho,\ell_0})^{-1},Z) 
    \leq 
    b^{-1/10} .
\end{equation*}
Denote by $Z^{(\epsilon)}$ the $\epsilon$-neighborhood of $Z$ inside $N^-$. Let
\begin{align*}
    \tilde{u}_\ell^- = n^-_{\rho,\ell}(n^-_{\rho,\ell_0})^{-1}
    \in \zeps\cap N_{\iota_j}^-,
\end{align*}
where we recall that the points $n_{\rho,\ell}^-$ belong to $N^-_{\iota_j/10}$ by definition of our flow boxes $B_\rho$; cf.~paragraph preceding~\eqref{eq:norm of rho at most b}.
Note that
\begin{align*}
    g^\g p^+_\ell \cdot x  = \tilde{u}_\ell^{-}\cdot  g^\g p^+_{\ell_0} \cdot x, \qquad
    \forall \ell\in C_{\rho,j}(\ell_0).
\end{align*}
In particular, for $u^-_\ell = \Ad(g^\g)^{-1}(\tilde{u}_\ell^-)$, since $g^\g=g_{t_\star}$ (cf.~\eqref{eq:ggamma}), we have that
\begin{equation}\label{eq:strong stable ball}
   p^+_\ell x = u_\ell^-\cdot p^+_{\ell_0}x \in  (Z^{(e^{t_\star}\epsilon)} \cap N^-_{e^{t_\star}\iota_j}) \cdot p^+_{\ell_0}x, 
   \qquad \forall \ell\in C_{\rho,j}(\ell_0).
\end{equation}
Our counting estimate will follow by estimating from below the separation between the points $p_\ell^+x$, combined with a measure estimate on the sets $(Z^{(e^{t_\star}\epsilon)} \cap N^-_{e^{t_\star}\iota_j})\cdot p^+_{\ell_0}x$.

To this end, recall the sublevel set $K_j$ and the injectivity radius $\iota_j$ in~\eqref{eq:K_j and iota_j and iota_b}. 
Recall also by~\eqref{eq:V(x)} that $x$ belongs to $K_j$. 
It follows that the injectivity radius at every point of the weak unstable ball $P^+_C\cdot x$ is $\gg \iota_j$. 
This implies that there is a radius $r_j$ with $\iota_j\ll r_j\leq \iota_j$ such that for every $\ell\in C_{\rho,j}(\ell_0)$, the map $n^- \mapsto n^- \cdot p^+_\ell x$
is an embedding of $ N^-_{r_j}$ into $X$.

Let $\set{B_m}$ be a cover of $P^+_C$ by $O(\iota_j^{-\dim P^+})$ balls of radius $r_j$. Then, similar injectivity radius considerations imply that, for every $m$, the disks 
\begin{align*}
    \set{N^-_{r_j}\cdot p^+_{\ell} x: \ell \in C_{\rho,j}(\ell_0), p^+_\ell\in B_m}
\end{align*}
are disjoint.
Indeed, otherwise, we can find $n\in N^-_{2r_j}$ and $\ell_1,\ell_2\in C_{\rho,j}$ with $p^+_{\ell_1}, p^+_{\ell_2}\in B_m$ such that $n p^+_{\ell_1}x= p^+_{\ell_2}x$.  
By choosing $r_j$ sufficiently smaller than $\iota_j$, this gives a contradiction to the fact that the injectivity radius at $x$ is $\gg \iota_j$ since $(p^+_{\ell_2})^{-1}n p^+_{\ell_1} = (p^+_{\ell_2})^{-1}n p^+_{\ell_2}\cdot (p^+_{\ell_2})^{-1}p^+_{\ell_1} $ is at distance $O(r_j)$ from identity.
The above disjointness, together with~\eqref{eq:strong stable ball}, imply that the disks $\set{N^-_{r_j}\cdot u^-_\ell: \ell\in C_{\rho,j}(\ell_0), p^+_\ell\in B_m}$ form a disjoint collection of disks inside $Z^{(e^{t_\star}\epsilon+\iota_j)} \cap N^-_{(e^{t_\star}+1)\iota_j}$.
In particular, we get that
\begin{align}\label{eq:count close frequencies by measure}
    \# \set{\ell\in C_{\rho,j}(\ell_0): p^+_{\ell}\in B_m}
    \leq 
    \frac{ \mu^s_{p^+_{\ell_0}x} \left( Z^{(e^{t_\star}\epsilon+\iota_j)} \cap N^-_{(e^{t_\star}+1)\iota_j} \right)
    }
    {\min_{\ell \in C_{\rho,j}(\ell_0)}  \mu_{p^+_{\ell_0}x}^s(N^-_{r_j}\cdot u^-_\ell )}
    ,
\end{align}
where $\mu^s_\bullet$ are the conditional measures along $N^-$-orbits defined similarly to~\eqref{eq:unstable conditionals}.

To obtain good bounds on the ratio in~\eqref{eq:count close frequencies by measure} for a given $\ell$, it will be important to change the basepoint $p^+_{\ell_0}x$ to another point of the form $g_{s}p^+_\ell x$ with uniformly bounded height.
Fix some arbitrary $\ell\in C_{\rho,j}(\ell_0)$ and recall~\eqref{eq:choice of p_ell} and~\eqref{eq:strong stable ball}.
Let $s_{\rho,\ell}\in [ t_\star,(1+2\a)t_\star]$ be the return time defined in~\eqref{eq:return time of x_rho,ell} and set
\begin{align*}
    y_\ell = g_{s_{\rho,\ell}} p_\ell^+ x.
\end{align*}
Note that our choice of $u^-_\ell$ implies that
\begin{align*}
    Z^{(e^{t_\star}\epsilon+\iota_j)} \cap N^-_{(e^{t_\star}+1)\iota_j} 
    \subseteq \left(Z^{(2e^{t_\star}\epsilon+\iota_j)} \cap N^-_{2(e^{t_\star}+1)\iota_j} \right) 
    \cdot u_\ell^-. 
\end{align*}
In particular, we can use the set on the right side to estimate the numerator of~\eqref{eq:count close frequencies by measure}.
Let
\begin{align*}
    Q := Z^{(2e^{t_\star}\epsilon+\iota_j)} \cap N^-_{2(e^{t_\star}+1)\iota_j},
    \qquad 
    Q^\prime := \Ad(g_{s_{\rho,\ell}})(Q).
\end{align*}
 Then, changing variables using~\eqref{eq:N equivariance} and~\eqref{eq:g_t equivariance}, we have
 \begin{align*}
     \frac{\mu^s_{p^+_{\ell_0} x}(Q \cdot u^-_\ell)}{\mu^s_{p^+_{\ell_0} x}(N^-_{r_j} \cdot u^-_\ell) }
     = \frac{\mu^s_{p^+_{\ell} x}(Q )}{\mu^s_{p^+_{\ell} x}(N^-_{r_j}) }
     = \frac{\mu^s_{y_\ell}(Q^\prime)}{ \mu^s_{y_\ell}(N^-_{e^{-s_{\rho,\ell}} r_j}) }.
 \end{align*}
Moreover, by the global measure formula, Theorem~\ref{thm:global measure formula}, since $V(y_\ell)\ll_{T_0} 1$, we have that
 \begin{align}\label{eq:lower bound on denominator in counting}
     \mu^s_{y_\ell}(N^-_{e^{-s_{\rho,\ell}} r_j}) \gg_{T_0} e^{-\d s_{\rho,\ell}} r_j^\d \gg e^{-\d (1+2\a)t_\star} \iota_j^\d.
 \end{align}
 Here, we used~\cite[Theorem 2.2]{Corlette} to relate strong stable disks of the form $N^-_{r}\cdot y_\ell$ to their shadows on the boundary; cf.~\eqref{eq:approximate shadow} for a precise formulation.

\begin{lem}\label{lem:measure of Q'}
    We have the bound $\mu^s_{y_\ell}(Q') \ll b^{-\k/10} + e^{-\k t_\star}$, where $\k>0$ is a uniform constant provided by Theorem~\ref{thm:flat implies friendly}.
\end{lem}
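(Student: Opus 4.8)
The plan is to estimate $\mu^s_{y_\ell}(Q')$ by unpacking the definition of the rescaled set $Q'$ and reducing the problem to a non-concentration statement for the conditional measure $\mu^s_{y_\ell}$ near a proper affine subspace of $N^-$, to which Theorem~\ref{thm:flat implies friendly} applies. First I would compute the effect of $\mathrm{Ad}(g_{s_{\rho,\ell}})$ on the two pieces defining $Q$: since $g_t$ contracts $N^-$ and, more precisely, expands $\mathfrak{n}^-_\a$ by $e^{-t}$ and $\mathfrak{n}^-_{2\a}$ by $e^{-2t}$ under $\mathrm{Ad}(g_{-t})$ (so $\mathrm{Ad}(g_{s_{\rho,\ell}})$ expands $N^-$ by at least $e^{s_{\rho,\ell}}$), the ball $N^-_{2(e^{t_\star}+1)\iota_j}$ maps into $N^-_{C e^{(1+2\a)t_\star}\iota_j \cdot e^{-s_{\rho,\ell}}}$, which by $s_{\rho,\ell}\geq t_\star$ is contained in $N^-_{O_{T_0}(\iota_j)}$, and in particular in a ball of radius $O_{T_0}(1)$. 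The subtle point is the image of the thin slab $Z^{(2e^{t_\star}\epsilon+\iota_j)}$: because $Z=\exp(\mathfrak{n}^-_{2\a})$ is the center of $N^-$ and $\mathrm{Ad}(g_{s_{\rho,\ell}})$ acts on the transverse directions $\mathfrak{n}^-_\a$ by the factor $e^{-s_{\rho,\ell}}$ (in the $\mathfrak n^-_{2\a}$ directions by $e^{-2s_{\rho,\ell}}$), the $\mathfrak n^-_\a$-width of the slab after rescaling is at most $(2e^{t_\star}\epsilon+\iota_j)\cdot e^{-s_{\rho,\ell}} \ll e^{-s_{\rho,\ell}}\iota_j + \epsilon$, and since $s_{\rho,\ell}\geq t_\star$ and $\epsilon=b^{-1/10}$, $\iota_b=b^{-2/3}\leq \iota_j$, this is $\ll \iota_j e^{-t_\star} + b^{-1/10}$; here I would track the metric-compatibility carefully using~\eqref{eq:Carnot} and the fact that the Cygan quasi-norm behaves like $\norm{u}$ in the $\mathfrak n^-_\a$ directions. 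Thus $Q'$ is contained in $Z^{(\eta_\star)}\cap B(\id, O_{T_0}(1))$ where $\eta_\star \ll b^{-1/10} + \iota_j e^{-t_\star}$.

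Next, I would invoke the fact that $V(y_\ell)\ll_{T_0}1$, so $y_\ell$ lies in a fixed compact subset of $\Omega$ (uniformly in $\rho,\ell$ over the relevant range), and that the conditional measures $\mu^s_\bullet$ along $N^-$-orbits satisfy the same non-concentration properties as $\mu^u_\bullet$ by symmetry — concretely, the analogue of Corollary~\ref{cor:aff non-conc of projections} and hence of Theorem~\ref{thm:flat implies friendly} holds for $\mu^s_{y_\ell}$ (the roles of $N^+$ and $N^-$, and of $P^-$ and $P^+$, being interchanged throughout Sections~\ref{sec:doubling}–\ref{sec:friendly}). The set $Z = \exp(\mathfrak n^-_{2\a})$ is a proper linear subspace of $\mathfrak n^-$ (it is a proper subspace precisely because $\mathfrak n^-_\a \neq 0$; in the real hyperbolic case $Z$ is trivial and $\mu^s_{y_\ell}(Q')$ is simply bounded by the measure of a ball of radius $O(b^{-1/10}+e^{-t_\star})$ times a constant, which gives the bound directly via Proposition~\ref{prop:doubling} — this is the route that avoids Theorem~\ref{thm:flat implies friendly}). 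Applying Theorem~\ref{thm:flat implies friendly} to $\mu^s_{y_\ell}$ with the proper affine subspace $Z$ (translated to pass through the support appropriately, noting that $Q'$ meets $\mathrm{supp}(\mu^s_{y_\ell})$ and using uniform doubling from Proposition~\ref{prop:doubling} to absorb the $O_{T_0}(1)$ ball into the normalization $\mu^s_{y_\ell}(N_1^-)\asymp 1$), we obtain
\begin{align*}
    \mu^s_{y_\ell}(Q') \ll_{T_0} \eta_\star^{\k} \ll b^{-\k/10} + e^{-\k t_\star},
\end{align*}
where $\k>0$ is the exponent from Theorem~\ref{thm:flat implies friendly} applied to the (compact family of) measures $\mu^s_{y_\ell}$ with $V(y_\ell)\ll_{T_0}1$, and where I used $(a+b)^\k \ll a^\k + b^\k$ and $\iota_j e^{-t_\star}\leq e^{-t_\star}$ after adjusting $\k$. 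This is exactly the claimed bound, recalling $t_\star = \g(w+jT_0)$.

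The main obstacle I anticipate is the bookkeeping in the first step: controlling how $\mathrm{Ad}(g_{s_{\rho,\ell}})$ distorts the slab $Z^{(\cdot)}$ given that the Cygan metric is not homogeneous under the abelian dilation and the two root spaces scale at different rates, so one must be careful that the rescaled slab really is contained in a \emph{bounded-radius} neighborhood of the \emph{proper subspace} $Z$ (and not, say, in all of $N^-_{O(1)}$, which would make Theorem~\ref{thm:flat implies friendly} useless). The key observation making this work is that expansion under $\mathrm{Ad}(g_{s})$ only helps — it shrinks the $\mathfrak n^-_\a$-width of the slab relative to the overall size of $Q'$ — so after rescaling the relevant width-to-size ratio is at most $e^{-t_\star}\iota_j + b^{-1/10}$ as needed. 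A secondary point requiring care is justifying the non-concentration input for $\mu^s_\bullet$ rather than $\mu^u_\bullet$; this is purely a matter of applying the symmetric versions of the results already proved (the geodesic flow time-reversal symmetry $w g_t w^{-1} = g_{-t}$ of Proposition~\ref{prop:Bruhat} swaps the two horospherical groups), so no new ideas are needed, but it should be stated explicitly. In the real hyperbolic case the whole lemma collapses to a ball estimate via Proposition~\ref{prop:doubling}, which I would note as a remark.
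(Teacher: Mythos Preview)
Your approach is essentially the same as the paper's: show $Q'\subseteq Z^{(\varrho)}\cap N_1^-$ with $\varrho\ll b^{-1/10}+\iota_j e^{-t_\star}$, then invoke the non-concentration of $\mu^s_{y_\ell}$ (uniform since $V(y_\ell)\ll 1$) via Theorem~\ref{thm:flat implies friendly}. Two small points: first, your ``main obstacle'' is a non-issue, since the Cygan metric \emph{is} homogeneous under $\mathrm{Ad}(g_t)$ by~\eqref{eq:scaling of Carnot metric}, so $\mathrm{Ad}(g_{s_{\rho,\ell}})(Z^{(r)}\cap N^-_R)=Z^{(e^{-s_{\rho,\ell}}r)}\cap N^-_{e^{-s_{\rho,\ell}}R}$ directly (no need to track the root spaces separately, and your parenthetical ``$\mathrm{Ad}(g_{s_{\rho,\ell}})$ expands $N^-$'' has the direction reversed); second, Theorem~\ref{thm:flat implies friendly} as stated is about the \emph{projection} $\mu$ of $\tilde\mu$ to the abelianization $\R^d$, so the paper explicitly projects $\mu^s_{y_\ell}|_{N_1^-}$ to $N^-/[N^-,N^-]\cong\mathfrak n^-_\alpha$ and observes that $Z$ is the preimage of $\{0\}$---you should phrase the application accordingly rather than applying the theorem ``to $\mu^s_{y_\ell}$ with $Z$'' directly.
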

\begin{proof}
    We wish to apply Theorem~\ref{thm:flat implies friendly} to the measure $\mu^s_{y_\ell}$.
    This result concerns decay of measures of the intersection of subspace neighborhoods with $N_1^-$.
    First, we show that $Q'\subseteq Z^{(\varrho)}\cap N_1^-$ for $\varrho = 2\epsilon + \iota_je^{-t_\star}  $.
    Indeed, let $r_1 = (2e^{t_\star}\epsilon+\iota_j)e^{-s_{\rho,\ell}} $ and $r_2=2(e^{t_\star}+1)\iota_j e^{-s_{\rho,\ell}}$.
    Then, $Q' = Z^{(r_1)}\cap N^-_{r_2}$.
    Since $s_{\rho,\ell}\geq t_\star$ and $\iota_j\leq 1/10$ (cf.~\eqref{eq:K_j and iota_j and iota_b}), we have $r_2\leq 1$. 
    Similarly, $r_1\leq \varrho$.

    Next, we note that the non-concentration hypothesis of Theorem~\ref{thm:flat implies friendly} is verified in Corollary~\ref{cor:aff non-conc of projections}.
    Moreover, the corollary also shows that the non-concentration parameters can be chosen uniform over all $y_\ell$ in view of the fact that $V(y_\ell) \ll 1$.
    Let $\mu$ be the projection of $\mu^s_{y_\ell}\left|_{N_1^-}\right.$ to the abelianization $N^-/[N^-,N^-]\cong \mf{n}_\a^-$, normalized to be a probability measure.
    Then, Theorem~\ref{thm:flat implies friendly} provides constants $C_1,\k>0$, independent of $\ell$, so that $\mu(W^{(\e)})\leq C_1 \e^\k$ for all $\e>0$ and all proper affine subspaces $W$, where such subspaces are defined in Def.~\ref{def:aff subspace of nilpotent}.
    The lemma now follows since $Q'$ is contained in the $\varrho$-neighborhood of a translate of the preimage of $0$ under this projection.
\end{proof}

Recall that the cover $\set{B_m}$ has cardinality at most $O(\iota_j^{-\dim P^+})$.
Also, recall by~\eqref{eq:bound iota_j} that $\iota_j^{-1} \ll_{T_0} e^{4\a t_\star}$.
Hence,~\eqref{eq:count close frequencies by measure},~\eqref{eq:lower bound on denominator in counting}, and Lemma~\ref{lem:measure of Q'} imply that
\begin{align*}
    \# C_{\rho,j}(\ell_0) \ll_{T_0} e^{O(\a t_\star)} (\epsilon^{\k} + e^{-\k t_\star }) e^{\d t_\star}.
\end{align*}
Finally, note that~\eqref{eq:j small} provides the bound $e^{\a t_\star}\ll_{T_0} b^{2\a/a}$.
Hence, if $\a$ is small enough, the above bounds imply that $\# C_{\rho,j}(\ell_0)$ is $\ll_{T_0}  (\epsilon^{\k_0} + e^{- \k_0 t_\star}) e^{\d t_\star} $, for $\k_0=\k/2$.
This concludes the proof.

\subsection{Flattening and proof of Theorem~\ref{thm:counting bad frequencies}}
\label{sec:count bad freqs}

We wish to apply Corollary~\ref{cor:flattening}.
Recall that $\nu_i$ has total mass $\murhoi(N_1^+)$ and let $\mu= \nu_i/\murhoi(N_1^+)$.
In particular, $\mu$ is a probability measure supported on the unit ball in $\nonepls$.

We fix identifications $\nonepls \cong \K^{p}\cong \noneminus$ for some $p\in\N$; cf.~Section~\ref{sec:Carnot}. 
Note further that the restriction of the metric in~\eqref{eq:Carnot} to $\nonepls$ is Euclidean.
In particular, we will fix a linear isomorphism of $\nonepls$ and $\noneminus$ with $\R^d$, where $d=p \dim \K$.

By Corollary~\ref{cor:aff non-conc of projections}, the measure $\mu^u_{y_\rho^i}$ is affinely non-concentrated at almost all scales in the sense of Def.~\ref{def:aff non-conc}.
Hence, Corollary~\ref{cor:flattening} provides $\l>0$ such that, for $T=b^{4/10}$, the set
\begin{align*}
    \mf{B}(\l):=\set{w\in \R^d: \norm{w}\leq T \text{ and } |\hat{\mu}( w)| \geq T^{-\l }}
\end{align*}
can be covered by $O_\e(T^\e)$ balls of radius $1$.
The result will follow once we estimate the spacing between the functionals $\langle w^i_{k,\ell},\cdot \rangle$.

To simplify notation, let $ w_\ell := \langle w^i_{k,\ell},\cdot \rangle$.
By~\eqref{eq:size of separated frequencies}, when $b$ is large enough, we have that $b\norm{w_\ell} \leq T$.
In particular, we can view the set $B(i,k,\l)$ as a subset of $\mf{B}(\l)$ above using the map $\ell \mapsto bw_\ell$.
By Lemma~\ref{lem:temp function formula}, the definition of $w^i_{k,\ell}$ in~\eqref{eq:frequency vectors def}, and~\eqref{eq:size of t_i}, we have that
\begin{align*}
    \norm{w_{\ell_1}-w_{\ell_2}} \gg b^{2/10}\norm{u_{\ell_2}-u_{\ell_1}}.
\end{align*}
In particular, by Proposition~\ref{prop:close pairs}, any ball of radius $1$ in $\R^d$ contains at most 
\[ 
O_{T_0} \left( (b^{-\k/10} + e^{-\k\g(w+jT_0)}) e^{\d \g(w+jT_0)} \right) 
\]
of the vectors $w_\ell$.
This completes the proof of Theorem~\ref{thm:counting bad frequencies}.


 \section{Proof of Theorems~\ref{thm:intro mixing} and~\ref{thm:intro strip}}
\label{sec:Butterley}

The goal of this section is to complete the proofs of Theorems~\ref{thm:intro mixing} and~\ref{thm:intro strip}.
The key ingredients are Theorems~\ref{thm:resolvent spectrum2} and~\ref{thm:Dolgopyat}.
The deduction is through a form of the Paley-Wiener theorems adapted for this purpose obtained in~\cite{Butterley,Butterley-erratum}.

\subsection{Paley-Wiener theorems}

Let $(\Bcal, \norm{\cdot}_\Bcal)$ be a Banach space equipped with a weaker norm $\norm{\cdot}_\Acal$.
Let $\Lcal_t$ be a bounded one-parameter semigroup of operators on $\Bcal$ in the norm $\norm{\cdot}_\Bcal$.
Denote by $\mf{X}$ the infinitesimal generator of $\Lcal_t$ and let $R(z)$, $\Re(z)>0$, be its resolvent.

\begin{thm}[{\hspace{0.1pt}\cite[Theorem 1]{Butterley,Butterley-erratum}}]
    \label{thm:Butterley}
Assume that $\Lcal_t$ is strongly continuous\footnote{Cf.~\cite{Butterley} for a result that does not require strong continuity.}, and
\begin{enumerate}
    \item
    \label{item:Assumption 1}
    $\Lcal_t$ is weakly-Lipschitz, i.e., for all $t\geq 0$ and $f\in \Bcal$, $
        \norm{\Lcal_t f - f}_\Acal \ll t \norm{f}_\Bcal$.
    \item
    \label{item:Assumption 2}
    there exists $\l>0$ such that $\rho_{ess}(R(z))\leq 1/(\Re(z)+\l)$ for all $z\in \C$ with $\Re(z)>0$, where $\rho_{ess}$ denotes the essential spectral radius.

    \item
    \label{item:Assumption 3}
    there exist positive constants $C, \a, \b$ and $0<\g <\log(1+\l/\a)$ such that, for all $z\in \C$ with $\Re(z)=\a$ and $|\Im(z)|\geq \b$, we have $
        \norm{R(z)^{q}}_\Bcal \leq  C/(\a +\l)^{q}$,
    where $q=\lceil \g \log |\Im(z)|\rceil$.
    Here, $\l$ is the constant in Assumption~\eqref{item:Assumption 2}.

\end{enumerate}
Then, there exists an operator valued function $t\mapsto \Pcal_t$ taking values in the space of bounded operators on $\Bcal$, and for $1\leq j\leq N\in\N$, there exist $z_j\in \C$ with $-\l <\Re(z_j)\leq \b$, a 
finite rank projector $\Pi_j$, and a nilpotent operator $\Ncal_j$, so that the following hold:
\begin{enumerate}
    \item For all $1\leq j,k\leq N$ and $t\geq 0$, we have 
    \begin{equation*}
        \Pi_j \Pi_k = \d_{jk} \Pi_j, \quad 
        \Pi_j \Pcal_t = \Pcal_t \Pi_j = 0, \quad 
        \Pi_j \Ncal_j = \Ncal_j\Pi_j = \Ncal_j.
    \end{equation*}

    \item For all $t\geq 0$,
    $   \Lcal_t = \Pcal_t + \sum_{j=1}^N e^{t z_j} \exp(t\Ncal_j) \Pi_j$.

    \item For all $f$ in the domain of $\mf{X}$, $t\geq 0$ and $0<\ell <\l$,
        $\norm{\Pcal_t f}_\Acal \ll_\ell e^{-\ell t} \norm{\mf{X} f}_\Bcal $.
\end{enumerate}
\end{thm}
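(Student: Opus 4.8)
The plan is to prove Theorem~\ref{thm:Butterley} (the abstract Paley-Wiener result of Butterley) by a contour-integration argument applied to the Laplace transform of the semigroup. The starting point is the standard representation formula expressing $\Lcal_t$ in terms of its resolvent: for fixed $\a>0$ and $f$ in the domain of $\mf{X}$, one has the inverse Laplace (Bromwich) formula
\begin{align*}
    \Lcal_t f = \frac{1}{2\pi i} \lim_{S\to\infty}\int_{\a-iS}^{\a+iS} e^{zt} R(z)f\;dz,
\end{align*}
valid because strong continuity and the weak-Lipschitz bound in Assumption~\eqref{item:Assumption 1} give enough regularity in $t$ and decay of the integrand when tested against $\norm{\cdot}_\Acal$. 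First I would use Assumption~\eqref{item:Assumption 2} together with the analytic Fredholm theory / quasi-compactness of $R(z)$ (its essential spectral radius is $\leq 1/(\Re(z)+\l)$ on $\Re(z)>0$, hence on any slightly smaller half-plane by resolvent identities) to conclude that $R(z)$ extends meromorphically to $\Re(z)>-\l$ with only finitely many poles $z_1,\dots,z_N$ in any vertical strip of bounded imaginary part, each of finite rank, and with associated spectral projectors $\Pi_j$ and nilpotent parts $\Ncal_j$ coming from the Laurent expansion of $R(z)$ at $z_j$.

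The core of the argument is to shift the contour of integration from $\Re(z)=\a$ to $\Re(z)=-\ell$ for some $0<\ell<\l$, picking up residues at the poles $z_j$ with $\Re(z_j)>-\ell$; these residues produce exactly the terms $e^{tz_j}\exp(t\Ncal_j)\Pi_j$, and the remaining integral on the line $\Re(z)=-\ell$ defines $\Pcal_t$. The key step here is to justify the contour shift, which requires controlling $\norm{R(z)f}$ (or rather $\norm{R(z)f}_\Acal$ after testing) for large $|\Im(z)|$ uniformly on the strip $-\ell\leq \Re(z)\leq \a$. This is precisely where Assumption~\eqref{item:Assumption 3} enters: the Dolgopyat-type bound $\norm{R(z)^q}_\Bcal \leq C/(\a+\l)^q$ with $q\asymp \g\log|\Im(z)|$ on the line $\Re(z)=\a$, combined with the resolvent identity $R(z) = R(\a+i\Im z) + (\a-\Re z)R(z)R(\a+i\Im z)$ to move horizontally, upgrades to a polynomial-in-$|\Im(z)|$ bound on $\norm{R(z)}_\Bcal$ throughout the strip; the constraint $\g<\log(1+\l/\a)$ is exactly what makes $|\Im(z)|^{\g}$ beat the geometric loss incurred when re-expanding $R(z)^q$ from a single power $R(z)$ via $R(z)^q$'s Neumann-type comparison, so that the line integral over $\Re(z)=-\ell$ converges absolutely after one application of $R$ (equivalently, after replacing $f$ by $\mf{X}f$, which gains a factor $1/|z|$ since $R(z)\mf{X}f = zR(z)f - f$). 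The orthogonality relations $\Pi_j\Pi_k=\d_{jk}\Pi_j$, $\Pi_j\Ncal_j=\Ncal_j\Pi_j=\Ncal_j$ and $\Pi_j\Pcal_t=\Pcal_t\Pi_j=0$ are then read off from the standard structure of spectral projectors of a quasi-compact operator (Riesz projectors associated to isolated spectral values) together with the fact that $\Pcal_t$ is built from the part of the resolvent holomorphic in a neighborhood of the $z_j$.

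Finally, the decay estimate $\norm{\Pcal_t f}_\Acal \ll_\ell e^{-\ell t}\norm{\mf{X}f}_\Bcal$ follows by bounding the line integral defining $\Pcal_t\mf{X}f$: writing $\Pcal_t$ as $\frac{1}{2\pi i}\int_{\Re z=-\ell} e^{zt} \tilde R(z)\,dz$ where $\tilde R$ is the resolvent with the finitely many polar parts subtracted, the factor $e^{zt}$ contributes $e^{-\ell t}$, and the integral in $\Im(z)$ converges because $\norm{\tilde R(z)\mf{X}f}_\Acal \ll \norm{\tilde R(z)\mf{X}f}_\Bcal \ll |z|^{-1}|\Im z|^{C}\norm{\mf{X}f}_\Bcal$ decays enough after, if necessary, integrating by parts once more in $z$ (using $\tfrac{d}{dz}R(z) = -R(z)^2$) to produce an extra decaying factor. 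I expect the main obstacle to be the second step: making the contour-shift rigorous, i.e.\ converting the single-line operator-power bound of Assumption~\eqref{item:Assumption 3} into a genuine polynomial resolvent bound on the whole vertical strip with explicit control of the exponent, and verifying that this exponent is dominated by the gain $e^{-\ell t}$ uniformly, which is the content of the delicate inequality $\g<\log(1+\l/\a)$; the bookkeeping of how $R(z)^q$ relates to $R(z)$ across the strip, and ensuring no spurious poles are introduced, is where the real work lies. This is carried out in detail in~\cite{Butterley,Butterley-erratum}, which we follow.
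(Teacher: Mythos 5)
The paper does not prove this statement at all: it is imported verbatim as \cite[Theorem 1]{Butterley,Butterley-erratum}, so there is no internal proof to compare against. Your inverse-Laplace/contour-shift sketch (meromorphic continuation of $R(z)$ on $\Re(z)>-\l$ from quasi-compactness, residues at the finitely many poles giving the terms $e^{tz_j}\exp(t\Ncal_j)\Pi_j$, and Assumption~\eqref{item:Assumption 3} with the constraint $\g<\log(1+\l/\a)$ controlling the shifted line integral that defines $\Pcal_t$) is essentially the argument of the cited references themselves, to which you correctly defer for the technical contour and strip estimates.
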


\subsection{Verification of the hypotheses of Theorem~\ref{thm:Butterley}}
Recall the Banach space $\Bcal_\star$ defined below~\eqref{eq:norm star} and the weak norm $\norm{\cdot}'_1$ defined in~\eqref{eq: norms}.
The link between the norms we introduced and decay of correlations is furnished in Lemma~\ref{lem:norm controls correlations}. 
In particular, this lemma implies that decay of correlations (for mean $0$ functions) would follow at once if we verify that $\norm{\Lcal_t f}'_1$ decays in $t$ with a suitable rate.
Theorem~\ref{thm:Butterley} shows that such decay follows from suitable spectral bounds on the resolvent.
Hence, it remains to verify the hypotheses of Theorem~\ref{thm:Butterley}.
We take
\begin{align*}
    \norm{\cdot}_\Acal = \norm{\cdot}'_1,
    \qquad
    \norm{\cdot}_\Bcal = \norm{\cdot}_1^\star
\end{align*}
 in the notation of Theorem~\ref{thm:Butterley}.
Strong continuity of $\Lcal_t$ is provided by Corollary~\ref{cor:strong continuity}, while
Theorem~\ref{thm:resolvent spectrum2} verifies Assumption~\eqref{item:Assumption 2} of Theorem~\ref{thm:Butterley}\footnote{Corollary~\ref{cor:strong continuity} and Theorem~\ref{thm:resolvent spectrum2} are obtained for the norms $\norm{\cdot}_k$, $k\geq 1$, however the proof extends readily to the norm $\norm{\cdot}^\star_1$ taking $\norm{\cdot}'_1$ as its associated norm.}.
The following lemma verifies Assumption~\eqref{item:Assumption 1}.

\begin{lem}\label{lem:weak Lipschitz}
For all $t\geq 0$, $
    \norm{\Lcal_t f-f}_1' \ll t \norm{f}^\star_1$.
\end{lem}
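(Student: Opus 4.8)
The plan is to reduce to $f\in C_c^2(X)^M$, prove a clean pointwise identity there via the fundamental theorem of calculus, and then pass to the limit in $\Bstar$. First I would record two routine facts that make the limiting step legitimate. One: the seminorm $\norm{\cdot}'_1$ is dominated by $\norm{\cdot}^\star_1$, since any test function admissible for the coefficient $e'_{1,0}$ — a $C^2$ function supported in the interior of $N_{1/10}^+$ — is, after rescaling by a bounded factor, admissible as a $C^{0,1}$ test function for $e^\star_{1,0}$, and $\Omega_{1/2}^-\subseteq\Omega_1^-$; hence $\norm{f}'_1\leq e^\star_{1,0}(f)\leq\norm{f}^\star_1$. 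Two: by the analog of Lemma~\ref{lem: equicts} for the space $\Bstar$ (the proofs of Corollary~\ref{cor:strong continuity} and Theorem~\ref{thm:resolvent spectrum2} carry over to $\norm{\cdot}^\star_1$), the operators $\Lcal_t$ are uniformly bounded on $\Bstar$ for $t\geq 0$. Since $\norm{\cdot}'_1$ is continuous for $\norm{\cdot}^\star_1$ and $C_c^2(X)^M$ is dense in $\Bstar$, it suffices to establish the inequality on $C_c^2(X)^M$.

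Next I would split into the ranges $t\geq 1$ and $0\leq t\leq 1$. For $t\geq 1$ the bound is immediate from the two facts above, since
\[
    \norm{\Lcal_t f - f}'_1 \leq \norm{\Lcal_t f}'_1 + \norm{f}'_1 \ll \norm{\Lcal_t f}^\star_1 + \norm{f}^\star_1 \ll \norm{f}^\star_1 \leq t\,\norm{f}^\star_1 .
\]
For $0\leq t\leq 1$ and $f\in C_c^2(X)^M$, fix a test function $\phi$ admissible for $e'_{1,0}$ and a basepoint $x\in\Omega_{1/2}^-$, and let $\w\in\mf{a}$ generate the geodesic flow. The fundamental theorem of calculus gives $(\Lcal_t f)(nx)-f(nx)=\int_0^t (L_\w f)(g_s nx)\,ds$ for each $n$, so by Fubini
\[
    \left|\int_{N_1^+}\phi(n)\big((\Lcal_t f)(nx)-f(nx)\big)\,d\muxu(n)\right|
    \leq \int_0^t\left|\int_{N_1^+}\phi(n)(L_\w f)(g_s nx)\,d\muxu(n)\right|ds .
\]
For each fixed $s\in[0,t]\subseteq[0,1]$ the inner integral is exactly of the form appearing in the definition of $e^\star_{1,1}$: the constant field $n\mapsto\w$ lies in $\Vcal_2^0$ (a flow-direction field with unit $C^2$-coefficient), and $\phi$ qualifies, up to a bounded multiple, as a $C^{0,1}$ test function; since $x\in\Omega_1^-$ this gives
\[
    \left|\int_{N_1^+}\phi(n)(L_\w f)(g_s nx)\,d\muxu(n)\right| \ll V(x)\,\muxu(N_1^+)\,e^\star_{1,1}(f) .
\]
Dividing by $V(x)\,\muxu(N_1^+)$, integrating over $s\in[0,t]$, and taking suprema over $\phi$ and over $x\in\Omega_{1/2}^-$ would yield $\norm{\Lcal_t f - f}'_1 \ll t\,e^\star_{1,1}(f)\leq t\,\norm{f}^\star_1$, as required.

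I do not expect a serious obstacle: the estimate is essentially bookkeeping. The one point deserving care is matching, in the two displayed estimates, the admissible test functions and vector fields of the weaker seminorm $\norm{\cdot}'_1$ against those entering the definition of $\norm{\cdot}^\star_1$, together with the standard density argument needed to make the fundamental-theorem-of-calculus identity rigorous for general $f\in\Bstar$ — but this is routine in the anisotropic-space setting, and the $t\geq 1$ case is handled crudely via uniform boundedness of $\Lcal_t$ rather than through the differentiation identity.
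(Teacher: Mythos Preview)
Your proof is correct and follows essentially the same idea as the paper: write $(\Lcal_t f-f)(nx)=\int_0^t (L_\w f)(g_r nx)\,dr$ and bound the resulting integrand by $e^\star_{1,1}$. The only difference is that you split into $t\le 1$ (using the built-in $s\in[0,1]$ supremum in $e^\star_{1,1}(f)$ directly) and $t\ge 1$ (crudely, via uniform boundedness of $\Lcal_t$), whereas the paper handles all $t\ge 0$ at once by writing $(L_\w f)(g_r nx)=L_\w(\Lcal_r f)(nx)$, bounding the inner integral by $V(x)\mu_x^u(N_1^+)\,e^\star_{1,1}(\Lcal_r f)$, and then invoking Lemma~\ref{lem: equicts} to get $e^\star_{1,1}(\Lcal_r f)\ll e^\star_{1,1}(f)$ for every $r\ge 0$; this avoids the case split and the separate density/boundedness preamble.
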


\begin{proof}
Recall that the norm $\norm{\cdot}'_1$ only involves the coefficient $e'_{1,0}$; cf.~\eqref{eq: norms}.
Let $x\in N_1^-\Omega$ and $t\geq 0$.
Then, given any test function $\phi$ for $e'_{1,0}$, we have that 
\begin{align*}
    \int_{N_1^+}\phi(n)(f(g_{t}nx)-f(nx))\;d\mu_x^u
    = \int_0^t \int_{N_1^+}\phi(n) L_\w f(g_{r}nx)\;d\mu_x^udr,
\end{align*}
where $L_\w$ denotes the derivative with respect to the vector field generating the geodesic flow.
Hence, Lemma~\ref{lem: equicts} implies that
\begin{align*}
    \left|\int_{N_1^+}\phi(n)(f(g_{t}nx)-f(nx))\;d\mu_x^u\right|
    \leq V(x)\mu_x^u(N_1^+) \int_0^t e^\star_{1,1}(\Lcal_{r}f)\;dr
    \ll tV(x)\mu_x^u(N_1^+) e^\star_{1,1}(f),
\end{align*}
where $e^\star_{1,1}$ is the coefficient defined above~\eqref{eq:norm star}.
This completes the proof.
\end{proof}

Finally, the following corollary verifies Assumption 3 of Theorem~\ref{thm:Butterley}.

\begin{cor} \label{cor:combine Dolgopyat bound}
Let the notation be as in Theorem~\ref{thm:Dolgopyat}.
Then, there exist constants $c_\star, \l_\star>0$, 
such that the following holds.
For all $z=a_\star+ib\in \C$ and for $q=\lceil c_\star \log|b|\rceil$, we have the following bound on the operator norm of $R(z)$:

\begin{align*}
    \norm{R(z)^{q}}_{1}^\star \leq  \frac{1}{(a_\star +\l_\star)^{q}},
\end{align*}
whenever $|b|\geq b_\G$, where $b_\G\geq 1$ is a constant depending on $\G$.
\end{cor}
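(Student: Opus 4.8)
The plan is to combine the Dolgopyat-type contraction for the seminorm $e^\star_{1,0}$ from Theorem~\ref{thm:Dolgopyat} with the cheap bounds on the higher-order seminorm $e^\star_{1,1}$ and then resum so that the full norm $\norm{\cdot}^\star_{1}$ is controlled. Recall that $\norm{f}^\star_1 = e^\star_{1,0}(f) + e^\star_{1,1}(f)$ and $\norm{f}^\star_{1,B} = e^\star_{1,0}(f) + e^\star_{1,1}(f)/B$, and that Theorem~\ref{thm:Dolgopyat} gives, for $z = a_\star + ib$ with $|b|\ge b_\star$ and $m = \lceil \log|b|\rceil$,
\begin{equation*}
    e^\star_{1,0}(R(z)^m f) \le C_\G \frac{\norm{f}^\star_{1,B}}{(a_\star+\s_\star)^m}, \qquad B = |b|^{1+\varkappa}.
\end{equation*}
The first step is to control $e^\star_{1,1}(R(z)^m f)$. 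Since $e^\star_{1,1}$ is (up to constants) the coefficient $e_{1,1}$ with Hölder test functions, Lemma~\ref{lem: equicts} (whose proof adapts verbatim to the $\star$-seminorms, as remarked after Theorem~\ref{thm:Butterley}) shows that the geodesic flow contracts the derivative-direction seminorm with no loss: $e^\star_{1,1}(\Lcal_t f) \ll_\b e^{-t}e^\star_{1,1}(f)(e^{-\b t}+1)$ for $t\ge 0$ (the derivative $L_w$ with $w$ in the stable direction picks up the $e^{-t}$ contraction factor; the flow direction is the only one that does not contract, but $e^\star_{1,1}$ only tests one derivative which we may always take to be stable up to commutators absorbed into $e^\star_{1,0}$). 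Integrating against $\frac{t^{m-1}e^{-zt}}{(m-1)!}\,dt$ and using~\eqref{eq:integral value} gives
\begin{equation*}
    e^\star_{1,1}(R(z)^m f) \ll_\b \frac{e^\star_{1,1}(f)}{(a_\star+1)^m},
\end{equation*}
and in fact the same argument yields $\norm{R(z)^m f}^\star_{1,B} \ll_\b \norm{f}^\star_{1,B}/(a_\star)^m \le \norm{f}^\star_{1}/(a_\star)^m$, which is the crude a priori bound we will iterate against.

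The second step is to iterate $R(z)^m$ in blocks. Write $q = \ell m$ for a large integer $\ell$ to be chosen (so $c_\star = \ell c$ for the appropriate constant, giving $q = \lceil c_\star \log|b|\rceil$ up to an $O(1)$ adjustment). Applying Theorem~\ref{thm:Dolgopyat} once and then the a priori bound $(\ell-1)$ more times:
\begin{equation*}
    e^\star_{1,0}(R(z)^{q}f) = e^\star_{1,0}\bigl(R(z)^m (R(z)^{(\ell-1)m}f)\bigr) \le C_\G \frac{\norm{R(z)^{(\ell-1)m}f}^\star_{1,B}}{(a_\star+\s_\star)^m} \ll_\b C_\G \frac{\norm{f}^\star_{1}}{(a_\star+\s_\star)^m \, a_\star^{(\ell-1)m}}.
\end{equation*}
Meanwhile $e^\star_{1,1}(R(z)^q f)\ll_\b \norm{f}^\star_1/(a_\star+1)^q$, so adding,
\begin{equation*}
    \norm{R(z)^q f}^\star_1 \ll_\b C_\G \norm{f}^\star_1 \Bigl( \frac{1}{(a_\star+\s_\star)^m a_\star^{(\ell-1)m}} + \frac{1}{(a_\star+1)^{\ell m}} \Bigr).
\end{equation*}
Taking the $q$-th root (recall $q = \ell m$), the first term contributes a geometric mean $\bigl((a_\star+\s_\star)a_\star^{\ell-1}\bigr)^{-1/\ell}$ which, for $\ell$ large, is strictly smaller than $a_\star$ by a definite amount — choose $\l_\star>0$ with $(a_\star+\s_\star)^{1/\ell}a_\star^{1-1/\ell} \ge a_\star+\l_\star$, possible since $\s_\star>0$ is fixed; the second term is bounded by $1/(a_\star+1) \le 1/(a_\star+\l_\star)$ after shrinking $\l_\star$; and the multiplicative constant $(C_\G)^{1/q}$ tends to $1$ as $|b|\to\infty$, so it is absorbed by a further tiny shrinkage of $\l_\star$ once $|b|\ge b_\G$ for $b_\G$ large enough. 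This yields $\norm{R(z)^q}^\star_1 \le (a_\star+\l_\star)^{-q}$ as claimed.

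The main obstacle is bookkeeping the interaction between the two seminorms under iteration: Theorem~\ref{thm:Dolgopyat} only contracts $e^\star_{1,0}$ and only at the cost of the $B$-weighted norm $\norm{\cdot}^\star_{1,B}$, so one must make sure that iterating does not let the $e^\star_{1,1}$-part (carrying the dangerous factor $B = |b|^{1+\varkappa}$) feed back destructively into the $e^\star_{1,0}$-part. This is handled precisely by the observation that the crude bound for $R(z)^m$ is contracting on the $\norm{\cdot}^\star_{1,B}$-norm itself (with rate $\sim 1/a_\star$, no $B$-loss, since the weight $B$ only ever multiplies the already-small $e^\star_{1,1}$ coefficient and that coefficient genuinely contracts), so the $B$-factor never compounds across blocks; it appears exactly once, in the single application of Theorem~\ref{thm:Dolgopyat}, and is then killed on taking the $q$-th root because $|b|^{(1+\varkappa)/q}\to 1$. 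A secondary point requiring care is that all of Section~\ref{sec:ess radius}'s lemmas (Lemma~\ref{lem: equicts} in particular) were stated for the norms $\norm{\cdot}_k$; one must check, as the footnote after Theorem~\ref{thm:Butterley} asserts, that the proofs go through for $\norm{\cdot}^\star_1$ with $\norm{\cdot}'_1$ as the associated weak norm, which is routine since the only change is the regularity class of test functions.
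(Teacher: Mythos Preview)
There is a genuine gap in your treatment of $e^\star_{1,1}$. Your claim that
\[
e^\star_{1,1}(R(z)^m f) \;\ll_\b\; \frac{e^\star_{1,1}(f)}{(a_\star+1)^m}
\]
is incorrect: the seminorm $e^\star_{1,1}$ is a supremum over vector fields in $\Vcal_2 = \Vcal_2^-\cup\Vcal_2^0$, hence it includes the flow direction, and there is no commutator identity that lets you ``take it to be stable up to commutators absorbed into $e^\star_{1,0}$''. For $v\in\Vcal_2^0$ one has $L_v\Lcal_t = \Lcal_t L_v$ with no contraction at all; the only mechanism available is integration by parts in the resolvent (Lemma~\ref{lem: flow by parts}), which gives
\[
e^\star_{1,1,\text{flow}}(R(z)^m f) \;\ll\; (a_\star+|z|)\,\frac{e^\star_{1,0}(f)}{a_\star^m},
\]
carrying an unavoidable factor $|z|\sim|b|$. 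This is exactly the computation at the start of the paper's own proof. With the correct bound, your scheme (apply Theorem~\ref{thm:Dolgopyat} once, crude bounds on the remaining $\ell-1$ blocks) produces in $\norm{R(z)^q f}^\star_1$ a term of order $|b|\,\norm{f}^\star_1\big/\bigl((a_\star+\s_\star)^m a_\star^{(\ell-1)m}\bigr)$; forcing this below $\norm{f}^\star_1/(a_\star+\l_\star)^{\ell m}$ would require $|b|^{1/m}(1+\l_\star/a_\star)^\ell \le 1+\s_\star/a_\star$, and since $|b|^{1/m}\ge e$ while $a_\star,\s_\star$ are fixed by Theorem~\ref{thm:Dolgopyat}, this need not hold.

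The paper avoids this by a different iteration. It first shows that the \emph{full} $B$-weighted norm contracts under $R(z)^m$, namely $\norm{R(z)^m}^\star_{1,B}\le (a_\star+\s_1/2)^{-m}$ for some $\s_1>0$, by combining Theorem~\ref{thm:Dolgopyat} for the $e^\star_{1,0}$ part with Lemmas~\ref{lem:all stable} and~\ref{lem: flow by parts} for the $e^\star_{1,1}/B$ part --- the extra $|b|^\varkappa$ hidden in $B=|b|^{1+\varkappa}$ is precisely what cancels the dangerous $|b|$ from the flow direction. This genuine contraction is then iterated $2p$ times in the $B$-norm, and only at the end does one convert back to $\norm{\cdot}^\star_1$, paying a single factor of $B$ which is absorbed by choosing $p$ large. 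Your single application of the Dolgopyat estimate does not compound enough gain to swallow that $|b|$; the fix is to iterate the contraction in the $B$-norm, not to add a separate $e^\star_{1,1}$ estimate at the end.
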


\begin{proof} 
First, we verify the corollary for the norm $\norm{\cdot}^\star_{1,B}$.
Let $e^\star_{1,1,b}$ be the scaled seminorm $e^\star_{1,1}/|b|^{1+\varkappa}$.
Note that the arguments of Lemmas~\ref{lem:all stable} and~\ref{lem: flow by parts} imply that for $z=a_\star+ib$ with $|b|\geq a_\star$, we have
\begin{align*}
    e^\star_{1,1,b}(R(z)^mf) \leq C_\G
    \frac{\norm{f}^\star_{1,B}(a_\star+|z|)}{a_\star^m b^{1+\varkappa}}
    \leq \frac{3C_\G\norm{f}^\star_{1,B}}{a_\star^m |b|^{\varkappa}},
\end{align*}
for some constant $C_\G\geq 1$ depending only on $\G$, where we used the fact that $a_\star+|z|\leq 3|b|$.

Moreover, if $m=\lceil \log |b| \rceil\geq 3/2$, we have that $|b|^\varkappa\geq e^{\varkappa m/2}\geq (1+\varkappa/2)^m$ and hence $a_\star^m |b|^\varkappa $ is at least $(a_\star+\varkappa/2)^{m}$.
It follows that, for all $f\in \Bcal_\star$, we have 
\begin{align*}
    e^\star_{1,1,b}(R(z)^mf) 
    \leq \frac{3C_\G\norm{f}^\star_{1,B}}{(a_\star+\varkappa/2)^m}.
\end{align*}
This estimate, combined with the estimate in Theorem~\ref{thm:Dolgopyat} implies that whenever $|b|\geq b_\star $,
\begin{align*}
    \norm{R(z)^m}^\star_{1,B} \ll_\G  (a_\star+\s_1)^{-m},
\end{align*}
where $\s_1>0$ is the minimum of $\s_\star$ and $\varkappa/2$.
In particular, if $|b|$ is large enough, depending on $\G$, we can absorb the implied constant in the estimate above to obtain
\begin{align*}
    \norm{R(z)^m}^\star_{1,B}\leq (a_\star+\s_1/2)^{-m}.
\end{align*}

Let $p\in \N$ be a large integer to be chosen shortly.
To obtain the claimed estimate for the norm $\norm{\cdot}^\star_1$, note that for any $f$ in the Banach space $\Bcal_\star$, since $\norm{\cdot}^\star_{1,B}\leq \norm{\cdot}_1^\star\leq B\norm{\cdot}^\star_B=|b|^{1+\varkappa} \norm{\cdot}^\star_{1,B}$, iterating the above estimate yields
\begin{align*}
    \norm{R(z)^{2pm}f}^\star_1
    \leq  B\norm{R(z)^{2pm}f}^\star_B
    \leq \frac{B\norm{R(z)^{pm}f}^\star_{1,B}}{(a_\star+\s_1/2)^{pm}}
    \leq \frac{B\norm{f}^\star_1}{(a_\star+\s_1/2)^{2pm}}.
\end{align*}
Since $m=\lceil \log |b|\rceil$, choosing $p$ large enough, depending only on $a_\star$ and $\s_1$, we can ensure that $B/(a_\star+\s_1/2)^{pm}\leq 1/a_\star^{pm}$.
In particular, taking $\l_\star$ to be the positive solution of the quadratic polynomial $x\mapsto x^2+2a_\star x-a_\star\s_1/2$, we obtain the desired estimate with $c_\star=2p $.
\end{proof}

\subsection{Proofs of the main theorems}

Let $\mf{X}$ denote the generator of the semigroup $\Lcal_t$ acting on $\Bcal_\star$ (which exists by Corollary~\ref{cor:strong continuity}).
In light of the above results and Theorem~\ref{thm:Butterley}, we obtain the following decomposition of the transfer operator $\Lcal_t$:
$
    \Lcal_t = \Pcal_t + \sum_{i=1}^N e^{t\l_i} e^{t\Ncal_i} \Pi_i  $,
where $\Pcal_t, \Ncal_i, \l_i$ and $\Pi_i$ are as in Theorem~\ref{thm:Butterley}.
Moreover, for a suitable $\s>0$ depending only on $\l_\star$ in Corollary~\ref{cor:combine Dolgopyat bound} and on $\s_0$ given by Theorem~\ref{thm:resolvent spectrum2},  we have that
\begin{align*}
    \norm{\Pcal_tf}'_1 \ll e^{-\s t} \norm{\mf{X}f}^\star_1,
\end{align*}
for all $t\geq 0$ and $f\in \Bstar$.
Finally, it follows by Lemma~\ref{lem:spectrum on imaginary axis}\footnote{Lemma~\ref{lem:spectrum on imaginary axis} is obtained for a slightly different norm but the proof is identical.} that the only eigenvalue $\l_i$ lying on the imaginary axis is $0$ and that its associated nilpotent operator $\Ncal_i$ vanishes.
This concludes the proof.

\bibliography{bibliography}{} 
\bibliographystyle{amsalpha}

\end{document}